\definecolor{darkred}{RGB}{160,0,0}
\definecolor{darkblue}{RGB}{0,0,160}
\DeclareMathAlphabet\mathbfcal{OMS}{cmsy}{b}{n} 
\DeclareFontFamily{U}{mathx}{\hyphenchar\font45}
\DeclareFontShape{U}{mathx}{m}{n}{
      <5> <6> <7> <8> <9> <10>
      <10.95> <12> <14.4> <17.28> <20.74> <24.88>
      mathx10
      }{}
\DeclareSymbolFont{mathx}{U}{mathx}{m}{n}
\DeclareMathSymbol{\bigtimes}{1}{mathx}{"91}
\newcommand{\excise}[1]{}
\theoremstyle{plain}
\newtheorem{thm}{Theorem}[section]
\newtheorem{lem}[thm]{Lemma}
\newtheorem{cor}[thm]{Corollary}
\newtheorem{prop}[thm]{Proposition}
\newtheorem{introthm}{Theorem}[chapter]
\theoremstyle{definition}
\newtheorem{defn}[thm]{Definition}
\newtheorem{example}[thm]{Example}
\newtheorem{condition}[thm]{Condition}
\theoremstyle{remark}
\newtheorem{claim}[thm]{Claim}
\newtheorem{remark}[thm]{Remark}
\newtheorem{conv}[thm]{Convention}
\newtheorem{notation}[thm]{Notation}
\newtheorem{examplenotation}[thm]{Example}
\numberwithin{equation}{section}
\renewcommand{\>}{\rangle}
\newcommand{\<}{\langle}
\newcommand{\CC}{\mathbb{C}}
\newcommand{\NN}{\mathbb{N}}
\newcommand{\QQ}{\mathbb{Q}}
\newcommand{\RR}{\mathbb{R}}
\newcommand{\FF}{\mathbb{F}}
\newcommand{\A}{\mathbb{A}}
\renewcommand\iff{\Leftrightarrow}
\renewcommand\implies{\Rightarrow}
\newcommand{\maps}{\rightarrow}
\newcommand\set[1]{\{#1\}}
\DeclareMathOperator{\SNrm}{\mathbf{SNrm}} 
\DeclareMathOperator{\Nrm}{\mathbf{Nrm}} 
\DeclareMathOperator{\Ban}{\mathbf{Ban}} 
\DeclareMathOperator{\CBorn}{\mathbf{CBorn}} 
\DeclareMathOperator{\E}{\mathbf{E}} 
\DeclareMathOperator{\F}{\mathbf{F}} 
\DeclareMathOperator{\LH}{\mathbf{LH}} 
\DeclareMathOperator{\TT}{\mathbb{T}} 
\DeclareMathOperator{\FarguesFontaine}{FF} 
\DeclareMathOperator{\act}{a} 
\DeclareMathOperator{\sh}{sh} 
\DeclareMathOperator{\swap}{swap} 
\DeclareMathOperator{\cone}{cone} 
\DeclareMathOperator{\D}{\mathbf{D}} 
\DeclareMathOperator{\Ho}{H} 
\DeclareMathOperator{\LHo}{LH} 
\DeclareMathOperator{\IndBan}{\mathbf{IndBan}} %
\DeclareMathOperator{\Pro}{\mathbf{Pro}} 
\DeclareMathOperator{\id}{id} 
\DeclareMathOperator{\Sp}{Sp} 
\DeclareMathOperator{\Hom}{Hom} 
\DeclareMathOperator{\intHom}{\underline{\Hom}} 
\DeclareMathOperator{\shHom}{\underline{\mathcal{H}\kern -.5pt\textit{om}}} 
\DeclareMathOperator{\RshHom}{R\underline{\mathcal{H}\kern -.5pt\textit{om}}} 
\DeclareMathOperator{\shExt}{\underline{\mathcal{E}\kern -1,75pt\textit{xt}}} 
\DeclareMathOperator{\Psh}{Psh} 
\DeclareMathOperator{\psh}{psh} 
\DeclareMathOperator{\Sh}{Sh} 
\DeclareMathOperator{\Loc}{Loc} 
\DeclareMathOperator{\Mod}{\mathbf{Mod}} 
\DeclareMathOperator{\sep}{sep} 
\DeclareMathOperator{\im}{im} 
\DeclareMathOperator{\coker}{coker} 
\DeclareMathOperator{\coim}{coim} 
\DeclareMathOperator{\p}{p} 
\DeclareMathOperator{\I}{\mathbf{I}} 
\DeclareMathOperator{\C}{\mathbf{C}} 
\DeclareMathOperator{\op}{op} 
\DeclareMathOperator{\Dcap}{\mathcal{\wideparen{D}}} 
\DeclareMathOperator{\Dcapbd}{\mathcal{\wideparen{D}}\kern 1,00pt^{b}} 
\DeclareMathOperator{\hO}{\cal{\widehat{O}}} 
\DeclareMathOperator{\Spa}{Spa} 
\DeclareMathOperator{\dR}{dR} 
\DeclareMathOperator{\pdR}{pdR} 
\DeclareMathOperator{\proet}{\text{pro\'{e}t}} %
\DeclareMathOperator{\profet}{\text{prof\'{e}t}} %
\DeclareMathOperator{\affperfd}{\text{affperfd}} 
\DeclareMathOperator{\fin}{\text{fin}} 
\DeclareMathOperator{\et}{\text{\'{e}t}} 
\DeclareMathOperator{\fet}{f\text{\'{e}t}} 
\DeclareMathOperator{\BB}{\mathbb{B}}
\DeclareMathOperator{\OB}{\cal{O}\kern -1,00pt\mathbb{B}} 
\DeclareMathOperator{\normalOB}{O\kern -1,00pt B} 
\DeclareMathOperator{\normalOA}{O\kern -1,00pt A} 
\DeclareMathOperator{\normalOtildeA}{O\kern -1,00pt {\tilde{A}}} 
\DeclareMathOperator{\OA}{\cal{O}\kern -1,00pt\mathbb{A}} 
\DeclareMathOperator{\OBcap}{\wideparen{\cal{O}\kern -1,00pt\mathbb{B}}} 
\DeclareMathOperator{\OBdR}{\cal{O}\kern -1,00pt\mathbb{B}_{\dR}} 
\DeclareMathOperator{\OXBdR}{\cal{O}_{X}\kern -1,00pt\mathbb{B}_{\dR}} 
\DeclareMathOperator{\OXxXBdR}{\cal{O}_{X\times X}\kern -1,00pt\mathbb{B}_{\dR}} 
\DeclareMathOperator{\OC}{\cal{O}\kern -0,90pt\mathbb{C}} %
\DeclareMathOperator{\CB}{\cal{C}\kern -1,00pt\mathbb{B}} 
\DeclareMathOperator{\Otheta}{\cal{O}\kern -1,00pt\theta} 
\DeclareMathOperator{\Ovartheta}{\cal{O}\kern -1,00pt\vartheta} 
\DeclareMathOperator{\Oiota}{\cal{O}\kern -1,00pt\iota} 
\DeclareMathOperator{\TB}{\mathcal{T}\kern -1,00pt\mathbb{B}} 
\DeclareMathOperator{\OmegaB}{\Omega\kern -1,00pt\mathbb{B}} 
\DeclareMathOperator{\DcapB}{\Dcap\kern -1,00pt\BB}
\DeclareMathOperator{\RB}{R\kern -1,00pt\BB}
\DeclareMathOperator{\PB}{P\kern -1,00pt\BB}
\DeclareMathOperator{\rhoB}{\rho\kern -1,00pt\BB}
\DeclareMathOperator{\SB}{S\kern -1,50pt\BB} 
\DeclareMathOperator{\dRB}{\dR\kern -1,50pt\BB}
\DeclareMathOperator{\DDB}{\DD\kern -1,50pt\BB}
\DeclareMathOperator{\SolB}{\Sol\kern -0.5pt\BB} 
\DeclareMathOperator{\dRfunctorB}{\dRfunctor\kern -1.5pt\BB} 
\DeclareMathOperator{\Conn}{Conn} 
\DeclareMathOperator{\LocB}{\Loc\kern -1,50pt\BB} 
\DeclareMathOperator{\ConnOB}{\Conn\kern -1,50pt\OB} 
\DeclareMathOperator{\Mat}{Mat} 
\DeclareMathOperator{\DB}{\mathcal{D}\kern -1,00pt\mathbb{B}} 
\DeclareMathOperator{\EB}{\mathcal{E}\kern -0.40pt\mathbb{B}} 
\DeclareMathOperator{\Fil}{Fil} 
\DeclareMathOperator{\bd}{b} 
\DeclareMathOperator{\Sol}{\mathcal{S}\kern -1.0pt\textit{ol}} 
\DeclareMathOperator{\nSolB}{Sol\kern -0,5pt\mathbb{B}} 
\DeclareMathOperator{\Rec}{\mathcal{R}\kern -1.0pt\textit{ec}} 
\DeclareMathOperator{\R}{R} 
\DeclareMathOperator{\solid}{solid} 
\DeclareMathOperator{\gr}{gr} 
\newcommand{\heart}{\ensuremath\heartsuit}
\DeclareMathOperator{\DD}{\mathbb{D}} 
\DeclareMathOperator{\cont}{cts} 
\DeclareMathOperator{\clcont}{clcts} 
\DeclareMathOperator{\Vect}{\mathbf{Vec}} 
\DeclareMathOperator{\Ind}{\mathbf{Ind}} 
\newcommand{\cal}[1]{\mathcal{#1}}
\DeclareMathOperator{\cond}{cond} 
\DeclareMathOperator{\cts}{cts} 
\DeclareMathOperator{\ZZ}{\mathbb{Z}} 
\DeclareMathOperator{\Gal}{Gal} 
\DeclareMathOperator{\al}{al} 
\DeclareMathOperator{\linearspan}{span} 
\DeclareMathOperator{\GL}{GL} 
\DeclareMathOperator{\rL}{L} 
\DeclareMathOperator{\rC}{C} 
\DeclareMathOperator{\aug}{aug} 
\DeclareMathOperator{\limit}{lim} 
\DeclareMathOperator{\bA}{\mathbf{A}} 
\DeclareMathOperator{\isomap}{\stackrel{\cong}{\longrightarrow}} 
\DeclareMathOperator{\dRfunctor}{\textit{d}\kern +0.5pt\mathcal{R}} 
\DeclareMathOperator{\Kos}{Kos} 
\DeclareMathOperator{\lc}{lc} 
\DeclareMathOperator{\lh}{^{\small\heart}\kern -3.00pt}
\DeclareMathOperator{\pfsets}{pfsets}
\DeclareMathOperator{\triv}{triv} 
\DeclareMathOperator{\dist}{dist} 
\DeclareMathOperator{\Prof}{Prof} 
\begin{document}

\selectlanguage{english}

\title{Galois and Pro-\'etale Cohomology of Overconvergent de Rham Period Rings}

\author{Finn Wiersig}
\address{National University of Singapore}
\email{fwiersig@nus.edu.sg}
\date{\today}

\begin{abstract}
  Motivated by the theory of $p$-adic differential equations and
  $p$-adic geometric representation theory, we introduce overconvergent
  variants of Fontaine's classical period rings. In particular,
  we study the positive overconvergent de Rham period ring
  $B_{\dR}^{\dag,+}$, which is the stalk of
  the structure sheaf of the analytic Fargues--Fontaine curve at
  infinity. Our main results include the computation of the
  Galois cohomology of these overconvergent period rings, as well
  as the cohomology of the associated period sheaves
  and period structure sheaves.
\end{abstract}

\maketitle


\tableofcontents



\chapter{Introduction}
\label{ch:intro}


\section{Motivation}


\subsection{Riemann-Hilbert correspondences}

A \emph{Riemann-Hilbert correspondence} relates analytic data, such as
systems of differential equations, to topological data, such as monodromy
representations. In the complex analytic setting this philosophy goes back to
Hilbert's twenty-first problem. It was developed in depth by
Deligne~\cite{Equationsdifferentiellesapointssinguliersreguliers},
Kashiwara~\cite{KashiwaraKawaiholonomicIII,Kashiwaraconstructibilityholonomic,KashiwaraRHforholonomicsystems}.
and Mebkhout~\cite{Mebkhoutuneequivalence}.

\begin{example}\label{ex:mon-periods}
Let $X=\DD^{\times}=\left\{x\in\CC\colon 0<|x|<1\right\}$ be the punctured
open disc with coordinate $z$, and consider the differential equation
$zf^{\prime}=\lambda f$
for some $\lambda\in\CC$. The local solutions are of the form $cz^{\lambda}$.
If $\lambda\notin\ZZ$, then $z^{\lambda}$ is multivalued on $X$, and the
equation has nontrivial monodromy. After choosing a base point $x_{0}\in X$,
this monodromy is described by a representation
\[
  \rho_{\lambda}\colon
  \pi_{1}\left(X,x_{0}\right)\to\GL_{1}\left(\CC\right),
\]
where a generator of $\pi_{1}(X,x_{0})$ is sent to
$e^{2\pi i\lambda}\in\CC^{\times}$.
\end{example}

Now let $X$ be a smooth rigid-analytic variety over a complete discretely
valued field $k$ of mixed characteristic $(0,p)$ with perfect residue field.
There are several versions of a $p$-adic Riemann-Hilbert correspondence,
depending on the class of analytic objects and the class of topological objects
one wants to relate. Important contributions include
Scholze~\cite[Theorem 7.6]{Sch13pAdicHodge},
Liu--Zhu~\cite{LiuZhuRH2017}, and
Diao--Lan--Liu--Zhu~\cite{MR4536903}.

The direction relevant for this work is the following. We would like to study
analytic objects on $X$ by relating them to suitable topological objects through
a $p$-adic Riemann-Hilbert correspondence. For this purpose, Fontaine's
classical de Rham period ring $B_{\dR}$ and Scholze's corresponding period
sheaf $\OB_{\dR}$ are too large: they contain the correct completed period
information, but they do not retain the convergence conditions needed to study
$p$-adic differential equations.

This motivates the introduction of the overconvergent de Rham period ring
$B_{\dR}^{\dagger}\subset B_{\dR}$ and its relative period sheaves. The guiding
principle is that $B_{\dR}^{\dagger}$ should be large enough to contain the
periods needed for $p$-adic monodromy, but small enough to remember the
analytic convergence conditions which are lost after passing to the completed
ring $B_{\dR}$.


\subsection{Why $B_{\dR}$ is too large}

Let us return to the rank-one differential equation from
Example~\ref{ex:mon-periods}, now in the $p$-adic setting. Let
$X=\DD^{\times}=\Sp k\<z\>\setminus\{0\}$
and consider $zf^{\prime}=\lambda f$
for some $\lambda\in k$. As in the complex case, one expects the associated
monodromy to involve a $p$-adic analogue of $e^{2\pi i\lambda}$.

The role of $2\pi i$ is played by Fontaine's element $t\in B_{\dR}$. More
precisely, choose a compatible system
$\epsilon=(1,\zeta_{p},\zeta_{p^{2}},\dots)$
of $p$-power roots of unity in $C$,
the completion of a fixed algebraic closure $\overline{k}$ of $k$,
and write
$t=\log[\epsilon]$,
where $[\epsilon]$ denotes the Teichm\"uller lift of $\epsilon$. Since $t$ is
the $p$-adic analogue of $2\pi i$, one is led to the expression
\[
  e^{2\pi i\lambda}
  \quad\leadsto\quad
  [\epsilon]^{\lambda}
  =
  \sum_{i\geq 0}
  \binom{\lambda}{i}\left([\epsilon]-1\right)^{i}
  \in B_{\dR}.
\]
Thus $B_{\dR}$ supplies a natural coefficient ring for a first version of
$p$-adic monodromy:
\[
  \rho_{\lambda}\colon
  \ZZ_{p}(1)\to\GL_{1}\left(B_{\dR}\right),
  \qquad
  \epsilon\mapsto[\epsilon]^{\lambda}.
\]
A general formalism for such $p$-adic monodromy representations will be
developed elsewhere.

However, this construction also shows why $B_{\dR}$ is too large for the
applications envisioned here. The issue is not that $B_{\dR}$ lacks periods;
rather, it contains too many of them. Passing to $B_{\dR}$ completes away the
analytic convergence information which should distinguish well-behaved
$p$-adic differential equations from pathological ones. The overconvergent
period ring $B_{\dR}^{\dagger}$ is designed to retain precisely this missing
information.


\subsubsection{Non-Liouville numbers}
\label{subsubsec:nonliouville-periods}

The first indication that $B_{\dR}^{\dagger}$ is the correct replacement for
$B_{\dR}$ comes from the theory of $p$-adic differential equations. Recall that
a scalar $\lambda\in k$ is called \emph{$p$-adically Liouville} if it admits
rapid approximation by positive integers, that is if
$\liminf_{n\to+\infty}|\lambda-n|^{1/n}=0$.
This notion was introduced by Clark~\cite{Clark1966_padicconvergence} in his
study of $p$-adic convergence of solutions of linear differential equations.
For the rank-one equation $zf^{\prime}=\lambda f$,
the distinction between Liouville and non-Liouville exponents is a genuine
analytic condition: non-Liouville exponents are precisely the exponents for
which the associated differential equation has the expected finiteness and
convergence behaviour. A modern formulation of this phenomenon, closely related
to the overconvergent de Rham period ring, appears in
Gao--Wang~\cite[Theorem 10.1]{arXiv260421605}.

Fontaine's ring $B_{\dR}$ does not see this distinction. Indeed,
$[\epsilon]^{\lambda}$ belongs to $B_{\dR}$ for every $\lambda\in k$, including
Liouville values of $\lambda$. In contrast,
$[\epsilon]^{\lambda}\in B_{\dR}^{\dagger}$
if and only if $\lambda$ is non-Liouville.
Thus $B_{\dR}^{\dagger}$ provides a period-theoretic way to impose the
non-Liouville condition. From this point of view, the overconvergent period ring
is not merely a smaller subring of $B_{\dR}$; it is the subring which remembers
the analytic convergence condition naturally attached to the differential
equation.


\subsubsection{A Riemann-Hilbert correspondence for $\Dcap$-modules}
\label{subsubsec:Dcapmodules-periods}

The second motivation comes from $p$-adic geometric representation theory.
Ardakov and Wadsley~\cite{AW2024_GlobalsectionsoneqlinebundlesonOmega}
proved, by explicit local computations, admissibility results for certain
locally analytic representations arising from Drinfeld's first covering of the
$p$-adic upper half-plane. Their work uses the theory of coadmissible
$\Dcap$-modules, where $\Dcap$ denotes the sheaf of infinite-order differential
operators on a smooth rigid-analytic variety.

This suggests that a Riemann-Hilbert correspondence for coadmissible
$\Dcap$-modules should play a structural role in the study of such
representations. Scholze's $p$-adic Riemann-Hilbert correspondence uses the
period sheaf $\OB_{\dR}$, equipped with its connection $\nabla_{\dR}$, to relate
filtered vector bundles with flat connection to $\mathbb{B}_{\dR}^{+}$-local
systems~\cite[Theorem 7.6]{Sch13pAdicHodge}. This construction is fundamental,
but the completed period sheaf $\OB_{\dR}$ is too large for the
coadmissible $\Dcap$-module setting: it naturally carries the structure of a
$\cal{D}$-module, but not the required structure of a
$\Dcap$-module.

The overconvergent period structure sheaf
\[
  \OB_{\dR}^{\dagger}\subset\OB_{\dR}
\]
is introduced to remedy this defect. It is the relative analogue of the
inclusion $B_{\dR}^{\dagger}\subset B_{\dR}$ and is small enough to carry the
desired $\Dcap$-module structure. In the follow-up work
\cite{WiersigReconstruction}, this construction is used to obtain a
Riemann-Hilbert correspondence for $\Dcap$-modules.

The purpose of the present work is to establish the period-ring and
period-sheaf cohomology needed for this overconvergent theory. In particular,
we compute the continuous Galois cohomology of $B_{\dR}^{\dagger}$,
prove local descriptions of the corresponding relative
period structure sheaves, and compute their derived pushforwards along the
projection from the pro-\'etale site.


\section{Results}

Let $k$ be a complete discrete valuation field in mixed characteristic $(0,p)$ with perfect residue field.
Fix an algebraic closure $\overline{k}$ of $k$ with completion $C$.


\subsection{Galois cohomology}

The absolute period ring $B_{\dR}^{\dag,+}$ is the stalk $\cal{O}_{\FarguesFontaine,\infty}$
of the structure sheaf of the analytic Fargues-Fontaine curve at the point $\infty\in\FarguesFontaine$.
We define $B_{\dR}^{\dag}:=B_{\dR}^{\dag,+}[1/t]$
where $t$ is Fontaine's $2\pi i$. Our first main result is:

\begin{introthm}[Theorem~\ref{thm:galois-cohomology-of-solidBdRdagger-born}]
\label{thm:BdRdagplusplus-Galoiscohomology-introduction}
\hfill 
\begin{equation*}
  \Ho_{\cont}^{i}\left(\Gal\left(\overline{k}/k\right),B_{\dR}^{\dag}\right)
  \cong\begin{cases}
    k & \text{ if } i=0,1 \\
    0 & \text{ if } i \geq 2.
  \end{cases}
\end{equation*}
\end{introthm}

We recall that Fontaine's ring $B_{\dR}$ is equipped with a filtration induced
by the $t$-adic filtration on $B_{\dR}^{+}$. Since $B_{\dR}$ is complete with
respect to this filtration, the continuous Galois cohomology of $B_{\dR}$ can
be computed in terms of the Galois cohomology of the associated graded pieces
$\gr^{n}B_{\dR}$, which are isomorphic to the Tate twists $C(n)$. The Galois cohomology of
$C(n)$ was computed by Tate~\cite{tatepdivisiblegroups}, yielding an explicit
description of the continuous Galois cohomology of $B_{\dR}$.

To prove Theorem~\ref{thm:BdRdagplusplus-Galoiscohomology-introduction}, we
follow the same guiding philosophy, but a new argument is required. The ring
$B_{\dR}^{\dagger}$ inherits the filtration induced by the inclusion
$B_{\dR}^{\dagger}\subset B_{\dR}$. However, $B_{\dR}^{\dagger}$ is not
complete with respect to this filtration; its completion is $B_{\dR}$. Thus
Theorem~\ref{thm:BdRdagplusplus-Galoiscohomology-introduction} is not a formal
consequence of the corresponding computation for $B_{\dR}$. In particular, one
cannot simply pass to the associated graded pieces and invoke Tate's computation
for $C(n)$ as in the classical complete setting: doing so would lose precisely
the overconvergence information encoded by $B_{\dR}^{\dagger}$.

Instead, we reduce the proof of
Theorem~\ref{thm:BdRdagplusplus-Galoiscohomology-introduction} to the
computation of the Galois cohomology of the Tate twists $\mathcal{O}_{C}(n)$ of
the ring of integers of $C$. This reduction is one of the main technical points
of the proof. It requires a torsion-sensitive analysis of the integral pieces
which occur before passing to the completion, and hence cannot be replaced by
the usual argument for $B_{\dR}$. See
\S\ref{subsec:on:prop:galois-cohomology-of-BdRdaggerplus-primitiverootofunity}
and
\S\ref{subsubsec:--prop:galois-cohomology-of-BdRdaggerplus-kprime-injectintotminusj--anditsproof-recpaper}
for an overview of the steps involved.

The required Galois cohomology of $\mathcal{O}_{C}(n)$ is due to
Barthel--Schlank--Stapleton--Weinstein~\cite[Theorems 4.0.4 and
4.0.5]{BSSW2024_rationalizationoftheKnlocalsphere}. Their result is a key
input in our proof of
Theorem~\ref{thm:BdRdagplusplus-Galoiscohomology-introduction}, because it
provides the necessary control of the torsion appearing in the integral Tate
twists.

As a byproduct, we also compute the continuous Galois cohomology of
$B_{\dR}^{\dagger,+}$
(Theorem~\ref{thm:galois-cohomology-of-solidBdRdaggerplus-born}). After the
present author's results were made public, Gao--Wang developed a Tate--Sen
formalism for representations over the convergent de Rham period ring
$B_{\dR}^{\dagger,+}$~\cite{arXiv260421605}. Their work gives, among
other things, a cohomology comparison theorem for
$B_{\dR}^{\dagger,+}$-representations under a $p$-adic non-Liouville condition
on the Sen weights. The case of the trivial representation recovers the
continuous Galois cohomology computation of $B_{\dR}^{\dagger,+}$ obtained
here, but by different methods. Thus the present argument and Gao--Wang's
Tate--Sen approach are complementary: the former proves the foundational
absolute computation by an integral, torsion-sensitive filtration argument,
whereas the latter places the resulting computation in a broader
representation-theoretic framework.


\subsection{Cohomology of period sheaves}

Fix a smooth rigid-analytic $k$-variety $X$. Let $X_{\proet}$ be Scholze's
pro-\'etale site with canonical projection
$\nu\colon X_{\proet}\to X$
as in~\cite{Sch13pAdicHodge}. We define an overconvergent de Rham period sheaf
$\BB_{\dR}^{\dag}$ on $X_{\proet}$. If $X=\Sp k$ is a point, then
$\BB_{\dR}^{\dag}$ recovers the absolute period ring $B_{\dR}^{\dag}$. As in
\S\ref{subsubsec:Dcapmodules-periods}, we also introduce the overconvergent
de Rham period structure sheaf $\OB_{\dR}^{\dag}$ on $X_{\proet}$. It is an
overconvergent completion of $\nu^{-1}\cal{O}\otimes\BB_{\dR}^{\dag}$.

Our main relative cohomology result is the following.

\begin{introthm}[Theorem~\ref{cor:borncohomologyOBdRdagX-reconstructionpaper}]
\label{thm:borncohomologyOBdRdagX-periods-intro}
\hfill
\begin{equation*}
  \R^{i}\nu_{*}\OB_{\dR}^{\dag}
  \cong
  \begin{cases}
    \cal{O} & \text{ if } i=0,1, \\
    0       & \text{ if } i \geq 2.
  \end{cases}
\end{equation*}
\end{introthm}

\begin{remark}
  If $X=\Sp k$ is a point, then
  Theorem~\ref{thm:borncohomologyOBdRdagX-periods-intro} recovers
  Theorem~\ref{thm:BdRdagplusplus-Galoiscohomology-introduction}.
\end{remark}

\begin{remark}
  Scholze~\cite[Proposition 6.16]{Sch13pAdicHodge} computed the corresponding
  higher direct images for the classical period structure sheaf $\OB_{\dR}$.
  In comparing with this result, one should use the corrected definition of
  $\OB_{\dR}$ and the completion convention explained in
  \cite[Remark 2.2.11]{MR4536903}; in particular, the classical sheaf has to
  be completed with respect to the Hodge filtration. For
  $\OB_{\dR}^{\dag}$ no further completion with respect to the Hodge filtration
  is taken. Instead, the local sections of $\OB_{\dR}^{\dag}$ can be written
  as filtered colimits of localisations of rings carrying complete auxiliary
  filtrations. These complete filtrations are the ones used in the proof of
  Theorem~\ref{thm:borncohomologyOBdRdagX-periods-intro}.
\end{remark}

We briefly indicate the strategy of the proof. Since the assertion is local on
$X$, we may assume that $X$ is affinoid and equipped with an \'etale morphism
$X\to\TT^{d}:=\Sp k\left\<T_{1}^{\pm},\dots,T_{d}^{\pm}\right\>$.
Let $\widetilde{X}\to X$ denote the standard toric pro-\'etale cover associated
with this map, and write $X_{C}$ and $\widetilde{X}_{C}$ for the corresponding
base changes to $C$.

The proof has two parts. First, one computes over $X_{C}$ by using the
pro-\'etale cover $\widetilde{X}_{C}\to X_{C}$. This cover is a
$\ZZ_{p}(1)^{d}$-torsor, so the relevant \v{C}ech complex can be identified
with a continuous group cohomology complex. The computation then becomes
explicit because of the following local description of $\OB_{\dR}^{\dag}$
(Theorem~\ref{thm:cohomology-OBdRdag-over-affperfd-reconstructionpaper}):
\begin{equation*}
  {\varinjlim}_{q}
  \BB_{\dR}^{\dag}|_{\widetilde{X}}
  \left\<\frac{Z_{1},\dots,Z_{d}}{p^{q}}\right\>
  \stackrel{\cong}{\longrightarrow}
  \OB_{\dR}^{\dag}|_{\widetilde{X}},
  \qquad
  Z_{i}\mapsto
  T_{i}\widehat{\otimes}1
  -
  1\widehat{\otimes}\left[T_{i}^{\flat}\right].
\end{equation*}
This yields an explicit description of
$\R\Gamma(X_{C},\OB_{\dR}^{\dag})$ in terms of
$\cal{O}(X)$ and the period ring $B_{\dR}^{\dag}$.

Second, one descends from $X_{C}$ to $X$. This step reduces the computation to
the continuous Galois cohomology of $B_{\dR}^{\dag}$ computed in
Theorem~\ref{thm:BdRdagplusplus-Galoiscohomology-introduction}. Combining the
explicit computation over $X_{C}$ with this absolute Galois cohomology
calculation gives
Theorem~\ref{thm:borncohomologyOBdRdagX-periods-intro}.

\begin{remark}\label{rem:whydiscvalued}
  Our proof of the local description of $\OB_{\dR}^{\dag}$
  (Theorem~\ref{thm:cohomology-OBdRdag-over-affperfd-reconstructionpaper})
  uses that $k$ is discretely valued; see
  Remark~\ref{rem:OBdRlocaldescription-discreteval}.
\end{remark}

\begin{remark}
  We also give an explicit local description of $\OB_{\dR}^{\dag,+}$
  (Theorem~\ref{cor:localdescription-of-subsections-OBla}). After completion,
  this recovers Scholze's local description of $\OB_{\dR}^{+}$
  \cite[Proposition 6.10]{Sch13pAdicHodge}.
\end{remark}


\subsection{The overconvergent Poincar\'e lemma}

Theorem~\ref{thm:borncohomologyOBdRdagX-periods-intro} shows that
$\R\nu_{*}\OB_{\dR}^{\dag}$ is not concentrated in degree zero: a single copy
of $\cal{O}$ survives in degree one. For applications to Riemann-Hilbert
correspondences, it is useful to remove this degree-one class.

Following an idea of Fontaine, we do this by adjoining $\log t$. In the
classical setting, adjoining $\log t$ kills the degree-one class in the
continuous Galois cohomology of $B_{\dR}$. The same mechanism works in the
overconvergent setting. More precisely, we introduce the overconvergent almost
de Rham period ring $B_{\pdR}^{\dag}$, the overconvergent analogue of
Fontaine's ring
$B_{\pdR}=B_{\dR}[\log t]$,
and we prove that its continuous Galois cohomology is concentrated in degree
zero; see Theorem~\ref{thm:galois-cohomology-of-BpdRdagger}.

Passing to the relative period structure sheaf gives:

\begin{introthm}[Poincar\'e lemma, Corollary~\ref{cor:borncohomologyOBpdRdagX-reconstructionpaper}]
\label{thm:pushforwardOBpdRdag-introduction}
  We have the canonical isomorphism
  \begin{equation*}
    \cal{O}\isomap\R\nu_{*}\OB_{\pdR}^{\dag}.
  \end{equation*}
\end{introthm}

In our work on the Riemann--Hilbert correspondence for $\Dcap$-modules
\cite{WiersigReconstruction,WiersigCauchy},
Theorem~\ref{thm:pushforwardOBpdRdag-introduction} plays the role of the
Poincar\'e lemma in the classical complex Riemann--Hilbert correspondence.
In particular, it allows us to recover a vector bundle with flat connection from 
its de Rham complex.


\section{Notation and conventions}\label{subsec:notationconventions}


\subsection{Ground fields and rings}\label{subsec:conventions-reconstructionpaper}
Throughout, $k$ denotes a complete discrete valuation field of mixed characteristic
$(0,p)$ with perfect residue field $\kappa$.
Fix a uniformiser $\pi\in k$ and write $k^{\circ}\subseteq k$
for the ring of power-bounded elements. Set $k_{0}:=W(\kappa)[1/p]$.
$C$ is the completion of a fixed algebraic closure of $\overline{k}$ of $k$.

\begin{remark}
  We require $k$ to be discretely valued for two reasons. For the first reason, see Remark~\ref{rem:whydiscvalued}.
  Secondly, it allows us to apply~\cite[Theorems 4.0.4 and 4.0.5]{BSSW2024_rationalizationoftheKnlocalsphere},
  which underlie our proof of Theorem~\ref{thm:BdRdagplusplus-Galoiscohomology-introduction}. Note
  that~\cite{BSSW2024_rationalizationoftheKnlocalsphere} uses the term
  \emph{local field} in a somewhat nonstandard manner, cf. \emph{loc. cit.} at the beginning of \S 4.
\end{remark}


\subsection{Overview of period rings}

The absolute period ring $B_{\dR}^{\dag,+}$ is the
stalk $\cal{O}_{\FarguesFontaine,\infty}$
of the structure sheaf of the analytic Fargues-Fontaine curve at the point $\infty\in\FarguesFontaine$.
That is, $B_{\dR}^{\dag,+}=\varinjlim_{q}B_{\dR}^{q,+}$,
where $B_{\dR}^{q,+}$ is the ring of functions on the closed ball
$\{x\colon |x-\infty|\leq p^{-q}\}\subseteq\FarguesFontaine$.
Similarly, $B_{\dR}^{\dag,+}=\varinjlim_{q}B_{\dR}^{>q,+}$,
where $B_{\dR}^{>q,+}$ is the ring of functions on the open ball
$\{x\colon |x-\infty|< p^{-q}\}\subseteq\FarguesFontaine$.
Similarly, we obtain presentations
$B_{\dR}^{\dag}=\varinjlim_{q}B_{\dR}^{q}=\varinjlim_{q}B_{\dR}^{>q}$
and
$B_{\pdR}^{\dag}=\varinjlim_{q}B_{\pdR}^{q}=\varinjlim_{q}B_{\pdR}^{>q}$.

Here is an overview of all period rings considered:
\begin{equation*}
\begin{tikzcd}[sep=tiny]
  \empty &
  B_{\pdR}^{0} \arrow[phantom]{r}{\subset} &
  B_{\pdR}^{>0} \arrow[phantom]{r}{\subset} &
  \cdots \arrow[phantom]{r}{\subset} &
  B_{\pdR}^{q} \arrow[phantom]{r}{\subset} &
  B_{\pdR}^{>q} \arrow[phantom]{r}{\subset} &
  B_{\pdR}^{q+1} \arrow[phantom]{r}{\subset} &
  B_{\pdR}^{>q+1} \arrow[phantom]{r}{\subset} &
  \cdots \arrow[phantom]{r}{\subset} &
  B_{\pdR}^{\dag} \arrow[phantom]{r}{\subset} &
  B_{\pdR} \\
  \empty &
  B_{\dR}^{0} \arrow[phantom]{r}{\subset} \arrow[phantom]{u}{\cup} &
  B_{\dR}^{>0} \arrow[phantom]{r}{\subset} \arrow[phantom]{u}{\cup} &
  \cdots \arrow[phantom]{r}{\subset} &
  B_{\dR}^{q} \arrow[phantom]{r}{\subset} \arrow[phantom]{u}{\cup} &
  B_{\dR}^{>q} \arrow[phantom]{r}{\subset} \arrow[phantom]{u}{\cup} &
  B_{\dR}^{q+1} \arrow[phantom]{r}{\subset} \arrow[phantom]{u}{\cup} &
  B_{\dR}^{>q+1} \arrow[phantom]{r}{\subset} \arrow[phantom]{u}{\cup} &
  \cdots \arrow[phantom]{r}{\subset} &
  B_{\dR}^{\dag} \arrow[phantom]{r}{\subset} \arrow[phantom]{u}{\cup} &
  B_{\dR} \arrow[phantom]{u}{\cup} \\
  B_{\inf} \arrow[phantom]{r}{\subset} &
    B_{\dR}^{0,+} \arrow[phantom]{r}{\subset} \arrow[phantom]{u}{\cup} &
    B_{\dR}^{>0,+} \arrow[phantom]{r}{\subset} \arrow[phantom]{u}{\cup} &
    \cdots \arrow[phantom]{r}{\subset} &
    B_{\dR}^{q,+} \arrow[phantom]{r}{\subset} \arrow[phantom]{u}{\cup} &
    B_{\dR}^{>q,+} \arrow[phantom]{r}{\subset} \arrow[phantom]{u}{\cup} &
    B_{\dR}^{q+1,+} \arrow[phantom]{r}{\subset} \arrow[phantom]{u}{\cup} &
    B_{\dR}^{>q+1,+} \arrow[phantom]{r}{\subset} \arrow[phantom]{u}{\cup} &
    \cdots \arrow[phantom]{r}{\subset} &
    B_{\dR}^{\dag,+} \arrow[phantom]{r}{\subset} \arrow[phantom]{u}{\cup} &
    B_{\dR}^{+} \arrow[phantom]{u}{\cup} \\
  A_{\inf} \arrow[phantom]{r}{\subset} \arrow[phantom]{u}{\cup} &
    A_{\dR}^{0} \arrow[phantom]{r}{\subset} \arrow[phantom]{u}{\cup} &
    A_{\dR}^{>0} \arrow[phantom]{r}{\subset} \arrow[phantom]{u}{\cup} &
    \cdots \arrow[phantom]{r}{\subset} &
    A_{\dR}^{q} \arrow[phantom]{r}{\subset} \arrow[phantom]{u}{\cup} &
    A_{\dR}^{>q} \arrow[phantom]{r}{\subset} \arrow[phantom]{u}{\cup} &
    A_{\dR}^{q+1} \arrow[phantom]{r}{\subset} \arrow[phantom]{u}{\cup} &
    A_{\dR}^{>q+1} \arrow[phantom]{r}{\subset} \arrow[phantom]{u}{\cup} &
    \cdots \arrow[phantom]{r}{\subset} &
    A_{\dR}^{\dag} \arrow[phantom]{u}{\cup} &
    \empty
\end{tikzcd}
\end{equation*}
We obtain similar diagrams for the period sheaves and period structure sheaves.

\subsection{Miscellany}
\label{subsec:notation}

$=$ denotes an equality, $\cong$ denotes an isomorphism, and $\simeq$ denotes an equivalence.
When an equality, isomorphism, or equivalence follows from a specific result, the reference
is written above the symbol denoting the equality, isomorphism, or equivalence. For example,
$X\stackrel{\text{\ref{lem:solidOBpdRdag-directsum-of-solidOBdRdag}}}{\cong}Y$ means:
Lemma~\ref{lem:solidOBpdRdag-directsum-of-solidOBdRdag} implies that $X$ and $Y$ are isomorphic.

Let $\mathbf{F}\colon\C\to\D$ be an additive functor between two additive categories.
It extends to a functor between the associated categories of chain complexes
and cochain complexes. Abusing notation, we denote it again by $\F$.

Given a symmetric monoidal category containing a monoid object $S$,
$\Mod(S)$ denotes the category of $S$-module objects. It is again symmetric
monoidal if $S$ is commutative, cf.~\cite[\S 2.2]{Bo21}.
The term \emph{module} always means \emph{left module}, unless
explicitly stated otherwise.

The natural numbers are $\NN=\{0,1,2,3,\dots\}$. For any $n\in\NN$,
$\NN_{\geq n}:=\{n,n+1,n+2,n+3\dots\}$.

All filtrations are descending, if not specified otherwise.
Given an ideal $I\subseteq R$ in a commutative ring $R$, 
the \emph{$I$-adic filtration on $R$} is given by
$\Fil^{s}R=I^{s}$ for all $s\in\ZZ_{\geq0}$ and $\Fil^{s}R=R$ if $s\in\ZZ_{<0}$.


All Huber pairs $\left(A,A^{+}\right)$ are complete, that is
both $A$ and $A^{+}$ are complete as topological rings.


\section{Acknowledgements}

Parts of this monograph are based on the author’s PhD thesis.
I am deeply grateful to my PhD supervisors, Konstantin Ardakov and Kobi Kremnitzer,
for introducing me to this subject and for their guidance and support:
thank you Konstantin and Kobi.

I would also like to thank Konstantin Ardakov for his continued mentorship and for many insightful and helpful discussions following my PhD, especially regarding the proof of Theorem~\ref{thm:pushforwardOBpdRdag-introduction}. My gratitude also extends to David Hansen for bringing Fontaine’s $B_{\pdR}$ to my attention, suggesting Proposition~\ref{prop:underlyingspaceBdRdagplus-iso-uncompletedcolim-reconstructionpaper}, and providing valuable comments on earlier drafts of this article. I am grateful to Fernando Peña Vázquez for identifying
a mistake in an earlier version of this article.

I would like to thank Simon Wadsley and Jack Kelly for their numerous valuable suggestions
on a draft of this article, as well as Dustin Clausen and Peter Scholze for discussions on the functional analytic formalism.

Furthermore, I would like to thank
Tomoyuki Abe,
Andreas Bode,
Hui Gao,
David Hansen,
Arun Soor,
and
Gergely Zábrádi
for valuable conversations.
Part of this research was financially supported by a Mathematical Institute Award at Oxford University.


\chapter{Categories and sheaves}
\label{ch:functional-analysis}

We discuss the functional analytic formalism underpinning this monograph.


\section{Quasi-abelian categories}
\label{subsec:qabelian-cat-recpaper}

We follow~\cite{Sch99}. Let $\E$ be an additive category with kernels and cokernels.

\begin{defn}
  A morphism $f$ in $\E$ is \emph{strict} if the induced $\coim f \to \im f$ is
  an isomorphism.
\end{defn}

$\E$ is abelian if all its morphisms $f$ are strict.
In this article, we encounter examples of categories $\E$
which are not abelian. Instead, these are \emph{quasi-abelian}.
That is, given the commutative diagram~(\ref{cd:quasi-abelian-cat}),
the stability conditions (i) and (ii) below hold, cf.~\cite[Definition 1.1.3]{Sch99}.
\begin{equation}\label{cd:quasi-abelian-cat}
  \begin{tikzcd}
    E \arrow{r}{e}\arrow{d} & F \arrow{d} \\
    G \arrow{r}{g} & H.
  \end{tikzcd}
\end{equation}
\begin{itemize}
  \item[(i)] If~(\ref{cd:quasi-abelian-cat}) is a pullback square, then
    $e$ is a strict epimorphism if $g$ is a strict epimorphism.
  \item[(ii)] If~(\ref{cd:quasi-abelian-cat}) is a pushout square, then
    $g$ is a strict monomorphism if $e$ is a strict monomorphism.
\end{itemize}

For the remainder of \S\ref{subsec:qabelian-cat-recpaper}, we suppose that $\E$ is quasi-abelian.
As Schneiders~\cite{Sch99} explains, this suffices to do homological algebra.
For example, instead of consider exact sequences, we
have the following definition as in \emph{loc. cit.} Definition 1.1.9.

\begin{defn}
  Consider a sequence of maps
  $E^{\prime}\stackrel{e^{\prime}}{\longrightarrow} E \stackrel{e}{\longrightarrow} E^{\prime\prime}$
  in $\E$ such that $e\circ e^{\prime}=0$. It is \emph{strictly exact} if the induced
  $\ker e^{\prime} \to \ker e^{\prime\prime}$ is an isomorphism. More generally,
  a sequence of maps
  \begin{equation*}
    E^{\bullet}\colon
    \dots\stackrel{e^{i-1}}{\longrightarrow} E^{i} \stackrel{e^{i}}{\longrightarrow} E^{i+1} \stackrel{e^{i+1}}{\longrightarrow}\dots
  \end{equation*}
  in $\E$ is a chochain complex if $e^{i}\circ e^{i-1}=0$ for all $i\in\ZZ$.
  $E^{\bullet}$ is \emph{strictly exact} if for all $i\in\ZZ$,
  $E^{i-1}\stackrel{e^{i-1}}{\longrightarrow} E^{i} \stackrel{e^{i}}{\longrightarrow} E^{i+1}$
  is strictly exact.
\end{defn}

Throughout this article, we freely use the following Definition~\ref{defn:exactnessclasses-recpaper},
cf.~\cite[\S 1.1.5]{Sch99}.

\begin{defn}\label{defn:exactnessclasses-recpaper}
  Given quasi-abelian categories $\E_{1}$ and $\E_{2}$,
  an additive functor $\F\colon\E_{1}\to\E_{2}$ is
  \begin{itemize}
    \item[(i)] \emph{left exact} if it preserves kernels of strict morphisms,
    \item[(ii)] \emph{strongly left exact} if it preserves kernels of arbitrary morphisms,
    \item[(iii)] \emph{right exact} if it preserves cokernels of strict morphisms,
    \item[(iv)] \emph{strongly right exact} if it preserves cokernels of arbitrary morphisms,
    \item[(v)] \emph{exact} if it is both left exact and right exact,
    \item[(vi)] \emph{strongly exact} if it is both strongly left exact and strongly right exact,
    \item[(vii)] \emph{strictly exact} if it preserves arbitrary strictly exact sequences.
  \end{itemize}
\end{defn}

Any quasi-abelian category $\E$ admits a derived category $\D\left(\E\right)$,
cf.~\cite[Definition 1.2.16]{Sch99}. It is a triangulated
category, arising as a localisation of the category of cochain complexes in $\E$.
$\D\left(\E\right)$ admits a natural t-structure, the \emph{left t-structure}, and its heart is the
\emph{left heart} $\LH\left(\E\right)$ of $\E$; see~\cite[Definition 1.2.18]{Sch99} for details.
Consequently, $\D\left(\E\right)\simeq\D\left(\LH\left(\E\right)\right)$. For example, any object $E\in\E$,
viewed as a complex concentrated in degree zero, lies in $\LH(\E)$. This gives rise to a functor
\begin{equation*}
  \I\colon\E\to\LH(\E), E\mapsto \text{$E$, viewed as a complex concentrated in degree zero},
\end{equation*}
which makes $\E$ a reflexive subcategory of $\LH(\E)$,
cf.~\cite[Proposition 1.2.27]{Sch99}.

The left t-structure $\D\left(\E\right)$ gives rise to cohomology functors
\begin{equation*}
  \LHo^{i}\colon\D\left(\E\right)\to\LH\left(\E\right).
\end{equation*}
When we understand $\D\left(\E\right)$ as the derived category
of $\LH\left(\E\right)$, we simply denote each $\LHo^{i}$ by $\Ho^{i}$. 

\begin{lem}\label{lem:LH-vs-H-reconstructionpaper}
  Given a cochain complex $E^{\bullet}$ in $\E$, we have canonical isomorphisms
  \begin{equation*}
    \LHo^{i}\left(E^{\bullet}\right)\cong\Ho^{i}\left(\I\left(E^{\bullet}\right)\right).
  \end{equation*}
\end{lem}
  
\begin{proof}
  This follows directly from~\cite[Proposition 1.2.32]{Sch99}.
\end{proof}

\begin{defn}\label{defn:quasiabelian-quasiiso-recpaper}
  A morphism $E^{\bullet}\to F^{\bullet}$ of cochain complexes
  in $\E$ is a \emph{quasi-isomorphism} if the induced morphisms
  $\LHo^{i}\left(E^{\bullet}\right)\isomap\LHo^{i}\left(F^{\bullet}\right)$ are quasi-isomorphisms
  in $\LH\left(\E\right)$ for all $i\in\ZZ$.
\end{defn}

\begin{lem}\label{lem:quasiiso-if-cone-strictlyexact-reconstructionpaper}
  A morphism $f^{\bullet}\colon E^{\bullet}\to F^{\bullet}$ of cochain complexes is a quasi-isomorphism
  if and only if its mapping cone $\cone\left(f^{\bullet}\right)$ is strictly exact.
\end{lem}

\begin{proof}
  By Lemma~\ref{lem:LH-vs-H-reconstructionpaper},
  $f^{\bullet}$ is a quasi-isomorphism if and only if $\I\left(f^{\bullet}\right)$ is a quasi-isomorphism.
  This is the case precisely when $\cone\left(\I\left(f^{\bullet}\right)\right)=\I\left(\cone\left(f^{\bullet}\right)\right)$
  is exact. Now apply~\cite[Corollary 1.2.28]{Sch99}.
\end{proof}

Finally, we include the following Lemma~\ref{lem:limpreservestrictmonomorphisms}
for future reference.

\begin{lem}\label{lem:limpreservestrictmonomorphisms}
  Fix a small category $I$ and suppose $\E$ admits limits of
  $I$-shaped diagrams. Then these limits preserve
  strict monomorphisms.
\end{lem}
  
\begin{proof}
  Let $\E^{I}$ denote the category of
  $I$-shaped diagrams in $\E$. The limit functor $\varprojlim_{I}\colon\E^{I}\to\E$
  exists by the assumptions, and it
  is a right adjoint of the constant diagram functor $\E\to\E^{I}$.
  We can thus apply~\cite[Lemma 3.7]{BBKFrechetModulesDescent}.
\end{proof}


\section{Seminormed, normed, and Banach modules}
\label{subsec:categories-of-normed-modules}

We follow~\cite[\S 5]{benbassat2024perspectivefoundationsderivedanalytic}.


\subsubsection{Rings and modules}

\begin{defn}
  A \emph{(non-Archimedean) seminormed ring} is a
  unitial commutative ring $R$ equipped with a map
  $|\cdot|\colon R\to\RR_{\geq0}$ such that 
  \begin{itemize}
    \item $|0|=0$,
    \item $|r+s|\leq\max\{|r|,|s|\}$ for all $r,s\in R$, and
    \item there is a $C>0$ such that $|rs|\leq C|r||s|$
      for all $r,s\in R$.
\end{itemize}
  $R$ is a \emph{(non-Archimedean) normed ring} if
  the following implication holds for all $r\in R$: $|r|=0$
  implies $r=0$. A normed ring is a \emph{(non-Archimedean) Banach ring}
  if it is a complete metric space with respect to the metric
  $(r,s)\mapsto|r-s|$.
\end{defn}

The following construction supplies examples of seminormed rings.

\begin{defn}\label{defn:I-adic-seminorm-norm}
  $R$ denotes an abstract commutative ring and $I\subseteq R$ an ideal.
  Define the \emph{$I$-adic seminorm on $R$ (with base $p$)}:
  Set $|r|:=p^{-v}$ for every $r\in R$,
  where $v\in\NN\cup\{\infty\}$ is maximal with respect to
  the property that $r\in I^{v}$. Here $I^{\infty}:=\bigcap_{j=0}^{\infty}I^{j}$
  and $p^{-\infty}:=0$. This turns $R$ into a seminormed ring.
  It is a normed ring when $R$ is separated with respect to the $I$-adic topology.
  $R$ is a Banach ring if it is separated and complete with respect to the $I$-adic topology.
  
  For any $s\in R$, the \emph{$s$-adic seminorm} is the $(s)$-adic seminorm.
\end{defn}

\begin{lem}\label{lem:bounded-map-adic-rings}
  Consider a map $\phi\colon R\to S$ between two Banach rings that
  is a morphism of abstract rings, 
  $R$ carries an $I$-adic norm and $S$ carries a $J$-adic norm, and
  $\phi(I)\subseteq J$.
  Then $|\phi(r)|\leq |r|$ for all $r\in R$.
\end{lem}

\begin{proof}
  Fix the notation from Definition~\ref{defn:I-adic-seminorm-norm}.
  Consider $r\in R$ with $|r|=p^{-v}$. Then $r\in I^{v}$,
  and $\phi(I)\subseteq J$ implies $\phi(r)\in J^{v}$. That is
  $|\phi(r)|\leq p^{-v} = |r|$.
\end{proof}

\begin{defn}\label{defn:seminormed-normed-banach-modules}
Fix a seminormed ring $R$.
A \emph{(non-Archimedean) seminormed $R$-module} is an $R$-module
$M$ equipped with a map $\|\cdot\|\colon M\to\RR_{\geq0}$
such that
\begin{itemize}
\item $\|m+n\|\leq\max\{\|m\|,\|n\|\}$ for all $m,n\in M$ and
\item there is a $C>0$ such that $\|rm\|\leq C|r|\|m\|$
  for all $r\in R$, $m\in M$.
\end{itemize}
$M$ is a \emph{(non-Archimedean) normed $R$-module} if
the following implication holds for all $m\in M$: $|m|=0$
implies $m=0$. A normed $R$-module is a \emph{(non-Archimedean) Banach $R$-module}
if it is a complete metric space with respect to the metric
$(m,n)\mapsto|m-n|$.
\end{defn}


\subsubsection{Categories}

Fix a seminormed ring $R$. We define the categories
\begin{equation}\label{eq:SNrmR-NrmR-BanR}
  \Ban_{R} \subseteq \Nrm_{R} \subseteq \SNrm_{R}
\end{equation}
of seminormed $R$-modules, normed $R$-modules,
and $R$-Banach modules. The morphisms are the
$R$-linear maps $\phi\colon M\to N$ which are
\emph{bounded}, that is
\begin{equation*}
  \|\phi(m)\| \leq C \|m\|
\end{equation*}
for a constant $C=C(\phi)>0$ and
every $m\in M$.

\begin{prop}\label{prop:BanRNrmRSNrmR-quasiabelian}
  The categories $\Ban_{R}$, $\Nrm_{R}$, and $\SNrm_{R}$
  are quasi-abelian. They admit enough functorial projectives.
\end{prop}

\begin{proof}
  See~\cite[Propositions 5.1.5 and 5.1.10]{benbassat2024perspectivefoundationsderivedanalytic}.
\end{proof}

We the cite the following two Lemma from~\cite[Proposition 3.14]{BBB16}.
\emph{Loc. cit.} assumes that $R$ is a Banach ring, but the arguments apply
as well for seminormed rings.

\begin{lem}\label{lem:krecokerimcoim-BanFcirc}
  Let $f\colon M\to N$ be a morphism of $R$-Banach modules. Then
  \begin{itemize}
    \item[(i)] $\ker(f)=f^{-1}(0)$ with the restriction of the norm
    on $M$,
    \item[(ii)] $\coker(f)=N/\overline{f(M)}$ with the residue norm,
    \item[(iii)] $\im(f)=\overline{f(N)}$ with the restriction of the norm on $N$, and
    \item[(iv)] $\coim(f)=M/\ker(f)$ with the residue norm.
  \end{itemize}
\end{lem}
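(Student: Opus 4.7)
The plan is to verify each of the four statements directly from the relevant universal properties in the category $\Ban_{R}$, using the construction of finite limits and colimits guaranteed by Lemma~\ref{lem:BanFcirc-structureresult}.

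\textbf{Kernels.} First I would check that $f^{-1}(0)$ is a closed $R$-submodule of $M$: closedness follows because $f$ is bounded, hence continuous, so the preimage of $\{0\}$ is closed, and closed subspaces of Banach modules are Banach with the restricted norm. To verify the universal property, suppose $g\colon P \to M$ is a bounded $R$-linear map with $f\circ g = 0$. Then $g$ factors through $f^{-1}(0)$ set-theoretically, and the factorization $P \to f^{-1}(0)$ is bounded since the norm on $f^{-1}(0)$ is the restriction of the norm on $M$. Uniqueness is clear.

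\textbf{Cokernels.} The submodule $\overline{f(M)} \subseteq N$ is closed by construction, so $N/\overline{f(M)}$ equipped with the residue (quotient) norm is an $R$-Banach module by a standard argument (a normed quotient by a closed submodule is complete when the numerator is). The projection $\pi\colon N \to N/\overline{f(M)}$ is bounded and satisfies $\pi\circ f = 0$ since $f(M) \subseteq \overline{f(M)}$. For the universal property, given $h\colon N \to Q$ bounded with $h \circ f = 0$, the vanishing $h(f(M)) = 0$ extends by continuity of $h$ to $h(\overline{f(M)}) = 0$; thus $h$ factors through a bounded map $N/\overline{f(M)} \to Q$ (the factorization is bounded precisely because the residue norm is the quotient norm).

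\textbf{Image and coimage.} In a quasi-abelian category, the image of $f$ is the kernel of the cokernel morphism $N \to \coker(f)$, and the coimage is the cokernel of the kernel morphism $\ker(f) \to M$. Applying (i) and (ii) respectively: the image is $\pi^{-1}(0) = \overline{f(M)}$ with the norm inherited from $N$ (note the typo in the statement: it should be $\overline{f(M)}$, not $\overline{f(N)}$), and the coimage is $M/f^{-1}(0) = M/\ker(f)$ with the residue norm. Both are Banach modules by the analyses above.

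\textbf{Main obstacle.} None of the steps is genuinely difficult; the only subtle point is the necessity of taking the \emph{closure} $\overline{f(M)}$ in the cokernel (and correspondingly in the image). Without closure, the residue seminorm on $N/f(M)$ need not be a norm, and the resulting object would fail to be a Banach module. This is exactly the phenomenon that makes $\Ban_{R}$ only quasi-abelian rather than abelian: the natural morphism $\coim(f) \to \im(f)$ is a bijective bounded map but, by the open mapping failure outside suitable settings, not generally an isomorphism in $\Ban_{R}$.
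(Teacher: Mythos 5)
Your proof is correct. The paper itself does not prove this lemma; it simply cites \cite[Propositions 3.10 and 3.13]{BBB16}. What you have done instead is give a direct, self-contained verification of the universal properties, which is perfectly sound: the kernel argument correctly uses that boundedness implies continuity (so $f^{-1}(0)$ is closed), the cokernel argument correctly passes the vanishing of $h$ on $f(M)$ to $\overline{f(M)}$ by continuity and then obtains boundedness of the induced map from the quotient norm, and deriving (iii) and (iv) via $\im f = \ker(\coker f)$ and $\coim f = \coker(\ker f)$ applied to (i) and (ii) is exactly the right move in a quasi-abelian category (with the observation that $\ker f$ is already closed, so no further closure is needed for the coimage). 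You have also correctly spotted the typo in the statement: the image should be $\overline{f(M)}$, not $\overline{f(N)}$. The only thing your approach buys over the paper's is self-containment; the only thing it costs is length. Your closing remark about why closures are necessary, and how this reflects the failure of $\coim f \to \im f$ to be an isomorphism (the precise reason $\Ban_R$ is quasi-abelian but not abelian), is a good piece of context to keep in mind when working with this category.
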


\begin{lem}\label{lem:BanFcirc-kercoker}
  Let $f\colon M\to N$ be a morphism of $R$-Banach modules.
  \begin{itemize}
    \item[(i)] It is a monomorphism if and only if it is injective.
    \item[(ii)] It is an epimorphism if and only if $f(M)\subseteq N$ is dense.
    \item[(iii)] It is a strict monomorphism if and only if it is injective, the norm on $M$ is equivalent
to the norm induced by $N$, and $f(M)$ is a closed subset of $N$.
    \item[(iv)] It is a strict epimorphism if and only if it is surjective and
    the residue norm on $M/\ker(f)$ is equivalent to the norm on $N$.
  \end{itemize}
\end{lem}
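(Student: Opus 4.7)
The plan is to reduce each part to Lemma~\ref{lem:krecokerimcoim-BanFcirc} together with the standard characterisations of (strict) monomorphisms and epimorphisms in a quasi-abelian category (as recalled in the appendix, following~\cite{Sch99}): namely, $f$ is a monomorphism iff $\ker(f)=0$, an epimorphism iff $\coker(f)=0$, and strict iff the canonical factorisation $\coim(f)\to\im(f)$ is an isomorphism.

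First I would handle (i) and (ii), which are immediate. Using $\ker(f)=f^{-1}(0)$ from Lemma~\ref{lem:krecokerimcoim-BanFcirc}(i), the kernel vanishes precisely when $f$ is set-theoretically injective, giving (i). Using $\coker(f)=N/\overline{f(M)}$ from Lemma~\ref{lem:krecokerimcoim-BanFcirc}(ii), the cokernel vanishes precisely when $\overline{f(M)}=N$, giving (ii).

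For (iii), assume $f$ is a strict monomorphism. By (i) it is injective, so $\coim(f)=M/\ker(f)=M$ equipped with its original norm, while $\im(f)=\overline{f(M)}$ carries the norm induced from $N$. The canonical map $M\to \overline{f(M)}$, $m\mapsto f(m)$, must be an isomorphism in $\Ban_{R}$; this amounts to saying that it is bijective (equivalently, $f(M)=\overline{f(M)}$, i.e.\ $M$ maps onto a closed subspace of $N$) and that the norm on $M$ is equivalent to the one induced by $N$. The converse is clear: these three conditions make the canonical map a bijective bounded $R$-linear map with bounded inverse, hence an isomorphism of $R$-Banach modules. Part (iv) is symmetric: if $f$ is a strict epimorphism, then by (ii) the image $\im(f)=\overline{f(M)}$ equals $N$, and the canonical map $M/\ker(f)\to N$, given by $[m]\mapsto f(m)$, must be an isomorphism in $\Ban_{R}$; this is precisely the condition that $f$ is surjective and that the residue norm on $M/\ker(f)$ is equivalent to the norm on $N$. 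The converse direction is again formal.

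I do not expect a genuine obstacle. The only subtle points are bookkeeping: one must remember that $\coim$ and $\im$ are equipped with specific norms (residue and induced, respectively), so that the iso condition on the canonical map really does translate into the stated equivalence of norms rather than a mere continuous bijection. In the non-Archimedean setting over $F$ the open mapping theorem would make bijectivity alone suffice, but the lemma is stated with the equivalence of norms spelled out explicitly for clarity, and my proof would state it this way as well.
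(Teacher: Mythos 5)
Your proof is correct, but it takes a different route from the paper: the paper simply cites this lemma (together with Lemma~\ref{lem:krecokerimcoim-BanFcirc}) from~\cite[Propositions 3.10 and 3.13]{BBB16} and gives no argument. You instead derive it from Lemma~\ref{lem:krecokerimcoim-BanFcirc} together with the general quasi-abelian characterisations of (strict) monos and epis, which is a clean and self-contained way to obtain the result. Two small points of rigour are worth flagging, though you navigate both correctly. In (iii) one must not conflate $M$ with its image: the condition ``$M$ is a closed subset of $N$'' really means $f(M)$ is closed, and your surjectivity analysis of $\coim f\to\im f$ correctly forces $f(M)=\overline{f(M)}$. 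In (iv) one must notice that an epimorphism only has dense image, whereas strictness of $\coim f\to\im f$ additionally forces $f(M)=N$ (actual surjectivity), which you also get right. Your closing remark about the open mapping theorem is also appropriately cautious: that shortcut is available over the field $F$ but not over a general Banach ring $R$, so spelling out the equivalence of norms, as both you and the statement do, is the correct level of generality.
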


The inclusions~(\ref{eq:SNrmR-NrmR-BanR}) admit
left adjoints: the separation and completion functors.

\begin{defn}\label{defn:separated-completion}
  \hfill
  \begin{itemize}
    \item[(i)] The \emph{separation functor} $\SNrm_{R}\to\Nrm_{R}$
      is $M\mapsto M^{\sep}:=M/\overline{\left\{0\right\}}$,
      equipped with the quotient norm. That is, the norm
      of an element $n\in M^{\sep}$ is $\inf_{m}\|m\|$,
      where the infinum runs over all preimages $m\in M$ of $n$.
    \item[(ii)] The completion functor $\Nrm_{R}\to\Ban_{R}$
      sends a seminormed $R$-module $N$ to its completion
      $\widehat{N}$; see for example~\cite[\S 1.1.7, the proof of Proposition 5]{BGR84}.
    \item[(iii)] The \emph{separated completion functor}
      $\SNrm_{R}\to\Ban_{R}$ is the composition of the
      completion functor and the separation functor. Abusing notation,
      we denote it by $M\mapsto\widehat{M}:=\widehat{M^{\sep}}$.
  \end{itemize}
\end{defn}

\begin{lem}\label{lem:sepcompl-SNrmF-BanF-exact}
  The separated completion functor
  is exact.
\end{lem}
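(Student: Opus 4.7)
The strategy is to write the separated completion as the composition $\widehat{\cdot}\circ\sep$, verify exactness of each factor separately, and combine. Since both inclusions $\Ban_{R}\subseteq\Nrm_{R}\subseteq\SNrm_{R}$ are reflective with left adjoints $\widehat{\cdot}$ and $\sep$, each factor is a left adjoint. As left adjoints, they preserve all colimits, hence all cokernels; in particular each is right exact in the quasi-abelian sense, so it only remains to verify preservation of strict monomorphisms, after which preservation of strict short exact sequences follows from the fact that in a quasi-abelian category a strict mono is the kernel of its cokernel.

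For $\sep$, a strict monomorphism $f\with M'\to M$ in $\SNrm_{R}$ is injective with the seminorm on $M'$ equivalent to the one restricted from $M$ (cf.\ the analogue of Lemma~\ref{lem:BanFcirc-kercoker}(iii) for seminormed modules). Writing $N(M):=\{m\in M:\|m\|=0\}$, the equivalence of seminorms forces $f^{-1}(N(M))=N(M')$, so $\sep(M')=M'/N(M')\hookrightarrow M/N(M)=\sep(M)$ is injective, with the two residue norms again equivalent. Hence $\sep(f)$ is a strict mono in $\Nrm_{R}$.

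For $\widehat{\cdot}$, a strict monomorphism $g\with N'\to N$ in $\Nrm_{R}$ realises $N'$ as a closed $R$-submodule of $N$ with equivalent norm. I would then check directly from the universal property of completions that $\widehat{g}\with\widehat{N'}\to\widehat{N}$ is injective: a Cauchy sequence in $N'$ whose image in $\widehat{N}$ converges to zero is, by the equivalence of norms, also Cauchy and convergent to zero in $\widehat{N'}$. The induced norm on the image is again equivalent to the one on $\widehat{N'}$, so $\widehat{g}$ is a strict mono in $\Ban_{R}$.

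Combining the two steps, the separated completion $\widehat{\cdot}\circ\sep$ preserves strict monomorphisms. Since it also preserves cokernels as a left adjoint, applying it to a strict short exact sequence $0\to A\to B\to C\to 0$ in $\SNrm_{R}$ yields a strict monomorphism from the completion of $A$ into that of $B$ whose cokernel is the completion of $C$; because strict monomorphisms in the quasi-abelian category $\Ban_{R}$ coincide with kernels of their cokernels (Lemmata~\ref{lem:krecokerimcoim-BanFcirc} and~\ref{lem:BanFcirc-kercoker}), this is a strict short exact sequence. The main technical point to nail down carefully is the norm-equivalence bookkeeping in the completion step; everything else is formal.
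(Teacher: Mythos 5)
Your overall architecture --- decompose the separated completion as $\widehat{\cdot}\circ\sep$, note each factor is a left adjoint and so preserves cokernels, then argue that strict monomorphisms are preserved and deduce preservation of strict short exact sequences from ``a strict mono is the kernel of its cokernel'' --- is the right shape, and is close to the paper's own proof, which also gets right exactness from left-adjointness and handles the other direction by citing~\cite[Proposition 5 in section 1.1.9]{BGR84} together with Lemma~\ref{lem:krecokerimcoim-BanFcirc}(i). The completion step of your argument is correct. The intermediate claim in the $\sep$ step, however, that $\sep(f)$ is a strict monomorphism in $\Nrm_{R}$, is false in general, and this is a genuine gap.

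The reason is that strict monomorphisms are characterised differently in $\SNrm_{R}$ and in $\Nrm_{R}$. In the seminormed category the cokernel of $f\with M'\to M$ is $M/f(M')$ with the quotient seminorm (no closure is needed to stay in $\SNrm_{R}$), so $\im f=f(M')$ and the strict-mono condition is simply injectivity plus equivalence of seminorms; in particular \emph{no closedness}. In $\Nrm_{R}$ one must pass to $M/\overline{f(M')}$ to keep the cokernel normed, and, exactly as in Lemma~\ref{lem:BanFcirc-kercoker}(iii), a strict monomorphism must additionally have closed image. Separation does nothing to produce closedness: the inclusion of the $R$-submodule of finitely supported functions into $c_{0,R}(\NN)$, both with the sup norm, is a strict monomorphism in $\SNrm_{R}$, but $\sep$ is the identity on both objects and the inclusion is not a strict monomorphism in $\Nrm_{R}$ since its image is dense, not closed. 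So $\sep$ alone is not exact, and ``preserve strict monos at each stage'' fails. What saves you is precisely what you observe in the completion step: a topological embedding of a complete module has closed image. So you should not route through $\Nrm_{R}$; instead argue directly that $\widehat{\cdot}\circ\sep$ sends a strict monomorphism of seminormed modules (injective, seminorms equivalent, nothing more) to a morphism of Banach modules that is injective with equivalent norms, and whose image is closed because $\widehat{\sep(M')}$ is complete. With that repair, the rest of your argument goes through.
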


\begin{proof}
  See~\cite[Remark 5.1.7]{benbassat2024perspectivefoundationsderivedanalytic}
\end{proof}


\subsubsection{Closed symmetric monoidal structures}

$R$ continuous to denote a seminormed ring.
Given seminormed $R$-modules $M$ and $N$, equip
$M\otimes_{R}N$ with the seminorm
\begin{equation*}
  \|x\|:=\inf\left\{ \max_{i=1,\dots,n}\|m_{i}\|\|n_{i}\|\colon x = \sum_{i=1}^{n}m_{i}\otimes_{R}n_{i}\right\}
\end{equation*}
for all $x\in M\otimes_{R}N$.
This defines a bifunctor
\begin{equation*}
  \SNrm_{R} \times \SNrm_{R} \to \SNrm_{R}, (M,N) \mapsto M\otimes_{R}N.
\end{equation*}
The \emph{separated tensor product} is
\begin{equation*}
  \Nrm_{R} \times \Nrm_{R} \to \Nrm_{R},
  (M,N) \mapsto M\otimes_{R}^{\sep}N
  :=\left( M\otimes_{R} N\right)^{\sep},
\end{equation*}
and the \emph{completed tensor product} is
\begin{equation*}
  \Ban_{R} \times \Ban_{R} \to \Ban_{R},
  (M,N) \mapsto M\widehat{\otimes}_{R}N
  :=\widehat{\left( M\otimes_{R} N\right)}.
\end{equation*}

\begin{lem}\label{lem:BanFcirc-structureresult}
  $\left(\SNrm_{R},R,\otimes_{R}\right)$,
  $\left(\Nrm_{R},R,\otimes_{R}^{\sep}\right)$, and
  $\left(\Ban_{R},R,\widehat{\otimes}_{R}\right)$
  are closed symmetric monoidal categories.
\end{lem}  

\begin{proof}
  See the discussion in~\cite[\S 5.1.1.2]{benbassat2024perspectivefoundationsderivedanalytic}, especially
  \emph{loc. cit.} Corollary 5.1.15.
\end{proof}

\begin{defn}
  For any two seminormed $R$-modules $M$ and $N$,
  define the \emph{internal homomorphisms}
  $\intHom_{R}\left(M,N\right)$ to be the seminormed $R$-module
  of all $R$-linear bounded functions $\phi\colon M\to N$, together with 
  the seminorm
  \begin{equation*}
    \|\phi\|:=\sup_{\substack{m\in M \\ \|m\|\neq0}}\frac{\|\phi(m)\|}{\|m\|}.
  \end{equation*}
\end{defn}

\begin{lem}
  Fix $M\in\SNrm_{R}$, $\Nrm_{R}$, or $\Ban_{R}$. Then the assignment
  $N\mapsto\intHom_{R}\left(M,N\right)$ defines a right adjoint of
  the functors $-\otimes_{R}M$, $-\otimes_{R}^{\sep}M$, or
  $-\widehat{\otimes}_{R}M$, respectively.
\end{lem}

\begin{proof}
  See the~\cite[remark following Corollary 5.1.15]{benbassat2024perspectivefoundationsderivedanalytic}.
\end{proof}

\begin{notation}
  An \emph{$R$-Banach algebra} $S$
  is a possibly non-commutative monoid object in $\Ban_{R}$.
  An \emph{$S$-Banach module} is
  a left $S$-module object.
  $\Ban_{S}:=\Mod\left(S\right)$ is the \emph{category of $S$-Banach modules},
  cf. \S\ref{subsec:notation}.
\end{notation}


\section{Ind-Banach modules}
\label{subsec:indBanachmodules}

Fix a Banach ring $R$. $\Ban_{R}$ is neither complete nor cocomplete.
Therefore, we consider its ind-completion, cf.~\cite[chapter 6]{KashiwaraSchapira2006}.
Note that $\widehat{\otimes}_{R}$ extends to a bifunctor
\begin{equation*}
\begin{split}
  \widehat{\otimes}_{R}\colon\Ind\left(\Ban_{R}\right)\times\Ind\left(\Ban_{R}\right)
  &\to\Ind\left(\Ban_{R}\right) \\
  \left(\text{``}\varinjlim_{i\in I}\text{"}V_{i}\right)
  \widehat{\otimes}_{R}
  \left(\text{``}\varinjlim_{j\in J}\text{"}W_{j}\right)
  &:=  \text{``}\varinjlim_{\substack{i\in I \\ j\in J}}\text{"}V_{i}\widehat{\otimes}_{R}W_{j}.
\end{split}
\end{equation*}

\begin{lem}\label{lem:indBanFcirc-structureresult}
  $\left(\Ind\left(\Ban_{R}\right),R,\widehat{\otimes}_{R}\right)$
  is a closed symmetric monoidal elementary quasi-abelian category.
  It has enough flat projectives stable under the monoidal
  structure $\widehat{\otimes}_{R}$.
  Furthermore, it has all limits and colimits.  
\end{lem}

\begin{proof}
  The first sentence follows from
  Lemma~\ref{lem:BanFcirc-structureresult}
  and~\cite[Proposition 2.1.19]{Sch99}.
  The second sentence follows again from~\cite[Proposition 2.1.19]{Sch99},
  which applies
  by the~\cite[Propositions 5.1.16 and 5.1.17]{benbassat2024perspectivefoundationsderivedanalytic}.
  The last sentence follows from~\cite[Proposition 6.1.18]{KashiwaraSchapira2006}
  because $\Ban_{R}$ has finite limits, cf. Lemma~\ref{lem:BanFcirc-structureresult}
  and~\cite[\href{https://stacks.math.columbia.edu/tag/002O}{Tag 002O}]{stacks-project}.
\end{proof}

\begin{cor}\label{cor:filteredcol-inIndBan-stronglyexact}
  Filtered colimits in $\Ind\left(\Ban_{R}\right)$ are strongly exact.
\end{cor}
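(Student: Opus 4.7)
The plan is to deduce this directly from Lemma~\ref{lem:indBanFcirc-structureresult}, which already asserts that $\Ind(\Ban_R)$ is an elementary quasi-abelian category. In Schneiders' framework, \emph{elementary} already builds in the requirement that filtered colimits exist and commute with finite limits, and from this one extracts strong exactness essentially for free. Concretely, I would first recall the definition of elementary quasi-abelian category from~\cite{Sch99} (or the appendix on quasi-abelian categories), and then invoke the standard fact that in such a category filtered colimits preserve kernels and cokernels, hence are strongly exact in the sense of Schneiders.

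More explicitly, the route I would take has three steps. First, reduce to the claim that, for a filtered small category $I$ and a natural transformation $f_\bullet\colon M_\bullet\to N_\bullet$ of $I$-diagrams of strictly short exact sequences in $\Ind(\Ban_R)$, the colimit sequence is again strictly short exact. Second, use the description of $\Ind(\Ban_R)$ as the formal filtered colimit completion of $\Ban_R$: any object can be presented as a filtered colimit of objects of $\Ban_R$, and up to cofinality arguments one can arrange the $M_\bullet$ and $N_\bullet$ to come from diagrams in $\Ban_R$. Third, use the fact that $\Ban_R$ has finite limits and finite colimits (Lemma~\ref{lem:BanFcirc-structureresult}) together with the general theorem that filtered colimits in an ind-completion commute with finite limits; in the quasi-abelian setting this translates into preservation of kernels of strict morphisms, which is exactly strong exactness.

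The main obstacle, as I see it, is less in the abstract framework and more in being careful with the precise notion of strict exactness inside $\Ind(\Ban_R)$: a morphism there is strict iff it fits into an appropriate ind-presentation, and one has to check that taking the filtered colimit of such presentations does not destroy strictness. This is handled by Schneiders' general lemma that, in an elementary quasi-abelian category, filtered colimits of strict short exact sequences are strict short exact. So in practice the proof reduces to a single citation, and I would write it as: apply Lemma~\ref{lem:indBanFcirc-structureresult} and then~\cite[Proposition 2.1.16]{Sch99} (or the analogous statement in the appendix) to conclude.
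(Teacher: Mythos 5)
Your proposal is correct and in the end reduces to exactly the same one-line argument as the paper: apply \cite[Proposition 2.1.16]{Sch99} to Lemma~\ref{lem:indBanFcirc-structureresult}. The extra discussion of why the elementary hypothesis yields strong exactness of filtered colimits is accurate background but not needed once you cite Schneiders' proposition.
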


\begin{proof}
  This follows from Lemma~\ref{lem:indBanFcirc-structureresult},
  by~\cite[Proposition 2.1.16]{Sch99}.
\end{proof}

\begin{notation}
  An \emph{$R$-ind-Banach algebra} $S$
  \footnote{We decide against the usage of the term
   \emph{ind-$R$-Banach algebra}. If spoken out loud,
   it might be misunderstood as an ind-($R$-Banach algebra).}
  is a possibly non-commutative monoid object in $\Ind\left(\Ban_{R}\right)$.
  An \emph{$S$-ind-Banach module} is
  a left $S$-module object.
  $\IndBan_{S}:=\Mod\left(S\right)$ is the \emph{category of $S$-ind-Banach modules},
  cf. \S\ref{subsec:notation}.
  In particular, $\IndBan_{R}=\Ind\left(\Ban_{R}\right)$ is the category
  of \emph{$R$-ind-Banach modules}.
\end{notation}

\begin{lem}\label{lem:functor-oC}
  Fix a category $\C$ and consider the canonical functor
  $\C\to\Ind\left(\C\right)$. It commutes with finite colimits.
  If $\C$ has all finite limits, then $\Ind\left(\C\right)$ has all finite limits
  as well and the canonical functor commutes with those.
\end{lem}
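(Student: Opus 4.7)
The plan is to rely on the standard universal property of the ind-completion: for any $c\in\C$ and any $X={``\varinjlim_{j\in J}"}\,o_{\C}(X_{j})$ in $\Ind(\C)$ with $J$ filtered, one has
\begin{equation*}
  \Hom_{\Ind(\C)}(o_{\C}(c),X) = \varinjlim_{j\in J} \Hom_{\C}(c,X_{j}),
\end{equation*}
while for arbitrary $Y\in\Ind(\C)$, the defining formula $\Hom_{\Ind(\C)}(X,Y)=\varprojlim_{j}\Hom_{\Ind(\C)}(o_{\C}(X_{j}),Y)$ always holds. Both formulas follow directly from the Yoneda-style construction of $\Ind(\C)$ as filtered colimits of representables inside $\Psh(\C)$.

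First I would prove preservation of finite colimits. Let $D\colon I\to\C$ be a finite diagram admitting a colimit $c=\varinjlim_{I}D(i)$ in $\C$. To check that $o_{\C}(c)$ is the colimit of $o_{\C}\circ D$ in $\Ind(\C)$, I test against an arbitrary $Y={``\varinjlim_{j}"}o_{\C}(Y_{j})\in\Ind(\C)$ and compute, using the two formulas above,
\begin{equation*}
  \varprojlim_{I}\Hom_{\Ind(\C)}(o_{\C}(D(i)),Y)
  = \varprojlim_{I}\varinjlim_{j}\Hom_{\C}(D(i),Y_{j})
  = \varinjlim_{j}\varprojlim_{I}\Hom_{\C}(D(i),Y_{j}),
\end{equation*}
where the swap is legal because finite limits commute with filtered colimits in $\Set$. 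The inner limit equals $\Hom_{\C}(c,Y_{j})$, and then the outer colimit equals $\Hom_{\Ind(\C)}(o_{\C}(c),Y)$, as required.

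Next, assume $\C$ has all finite limits. To construct finite limits in $\Ind(\C)$, I would use the standard strictification: any finite diagram $\tilde D\colon I\to\Ind(\C)$ can, after passing to a cofinal filtered indexing, be written as a filtered colimit of finite diagrams $D_{j}\colon I\to\C$, i.e. $\tilde D \cong {``\varinjlim_{j}"}\,o_{\C}\circ D_{j}$ in $\Fun(I,\Ind(\C))$. Taking the levelwise limit $l_{j}:=\varprojlim_{I}D_{j}$ in $\C$ and setting $l:={``\varinjlim_{j}"}\,o_{\C}(l_{j})\in\Ind(\C)$, I claim $l=\varprojlim_{I}\tilde D$. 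This is verified once more by mapping in from an arbitrary $Z\in\Ind(\C)$ and repeatedly using the two $\Hom$-formulas, together with commutativity of finite limits with filtered colimits of $\Hom$-sets. The same computation, specialised to a diagram that already lies in $\C$, shows that $o_{\C}$ preserves the resulting finite limits.

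The main obstacle is the strictification step in the limit part: rewriting an arbitrary finite diagram in $\Ind(\C)$ as a filtered colimit of diagrams coming from $\C$ requires choosing a cofinal filtered category over which all the transition maps of the ind-objects involved become compatible. This is the standard level-representation lemma for $\Ind(\C)$ (e.g.\ SGA 4, Expos\'e I), and once it is in place the rest of the argument is a formal manipulation of $\Hom$-sets.
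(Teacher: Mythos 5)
Your argument is correct. The paper's proof is simply a citation to Kashiwara--Schapira (\emph{Categories and Sheaves}, Corollary 6.1.6 and 6.1.17), and what you have written out is the standard argument underlying that reference: the two hom-set formulas for $\Ind(\C)$, commutation of finite limits with filtered colimits of sets, and --- for the limit direction --- the level-representation (strictification) lemma expressing a finite diagram $I\to\Ind(\C)$ as a filtered colimit of finite diagrams $I\to\C$, which you rightly flag as the crux and which is exactly the nontrivial ingredient Kashiwara--Schapira (or SGA~4) supply.
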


\begin{proof}
  This is~\cite[Corollary 6.1.6 and 6.1.17]{KashiwaraSchapira2006}.
\end{proof}

\begin{lem}\label{lem:messmse-indcat}
  Let $\E$ be a quasi-abelian category and $f\colon M\to N$
  a morphism in $\Ind(\E)$. Then $f$ is $\mathbf{P}$
  if and only if $f=\text{``}\varinjlim_{i}\text{"}f_{i}$
  where each $f_{i}$ is $\mathbf{P}$. Here,
  \begin{equation*} 
    \mathbf{P}\in
    \left\{ \text{mono, epi, strict, strict mono, strict epi}\right\}.
  \end{equation*}
\end{lem}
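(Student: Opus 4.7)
My plan is to combine two standard inputs: any morphism $f\colon M \to N$ in $\Ind(\E)$ can be presented as $f=\text{``}\varinjlim_{i}\text{"}f_{i}$ with $f_{i}\colon M_{i}\to N_{i}$ morphisms in $\E$ (see~\cite[Section 6.1]{KashiwaraSchapira2006}), and filtered colimits in $\Ind(\E)$ are strongly exact by~\cite[Proposition 2.1.16]{Sch99}. Combined with Lemma~\ref{lem:functor-oC}, which lets me move finite limits and colimits through $o_{\E}$, strong exactness implies that the kernel, cokernel, image, and coimage of $f$ are the ind-objects of the corresponding constructions for the $f_{i}$. The ``if'' direction is then immediate: each of the five classes is detected by a vanishing or isomorphism condition on one of these invariants, and such conditions are preserved under filtered colimits.

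For the ``only if'' direction, starting from an arbitrary presentation $f=\text{``}\varinjlim_{i}\text{"}f_{i}$, I will exhibit an equivalent presentation by morphisms of the prescribed type. For a strict monomorphism I would use the system $(\im(f_{i})\hookrightarrow N_{i})_{i}$: the transition $N_{i}\to N_{j}$ restricts to $\im(f_{i})\to\im(f_{j})$ because $\im(f_{i})\hookrightarrow N_{i}\to N_{j}$ lands in $\im(f_{j})$; the filtered colimit is $\im(f)\hookrightarrow N=f$ since $f$ is a strict mono; and each inclusion is a strict mono in $\E$. Dually for strict epimorphisms, using $(M_{i}\twoheadrightarrow\coim(f_{i}))_{i}$. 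For a plain monomorphism I would use $(\coim(f_{i})\to N_{i})_{i}$: each arrow factors as $\coim(f_{i})\to\im(f_{i})\hookrightarrow N_{i}$, a composition of monomorphisms in the quasi-abelian $\E$ (the first is the canonical mono-epi, the second a strict mono); the colimit is $\coim(f)\to N=f$, using that $\ker(f)=0$ forces $\coim(f)\cong M$. Dually for epimorphisms, via $(M_{i}\to\im(f_{i}))_{i}$.

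The remaining strict case I would handle by factoring $f=m\circ e$ in $\Ind(\E)$ with $m$ strict mono and $e$ strict epi, applying the previous two cases to express $m$ and $e$ as filtered colimits of strict monos $m_{\beta}$ and strict epis $e_{\alpha}$ in $\E$, and then combining into a presentation of $f$ whose arrows are of the form $m_{\beta}\circ e_{\alpha}$, which are strict in $\E$. The hardest step is the final combination: the two indexings must be aligned so the target of each $e_{\alpha}$ coincides with the source of the paired $m_{\beta}$. I would achieve this by passing to a common cofinal refinement indexing two compatible presentations of the intermediate ind-object $\coim(f)\cong\im(f)$, using the universal property of filtered colimits to match up the data.
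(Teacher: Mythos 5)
The paper does not give a proof of this lemma at all; it simply cites~\cite[Proposition 2.10]{BBB16}, adding only that the cited proof uses Lemma~\ref{lem:functor-oC} implicitly. So your proposal is not comparable to the paper's argument but rather a reconstruction of the outsourced one, which is useful to evaluate on its own merits.

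Your ``if'' direction and the four cases mono, epi, strict mono, strict epi in the ``only if'' direction are sound. The key inputs — that $\ker$, $\coker$, $\coim$, $\im$ of $f=\text{``}\varinjlim_{i}\text{''}f_{i}$ are computed levelwise, and that the canonical map $\coim(f_{i})\to\im(f_{i})$ is a bimorphism in a quasi-abelian category — are used correctly, and the four replacement systems $\im(f_{i})\hookrightarrow N_{i}$, $M_{i}\twoheadrightarrow\coim(f_{i})$, $\coim(f_{i})\to N_{i}$, $M_{i}\to\im(f_{i})$ do realise $f$ up to the evident isomorphisms on source or target. One small caveat: you invoke~\cite[Proposition~2.1.16]{Sch99} for strong exactness of filtered colimits in $\Ind(\E)$, but that proposition is stated for \emph{elementary} quasi-abelian categories, and $\Ind(\E)$ need not be elementary for a bare quasi-abelian $\E$. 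The statement you need is still true, but it should be justified via the general fact that filtered colimits in an ind-category commute with finite limits (e.g.\ \cite[Theorem~6.1.8]{KashiwaraSchapira2006}) together with the computation of finite (co)limits in $\Ind(\E)$.

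The genuine gap is the strict case. After writing $f=m\circ e$ with $m$ a strict mono and $e$ a strict epi, and applying your two earlier cases, the strict epi $e$ gets presented with \emph{targets} $\coim(f_{i})$, while the strict mono $m$ gets presented with \emph{sources} $\im(f_{i})$. These are linked only by the bimorphisms $\beta_{i}\colon\coim(f_{i})\to\im(f_{i})$, which individually need not be isomorphisms even though their filtered colimit is. So the two presentations of the intermediate ind-object $\coim(f)\cong\im(f)$ do not compose on the nose, and the sentence ``using the universal property of filtered colimits to match up the data'' is hiding precisely the hard content: one must show that an isomorphism of ind-objects that is realised levelwise by bimorphisms can be re-presented, after a cofinal reindexing of the source category, by levelwise isomorphisms in $\E$. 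That is a nontrivial zig-zag/cofinality argument (essentially the characterisation of isomorphisms of ind-objects), and without it the strict case is not proved. I would either carry that argument out explicitly, or — as the paper does — defer the whole lemma to~\cite[Proposition~2.10]{BBB16}.
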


\begin{proof}
  This is~\cite[Proposition 2.10]{BBB16}.
  Its proof implicitly uses
  Lemma~\ref{lem:functor-oC}.
\end{proof}

\begin{cor}\label{cor:tensorproductfieldexact}
  Let $F$ denote a field, complete with respect to a non-trivial non-Archimedean
  valuation. Then, for any $F$-ind-Banach module $V$,
  $-\widehat{\otimes}_{F}V\colon\IndBan_{F}\to\IndBan_{F}$
  is exact.
\end{cor}

\begin{proof}
  Applying Lemma~\ref{lem:functor-oC} and~\ref{lem:messmse-indcat},
  we may assume that $V$ is a $k$-Banach module.
  Thus the Corollary follows from~\cite[Theorem 3.50]{BBBK18}.
\end{proof}

Fix a field $F$, complete with respect to a non-trivial non-Archimedean valuation.

\begin{notation}
  An \emph{$F$-Banach space} is an $F$-Banach module.
\end{notation}

\begin{defn}
  An $F$-ind-Banach space is \emph{bornological} if it is isomorphic to an object $\text{``}\varinjlim\text{"}_{i}E_{i}$
  where all the structural maps $E_{i}\to E_{j}$ are injective. A \emph{complete bornological $F$-vector space}
  is a bornological $F$-ind-Banach space.
  $\CBorn_{F}$ denotes the full subcategory of $\IndBan_{F}$ of complete bornological $F$-vector spaces.
\end{defn}

\begin{notation}
  Consider a diagram $i\mapsto E_{i}$ of complete bornological $F$-vector spaces.
  Denote its limit in $\CBorn_{F}$, if it exists, by $\varprojlim_{i}^{\bd}E_{i}$.
  $\varprojlim_{i}E_{i}$ is its limit in $\IndBan_{F}$.
\end{notation}

\begin{lem}\label{lem:limitCBornIndBan}
  Given a diagram $i\mapsto E_{i}$ of complete bornological $F$-vector spaces,
  $\varprojlim_{i}^{\bd}E_{i}$ exists and coincides with $\varprojlim_{i}E_{i}$.
\end{lem}

\begin{proof}
  This follows from~\cite[Remark 3.44 and Proposition 3.60]{BBB16}.
\end{proof}

\begin{lem}\label{lem:CBorntoIndBan-exact}
  The functor $\CBorn_{F}\to\IndBan_{F}$ is exact.
\end{lem}

\begin{proof}
  This is~\cite[Proposition 4.22]{Bo21}.
\end{proof}

$\CBorn_{F}$ carries a symmetric monoidal operation which we denote by $\widehat{\otimes}_{F}^{\bd}$.
In this article, we do not need its precise~\cite[Definition 3.57]{BBB16}
but only the following result.

\begin{lem}\label{lem:hotimesbd-is-hotimes}
  Consider two complete bornological $F$-vector spaces $V$ and $W$, which
  are inverse limits of Banach spaces. Then there is a functorial isomorphism
  \begin{equation*}
    V\widehat{\otimes}_{F}W \isomap V\widehat{\otimes}_{F}^{\bd}W
  \end{equation*}
  of $F$-ind-Banach spaces.
\end{lem}

\begin{proof}
  Both $V$ and $W$ are \emph{proper} as bornological spaces,
  cf.~\cite[Definition 3.62]{BBB16}. This follows from~\cite[Proposition 3.11]{BBBK18},
  together with the fact that Banach spaces are proper, which follows directly
  from the definition. Lemma~\ref{lem:hotimesbd-is-hotimes}
  thus follows from~\cite[Proposition 3.64]{BBB16}.
\end{proof}

Fix a \emph{pseudo-uniformiser}, that is an element $\pi\in F^{\circ}$
such that $0<|\pi|<1$.

\begin{lem}\label{lem:Andreaspowerseriesstronglyexactlemma}
  $-\widehat{\otimes}_{F}\varprojlim_{r}F\left\<\pi^{r}\zeta_{1},\dots,\pi^{r}\zeta_{d}\right\>
  \colon\IndBan_{F}\to\IndBan_{F}$
  is strongly exact, given formal variables $\zeta_{1},\dots,\zeta_{d}$.
\end{lem}

\begin{proof}
  It preserves cokernels because the monoidal category $\IndBan_{k}$
  is closed. To show that it preserves kernels of arbitrary maps,
  apply~\cite[Remark 2.3]{BBB16} and Corollary~\ref{cor:filteredcol-inIndBan-stronglyexact};
  thus it suffices to check that it preserves kernels of maps
  between $k$-Banach spaces. Given a $k$-Banach space $V$,
  we compute with the Lemma~\ref{lem:limitCBornIndBan} and~\ref{lem:hotimesbd-is-hotimes}:
  \begin{equation*}
    V\widehat{\otimes}_{F}\varprojlim_{r}F\left\<\pi^{r}\zeta_{1},\dots,\pi^{r}\zeta_{d}\right\>
    \cong
    V\widehat{\otimes}_{F}^{\bd}{\varprojlim_{r}}^{\bd}F\left\<\pi^{r}\zeta_{1},\dots,\pi^{r}\zeta_{d}\right\>.
  \end{equation*}
  As $\CBorn_{F}\hookrightarrow\IndBan_{F}$ preserves kernels,
  \cite[Corollary 5.36]{Bo21} gives the result.
\end{proof}

\begin{lem}\label{lem:thenormonAhotimesB-Iadic-piadic-reconstructionpaper}
  Let $A$, $B$ be a commutative $F^{\circ}$-Banach algebras.
  $A$ carries the $I$-adic norm,
  for some ideal $I\subseteq A$, and $B$ carries
  the $\pi$-adic norm. Then the norm on
  $A\widehat{\otimes}_{F^{\circ}}B$ induces the
  $\left( \left\{ a\widehat{\otimes}1\colon a\in I \right\} \cup \left\{ 1 \widehat{\otimes} \pi \right\} \right)$-adic
  topology.
\end{lem}

\begin{proof}
  We check that
  $A\otimes_{F^{\circ}} B$ carries the
  $J:=\left( \left\{ a\otimes1\colon a\in I \right\} \cup \left\{ 1\otimes\pi \right\} \right)$-adic topology,
  using~\cite[Exercise 7.9]{Ei95}.
  In fact, for all $x\in A\otimes_{F^{\circ}} B$, we claim that $\|x\|\leq|\pi|^{v}$ if and only if $x\in J^{v}$.
  The direction $\Leftarrow$ is clear. To show $\implies$, assume $\|x\|\leq|\pi|^{v}$. Then we may write
  $x=\sum_{i=1}^{n}y_{i}\otimes z_{i}$ such that $\max_{i=1,\dots,n}\|y_{i}\|\|z_{i}\|\leq|\pi|^{v}$.
  For all $i=1,\dots,n$, this implies $\|y_{i}\|\|z_{i}\|\leq|\pi|^{v}$. That is, there exists a $w\in\NN$, $w\leq v$, such that
  $y_{i}\in I^{w}$ and $z_{i}\in\left(\pi\right)^{v-w}$. Consequently, $y_{i}\otimes z_{i}\in J^{v}$
  for all $i=1,\dots,n$, and we conclude $x\in J^{v}$.
\end{proof}


\section{Ind-$G$-$R$-Banach modules}

Fix a Banach ring $R$. Consider a profinite group $G$ and an $R$-ind-Banach module $M$.
Both live in different categories, therefore we find it hard to define a notion
of a $G$-action on $M$. Luckily, the following ad hoc definition suffices for our purposes.

\begin{defn}
  We define the category \emph{$\Ban_{R}\left(G\right)$ of $G$-$R$-Banach modules}.
  \begin{itemize}
    \item[(i)] Its objects are $R$-Banach modules $M$, together with continuous $G\times M\to M$-actions.
    \item[(ii)] The morphisms are the $G$-equivariant bounded $R$-linear maps.
  \end{itemize}
  Its ind-completion $\IndBan_{R}\left(G\right):=\Ind\left(\Ban_{R}\left(G\right)\right)$
  is the \emph{category of ind-$G$-$R$-Banach modules}.
\end{defn}

Given two $G$-$R$-Banach modules $M$ and $N$, the usual completed tensor product
$M\widehat{\otimes}_{R}N$ carries the continuous $G$-action determined by
\begin{equation*}
  g \cdot \left( m \widehat{\otimes} n \right)
  := (g\cdot m) \widehat{\otimes} (g\cdot n)
\end{equation*}
for $g\in G$, $m\in M$, and $n\in N$. Consequently,
\begin{equation*} 
  \left(\text{``}\varinjlim_{i\in I}\text{"}V_{i}\right)
  \widehat{\otimes}_{R}
  \left(\text{``}\varinjlim_{j\in J}\text{"}W_{j}\right)
  :=\text{``}\varinjlim_{\substack{i\in I \\ j\in J}}\text{"}V_{i}\widehat{\otimes}_{R}W_{j}
\end{equation*}
defines a monoidal operation on $\IndBan_{R}\left(G\right)$
with unit $R$, carrying the trivial $G$-action.  

\begin{defn}
  An \emph{ind-$G$-$R$-Banach algebra} is a monoid object in $\IndBan_{R}$ with respect to the
  completed tensor product $\widehat{\otimes}_{R}$ described above and unit $R$, equipped with
  the trivial action.
\end{defn}


\section{The left heart of the category of ind-Banach modules}
\label{subsec:LHindBanachmodules-reconstructionpaper}

In \S\ref{subsec:LHindBanachmodules-reconstructionpaper},
we fix a field $F=R$, complete with respect to nontrivial non-Archimedean valuation.
Furthermore, $\pi\in F$ denotes a pseudo-uniformiser, that is $0<|\pi|<1$.

\begin{notation}
  Write $\IndBan_{\I\left(F\right)}:=\LH\left(\IndBan_{F}\right)$
  for the left heart, cf. \S\ref{subsec:qabelian-cat-recpaper}.
\end{notation}

We refer the reader to~\cite[\S 4.2]{Bo21} an in-depth study
of $\IndBan_{\I\left(F\right)}$. \emph{Loc. cit.} Lemma 4.20
and~\cite[Proposition 2.1.12]{Sch99} imply that it is an elementary
abelian category. By~\cite[Proposition 2.1.15 and 2.1.16]{Sch99},
it is complete with exact products and cocomplete with
exact direct sums and filtered colimits. It has enough projectives
and enough injectives. Furthermore, $\IndBan_{\I\left(F\right)}$ is canonically
closed symmetric monoidal, cf.~\cite[Lemma 3.6 and the remark following it,
which applies thanks to Corollary 4.24]{Bo21}
The monoidal operation
and internal homomorphisms are given as follows:
for all $V,W\in\IndBan_{\I\left(F\right)}$,
\begin{equation}\label{eq:internaloperations-IndBanIF-recpaper}
\begin{split}
  V\widehat{\otimes}_{\I\left(F\right)}W
    &:= \LHo^{0}\left(V\widehat{\otimes}_{F}^{\rL}W\right), \text{ and} \\
  \intHom_{\I\left(F\right)}\left( V , W \right)
    &:= \LHo^{0}\left(\R\intHom_{F}\left( V , W \right)\right).
\end{split}
\end{equation}  
Here, we derive $\widehat{\otimes}_{F}$ and $\intHom_{F}$
via projectives resolutions, cf. Lemma~\ref{prop:BanRNrmRSNrmR-quasiabelian}.
$\I\left(F\right)$ is the unit.

\begin{lem}\label{lem:Ifunctor-stronglymonoidal-reconstructionpaper}
  For any two $F$-ind-Banach spaces $V$ and $W$, we have the canonical isomorphism
  \begin{equation*}
    \I\left(V\right) \widehat{\otimes}_{\I\left(F\right)} \I\left(W\right)
    \isomap \I\left( V\widehat{\otimes}_{F} W\right).
  \end{equation*}
\end{lem}

\begin{proof}
  This follows from Corollary~\ref{cor:tensorproductfieldexact}
  and the definition of $\widehat{\otimes}_{\I\left(F\right)}$.
\end{proof}



\section{Categories of sheaves}
\label{subsec:functional-analysis-sheaves}

Fix a quasi-abelian category $\E$ and a site $X$. The following Definition~\ref{defn:sheaf-reconstructionpaper}
is a slight generalisation of the original definition in~\cite[\S 2.2.1]{Sch99}.

\begin{defn}\label{defn:sheaf-reconstructionpaper}
  An \emph{$\E$-presheaf} or \emph{presheaf with values in $\E$}
  is a functor
  \begin{equation*}
    \mathcal{F}\colon X^{\op}\maps\E.
  \end{equation*}
  An \emph{$\E$-sheaf} or \emph{sheaf with values in $\E$} is
  an $\E$-presheaf $\mathcal{F}$ such that for any open
  $U\in X$ and any covering $\mathfrak{U}$ of $U$,
  \begin{itemize}
    \item the products $\prod_{V\in\mathfrak{U}}\mathcal{F}(V)$ and
    $\prod_{W,W^{\prime}\in\mathfrak{U}}\mathcal{F}\left(W\times_{U} W^{\prime}\right)$
    exist, and
    \item $0\to\mathcal{F}(U)
     \to\prod_{V\in\mathfrak{U}}\mathcal{F}(V)
     \to\prod_{W,W^{\prime}\in\mathfrak{U}}\mathcal{F}\left(W\times_{U} W^{\prime}\right)$
    is strictly exact.
  \end{itemize}
\end{defn}

The morphisms in the category of presheaves $\Psh(X,\E)$ valued in $\E$
are the natural transformations. The category of sheaves $\Sh(X,\E)\subseteq\Psh(X,\E)$
is a full triangulated subcategory. Schneiders explains in~\cite[\S 2.2]{Sch99} that
these categories are well-behaved when $\E$ is elementary,
cf. \emph{loc. cit.} Definition 2.1.10. For example, if $\E$ is elementary,
$\Sh(X,\E)$ is a complete and cocomplete quasi-abelian category by~\cite[Propositions 2.2.7 and 2.2.8]{Sch99}.
Furthermore, the inclusion $\Psh(X,\E)\to\Sh(X,\E)$ admits a left adjoint $\cal{F}\mapsto\cal{F}^{\sh}$
which we call \emph{sheafification}, cf. \emph{loc. cit.} Proposition 2.2.6.

\begin{lem}\label{lem:calFsheaf-FcirccalF-sheaf-exactfunctor}
  Suppose that $X$ admits only finite coverings and
  consider a strongly left exact functor $\F\colon\E_{1}\to\E_{2}$
  between two quasi-abelian categories
  which admit all finite products.
  Then for any $\E_{1}$-sheaf $\mathcal{F}$, $\F\circ\mathcal{F}$ is a $\E_{2}$-sheaf.
\end{lem}

\begin{proof}
  This is because $\F$ commutes with finite products, cf.~\cite[Remark 1.1.13]{Sch99}.
\end{proof}

From now on, $\mathbf{E}$ is elementary.

\begin{lem}\label{lem:directsumsheaves-finitecoverings}
  Suppose $X$ admits only finite coverings.
  Consider an arbitrary collection of $\E$-valued sheaves $\cal{F}_{\alpha}$.
  Then we have the canonical isomorphism
  \begin{equation*}
    \bigoplus_{\alpha}\cal{F}_{\alpha}(U) \isomap\left(\bigoplus_{\alpha}\cal{F}_{\alpha}\right)(U).
  \end{equation*}
\end{lem}

\begin{proof}
  By~\cite[Lemma 3.15]{Bo21},
  we have to check that $U\mapsto\bigoplus_{\alpha}F_{\alpha}(U)$ is a sheaf.
  To do this, we consider a covering $\mathfrak{U}$ of $U\in X$
  and the associated strictly exact sequences
  \begin{equation}\label{eq:sectionsofunderlineF-exactsequence--lem:directsumsheaves-finitecoverings}
    0\to\mathcal{F}_{\alpha}(U)
     \to\prod_{V\in\mathfrak{U}}\mathcal{F}_{\alpha}(V)
     \to\prod_{W,W^{\prime}\in\mathfrak{U}}\mathcal{F}_{\alpha}\left(W\times_{U} W^{\prime}\right).
  \end{equation}
  $\mathfrak{U}$ is finite,
  and so are all the products in~(\ref{eq:sectionsofunderlineF-exactsequence--lem:directsumsheaves-finitecoverings}).
  Since finite products commute with arbitrary direct sums
  and direct sums are strongly exact in $\E$, cf.~\cite[Proposition 2.1.15(b)]{Sch99},
  \begin{equation*}
    0\to\bigoplus_{\alpha}\mathcal{F}_{\alpha}(U)
     \to\prod_{V\in\mathfrak{U}}\bigoplus_{\alpha}\mathcal{F}_{\alpha}(V)
     \to\prod_{W,W^{\prime}\in\mathfrak{U}}\bigoplus_{\alpha}\mathcal{F}_{\alpha}\left(W\times_{U} W^{\prime}\right)
  \end{equation*}
  is strictly exact, as desired.
\end{proof}

Now assume that $\E$ is closed symmetric monoidal with
unit $1\in\E$ and tensor product $\otimes$. This gives
a symmetric monoidal structure on $\Sh(X,\E)$ as follows.
$1_{X}$ is the constant sheaf on $X$.
Next, define for any two  presheaves
$\mathcal{F}$ and $\mathcal{G}$ a presheaf
\begin{equation*}
  \mathcal{F}\otimes_{\psh}\mathcal{G} \colon
  U\mapsto \mathcal{F}(U)\otimes\mathcal{G}(U).
\end{equation*}
If $\mathcal{F}$ and $\mathcal{G}$ are sheaves,
$\mathcal{F}\otimes\mathcal{G}:=
  \left(\mathcal{F}\otimes_{\psh}\mathcal{G}\right)^{\sh}$.
This gives a bifunctor
\begin{equation*}
  -\otimes-\colon
   \Sh(X,\E)\times\Sh(X,\E)\to\Sh(X,\E).
 \end{equation*}

\begin{lem}[{\cite[Lemma 2.15]{Bo21}}]\label{lem:ShXE-closedcat}
  $\left(\Sh(X,\E),1_{X},\otimes\right)$
  is closed symmetric monoidal.
\end{lem}

We use the following fact without further reference:
A monoid structure on an $\E$-sheaf $\mathcal{R}$ is
equivalent to the data of a monoid structure on
the sections of $\mathcal{R}$ such that the restriction
maps $\mathcal{R}(U)\to\mathcal{R}(V)$ are multiplicative.
Similarly, the structure which makes an $\E$-sheaf
an $\mathcal{R}$-module object is equivalent to
section-wise module structures which commute with
the restriction maps in the obvious way.

\begin{lem}\label{lem:sh-strongly-monoidal}
  $\left(\mathcal{F}\otimes_{\psh}\mathcal{G}\right)^{\sh}
    \stackrel{\sim}{\longrightarrow}\mathcal{F}^{\sh}\otimes\mathcal{G}^{\sh}$
  is an isomorphism for any two $\E$-presheaves $\mathcal{F}$ and
  $\mathcal{G}$ on $X$. That is sheafification
  is strongly monoidal.
\end{lem}

\begin{proof}
  See~\cite[Lemma 2.16]{Bo21}.
\end{proof}

We keep our assumptions on $\E$ fixed and consider a morphism $f\colon X\to Y$
of sites, which is given by a functor $f^{-1}\colon Y\to X$ between the
underlying categories.
\begin{align*}
  f^{\psh,-1}\colon\Psh\left(Y,\E\right) &\to \Psh\left(X,\E\right) \\
  f^{\psh,-1}\left(\mathcal{F}\right)(U) &:= \varinjlim_{U\to f^{-1}(V)}\mathcal{F}(V)
\end{align*}
is the \emph{presheaf inverse image}.
The \emph{direct image functor} is
\begin{align*}
  f_{*}\colon\Psh\left(X,\E\right) &\to \Psh\left(Y,\E\right) \\
  f_{*}\left(\mathcal{F}\right)(U) &:= \mathcal{F}\left(f^{-1}(U)\right),
\end{align*}
as discussed in~\cite[\S 2.6]{Bo21}. $f_{*}$ sends sheaves
to sheaves but $f^{\psh,-1}$ does, in general, not. This is why we
define the direct image functor $f^{-1}:=\cdot^{\sh}\circ f^{\psh,-1}$.

\begin{lem}\label{lem:sh-commuteswith-restriction}
  Sheafification commutes with restriction.
\end{lem}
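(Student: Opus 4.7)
The plan is to unfold sheafification via its explicit $+$-construction and observe that both the indexing coverings and the finite limits involved are preserved pointwise by restriction, so that the construction is intrinsically local on the site.

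Fix an open $U \in X$. Restriction sends a presheaf $\cal{F}\colon X^{\op}\to\E$ to the presheaf $\cal{F}|_U$ on the localized site $X/U$ with $(\cal{F}|_U)(V\to U) = \cal{F}(V)$. The decisive observation is that the category of coverings of an object $V\to U$ in $X/U$ is naturally identified with the category of coverings of $V$ in $X$, and for any covering $\mathfrak{V} = \{W_i\to V\}$ the fibre products $W_i\times_V W_j$ computed in $X/U$ coincide with those computed in $X$. Hence the sheaf-condition diagrams associated to such a covering are identical in the two sites.

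First I would recall Schneiders' $+$-construction in the elementary quasi-abelian setting: for a presheaf $\cal{G}\colon Y^{\op}\to\E$ on any site $Y$ and any $V\in Y$,
\begin{equation*}
  \cal{G}^+(V) := \varinjlim_{\mathfrak{V}} \ker\!\Bigl(\,\prod_{W\in\mathfrak{V}}\cal{G}(W) \rightrightarrows \prod_{W,W'\in\mathfrak{V}}\cal{G}(W\times_V W')\,\Bigr),
\end{equation*}
where the colimit runs over the directed poset of coverings of $V$ ordered by refinement, and sheafification is $\cal{G}^{\sh} = (\cal{G}^+)^+$. Because finite limits and filtered colimits in $\Psh(X/U,\E)$ are computed objectwise, and because the indexing data and fibre products entering the formula above depend only on $V$ and not on whether it is regarded as an object of $X$ or of $X/U$, evaluation of the formula for $\cal{F}|_U$ at $V\to U$ yields the very same object of $\E$ as evaluation for $\cal{F}$ at $V$. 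Naturality in $V\to U$ then produces a canonical isomorphism
\begin{equation*}
  (\cal{F}|_U)^+ \;\stackrel{\cong}{\longrightarrow}\; (\cal{F}^+)|_U
\end{equation*}
of presheaves on $X/U$.

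Iterating the $+$-construction once more gives
\begin{equation*}
  (\cal{F}|_U)^{\sh} \;=\; \bigl((\cal{F}|_U)^+\bigr)^+ \;\cong\; \bigl((\cal{F}^+)|_U\bigr)^+ \;\cong\; \bigl((\cal{F}^+)^+\bigr)|_U \;=\; (\cal{F}^{\sh})|_U,
\end{equation*}
which is the desired natural isomorphism. I do not expect a substantive obstacle: the only care required is to confirm that Schneiders' $+$-construction in the quasi-abelian setting really is defined by the colimit-of-kernel formula above and that sheaves on $X/U$ are characterised by strict exactness of the same sequence~\eqref{eq:sheaf-lses} as in Definition~\ref{defn:sheaf}, both of which are built into the formalism.
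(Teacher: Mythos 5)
Your proof is correct, but it takes a genuinely different route from the paper's. The paper's proof is an adjunction argument: it observes that restriction preserves sheaves, so the restricted unit $\cal{F}|_U\to\cal{F}^{\sh}|_U$ lands in a sheaf and therefore factors through $(\cal{F}|_U)^{\sh}$ by the universal property; that the resulting comparison map is an isomorphism follows "from the adjunctions," which in detail amounts to the observation that $j_{*,\psh}\circ\iota_{X/U}=\iota_X\circ j_*$ (presheaf pushforward of a sheaf is the sheaf pushforward), and then uniqueness of left adjoints identifies $\sh_{X/U}\circ j^*_{\psh}$ with $j^*\circ\sh_X$. You instead unfold the $+$-construction and check pointwise equality, using that coverings of $V\to U$ in $X/U$ and fibre products over $V$ agree with those in $X$, and that the finite limits and filtered colimits in the formula are computed objectwise. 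Your approach is more concrete and verifies the isomorphism hands-on, but it presupposes that sheafification in the elementary quasi-abelian setting is given by iterating the colimit-of-kernels formula — this is indeed how Bode and Schneiders set it up, so the argument goes through, but it is an extra dependency. The paper's approach is shorter, avoids any explicit description of $\sh$, and would survive a change in how sheafification is constructed, at the cost of leaving the "adjunction" step compressed. Both are valid; yours fills in computational content that the paper leaves implicit.
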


\begin{proof}
This follows from the adjunctions, in particular $f^{-1}\dashv f_{*}$.
\end{proof}

Lemma~\ref{lem:sh-commuteswith-restriction} implies the following.

\begin{lem}\label{lem:monoidal-structure-commutes-restriction}
  For any two $\E$-presheaves $\mathcal{F}$ and
  $\mathcal{G}$ on $X$, there is a functorial isomorphism
  $\mathcal{F}|_{U}\otimes\mathcal{G}|_{U}\cong\left(\mathcal{F}\otimes\mathcal{G}\right)|_{U}$
  for any $U\in X$.
\end{lem}


\section{Sites with many quasi-compact open subsets}
\label{subsec:siteqc-reducesheafcondtofincov}

We fix again an elementary quasi-abelian category
$\E$ and a site $X$.

\begin{prop}\label{prop:siteqc-reducesheafcondtofincov}
  Suppose that any
  $U\in X$ is quasi-compact. Let $\mathcal{F}$
  denote an $\E$-presheaf on $X$ such that the sequence
  \begin{equation*}
    0\maps\mathcal{F}(U)\maps\prod_{V\in\mathfrak{U}}\mathcal{F}(V)\maps\prod_{W,W^{\prime}\in\mathfrak{U}}\mathcal{F}(W\times_{U}W^{\prime})
  \end{equation*}
  is strictly exact for every finite covering $\mathfrak{U}$ of any $U\in X$.
  Then $\mathcal{F}$ is a sheaf.
\end{prop}

By~\cite[Corollary 1.2.28]{Sch99},
Proposition~\ref{prop:siteqc-reducesheafcondtofincov} follows
from the following.

\begin{lem}\label{lem:siteqc-reducesheafcondtofincov}
  Proposition~\ref{prop:siteqc-reducesheafcondtofincov} holds
  for any elementary abelian category $\E=\bA$.
\end{lem}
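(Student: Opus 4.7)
\emph{Proof proposal.} The plan is to reduce the infinite sheaf condition to the finite one via quasi-compactness, and to translate the resulting element-chasing argument into the abelian setting by testing with $\Hom_{\bA}(T,-)$. Let $\mathfrak{U}=\{U_i\to U\}_{i\in I}$ be an arbitrary covering. Since $U$ is quasi-compact, there is a finite subset $I_0\subseteq I$ such that $\mathfrak{U}_0:=\{U_i\to U\}_{i\in I_0}$ is already a covering. By assumption the sheaf sequence for $\mathfrak{U}_0$ is exact in $\bA$, and for every $i\in I\setminus I_0$ the family $\{U_i\times_U U_j\to U_i\}_{j\in I_0}$ is a finite covering of $U_i$ (again a covering by the pullback stability of the topology), so the sheaf condition is available there as well.

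Next, I would fix an arbitrary test object $T\in\bA$ and apply $\Hom_{\bA}(T,-)$, which preserves arbitrary products and left-exact sequences. This turns the finite sheaf conditions above into the finite sheaf condition for the presheaf of abelian groups $\cal{F}_T(-):=\Hom_{\bA}(T,\cal{F}(-))$. Moreover, exactness of the infinite sheaf sequence in $\bA$ is equivalent to exactness of the corresponding sequences after applying $\Hom_{\bA}(T,-)$ for every $T\in\bA$, so the problem reduces to showing that each $\cal{F}_T$ is a sheaf of abelian groups.

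The heart of the argument is then the standard set-theoretic reduction. Injectivity of $\cal{F}_T(U)\to\prod_{i\in I}\cal{F}_T(U_i)$ follows because its composition with the projection to $\prod_{i\in I_0}\cal{F}_T(U_i)$ is already injective by the finite condition. For the kernel computation, given $(s_i)_{i\in I}$ satisfying the cocycle condition, restrict to $I_0$ to obtain a cocycle, which by the finite sheaf condition comes from a unique $\tilde s\in\cal{F}_T(U)$ with $\tilde s|_{U_i}=s_i$ for $i\in I_0$. For $i\in I\setminus I_0$, the computation
\[
  (\tilde s|_{U_i})|_{U_i\times_U U_j}
  =\tilde s|_{U_i\times_U U_j}
  =(\tilde s|_{U_j})|_{U_i\times_U U_j}
  =(s_j)|_{U_i\times_U U_j}
  =(s_i)|_{U_i\times_U U_j},
\]
valid for all $j\in I_0$, combined with the finite sheaf condition applied to the covering $\{U_i\times_U U_j\to U_i\}_{j\in I_0}$ of $U_i$, forces $\tilde s|_{U_i}=s_i$. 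This gives the desired equality in $\prod_{i\in I}\cal{F}_T(U_i)$.

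The main obstacle is making the transition from the abelian category $\bA$ to $\Ab$ fully rigorous: one must verify that $\Hom_{\bA}(T,-)$ suffices as a family of test functors to detect exactness of the infinite sheaf sequence in $\bA$, and that each $\cal{F}_T$ indeed inherits the finite sheaf condition from $\cal{F}$ — both of which follow from the left-exactness and product-preservation of $\Hom_{\bA}(T,-)$. Once these categorical bookkeeping points are in place, the remainder is the classical diagram chase described above, for which quasi-compactness of $U$ and of each $U_i$ supplies exactly the finite refinements that the argument requires.
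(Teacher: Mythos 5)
Your proof is correct, and the overall plan—pass to a finite subcovering and then propagate the data to the remaining opens via pullback stability—mirrors the paper's. Where you diverge is in how the element chase inside the abstract abelian category $\bA$ is made rigorous. The paper constructs an explicit two-sided inverse $\Phi$ of the canonical map $\im\alpha\to\ker\beta$, proving two commutation identities (its Lemma~\ref{lem:siteqc-reducesheafcondtofincov-1}); the second of these is established by first invoking Freyd–Mitchell to replace $\bA$ by a module category, at which point the classical diagram chase can be run. You instead test against $\Hom_{\bA}(T,-)$ for every $T$. Since this functor is left exact and preserves arbitrary products, it carries both the finite sheaf sequences and the full sheaf sequence to the corresponding sequences for the $\Ab$-valued presheaf $\cal{F}_T=\Hom_{\bA}(T,\cal{F}(-))$, and by Yoneda the full sequence in $\bA$ is left exact iff all its $\Hom(T,-)$-images are; this reduces everything to the classical statement for abelian groups. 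Your route is arguably cleaner: it sidesteps the smallness hypothesis and the delicate point that a Freyd–Mitchell embedding need not preserve infinite products (so that "elements of $\prod_{i\in I}\cal{F}(U_i)$" must be read as families of projections rather than literal tuples in the target module category). The paper's route, on the other hand, yields an explicit inverse morphism, which can be a useful bonus in other situations even if it is not needed here. One small point worth making explicit in a polished write-up: when you apply the finite sheaf condition to $\{U_i\times_U U_j\to U_i\}_{j\in I_0}$, you only use the \emph{injectivity} half of that condition (to conclude $\tilde s|_{U_i}=s_i$ from equality on the cover), which is exactly what is available.
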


\begin{proof}
  Let $\mathfrak{U}=\set{U_{i}\maps U}_{i\in I}$ denote a covering.
  Since $U$ is quasicompact, we find a finite subcovering
  $\widetilde{\mathfrak{U}}=\set{U_{\widetilde{i}}\maps U}_{\widetilde{i}\in\widetilde{I}}$
  where $\widetilde{I}\subseteq I$. Consider the commutative diagram
  \begin{equation}\label{cd:siteqc-reducesheafcondtofincov-1}
    \begin{tikzcd}
      0 \arrow{r} &
      \mathcal{F}(U) \arrow{r}{\alpha} \arrow[equal]{d} &
      \prod_{i\in I}\mathcal{F}(U_{i}) \arrow{r}{\beta} \arrow{d}{\pi_{\widetilde{I}}} &
      \prod_{i,j\in I}\mathcal{F}(U_{i}\times_{U}U_{j}) \arrow{d}{\pi_{\widetilde{I}\times\widetilde{I}}} \\
      0 \arrow{r} &
      \mathcal{F}(U) \arrow{r}{\widetilde{\alpha}} &
      \prod_{\widetilde{i}\in\widetilde{I}}\mathcal{F}(U_{\widetilde{i}}) \arrow{r}{\widetilde{\beta}} &
      \prod_{\widetilde{i},\widetilde{j}\in\widetilde{I}}\mathcal{F}(U_{\widetilde{i}}\times_{U}U_{\widetilde{j}})
    \end{tikzcd}
  \end{equation}
  where both $\pi_{\widetilde{I}}$ and $\pi_{\widetilde{I}\times\widetilde{I}}$
  are the projections. It follows from the commutativity of
  the left square that $\alpha$ is a monomorphism.
  It remains to show that the canonical morphism $\Psi\colon\im\alpha\to\ker\beta$
  is an isomorphism. We introduce some notation.
  \begin{itemize}
  \item Let $\widetilde{\Phi}$ denote the inverse of
    $\widetilde{\Psi}\colon\im\widetilde{\alpha}\to\ker\widetilde{\beta}$.
  \item $\pi_{i_0}$ is the projection $\prod_{i\in I}\mathcal{F}(U_{i})\maps\mathcal{F}(U_{i_0})$
  and $\alpha_{i_0}:=\pi_{i_0}\circ\alpha$ for every $i_{0}\in I$.
  \item $\alpha^{\prime-1}\colon\im\alpha\to\mathcal{F}(U)$
    is the inverse of the morphism
    $\alpha^{\prime}\colon\mathcal{F}(U)\to\im\alpha$
    induced by $\alpha$. It is an isomorphism because
    $\alpha$ is a monomorphism and $\bA$ is abelian.
    Similarly, $\widetilde{\alpha}^{\prime-1}\colon\im\widetilde{\alpha}\to\mathcal{F}(U)$
    denotes the inverse of the morphism
    $\widetilde{\alpha}^{\prime}\colon\mathcal{F}(U)\to\im\widetilde{\alpha}$
    induced by $\widetilde{\alpha}$.
  \item $\pi_{\widetilde{I}}$ restricts to maps
    $\pi_{\widetilde{I}}^{\im\alpha}\colon\im\alpha\to\im\widetilde{\alpha}$ and
    $\pi_{\widetilde{I}}^{\ker\beta}\colon\ker\beta\to\ker\widetilde{\beta}$.
  \item Write $\pi_{i_0}^{\im\alpha}$ for the composition
    $\im\alpha\hookrightarrow\prod_{i\in I}\mathcal{F}(U_{i})\stackrel{\pi_{i_0}}{\longrightarrow}\mathcal{F}(U_{i_{0}})$
    and $\pi_{i_0}^{\ker\beta}$ for the composition
    $\ker\beta\hookrightarrow\prod_{i\in I}\mathcal{F}(U_{i})\stackrel{\pi_{i_0}}{\longrightarrow}\mathcal{F}(U_{i_{0}})$.
  \end{itemize}
  
  \begin{lem}\label{lem:siteqc-reducesheafcondtofincov-1}
    We have identities
    \begin{align*}
      \pi_{i_0}^{\im\alpha}&=\alpha_{i_0}\circ\widetilde{\alpha}^{\prime-1}\circ\pi_{\widetilde{I}}^{\im\alpha}, \text{ and} \\
      \pi_{i_0}^{\ker\beta}&=\alpha_{i_0}\circ\widetilde{\alpha}^{\prime-1}\circ\widetilde{\Phi}\circ\pi_{\widetilde{I}}^{\ker\beta}
    \end{align*}
    for every $i_{0}\in I$.
  \end{lem}
  
  \begin{proof}
    Compose
    $\widetilde{\alpha}^{\prime} =\pi_{\widetilde{I}}^{\im\alpha}\circ\alpha^{\prime}$
    with $\widetilde{\alpha}^{\prime-1}$ on the left and
    $\alpha^{\prime-1}$ on the right to get
    \begin{equation*}
      \alpha^{\prime-1} =\widetilde{\alpha}^{\prime-1}\circ\pi_{\widetilde{I}}^{\im\alpha}.
    \end{equation*}
    Now we compose with $\alpha^{\prime}$ on the left
    and again with $\pi_{i_0}^{\im\alpha}$ on the left. This yields
    \begin{align*}
      \pi_{i_0}^{\im\alpha}&=
      \pi_{i_0}^{\im\alpha}\circ
      \left(\alpha^{\prime}\circ\widetilde{\alpha}^{\prime-1}\circ\pi_{\widetilde{I}}^{\im\alpha}\right) \\
      &=\left(\pi_{i_0}^{\im\alpha}\circ\alpha^{\prime}\right)\circ\widetilde{\alpha}^{\prime-1}\circ\pi_{\widetilde{I}}^{\im\alpha} \\
      &=\alpha_{i_0}\circ\widetilde{\alpha}^{\prime-1}\circ\pi_{\widetilde{I}}^{\im\alpha}
    \end{align*}
    which is the first identity stated above in Lemma~\ref{lem:siteqc-reducesheafcondtofincov-1}.

    To prove the second identity, we may assume without loss of generality
    that $\bA$ is small; otherwise we pass to a suitable subcategory.
    The Freyd-Mitchell
    Embedding Theorem~\cite[Theorem 1.6.1]{Wei94}
    gives that $\bA$ is the category of modules over some ring.
    Let $(s_{i})_{i\in I}\in\ker\beta$. Then
    $\left(\widetilde{\Phi}\circ\pi_{\widetilde{I}}^{\ker\beta}\right)\left(\left(s_{i}\right)_{i\in I}\right)=\left(s_{\widetilde{i}}\right)_{\widetilde{i}\in \widetilde{I}}$
    lies in the kernel of $\widetilde{\beta}$,
    thus it lies in the image of $\widetilde{\alpha}$. That is,
    there exists an $s\in\mathcal{F}(U)$ such that
    $\widetilde{\alpha}(s)=\left(s_{\widetilde{i}}\right)_{\widetilde{i}\in \widetilde{I}}$.
    With other words, $s=\left(\widetilde{\alpha}^{\prime-1}\circ\widetilde{\Phi}\circ\pi_{\widetilde{I}}^{\ker\beta}\right)\left(\left(s_{i}\right)_{i\in I}\right)$.
    We have to show that $\alpha_{i_0}(s)=s_{i_0}$ for all $i_{0}\in I$,
    since $\pi_{i_0}^{\ker\beta}\left(\left(s_{i}\right)_{i\in I}\right)=s_{i_0}$.

    Because $\widetilde{\mathfrak{U}}=\set{U_{\widetilde{i}}\maps U}_{\widetilde{i}\in\widetilde{I}}$
    is a finite covering of $U$,
    $\set{U_{\widetilde{i}}\times_{U}U_{i_0}\maps U_{i_0}}_{\widetilde{i}\in\widetilde{I}}$
    is a finite covering of $U_{i_0}$.
    Thus the sequence
    \begin{equation*}
    \adjustbox{scale=0.9,center}{%
      \begin{tikzcd}
        0 \arrow{r} &
        \mathcal{F}(U_{i_{0}}) \arrow{r} &
        \prod_{\widetilde{i}\in\widetilde{I}}
        \mathcal{F}(U_{\widetilde{i}}\times_{U}U_{i_0}) \arrow{r} &
        \prod_{\widetilde{i},\widetilde{j}\in\widetilde{I}}
        \mathcal{F}((U_{\widetilde{i}}\times_{U}U_{i_0})\times_{U_{i_{0}}}(U_{\widetilde{j}}\times_{U}U_{i_0}))
      \end{tikzcd} }
    \end{equation*}
    is exact. That is, the element $s_{i_0}$
    is uniquely determined by its restrictions to the open
    sets $U_{\widetilde{i}}\times_{U}U_{i_0}$. Therefore, the computation
    \begin{equation*}
      s_{i_{0}}|_{U_{\widetilde{i}}\times_{U}U_{i_0}}
      =s_{\widetilde{i}}|_{U_{\widetilde{i}}\times_{U}U_{i_0}}
      =\left(s|_{U_{\widetilde{i}}}\right)|_{U_{\widetilde{i}}\times_{U}U_{i_0}}
      =s|_{U_{\widetilde{i}}\times_{U}U_{i_0}}
    \end{equation*}
    implies $\alpha_{i_0}(s)=s|_{U_{i_0}}=s_{i_0}$.
    Note that the first equality
    $s_{i_{0}}|_{U_{\widetilde{i}}\times_{U}U_{i_0}}=s_{\widetilde{i}}|_{U_{\widetilde{i}}\times_{U}U_{i_0}}$
    follows from $(s_{i})_{i\in I}\in\ker\beta$.
    This finishes the proof.
  \end{proof}

  We claim that
  \begin{equation*}
    \Phi = \alpha^{\prime} \circ \widetilde{\alpha}^{\prime-1} \circ \widetilde{\Phi} \circ \pi_{\widetilde{I}}^{\ker\beta}
  \end{equation*}
  is a two-sided inverse of $\Psi$.  First, we will show that it is a
  right-inverse. Since $\ker\beta$ is a subobject of
  $\prod_{i\in I}\mathcal{F}(U)$, it suffices to show
  the commutativity of the diagrams
  \begin{equation*}
    \begin{tikzcd}
      \ker\beta
      \arrow{r}{\Psi\circ\Phi}\arrow[swap]{rd}{\pi_{i_0}^{\ker\beta}} &
      \ker\beta \arrow{d}{\pi_{i_0}^{\ker\beta}} \\
      \empty &
      \mathcal{F}(U_{i_0})
    \end{tikzcd}
  \end{equation*}
  for every $i_{0}\in I$. Now compute
  \begin{equation}\label{eq:siteqc-reducesheafcondtofincov-1}
    \pi_{i_0}^{\ker\beta}\circ\Psi\circ\alpha^{\prime}
    =\pi_{i_0}^{\im\alpha}\circ\alpha^{\prime}
    =\alpha_{i_0}
  \end{equation}
  and therefore
  \begin{align*}
    \pi_{i_0}^{\ker\beta}\circ\left(\Psi\circ\Phi\right)
    &=\pi_{i_0}^{\ker\beta}\circ \Psi\circ \alpha^{\prime}\circ \widetilde{\alpha}^{\prime-1} \circ \widetilde{\Phi} \circ \pi_{\widetilde{I}}^{\ker\beta} \\
    &\stackrel{\text{(\ref{eq:siteqc-reducesheafcondtofincov-1})}}{=}
    \alpha_{i_0}\circ\widetilde{\alpha}^{\prime-1}\circ\widetilde{\Phi}\circ\pi_{\widetilde{I}}^{\ker\beta} \\
    &\stackrel{\text{\ref{lem:siteqc-reducesheafcondtofincov-1}}}{=}
    \pi_{i_0}^{\ker\beta}.
  \end{align*}
  We find that $\Phi$ is a right-inverse of
  $\Psi$. It remains
  to compute that the diagrams
  \begin{equation*}
    \begin{tikzcd}
      \im\alpha
      \arrow{r}{\Phi\circ\Psi}\arrow[swap]{rd}{\pi_{i_0}^{\im\alpha}} &
      \im\alpha \arrow{d}{\pi_{i_0}^{\im\alpha}} \\
      \empty & \mathcal{F}(U_{i_0})
    \end{tikzcd}
  \end{equation*}
  are commutative for every $i_{0}\in I$.
  We have
  $\pi_{\widetilde{I}}^{\ker\beta}\circ\Psi=\widetilde{\Psi}\circ\pi_{\widetilde{I}}^{\im\alpha}$,
  so that composing with $\widetilde{\Phi}$ on the left yields
  \begin{equation}\label{eq:siteqc-reducesheafcondtofincov-2}
    \widetilde{\Phi}\circ\pi_{\widetilde{I}}^{\ker\beta}\circ\Psi=\pi_{\widetilde{I}}^{\im\alpha}.
  \end{equation}
  We get
  \begin{align*}
    \pi_{i_0}^{\im\alpha}\circ(\Phi\circ\Psi)
    &=\pi_{i_0}^{\im\alpha}\circ\alpha^{\prime} \circ \widetilde{\alpha}^{\prime-1} \circ \widetilde{\Phi} \circ \pi_{\widetilde{I}}^{\ker\beta}\circ\Psi \\
    &=\alpha_{i_0} \circ \widetilde{\alpha}^{\prime-1} \circ \widetilde{\Phi} \circ \pi_{\widetilde{I}}^{\ker\beta}\circ\Psi \\
    &\stackrel{\text{(\ref{eq:siteqc-reducesheafcondtofincov-2})}}{=}\alpha_{i_0} \circ \widetilde{\alpha}^{\prime-1} \circ\pi_{I}^{\im\alpha} \\
    &\stackrel{\text{\ref{lem:siteqc-reducesheafcondtofincov-1}}}{=}
    \pi_{i_0}^{\im\alpha}.
  \end{align*}
  That is, $\Phi$ is a left-inverse of
  $\Psi$ as well. We have thus shown that the
  top row of the diagram~(\ref{cd:siteqc-reducesheafcondtofincov-1})
  is exact. Since this choice of covering was arbitrary,
  $\mathcal{F}$ is a sheaf.
\end{proof}


\section{Sheaves of Banach modules}
\label{subsec:sheavesindbanspaces-reconstructionpaper}

Fix a Banach ring $R$ and a site $X$.

\begin{lem}\label{lem:defnsheaves-indBan}
  Suppose all coverings in $X$ are finite.
  A $\Ban_{R}$-presheaf on $X$ is a sheaf
  if and only if the following sequence is strictly exact
  for every open $U\in X$ and every covering $\mathfrak{U}$ of $U$:
  \begin{equation*}
    0\to\mathcal{F}(U)
     \to\prod_{V\in\mathfrak{U}}\mathcal{F}(V)
     \to\prod_{W,W^{\prime}\in\mathfrak{U}}\mathcal{F}\left(W\times_{U} W^{\prime}\right).
  \end{equation*}
\end{lem}

\begin{proof}
  The implication $\implies$ is clear. $\Leftarrow$
  follows because $\Ban_{R}$ has finite products.
\end{proof}

\begin{lem}\label{lem:banach-to-indbanach-sheaves}
  Given a sheaf $\mathcal{F}\colon X\to\Ban_{R}$,
  its composition with the canonical functor
  $\Ban_{R}\to\IndBan_{R}$ is a sheaf of $R$-ind-Banach modules.
\end{lem}

\begin{proof}
  Lemma~\ref{lem:functor-oC} applies because $\Ban_{R}$ has finite limits.
\end{proof}

\begin{notation}
  $\mathcal{R}$ is a monoid in the category of sheaves on $X$ with
  values in a closed symmetric monoidal quasi-abelian
  category $\E$. $\mathcal{M}$ is an $\mathcal{R}$-module object.
  \begin{itemize}
    \item[(i)] $\mathcal{R}$ is a \emph{sheaf of $S$-Banach algebras}
    if $\E=\Ban_{S}$, where $S$ is an
    $R$-Banach algebra.
    $\mathcal{M}$ is a \emph{sheaf of $\mathcal{R}$-Banach modules}.
    \item[(ii)] $\mathcal{R}$ is a \emph{sheaf of $S$-ind-Banach algebras}
    if $\E=\IndBan_{S}$, where $S$ is an
    $R$-ind-Banach algebra.
    $\mathcal{M}$ is a \emph{sheaf of $\mathcal{R}$-ind-Banach modules}.
  \end{itemize}
\end{notation}


\section{Sheaves of ind-Banach spaces}
\label{subsec:sheavesindbanspaces-reconstructionpaper}

We fix again a field $F=R$, complete with respect to nontrivial non-Archimedean valuation.
As $\IndBan_{\I\left(F\right)}$ is elementary abelian,
cf. the previous \S\ref{subsec:LHindBanachmodules-reconstructionpaper},
it is in particular elementary quasi-abelian. Therefore, Schneiders' formalism
as in~\cite[chapter 2]{Sch99} applies. Because of~\cite[Corollary 4.24]{Bo21}, we furthermore
have all the results in \emph{loc. cit.} \S 3 in our disposal. In particular,
given any site $X$, $\Sh\left(X,\IndBan_{\I\left(F\right)}\right)$ is an abelian category.
It is complete and cocomplete, where both limits and colimits can be described
can be described explicitly, cf.~\cite[Lemma 3.15]{Bo21}.
Writing down colimits requires a notion of sheafification 
$\cal{F}\mapsto\cal{F}^{\sh}$ which is well-developed,
cf. \emph{loc. cit.} \S 3.4.

As $\I\colon\IndBan_{F}\to\IndBan_{\I(F)}$ is right exact,
cf.~\cite[Proposition 1.2.27]{Sch99}, it induces the functor
\begin{equation*}
  \Sh\left(X,\IndBan_{F}\right)
  \to
  \Sh\left(X,\IndBan_{\I\left(F\right)}\right)
  \cal{F}\mapsto \I\left(\cal{F}\right)
\end{equation*}
where $\I\left(\cal{F}\right)$ is the sheaf $U\mapsto\I\left(\cal{F}(U)\right)$.

Let $\E\in\{\IndBan_{F},\IndBan_{\I(F)}\}$. There is a \emph{\v{C}ech complex}
\begin{equation*}
  \check{\rC}^{n}\left( \mathfrak{U},\cal{F} \right)=
  \prod_{i_{0}<\dots<i_{n}}\cal{F}\left(V_{i_{0},\dots,i_{n}} \right)
\end{equation*}
for any presheaf $\cal{F}\in\Psh(X,\E)$ and any covering
$\mathfrak{U}=\left\{ U_{i}\to U\right\}_{i\in I}$. Here, we fixed
an ordering on the index set $I$ and wrote
$V_{i_{0},\dots,i_{n}}:=V_{i_{0}}\cap\dots\cap V_{i_{n}}$.
The \emph{augmented \v{C}ech complex $\check{\rC}_{\aug}^{\bullet}\left( \mathfrak{U},\cal{F} \right)$}
is
\begin{equation}\label{eq:augmentedcechcplx-recpaper}
  0 \to \cal{F}(U)
  \to \check{\rC}^{0}\left( \mathfrak{U},\cal{F} \right)
  \to \check{\rC}^{1}\left( \mathfrak{U},\cal{F} \right)
  \to\dots
\end{equation}
where $\cal{F}(U)$ sits in cohomological degree $-1$.

\begin{defn}
  An $\E$-sheaf has \emph{vanishing higher \v{C}ech cohomology}
  if for every covering $\mathfrak{U}$, the associated \v{C}ech complex
  $\check{\rC}\left( \mathfrak{U},\cal{F} \right)^{\bullet}$ is strictly exact
  in strictly positive degrees.
\end{defn}

$\Sh\left(X,\IndBan_{\I(F)}\right)$ has enough injectives because
$\IndBan_{\I(F)}$ has enough injectives,
hence the following Lemma~\ref{lem:strictlyexactcechcomplexes-sheafcohomology-reconstructionpaper}
makes sense.

\begin{lem}\label{lem:strictlyexactcechcomplexes-sheafcohomology-reconstructionpaper}
  Let $\cal{F}^{\psh}$ denote a presheaf of $F$-ind-Banach spaces on $X$.
  $\cal{F}:=\left(\cal{F}^{\psh}\right)^{\sh}$ is its sheafification.
  Assume that, for every covering $\mathfrak{V}$
  of any $V\in X$, the augmented \v{C}ech complex $\check{\cal{C}}_{\aug}^{\bullet}\left(\mathfrak{V},\cal{F}^{\psh}\right)$
  is strictly exact. Then, for every $U\in X$, the canonical morphism
  \begin{equation*}
    \I\left(\cal{F}^{\psh}(U)\right)
    \isomap
    \R\Gamma\left( U , \I\left(\cal{F}\right) \right)
  \end{equation*}
  is an isomorphism in the derived category of $\IndBan_{\I(F)}$.
  In particular, $\cal{F}^{\psh}(U)\isomap\cal{F}(U)$.
\end{lem}

\begin{proof}
  $\I$ commutes with products because $\IndBan_{\I(F)}$ is elementary,
  cf.~\cite[Proposition 2.1.15(a)]{Sch99}.
  \begin{equation}\label{eq:lem:strictlyexactcechcomplexes-sheafcohomology-reconstructionpaper}
    \I\left(\check{\cal{C}}_{\aug}^{\bullet}\left(\mathfrak{V},\cal{F}^{\psh}\right)\right)
    \cong\check{\cal{C}}_{\aug}^{\bullet}\left(\mathfrak{V},\I\left(\cal{F}^{\psh}\right)\right)    
  \end{equation}
  follows, where $\I\left(\cal{F}^{\psh}\right)$ is the presheaf
  $U\mapsto\I\left(\cal{F}^{\psh}(U)\right)$.
  This complex~(\ref{eq:lem:strictlyexactcechcomplexes-sheafcohomology-reconstructionpaper})
  is exact by~\cite[Corollary 1.2.28]{Sch99}.
  Thus $\I\left(\cal{F}^{\psh}\right)$ is a sheaf with vanishing higher \v{C}ech cohomology.
  This gives $\Gamma\left( U , \I\left(\cal{F}\right)\right)\cong\I\left(\cal{F}^{\psh}\right)(U)\cong\I\left(\cal{F}^{\psh}(U)\right)$
  and the vanishing of $\Ho^{i}\left( U , \I\left(\cal{F}\right)\right)=0$ for $i>0$.
  The latter follows from a version of the Čech-to-cohomology spectral sequence
  as in~\cite[\href{https://stacks.math.columbia.edu/tag/03OW}{Tag 03OW}]{stacks-project}
  for $\LH\left(\E\right)$-valued sheaves.
\end{proof}

We have discussed in \S\ref{subsec:LHindBanachmodules-reconstructionpaper}
that $\IndBan_{\I\left(F\right)}$ carries a closed symmetric monoidal structure.
\cite[Lemma 3.19]{Bo21} explains
how to turn the category of $\Sh\left(X,\IndBan_{\I\left(F\right)}\right)$ into a closed
symmetric monoidal category with monoidal operation
$\widehat{\otimes}_{\I\left(F\right)}$ and internal homomorphisms
$\shHom_{\I\left(F\right)}$. The unit is the constant sheaf $\I\left(F\right)_{X}$.



\section{Condensed mathematics}
\label{subsec:condensedmath-recpaper}

The language of ind-Banach spaces
is not well-suited for computations with continuous group cohomology.
We therefore make use of condensed mathematics~\cite{CondensedMaths}.
With the help of~\cite[\S 3.2]{BSSW2024_rationalizationoftheKnlocalsphere}
and~\cite[Appendix A]{Bosco21},
we see in \S\ref{subsec:contgrpcohomology} that this solves our issues.

We ignore set-theoretic issues and refer the reader to~\cite[Lecture I]{CondensedMaths}
and~\cite[Appendix A]{Bosco21} for details. Let $\Prof$ denote the site whose underlying
category is the category of profinite sets; the covers are the jointly surjective continuous maps.
A \emph{condensed set} is a sheaf of sets
of on $\Prof$. There is a functor
$X\mapsto\underline{X}$ from topological groups to condensed sets
via $\underline{X}(S):=\Hom_{\cont}\left(S,X\right)$, which denotes the
set of continuous maps $S\to X$. Similarly, 
a \emph{condensed abelian group} is a sheaf
of abelian groups on $\Prof$. The same recipe
$G\mapsto\underline{G}$, $\underline{G}(S):=\Hom_{\cont}\left(S,G\right)$
as above defines a functor from topological groups to condensed abelian groups.
As explained in~\cite[Lecture I, page 13]{CondensedMaths},
the category of condensed abelian groups has has a canonical symmetric monoidal tensor product $\otimes$ which
makes it a closed symmetric monoidal category.
We can therefore speak of the category of $\underline{k}$-module objects, where
$k$ is as in \S\ref{subsec:conventions-reconstructionpaper}.
This is the \emph{category of condensed $k$-vector spaces} $\Vect_{k}^{\cond}$.

We refer the reader to~\cite[Proposition A.2]{Bosco21} for the definition
of the full subcategory $\Vect_{\QQ_{p}}^{\solid}\subseteq\Vect_{\QQ_{p}}^{\cond}$
of \emph{solid $\QQ_{p}$-vector spaces}. \emph{Loc. cit.} explains that
it is abelian, and, as a full subcategory of $\Vect_{\QQ_{p}}^{\solid}\subseteq\Vect_{\QQ_{p}}^{\cond}$,
stable under all limits, colimits, and extensions. It carries
a symmetric monoidal tensor product $\otimes_{\QQ_{p}}^{\blacksquare}$;
we again refer the reader again to~\cite[Proposition A.2]{Bosco21} for its definition.

By~\cite[Proposition A.12]{Bosco21}, we have
\begin{equation*}
  \Ban_{\QQ_{p}}\to\Vect_{\QQ_{p}}^{\solid}, V\mapsto\underline{V}.
\end{equation*}
This functor is strongly monoidal by \emph{loc. cit.} Proposition A.25.
As $k$ is canonically a monoid in $\Ban_{\QQ_{p}}$,
$\underline{k}$ is a monoid in $\Vect_{\QQ_{p}}^{\solid}$. We denote by
\begin{equation*}
  \Vect_{k}^{\solid}:=\Mod\left(\underline{k}\right)
\end{equation*}
the categories of $\underline{k}$-module objects.
This is the category of \emph{solid $k$-vector spaces}.
Following the constructions in~\cite[\S 2]{BBK},
$\Vect_{k}^{\solid}$ becomes a closed symmetric monoidal abelian category
with unit $\underline{k}$. Denote the monoidal operation by $\otimes_{k}^{\blacksquare}$.
By Lemma~\ref{lem:Bank-is-Modk-recpaper} below, one gets the functor
\begin{equation}\label{eq:Bank-to-Vectksolid-recpaper}
  \Ban_{k}\to\Vect_{k}^{\solid}, V\mapsto \underline{V}
\end{equation}

\begin{lem}\label{lem:Bank-is-Modk-recpaper}
  For $k\in\Ban_{\QQ_{p}}$,
  $\Ban_{k}=\Mod\left(k\right)$ as monoidal categories.
\end{lem}

\begin{proof}
  Clearly, $\Ban_{k}$ and $\Mod\left(k\right)$ coincide as categories.
  It remains to compare their monoidal structures: This is~\cite[Lemma A.54]{BBK}.
\end{proof}

We prove a few simple lemmata for future reference.

\begin{lem}\label{lem:Vectksolid-to-Vectkcond-cont-cocont-reconstructionpaper}
  The canonical functor $\Vect_{k}^{\solid}\to\Vect_{\QQ_{p}}^{\cond}$
  commutes with all limits and colimits.
\end{lem}

\begin{proof}
  $\Vect_{\QQ_{p}}^{\solid}\subseteq\Vect_{\QQ_{o}}^{\cond}$
  is stable under all limits and colimits, cf.~\cite[Proposition A.2(ii)]{Bosco21},
  and the forgetful functor $\Vect_{k}^{\solid}\to\Vect_{\QQ_{p}}^{\solid}$
  commutes with limits and colimits by~\cite[Lemma 2.3]{BBK}.
\end{proof}

\begin{lem}\label{lem:colimitsVectkcond-exact}
  Filtered colimits in $\Vect_{\QQ_{p}}^{\cond}$ are left exact.
\end{lem}

\begin{proof}
  Given a filtered systems $\left\{\phi_{i}\colon W_{i}\to Z_{i}\right\}_{i}$ of condensed $k$-vector spaces,
  we claim that the canonical morphism
    $\ker\varinjlim_{i}\phi_{i}
    \to
    \varinjlim_{i}\ker\phi_{i}$
  is an isomorphism. This is the case if it is an isomorphism of condensed abelian groups,
  which is well-known, cf.~\cite[Theorem 2.2]{CondensedMaths}.
\end{proof}

\begin{lem}\label{lem:colimitsVectksolid-exact}
  Filtered colimits in $\Vect_{k}^{\solid}$ are exact.
\end{lem}

\begin{proof}
  They commute with cokernels. It remains to check that given a filtered system
  $\left\{\phi_{i}\colon W_{i}\to Z_{i}\right\}_{i}$ of solid $k$-vector spaces,
  the canonical morphism
    $\ker\varinjlim_{i}\phi_{i}
    \to
    \varinjlim_{i}\ker\phi_{i}$
  is an isomorphism. If $\iota$ denotes the canonical functor $\Vect_{k}^{\solid}\to\Vect_{\QQ_{p}}^{\cond}$, then
  one may check that
    $\ker\varinjlim_{i}\iota\left(\phi_{i}\right)
    \to
    \varinjlim_{i}\ker\iota\left(\phi_{i}\right)$
  is an isomorphism, cf. Lemma~\ref{lem:Vectksolid-to-Vectkcond-cont-cocont-reconstructionpaper}.
  This follows from Lemma~\ref{lem:colimitsVectkcond-exact}.
\end{proof}

\begin{defn}\label{defn:underlineHomctsSV-recpaper}
  Given a profinite set $S$ and a $k$-Banach space $V$, $\intHom_{\cont}\left(S,V\right)$
  denotes the seminormed $k$-vector space of all continuous maps $S\to V$
  equipped with the supremum norm.
\end{defn}

With the notation as in Definition~\ref{defn:underlineHomctsSV-recpaper},
$\intHom_{\cont}\left(S,V\right)$ is a $k$-Banach space because $S$ is compact.

\begin{lem}\label{lem:HomcontTtimessV-is-HomcontTHomcontSV-reconstructionpaper}
  Let $S$ and $T$ be profinite sets, $V$ is a $k$-Banach space. Then the canonical map
  \begin{equation*}
    \Hom_{\cont}\left(T\times S , V\right)
    \longrightarrow
    \Hom_{\cont}\left(T,\intHom_{\cont}\left(S,V\right)\right)
  \end{equation*}
  is a bijection.
\end{lem}

\begin{proof}
  This is well-known, since the norm on $\intHom_{\cont}\left(S,V\right)$
  induces the compact-open topology.
\end{proof}

Define internal homomorphisms as follows:
For all condensed sets $X,Y$ and $S\in\Prof$,
\begin{equation*}
  \intHom(X,Y)(S):=\Hom(X\times S,Y).
\end{equation*}

\begin{lem}\label{lem:intHomunderlineSunderlineV-is-underlineHomcontSV-reconstructionpaper}
  Let $S$ be a profinite set, $V$ is a $k$-Banach space. Then, functorially,
  \begin{equation*}
    \intHom\left(\underline{S},\underline{V}\right)
    \cong\underline{\intHom_{\cont}\left(S,V\right)}.
  \end{equation*}
\end{lem}

\begin{proof}
 The following direct computation
  \begin{align*}
    \intHom\left(\underline{S},\underline{V}\right)(T)
    &=\Hom\left(\underline{S\times T},\underline{V}\right) \\
    &=\underline{V}\left(S\times T\right) \\
    &=\Hom_{\cont}\left(S\times T,V\right) \\
    &\stackrel{\text{\ref{lem:HomcontTtimessV-is-HomcontTHomcontSV-reconstructionpaper}}}{=}
      \Hom_{\cont}\left(T,\intHom_{\cont}\left(S,V\right)\right) \\
    &=\intHom_{\cont}\left( S , V \right)(T)
  \end{align*}
  for every profinite set $T$ gives the result.
\end{proof}

Now we relate ind-Banach spaces to solid vector spaces.
$\Vect_{k}^{\solid}$ is cocomplete
since $\Vect_{\QQ_{p}}^{\solid}$ is cocomplete, cf.~\cite[Proposition A.2(ii)]{Bosco21}. Thus
(\ref{eq:Bank-to-Vectksolid-recpaper}) extends to the functor
\begin{equation}\label{eq:defn-solidificationfunctor-recpaper}
  \IndBan_{k}\to\Vect_{k}^{\solid},
  \text{``}\varinjlim_{i}\text{"}V_{i}
  \mapsto\underline{\text{``}\varinjlim_{i}\text{"}V_{i}}:=\text{``}\varinjlim_{i}\text{"}\underline{V_{i}}.
\end{equation}

\begin{notation}
  (\ref{eq:defn-solidificationfunctor-recpaper}) is the \emph{solidification functor}.
\end{notation}

\begin{lem}\label{lem:intHomunderlineSunderlineV-is-underlineHomcontSV-indBansetting-reconstructionpaper}
  Let $S$ be a profinite set, $V$ is a $k$-ind-Banach space. Then, functorially,
  \begin{equation*}
    \intHom\left(\underline{S},\underline{V}\right)
    \cong\underline{\intHom_{\cont}\left(S,V\right)}.
  \end{equation*}
\end{lem}

\begin{proof}
  Write $V=\text{``}\varinjlim_{i}\text{"}V_{i}$.
  Given a profinite set $L$, we claim that the canonical morphism
  \begin{equation}\label{eq:--lem:intHomunderlineSunderlineV-is-underlineHomcontSV-indBansetting-reconstructionpaper}
    \varinjlim_{i}\left(\underline{V_{i}}(L)\right)\isomap\underline{V}(L)
  \end{equation}
  is an isomorphism of $k$-vector spaces.
  This follows from~\cite[\href{https://stacks.math.columbia.edu/tag/0738}{Tag 0738}]{stacks-project}.
  \emph{Loc. cit.} applies because condensed sets are sheaves on sites of profinite sets,
  which are quasi-compact. Now compute
  \begin{equation*}
    \intHom\left(\underline{S},\underline{V}\right)(T)
    =\Hom\left(\underline{S\times T},\underline{V}\right)
    =\underline{V}(S\times T)
    \stackrel{\text{\ref{eq:--lem:intHomunderlineSunderlineV-is-underlineHomcontSV-indBansetting-reconstructionpaper}}}{\cong}
    \varinjlim_{i}\left(\underline{V_{i}}(S\times T)\right)
    \cong\varinjlim_{i}\left(\intHom\left(\underline{S},\underline{V_{i}}\right)(T)\right).
  \end{equation*}
  for every profinite set $T$.
  That is,
  $\intHom\left(\underline{S},\underline{V}\right)\cong\varinjlim_{i}\intHom\left(\underline{S},\underline{V}_{i}\right)$.
  Using this fact, one deduces
  Lemma~\ref{lem:intHomunderlineSunderlineV-is-underlineHomcontSV-indBansetting-reconstructionpaper}
  directly from Lemma~\ref{lem:intHomunderlineSunderlineV-is-underlineHomcontSV-reconstructionpaper}.
\end{proof}

\begin{lem}\label{lem:intHomunderlineSunderlineV-is-underlineHomcontSV-reconstructionpaper}
  Let $S$ be a profinite set, $V$ is a $k$-Banach space. Then, functorially,
  \begin{equation*}
    \intHom\left(\underline{S},\underline{V}\right)
    \cong\underline{\intHom_{\cont}\left(S,V\right)}.
  \end{equation*}
\end{lem}

\begin{proof}
 The following computation
  \begin{align*}
    \intHom\left(\underline{S},\underline{V}\right)(T)
    &=\Hom\left(\underline{S\times T},\underline{V}\right) \\
    &=\underline{V}\left(S\times T\right) \\
    &=\Hom_{\cont}\left(S\times T,V\right) \\
    &\stackrel{\text{\ref{lem:HomcontTtimessV-is-HomcontTHomcontSV-reconstructionpaper}}}{=}
      \Hom_{\cont}\left(T,\intHom_{\cont}\left(S,V\right)\right) \\
    &=\intHom_{\cont}\left( S , V \right)(T)
  \end{align*}
  for every profinite set $T$ gives the result.
\end{proof}

\begin{lem}\label{lem:IndBan-to-Solid-stronglylefteexact-reconstructionpaper}
  The solidification functor $\IndBan_{k}\to\Vect_{k}^{\solid}$, $V\mapsto\underline{V}$ is strongly left exact.
\end{lem}

\begin{proof}
  By~\cite[Remark 2.3]{BBB16}
  and Corollary~\ref{cor:filteredcol-inIndBan-stronglyexact},
  it suffices to check that $V\mapsto\underline{V}$ preserves kernels of
  bounded linear maps between $k$-Banach spaces.
  We may compute the kernels in the categories $\Ban_{\QQ_{p}}$
  and $\Vect_{\QQ_{p}}^{\solid}$, respectively. Thus~\cite[Lemma A.15]{Bosco21} applies.
\end{proof}

\begin{lem}\label{lem:IndBan-to-Solid-strictlyexact-reconstructionpaper}
  The solidification functor $\IndBan_{k}\to\Vect_{k}^{\solid}$, $V\mapsto\underline{V}$ is strictly exact.
\end{lem}

\begin{proof}
  Given a strictly exact sequence
  $V^{\prime}\stackrel{e^{\prime}}{\longrightarrow}V\stackrel{e}{\longrightarrow}V^{\prime\prime}$
  of $k$-ind-Banach spaces, we have to check that
  $\underline{V^{\prime}}\stackrel{\underline{e^{\prime}}}{\longrightarrow}
    \underline{V}\stackrel{\underline{e}}{\longrightarrow}\underline{V^{\prime\prime}}$
  is exact. That is,
  $\underline{e}^{\prime}\colon\underline{V}^{\prime}\to \ker\underline{e}$ has to be an epimorphism.
  Since $e^{\prime}\colon V\to\ker e$ is a strict epimorphism, and by
  Lemma~\ref{lem:IndBan-to-Solid-stronglylefteexact-reconstructionpaper},
  it suffices to check that $V\mapsto\underline{V}$ preserves strict epimorphisms.
  By~\cite[Remark 2.3]{BBB16}
  and Corollary~\ref{cor:filteredcol-inIndBan-stronglyexact},
  it suffices to check that $V\mapsto\underline{V}$ preserves strict
  epimorphisms between $k$-Banach spaces. But maps in
  $\Ban_{k}$, respectively $\Vect_{k}^{\solid}$, are epimorphisms if and only if they are
  epimorphisms in $\Ban_{\QQ_{p}}$, respectively $\Vect_{\QQ_{p}}^{\solid}$.
  Thus we can invoke the arguments in the~\cite[proof of Lemma A.15]{Bosco21}.
\end{proof}

\begin{lem}\label{lem:IndBan-to-Solid-preservesdirectsums-reconstructionpaper}
  The solidification functor $\IndBan_{k}\to\Vect_{k}^{\solid}$, $V\mapsto\underline{V}$ preserves direct sums.
\end{lem}

\begin{proof}
  $\Ban_{k}\to\Vect_{k}^{\solid}$ preserves finite direct sums. Thus
  Lemma~\ref{lem:IndBan-to-Solid-preservesdirectsums-reconstructionpaper} follows
  because the solidification functor $\IndBan_{k}\to\Vect_{k}^{\solid}$ preserves filtered colimits by construction
  and direct sums are filtered colimits of finite direct sums.
\end{proof}

In the following, we show that the converse of Lemma~\ref{lem:IndBan-to-Solid-strictlyexact-reconstructionpaper}
holds in one particular instance. 

\begin{notation}
  Given a $k$-Banach space $V$, denote by $|V|$ its underlying abstract $k$-vector space.
  This construction extends to a functor
  \begin{equation*}
    \IndBan_{k} \to \Mod(|k|), \text{``}\varinjlim_{i}\text{"}V_{i}\mapsto\varinjlim_{i}|V_{i}|.
  \end{equation*}
  On the other hand, given a condensed $k$-vector space $W$, we get the abstract $k$-vector space
  $W(*)$.
\end{notation}

\begin{lem}\label{lem:indBan-solid-underlyingabstractvs-thesame-reconstructionpaper}
  For any $k$-ind-Banach space $V$, we have the canonical isomorphism
  $|V|\cong\underline{V}(*)$.
\end{lem}

\begin{proof}
  If $V\in\Ban_{k}$, then the isomorphism is
    $|V|\isomap\Hom_{\cont}\left(*,V\right)=\underline{V}(*)$.
  If $V=\text{``}\varinjlim_{i}\text{''}V_{i}$ is an ind-Banach space,
  then we find from the above that it suffices to check that the canonical map
  \begin{equation*}
    \varinjlim_{i}\underline{V_{i}}(*) \to \left(\varinjlim_{i}\underline{V_{i}}\right)(*)
  \end{equation*}
  is an isomorphism. As $*$ is extremally disconnected, this follow from
  the~\cite[first paragraph of the proof of Theorem 2.2]{CondensedMaths}.
\end{proof}

\begin{lem}\label{lem:condensed-takingunderlyingabstractmodule-is-exact}
  The functor $\Vect_{k}^{\solid}\to\Mod(|k|)$, $W\mapsto W(*)$ is exact.
\end{lem}

\begin{proof}
  By~\cite[Proposition A.2(ii)]{Bosco21}, we may show that
  $\Vect_{k}^{\solid}\to\Mod(|k|)$, $W\mapsto W(*)$ is exact.
  As $*$ is extremally disconnected, invoke
  the~\cite[first paragraph of the proof of Theorem 2.2]{CondensedMaths}.
\end{proof}

\begin{defn}\label{defn:bornologicalspace-reconstructionpaper}
  An $F$-ind-Banach space is \emph{bornological} if it is isomorphic to an object $\text{``}\varinjlim\text{"}_{i}E_{i}$
  where all the structural maps $E_{i}\to E_{j}$ are injective. A \emph{complete bornological $F$-vector space}
  is a bornological $F$-ind-Banach space.
  $\CBorn_{F}$ denotes the full subcategory of $\IndBan_{F}$ of complete bornological $F$-vector spaces.
\end{defn}

\begin{defn}\label{defn:bornology-countable-basis-reconstructionpaper}
  Given a complete bornological space $k$-vector space $E$,
  we say that its \emph{bornology has countable basis} if it is isomorphic
  to a $k$-ind-Banach space $\text{``}\varinjlim\text{''}_{n\in\NN}E_{n}$
  where all the structural maps $E_{n}\to E_{m}$
  for $m\geq n$ are injective.
\end{defn}

Here is the aforementioned converse of Lemma~\ref{lem:IndBan-to-Solid-strictlyexact-reconstructionpaper}:

\begin{prop}\label{prop:solidexact-implies-indBanachstrictlyexact-reconstructionpaper}
  Consider a sequence $E^{\prime}\to E\to E^{\prime\prime}$ of complete bornological
  $k$-vector space such that the bornology of $E^{\prime}$ has countable basis.
  If $\underline{E^{\prime}}\to\underline{E}\to\underline{E^{\prime\prime}}$ is exact
  as a sequence of solid $k$-vector spaces, then $E^{\prime}\to E\to E^{\prime\prime}$ is strictly exact.
\end{prop}

\begin{proof}
  If $\underline{E^{\prime}}\to\underline{E}\to\underline{E^{\prime\prime}}$
  is exact as a sequence of solid $k$-vector spaces, then it is exact as a sequence
  of condensed $k$-vector spaces by~\cite[Proposition A.2(ii)]{Bosco21}.
  Lemma~\ref{lem:condensed-takingunderlyingabstractmodule-is-exact}
  gives that $\underline{E^{\prime}}(*)\to\underline{E}(*)\to\underline{E^{\prime\prime}}(*)$
  is an exact sequence of abstract $k$-vector spaces.
  By Lemma~\ref{lem:indBan-solid-underlyingabstractvs-thesame-reconstructionpaper},
  $|E^{\prime}| \to |E| \to |E^{\prime\prime}|$ is exact.
  Now apply a version of the open mapping theorem,
  cf.~\cite[Theorem 4.9]{Ba15}, to deduce that $E^{\prime}\to E\to E^{\prime\prime}$ is strictly exact.
  This last step used that the bornology of $E^{\prime}$ has countable basis.
\end{proof}

We continue with further properties of the solidification functor $V\mapsto\underline{V}$.

\begin{lem}\label{lem:IndBan-to-Solid-canonicallystronglymonoidal-reconstructionpaper}
  The solidification functor $\IndBan_{k}\to\Vect_{k}^{\solid}$, $V\mapsto\underline{V}$ is strongly monoidal.
\end{lem}

\begin{proof}
  Both categories are closed, thus their monoidal structures commute with
  colimits. Since $V\mapsto\underline{V}$ commutes with filtered colimits,
  it suffices to check that its restriction $\Ban_{k}\to\Vect_{k}^{\solid}$
  is strongly monoidal. To show this, let $V,W\in\Ban_{k}$ and consider
  the sequence of $k$-Banach modules
  \begin{equation}\label{eq:1--lem:IndBan-to-Solid-canonicallystronglymonoidal-reconstructionpaper}
    k\widehat{\otimes}_{\QQ_{p}}k\widehat{\otimes}_{\QQ_{p}}W
    \stackrel{\phi}{\longrightarrow} k\widehat{\otimes}_{\QQ_{p}}W
    \stackrel{\psi}{\longrightarrow} W
    \longrightarrow 0.
  \end{equation}
  Here, $\phi$ is
  $\mu\widehat{\otimes}\lambda\widehat{\otimes}w\mapsto\mu\lambda\widehat{\otimes}w-\mu\widehat{\otimes}\lambda w$
  and $\psi$ is $\lambda\widehat{\otimes}w\mapsto\lambda w$.
  (\ref{eq:1--lem:IndBan-to-Solid-canonicallystronglymonoidal-reconstructionpaper})
  is strictly exact by~\cite[Lemma 2.9]{BBBK18}.
  By \emph{loc. cit.} Theorem 3.50,
  applying $V\widehat{\otimes}_{k}-$ gives a strictly exact sequence
  \begin{equation*}
    V\widehat{\otimes}_{\QQ_{p}}k\widehat{\otimes}_{\QQ_{p}}W
    \stackrel{\phi}{\longrightarrow} V\widehat{\otimes}_{\QQ_{p}}W
    \stackrel{\psi}{\longrightarrow} V\widehat{\otimes}_{k}W
    \longrightarrow 0.
  \end{equation*}
  Thus Lemma~\ref{lem:IndBan-to-Solid-strictlyexact-reconstructionpaper}
  applies in the following computation:
  \begin{equation}\label{eq:2--lem:IndBan-to-Solid-canonicallystronglymonoidal-reconstructionpaper}
    \underline{V\widehat{\otimes}_{k}W}
    \stackrel{\text{\ref{lem:IndBan-to-Solid-strictlyexact-reconstructionpaper}}}{\cong}
    \coker\left(
      \underline{V}\otimes_{\QQ_{p}}^{\blacksquare}
      \underline{k}\widehat{\otimes}_{\QQ_{p}}^{\blacksquare}
      \underline{W}
      \to\underline{V}\otimes_{\QQ_{p}}^{\blacksquare}\underline{W}
      \right)
    =\underline{V}\otimes_{k}^{\blacksquare}\underline{W}.
  \end{equation}
  Here, we also used that $\Ban_{\QQ_{p}}\to\Vect_{\QQ_{p}}^{\solid}$
  is strongly monoidal, cf.~\cite[Proposition A.25]{Bosco21}. The equality at the right-hand side
  of~(\ref{eq:2--lem:IndBan-to-Solid-canonicallystronglymonoidal-reconstructionpaper})
  holds by definition, as the monoidal structure on $\Vect_{k}^{\solid}=\Mod\left(\underline{k}\right)$
  is the one we obtain from $\otimes_{\QQ_{p}}^{\blacksquare}$
  by following the construction as in~\cite[Definition 2.2]{BBK}.
\end{proof}

Thanks to Lemma~\ref{lem:IndBan-to-Solid-strictlyexact-reconstructionpaper},
\cite[Proposition 1.2.34]{Sch99} implies the existence of a functor
$\IndBan_{\I\left(k\right)}\to\Vect_{k}^{\solid}$, $E\mapsto\underline{E}$
which is exact and, for all $V\in\IndBan_{k}$,
\begin{equation}\label{eq:underlineIVisunderlineV-reconstructionpaper}
  \underline{\I\left(V\right)}=\underline{V}.
\end{equation}
From now on, we fix such a functor.

\begin{lem}
  Given a cochain complex $V^{\bullet}$ of $k$-ind-Banach spaces, for all $i\in\ZZ$,
  \begin{equation*}
    \Ho^{i}\left(\underline{V^{\bullet}}\right)
    \cong\underline{\LHo^{i}\left(V^{\bullet}\right)}.
  \end{equation*}
\end{lem}

\begin{proof}
  Compute
  $\Ho^{i}\left(\underline{V^{\bullet}}\right)
    \stackrel{\text{(\ref{eq:underlineIVisunderlineV-reconstructionpaper})}}{\cong}
      \Ho^{i}\left(\underline{\I\left(V^{\bullet}\right)}\right)
    \cong\underline{\Ho^{i}\left(\I\left(V^{\bullet}\right)\right)}
    \stackrel{\text{\ref{lem:LH-vs-H-reconstructionpaper}}}{\cong}\underline{\LHo^{i}\left(V^{\bullet}\right)}$.
\end{proof}

Let $X$ denote a site.

\begin{defn}\label{defn:solidificationssheaves}
  Given a sheaf $\cal{F}$ of $k$-ind-Banach spaces on $X$,
  its \emph{solidification}
  $\underline{\cal{F}}$ is the sheafification of the presheaf $U\mapsto\underline{\cal{F}(U)}$.
\end{defn}

\begin{lem}\label{lem:sectionsofunderlineF}
  Let $\cal{F}$ denote a sheaf of $k$-ind-Banach spaces on
  $X$, and assume every covering in $X$ is finite. Then, for all $U\in X$,
  we have the canonical isomorphism
  \begin{equation*}
    \underline{\cal{F}(U)}\isomap\underline{F}(U).
  \end{equation*}
\end{lem}

\begin{proof}
  We have to check that $U\mapsto\underline{F(U)}$ is a sheaf.
  To do this, we consider a covering $\mathfrak{U}$ of $U\in X$
  and the associated strictly exact sequences
  \begin{equation}\label{eq:sectionsofunderlineF-exactsequence}
    0\to\mathcal{F}(U)
     \to\prod_{V\in\mathfrak{U}}\mathcal{F}(V)
     \to\prod_{W,W^{\prime}\in\mathfrak{U}}\mathcal{F}\left(W\times_{U} W^{\prime}\right).
  \end{equation}
  $\mathfrak{U}$ is finite,
  and so are all the products in~(\ref{eq:sectionsofunderlineF-exactsequence}).
  Therefore,
  \begin{equation*}
    0\to\underline{\mathcal{F}(U)}
     \to\prod_{V\in\mathfrak{U}}\underline{\mathcal{F}(V)}
     \to\prod_{W,W^{\prime}\in\mathfrak{U}}\underline{\mathcal{F}\left(W\times_{U} W^{\prime}\right)}
  \end{equation*}
  is exact by Lemma~\ref{lem:IndBan-to-Solid-strictlyexact-reconstructionpaper}, as desired.
\end{proof}

The construction $\cal{F}\mapsto\underline{\cal{F}}$ is functorial.

\begin{lem}\label{lem:underlineF-commuteswithdirectsums-finitecov}
  $\Sh\left( X , \IndBan_{k} \right)\to \Sh\left(X , \Vect_{k}^{\solid} \right)$,
  $\cal{F}\mapsto\underline{\cal{F}}$ commutes with arbitrary direct sums
  if $X$ admits only finite coverings.
\end{lem}

\begin{proof}
  This follows from Lemma~\ref{lem:directsumsheaves-finitecoverings}
  since the solidification functor is cocontinuous.
\end{proof}


\chapter{Miscellaneous}
\label{ch:miscellaneous}

We discuss technical results which will be used later on.
We recommend to skip these on a first reading.


\section{Algebraic Localisations}
\label{subsec:localisations}

Fix a seminormed ring $R$ and an element $r\in R$.

\begin{notation}\label{notation:localisation}
  Given a seminormed $R$-module $M$,
  equip $M[1/r]$ with the seminorm $\|n\|:=\inf\|m\|/|r^{i}|$.
  The infinum varies along expressions
  $n=m/r^{i}$ with $m\in M$.
\end{notation}


\begin{lem}\label{lem:localisation-normedotimes}
  $M\otimes_{R}R[1/r]\isomap M[1/r]$ as seminormed
  $R[1/r]$-modules for any $R$-Banach module $M$.
\end{lem}

\begin{proof}
  Denote the canonical map $M\to M[1/r]$ by $\phi$.
  We aim to exploit Yoneda's Lemma to show that it is an isomorphism: We claim
  \begin{equation*}
    \Hom_{R[1/r]}\left(M[1/r],V\right)\to\Hom_{R}\left(M,V\right), f \mapsto f\circ\phi
  \end{equation*}
  is a bijection for every seminormed $R[1/r]$-module $V$. This is clear when we consider
  the $\Hom$ in the category of abstract modules. It remains to check that the $\Hom$
  also coincide when they only capture the bounded linear maps. That is, we have to
  check the following for every $R[1/r]$-linear map $f\colon M[1/r]\to V$:
  $f$ is bounded if and only if $f\circ\phi$ is bounded.
  
  The implication $\implies$ follows because $\phi$ is bounded. To prove the converse,
  fix $n\in M[1/r]$ and a presentation $n=m/r^{i}$. $C>0$ is a bound on
  the scalar multiplication $R[1/r]\times V\to V$.
  Then
  \begin{equation*}
    \|f(n)\|
    =\|\frac{(f\circ\phi)(m)}{r^{i}}\|
    \leq C|\frac{1}{r^{i}}|\|(f\circ\phi)(m)\|
    \leq C|1|\|f\circ\phi\|\frac{\|m\|}{|r^{i}|}
  \end{equation*}
  Taking the infinum over all such expressions $n=m/r^{i}$, we find
  $\|f\|\leq C|1|\|f\circ\phi\|$.
\end{proof}

\begin{lem}\label{lem:SNrm-to-SNrm-localisation-preserves-certain-kernels}
  $-\otimes_{R}R[1/r]\colon\SNrm_{R}\to\SNrm_{R}$ preserves kernels
  of maps $\phi\colon M\to N$ when $N$ is $r$-torsion free.
\end{lem}

\begin{proof}
  There is a canonical linear map $\tau\colon\left(\ker\phi\right)[1/r]\to\ker\left(\phi[1/r]\right)$,
  which is bijective. It remains to check that the seminorms on both sides coincide. Recall that
  for every $x\in\left(\ker\phi\right)[1/r]$,
  \begin{align*}
    \|x\| &= \inf_{\substack{x=m/r^{i} \\ m\in\ker\phi}}\frac{\|m\|}{|r^{i}|} \\
    \|\tau(x)\| &= \inf_{\substack{x=m/r^{i} \\ m\in M}}\frac{\|m\|}{|r^{i}|}.
  \end{align*}
  We may therefore compare the infinums' indexing sets
  \begin{align*}
    S_{\ker\phi} := \left\{ \left(m,r^{i}\right) \colon x= m/r^{i} \text{ and } m\in\ker\phi\right\}, \\
    S_{M}         := \left\{ \left(m,r^{i}\right) \colon x= m/r^{i} \text{ and } m\in M\right\}.
  \end{align*}
  We have to check that both sets coincide. The inclusion $S_{\ker\phi}\subseteq S_{M}$
  is clear. Consider $\left(m,r^{i}\right)\in S_{M}$ to prove $\supseteq$. Now pick some
  $\left(\widetilde{m},r^{\widetilde{i}}\right)\in S_{\ker\phi}$.
  The equality $m/r^{i}=x=\widetilde{m}/r^{\widetilde{i}}$ implies $m/1\in\ker\left(\phi[1/r]\right)$.
  That is $\phi(m)/1=0$, thus $\phi(m)\in N$ is killed by some power of $r$. Because
  $N$ is $r$-torsion free, this implies $\phi(m)=0$. We find $\left(m,r^{i}\right)\in S_{\ker\phi}$,
  as desired.
\end{proof}

\begin{lem}\label{em:commute-htensor-with-completion}
  The canonical morphism
    $\widehat{M \otimes_{R} N} \stackrel{\cong}{\longrightarrow} \widehat{M} \widehat{\otimes}_{R} \widehat{N}$
  is an isomorphism for any two
  normed $R$-modules $M$ and $N$.
\end{lem}

\begin{proof}
  This follows from the adjunctions.
\end{proof}

\begin{cor}\label{cor:completed-localisation-strictlyexact}
  Consider a strictly exact complex
  $M^{\prime}\stackrel{\phi^{\prime}}{\longrightarrow}M\stackrel{\phi}{\longrightarrow}M^{\prime\prime}$
  of $R$-Banach modules. If $M^{\prime\prime}$ is $r$-torsion free, then
  \begin{equation}\label{eq:completed-localisation-strictlyexact}
    M^{\prime}\widehat{\otimes}_{R}\widehat{R[1/r]}
    \stackrel{\phi^{\prime}\widehat{\otimes}_{R}\id}{\longrightarrow}
    M\widehat{\otimes}_{R}\widehat{R[1/r]}
    \stackrel{\phi\widehat{\otimes}_{R}\id}{\longrightarrow}
    M^{\prime\prime}\widehat{\otimes}_{R}\widehat{R[1/r]}
  \end{equation}
  is a strictly exact complex of $\widehat{R[1/r]}$-Banach modules.
\end{cor}

\begin{proof}
  By assumption, $\phi^{\prime}$ is strict and the canonical morphism
  $\iota\colon\im\phi^{\prime}\to\ker\phi$
  is an isomorphism. We apply $-\widehat{\otimes}_{R}R[1/r]$ to $\iota$:
  \begin{itemize}
    \item Regarding its domain,
    \begin{align*}
      \left(\im\phi^{\prime}\right)\otimes_{R}R[1/r]
      &=\ker\left(M\to\coker\phi^{\prime}\right)\otimes_{R}R[1/r] \\
      &\stackrel{\text{\ref{lem:SNrm-to-SNrm-localisation-preserves-certain-kernels}}}{\cong}
      \ker\left(M\otimes_{R}R[1/r]\to \coker\left(\phi^{\prime}\right)\otimes_{R}R[1/r]\right) \\
      &\cong
      \ker\left(M\otimes_{R}R[1/r]\to \coker\left(\phi^{\prime}\otimes_{R}\id_{R[1/r]}\right)\right) \\
      &=\im\left(\phi^{\prime}\otimes_{R}\id_{R[1/r]}\right).
    \end{align*}
    The application of Lemma~\ref{lem:SNrm-to-SNrm-localisation-preserves-certain-kernels} requires that
    the canonical morphism $M\to\coker\phi^{\prime}$ is strict and $\coker\phi^{\prime}$ does not have
    $r$-torsion. The strictness is formal, see~\cite[Remark 1.1.2(a)]{Sch99}. Regarding the torsion,
    consider $[m]\in\coker\phi^{\prime}=M/\ker\phi$ such that $r[m]=0$. That is $rm\in\ker\phi$.
    This implies $r\phi(m)=\phi(rm)=0$, thus $\phi(m)=0$ because $M^{\prime\prime}$ is $r$-torsion free.
    Therefore, $[m]=0$.
    \item Regarding its codomain,
    \begin{equation*}
      \left(\ker\phi\right)\otimes_{R}R[1/r]
      \cong\ker\left(\phi\otimes_{R}\id_{R[1/r]}\right)
    \end{equation*}
    by Lemma~\ref{lem:SNrm-to-SNrm-localisation-preserves-certain-kernels}.
  \end{itemize}
  It follows that $\iota\otimes_{R}\id_{R[1/r]}$ coincides with
  the canonical morphism
  $\im\left(\phi^{\prime}\otimes_{R}\id_{R[1/r]}\right)
  \to\ker\left(\phi\otimes_{R}\id_{R[1/r]}\right)$, which is thus an isomorphism.
  That is
  \begin{equation*}
    M^{\prime}\otimes_{R}R[1/r]
    \stackrel{\phi^{\prime}\otimes_{R}\id}{\longrightarrow}
    M\otimes_{R}R[1/r]
    \stackrel{\phi\widehat{\otimes}_{R}\id}{\longrightarrow}
    M^{\prime\prime}\otimes_{R}R[1/r]
  \end{equation*}
  is strictly exact. Now apply the separated completion functor; the
  Lemmata~\ref{em:commute-htensor-with-completion}
  and~\ref{lem:sepcompl-SNrmF-BanF-exact} imply
  that~(\ref{eq:completed-localisation-strictlyexact}) is strictly exact.
\end{proof}

Let  $F$ be a field which is complete with respect to a non-trivial
non-Archimedean valuation. Fix a \emph{pseudo-uniformiser} $\pi\in F$,  that
is $0<|\pi|<1$.

The Banach ring of \emph{power-bounded elements} is
$F^{\circ}=\{x\in F \colon |x|\leq 1\}$. Consider $R=F^{\circ}$
and $r=\pi$.

\begin{lem}\label{lem:localise-torsionfree-modules}
  Let $M$ denote an $F^{\circ}$-Banach module carrying
  the $\pi$-adic norm. Assume further that $M$ has no $\pi$-torsion.
  Then $M[1/\pi]$ is an $F$-Banach space, in particular
  $M\widehat{\otimes}_{F^{\circ}}F\isomap M[1/\pi]$.
\end{lem}

\begin{proof}[Proof of Lemma~\ref{lem:localise-torsionfree-modules}]
  $M$ is the unit ball of $M[1/\pi]$ because it is $\pi$-torsion free. Any given
  Cauchy sequence in $M[1/\pi]$ is bounded, thus we can assume it lies in
  $M$. Since $M$ is complete, it follows that such a sequence
  converges in $M$, therefore it converges in $M[1/\pi]$.
  This shows that $M[1/\pi]$ is complete.
  Finally, apply Lemma~\ref{lem:localisation-normedotimes}.  
\end{proof}

\begin{cor}\label{cor:completed-loc-exact-overF}
  Consider a strictly exact complex
  $M^{\prime}\stackrel{\phi^{\prime}}{\longrightarrow}M\stackrel{\phi}{\longrightarrow}M^{\prime\prime}$
  of $F^{\circ}$-Banach modules. If $M^{\prime\prime}$ is $\pi$-torsion free, then
  \begin{equation}\label{eq:completed-localisation-strictlyexact}
    M^{\prime}\widehat{\otimes}_{F^{\circ}}F
    \stackrel{\phi^{\prime}\widehat{\otimes}_{R}\id}{\longrightarrow}
    M\widehat{\otimes}_{F^{\circ}}F
    \stackrel{\phi\widehat{\otimes}_{R}\id}{\longrightarrow}
    M^{\prime\prime}\widehat{\otimes}_{F^{\circ}}F
  \end{equation}
  is a strictly exact complex of $F$-Banach spaces.
\end{cor}

\begin{proof}
  $\widehat{F^{\circ}[1/p]}\isomap F$ by Lemma~\ref{lem:localise-torsionfree-modules},
  thus Corollary~\ref{cor:completed-localisation-strictlyexact} applies.
\end{proof}

\begin{lem}\label{lem:completed-localisation-preserves-fiberproducts}
  Consider $F^{\circ}$-Banach modules $M\stackrel{\phi}{\rightarrow} T \stackrel{\psi}{\leftarrow} N$.
  If both $M$ and $N$ are $\pi$-torsion free,
  \begin{equation*}
    \left( M \times_{T} N\right) \widehat{\otimes}_{F^{\circ}}F
    \isomap \left(M\widehat{\otimes}_{F^{\circ}}F\right)
    \times_{\left(T\widehat{\otimes}_{F^{\circ}}F\right)}
    \left(N\widehat{\otimes}_{F^{\circ}}F\right),
  \end{equation*}
\end{lem}

\begin{proof}
  $\widehat{F^{\circ}[1/p]}\isomap F$ by Lemma~\ref{lem:localise-torsionfree-modules},
  thus Corollary~\ref{cor:completed-localisation-strictlyexact} applies
  again as
  $M \times_{T} N \cong \ker\left( M\times N \to T, (m,n)\mapsto \phi(m)-\psi(n) \right)$.
\end{proof}

\begin{lem}\label{lem:widehatotimesFcircF-preserves-strictmonos}
  Let $\phi\colon M\to N$ denote a morphism of $F^{\circ}$-Banach modules
  such that $N$ and $\coker\phi$ are $\pi$-torsion free.
  If $\phi$ is a strict monomorphism, then $\phi\widehat{\otimes}_{F^{\circ}}\id_{F}$
  is a strict monomorphism.
\end{lem}

\begin{proof}
  We fit $\phi$ into the strictly exact sequence $0\to M\to N\to\coker\phi\to0$.
  Now apply Corollary~\ref{cor:completed-loc-exact-overF}
  to find that
  $0\to M\widehat{\otimes}_{F^{\circ}}F \to N\widehat{\otimes}_{F^{\circ}}F \to\coker\phi\widehat{\otimes}_{F^{\circ}}F \to0$
  is a strictly exact sequence. In particular, $\phi\widehat{\otimes}_{F^{\circ}}\id_{F}$
  is a strict monomorphism.
\end{proof}

\begin{lem}\label{lem:HomcontS-commutes-completed-localisation-somenorms}
  Consider an $F^{\circ}$-Banach module $M$
  such that $\|\pi m\|=|\pi|\|m\|$ for all $m\in M$.
  Then the norm on $M\otimes_{F^{\circ}}F$ is equivalent to the norm given by
  $\|m\otimes 1/\pi^{i}\|=\|m\| / |\pi^{i}|$ for all $m\otimes 1/\pi^{i}\in M\otimes_{F^{\circ}}F$.
\end{lem}
  
\begin{proof}
  By Lemma~\ref{lem:localisation-normedotimes}, we may
  identify $\phi\colon M\otimes_{F^{\circ}}F\isomap M[1/\pi]$ as normed $k_{0}$-vector spaces.
  Here, the latter carries the following norm:
  $\|m/\pi^{i}\|$ is the infinum of $\|n\|/|\pi^{j}|$, varying over all expression $n/\pi^{j}=m/\pi^{i}$.
  The pullback of this norm along $\phi$ gives
  \begin{equation*}
    \|\frac{m}{\pi^{i}}\|
    =\inf_{n/\pi^{j}=m/\pi^{i}}\frac{\|n\|}{|\pi^{j}|}
    =\inf_{s\in\NN}\frac{\|\pi^{s}m\|}{|\pi^{s+i}|}
    =\inf_{s\in\NN}\frac{|\pi^{s}|\|m\|}{|\pi^{s+i}|}
    =\frac{\|m\|}{|\pi^{i}|},
  \end{equation*} 
  which is the desired norm.
\end{proof}


\section{Banach and ind-Banach modules of power series}
\label{subsec:Tubular}

Fix a Banach ring $R$.

\begin{notation}\label{notation:indexing-sets}
  Let $\Omega$ denote a possibly infinite set.
  $\NN^{(\Omega)}$ is the set of tuples
  $\alpha=\left( \alpha_{\omega} \right)_{\omega\in\Omega}\in\NN^{\Omega}$
  such that $|\alpha|:=\sum_{\omega\in\Omega}\alpha_{\omega}$ is finite. Write
  \begin{equation*}
    \zeta^{\alpha}:=\prod_{\omega\in\Omega}\zeta_{\omega}^{\alpha_{\omega}}
  \end{equation*}
  for a given set of formal variables $\left\{ \zeta_{\omega} \colon \omega\in\Omega\right\}$
  and all $\alpha=\left( \alpha_{\omega} \right)_{\omega\in\Omega}\in\NN^{(\Omega)}$.
\end{notation}




\begin{defn}\label{defn:restrictedpowerseries}
  Let $M$ denote an $R$-Banach module and $\Omega$ a fixed set.
  An element of the $R$-module $M\left\<\zeta_{\omega} \colon \omega\in \Omega\right\>$
  is a formal expression
  \begin{equation*}
    \sum_{\alpha\in\NN^{(\Omega)}}m_{\alpha}\zeta^{\alpha}
    \in \prod_{\alpha\in\NN^{(\Omega)}}M\zeta^{\alpha},
  \end{equation*}
  such that the sets
  $\left\{\alpha\in\NN^{(\Omega)} \colon m_{\alpha}\geq\epsilon \right\}$
  are finite for all $\epsilon>0$.
  We equip $M\left\<\zeta_{\omega} \colon \omega\in \Omega\right\>$ with the norm
  \begin{equation*}
    \|\sum_{\alpha\in\NN^{(\Omega)}}m_{\alpha}\zeta^{\alpha}\|
    :=\sup_{\alpha\in\NN^{(\Omega)}}\|m_{\alpha}\|.
  \end{equation*}
\end{defn}

\begin{remark}\label{rem:restrictedpowerseries-is-c0space}
  When $R=F$ is a field, complete with respect to a non-trivial non-Archimedean
  valuation, there is the canonical isomorphism
  \begin{align*}
    c_{0}\left(\NN^{(\Omega)}\right) &\isomap M\left\<\zeta_{\omega} \colon \omega\in \Omega\right\>, \\
    \left(\phi\colon\NN^{(\Omega)}\to F\right)&\mapsto\sum_{\alpha\in\NN^{(\Omega)}}\phi\left(\alpha\right)\zeta^{\alpha},
  \end{align*}
  cf.~\cite[\S 3]{Sch02}.
  We chose the notation $M\left\<\zeta_{\omega} \colon \omega\in \Omega\right\>$
  to emphasise that Definition~\ref{defn:restrictedpowerseries} is a generalisaton of Tate algebras,
  in view of \S\ref{sec:BanachandindBanachcompletions}.
\end{remark}

\begin{notation}
  $\Ban_{R}^{\leq1}$ denotes the category whose objects are $R$-Banach modules
  and whose morphisms are the $R$-linear maps which are bounded by $1$.
\end{notation}

Let $\left\{M_{i}\right\}_{i\in I}$ denote a family of $R$-Banach modules. Their
coproduct exists in $\Ban_{R}^{\leq1}$,
cf.~\cite[\S 5.1.1.1]{benbassat2024perspectivefoundationsderivedanalytic}.
Denote it by $\coprod_{i\in I}^{\leq1}M_{i}$. It is the
\emph{non-expanding coproduct}.

\begin{lem}\label{lem:infiniteTatealgebrasascontractingcoproductandc0}
  Let $M$ denote an $R$-Banach module and $\Omega$ a fixed set. Then
  the universal property of the non-expanding coproduct induces an isomorphism
  \begin{equation}\label{eq:infiniteTatealgebrasascontractingcoproductandc0-contractingcoproduct}
    {\coprod_{\alpha\in\NN^{(\Omega)}}}^{\leq 1} M \zeta^{\alpha}
    \isomap
    M\left\<\zeta_{\omega} \colon \omega\in \Omega\right\>
  \end{equation}
  of $R$-Banach modules.
\end{lem}

\begin{proof}
  By \cite[\S 5.1.1.1]{benbassat2024perspectivefoundationsderivedanalytic},
  $\coprod_{\alpha\in\NN^{(\Omega)}}^{\leq 1} M \zeta^{\alpha}$
  is the completion of $\bigoplus_{\alpha\in\NN^{(\Omega)}} M \zeta^{\alpha}$
  equipped with the norm
  \begin{equation*}
    \|\left( m_{\alpha} \zeta^{\alpha}\right)_{\alpha\in\NN^{(\Omega)}}\|:=\sup_{\alpha\in\NN^{(\Omega)}}\|m_{\alpha}\|.
  \end{equation*}
  On the other hand, $M\left\<\zeta_{\omega} \colon \omega\in \Omega\right\>$
  is the completion of $M\left[\zeta_{\omega} \colon \omega\in \Omega\right]$, the
  subspace of all finite sums $\sum_{\alpha\in\NN^{(\Omega)}}m_{\alpha}\zeta^{\alpha}$ carrying the
  induced norm
  \begin{equation*}
    \|\sum_{\alpha\in\NN^{(\Omega)}}m_{\alpha}\zeta^{\alpha}\|
    =\sup_{\alpha\in\NN^{(\Omega)}}\|m_{\alpha}\|.
  \end{equation*}
  The morphism~(\ref{eq:infiniteTatealgebrasascontractingcoproductandc0-contractingcoproduct})
  thus arises as the completion of the isomorphism
  \begin{equation*}
    \bigoplus_{\alpha\in\NN^{(\Omega)}} M \zeta^{\alpha}
    \isomap M\left[\zeta_{\omega} \colon \omega\in \Omega\right]
  \end{equation*}
  of normed $R$-modules.
\end{proof}

\begin{lem}\label{lem:extend-maps-to-restrictedpowerseries-modules}
  $M$ is an $R$-Banach module and $\Omega$ a fixed set.
  Given any $m:=\left(m_{\alpha}\right)_{\alpha\in\NN^{(\Omega)}}\subseteq M$
  with $\|rm_{\alpha}\|=|r|\|m_{\alpha}\|$ for all $r\in R$ and $\|m_{\alpha}\|\leq 1$
  for all $\alpha\in\NN^{(\Omega)}$,
  \begin{equation*}
    \sum_{\alpha\in\NN^{(\Omega)}}r_{\alpha}\zeta^{\alpha}
    \mapsto\sum_{\alpha\in\NN^{(\Omega)}}r_{\alpha}m_{\alpha}
  \end{equation*}  
  defines a morphism
  $R\left\< \zeta_{\omega}\colon\omega\in\Omega\right\> \maps M$
  of $R$-Banach modules.
\end{lem}

\begin{proof}
  By Lemma~\ref{lem:infiniteTatealgebrasascontractingcoproductandc0},
  we can exploit the universal property of the non-expanding coproduct.
  Thus we may check that the maps
  $R \to T$, $r\mapsto rm_{\alpha}$
  are bounded by $1$ for all $\alpha\in\NN^{\Omega}$: for all $r\in R$,
  $\|rm_{\alpha}\|\leq|r|\|m_{\alpha}\|\leq|r|\cdot 1$.
\end{proof}

Lemma~\ref{lem:infiniteTatealgebrasascontractingcoproductandc0}(i) implies that
the operation $M\mapsto M\left\<\zeta_{\omega} \colon \omega\in \Omega\right\>$
defines a functor
  $\Ban_{R} \to \Ban_{R}$.
By~\cite[Proposition 6.1.9]{KashiwaraSchapira2006}, it extends canonically to a functor
  $\IndBan_{R} \to \IndBan_{R}$,
which we denote again by
$M_{\bullet}\mapsto M_{\bullet}\left\<\zeta_{\omega} \colon \omega\in \Omega\right\>$.
\emph{Loc. cit.} explains that this is not an abuse of notation, as
the canonical morphisms
\begin{equation*}
  \text{``}\varinjlim_{i\in I}\text{"}\left(M_{i}\left\<\zeta_{\omega} \colon \omega\in \Omega\right\>\right)
  \stackrel{\cong}{\longrightarrow}
  M_{\bullet}\left\<\zeta_{\omega} \colon \sigma\in \Omega\right\>
\end{equation*}
are isomorphisms for all
$M_{\bullet}=\text{``}\varinjlim\text{"}_{i\in I}M_{i}\in\IndBan_{R}$.

\begin{lem}\label{lem:restrictedpowerseries-from-hotimes}
  Fix an $R$-ind-Banach module $M_{\bullet}=\text{``}\varinjlim\text{"}_{i\in I}M_{i}$
  and a set $\Omega$. Then
  \begin{equation}\label{eq:restrictedpowerseries-from-hotimes}
    M_{\bullet} \widehat{\otimes}_{R} R\left\<\zeta_{\omega} \colon \omega\in \Omega\right\>
    \stackrel{\cong}{\longrightarrow} M_{\bullet}\left\<\zeta_{\omega} \colon \omega\in \Omega\right\>,
  \end{equation}
  the canonical morphism, is an isomorphism.
\end{lem}

\begin{proof}
  We may assume without
  loss of generality that $M_{\bullet}=M$ is an $R$-Banach module.
  The Lemma then follows from Lemma~\ref{lem:infiniteTatealgebrasascontractingcoproductandc0}
  and the~\cite[last sentence in the proof of Proposition 5.1.16]{benbassat2024perspectivefoundationsderivedanalytic}.
\end{proof}

\begin{cor}\label{cor:restrictedpowerseries-exact}
  $M_{\bullet}\mapsto M_{\bullet}\left\<\zeta_{\omega} \colon \omega\in\Omega\right\>$
  is strongly exact for every set $\Omega$.
\end{cor}

\begin{proof}
  This follows from Lemma~\ref{lem:restrictedpowerseries-from-hotimes}
  and~\cite[Proposition 5.1.16]{benbassat2024perspectivefoundationsderivedanalytic}.
\end{proof}

\begin{notation}
  $\beta+\gamma:=\left( \beta_{\omega} + \gamma_{\omega} \right)_{\omega\in\Omega} \in \NN^{(\Omega)}$
  for
  $\beta=\left( \alpha_{\omega} \right)_{\omega\in\Omega},
  \gamma=\left( \gamma_{\omega} \right)_{\omega\in\Omega}
  \in\NN^{(\Omega)}$.
\end{notation}

\begin{lem}\label{lem:mult-on-S-restrictedpowerseries}
  Let $S$ denote an $R$-Banach algebra and $\Omega$ a fixed set. Then
  \begin{equation*}
    \left(\sum_{\alpha\in\NN^{(\Omega)}}s_{1\alpha}\zeta^{\alpha}\right)
    \cdot\left(\sum_{\alpha\in\NN^{(\Omega)}}s_{2\alpha}\zeta^{\alpha}\right):=
    \sum_{\alpha\in\NN^{(\Omega)}}\left(\sum_{\alpha=\beta+\gamma}s_{1\beta}s_{2\gamma}\right)\zeta^{\alpha}
  \end{equation*}
  makes $S\left\<\zeta_{\omega} \colon \omega\in \Omega\right\>$
  an $S$-Banach algebra.
\end{lem}

\begin{proof}
  This is obvious.
\end{proof}

\begin{lem}\label{cor:extend-maps-to-restrictedpowerseries}
  Consider a morphism $\phi\colon S\to T$ of $R$-Banach algebras and a set $\Omega$.
  Given any tuple $t:=\left( t_{\omega} \right)_{\omega\in\Omega}\in T^{\Omega}$
  with $\|rt^{\alpha}\|=|r|\|t^{\alpha}\|$ for all $r\in R$ and $\|t^{\alpha}\|\leq 1$ for all
  $t\in\NN^{(\Omega)}$,
  \begin{equation*}
    \sum_{\alpha\in\NN^{(\Omega)}}s_{\alpha}\zeta^{\alpha}
    \mapsto\sum_{\alpha\in\NN^{(\Omega)}}\phi\left(s_{\alpha}\right)t^{\alpha}
  \end{equation*}
  defines a morphism
  $S\left\< \zeta_{\omega}\colon\omega\in\Omega\right\> \maps T$
  of $R$-Banach algebras.
\end{lem}

\begin{proof}
  See Lemma~\ref{lem:extend-maps-to-restrictedpowerseries-modules}
  for the construction of the morphism
  \begin{equation*}
    R\left\< \zeta_{\omega}\colon\omega\in\Omega\right\> \maps T,
    \sum_{\alpha\in\NN^{(\Omega)}}r_{\alpha}\zeta^{\alpha}
    \mapsto\sum_{\alpha\in\NN^{(\Omega)}}rt^{\alpha}
  \end{equation*}
  of $R$-Banach modules. 
  Apply Lemma~\ref{lem:restrictedpowerseries-from-hotimes}
  and~\cite[Proposition 1.5.2]{Sch99} to lift it to the desired morphism.
  It is bounded, $S$-linear, and multiplicative by construction.
\end{proof}

Fix a pseudo-uniformiser
$\pi\in R$, that is $0<|\pi|<1$.

\begin{notation}\label{defn:stalks-rigidanalyticfunctions-smaller-discs}
  Let $M$ be an $R$-Banach module, $\Omega$ a fixed set, and $q\in\NN$. Define
  \begin{equation*}
    M\left\< \frac{\zeta_{\omega}}{\pi^{q}} \colon \omega\in\Omega \right\>
    := M\left\< \zeta_{\omega} \colon \omega\in\Omega \right\>
         \left\< \eta_{\omega} \colon \omega\in\Omega \right\>/
         \overline{\left( \pi^{q}\eta_{\omega} - \zeta_{\omega} \colon \omega\in\Omega\right)},
\end{equation*}
an $R$-Banach module,
where the $\eta_{\omega}$ denote formal variables. We write
$\zeta_{\omega}/p^{q}$ for the images of the $\eta_{\omega}$ in
$M\left\< \frac{\zeta_{\omega}}{\pi^{q}} \colon \omega\in\Omega \right\>$.

If $M=S$ is an $R$-Banach algebra, we view
$S\left\< \frac{\zeta_{\omega}}{\pi^{q}}\colon\omega\in\Omega\right\>$
as an $R$-Banach algebra where the multiplication is induced by
Lemma~\ref{lem:mult-on-S-restrictedpowerseries}.
\end{notation}

Fix $q\in\NN$. The operation $M\mapsto M\left\<\frac{\zeta_{\omega}}{\pi^{q}} \colon \omega\in \Omega\right\>$
defines a functor
  $\Ban_{R} \to \Ban_{R}$.
By~\cite[Proposition 6.1.9]{KashiwaraSchapira2006}, it extends canonically to a functor
  $\IndBan_{R} \to \IndBan_{R}$,
which we denote again by
$M_{\bullet}\mapsto M_{\bullet}\left\<\frac{\zeta_{\omega}}{\pi^{q}} \colon \omega\in \Omega\right\>$.
\emph{Loc. cit.} explains that this is not an abuse of notation, as
the canonical morphisms
\begin{equation*}
  \text{``}\varinjlim_{i\in I}\text{"}\left(M_{i}\left\<\frac{\zeta_{\omega}}{\pi^{q}} \colon \omega\in \Omega\right\>\right)
  \stackrel{\cong}{\longrightarrow}
  M_{\bullet}\left\<\frac{\zeta_{\omega}}{\pi^{q}} \colon \sigma\in \Omega\right\>
\end{equation*}
are isomorphisms for all
$M_{\bullet}=\text{``}\varinjlim\text{"}_{i\in I}M_{i}\in\IndBan_{R}$.

\begin{lem}\label{lem:restrictedpowerseries-exact-strongerconvergence}
  $M_{\bullet}\mapsto M_{\bullet}\left\<\frac{\zeta_{\omega}}{\pi^{q}} \colon \omega\in\Omega\right\>$
  is strongly exact for every $q\in\NN$ and set $\Omega$.
\end{lem}

\begin{proof}
  Fix an $R$-Banach module $M$ and consider the morphism
  \begin{equation*}
    M\left\<\eta_{\omega}\colon\omega\in\Omega\right\>
    \to M\left\<\frac{\zeta_{\omega}}{\pi^{q}}\colon\omega\in\Omega\right\>,
    \eta_{\omega}\mapsto\frac{\zeta_{\omega}}{\pi^{q}}.
  \end{equation*}
  It is an isomorphism, as one directly constructs a two-sided inverse via
  the universal property of the non-expanding coproduct, see
  Lemma~\ref{lem:infiniteTatealgebrasascontractingcoproductandc0}.
  Lemma~\ref{lem:restrictedpowerseries-exact-strongerconvergence}
  thus follows from Corollary~\ref{cor:restrictedpowerseries-exact}.
\end{proof}

\begin{lem}\label{lem:diagram-formal-restricted-powerseries}
  Let $M$ denote an $R$-Banach module, $\Omega$ a fixed set, and $q\in\NN$.
  Then
  \begin{equation*}
    M\left\< \zeta_{\omega} \colon \omega\in\Omega \right\>
    \to M\left\< \frac{\zeta_{\omega}}{\pi^{q+1}} \colon \omega\in\Omega \right\>
  \end{equation*}
  lifts canonically to a morphism of $R$-Banach modules as follows:
  \begin{equation*}
    M\left\< \frac{\zeta_{\omega}}{\pi^{q}} \colon \omega\in\Omega \right\>
    \to M\left\< \frac{\zeta_{\omega}}{\pi^{q+1}} \colon \omega\in\Omega \right\>
  \end{equation*}
  If $M=S$ is an $R$-Banach algebra,
  this defines a morphism of $R$-Banach algebras.
\end{lem}

\begin{proof}
  Exploit the universal property of the non-expanding coproduct,
  cf. Lemma~\ref{lem:infiniteTatealgebrasascontractingcoproductandc0},
  to write down the morphism
  \begin{equation*}
    M\left\< \zeta_{\omega} \colon \omega\in\Omega \right\>\left\<\eta_{\omega}\colon\omega\in\Omega\right\>
    \to M\left\< \frac{\zeta_{\omega}}{\pi^{q+1}} \colon \omega\in\Omega \right\>,
  \end{equation*}
  given by $\eta_{\omega}^{\alpha}\mapsto \pi^{|\alpha|}\left(\zeta_{\omega} / \pi^{q}\right)^{\alpha}$
  for all $\alpha\in\NN^{(\Omega)}$. It factors through
  \begin{equation*}
    M\left\< \zeta_{\omega} \colon \omega\in\Omega \right\>
         \left\< \eta_{\omega} \colon \omega\in\Omega \right\>/
         \left( \pi^{q}\eta_{\omega} - \zeta_{\omega} \colon \omega\in\Omega\right)
    \to M\left\< \frac{\zeta_{\omega}}{\pi^{q+1}} \colon \omega\in\Omega \right\>.
  \end{equation*}
  Complete to get the desired map. The last sentence of Lemma~\ref{lem:diagram-formal-restricted-powerseries} is clear.
\end{proof}

Lemma~\ref{lem:diagram-formal-restricted-powerseries} furnishes a commutative diagram
of $R$-Banach modules:
\begin{equation}\label{eq:formal-restricted-powerseries}
  M\left\< \zeta_{\omega} \colon \omega\in\Omega \right\> \longrightarrow
  M\left\< \frac{\zeta_{\omega}}{\pi} \colon \omega\in\Omega \right\> \longrightarrow
  M\left\< \frac{\zeta_{\omega}}{\pi^{2}} \colon \omega\in\Omega \right\> \longrightarrow
  \dots.
\end{equation}

\begin{defn}
  Let $M$ denote an $R$-Banach module and $\Omega$ a fixed set.
  We denote the formal colimit of the diagram~(\ref{eq:formal-restricted-powerseries}) by
  \begin{align*}
    M\left\< \frac{\zeta_{\omega}}{\pi^{\infty}}\colon\omega\in\Omega\right\>
    &:=\text{``}\varinjlim_{q\in\NN}\text{"}M\left\< \frac{\zeta_{\omega}}{\pi^{q}} \colon \omega\in\Omega\right\>.
  \end{align*}
  This is, by definition, an $R$-ind-Banach module. If $M=S$ is an $R$-Banach algebra,
  we view
  $S\left\< \frac{\zeta_{\omega}}{\pi^{\infty}}\colon\omega\in\Omega\right\>$
  as an $R$-ind-Banach algebra. 
\end{defn}

Following previous discussion,
$M\mapsto M\left\<\frac{\zeta_{\omega}}{\pi^{\infty}} \colon \omega\in \Omega\right\>$
extends to a functor
\begin{equation*}
  \IndBan_{R} \to \IndBan_{R},
  \text{``}\varinjlim_{i\in I}\text{"}M_{i}
  \mapsto
  \varinjlim_{i\in I}M_{i}\left\<\frac{\zeta_{\omega}}{\pi^{\infty}}\colon\omega\in\Omega\right\>.
\end{equation*}
which we denote by
$M_{\bullet}\mapsto M_{\bullet}\left\<\frac{\zeta_{\omega}}{\pi^{\infty}} \colon \omega\in \Omega\right\>$.
\emph{Loc. cit.} explains that this is not an abuse of notation, as
the canonical morphisms
\begin{equation*}
  \text{``}\varinjlim_{i\in I}\text{"}\left(M_{i}\left\<\zeta_{\omega} \colon \omega\in \Omega\right\>\right)
  \stackrel{\cong}{\longrightarrow}
  M_{\bullet}\left\<\zeta_{\omega} \colon \sigma\in \Omega\right\>
\end{equation*}
are isomorphisms for all
$M_{\bullet}=\text{``}\varinjlim\text{"}_{i\in I}M_{i}\in\IndBan_{R}$.

\begin{lem}\label{lem:restrictedpowerseries-exact-strongerconvergenceinfty}
  $M_{\bullet}\mapsto M_{\bullet}\left\<\frac{\zeta_{\omega}}{\pi^{\infty}} \colon \omega\in\Omega\right\>$
  is strongly exact for every set $\Omega$.
\end{lem}

\begin{proof}
  By Lemma~\ref{lem:messmse-indcat}, it suffices to check the lemma
  for the restriction of the functor to $\Ban_{R}$. Now the result follows from
  Lemma~\ref{lem:restrictedpowerseries-exact-strongerconvergence}
  and Corollary~\ref{cor:filteredcol-inIndBan-stronglyexact}.
\end{proof}


\section{Banach and ind-Banach completions}
\label{sec:BanachandindBanachcompletions}

We continue to fix a Banach ring $R$, together with a pseudo-uniformiser $\pi\in R$.

Recall Notation~\ref{defn:stalks-rigidanalyticfunctions-smaller-discs}.

\begin{defn}\label{defn:Banach-completions}
  Fix a commutative $R$-Banach algebra $S$ and a subset $\Sigma\subseteq S$. Set
  \begin{align*}
     S\left\< \frac{\Sigma}{\pi^{q}}\right\>&:=
     \left. S\left\< \frac{\zeta_{\sigma}}{\pi^{q}} \colon \sigma\in\Sigma \right\> \middle/
     \overline{\left( \sigma - \pi^{q}\frac{\zeta_{\sigma}}{\pi^{q}} \colon \sigma\in\Sigma \right)} \right. ,
   \end{align*}
   for all $q\in\NN$. This is, by definition, an $R$-Banach algebra.
   We write $\sigma/p^{q}$ for the images of the $\zeta_{\sigma}/\pi^{q}$ in
   $S\left\< \frac{\Sigma}{\pi^{q}}\right\>$.
 \end{defn}
      
\begin{notation}
When $\Sigma=\left\{ s_{1},\dots,s_{d}\right\}$ is finite,
abbreviate
\begin{equation*}
  S\left\< \frac{s_{1},\dots,s_{d}}{\pi^{q}}\right\>:=
  S\left\< \frac{\Sigma}{\pi^{q}}\right\>.
\end{equation*}
\end{notation}

\begin{remark}
  The ideal
  $\left( \sigma - p^{q}\frac{\zeta_{\sigma}}{\pi^{q}} \colon \sigma\in\Sigma \right)$
  in Definition~\ref{defn:Banach-completions} is
  not closed in general, see for example~\cite[Proposition 5.7]{BK20}.
\end{remark}

\begin{lem}\label{lem:diagram-formal-restricted-powerseries-factor-through}
  For a commutative $R$-Banach algebra $S$, a subset $\Sigma\subset S$,
  and $q\in\NN$,
  \begin{equation*}
    \delta^{q}\colon S\left\< \frac{\zeta_{\omega}}{\pi^{q}} \colon \omega\in\Omega \right\>
    \to S\left\< \frac{\zeta_{\omega}}{\pi^{q+1}} \colon \omega\in\Omega \right\>
  \end{equation*}
  denotes the map constructed by Lemma~\ref{lem:diagram-formal-restricted-powerseries}.
  It factors through a morphism
  \begin{equation*}
    S\left\< \frac{\Sigma}{\pi^{q}} \right\> \to S\left\<\frac{\Sigma}{\pi^{q+1}} \right\>
  \end{equation*}
  of $R$-Banach algebras.
\end{lem}

\begin{proof}
  The computations
  \begin{equation*}
    \delta^{q}\left( \sigma - \pi^{q}\frac{\zeta_{\sigma}}{\pi^{q}} \right)
    = \sigma - \pi^{q+1}\frac{\zeta_{\sigma}}{\pi^{q+1}}
  \end{equation*}
  for all $\sigma\in\Sigma$ imply
  \begin{equation*}
    \delta^{q}\left( \left( \sigma - \pi^{q}\frac{\zeta_{\sigma}}{\pi^{q}} \colon \sigma\in\Sigma \right) \right)
    \subseteq \left( \sigma - \pi^{q+1}\frac{\zeta_{\sigma}}{\pi^{q+1}} \colon \sigma\in\Sigma \right).
  \end{equation*}
  The $\delta^{q}$ are bounded, thus the
  statement for the closures of the ideals follows.
\end{proof}

Lemma~\ref{lem:diagram-formal-restricted-powerseries-factor-through}
gives a commutative diagram
\begin{equation}\label{eq:formal-restricted-powerseries-SinSigma}
  S\left\< \Sigma \right\> \longrightarrow
  S\left\< \frac{\Sigma}{\pi^{1}} \right\> \longrightarrow
  S\left\< \frac{\Sigma}{\pi^{2}} \right\> \longrightarrow
  \dots.
\end{equation}

\begin{defn}
  Let $S$ denote a commutative $R$-Banach algebra and $\Sigma\subset S$ a fixed subset.
  We denote the formal colimit of the diagram~(\ref{eq:formal-restricted-powerseries}) by
  \begin{align*}
    S\left\< \frac{\Sigma}{\pi^{\infty}}\right\>:=\text{``}\varinjlim_{q\in\NN}\text{"}S\left\< \frac{\Sigma}{\pi^{q}}\right\>.
  \end{align*}
  This is, by definition, an $R$-ind-Banach algebra.
\end{defn}

\begin{notation}
  When $\Sigma=\left\{ s_{1},\dots,s_{d} \right\}\subseteq S$ is finite,
  abbreviate
  \begin{equation*}
    S\left\< \frac{s_{1},\dots,s_{d}}{\pi^{\infty}}\right\>:=
    S\left\< \frac{\Sigma}{\pi^{\infty}}\right\>
    \end{equation*}
\end{notation}

\begin{defn}
  An $R$-Banach module $M$ is \emph{bounded} if
  \begin{equation*}
    \sup_{m\in M}\|m\| < C
  \end{equation*}
  for some constant $C=C(M)>0$.
  In this case, $M$ is \emph{bounded by $C$}.
\end{defn}

\begin{example}
  $R=F$ is a field, complete with respect to a non-Archimedean
  non-trivial valuation. $S=F\<T,L\>$ is the Tate algebra in two variables
  and $\Sigma$ is its ideal generated by $T-L$.
  Then the canonical morphism
  \begin{equation*}
    F\<T,L\>\left\<\frac{T-L}{\pi}\right\>
    \stackrel{\not\cong}{\longrightarrow}F\<T,L\>\left\<\frac{\Sigma}{\pi}\right\>
  \end{equation*}
  is not injective: $T-L$ is a non-zero element of the domain,
  but it vanishes in $F\<T,L\>\left\<\Sigma/\pi\right\>$. Indeed,
  for all $i\in\NN$,
  \begin{equation*}
    |\pi^{i}|\|T-L\|
    =\|\pi^{i}\left(T-L\right)\|
    \leq\|\zeta_{\pi^{i}\left(T-L\right)}\|
    \leq 1.
  \end{equation*}
  This implies $\|T-L\|=0$, thus $T-L=0$.
  
  We resolve this issue by considering
  $R=F^{\circ}$, $S=F^{\circ}\left\<T,L\right\>$,
  and $\Sigma=(T-L)$ instead. Then
  $F^{\circ}\left\<T,L\right\>$ is bounded by $1$,
  thus Lemma~\ref{lem:Banach-completions-extend} applies.
\end{example}

\begin{defn}\label{defn:submultiplicative}
  A Banach algebra is \emph{submultiplicative}
  if for every two elements $x,y$ of this algebra,
  $\|xy\|\leq\|x\|\|y\|$.
\end{defn}

\begin{lem}\label{lem:Banach-completions-extend}
  Fix two commutative $R$-Banach algebras $S$ and $T$
  as well as a subset $\Sigma\subseteq S$.
  We further assume that $T$ is bounded by $1$ and for all $t_{1},t_{2}\in T$, $\|t_{1}t_{2}\|\leq\|t_{2}\|\|t_{2}\|$.
  Also, $\pi=\pi\cdot 1\in T$ is not a zero-divisor. Then, for every $q\in\NN$,
  there is a bijection between the set of morphisms
  $\phi\colon S\left\< \Sigma / \pi^{q} \right\>\to T$ of $R$-Banach algebras
  and the set of morphisms of $\psi\colon S\to T$ of $R$-Banach algebras
  such that
  \begin{itemize}
    \item[(i)] $\pi^{q}$ divides $\phi(\sigma)\in T$ for all $\sigma\in\Sigma$ and
    \item[(ii)] $\|\phi(\sigma)\|\leq \|\pi^{q}\|$ for all $\sigma\in\Sigma$. Here,
    $\|\cdot\|$ denotes the norm on $T$.
  \end{itemize}
  The bijection is given by $\phi \mapsto \psi:=\phi|_{S}$.
\end{lem}

\begin{proof}
  Firstly, we show that the assignment $\phi \mapsto \phi|_{S}$
  defines the desired map. Indeed, given a morphism
  $\phi\colon S\left\< \Sigma / \pi^{q} \right\>\to T$
  of $R$-Banach algebras, we find that
  \begin{itemize}
    \item[(i)] $\phi(\sigma)=\pi^{q}\phi\left( \sigma / \pi^{q} \right)\in T$. That is,
    $\pi^{q}$ divides $\phi(\sigma)$.
    \item[(ii)] Furthermore,
    \begin{equation*}
      \|\phi(\sigma)\|\leq\|\pi^{q}\phi\left( \sigma / \pi^{q} \right)\|
      \leq \|\pi^{q}\|\|\phi\left( \sigma / \pi^{q} \right)\|
      \leq \|\pi^{q}\|1
      = \|\pi^{q}\|.
    \end{equation*}
    This computation relies on the assumptions on $T$.
  \end{itemize}
  
  Secondly, we check that $\phi \mapsto \phi|_{S}$ is injective. Pick two maps
  $\phi,\phi^{\prime}\colon S\left\< \frac{\Sigma}{\pi^{q}} \right\>\to T$
  which agree on $S$. It suffices to show that they agree on elements
  of the form $\sigma/\pi^{q}$ where $\sigma\in\Sigma$. But we compute
  \begin{equation*}
    \pi^{q}\phi\left( \frac{\sigma}{\pi^{q}} \right)
    =\phi\left(\sigma\right)
    =\phi^{\prime}\left( \sigma \right)
    =\pi^{q}\phi^{\prime}\left( \frac{\sigma}{\pi^{q}} \right).
  \end{equation*}
  Since $\pi\in T$ is not a zero-divisor, we find
  $\phi\left( \sigma / \pi^{q} \right)=\phi^{\prime}\left( \sigma / \pi^{q} \right)$.

  Thirdly, we prove that $\phi \mapsto \phi|_{S}$
  is surjective. That is, we have to extend a given $\psi\colon S\to T$
  satisfying (i) and (ii) to a morphism
  $S\left\< \Sigma / \pi^{q} \right\>\to T$ of $R$-Banach algebras.
  Extend $\psi$ with Corollary~\ref{cor:extend-maps-to-restrictedpowerseries}
  to a morphism
  \begin{equation*}
    S\left\< \frac{\zeta_{\sigma}}{\pi^{q}} \colon \sigma\in\Sigma \right\>\to T,
    \frac{\zeta_{\sigma}}{\pi^{q}} \mapsto \frac{\phi(\sigma)}{\pi^{q}}
  \end{equation*}
  of $R$-Banach algebras.
  It vanishes on the ideal generated by all the $\sigma-\pi^{q}\zeta_{\sigma} / \pi^{q}$,
  and thus it vanishes on the closure. Therefore, the map above factors
  through a morphism
  \begin{equation*}
    \phi\colon S\left\< \frac{\Sigma}{\pi^{q}} \right\>\to T
  \end{equation*}
  of $R$-Banach algebras. By construction, $\phi|_{S}=\psi$.
\end{proof}

\begin{lem}\label{lem:completionsalongideals-generators}
  Consider a commutative $R$-Banach algebra $S$ which is bounded by $1$
  and for all $s_{1},s_{2}\in S$, $\|s_{1}s_{2}\|\leq\|s_{1}\|\|s_{2}\|$.
  Fix a subset $\Sigma\subseteq S$. $I:=\left(\Sigma\right)$
  is the ideal generated by this subset. Then, for every $q\in\NN$,
  the canonical morphism
  \begin{equation}\label{eq:completionsalongideals-generators}
    S\left\< \frac{\Sigma}{\pi^{q}} \right\>
    \stackrel{\cong}{\longrightarrow}
    S\left\< \frac{I}{\pi^{q}} \right\>
    \end{equation}
  is an isomorphism of $S$-Banach algebras.
  It is an isomorphisms of $S$-ind-Banach algebras
  for $q=\infty$.
\end{lem}

\begin{proof}
  Assume $q<\infty$ without loss of generality and
  construct the morphism
  \begin{equation*}
    S\left\< \frac{\zeta_{i}}{\pi^{q}}\colon i\in I\right\>
    \maps S\left\< \frac{\Sigma}{\pi^{q}} \right\>,
    \frac{\zeta_{i}}{\pi^{q}}
    \mapsto
    \frac{i}{\pi^{q}}
  \end{equation*}
  of $S$-Banach algebras with Lemma~\ref{cor:extend-maps-to-restrictedpowerseries}.
  It factors through the desired two-sided inverse
  of~(\ref{eq:completionsalongideals-generators}). 
\end{proof}

\begin{prop}\label{prop:adictop-ringsofpowerseries}
  Fix a commutative ring $S$ containing a finitely generated ideal $I$, such that $S$ is $I$-adically
  separated and complete. Equip $S$ with the $I$-adic norm. Let $\zeta$
  denote a formal variable. Then the topological algebra underlying the
  Banach algebra $S\left\<\zeta\right\>$ carries the $(I)$-adic topology,
  where $(I)\subseteq S\left\<\zeta\right\>$ is the ideal generated
  by the image of $I$ in $S\left\<\zeta\right\>$.
  
  Now fix an element $s\in S$ and a natural number $q$. Then
  the topological algebra underlying the Banach algebra $S\left\< s / \pi^{q} \right\>$ carries the
  $(I)$-adic topology, where $(I)\subseteq S\left\<s / \pi^{q}\right\>$ is the ideal generated
  by the image of $I$ in $S\left\<s / \pi^{q}\right\>$.
\end{prop}

We need the following two Lemmata
in order to prove Proposition~\ref{prop:adictop-ringsofpowerseries}.

\begin{lem}\label{lem:images-of-adic-topologies}
  Fix a surjective map $\phi\colon A\maps B$ of abstract commutative rings,
  together with an ideal $J\subseteq A$ such that $A$ is $J$-adically separated
  and complete. Equip $B$ with the quotient topology. Then $B$ carries the
  $\phi(J)$-adic topology.
\end{lem}

\begin{proof}
  A subset $U\subseteq B$ is open if its preimage $\phi^{-1}(U)\subseteq A$
  is open. In this case, there exists an $n\in\NN$ such that $J^{n}\subseteq\phi^{-1}(U)$,
  thus $\phi(J)^{n}=\phi\left(J^{n}\right)\subseteq U$.
  
  It remains to show that for each $n\in\NN$, $\phi(J)^{n}\subseteq B$ is open.
  Again, this is the case once its preimage is open. But its preimage $\phi^{-1}\left( \phi(J)^{n} \right)$
  is an ideal in $A$ and it contains $J^{n}$. This implies that it is open.
\end{proof}

\begin{lem}\label{lem:Banach-localisation-in-one-variable-description}
  Let $S$ commutative $R$-Banach algebra, $s\in S$, and $q\in\NN$. Then
  \begin{equation}\label{eq:Banach-localisation-in-one-variable-description}
    S\left\<\zeta\right\>/\overline{\left(\pi^{q}\zeta-s\right)}
    \isomap S\left\<\frac{s}{\pi^{q}}\right\>,
    \zeta\mapsto\frac{s}{\pi^{q}}
  \end{equation}
  is an isomorphism of $S$-Banach algebras.
\end{lem}

\begin{proof}
  One constructs the map~(\ref{eq:Banach-localisation-in-one-variable-description})
  and its two-sided inverse via Lemma~\ref{cor:extend-maps-to-restrictedpowerseries}.
\end{proof}

\begin{proof}[Proof of Proposition~\ref{prop:adictop-ringsofpowerseries}]
  Fix a power series $f=\sum_{\alpha\geq0}f_{\alpha}\zeta^{\alpha}\in S\left\< \zeta \right\>$.
  We have, by definition, $\|f\|\leq p^{-r}$ if and only if $\|f_{\alpha}\|\leq p^{-r}$ for all $\alpha\geq 0$.
  In order to prove the first statement, we therefore have to show
  \begin{equation}\label{eq:adictop-ringsofpowerseries}
    \|f_{\alpha}\|\leq p^{-r} \text{ for all } \alpha\geq 0
    \iff f\in(I)^{r}.
  \end{equation}
  The direction $\Leftarrow$ is clear. It remains to check $\implies$.
  Write $J:=I^{r}$ and fix a finite generating set
  $\left(x_{1},\dots,x_{n}\right)=J$. Define
  $e_{\alpha}:=\sup_{f_{\alpha}\in J^{e}}e$ for all $\alpha\geq0$.
  Note that for all $\alpha\geq0$, the assumption $\|f_{\alpha}\|\leq p^{-r}$
  implies $f\in I^{r}=J$, which gives
  $e_{\alpha}\geq 1$. Therefore, we can write, for all $\alpha\geq0$,
  \begin{equation*}
    f_{\alpha}=\sum_{i=1}^{n}f_{i\alpha}x_{i}
  \end{equation*}
  for certain $f_{i\alpha}\in J^{e_{\alpha}-1}$.
  Also, $f_{\alpha}\to0$ for $\alpha\to\infty$ implies
  \begin{equation}\label{eq:adictop-ringsofpowerseries-divergence}
  e_{\alpha}\to\infty \text{ for }
  \alpha\to\infty.
  \end{equation}
  Thus
    $\|f_{i\alpha}\|
    \leq p^{-(e_{\alpha}-1)}
    \to 0 \text{ for } \alpha\to\infty$,
  and the formal power series
  $f_{i}:=\sum_{\alpha\geq0}f_{i\alpha}\zeta^{\alpha}$
  define elements of $S\<\zeta\>$ for all $i=1,\dots,n$.
  But then
  \begin{align*}
    f
    = \sum_{\alpha\geq0}f_{\alpha}\zeta^{\alpha}
    = \sum_{\alpha\geq0}\sum_{i=1}^{n}f_{i\alpha}x_{i}\zeta^{\alpha}
    = \sum_{i=1}^{n}\left(\sum_{\alpha\geq0}f_{i\alpha}\zeta^{\alpha}\right)x_{i}
    = \sum_{i=1}^{n}f_{i}x_{i}
    \in (J) = (I)^{r}.
  \end{align*}
  This finishes the proof of $\implies$ in~(\ref{eq:adictop-ringsofpowerseries}),
  and we get the first half of Proposition~\ref{prop:adictop-ringsofpowerseries}.
  Apply Lemma~\ref{lem:images-of-adic-topologies}
  to
    $S\left\< \zeta \right\>\to S\left\<\zeta\right\>/\overline{\left(\pi^{q}\zeta-s\right)}
    \stackrel{\text{\ref{lem:Banach-localisation-in-one-variable-description}}}{\cong} S\left\< s / \pi^{q}\right\>$
  for the second half.
\end{proof}

Given an ideal $I\subseteq R$ in a commutative ring,
an element $r\in R$ is invertible in the $I$-adic completion
of $R$ if and only if its image in $R/I$ is invertible.
We regard the following Lemma~\ref{lem:unit-analytic-completion-if-unit-quotient}
as an analytic version of this fact.

\begin{lem}\label{lem:unit-analytic-completion-if-unit-quotient}
  Consider a commutative $R$-Banach algebra $S$
  and an ideal $I\subseteq S$. Fix $r\in R$ and $q\in\NN$ such
  that the following two conditions hold:
  \begin{itemize}
    \item[(i)] There exists an $s\in S$ with $sr-\pi^{q}\in I$ in $S$.
    \item[(ii)] The element $\pi\in S\left\< I / \pi^{q+1}\right\>$ is not a zero-divisor.
  \end{itemize}
  Then the image of $r$ in $S\left\< I / \pi^{q+1}\right\>[1/\pi]$ is a unit.
\end{lem}

\begin{proof}
  Condition (i) implies that the image of $sr-\pi^{q}$
  lies in the ideal $\pi^{q+1} I/\pi^{q+1}\subseteq S\left\< I / \pi^{q+1}\right\>$.
  By (ii), we can divide by $\pi^{q}$ and get
  $\pi^{-q}sr-1\in \pi I/\pi^{q+1}\subseteq S\left\< I / \pi^{q+1}\right\>$.
  Therefore, $\pi^{-q}sr\equiv 1$ modulo $\pi$
  in $S\left\< I / \pi^{q+1}\right\>$. Because
  $S\left\< I / \pi^{q+1}\right\>$ is $\pi$-adically complete,
  \cite[\href{https://stacks.math.columbia.edu/tag/05GI}{Tag 05GI}]{stacks-project}
  implies that $\pi^{-q}sr\in S\left\< I / \pi^{q+1}\right\>$
  is a unit. This implies that
  $sr$ is invertible in $S\left\< I / \pi^{q+1}\right\>[1/\pi]$,
  and so is $r$.
\end{proof}


\section{A variant of Hensel's Lemma}
\label{subsec:Hensel}

Here in \S\ref{subsec:Hensel}, we formulate and prove Theorem~\ref{thm:Hensel-analytic}.
We view this result as a more general version of Hensel's lemma,
which is tailored to the specific situation in \S\ref{subsec:proof-localdescrption-of-OBla}.

We start with notation.

Assume that $R=F^{\circ}$ is the subring of power-bounded elements
of a field $F$, complete with respect to a non-Archimedean
non-trivial valuation. Fix a fixed pseudo-uniformiser $\pi\in F^{\circ}$.

Furthermore, we fix a commutative $F^{\circ}$-Banach algebra
$S$ which is bounded by $1$ and submultiplicative.
Let $I\subseteq S$
be a proper ideal which is closed. For all $q\in\NN$, we write
  $S^{(q)} := S\left\< I / \pi^{q} \right\>$ and
  $T^{(q)} := S^{(q)}\widehat{\otimes}_{F^{\circ}} F$.
The reduction-mod-$I$ map induces morphisms of
$F^{\circ}$- respectively $F$-Banach algebras
  $\phi^{(q)}\colon S^{(q)} \to S/I$ and
  $\psi^{(q)}\colon T^{(q)} \to \left(S/I\right)\widehat{\otimes}_{F^{\circ}} F$.
We denote their kernels by
  $I^{(q)} \subseteq S^{(q)}$ and
  $J^{(q)} \subseteq T^{(q)}$, respectively.

Next, we fix formal variables $L=\left(L_{1},\dots,L_{n}\right)$.
Let $H=\left(H_{1},\dots,H_{n}\right)$ denote a tuple of
elements of $S\<L\>=S\left\<L_{1},\dots,L_{n}\right\>$.
The associated Jacobian is the $n\times n$-matrix
\begin{equation*}
  D_{H}:=
  \begin{pmatrix}
    \frac{\partial H_{1}}{\partial L_{1}} & \cdots & \frac{\partial H_{1}}{\partial L_{n}} \\
      \vdots & \ddots & \vdots \\
      \frac{\partial H_{n}}{\partial L_{1}} & \cdots & \frac{\partial H_{n}}{\partial L_{n}}
  \end{pmatrix}
  \in \Mat_{n}\left(S\left\< L \right\>\right)
\end{equation*}
with entries in $S\left\< L \right\>$.
Its determinant is $J_{H}:=\det D_{H}\in S\left\< L \right\>$.

\begin{condition}\label{condition:Hensel-analytic}
  Given $q\in\NN$ and
  $s=\left(s_{1},\dots,s_{n}\right)\in S^{n}$, we list the following conditions:
  \begin{itemize}
    \item[(a)] There exists an $r\in S$
      such that $J_{H}(s) r - \pi^{q} \in I$ in $S$.
    \item[(b)] The element $\pi\in S^{\left(\widetilde{q}\right)}$ is not a zero-divisor
      for all $\widetilde{q}>q$.
    \item[(c)] We have $H_{i}(s) \equiv 0 \mod I$ for all $i=1,\dots,n$.
  \end{itemize}
\end{condition}

From now on, we fix $q\in\NN$ and
$s=\left(s_{1},\dots,s_{n}\right)\in S^{n}$
such that Condition~\ref{condition:Hensel-analytic} is satisfied.

In the following, we define a certain constants $\widetilde{q}\in\NN$ and $\lambda\in\RR_{\geq0}$.

\begin{notation}
  Given a Banach space $V$, we denote its norm by $\|\cdot\|_{V}=\|\cdot\|$.
  
  We fix the usual norm
  \begin{equation*}
    \| \left( v_{1},\dots,v_{n}\right) \|_{V^{n}}
    := \max\left\{ \| v_{1} \|_{V} , \dots , \| v_{n} \|_{V} \right\}
  \end{equation*}
  on the direct sum $V^{n}$ of finitely many copies of $V$.
  
  Given a Banach ring $A$, we have the following norm on the matrix ring $\Mat_{n}(A)$:
  \begin{equation*}
    \|M\|_{\Mat_{n}(A)}:=\max_{i,j=1,\dots,n}\| m_{ij} \|_{A}
    \quad \text{ where } M=\left(m_{ij}\right)_{ij=1,\dots,n}.
  \end{equation*}
  If $A$ is submultiplicative and bounded by $1$,
  then this coincides with the norm of the operator
  $M\colon A^{n}\to A^{n}$.
\end{notation}

\begin{remark}
  Since $S$ is bounded by 1, $\|H_{i}\|\leq 1$
  for all $i=1,\dots,n$.
\end{remark}

\begin{lem}\label{lem:DHs-invertible--thm:Hensel-analytic}
  The matrix
  $D_{H}(s)\in \Mat_{n}\left( T^{\left(q+1\right)} \right)$
  is invertible.
\end{lem}

\begin{proof}
  The conditions (i) and (ii) in Lemma~\ref{lem:unit-analytic-completion-if-unit-quotient}
  are satisfied because of Condition~\ref{condition:Hensel-analytic}(a) and (b).
  This implies that the image of $J_{H}(s)$ in $S^{(q+1)}[1/\pi]$
  is a unit. Since morphisms of rings such as
  $S^{(q+1)}[1/\pi]\to T^{\left(q+1\right)}$
  preserve units, it follows that the image of
  $J_{H}(s)$ in $T^{\left(q+1\right)}$ is a unit.
  Because $J_{H}(s)=\det D_{H}(s)$,
  and $T^{\left(q+1\right)}$ is a $\QQ$-module,
  Cramer's rule implies that
  $D_{H}(s)$ is invertible in $\Mat_{n}\left( T^{\left(q+1\right)} \right)$.
\end{proof}

Following the notation at the beginning of this section,
we write $\phi^{(q+1)}$ for the map
$S^{(q+1)}\to S/I$ which is induced by the reduction-mod-$I$ map.
Abusing notation, we denote the induced map
$\Mat_{n}\left(S^{(q+1)}\right) \to \Mat_{n}(S/I)$
again by $\phi^{(q+1)}$.

Thanks to Lemma~\ref{lem:DHs-invertible--thm:Hensel-analytic}, we have the
matrix $D_{H}(s)^{-1}\in \Mat_{n}\left( T^{\left(q+1\right)} \right)$.
In particular, we get
\begin{equation*}
  \phi^{(q+1)}\left(D_{H}(s)^{-1}\right) \in \Mat_{n}(S/I).
\end{equation*}
Now equip $S/I$ with the quotient norm and $\Mat_{n}(S/I)$ with the induced operator norm.
This makes the following Definition~\ref{defntildeq:Hensel-analytic} meaningful:

\begin{defn}\label{defntildeq:Hensel-analytic}
  Set
  \begin{equation*}
    \widetilde{q}:=\max\left\{ q+1 ,
      \left\lceil \log_{|\pi|} \left( \left\| \phi^{(q+1)}\left(D_{H}(s)^{-1}\right)\right\|_{\Mat_{n}(S/I)}^{-2} \right)+1
      \right\rceil \right\} \in \NN.
  \end{equation*}
\end{defn}

By definition, $\widetilde{q}>q$. Therefore, we have a map
$\Mat_{n}\left( T^{\left(q+1\right)} \right)\to\Mat_{n}\left( T^{\left(\widetilde{q}\right)} \right)$.
It is a map of rings, and therefore Lemma~\ref{lem:DHs-invertible--thm:Hensel-analytic}
implies that the matrix $D_{H}(s)\in \Mat_{n}\left( T^{\left(\widetilde{q}\right)} \right)$
is invertible. We can therefore define the map
\begin{equation*}
  \varphi\colon \left(T^{\left(\widetilde{q}\right)}\right)^{n} \to \left(T^{\left(\widetilde{q}\right)}\right)^{n},
    x \mapsto x - D_{H}(s)^{-1}H(x).
\end{equation*}
We would like to show that $\varphi$ is a contraction between metric spaces,
so that we can construct $\widetilde{s}$ via the Banach fixed-point theorem.
However, we are only able to show that this statement is correct once we restrict
to a small ball
\begin{equation*}
  B_{s}(\lambda):=\left\{
    x\in \left(T^{\left(\widetilde{q}\right)}\right)^{n}\colon \|x-s\|\leq \lambda
    \right\}
\end{equation*}
of radius $\lambda$ around $s$.
Here, $\lambda$ is as in the following Definition~\ref{defn:lambda--Hensel-analytic}.

\begin{defn}\label{defn:lambda--Hensel-analytic}
  Define
  \begin{equation*}
    \lambda
    :=\|D_{H}(s)^{-1}\|_{\Mat_{n}\left( T^{\left(\widetilde{q}\right)} \right)}
      \|H(s)\|_{\left(T^{\left(\widetilde{q}\right)}\right)^{n}}\in\RR_{\geq0}.
  \end{equation*}
\end{defn}

Here is our analytic version of Hensel's lemma:

\begin{thm}\label{thm:Hensel-analytic}
  We continue to fix $q\in\NN$ and
  $s=\left(s_{1},\dots,s_{n}\right)\in S^{n}$
  such that Condition~\ref{condition:Hensel-analytic} is satisfied.
  Then, for every $\widetilde{q}>q$ large enough, there exists a unique tuple
  $\widetilde{s}
    =\left(\widetilde{s}_{1},\dots,\widetilde{s}_{n}\right)
    \in \left( T^{\left( \widetilde{q} \right)} \right)^{n}$
  such that
  \begin{itemize}
    \item[(i)] $\widetilde{s}_{j}\equiv s_{j} \mod J^{\left(\widetilde{q}\right)}$ for all $j=1,\dots,n$,
    \item[(ii)] $H_{i}\left( \widetilde{s} \right)=0$ for all $i=1,\dots,n$, and
    \item[(iii)] $\|\widetilde{s}-s\|_{\left( T^{\left( \widetilde{q} \right)} \right)^{n}}\leq\lambda$.
  \end{itemize}
  These $\widetilde{s}_{i}$ are furthermore power-bounded.
\end{thm}

We spend the remainder of \S\ref{subsec:Hensel} on
a proof of Theorem~\ref{thm:Hensel-analytic}.
We fix all the notation and start with preliminary considerations.

\begin{remark}
  A standard proof of Hensel's lemma is via Newton's method.
  Unfortunately, we were unable to it to prove
  Theorem~\ref{thm:Hensel-analytic}; the reason is that Newton's method
  requires the division by several elements, but we were unable to
  show that these elements are units in our setting.
  Our proof of Theorem~\ref{thm:Hensel-analytic} therefore
  follows a slightly nonstandard proof of Hensel's classical lemma,
  which we learned from~\cite[\S6]{KConradHensel}.
\end{remark}

\begin{lem}\label{lem:lambdaDhsinverse-less-1--Hensel-analytic}
  The following inequality holds:
  $\lambda \|D_{H}(s)^{-1}\|_{\Mat_{n}\left( T^{\left(\widetilde{q}\right)} \right)} < 1$.
\end{lem}

\begin{proof}
  We deduce
  \begin{align*}
    \|H(s)\|_{\left(T^{\left(\widetilde{q}\right)}\right)^{n}}
    \leq |\pi|^{\widetilde{q}}
    &< \| \phi^{(q+1)}\left(D_{H}(s)^{-1}\right) \|_{\Mat_{n}\left(S/I\right)}^{-2} \\
    &= \| \phi^{(\widetilde{q})}\left(D_{H}(s)^{-1}\right) \|_{\Mat_{n}\left(S/I\right)}^{-2}
    \leq \|D_{H}(s)^{-1}\|_{\Mat_{n}\left(T^{\left(\widetilde{q}\right)}\right)}^{-2}
  \end{align*}
  where the first inequality comes from Condition~\ref{condition:Hensel-analytic}(c),
  the strict inequality follows from the Definition~\ref{defntildeq:Hensel-analytic}
  of $\widetilde{q}$, the equality follows from the equality
  $\phi^{(q+1)}\left(D_{H}(s)^{-1}\right)=\phi^{(\widetilde{q})}\left(D_{H}(s)^{-1}\right)$,
  and the final inequality comes from the fact that the operator norm
  of the map $\Mat_{n}\left(T^{\left(\widetilde{q}\right)}\right)\to\Mat_{n}(S/I)$
  is bounded by $1$. This implies
  \begin{align*}
    \lambda \|D_{H}(s)^{-1}\|_{\Mat_{n}\left( T^{\left(\widetilde{q}\right)} \right)}
    &= \|D_{H}(s)^{-1}\|_{\Mat_{n}\left( T^{\left(\widetilde{q}\right)} \right)}^{2}
      \|H(s)\|_{\left(T^{\left(\widetilde{q}\right)}\right)^{n}} \\
    &< \|D_{H}(s)^{-1}\|_{\Mat_{n}\left( T^{\left(\widetilde{q}\right)} \right)}^{2}
        \|D_{H}(s)^{-1}\|_{\Mat_{n}\left( T^{\left(\widetilde{q}\right)} \right)}^{-2}
      =1,
  \end{align*}
  using the Definition~\ref{defn:lambda--Hensel-analytic} of $\lambda$.
\end{proof}

\begin{lem}\label{lem:lambda-less-1--Hensel-analytic}
  The following inequality holds:
  $\lambda  < 1$.
\end{lem}

\begin{proof}
  Using the submultiplicativity and Lemma~\ref{lem:lambdaDhsinverse-less-1--Hensel-analytic}, we deduce
  \begin{equation*}
    \lambda \|D_{H}(s)\|_{\Mat_{n}\left( T^{\left(\widetilde{q}\right)} \right)}^{-1}
    \leq \lambda \|D_{H}(s)^{-1}\|_{\Mat_{n}\left( T^{\left(\widetilde{q}\right)} \right)}
    < 1.
  \end{equation*}
  Because $S$ is bounded by $1$, we find $\|D_{H}(s)\|_{\Mat_{n}\left( T^{\left(\widetilde{q}\right)} \right)}\leq 1$,
  and therefore
  \begin{equation*}
    \lambda
    < \|D_{H}(s)\|_{\Mat_{n}\left( T^{\left(\widetilde{q}\right)} \right)}
    \leq 1,
  \end{equation*}
  as desired.
\end{proof}

\begin{lem}\label{lem:S-to-Ttq-bounded1--Hensel-analytic}
  The canonical map $S\to T^{\left(\widetilde{q}\right)}$ is bounded by $1$.
\end{lem}

\begin{proof}
  This follows from the definitions.
\end{proof}

\begin{lem}\label{lem:Ttq-submultiplicative--Hensel-analytic}
  $T^{\left(\widetilde{q}\right)}$ is submultiplicative.
\end{lem}

\begin{proof}
  This follows from the submultiplicativity of $S$.
\end{proof}

\begin{lem}\label{lem:Taylor--Hensel-analytic}
  For all $x\in \left(T^{\left(\widetilde{q}\right)}\right)^{n}$,
  \begin{equation*}
    H(x) =
    H(s) + D_{H}(s)(x-s) + z_{x}.
  \end{equation*} 
  Here, $z_{x}\in \left(T^{\left(\widetilde{q}\right)}\right)^{n}$ is of the form
  \begin{equation*}
    z_{x} = \left( \sum_{\alpha\in\NN^{n} , |\alpha|\geq2} h_{i\alpha}\left( x-s \right)^{\alpha} \right)_{i=1,\dots,n}.
  \end{equation*}
  If $x\in B_{s}(\lambda)$, this implies implies
  $\|z_{x}\|_{\left(T^{\left(\widetilde{q}\right)}\right)^{n}}\leq\|x-s\|_{\left(T^{\left(\widetilde{q}\right)}\right)^{n}}^{2}$.
\end{lem}

\begin{proof}
  This follows by looking at the Taylor expansion. The finally inequality follows because
  the coefficients $h_{i\alpha}$ lie in $S$. Since $S$ is bounded by $1$ and thanks
  to Lemma~\ref{lem:S-to-Ttq-bounded1--Hensel-analytic}, we have
  \begin{equation*}
    \| h_{i\alpha} \|_{T^{\left(\widetilde{q}\right)}}
    \leq \| h_{i\alpha} \|_{S}
    \leq 1.
  \end{equation*}
  Now Lemma~\ref{lem:Ttq-submultiplicative--Hensel-analytic} implies:
  \begin{equation*}
    \|z_{x}\|_{\left(T^{\left(\widetilde{q}\right)}\right)^{n}}
    =\max_{i=1,\dots,n}\| \sum_{\alpha\in\NN^{n} , |\alpha|\geq2} h_{i\alpha}\left( x-s \right)^{\alpha} \|_{\left(T^{\left(\widetilde{q}\right)}\right)^{n}}
    \leq\max_{\alpha\in\NN^{n} , |\alpha|\geq2}\|(x-s)^{\alpha}\|_{\left(T^{\left(\widetilde{q}\right)}\right)^{n}}
    \leq\|x-s\|_{\left(T^{\left(\widetilde{q}\right)}\right)^{n}}^{2}.
  \end{equation*}
  The last inequality here follows because $x\in B_{s}(\lambda)$.
  Indeed, this implies $\|x-s\|_{\left(T^{\left(\widetilde{q}\right)}\right)^{n}}\leq\lambda<1$,
  cf. Lemma~\ref{lem:lambda-less-1--Hensel-analytic}.
\end{proof}

\begin{lem}\label{lem:varphi-preserves-B--Hensel-analytic}
  If $x\in B_{s}(\lambda)$, then $\varphi(x)\in B_{s}(\lambda)$.
\end{lem}

\begin{proof}
  To simplify notation, we denote all norms by $\|\cdot\|$.
  Let $x\in B_{s}(\lambda)$. We aim to
  check $\sigma(x)\in B_{s}(\lambda)$, that is $\|\sigma(x)-s\|\leq \lambda$.
  Fix the notation as in Lemma~\ref{lem:Taylor--Hensel-analytic} and
  compute:
  \begin{align*}
    \| D_{H}(s)^{-1} H(x) \|
    &=\| D_{H}(s)^{-1} \left( H(s) + D_{H}(s)(x-s) + z_{x} \right) \| \\
    &=\| D_{H}(s)^{-1} H(s) + (x-s) + D_{H}(s)^{-1} z_{x} \| \\
    &\leq \max\left\{ \| D_{H}(s)^{-1} H(s) \| , \| x-s \| , \| D_{H}(s)^{-1} z_{x} \| \right\} \\
    &\leq \max\left\{ \lambda , \| D_{H}(s)^{-1} z_{x} \| \right\};
  \end{align*}
  here, the last inequality uses $\| D_{H}(s)^{-1} H(s) \|\leq\| D_{H}(s)^{-1}\| \| H(s) \|\leq\lambda$,
  which follows from the submultiplicativity of $S$ the definition of $\lambda$.
  Because
  \begin{equation*}
    \| D_{H}(s)^{-1} z_{x} \|
    \leq \| D_{H}(s)^{-1} \| \| z_{x} \|
    \stackrel{\text{\ref{lem:Taylor--Hensel-analytic}}}{\leq} \| D_{H}(s)^{-1} \| \| x-s \|^{2}
    \stackrel{\text{\ref{lem:lambdaDhsinverse-less-1--Hensel-analytic}}}{\leq} \| D_{H}(s)^{-1} \| \lambda^{2}
    < \lambda,
  \end{equation*}
  this implies
  \begin{equation*}
    \| D_{H}(s)^{-1} H(x) \|
    \leq \max\left\{ \lambda , \| D_{H}(s)^{-1} z_{x} \| \right\}
    = \lambda.
  \end{equation*}
  Consequently,
  \begin{equation*}
    \| \varphi(x) - s \|
    =\| (x-s) - D_{H}(s)^{-1} H(x) \|
    \leq \max\left\{ \|x-s\| , \|D_{H}(s)^{-1} H(x)\| \right\}
    \leq\lambda.
  \end{equation*}
  With other words, $\varphi(x)\in B_{s}(\lambda)$.
\end{proof}

Lemma~\ref{lem:varphi-preserves-B--Hensel-analytic} implies that
$\varphi$ restricts to the endomorphism
\begin{equation*}
  \varphi_{|B_{s}(\lambda)}\colon B_{s}(\lambda)\to B_{s}(\lambda)
\end{equation*}
of complete metric spaces. We continue by showing that it is contracting.

\begin{lem}\label{lem:Xalpha-Yalpha-Hensel}
  Consider the ring $\ZZ\left[ X_{1},\dots,X_{n},Y_{1},\dots,Y_{n}\right]$,
  where $X_{i}, Y_{i}$ denote formal variables.
  We use multi-index notation $X=\left(X_{1},\dots,X_{n}\right)$
  and $Y=\left(Y_{1},\dots,Y_{n}\right)$, and for $\alpha=\left(\alpha_{1},\dots,\alpha_{n}\right)\in\NN^{n}$
  with $\sum_{i=1}^{n}\alpha_{i}\geq2$, the polynomial $X^{\alpha}-Y^{\alpha}$
  lies in the following product of ideals:
  \begin{equation*}
    X^{\alpha}-Y^{\alpha}
    \in\left(X_{1}-Y_{1},\dots,X_{n}-Y_{n}\right)\left(X_{1},\dots,X_{n},Y_{1},\dots,Y_{n}\right).
  \end{equation*}
\end{lem}

\begin{proof}
  We argue via induction in $n$.
  If $n=1$ and so that $\alpha\in\NN$, $\alpha\geq2$, we have:
  \begin{equation*}
    X^{\alpha}-Y^{\alpha} = (X-Y)\sum_{i=0}^{\alpha-1}X^{i}Y^{\alpha-1-i}
    \in(X-Y)(X,Y)
  \end{equation*}
  writing $X=X_{1}$, $Y=Y_{1}$. Now assume the result holds for fixed
  $n \geq 2$. Then:
  \begin{align*}
    X^{\alpha}-Y^{\alpha}
    =\left( X_{1}^{\alpha_{1}} - Y_{1}^{\alpha_{1}} \right)
        X_{2}^{\alpha_{2}} \cdots X_{n+1}^{\alpha_{n+1}}
      + Y_{1}^{\alpha_{1}}
        \left( X_{2}^{\alpha_{2}} \cdots X_{n+1}^{\alpha_{n+1}} - Y_{2}^{\alpha_{2}}\cdots Y_{n}^{\alpha_{2}}\right)
  \end{align*}
  and the result follows from the induction hypothesis.
\end{proof}

\begin{lem}\label{lem:Banchfixedpt-Hensel}
  $\varphi_{|B_{s}(\lambda)}\colon B_{s}(\lambda)\to B_{s}(\lambda)$
  is a contracting map between metric spaces.
\end{lem}
  
\begin{proof}
  To simplify notation, we denote all norms by $\|\cdot\|$.
  Let $x,y\in B_{s}(\lambda)$ be arbitrary.
  By Lemma~\ref{lem:Taylor--Hensel-analytic}, we have expressions
  \begin{align*}
    H(x) &= H(s) + D_{H}(s)(x-s) + z_{x}, \\
    H(y) &= H(s) + D_{H}(s)(y-s) + z_{y},
  \end{align*}
  where
  \begin{equation*}
    z_{x} = \left( \sum_{\alpha\in\NN^{n} , |\alpha|\geq2} h_{i\alpha}\left( x-s \right)^{\alpha} \right)_{i=1,\dots,n}
    \quad\text{and}\quad
    z_{y} = \left( \sum_{\alpha\in\NN^{n} , |\alpha|\geq2} h_{i\alpha}\left( y-s \right)^{\alpha} \right)_{i=1,\dots,n}.
  \end{equation*}
  Because $\|h_{i\alpha}\|\leq 1$ for all $i,\alpha$,
  Lemma~\ref{lem:Xalpha-Yalpha-Hensel}
  (for $X=x-s$ and $Y=y-s$) implies
  \begin{equation*}
    \| z_{x} - z_{y} \|
    \leq \left( \sum_{\alpha\in\NN^{n} , |\alpha|\geq2}
      h_{i\alpha}\left( \left( x-s \right)^{\alpha}  - \left( y-s \right)^{\alpha} \right)
    \right)_{i=1,\dots,n}
    \leq \|x-y\| \max\left\{ \|x-s\| , \|y-s\| \right\}.
  \end{equation*}
  Now we have
  \begin{align*}
    \|\varphi(x)-\varphi(y)\|
    &= \|x-y-D_{H}(s)^{-1}(H(x)-H(y))\| \\
    &= \|D_{H}(s)^{-1}(z_{x}-z_{y})\| \\
    &\leq \|D_{H}(s)^{-1}\| \|z_{x}-z_{y}\| \\
    &\leq \|D_{H}(s)^{-1}\| \|x-y\| \max\left\{ \|x-s\| , \|y-s\| \right\} \\
    &\leq \|D_{H}(s)^{-1}\|\lambda \|x-y\|
  \end{align*}
  The result follows because
  $\|D_{H}(s)^{-1}\|\lambda<1$, by Lemma~\ref{lem:lambdaDhsinverse-less-1--Hensel-analytic}.
\end{proof}

\begin{proof}[Proof of Theorem~\ref{thm:Hensel-analytic}]
  Thanks to Lemma~\ref{lem:Banchfixedpt-Hensel},
  we can apply the Banach fixed-point theorem to the contracting
  map $\varphi_{|B_{s}(\lambda)}\colon B_{s}(\lambda)\to B_{s}(\lambda)$.
  This yields a unique fixed point $\widetilde{s}\in B_{s}(\lambda)$, that is
  $\varphi\left(\widetilde{s}\right)=\widetilde{s}$.
  From the definition of $\varphi$, this implies $H_{i}\left( \widetilde{s} \right)=0$
  for all $i=1,\dots,n$; this is Theorem~\ref{thm:Hensel-analytic}(ii).
  Note that (iii) holds since $\widetilde{s}\in B_{s}(\lambda)$.
  
  We continue with checking (i).
  To do this, we need the explicit description
  $\widetilde{s}=\lim_{j\to\infty}\varphi(s)$, which is also given by the Banach
  fixed-point theorem. We aim to show:
  $\widetilde{s}-s\in \left(J^{\left(\widetilde{q}\right)}\right)^{n}$.
  Since $\left(J^{\left(\widetilde{q}\right)}\right)^{n}$ is closed,
  because $J^{\left(\widetilde{q}\right)}$ is the kernel of a bounded linear map,
  it suffices to check $\varphi^{j}(s)-s\in\left(J^{\left(\widetilde{q}\right)}\right)^{n}$
  for all $j\in\NN_{\geq1}$. This will follows from the following fact
  \begin{equation}\label{eq-proof:thm:Hensel-analytic}
    H\left( \varphi^{j}\left(s\right) \right) \in \left(J^{\left(\widetilde{q}\right)}\right)^{n}
  \end{equation}
  which we check by induction along $j$. For $j=1$, this is Condition~\ref{thm:Hensel-analytic}(c).
  Now suppose~(\ref{eq-proof:thm:Hensel-analytic})
  is known for fixed $j$. Then:
  \begin{equation*}
    H\left( \varphi^{j+1}\left(s\right) \right)
    =H\left( \varphi^{j}\left(s\right) - D_{H}(s)^{-1}H\left(\varphi^{j}(s) \right) \right)
    \stackrel{\clubsuit}{\equiv} H\left( \varphi^{j}\left(s\right) - 0 \right)
    = H\left( \varphi^{j}\left(s\right) \right)
    \stackrel{\clubsuit}{\equiv} 0
    \mod \left(J^{\left(\widetilde{q}\right)}\right)^{n},
  \end{equation*}
  where the $\clubsuit$ follow from the induction hypothesis.
  This confirms~(\ref{eq-proof:thm:Hensel-analytic}). As a consequence, we find
  \begin{equation*}
    \varphi^{j+1}(s)-\varphi^{j}(s)
    =-D_{H}(s)^{-1}H\left(\varphi^{j}(s)\right)
    \in\left(J^{\left(\widetilde{q}\right)}\right)^{n}
  \end{equation*}
  for all $j\in\NN$. This implies
  \begin{equation*}
    \varphi^{j}(s)-s
    =\sum_{i=1}^{j}\varphi^{j}(s)-\varphi^{j-1}(s)
    \in\left(J^{\left(\widetilde{q}\right)}\right)^{n},
  \end{equation*}
  completing the proof of (i).
   
  Uniqueness is clear from the Banach fixed-point theorem.
  
  Finally, we check that the $\widetilde{s}_{i}$ are power-bounded.
  $\widetilde{s}\in B_{s}(\lambda)$ implies
  \begin{equation*}
    \|\widetilde{s}_{i}\|
    \leq\|\widetilde{s}\|
    =\|\widetilde{s}-s+s\|
    \leq\max\left\{ \| \widetilde{s} - s \| , \| s \| \right\}
    \leq\max\left\{\lambda,1\right\}
    \stackrel{\text{\ref{lem:lambda-less-1--Hensel-analytic}}}{=}1.
  \end{equation*}
  The second to last inequality follows
  because $S$ is bounded by $1$, therefore
  $\|s\|\leq 1$. Because the norm on $S$ is submultiplicative,
  this implies that $\widetilde{s}_{i}$ is power-bounded.
\end{proof}


\section{The $\eta$-operator}
\label{subsec:the-eta-operator}

The $\eta$-operator, as introduced by Berthelot-Ogus~\cite{BerthelotOgus78Notesoncrystallinecoh},
kills torsion in the cohomology of a given cochain complex.
Our exposition follows~\cite[\S 6]{BhattMorrowScholze2018}

\begin{defn}\label{defn:algebraic-decalage}
  Let $S$ be an abstract commutative ring, $s\in S$ is not a zero-divisor. Define,
  for every cochain complex $N^{\bullet}$ of $S$-Banach modules concentrated in degrees
  $\geq0$ with differentials $d$,
  \begin{equation*}
    \left(\eta_{s}N^{\bullet}\right)^{i}
    :=\left\{n\in s^{i}N^{i}\colon d(n)\in s^{i+1}N^{i+1}\right\}.
  \end{equation*}
  This defines a subcomplex $\eta_{s}N^{\bullet}\subseteq N^{\bullet}$.
\end{defn}

The following is a special case of~\cite[Lemma 6.4]{BhattMorrowScholze2018}.

\begin{lem}\label{lem:eta-operator-kills-torsion}
  Fix the notation from Definition~\ref{defn:algebraic-decalage}. One computes, for all $i\in\ZZ$,
  \begin{equation*}
    \Ho^{i}\left(\eta_{s}N^{\bullet}\right)
    \cong
    \Ho^{i}\left( N^{\bullet} \right) / \left(\text{$s$-torsion}\right)
  \end{equation*}
\end{lem}

Now fix a Banach ring $R$. Let $r\in R$, which is not a zero-divisor.
We give a variant of the $\eta$-operator for complexes of $R$-Banach modules.

\begin{condition}\label{cond:Banachmoduledecalage-reconstructionpaper}
  Consider a cochain complex $M^{\bullet}$ of $R$-Banach modules.
  \begin{itemize}
    \item[(i)] $M^{i}=0$ for all $i<0$.
    \item[(ii)] $r^{i}M^{i}\subseteq M^{i}$ is a closed subset for all $i\geq0$.
    \item[(iii)] $M^{i}$ does not have $r$-torsion for all $i\geq0$.
  \end{itemize}
\end{condition}

\begin{defn}\label{defn:decalage-for-Banachmodules}
  Given a cochain complex $M^{\bullet}$ of $R$-Banach modules,
  we equip $\eta_{r}M^{\bullet}\subseteq M^{\bullet}$
  with the induced norms.
\end{defn}

\begin{lem}\label{lem:decalage-defined-reconstructionpaper}
  Consider a cochain complex $M^{\bullet}$ of $R$-Banach modules satisfying
  Condition~\ref{cond:Banachmoduledecalage-reconstructionpaper}.
  Then $\eta_{r}M^{\bullet}\subseteq M^{\bullet}$, equipped with the induced norms,
  is also cochain complex of $R$-Banach modules.
\end{lem}

\begin{proof}
  The following commutative diagrams are pullback diagrams of normed $R$-modules:
  \begin{equation}\label{cd:dacalagefunctor-degrees-pullbackdiagram}
    \begin{tikzcd}
      \left(\eta_{r}M^{\bullet}\right)^{i}
        \arrow[hook]{r}\arrow{d} &
      r^{i}M^{i}
        \arrow{d}{d} \\
      r^{i+1}M^{i+1}
        \arrow[hook]{r} &
      M^{i+1}.
    \end{tikzcd}
  \end{equation}
  Condition~\ref{cond:Banachmoduledecalage-reconstructionpaper}(ii) implies that all
  $\left(\eta_{r}M^{\bullet}\right)^{i}$ are $R$-Banach modules.
\end{proof}

We study the behaviour of the $\eta$-operator with respect to localisation.

\begin{notation}\label{notation:indbanach-invert-r}
  Introduce the following $R$-ind-Banach algebra
  \begin{align*}
    \text{``}\varinjlim_{r\times}\text{"}R
    &:=
    \text{``}\varinjlim\text{"}\left(
      \dots\stackrel{r\times}{\longrightarrow} R
        \stackrel{r\times}{\longrightarrow} R
        \stackrel{r\times}{\longrightarrow}\dots\right).
  \end{align*}
  The multiplication is
  \begin{equation*}
    \text{``}\varinjlim_{r\times}\text{"} R
    \widehat{\otimes}_{R}\text{``}\varinjlim_{r\times}\text{"} R
    \cong
    \text{``}\varinjlim_{2r\times}\text{"}R\widehat{\otimes}_{R} R
    \stackrel{\text{``}\varinjlim\text{"}_{2r\times}\mu}{\longrightarrow}
    \text{``}\varinjlim_{2r\times}\text{"} R
    \cong\text{``}\varinjlim_{r\times}\text{"} R,
  \end{equation*}
  where $\mu$ is the multiplication on $r$.
  The unit is induced by the unit $R\to R$, that is the identity.
\end{notation}

\begin{lem}\label{lem:decalage-indBanach-localisation-is-just-completed-localisation}
  Consider a cochain complex $M^{\bullet}$ of $R$-Banach modules satisfying
  Condition~\ref{cond:Banachmoduledecalage-reconstructionpaper}.
  Then the canonical morphism
  \begin{equation}\label{lem:decalage-indBanach-localisation-is-just-completed-localisation-theiso}
    \eta_{r}M^{\bullet} \widehat{\otimes}_{R} \text{``}\varinjlim_{r\times}\text{"} R
    \isomap
    M^{\bullet} \widehat{\otimes}_{R} \text{``}\varinjlim_{r\times}\text{"} R
  \end{equation}
  is an isomorphism of cochain complexes of $R$-ind-Banach modules.
\end{lem}

\begin{proof}
  First, we claim that the canonical
  morphisms~(\ref{eq:decalage-indBanach-localisation-is-just-completed-localisation-lemma})
  are isomorphism for all $i\geq0$:
  \begin{equation}\label{eq:decalage-indBanach-localisation-is-just-completed-localisation-lemma}
    r^{i}M^{i}\widehat{\otimes}_{R} \text{``}\varinjlim_{r\times}\text{"} R
    \isomap M^{i} \widehat{\otimes}_{R} \text{``}\varinjlim_{r\times}\text{"} R.
  \end{equation}
  These are colimits of
  the strict monomorphisms $r^{i}M^{i}\hookrightarrow M^{i}$ of $R$-Banach modules.
  Corollary~\ref{cor:filteredcol-inIndBan-stronglyexact}
  implies that~(\ref{eq:decalage-indBanach-localisation-is-just-completed-localisation-lemma})
  is a strict monomorphism. Next, observe that
  $r^{i}M^{i}\widehat{\otimes}_{R} \text{``}\varinjlim\text{"}_{r\times} R\cong\text{``}\varinjlim\text{"}_{r\times}r^{i}M^{i}$
  and $M^{i}\widehat{\otimes}_{R} \text{``}\varinjlim\text{"}_{r\times} R\cong\text{``}\varinjlim\text{"}_{r\times}M^{i}$
  are monomorphic ind-objects, see~\cite[Definition 3.30]{BBB16},
  by Condition~\ref{cond:Banachmoduledecalage-reconstructionpaper}(iii).
  Since the map between the underlying abstract $R$-modules
  \begin{equation*}
    \varinjlim_{r\times}|r^{i}M^{i}|
    =|r^{i}M^{i}\widehat{\otimes}_{R} \text{``}\varinjlim_{r\times}\text{"} R|
    \to |M^{i} \widehat{\otimes}_{R} \text{``}\varinjlim_{r\times}\text{"} R|
    =\varinjlim_{r\times}|M^{i}|
  \end{equation*}
  is surjective,~\cite[Corollary 3.32]{BBB16} implies
  that~(\ref{eq:decalage-indBanach-localisation-is-just-completed-localisation-lemma})
  is an epimorphism. It is thus an isomorphism.
  
  Now compute, using the pullback diagram~(\ref{cd:dacalagefunctor-degrees-pullbackdiagram}),
  \begin{equation*}
  \begin{split}
    \left(\eta_{r}M^{\bullet}\right)^{i}\widehat{\otimes}_{R} \text{``}\varinjlim_{r\times}\text{"} R
    &\stackrel{\text{(\ref{cd:dacalagefunctor-degrees-pullbackdiagram})}}{\cong}
      \left(r^{i+1}M^{i+1}\times_{M^{i+1}} r^{i}M^{i}\right)\widehat{\otimes}_{R}\text{``}\varinjlim_{r\times}\text{"} R \\
    &\stackrel{\text{\ref{cor:filteredcol-inIndBan-stronglyexact}}}{\cong}
      \left(r^{i+1}M^{i+1}\widehat{\otimes}_{R}\text{``}\varinjlim_{r\times}\text{"} R\right)
      \times_{M^{i+1}\widehat{\otimes}_{R}\text{``}\varinjlim_{r\times}\text{"} R}
      \left(r^{i}M^{i}\widehat{\otimes}_{R}\text{``}\varinjlim_{r\times}\text{"} R\right) \\
    &\stackrel{\text{(\ref{eq:decalage-indBanach-localisation-is-just-completed-localisation-lemma})}}{\cong}
      \left(M^{i+1}\widehat{\otimes}_{R}\text{``}\varinjlim_{r\times}\text{"} R\right)
      \times_{\left(M^{i+1}\widehat{\otimes}_{R}\text{``}\varinjlim_{r\times}\text{"} R\right)}
      \left(M^{i}\widehat{\otimes}_{R}\text{``}\varinjlim_{r\times}\text{"} R\right) \\
    &\cong
      M^{i}\widehat{\otimes}_{R}\text{``}\varinjlim_{r\times}\text{"} R
  \end{split}
  \end{equation*}
  for all $i\geq0$. This gives that~(\ref{lem:decalage-indBanach-localisation-is-just-completed-localisation-theiso})
  is an isomorphism.
\end{proof}

\begin{lem}\label{lem:decalage-completed-localisation-is-just-completed-localisation}
  Let $F$ denote a field complete with respect to a non-trivial non-Archimedean valuation.
  $F^{\circ}\subseteq F$ is the Banach ring of power-bounded elements,
  and $\pi\in F^{\circ}$ denotes a pseudo-uniformiser, that is $0<|\pi|<1$.
  Consider a cochain complex $M^{\bullet}$ of $F^{\circ}$-Banach modules satisfying
  Condition~\ref{cond:Banachmoduledecalage-reconstructionpaper} for $R=F^{\circ}$ and $r=\pi$.
  Then the canonical morphism
  \begin{equation}\label{lem:decalage-completed-localisation-is-just-completed-localisation}
    \eta_{\pi}M^{\bullet} \widehat{\otimes}_{F^{\circ}} F
    \isomap
    M^{\bullet} \widehat{\otimes}_{F^{\circ}} F
  \end{equation}
  is an isomorphism of cochain complexes of $R$-ind-Banach modules.
\end{lem}

\begin{proof}
  First, we claim that the canonical morphisms
  \begin{equation}\label{eq:decalage-completed-localisation-is-just-completed-localisation-lemma}
    \left(\pi^{i}M^{i}\right)\widehat{\otimes}_{F^{\circ}} F
    \isomap M^{i}\widehat{\otimes}_{F^{\circ}} F
  \end{equation}
  are isomorphisms for all $i\geq0$. Because $\pi^{i}M^{i}\hookrightarrow M^{i}$
  is a strict monomorphism,
  Lemma~\ref{lem:SNrm-to-SNrm-localisation-preserves-certain-kernels}
  implies that $\pi^{i}M^{i}\otimes_{F^{\circ}} F\to M^{i}\otimes_{F^{\circ}} F$ is a strict monomorphism;
  here \emph{loc. cit.} applies because of Condition~\ref{cond:Banachmoduledecalage-reconstructionpaper}(iii).
  As $\pi^{i}M^{i}\otimes_{F^{\circ}} F\to M^{i}\otimes_{F^{\circ}} F$ is surjective, it follows
  that it is an isomorphism of normed $F^{\circ}$-modules. Thus its
  completion~(\ref{eq:decalage-completed-localisation-is-just-completed-localisation-lemma})
  is an isomorphism of $F$-Banach spaces.
  
  Finally, the following computations for all $i\geq0$
  give that~(\ref{lem:decalage-completed-localisation-is-just-completed-localisation})
  is an isomorphism:
  \begin{equation*}
  \begin{split}
    \left(\eta_{\pi}M^{\bullet}\right)^{i}\widehat{\otimes}_{{F}^{\circ}}F
    &\stackrel{\text{(\ref{cd:dacalagefunctor-degrees-pullbackdiagram})}}{\cong}
      \left(\pi^{i+1}M^{i+1}\times_{M^{i+1}} \pi^{i}M^{i}\right)\widehat{\otimes}_{F^{\circ}}F \\
    &\stackrel{\text{\ref{lem:completed-localisation-preserves-fiberproducts}}}{\cong}
      \left(r^{i+1}M^{i+1}\widehat{\otimes}_{F^{\circ}}F\right)
      \times_{M^{i+1}\widehat{\otimes}_{F^{\circ}}F} \left(r^{i}M^{i}\widehat{\otimes}_{F^{\circ}}F\right) \\
    &\stackrel{\text{(\ref{eq:decalage-completed-localisation-is-just-completed-localisation-lemma})}}{\cong}
    \left(M^{i+1}\widehat{\otimes}_{F^{\circ}}F\right)
    \times_{M^{i+1}\widehat{\otimes}_{F^{\circ}}F} \left(M^{i}\widehat{\otimes}_{F^{\circ}}F\right) \\
    &\cong M^{i}\widehat{\otimes}_{F^{\circ}}F.
  \end{split}
  \end{equation*}
  Here, Lemma~\ref{lem:completed-localisation-preserves-fiberproducts} applies
  thanks to Condition~\ref{cond:Banachmoduledecalage-reconstructionpaper}(iii).
\end{proof}

We often investigate Banach modules via their associated graded modules.
Therefore, we introduce a variant of the $\eta$-operator for filtered modules.
From now on, $R$ denotes a filtered ring, and $r\in R$ is again not a zero-divisor.
All filtrations are descending.

\begin{condition}\label{cond:filteredmodulesdecalage-reconstructionpaper}
  Consider a cochain complex $M^{\bullet}$ of filtered $R$-modules.
  \begin{itemize}
    \item[(i)] $M^{i}=0$ for all $i<0$.
    \item[(ii)] $r^{i}M^{i}\subseteq M^{i}$ is a closed subset for all $i\geq0$ with respect
    to the filtration topology.
    \item[(iii)] $M^{i}$ does not have $r$-torsion for all $i\geq0$.
    \item[(iv)] $\gr M^{i}$ does not have $\sigma\left(r^{i}\right)$-torsion. Here,
      $\sigma\left(r^{i}\right)\in\gr R$ denotes the principal symbol of $r^{i}$.
  \end{itemize}
\end{condition}

\begin{lem}\label{lem:decalagefilteredmodules-defined}
  Consider a cochain complex $M^{\bullet}$ of filtered $R$-modules
  satisfying Conditions~\ref{cond:filteredmodulesdecalage-reconstructionpaper}(i), (ii), and (iii).
  Then $\eta_{r}M^{\bullet}\subseteq M^{\bullet}$, equipped with the induced filtrations,
  is a cochain complex of filtered $R$-modules as well. The filtrations are
  exhaustive (respectively separated, respectively complete) if the
  filtrations on the $M^{i}$ are exhaustive (respectively separated, respectively complete).
\end{lem}

\begin{proof}
  To show completeness, proceed as in the proof of
  Lemma~\ref{lem:decalage-defined-reconstructionpaper}.
  The filtrations are exhaustive (respectively separated)
  as they are induced by exhaustive (respectively separated) filtrations.
\end{proof}

\begin{lem}\label{lem:decalage-commutes-gr}
  Consider a cochain complex $M^{\bullet}$ of filtered $R$-modules
  satisfying Condition~\ref{cond:filteredmodulesdecalage-reconstructionpaper}.
  If $\sigma(r)\in\gr^{0}R$, then the grading on $\gr M^{\bullet}$ induces a grading on
  $\eta_{\sigma(r)}\gr M^{\bullet}$. In this case, we have
  \begin{equation}\label{eq:decalage-commutes-gr-theiso}
    \gr\eta_{r}M^{\bullet}\cong\eta_{\sigma(r)}\gr M^{\bullet},
  \end{equation}
  as complexes of graded $\gr R$-modules. This isomorphism is functorial.
\end{lem}

\begin{proof}
  Because $\sigma\left(r\right)\in\gr^{0} R$,
  we find $\eta_{\sigma(r)}\gr M^{\bullet}\cong \bigoplus_{n\in\ZZ}\eta_{\sigma(r)}\gr^{n} M^{\bullet}$.
  This gives the desired grading. Furthermore, for all $i\in\ZZ$, we have the sequence of filtered $R$-modules
  \begin{equation*}
    0 \longrightarrow
    M^{i} \stackrel{\iota}{\longrightarrow}
    M^{i}
  \end{equation*}
  where $\iota$ is the multiplication-by-$r^{i}$ map.
  It is strictly exact because its associated graded is exact by
  Condition~\ref{cond:filteredmodulesdecalage-reconstructionpaper}(iv),
  cf.~\cite[Chapter I, \S 4.1, page 31-32, Theorem 4]{HuishiOystaeyen1996}.
  Since passing to the associated graded
  transforms strictly exact sequences to exact sequence,
  cf. \emph{loc. cit.}, we find
  \begin{equation}\label{eq:1--lem:decalage-commutes-gr}
    \gr\left(r^{i}M^{i}\right)
    =\gr\left(\im\iota\right)
    =\im\left(\gr\iota\right)
    =\sigma\left(r^{i}\right)\gr\left(M^{i}\right).
  \end{equation}
  Now isomorphism~(\ref{eq:decalage-commutes-gr-theiso})
  follows from the computation
  \begin{equation*}
  \begin{split}
    \gr\left( \left(\eta_{r}M^{\bullet}\right)^{i}\right)
    &=\gr\left( r^{i}M^{i} \times_{d,M^{i+1}} r^{i+1}M^{i+1} \right) \\
    &\stackrel{\text{\ref{lem:gr:commutes-pullback}}}{\cong}
    \gr\left( r^{i}M^{i}\right) \times_{\gr\left(d\right),\gr\left(M^{i+1}\right)} \gr\left(r^{i+1}M^{i+1}\right) \\
    &\stackrel{\text{(\ref{eq:1--lem:decalage-commutes-gr})}}{\cong}
    \sigma\left(r^{i}\right)\gr\left(M^{i}\right)
    \times_{\gr\left(d\right),\gr\left(M^{i+1}\right)}
    \sigma\left(r^{i}\right)\gr\left(M^{i+1}\right) \\
    &=\left(\eta_{\sigma(r)}\gr M^{\bullet}\right)^{i}.
  \end{split}
  \end{equation*}
  Here we used Lemma~\ref{lem:gr:commutes-pullback} below.
  The functoriality is clear.
\end{proof}

\begin{lem}\label{lem:gr:commutes-pullback}
  We have the canonical isomorphism of graded $\gr R$-modules
  \begin{equation*}
    \gr\left(M\times_{T}N\right)\cong\gr M \times_{\gr T} \gr N
  \end{equation*}
  for every diagram $M\to T\leftarrow N$ of filtered $R$-modules.
\end{lem}

\begin{proof}
  Since
  $M \times_{T} N \cong \ker\left( M\times N \to T, (m,n)\mapsto \phi(m)-\psi(n) \right)$,
  \cite[Chapter I, \S 4.1, page 31-32, Theorem 4]{HuishiOystaeyen1996} applies.
\end{proof}


\section{Continuous group cohomology}\label{subsec:contgrpcohomology}

Fix a Banach ring $R$. $G$ denotes a profinite group.


\subsection{Ind-$G$-$k$-Banach spaces}

Recall the notion of Ind-$G$-$k$-Banach spaces as in~\S\label{sec:indGkbanachspaces}.

The following is needed for our definition of continuous group cohomology later on.

\begin{defn}
  Given a compact topological space $S$ and 
  an $R$-Banach module $M$,
  $\intHom_{\cont}\left(S,M\right)$ denotes the $R$-Banach module
  of all continuous functions $S\to M$ together with the supremum norm.
\end{defn}

We record the following results for future reference.

\begin{lem}\label{lem:koszulsequences-inHomcontSA}
  Let $S$ denote a compact topological space, and $A$ is an $R$-Banach algebra.
  View $\intHom_{\cont}\left(S,A\right)$ as an $R$-Banach algebra with pointwise
  multiplication. Then, given a regular sequence
  $a_{1},\dots,a_{n}\in A$, the constant functions $a_{1},\dots,a_{n}\in\intHom_{\cont}\left(S,A\right)$
  form a regular sequence.
\end{lem}

\begin{proof}
  As $a_{1},\dots,a_{n}\in A$ is a regular sequence, we find
  a nonzero element $[a]\in A/\left(a_{1},\dots,a_{n}\right)$. Thus 
  $\intHom_{\cont}\left(S,A\right)/\left(a_{1},\dots,a_{n}\right)$ is nonzero,
  as it contains the equivalence class of the constant map $s\mapsto a$.
  It remains to show that, for every $i=1,\dots,n$,
  $[a_{i}]\in\intHom_{\cont}\left(S,A\right)/\left(a_{1},\dots,a_{i-1}\right)$
  is nonzero. Suppose there exists an $i$ such that $a_{i}$ becomes zero
  in the quotient. Then we may write $a_{i}=\sum_{j=1}^{i-1}\phi_{j}a_{j}$,
  where the $\phi_{j}$ are continuous maps $S\to A$. But then, for any
  $s\in S$, $a_{i}=\sum_{j=1}^{i-1}\phi_{j}(s)a_{j}$. Thus
  $a_{i}\in\left(a_{1},\dots,a_{i-1}\right)\subseteq A$, which contradicts
  the regularity of $a_{1},\dots,a_{i-1}$.
\end{proof}

\begin{lem}\label{lem:aHomcontSA-iso-HomcontSaA-reconstructionpaper}
  Consider an $R$-Banach algebra $A$ and fix an element $a\in A$.
  The canonical map
  \begin{equation}\label{eq:aHomcontSA-iso-HomcontSaA-themap-reconstructionpaper}
    a\intHom_{\cont}\left(S , A \right) \isomap \intHom_{\cont}\left(S , aA \right)
  \end{equation}
  is an isomorphism of $R$-Banach modules if $A\to A$, $x\mapsto ax$ is a strict monomorphism.
\end{lem}

\begin{proof}
  The assumptions imply that $\alpha\colon A\isomap aA$, $x\mapsto ax$ is an isomorphism of
  $R$-Banach modules. We can thus define a two-sided inverse of
  (\ref{eq:aHomcontSA-iso-HomcontSaA-themap-reconstructionpaper}) via
  $\phi\mapsto a\left(\alpha^{-1}\circ\phi\right)$.
\end{proof}

$\intHom_{\cont}\left(S,-\right)$ defines a functor $\Ban_{R}\to\Ban_{R}$.
Apply~\cite[Proposition 6.1.9]{KashiwaraSchapira2006}
to lift it to a cocontinuous functor
$\IndBan_{R}\to\IndBan_{R}$ which we denote again by $\intHom_{\cont}\left(S,-\right)$.

\begin{lem}\label{lem:finitedirectsum-comutes-withHomcont-reconstructionpaper}
  $\intHom_{\cont}\left(S,-\right)$ commutes with direct sums of $R$-ind-Banach modules.
\end{lem}

\begin{proof}
  An arbitrary colimit is a filtered colimit of
  finite direct sums. Thanks to~\cite[Proposition 6.1.9(ii)]{KashiwaraSchapira2006},
  it remains to show that $\intHom_{\cont}\left(S,-\right)$ commutes
  with direct sums of finitely many $R$-Banach modules $M_{1},\dots,M_{n}$.
  There is an obvious morphism of $R$-Banach modules
  \begin{equation}\label{eq:--lem:finitedirectsum-comutes-withHomcont-reconstructionpaper}
    \bigoplus_{i=1}^{n}\intHom_{\cont}\left(S , M_{i}\right)
    \to
    \intHom_{\cont}\left(S , \bigoplus_{i=1}^{n} M_{i}\right).
  \end{equation}
  It is bijective, because the topological spaces underlying the direct sums
  in Banach modules are the products in the category of topological spaces, and
  $\Hom_{\cont}\left(S , - \right)$ is a restriction
  of the homomorphisms in the category of topological spaces,
  thus it commutes with limits. One checks directly from the definitions
  that~(\ref{eq:--lem:finitedirectsum-comutes-withHomcont-reconstructionpaper}) preserves the norms as well.
  Thus it is an isomorphism.
\end{proof}

\subsection{Ind-$G$-$k$-Banach spaces as sheaves}

Recall~\cite[Proposition 3.5]{Sch13pAdicHodge}
and~\cite{Sch13pAdicHodgeErratum}:

\begin{defn}\label{defn:Gpfsets-reconstructionpaper}
  $G$-$\pfsets$ is the site whose underlying category is the
  category of profinite sets with continuous $G$-action, and a
  set of continuous $G$-equivariant maps $\left\{f_{i}\colon S_{i}\to S\right\}_{i}$
  is a covering if
  \begin{itemize}
    \item[(i)] each $S_{i}\to S$ is an inverse limit
      $\varprojlim_{\mu<\lambda}S_{i\mu}$ of finite sets with $S_{i0}=S$,
      having the following property: for all $\mu<\lambda$,
      $S_{\mu}\to\varprojlim_{\mu^{\prime}<\prime}S_{\mu^{\prime}}$
      is the pullback of a surjective map of finite sets.
    \item[(ii)] Furthermore, $S=\bigcup_{i}f_{i}\left(S_{i}\right)$.
  \end{itemize}
\end{defn}

We continue to fix a Banach ring $R$.

\begin{defn}\label{defn:HomcontG}
  Given $S\in G$-$\pfsets$ and a $G$-$R$-Banach module $M$,
  \begin{equation*}
    \intHom_{\cont,G}\left(S,M\right)
    \subseteq\intHom_{\cont}\left(S,M\right)
  \end{equation*}
  is the $R$-submodule of $G$-equivariant continuous maps $S \to M$
  equipped with the induced norm.
\end{defn}

\begin{lem}
  With the notation as in Definition~\ref{defn:HomcontG},
  $\intHom_{\cont,G}\left(S,M\right)$ is an $R$-Banach module.
\end{lem}

\begin{proof}
  $\intHom_{\cont,G}\left(S,M\right)$ is the intersection of the following sets
  \begin{equation*}
    E_{g}=\left\{
      \phi\in\intHom_{\cont}\left(S,M\right)\colon
      \phi(gs)=g\phi(s) \text{ for all } s\in S
    \right\} \text{ for all } g\in G.
  \end{equation*}
  These sets are closed. Indeed, one checks from the definition
  that the limit of any sequence $\left(\phi_{i}\right)_{i}\subseteq E_{g}$
  converging in $\intHom_{\cont,G}\left(S,M\right)$ lies again in $E_{g}$.
\end{proof}

$M\mapsto\intHom_{\cont,G}\left(S,M\right)$ defines a functor
$\Ban_{R}(G)\to\Ban_{R}$.
Apply~\cite[Proposition 6.1.9]{KashiwaraSchapira2006}
to lift it to a cocontinuous functor
$\IndBan_{R}(G)\to\IndBan_{R}$. Denote it again by $\intHom_{\cont,G}\left(S,-\right)$.
By definition, this construction is also functorial in $S$.

\begin{defn}\label{defn:FMfunctor-Gpfsets-reconstructionpaper}
  Given an ind-$G$-$R$-Banach module $M$, define
  the following presheaf on $G-\pfsets$:
  \begin{equation*}
    \cal{F}_{M}\colon S \mapsto \intHom_{\cont,G}\left(S,M\right),
  \end{equation*}
\end{defn}

From now on, we assume $R=F$ to be a field complete with respect to a non-trivial
non-Archimedean valuation. In particular, we have the open mapping theorem
in our disposal.

\begin{lem}\label{lem:FMfunctor-Gpfsets-givessheaves-reconstructionpaper}
  Given an ind-$G$-$F$-Banach space $M$, $\cal{F}_{M}$
  is a sheaf of $F$-ind-Banach spaces.
\end{lem}

\begin{proof}
  Given an arbitrary covering $\left\{f_{i}\colon S_{i}\to S\right\}$ in $G-\pfsets$,
  we have to show that
  \begin{equation}\label{eq:FMfunctor-Gpfsets-givessheaves-sheafexactsequence-reconstructionpaper}
    0
    \stackrel{}{\longrightarrow}    
    \intHom_{\cont,G}\left( S , M \right)
    \stackrel{}{\longrightarrow}
    \prod_{i}\intHom_{\cont,G}\left( S_{i} , M \right)
    \stackrel{}{\longrightarrow}
    \prod_{j,l}\intHom_{\cont,G}\left( S_{j}\times_{S}S_{l} , M \right)
  \end{equation}
  is strictly exact. We may assume that the covering is finite,
  by Proposition~\ref{prop:siteqc-reducesheafcondtofincov}.
  Indeed, since $S$ is compact, and Definition~\ref{defn:Gpfsets-reconstructionpaper}(i)
  implies that the $f_{i}$ are open, see~\cite[paragraph (1)]{Sch13pAdicHodgeErratum},
  we can replace the given covering by a finite subcovering.
  Next, we assume that $M$ is a $G$-$F$-Banach space. This is allowed,
  by Corollary~\ref{cor:filteredcol-inIndBan-stronglyexact}
  and because the products in~(\ref{eq:FMfunctor-Gpfsets-givessheaves-sheafexactsequence-reconstructionpaper})
  commute with filtered colimits as they are finite. Furthermore,
  we find with~\cite[Corollary 6.1.17]{KashiwaraSchapira2006}
  that the products in~(\ref{eq:FMfunctor-Gpfsets-givessheaves-sheafexactsequence-reconstructionpaper})
  are computed in $\Ban_{F}$. In this situation, it is well-known
  that~(\ref{eq:FMfunctor-Gpfsets-givessheaves-sheafexactsequence-reconstructionpaper})
  is exact, cf. the~\cite[discussion in \S 3]{Sch13pAdicHodge}.
  It is strictly exact by the open mapping theorem.
\end{proof}


\subsection{Continuous group cohomology}

$R$ is again a fixed Banach ring.
In the following, we view any ind-$G$-$R$-Banach module as an ind-$R$-Banach module
via the canonical $\IndBan_{R}\left(G\right)\to\IndBan_{R}$.

\begin{lem}\label{lem:contgpaction-banachmodule-hasboundedimage-reconstructionpaper}
  Given a $G$-$R$-Banach module, we find that the map
  \begin{equation*}
    b\colon G \to \intHom_{R}\left(M,M\right), \, g\mapsto \left( m\mapsto gm \right)
  \end{equation*}
  is continuous. As a consequence, $\sup_{g\in G}\|b(g)\|<\infty$.
\end{lem}

\begin{proof}
  The norm on $\intHom_{R}\left(M,M\right)$ induces the compact-open topology.
  Thus we have
  \begin{equation*}
    \Hom_{\cont}\left( G \times M , M \right)
    =\Hom_{\cont}\left( G , \intHom_{R}\left(M,M\right) \right).
  \end{equation*}
  The equality matches the action map $G\times M\to M$
  with the morphism $b$, which is thus continuous.
  This implies that the following composition is continuous:
  \begin{equation*}
    G \stackrel{b}{\longrightarrow} \intHom_{R}\left(M,M\right)
    \stackrel{\|\cdot\|}{\longrightarrow} \RR_{\geq0}.
  \end{equation*}
  Its image $\left\{\|b(g)\|\colon g\in G\right\}$ is compact because $G$ is compact,
  thus it is bounded.
\end{proof}

\begin{lem}\label{lem:defn:Banachmodule-Ccontcomplex-reconstructionpaper}
  Given a $G$-$R$-Banach module $M$, we find that
  \begin{align*}
    d^{0}\colon M &\to \intHom_{\cont}\left( G , M \right) \\
    m &\mapsto g(m)-m
  \end{align*}
  is a morphism of $R$-Banach modules. For all $i\geq1$,
  \begin{align*}
    d^{i}\colon \intHom_{\cont}\left( G^{i} , M \right) &\to \intHom_{\cont}\left( G^{i+1} , M \right) \\
    \phi &\mapsto\left(\left(g_{1},\dots,g_{i+1}\right)\mapsto
      \begin{array}{l}
        g_{1}\left(\phi\left(g_{2},\dots,g_{i+1}\right)\right) \\
        +\sum_{j=1}^{i}\left(-1\right)^{j}\phi\left(g_{1},\dots,g_{j}g_{j+1},\dots,g_{i+1}\right) \\
        +\left(-1\right)^{i+1}\phi\left(g_{1},\dots,g_{i}\right)
      \end{array}
    \right)
  \end{align*}
  are morphisms of $R$-Banach modules as well.
\end{lem}

\begin{proof}
  It is classical that these maps are well-defined, see for
  example~\cite{Tate1976K2andGaloisCohomology}
  or~\cite[\href{https://stacks.math.columbia.edu/tag/0DVG}{Tag 0DVG}]{stacks-project}.
  It remains to show that they are bounded. Recall
  Lemma~\ref{lem:contgpaction-banachmodule-hasboundedimage-reconstructionpaper}
  and compute, for all $i\geq0$,
  \begin{align*}
    \|d^{i}\left(\phi\right)\|
    =\sup_{g_{1},\dots,g_{i+1}\in G}d^{i}\left(\phi\right)\left( g_{1},\dots,g_{i+1} \right)
    \leq\max\left\{\sup_{g\in G}\|b(g)\|,1\right\}\|\phi\|.
  \end{align*}
  for any $\phi\in\intHom_{\cont}\left( G^{i} , M \right)$.
  That is, each $d^{i}$ is bounded by $\left\{\sup_{g\in G}\|b(g)\|,1\right\}$.
\end{proof}

\begin{defn}\label{defn:Banachmodule-Ccontcomplex-reconstructionpaper}
  Given a $G$-$R$-Banach module $M$, $C_{\cont}^{\bullet}\left(G,M\right)$ denotes the sequence of maps
  \begin{equation}\label{eq:contgpcoh-defn}
    M \stackrel{d^{0}}{\longrightarrow}
    \intHom_{\cont}\left( G , M\right)
    \stackrel{d^{1}}{\longrightarrow}
    \intHom_{\cont}\left( G \times G, M\right)
    \stackrel{d^{2}}{\longrightarrow} \dots
  \end{equation}
  where the differentials are as in Lemma~\ref{lem:defn:Banachmodule-Ccontcomplex-reconstructionpaper}.
  It is a cochain complex of abstract $R$-modules,
  see for example~\cite{Tate1976K2andGaloisCohomology}
  or~\cite[\href{https://stacks.math.columbia.edu/tag/0DVG}{Tag 0DVG}]{stacks-project}.
  It is a cochain complex of $R$-Banach modules by
  Lemma~\ref{lem:defn:Banachmodule-Ccontcomplex-reconstructionpaper}.
\end{defn}

\begin{defn}\label{defn:indBanachmodule-Ccontcomplex-reconstructionpaper}
  Given an ind-$G$-$R$-Banach module $M=\text{``}\varinjlim_{i}\text{''}M_{i}$,
  \begin{equation}\label{eq:contgpcoh-defn}
    C_{\cont}^{\bullet}\left(G,M\right):=\text{``}\varinjlim_{i}\text{''}C_{\cont}^{\bullet}\left(G,M_{i}\right)
  \end{equation}
  is a cochain complex of $R$-ind-Banach modules.
\end{defn}

\begin{defn}\label{defn:indBan-ctsRGamma-recpaper}
  The \emph{continuous group cohomology of $G$ with coefficients in $M\in\IndBan_{R}(G)$} is
  \begin{equation*}
    \R\Gamma_{\cont}\left( G , M\right) \in \D\left(\IndBan_{R}\right),
  \end{equation*}
  the image of the cochain complex $C_{\cont}^{\bullet}\left(G,M\right)$
  in the derived category.
  The \emph{$i$th continuous group cohomology of $G$ with coefficients in $M$} is
  \begin{equation*}
    \Ho_{\cont}^{i}\left( G , M\right)
    :=\LHo^{i}\left( C_{\cont}^{\bullet}\left( G , M\right) \right)
    =\LHo^{i}\left( \R\Gamma_{\cont}\left( G , M\right) \right)
    \in \LH\left(\IndBan_{R}\right).
  \end{equation*}
\end{defn}

\begin{lem}\label{lem:indbanachcontcohomology-commuteswith-filteredcolimits-reconstructionpaper}
  For any ind-$G$-$R$-Banach module $M=\text{``}\varinjlim_{j}\text{''}M_{j}$
  and all $i\in\ZZ$,
  \begin{equation*}
    \Ho_{\cont}^{i}\left( G , M\right)=\varinjlim_{j}\Ho_{\cont}^{i}\left( G , M_{j}\right).
  \end{equation*}
\end{lem}

\begin{proof}
  This follows directly from~(\ref{eq:contgpcoh-defn})
  and Corollary~\ref{cor:filteredcol-inIndBan-stronglyexact}.
\end{proof}

We highlight again that, in general, each $\Ho_{\cont}^{i}\left( G , M\right)$ is not itself an ind-Banach space,
but an object in the left heart. However at least when $i=0$, we have
Lemma~\ref{lem:indBanachcontgpcohzero-is-invariance} below.

\begin{defn}\label{defn:Ginv-recpaper}
  Fix an ind-$G$-$R$-Banach module $M$. Its \emph{$G$-invariance} is the $R$-ind-Banach module
  \begin{equation*}
    M^{G}:=\ker\left( d^{0} \colon M \to \Hom_{\cont}\left(G,M\right)\right),
  \end{equation*}
  where $d^{0}$ is the zeroth differential of $C_{\cont}^{\bullet}\left(G,M\right)$.
\end{defn}

\begin{lem}\label{lem:indBanachcontgpcohzero-is-invariance}
  For any ind-$G$-$R$-Banach module $M$, we have the canonical isomorphism
  \begin{equation*}
    \Ho_{\cont}^{0}\left( G , M \right)
    \cong\I\left(M^{G}\right).
  \end{equation*}
\end{lem}

\begin{proof}
    $\Ho^{0}\left( G , M \right)
    =\LHo^{0}\left(C_{\cont}^{\bullet}\left(G,M\right)\right)
    \stackrel{\text{\ref{lem:LH-vs-H-reconstructionpaper}}}{\cong}\Ho^{0}\left(\I\left(C_{\cont}^{\bullet}\left(G,M\right)\right)\right)
    =\ker\I\left(d^{0}\right)
    \cong\I\left(\ker d^{0}\right)
    =\I\left(M^{G}\right)$,
  where the last isomorphism of this computation follows from~\cite[Proposition 1.2.27]{Sch99}.
\end{proof}

\begin{lem}\label{lem:indBanachinvariance-cocont-leftexact}
  $M\mapsto M^{G}$ defines a cocountinuous left exact functor $\IndBan_{R}\left(G\right)\to\IndBan_{R}$.
\end{lem}

\begin{proof}
  The construction is clearly functorial and the cocontinuity follows from
  Corollary~\ref{cor:filteredcol-inIndBan-stronglyexact}.
  We have to check that $M\mapsto M^{G}$ preserves kernels of strict morphisms
  of ind-$G$-$R$-Banach modules. By~\cite[Proposition 2.10(3)]{BBB16}, it suffices to check that it preserves
  kernels of strict morphisms $\phi\colon M\to N$ of $G$-$R$-Banach modules. To do this,
  consider the commutative diagram
  \begin{equation*}
  \begin{tikzcd}
    M^{G} \arrow[hook]{r}{i} & M \\
    \ker\left(\phi^{G}\right) \arrow[hook]{u}{j} \arrow{r} & \left(\ker\phi\right)^{G} \arrow[hook]{u}.
  \end{tikzcd}
  \end{equation*}
  We have to check that the morphism at the bottom
  is an isomorphism. It is clearly bijective, thus it remains to show that it is strict.
  But both $i$ and $j$ are strict monomorphisms, thus their composition
  $i\circ j$ is a strict monomorphism by~\cite[Proposition 1.1.7]{Sch99}.
  Now apply \emph{loc. cit.} Proposition 1.1.8.
\end{proof}

The following discussion is almost identical to~\cite[\S 3.3]{BSSW2024_rationalizationoftheKnlocalsphere},
and we refer the reader to \emph{loc. cit.} for details.
Consider the solid abelian group $\underline{G}$.
We specialise to the case $R=k$, where
$k$ is as in \S\ref{subsec:conventions-reconstructionpaper}.
See \S\ref{subsec:condensedmath-recpaper}
for the definition of the category of solid $k$-vector spaces.
A $\underline{G}$-action on a solid $k$-vector space $V$ is a
morphism $\underline{G}\times V\to V$ satisfying the usual axioms.

\begin{lem}
  Let $V$ be an ind-$G$-$k$-Banach module.
  Then $\underline{V}$ is a solid $k$-vector space with $\underline{G}$-action.
\end{lem}

\begin{proof}
  $\underline{V}$ carries a $\underline{G}$-action
  by~\cite[Lemma 3.3.1]{BSSW2024_rationalizationoftheKnlocalsphere}.
  The result follows as $\underline{V}$ is also a $\underline{k}$-module.
\end{proof}

Let $\Vect_{k}^{\solid}\left(G\right)$ be the category of solid $k$-vector spaces
with $\underline{G}$-action. This category is abelian.

\begin{notation}\label{notation:solidctscoh-recpaper}
  The \emph{fixed points functor} $\Vect_{k}^{\solid}\left(G\right)\to\Vect_{k}^{\solid}$,
  denoted by $W\mapsto W^{G}$, is the right adjoint of the trivial action functor
  $\Vect_{k}^{\solid}\mapsto\Vect_{k}^{\solid}\left(G\right)$. Let
  $W^{\bullet}\to\R\Gamma_{\cont}\left(G,W^{\bullet}\right)$ be its derived functor
  $\D\left(\Vect_{k}^{\solid}\left(G\right)\right)\to\D\left(\Vect_{k}^{\solid}\right)$
  and let $\Ho_{\cont}^{i}\left(G,W^{\bullet}\right)=\R^{i}\Gamma_{\cont}\left(G,W^{\bullet}\right)$
  for all $i\in\ZZ$.
\end{notation}

\begin{lem}\label{lem:contgpcoh-indban-vs-solid-reconstructionpaper}
  Given an ind-$G$-k-ind-Banach space $V$, we have an isomorphism
  \begin{equation*}
    \R\Gamma_{\cont}\left(G,\underline{V}\right)\cong\underline{\R\Gamma_{\cont}\left(G,V\right)}
  \end{equation*}
  in $\D\left(\Vect_{k}^{\solid}\right)$.
  Consequently,
  $\Ho_{\cont}^{i}\left(G,\underline{V}\right)\cong\underline{\Ho^{i}_{\cont}\left(G,V\right)}$
  for all $i\in\ZZ$.
\end{lem}

\begin{proof}
  By the~\cite[proof of Lemma 3.3.2]{BSSW2024_rationalizationoftheKnlocalsphere},
  we find that $\R\Gamma_{\cont}\left(G,\underline{V}\right)$ is isomorphic to a complex
  \begin{equation*}
  0\to
  \underline{V} \to
  \intHom\left(\underline{G},\underline{V}\right) \to\dots 
  \intHom\left(\underline{G}^{2},\underline{V}\right) \to\dots.
  \end{equation*}
  By Lemma~\ref{lem:intHomunderlineSunderlineV-is-underlineHomcontSV-indBansetting-reconstructionpaper},
  this is the image of the complex~(\ref{eq:contgpcoh-defn})
  with respect to the functor $V^{\bullet}\mapsto\underline{V^{\bullet}}$.
  The second sentence follows from Lemma~\ref{lem:colimitsVectksolid-exact}.
\end{proof}

\begin{lem}\label{lem:formalcolimits-and-solid-group-cohomology-reconstructionpaper}
  Consider an ind-$G$-$k$-Banach space $V=\text{``}\varinjlim_{i}\text{''}V_{i}$. Then, for all $j\in\ZZ$,
  \begin{equation*}
    \Ho_{\cont}^{j}\left( G , \underline{V} \right)
    =\varinjlim_{i}\Ho_{\cont}^{j}\left( G , \underline{V_{i}} \right).
  \end{equation*}
\end{lem}

\begin{proof}
  Compute
  $\Ho_{\cont}^{j}\left( G , \underline{V} \right)
    \stackrel{\text{\ref{lem:contgpcoh-indban-vs-solid-reconstructionpaper}}}{\cong}\underline{\Ho_{\cont}^{j}\left( G , V \right)}
    \stackrel{\text{\ref{lem:indbanachcontcohomology-commuteswith-filteredcolimits-reconstructionpaper}}}{\cong}
    \underline{\varinjlim_{i}\Ho_{\cont}^{j}\left( G , V_{i} \right)}
    \stackrel{\text{\ref{lem:contgpcoh-indban-vs-solid-reconstructionpaper}}}{\cong}\varinjlim_{i}\Ho_{\cont}^{j}\left( G , \underline{V_{i}} \right)$.
\end{proof}

The advantage of the solid formalism is the following result.

\begin{lem}\label{lem:solidHochschild-Serre-reconstructiontheorem}
  Given a solid $k$-vector space $W$ with continous $G$-action
  and a closed normal subgroup
  $H\subseteq G$, we have the Hochschild-Serre spectral sequence
  \begin{equation*}
    \Ho_{\cont}^{i}\left(G/H , \Ho_{\cont}^{j}\left( H , W \right) \right)
    \implies \Ho_{\cont}^{i+j}\left(W\right).
  \end{equation*}
\end{lem}

\begin{proof}
  It arises as a Grothendieck spectral sequence
  since $W^{G}=\left(W^{H}\right)^{G/H}$.
\end{proof}

We discuss the case $G=\ZZ_{p}^{d}$
which we have learned from~\cite[\S 7]{BhattMorrowScholze2018}
and~\cite[Proposition B.4]{Bosco21}.

\begin{defn}
  Let $W$ denote a solid $k$-vector space with commuting endomorphisms
  $f_{i}\colon V\to V$ for $i=1,\dots,d$. $\Kos_{V}\left( f_{1},\dots,f_{d}\right)$
  is the \emph{Koszul complex}, that is
  \begin{equation*}
    M
    \stackrel{\left(f_{1},\dots,f_{d}\right)}{\longrightarrow}
    \bigoplus_{1\leq i\leq d} M \longrightarrow
    \bigoplus_{1\leq i_{1}<i_{2}\leq d} M \longrightarrow
    \bigoplus_{1\leq i_{1}<\dots i_{l}\leq d} M \longrightarrow
    \dots,
  \end{equation*}
  where the differential from $M$ in spot
  $i_{1}<\dots< i_{l}$ to $M$ in spot $j_{1}<\dots< j_{l+1}$
  is non-zero only if
  $\left\{i_{1},\dots,i_{l}\right\}\subseteq\left\{j_{1},\dots,j_{l+1}\right\}$.
  In this case, it is given by $(-1)^{s-1}f_{j_{s}}$, where $s\in\left\{1,\dots,l+1\right\}$
  is the unique integer such that $j_{s}\not\in\left\{i_{1},\dots,i_{l}\right\}$.
\end{defn}

\begin{lem}\label{lem:solidcontinuous-cohomology-over-ZZpd-via-Koszul}
  Let $d\in\NN$. Given a solid $k$-vector space $W$ with
  $\underline{\ZZ_{p}}^{d}$-action, we have
  \begin{equation}\label{eq:iso--lem:solidcontinuous-cohomology-over-ZZpd-via-Koszul}
    \R\Gamma_{\cont}\left( \ZZ_{p}^{d} , W \right)
    \cong\Kos_{W}\left( \gamma_{1}-1, \dots, \gamma_{d}-1 \right).
  \end{equation}
  in the derived category of $\Vect_{k}^{\solid}$.
  Here, $\gamma_{1},\dots,\gamma_{d}$ denotes a basis of $\ZZ_{p}^{d}$.
\end{lem}

\begin{proof}
  See the~\cite[proof of Lemma 3.3.2]{BSSW2024_rationalizationoftheKnlocalsphere}
  for the construction of the solid Iwasawa algebra $\ZZ[G]^{\blacksquare}$,
  as well as a proof of the fact
  \begin{equation*}
    \R\Gamma_{\cont}\left(G,W\right)
    \cong
    \R\Hom_{\ZZ[G]^{\blacksquare}}\left(\ZZ , W \right).
  \end{equation*}
  By~\cite[Lemma B.5(i)]{Bosco21}
  \footnote{\emph{Loc. cit.} denotes $\ZZ[G]^{\blacksquare}$ by $\ZZ\llbracket G \rrbracket$},
  $\ZZ[G]^{\blacksquare}$ is the solidification of $\ZZ[G]$.  This implies
  \begin{equation*}
    \R\Gamma_{\cont}\left(G,W\right)
    \cong
    \R\Hom_{\ZZ[G]}\left(\ZZ , W \right).
  \end{equation*}
  Applying~\cite[Lemma B.5(i)]{Bosco21}
  gives~(\ref{eq:iso--lem:solidcontinuous-cohomology-over-ZZpd-via-Koszul}) in
  the desired category of solid abelian groups. The
  proof \emph{loc. cit.} implies that it is an isomorphism
  in the derived category of solid $k$-vector spaces as well.
\end{proof}


\section{Completeness of rings of formal power series}
\label{sec:formalpowerseries-cplt}

We prove the following
elementary result which we were unable to find in the literature.

\begin{prop}\label{prop:Scomplete-Spowerseriescomplete-ifKoszulregular}
  Fix a regular sequence $s_{1},\dots,s_{n}$
  in a commutative ring $S$. We consider the ideal $I:=\left(s_{1},\dots,s_{n}\right)$
  and pick an arbitrary $d\in\NN_{\geq 1}$.
  If $S$ is $I$-adically complete, then $S\llbracket X_{1},\dots,X_{d}\rrbracket$
  is $\left(X_{1},\dots,X_{d},s_{1},\dots,s_{n}\right)$-adically complete.
\end{prop}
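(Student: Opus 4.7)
The plan is to deduce the proposition by combining Lemma~\ref{lem:S-Icompl-iff-sicomplforalli} (in both directions) with Lemma~\ref{lem:S-sadiccomplete-then-SllbrXrrbr-sadiccomplete}. Write $T := S\llbracket X_{1},\dots,X_{d}\rrbracket$ and $J := \left(X_{1},\dots,X_{d},s_{1},\dots,s_{n}\right) \subseteq T$.

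First I would use the forward direction of Lemma~\ref{lem:S-Icompl-iff-sicomplforalli} to deduce that $S$ is $s_{i}$-adically complete for each $i=1,\dots,n$. Applying Lemma~\ref{lem:S-sadiccomplete-then-SllbrXrrbr-sadiccomplete} to each such $s_{i}$ then gives that $T$ is $s_{i}$-adically complete for every $i$. Moreover, the power series ring $T$ is $X_{j}$-adically complete for every $j=1,\dots,d$: this is standard and follows, for instance, by viewing $T \cong S\llbracket X_{1},\dots,\widehat{X_{j}},\dots,X_{d}\rrbracket\llbracket X_{j}\rrbracket$ and noting that a formal power series ring over any commutative ring is complete in the variable one adjoined.

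Next I would verify that $X_{1},\dots,X_{d},s_{1},\dots,s_{n}$ is a regular sequence in $T$. The element $X_{j}$ is plainly a non-zero-divisor in $S\llbracket X_{j},\dots,X_{d}\rrbracket$ by comparing coefficients, and the quotient satisfies
\begin{equation*}
  T/(X_{1},\dots,X_{j}) \cong S\llbracket X_{j+1},\dots,X_{d}\rrbracket,
\end{equation*}
so the inductive step goes through until we reach
\begin{equation*}
  T/(X_{1},\dots,X_{d}) \cong S,
\end{equation*}
in which $s_{1},\dots,s_{n}$ is a regular sequence by hypothesis. Since $J$ is generated by this regular sequence, the converse direction of Lemma~\ref{lem:S-Icompl-iff-sicomplforalli} applies: $T$ is $J$-adically complete as soon as it is $X_{j}$-adically complete for each $j$ and $s_{i}$-adically complete for each $i$, both of which we have just established.

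I do not anticipate any real obstacle; the only non-formal input is the observation that appending the $X_{j}$ to a regular sequence $s_{1},\dots,s_{n}$ of $S$ still yields a regular sequence in $T$, which is immediate from the identification of successive quotients above. Everything else is an application of the two cited lemmata.
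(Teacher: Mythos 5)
Your argument is correct and is essentially identical to the paper's own proof: both establish $s_i$-adic completeness of $T=S\llbracket X_1,\dots,X_d\rrbracket$ via Lemma~\ref{lem:S-Icompl-iff-sicomplforalli} (forward direction) followed by Lemma~\ref{lem:S-sadiccomplete-then-SllbrXrrbr-sadiccomplete}, observe $X_j$-adic completeness from the iterated power-series factorisation, and then apply the converse direction of Lemma~\ref{lem:S-Icompl-iff-sicomplforalli} to the regular sequence $X_1,\dots,X_d,s_1,\dots,s_n$. The only difference is that you spell out why this extended sequence is regular (via the successive quotients $T/(X_1,\dots,X_j)\cong S\llbracket X_{j+1},\dots,X_d\rrbracket$), a step the paper merely asserts.
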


\begin{lem}\label{lem:S-Icompl-iff-sicomplforalli}
  Let $S$ denote a commutative ring, fix a regular sequence $s_{1},\dots,s_{n}\in S$,
  and consider the ideal $I:=\left(s_{1},\dots,s_{n}\right)$. The following are equivalent.
  \begin{itemize}
    \item[(i)] $S$ is $I$-adically complete.
    \item[(ii)] $S$ is, for every $i=1,\dots,n$, $s_{i}$-adically complete.
  \end{itemize}
\end{lem}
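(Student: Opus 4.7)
The plan is to argue by induction on $n$. The base case $n=1$ is tautological. For the inductive step, set $J := (s_1,\dots,s_{n-1})$; since an initial segment of a regular sequence is regular, the inductive hypothesis applied to $s_1,\dots,s_{n-1}$ reduces the claim to the equivalence
\begin{equation*}
\text{$S$ is $I$-adically complete} \iff \text{$S$ is $J$-adically complete and $s_n$-adically complete.}
\end{equation*}

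For the forward direction, I would exploit that $J^k \subseteq I^k$ and $(s_n^k) \subseteq I^k$, so the $J$- and $s_n$-adic topologies are finer than the $I$-adic one; thus $I$-adic separation (which holds since $\widehat{S}_I = S$) immediately yields $J$- and $s_n$-adic separation, and any $J$- (resp.\ $s_n$-)Cauchy sequence is $I$-Cauchy, so admits an $I$-adic limit in $S$. The remaining task is to promote this to a limit in the finer topology, which amounts to the closedness of $J^k$ and $s_n^k S$ in the $I$-adic topology, equivalently to $I$-adic separation of $S/J^k$ and $S/(s_n^k)$. This I would deduce from the fact that replacing an entry of a regular sequence by a power still gives a regular sequence, which lets one transfer $I$-adic completeness from $S$ to the quotients via an elementary short-exact-sequence argument.

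For the converse, assume $S$ is both $J$-adically complete and $s_n$-adically complete. Using the sandwich $I^{nk} \subseteq (s_1^k,\dots,s_n^k) \subseteq I^k$, one has $\widehat{S}_I \cong \varprojlim_k S/(J^k + s_n^k S)$. Regularity of $s_n$ on each $S/J^k$ (guaranteed by the hypothesis that $s_1,\dots,s_n$ is regular) yields short exact sequences
\begin{equation*}
0 \longrightarrow S/J^k \xrightarrow{\,s_n^k\,} S/J^k \longrightarrow S/(J^k+s_n^k S) \longrightarrow 0.
\end{equation*}
I would then pass to the inverse limit along $k$: the $J$-adic completeness of $S$ identifies $\varprojlim_k S/J^k$ with $S$, while the $s_n$-adic completeness is used to control the $\varprojlim^1$ on the first two terms and to identify the kernel of $s_n^k$ on the limit with $0$. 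Comparing with $S/s_n^k S$-type quotients and passing to the limit once more yields $\widehat{S}_I \cong S$.

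The main obstacle I anticipate is the careful bookkeeping in the converse: the inverse limit is taken simultaneously in the $J^k$- and $s_n^k$-directions, and interchanging these limits (or, equivalently, showing that the iterated completion equals the joint completion) is exactly where the regularity of $s_1,\dots,s_n$ plays its essential role. A small auxiliary Mittag-Leffler-type argument may be needed to make this interchange rigorous, and the verification that regularity persists modulo powers of the various $s_i$ (so that the short exact sequence above is genuinely exact) is the key technical input without which neither direction goes through.
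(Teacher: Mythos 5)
Your route — induction on $n$ plus classical inverse-limit manipulations — is genuinely different from the paper's, which passes through derived completeness and the Koszul complex (Stacks Tags 091T, 091Q, 091Z, 062F). The reduction to a two-ideal statement via the inductive hypothesis on $s_1,\dots,s_{n-1}$ and the cofinality $I^{nk} \subseteq (s_1^k,\dots,s_n^k) \subseteq I^k$ are both fine. But the steps you describe as ``small auxiliary'' arguments are where the real work lies, and I see two concrete gaps.

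In the forward direction, exactness of $0 \to S \xrightarrow{s_n^k} S \to S/(s_n^k) \to 0$ requires $s_n^k$ to be a non-zero-divisor on $S$ itself, whereas the hypothesis only makes $s_n$ regular on $S/(s_1,\dots,s_{n-1})$; over a general (non-Noetherian, non-local) commutative ring regular sequences are not permutable, so $s_n$ may well be a zero-divisor on $S$ and the short exact sequence you want does not exist. In the converse, your sequences $0 \to S/J^k \xrightarrow{s_n^k} S/J^k \to S/(J^k + s_n^k S) \to 0$ do not form an inverse system under the naive projections: the commutativity constraint forces the kernel-term transition $S/J^{k+1} \to S/J^k$ to be ``project, then multiply by $s_n$.'' For that twisted system, $\varprojlim = 0$ already requires $\bigcap_m (s_n^m S + J^k) = J^k$, i.e.\ $s_n$-adic separation of $S/J^k$, which is \emph{not} a consequence of $S$ being $s_n$-adically complete — quotients of complete modules can fail to be separated outside the Noetherian setting — and $\varprojlim^1 = 0$ is harder still, since the twisted system is not visibly Mittag--Leffler. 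These are precisely the pathologies the derived-completeness formalism is designed to package away: derived completion is exact, derived $I$-completeness reduces to derived completeness along each generator, and Koszul-regularity of $s_1^t,\dots,s_n^t$ collapses $R\lim K(S;s_1^t,\dots,s_n^t)$ to $\varprojlim S/(s_1^t,\dots,s_n^t)$ in one stroke. Making your elementary route rigorous would amount to re-proving that toolkit rather than bypassing it.
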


\begin{proof}
  The direction (i) $\implies$ (ii) is proven in~\cite[\href{https://stacks.math.columbia.edu/tag/090T}{Tag 090T}]{stacks-project}.
  Now suppose (ii). \cite[\href{https://stacks.math.columbia.edu/tag/091T}{Tag 091T}]{stacks-project}
  implies that $S$ is, for every $i=1,\dots,n$, derived $s_{i}$-adically complete.
  It is derived $I$-adically complete
  by~\cite[\href{https://stacks.math.columbia.edu/tag/091Q}{Tag 091Q}]{stacks-project}.
  Now use the notation
  from~\cite[\href{https://stacks.math.columbia.edu/tag/0BKC}{Tag 0BKC}]{stacks-project},
  that is consider the Koszul complexes $K_{j}^{\bullet}=K_{\bullet}\left(S;s_{1}^{j},\dots,s_{n}^{j}\right)$
  for every $j\in\NN_{\geq1}$.
  Since $s_{1},\dots,s_{n}$ is a regular sequence, the sequences $s_{1}^{j},\dots,s_{n}^{j}$
  are regular. \cite[\href{https://stacks.math.columbia.edu/tag/062F}{Tag 062F}]{stacks-project}
  gives that $K_{j}^{\bullet}$ is quasi-isomorphic to $S/\left(s_{1}^{j},\dots,s_{n}^{j}\right)$,
  viewed as a complex concentrated in degree zero.
  \cite[\href{https://stacks.math.columbia.edu/tag/091Z}{Tag 091Z}]{stacks-project}
  implies that the canonical
  \begin{equation*}
  S\stackrel{\cong}{\longrightarrow}
  R\limit\left( S\otimes_{S}^{\rL} K_{j}^{\bullet}\right)\cong
  R\limit S/\left(s_{1}^{j},\dots,s_{n}^{j}\right)
  \end{equation*}
  is an isomorphism.
  Here, the operator $\limit$ denotes the homotopy limit
  along the maps $\dots\to K_{2}^{\bullet}\to K_{1}^{\bullet}$, see the discussion
  in~\cite[\href{https://stacks.math.columbia.edu/tag/0BKC}{Tag 0BKC}]{stacks-project}.
  We apply~\cite[\href{https://stacks.math.columbia.edu/tag/0941}{Tag 0941}]{stacks-project}
  \footnote{In the notation of~\cite[\href{https://stacks.math.columbia.edu/tag/0941}{Tag 0941}]{stacks-project},
  we choose $C$ to be the site associated
  to the topological space with one point.} to see that 
  the right-hand side coincides with
  $R\varprojlim_{t\in\NN} S/\left(s_{1}^{j},\dots,s_{n}^{j}\right)$,
  where $R\varprojlim_{j\in\NN}$ denotes the derived functor
  of the inverse limit. In particular, taking zeroth cohomology,
  $S\cong \varprojlim_{j\in\NN}S/\left(s_{1}^{j},\dots,s_{n}^{j}\right)$.
  It suffices to show that the canonical morphism
  $\varprojlim_{j\in\NN}S/\left(s_{1}^{j},\dots,s_{n}^{j}\right)\to\varprojlim_{j\in\NN}S/I^{j}$
  is an isomorphism. But for every $j\geq n$,
  $I^{j}\subseteq \left(s_{1}^{l},\dots,s_{n}^{l}\right)$
  for $l=\lfloor j/n \rfloor$, proving the claim.
\end{proof}

\begin{lem}\label{lem:S-sadiccomplete-then-SllbrXrrbr-sadiccomplete}
  Let $S$ denote a commutative ring containing an element $s\in S$ such
  that it is $s$-adically complete.
  Then $S\llbracket X_{1},\dots,X_{d}\rrbracket$ is $s$-adically complete as well.
\end{lem}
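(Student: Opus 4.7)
The plan is to reduce the completeness of $S\llbracket X_1,\dots,X_d\rrbracket$ to the completeness of $S$ by observing that multiplication by $s$ acts coefficient-wise on formal power series. Concretely, I would first verify the identity
\begin{equation*}
  s^{t}\cdot S\llbracket X_{1},\dots,X_{d}\rrbracket
  =\left\{\sum_{\alpha\in\NN^{d}}a_{\alpha}X^{\alpha}\colon a_{\alpha}\in s^{t}S \text{ for all } \alpha\right\}
\end{equation*}
for every $t\in\NN$. The inclusion $\subseteq$ is immediate, and the reverse inclusion follows by writing $a_{\alpha}=s^{t}b_{\alpha}$ and factoring out $s^{t}$ to obtain $\sum_{\alpha}a_{\alpha}X^{\alpha}=s^{t}\sum_{\alpha}b_{\alpha}X^{\alpha}$.

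Using this, I would identify
\begin{equation*}
  S\llbracket X_{1},\dots,X_{d}\rrbracket \Big/ s^{t}S\llbracket X_{1},\dots,X_{d}\rrbracket
  \stackrel{\cong}{\longrightarrow} \left(S/s^{t}S\right)\llbracket X_{1},\dots,X_{d}\rrbracket,
\end{equation*}
via the obvious coefficient-wise reduction map, which is clearly a ring isomorphism functorial in $t$. Passing to the inverse limit along $t$ produces a canonical morphism
\begin{equation*}
  \varprojlim_{t\in\NN} S\llbracket X_{1},\dots,X_{d}\rrbracket / s^{t}S\llbracket X_{1},\dots,X_{d}\rrbracket
  \cong \varprojlim_{t\in\NN}\left(S/s^{t}S\right)\llbracket X_{1},\dots,X_{d}\rrbracket.
\end{equation*}

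Finally, I would observe that the inverse limit of power series rings commutes with the formation of coefficients: a compatible family $\left(f_{t}\right)_{t\in\NN}$ with $f_{t}=\sum_{\alpha}a_{\alpha,t}X^{\alpha}\in\left(S/s^{t}S\right)\llbracket X_{1},\dots,X_{d}\rrbracket$ is the same datum as a family of compatible systems $\left(a_{\alpha,t}\right)_{t\in\NN}\in\varprojlim_{t}S/s^{t}S$ indexed by $\alpha\in\NN^{d}$. By the $s$-adic completeness of $S$, each such compatible system assembles to a unique element $a_{\alpha}\in S$, and conversely any collection $\left(a_{\alpha}\right)_{\alpha\in\NN^{d}}$ in $S$ yields such a family. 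Hence the natural map
\begin{equation*}
  S\llbracket X_{1},\dots,X_{d}\rrbracket \stackrel{\cong}{\longrightarrow} \varprojlim_{t\in\NN}\left(S/s^{t}S\right)\llbracket X_{1},\dots,X_{d}\rrbracket
\end{equation*}
is a bijection, which proves the claim. There is no real obstacle here; the only thing to be mindful of is the identification $s^{t}S\llbracket X\rrbracket =\{f\colon \text{all coefficients of } f \text{ lie in } s^{t}S\}$, which is what makes the coefficient-wise argument work without any finiteness hypothesis on the number of variables.
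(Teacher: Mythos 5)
Your proposal is correct and amounts to the same argument as the paper's, just packaged more structurally. The paper verifies directly that the canonical map $S\llbracket X_{1},\dots,X_{d}\rrbracket \to \varprojlim_{t} S\llbracket X_{1},\dots,X_{d}\rrbracket/s^{t}$ is injective and surjective by chasing coefficients, implicitly using exactly the identity you make explicit at the start, namely that $s^{t}S\llbracket X_{1},\dots,X_{d}\rrbracket$ is the set of power series with all coefficients in $s^{t}S$; you instead isolate this as the isomorphism $S\llbracket X\rrbracket/s^{t} \cong (S/s^{t})\llbracket X\rrbracket$ and then observe that $\llbracket X\rrbracket$ (being an infinite product at the level of underlying modules) commutes with $\varprojlim$. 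Both routes hinge on the same two facts: multiplication by $s^{t}$ acts coefficient-wise, and $S\cong\varprojlim_{t}S/s^{t}$. Your version makes the logic a little cleaner to read, but the mathematical content is identical.
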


\begin{proof}
  We have to show that the following morphism is an isomorphism:
  \begin{equation*}
    \varphi\colon S\llbracket X_{1},\dots,X_{d}\rrbracket \maps
    \varprojlim_{j}S\llbracket X_{1},\dots,X_{d}\rrbracket/s^{j}.
  \end{equation*}
  Let $f\in\ker\varphi$. That is for every $j$ exists 
  an $f_{j}\in S\llbracket X_{1},\dots,X_{d}\rrbracket$ such
  that $f=s^{j}f_{j}$. Writing $f=\sum_{\alpha\in\NN^{d}}f_{\alpha}X^{\alpha}$
  and $f_{j}=\sum_{\alpha\in\NN^{d}}f_{j\alpha}X^{\alpha}$ for all $j$,
  this means that $f_{\alpha}=s^{j}f_{j\alpha}$ for all $\alpha$ and for all $j$.
  Since $S$ is $s$-adically separated this implies $f_{\alpha}=0$
  for all $\alpha$. This shows $f=0$.
  To prove surjectivity, pick
  $f_{j}\in S\llbracket X_{1},\dots,X_{d}\rrbracket$ for all $j$ such
  that
  \begin{equation*}
    \left(\overline{f_{j}}\right)_{j}\in
      \varprojlim_{j}S\llbracket X_{1},\dots,X_{d}\rrbracket/s^{j},
  \end{equation*}
  where $\overline{f_{j}}$ denotes the image of $f_{j}$ modulo $s^{j}$.
  This means that $f_{l}-f_{j}\in (s^{j})$ for every $l\geq j$.
  Writing $f_{j}=\sum_{\alpha\in\NN^{d}}f_{j\alpha}X^{\alpha}$ for all $j$, this implies
  that $f_{l\alpha}-f_{t\alpha}\in (s^{j})$ for every $l\geq j$. Therefore
  \begin{equation*}
    \left(\overline{f_{j\alpha}}\right)\in\varprojlim_{j} S/s^{j}.
  \end{equation*}
  Because $S$ is $s$-adically complete, we conclude that there exists
  an $f_{\alpha}\in S$ for every $\alpha$ such that
  $f_{\alpha}\equiv f_{j\alpha}\mod s^{j}$. Set $f=\sum_{\alpha\in\NN^{d}}f_{\alpha}X^{\alpha}$.
  We claim that $\varphi(f)=\left(\overline{f_{j}}\right)_{j}$.
  Indeed, we have
  \begin{equation*}
    f-f_{j}=\sum_{\alpha\in\NN^{d}}(f_{\alpha}-f_{j\alpha})X^{\alpha}\equiv 0 \mod s^{j}
  \end{equation*}
  for every $j$.
  This proves Lemma~\ref{lem:S-sadiccomplete-then-SllbrXrrbr-sadiccomplete}.
\end{proof}

\begin{proof}[Proof of Proposition~\ref{prop:Scomplete-Spowerseriescomplete-ifKoszulregular}]
  $S\llbracket X_{1},\dots,X_{d}\rrbracket\cong S\llbracket X_{1},\dots,X_{i-1},X_{i+1},\dots X_{d}\rrbracket\llbracket X_{i}\rrbracket$
  is $X_{i}$-adically complete for every $i=1,\dots,d$.
  By Lemma~\ref{lem:S-Icompl-iff-sicomplforalli}, $S$ is
  $s_{i}$-adically complete for every $i=1,\dots,n$, 
  therefore $S\llbracket X_{1},\dots,X_{d}\rrbracket$
  is $s_{i}$-adically complete for every $i=1,\dots,n$,
  see Lemma~\ref{lem:S-sadiccomplete-then-SllbrXrrbr-sadiccomplete}.
  But the sequence $X_{1},\dots,X_{d},s_{1},\dots,s_{n}$ is regular, thus
  we can apply Lemma~\ref{lem:S-Icompl-iff-sicomplforalli}
  to finish the proof of Proposition~\ref{prop:Scomplete-Spowerseriescomplete-ifKoszulregular}.
\end{proof}


\chapter{Period rings and period sheaves}
\label{ch:period-sheaves}

We introduce overconvergent period sheaves. Our constructions are
inspired by~\cite[\S 6]{Sch13pAdicHodge}.


\section{The pro-étale site}
\label{sec:proetalesite-recpaper}

\subsection{Definitions}

We recall the pro-étale site associated
to a smooth locally Noetherian adic space $X$ over $\Spa(k,k^{\circ})$
from~\cite{Sch13pAdicHodge}. Regarding the theory of adic spaces,
we follow the notation given in~\cite[Lecture 2 and 3]{ScholzeWeinstein2020}.
See the~\cite[beginning of \S 3]{Sch13pAdicHodge} for the
definition of \emph{locally Noetherian}.
$\Pro\left(X_{\et}\right)$ is the pro-completion of
the category $X_{\et}$ of adic spaces which are étale over $X$.
The underlying topological space of
$\text{``}\varprojlim\text{"}_{i\in I}U_{i}\in\Pro\left(X_{\et}\right)$ is
$\varprojlim_{i\in I}|U_{i}|$, where $|U_{i}|$ is the
underlying topological space of $U_{i}$.
$U\in\Pro\left(X_{\et}\right)$ is \emph{pro-étale over $X$}
if and only if it is isomorphic
to an object $\text{``}\varprojlim\text{"}_{i\in I}U_{i}$
such that all transition maps $U_{j}\maps U_{i}$ are finite
étale and surjective.

The \emph{pro-étale site $X_{\proet}$ of $X$} is the full
subcategory of $\Pro\left(X_{\et}\right)$ consisting of objects
which are pro-étale over $X$. A collection of maps
$\{f_{i}\colon U_{i}\maps U\}_{i}$ in $X_{\proet}$ is
a covering if and only if the collection
$\{|U_{i}|\maps|U|\}_{i}$ is a pointwise covering of the topological
space $|U|$, and a second set-theoretic condition is satisfied,
see~\cite{Sch13pAdicHodgeErratum}. 
In particular, there is a canonical morphism of sites
$\nu\colon X_{\proet} \maps X_{\et}$.

\begin{examplenotation}\label{examplenotation:profinitesets-in-Xproet}
  For $U\in X_{\proet}$ and a profinite set $S$, which is the
  cofiltered limit of finite sets $S_{j}$,
  \begin{equation*}
    U\times S
    := \text{``}\varprojlim_{j}\text{"} U \times S_{j}
    = \text{``}\varprojlim_{j}\text{"} \coprod_{S_{j}} U \in X_{\proet}.
  \end{equation*}
  This is indeed independent of the presentation $S=\varprojlim_{j}S_{j}$.
\end{examplenotation}

Recall~\cite[Definition 4.3]{Sch13pAdicHodge}.
$K$ is a perfectoid field of characteristic zero, 
cf.~\cite[Definition 3.1]{Sch12perfectoid}, containing
a ring of integral elements $K^{+}\subseteq K$,
cf.~\cite[Definition 2.2.12]{ScholzeWeinstein2020}. The
definition of the pro-étale site still makes sense for a given
locally Noetherian adic space $Y$ over $\Spa\left(K,K^{+}\right)$.
$U\in Y_{\proet}$ is
\emph{affinoid perfectoid} if it is isomorphic to
$\text{``}\varprojlim\text{"}_{i\in I}U_{i}$ with $U_{i}=\Spa(R_{i},R_{i}^{+})$
such that, denoting by $R^{+}$ the $p$-adic completion of
$\varinjlim_{i\in I}R_{i}^{+}$ and $R=R^{+}[1/p]$, the pair
$(R,R^{+})$ is a perfectoid affinoid $(K,K^{+})$-algebra,
cf.~\cite[Definition 5.1(i)]{Sch12perfectoid}.

\begin{notation}\label{notation:widehatU}
  With the notation from the previous paragraph,
  $\widehat{U}:=\Spa\left(R,R^{+}\right)$.
\end{notation}

Consider again $X$, the fixed smooth locally Noetherian adic space over
$\Spa\left(k,k^{\circ}\right)$.

\begin{notation}\label{notatio:base-change-in-proet}
  Let $K$ denote the completion of an algebraic extension of $k$ which is perfectoid,
  together with an open bounded valuation subring $K^{+}\subseteq K$.
  We consider $\Spa\left(K,K^{+}\right)$ as an object
  $V=\text{``}\varprojlim\text{"}_{i\in I}V_{i}\in\Spa\left( k , k^{\circ} \right)_{\proet}$
  with $V_{i}=\Spa(K_{i}, K_{i}^{+})$,
  such that $K^{+}$ is the $\pi$-adic completion of
  $\varinjlim_{i\in I}K_{i}^{+}$. By abuse of notation, define the base-change
  \begin{equation*}
    U_{K}=U_{\left(K,K^{+}\right)}:=U\times_{\Spa\left(k,k^{\circ}\right)}V\in X_{\proet}.
  \end{equation*}
\end{notation}

\begin{defn}[{\cite[Definition 4.3]{Sch13pAdicHodge}}]
  $U\in X_{\proet}$ is \emph{affinoid perfectoid}
  if the structural map $U \to X$ factors through a pro-étale map $X_{K}\to X$ such that
  $U\in X_{\proet}/X_{K}\simeq X_{K,\proet}$ is affinoid perfectoid;
  see~\cite[Proposition 3.15]{Sch13pAdicHodge} for the canonical identification
  of the sites. Here, $K$ is the completion of an algebraic extension
  of $k$ which is perfectoid.
\end{defn}

\begin{examplenotation}\label{examplenotation:torus-variable-index}
  Given an indexing set $\left\{n_{1}<\dots <n_{d}\right\}\subseteq\NN_{\geq1}$,
  $T_{n_{1}},\dots,T_{n_{s}}$ denote formal variables. Define the following
  tori for every $e\in\NN$:
  \begin{equation*}
    \TT_{e}^{\left\{n_{1},\dots,n_{d}\right\}}:=\Spa\left(k\left\<T_{n_{1}}^{\pm1/p^{e}},\dots,T_{n_{d}}^{\pm1/p^{e}}\right\>,
    k^{\circ}\left\<T_{n_{1}}^{\pm1/p^{e}},\dots,T_{n_{d}}^{\pm1/p^{e}}\right\>\right).
  \end{equation*}
  In particular, $\TT_{e}^{\emptyset}=\Spa\left(k,k^{\circ}\right)$
  and we write $\TT^{d}=\TT^{\{1,\dots,d\}}:=\TT_{0}^{\{1,\dots,d\}}$.
  Then
  \begin{equation*}
    \widetilde{\TT}^{\left\{n_{1},\dots,n_{d}\right\}}
    :=\text{``}\varprojlim_{e\in\NN}\text{"}\widetilde{\TT}^{\left\{n_{1},\dots,n_{d}\right\}}
    \in\TT_{\proet}^{\left\{n_{1},\dots,n_{d}\right\}}
  \end{equation*}  
  is not affinoid perfectoid, but its base-change to
  the completion of an algebraic extension of $k$ which is perfectoid is.
  Finally, write $\widetilde{\TT}^{d}=\widetilde{\TT}^{\{1,\dots,d\}}\in\TT_{\proet}^{d}$.
\end{examplenotation}

The following Lemma~\ref{lem:affperfd-basis} appears implicitly
in~\cite{Sch13pAdicHodge}. 

\begin{lem}\label{lem:affperfd-basis}
  The affinoid perfectoids form a basis for the site $X_{\proet}$.
\end{lem}

\begin{proof}
  Every finite field
  extension $k\subseteq k^{\prime}\subseteq C$ 
  induces a finite étale map $\Sp k^{\prime} \to \Sp k$ between
  the associated rigid-analytic spaces. Thus it induces a finite étale map
  between the associated adic spaces
  $\Spa \left(k^{\prime},k^{\prime\circ}\right) \to \Sp \left(k , k^{\circ}\right)$,
  cf.~\cite[Proposition 1.7.11]{Huber96Etale}. In particular,
  $V=\text{``}\varprojlim\text{"}_{[k^{\prime}:k]<\infty}\left(k^{\prime},k^{\prime\circ}\right)\in\Spa\left( k , k^{\circ} \right)_{\proet}$.
  Set $C^{+}:=\widehat{\varinjlim_{[k^{\prime}:k]<\infty}k^{\prime\circ}}\subseteq C$.
  $C$ is perfectoid. By the~\cite[remark proceeding Proposition 3.9]{HuberContVal1993},
  $C^{+}\subseteq C$ is a ring of integral elements.
  By construction, $X_{C}:=X_{\left(C,C^{+}\right)}\to X$ is a covering.
  Thus every covering of $X_{C}$ by affinoid perfectoids will give a covering
  of $X$ by affinoid perfectoids. Now apply~\cite[Corollary 4.7]{Sch13pAdicHodge}.
\end{proof}



Consider the full subcategory
$X_{\proet,\affperfd}\subseteq X_{\proet}$
of affinoid perfectoids.

\begin{lem}\label{lem:affperfd-closed-fibreprod}
  $X_{\proet,\affperfd}$ is closed under fibre products.
\end{lem}

\begin{proof}
  Fix a diagram $U_{1}\to U_{2}\leftarrow U_{3}$ of affinoid perfectoids
  in $X_{\proet}$ that lives over a fixed perfectoid affinoid field
  $\left( K , K^{+} \right)$ of characteristic zero. Recall Notation~\ref{notation:widehatU}.
  The fibre product $\widehat{U_{1}}\times_{\widehat{U_{2}}}\widehat{U_{3}}$
  exists in the category of adic spaces and is again a perfectoid space,
  see~\cite[Proposition 6.18]{Sch12perfectoid}. On the other hand, $U_{1}\times_{U_{2}}U_{3}$ is perfectoid
  by~\cite[Lemma 4.6]{Sch13pAdicHodge}, and the universal property of the fibre product yields
  a map
  \begin{equation}\label{eq:affperfd-closed-fibreprod-1}
    \widehat{U_{1}\times_{U_{2}}U_{3}}\to\widehat{U_{1}}\times_{\widehat{U_{2}}}\widehat{U_{3}}.
  \end{equation}
  We claim that it is an isomorphism of adic spaces.
  
  Write $\widehat{U_{1}}=\Spa\left(R_{1},R_{1}^{+}\right)$,
  $\widehat{U_{2}}=\Spa\left(R_{2},R_{2}^{+}\right)$, $\widehat{U_{3}}=\Spa\left(R_{3},R_{3}^{+}\right)$,
  and $\widehat{U_{1}}\times_{\widehat{U_{2}}}\widehat{U_{3}}=\Spa(S,S^{+})$.
  Here $S=R_{1}\widehat{\otimes}_{R_{2}}R_{3}$ and $S^{+}$ is the completion of the
  integral closure of the image of $R_{1}^{+}\otimes_{R_{2}^{+}}R_{3}^{+}$ in $S$.
  On the other hand, write the diagram $U_{1}\to U_{2}\leftarrow U_{3}$ above, after suitable
  reindexing as explained in~\cite[page 54, remark 1.133]{Meyer2007Localanalyticcyclichomology},
  as an inverse limit $U_{1i}\to U_{2i} \leftarrow U_{3i}$
  of affinoids over a small cofiltered category $I$. Fix notation $U_{1i}=\Spa\left(R_{1i},R_{1i}^{+}\right)$,
  $U_{2i}=\Spa\left(R_{2i},R_{2i}^{+}\right)$, and $U_{3i}=\Spa\left(R_{3i},R_{3i}^{+}\right)$.
  Assume that all these Huber pairs are complete.
  Then each $U_{1i}\times_{U_{2i}}U_{3i}$ exists in the category of adic spaces,
  see~\cite[Proposition 1.2.2]{Huber96Etale}, and
  $U_{1}\times_{U_{2}}U_{3}=\text{``}\varprojlim\text{"}_{i}U_{1i}\times_{U_{2i}}U_{3i}$.
  Indeed, \emph{loc. cit.} says that $U_{1i}\times_{U_{2i}}U_{3i}=\Spa(S_{i},S_{i}^{+})$
  where $S_{i}=R_{1i}\otimes_{R_{2i}}R_{3i}$, $S_{i}^{+}$ is the integral closure
  of $R_{1i}^{+}\otimes_{R_{2i}^{+}}R_{3i}^{+}$ in $S_{i}$, and both $S_{i}$ and $S_{i}^{+}$
  carry suitabale topologies. Now write $S_{\infty}:=\varinjlim_{i}S_{i}$
  and $S_{\infty}^{+}:=\varinjlim_{i}S_{i}^{+}$. We turn $S_{\infty}$ into a Huber
  ring by declaring $S_{\infty}^{+}$ to be a ring of definition, which we equip with the
  $\pi$-adic topology.
  This gives a Huber pair $\left(S_{\infty},S_{\infty}^{+}\right)$.
  To check this, we have to verify three conditions:
  \begin{itemize}
    \item[(i)] $S_{\infty}^{+}\subseteq S_{\infty}$ is by definition open.
    \item[(ii)] $S_{\infty}^{+}\subseteq S_{\infty}^{\circ}$ follows from~\cite[Proposition 2.2.10.2]{ScholzeWeinstein2020}.
    \item[(iii)] It remains to show that $S_{\infty}^{+}$ is integrally closed
    in $S_{\infty}$. To show this, consider an element $s\in S_{\infty}$ such that there
    exists a monic polynomial $f=\sum_{j=0}^{n}f_{i}L^{i}\in S_{\infty}^{+}[L]$ with
    $f(s)=0$. Pick indices $l$ and $i_{0},\dots,i_{n}$ such that
    $s\in S_{l}$ and $f_{j}\in S_{i_{j}}^{+}$ for all $j=0,\dots,n$.
    Recall that the index category $I$ was assumed to be cofiltered.
    That is, $I^{\op}$ is filtered and thus we find an element $t\in I^{\op}$
    together with morphisms from $l$ and $i_{0},\dots,i_{n}$ to $t$.
    In particular, $s\in S_{t}$ and $f\in S_{t}^{+}[L]$. Because
    $S_{t}^{+}\subseteq S_{t}$ is integrally closed,
    $s\in S_{t}^{+}$ follows, and therefore $s\in S_{\infty}$.
  \end{itemize}
  We observe that the map~(\ref{eq:affperfd-closed-fibreprod-1})
  above is given by
    $\Spa\left(\widehat{\left(S_{\infty},S_{\infty}^{+}\right)}\right)
    \to\Spa(S,S^{+})$.
 It is an isomorphism by construction.
\end{proof}

\begin{defn}\label{defn:XproetaffperfdU}
  Fix $U\in X_{\proet}$.
  \begin{itemize}
    \item[(i)] $X_{\proet,\affperfd}/U$ is the full subcategory of
      $X_{\proet}/U$ whose objects are the maps
      $V\to U$ for affinoid perfectoid $V$.
      We equip it with the induced topology, and
      Lemma~\ref{lem:affperfd-closed-fibreprod}
      shows that it gives rise to a site.
    \item[(ii)] $X_{\proet,\affperfd}^{\fin}/U$ is the site
    whose underlying category is the category underlying
    $X_{\proet,\affperfd}/U$, but we consider only the finite
    coverings.
  \end{itemize}
  If $U=X$, write $X_{\proet,\affperfd}:=X_{\proet,\affperfd}/X$ and
  $X_{\proet,\affperfd}^{\fin}:=X_{\proet,\affperfd}^{\fin}/X$.
\end{defn}

\begin{lem}\label{lem:sheaves-on-Xproet-and-Xproetaffperfdfin}
Fix a covering $U\to X$ in $X_{\proet}$.
Then the morphisms of sites
\begin{equation*}
  \begin{tikzcd}
  X_{\proet} \arrow{r} &
  X_{\proet,\affperfd} \arrow{r} &
  X_{\proet,\affperfd}^{\fin} \\
  X_{\proet}/U \arrow{r}\arrow{u} &
  X_{\proet,\affperfd}/U \arrow{r}\arrow{u} &
  X_{\proet,\affperfd}^{\fin}/U \arrow{u}
  \end{tikzcd}
\end{equation*}
give rise to equivalences of categories
\begin{equation*}
  \begin{tikzcd}
  \Sh\left(X_{\proet},\mathbf{E}\right) \arrow{r}{\simeq}\arrow{d}{\simeq} &
  \Sh\left(X_{\proet,\affperfd},\mathbf{E}\right) \arrow{r}{\simeq}\arrow{d}{\simeq} &
  \Sh\left(X_{\proet,\affperfd}^{\fin},\mathbf{E}\right)\arrow{d}{\simeq} \\
  \Sh\left(X_{\proet}/U,\mathbf{E}\right) \arrow{r}{\simeq} &
  \Sh\left(X_{\proet,\affperfd}/U,\mathbf{E}\right) \arrow{r}{\simeq}&
  \Sh\left(X_{\proet,\affperfd}^{\fin}/U,\mathbf{E}\right)
  \end{tikzcd}
\end{equation*}
for any elementary quasi-abelian category $\mathbf{E}$.
\end{lem}

\begin{proof}
  The vertical morphism at the left-hand side is an equivalence because
  $U\to X$ is a covering. It remains to check that the horizontal arrows are
  equivalences. Since the row at the top is obtained from the row at the
  bottom by setting $U=X$, it suffices to check that the horizontal morphisms
  at the bottom are equivalences. Lemma~\ref{lem:affperfd-basis}
  gives the first equivalence
  $\Sh\left(X_{\proet}/U,\mathbf{E}\right)\simeq
  \Sh\left(X_{\proet,\affperfd}/U,\mathbf{E}\right)$. The second equivalence
  $\Sh\left(X_{\proet,\affperfd}/U,\mathbf{E}\right)\simeq
  \Sh\left(X_{\proet,\affperfd}^{\fin}/U,\mathbf{E}\right)$
  follows from Lemma~\ref{prop:siteqc-reducesheafcondtofincov},
  which applies because all affinoid perfectoids are quasicompact objects
  in $X_{\proet}$, cf.~\cite[Proposition 3.12]{Sch13pAdicHodge},
\end{proof}


\subsection{The pro-étale site of a point}

We recall Definition~\cite[Definition 3.3]{Sch13pAdicHodge}, see also~\cite{Sch13pAdicHodgeErratum}.
The category underlying the the pro-finite étale site $X_{\profet}$ is
$\Pro\left(X_{\fet}\right)$, and a collection of morphisms $\left\{U_{i}\to U\right\}_{i}$
is a covering if and only if it is a covering in $X_{\proet}$.

Now specialise to the case $X=*:=\Spa\left(k,k^{\circ}\right)$.
$\cal{G}:=\Gal\left(\overline{k}/k\right)$ is the absolute Galois group of $k$.
Recall the Definition~\ref{defn:Gpfsets-reconstructionpaper}
of the site of $\cal{G}$-profinite sets and
see~\cite[Proposition 3.5]{Sch13pAdicHodge} for the construction
of an equivalence of sites $*_{\profet}\simeq\cal{G}$-$\pfsets$.
Composition with the canonical map
$*_{\proet}\to*_{\profet}$ give a morphism of sites
\begin{equation*}
  \lambda\colon *_{\proet} \to \cal{G}-\pfsets.
\end{equation*}
Recall the functor $\IndBan_{k}\left(\cal{G}\right)\ni V\mapsto\cal{F}_{V}$ as in
Definition~\ref{defn:FMfunctor-Gpfsets-reconstructionpaper},
see also Lemma~\ref{lem:FMfunctor-Gpfsets-givessheaves-reconstructionpaper}.
Consider
\begin{equation}\label{eq:indBanachGaloisrep-toShonproetsite-reconstructionpaper}
  \IndBan_{k}\left(\cal{G}\right) \to \Sh\left( *_{\proet} , \IndBan_{k} \right),
  V \mapsto \lambda^{-1}\cal{F}_{V}.
\end{equation}

\begin{lem}\label{lem:basiscompareproetalepoint-and-Gpfsets-reconstructionpaper}
  $C$ is the completion of a fixed algebraic closure of $k$, cf. \S\ref{subsec:conventions-reconstructionpaper}.
  \begin{itemize}
    \item[(i)] $*_{\proet}$ admits a basis given by all affinoid perfectoids
      $\Spa\left(C,\cal{O}_{C}\right)\times S$, where $S$ denotes any profinite set.
    \item[(ii)] $\cal{G}-\pfsets$ admits a basis given by all products
      $\cal{G}\times S$,
      where $S$ denotes any profinite set carrying the trivial $\cal{G}$-action.
  \end{itemize}
\end{lem}

\begin{proof}
  Any basis for the site $*_{\proet}/C\simeq \Spa\left(C,\cal{O}_{C}\right)_{\proet}$, cf.~\cite[Proposition 3.15]{Sch13pAdicHodge},
  will descend to a basis of $*_{\proet}$. Thus (i) follows from~\cite[Remark 2.5]{Bosco21}.
  In particular, the affinoid perfectoids of the form $\Spa\left(C,\cal{O}_{C}\right)\times S$
  also form a basis of $*_{\profet}$. This gives (ii), as the equivalence of sites
  $*_{\profet}\simeq\cal{G}$-$\pfsets$ identifies $\Spa\left(C,\cal{O}_{C}\right)\times S$
  and $\cal{G}\times S$, cf. the~\cite[proof of Proposition 3.5]{Sch13pAdicHodge}.
\end{proof}

\begin{cor}\label{cor:comparepullbackalonglambda-reconstructionpaper}
  $\lambda^{-1}$ induces an equivalence
  $\Sh\left( *_{\profet} , \IndBan_{k} \right)\simeq\Sh\left( *_{\proet} , \IndBan_{k} \right)$.
  Furthermore, we have the functorial isomorphism for all profinite sets $S$
  \begin{equation*}
    \left(\lambda^{-1}\cal{F}\right)\left(\Spa\left(C,\cal{O}_{C}\right)\times S\right)
    \cong\cal{F}\left(\cal{G}\times S\right)
  \end{equation*}
\end{cor}

\begin{proof}
  This follows from Lemma~\ref{lem:basiscompareproetalepoint-and-Gpfsets-reconstructionpaper} and
  the~\cite[proof of Proposition 3.5]{Sch13pAdicHodge}
  as the equivalence $*_{\profet}\simeq\cal{G}$-$\pfsets$ identifies $\Spa\left(C,\cal{O}_{C}\right)\times S$
  and $\cal{G}\times S$.
\end{proof}

\begin{lem}\label{lem:rep-to-proetalesheaf-commutes-with-filtered-colimits}
  (\ref{eq:indBanachGaloisrep-toShonproetsite-reconstructionpaper}) commutes with filtered colimits.
\end{lem}

\begin{proof}
  By Corollary~\ref{cor:comparepullbackalonglambda-reconstructionpaper}, it remains to show
  that $V\mapsto\cal{F}_{V}$ commutes with filtered colimits. But
  \begin{equation*}
    V\mapsto\intHom_{\cont,G}\left( S , V \right)
  \end{equation*}
  commutes with filtered colimits for every $G$-profinite set $S$, by definition.
\end{proof}

\begin{lem}\label{lem:rep-to-proetalesheaf-laxmonoidal}
  (\ref{eq:indBanachGaloisrep-toShonproetsite-reconstructionpaper})
  is canonically lax monoidal. 
\end{lem}

\begin{proof}
  It remains to check that $V\mapsto\cal{F}_{V}$ is canonically lax monoidal as $\lambda^{-1}$
  is canonically strongly monoidal, cf.~\cite[Lemma 2.28]{Bo21}.
  As every operation commutes with filtered colimits, we may as well consider the restriction
  to the category of $G$-$R$-Banach modules. Here is a canonical morphism
  \begin{equation*}
    \intHom_{\cont,G}\left(S,V\right)\widehat{\otimes}_{R}\intHom_{\cont,G}\left(S,W\right)
    \to\intHom_{\cont,G}\left(S,V\widehat{\otimes}_{R}W\right)
  \end{equation*}
  of $k$-Banach spaces for every $S\in\cal{G}$-$\pfsets$ and $V,W\in\Ban_{k}\left(\cal{G}\right)$.
  Sheafify to get the desired functorial canonical morphism
  $\cal{F}_{V}\widehat{\otimes}_{k}\cal{F}_{W}\to\cal{F}_{V\widehat{\otimes}_{k}W}$.
\end{proof}


\subsection{Structure sheaves}

$X$ denotes again an arbitrary smooth locally Noetherian adic space over $\Spa(k,k^{\circ})$
and $\nu\colon X_{\proet} \maps X_{\et}$ is the canonical projection of sites.
See~\cite[Definition 4.1]{Sch13pAdicHodge} for the definition of the integral completed
structure sheaf $\widehat{\cal{O}}^{+}$ and the completed structure sheaf $\widehat{\cal{O}}$.
In the following, we promote these to sheaves of $k^{\circ}$-ind-Banach algebras and
$k$-ind-Banach algebras, respectively.
\begin{itemize}
\item[(i)] Let $U\in X_{\proet}$. By~\cite[Lemma 4.2(iii)]{Sch13pAdicHodge},
  $\varprojlim_{j\geq 1}\nu^{-1}\mathcal{O}_{X_{\et}}^{+}(U)/p^{j}$ is $p$-adically complete,
  thus it is $\pi$-adically complete.
  Equip it with the $\pi$-adic norm, cf. Definition~\ref{defn:I-adic-seminorm-norm}.
  This makes $\varprojlim_{j\geq 1}\nu^{-1}\mathcal{O}_{X_{\et}}^{+}(U)/p^{j}$
  a $k^{\circ}$-Banach algebra. It is thus a $k^{\circ}$-ind-Banach algebra.
  The sheafification of
  \begin{equation*}
    U \mapsto \varprojlim_{j\geq 1}\nu^{-1}\mathcal{O}_{X_{\et}}^{+}(U)/p^{j}
  \end{equation*}
  is the \emph{completed integral structure sheaf}
  $\widehat{\mathcal{O}}_{X_{\proet}}^{+}$.
  It is a sheaf of $k^{\circ}$-ind-Banach algebras because
  sheafification is strongly monoidal, cf. Lemma~\ref{lem:sh-strongly-monoidal}.
\item[(ii)] Consider the presheaf
  of $k$-Banach algebras
  \begin{align*}
    U \mapsto &\left(\varprojlim_{j\geq 1}\nu^{-1}\mathcal{O}_{X_{\et}}^{+}(U)/p^{j}\right)\widehat{\otimes}_{k^{\circ}}k
    \stackrel{\text{\ref{lem:localise-torsionfree-modules}}}{\cong}
    \left(\varprojlim_{j\geq 1}\nu^{-1}\mathcal{O}_{X_{\et}}^{+}(U)/p^{j}\right)[1/\pi].
  \end{align*}
  Sheafify it as a presheaf of $k$-ind-Banach algebras to get
  the \emph{completed structure sheaf} $\widehat{\mathcal{O}}_{X_{\proet}}$.
  It is a sheaf of $k$-ind-Banach algebras
  by Lemma~\ref{lem:sh-strongly-monoidal}.
\end{itemize}

\begin{notation}
  We may write
  $\widehat{\mathcal{O}}^{+}=\widehat{\mathcal{O}}_{X_{\proet}}^{+}$ and
  $\widehat{\mathcal{O}}=\widehat{\mathcal{O}}_{X_{\proet}}$.
\end{notation}

Here is a local description of the structure sheaves.

\begin{lem}\label{lem:sheavesonXproet-over-affinoidperfectoid}
  Assume that $U\in X_{\proet}$ is affinoid perfectoid
  with $\widehat{U}=\Spa(R,R^{+})$. Equip $R^{+}$ with the $p$-adic norm,
  giving $R=R^{+}[1/\pi]$ the structure of $k$-Banach algebra. Then
  $\widehat{\mathcal{O}}^{+}(U) \cong R^{+}$ and
  $\widehat{\mathcal{O}}(U) \cong R$.
\end{lem}

\begin{proof}
  Thanks to Lemma~\ref{lem:sheaves-on-Xproet-and-Xproetaffperfdfin}, we may
  view $\widehat{\mathcal{O}}^{+}$ and $\widehat{\mathcal{O}}$
  as sheaves on the site $X_{\proet,\affperfd}^{\fin}$.
  By \cite[Lemma 4.10(iii)]{Sch13pAdicHodge},
  it suffices to show that the presheaves
  $U\mapsto R^{+}$ and $U\mapsto R$
  are sheaves.
  
  \emph{Loc. cit.} says that $U\mapsto R$ is a
  sheaf of abstract $k$-algebras. The open mapping theorem
  implies that it is a sheaf of $k$-Banach algebras,
  and it is a sheaf of $k$-ind-Banach algebras by
  Lemma~\ref{lem:banach-to-indbanach-sheaves}.
  
  We know by~\cite[Lemma 4.10(iii)]{Sch13pAdicHodge}
  that $U\mapsto R^{+}$ is a sheaf of abstract $k^{\circ}$-algebras.
  $U\mapsto R$ being a sheaf of $k$-Banach algebras
  implies that $U\mapsto R^{+}$ is a sheaf of $k^{\circ}$-Banach algebras.
  Another application of Lemma~\ref{lem:banach-to-indbanach-sheaves}
  finishes the proof.
\end{proof}

We continue with technical results.

\begin{lem}\label{lem:hcalOUtimesS-is-HomctsScalO}
  We have the following functorial isomorphism of $k$-Banach spaces
  \begin{equation}\label{eq:hcalOUtimesS-is-HomctsScalO}
    \widehat{\cal{O}}(U\times S)
    \cong
    \intHom_{\cont}\left(S,\widehat{\cal{O}}(U)\right)
  \end{equation}
  for every affinoid perfectoid $U\in X_{\proet}$ and any profinite set $S$.
\end{lem}

\begin{proof}
  \cite[Corollary 6.6]{Sch13pAdicHodge} gives the desired map of abstract $k$-vector spaces
  and computes that it is an isomorphism. \emph{Loc cit.} also proves that
  there is an isomorphism, compatible with~(\ref{eq:hcalOUtimesS-is-HomctsScalO}),
  \begin{equation*}
    \widehat{\cal{O}}^{+}(U\times S)
    \cong
    \intHom_{\cont}\left(S,\widehat{\cal{O}}^{+}(U)\right).
  \end{equation*}
  Therefore,~(\ref{eq:hcalOUtimesS-is-HomctsScalO}),
  preserves the unit balls. It is thus an isomorphism of Banach spaces.
\end{proof}

$\pi\in k$ is a uniformiser as in \S\ref{subsec:conventions-reconstructionpaper}.

\begin{lem}\label{lem:pi-adicallycomplete-iso-if-grzeroiso}
  Consider a morphism $\phi$ of $\pi$-torsion free $k^{\circ}$-Banach rings
  carrying the $\pi$-adic topologies. It is an isomorphism if $\gr^{0}\phi$,
  with respect to the $\pi$-adic filtrations, is an isomorphism.
\end{lem}

\begin{proof}
  If $\gr^{0}\phi$ is an isomorphism, \cite[Exercise 17.16.a]{Ei95}
  implies that $\gr\phi$ is an isomorphism.
  Now apply~\cite[Chapter I, \S 4.2 page 31-32, Theorem 4(5)]{HuishiOystaeyen1996}.
\end{proof}

\begin{lem}\label{lem:Banachspace-iso-if-grzeroiso}
  Consider a morphism $\phi\colon V\to W$ of $k$-Banach spaces.
  Equip the Banach spaces with the filtration $\Fil^{n}:=\{x\colon\|x\|\leq|\pi|^{n}\}$,
  $n\in\ZZ$. Then $\phi$ is an isomorphism of Banach spaces
  if $\gr^{0}\phi$ is an isomorphism.
\end{lem}

\begin{proof}
  This follows from Lemma~\ref{lem:pi-adicallycomplete-iso-if-grzeroiso}.
\end{proof}

Fix $d\in\NN$.

\begin{lem}\label{lem:perfdtorus-tensor-products}
  We have the canonical isomorphism of $k$-Banach spaces
  \begin{equation*}
  \widehat{\cal{O}}\left(\widetilde{\TT}^{\left\{1\right\}}\right)
    \widehat{\otimes}_{k}
    \dots
    \widehat{\otimes}_{k}
    \widehat{\cal{O}}\left(\widetilde{\TT}^{\left\{d\right\}}\right)
    \isomap
  \widehat{\cal{O}}\left(\widetilde{\TT}^{d}\right).
  \end{equation*}
\end{lem}

\begin{proof}
  By Lemma~\ref{lem:Banachspace-iso-if-grzeroiso}, we may compute the
  zeroth pieces of the associated gradeds
  \begin{align*}
    \gr^{0}\left(\widehat{\cal{O}}\left(\widetilde{\TT}^{\left\{1\right\}}\right)
    \widehat{\otimes}_{k}
    \dots
    \widehat{\otimes}_{k}
    \widehat{\cal{O}}\left(\widetilde{\TT}^{\left\{d\right\}}\right)\right)
    &\cong
    \kappa\left[ T_{1}^{\pm\frac{1}{p^{\infty}}} \right]
    \otimes_{\kappa}
    \dots
    \otimes_{\kappa}
    \kappa\left[ T_{d}^{\pm\frac{1}{p^{\infty}}} \right], \text{ and}\\
    \gr^{0}\widehat{\cal{O}}\left(\widetilde{\TT}^{d}\right)
    &\cong
    \kappa\left[ T_{1}^{\pm\frac{1}{p^{\infty}}},\dots,
    T_{d}^{\pm\frac{1}{p^{\infty}}} \right],
  \end{align*}
  where $\kappa$ is the residue field of $k$ as in \S\ref{subsec:conventions-reconstructionpaper}.
  Lemma~\ref{lem:perfdtorus-tensor-products} follows.
\end{proof}

The following result appeared implicitly 
in the~\cite[proof of Lemma 8.7(ii)]{Sch13pAdicHodge}.

\begin{lem}\label{lem:perfdtorus-flat-over-torus}
  $\widehat{\cal{O}}\left(\widetilde{\TT}^{d}\right)$ is flat
  as an $\cal{O}\left(\TT^{d}\right)$-ind-Banach module.
\end{lem}

\begin{proof}
  The result is clear for $d=0$.
  Next, let $d=1$. Write
  $J_{r}:=\left\{m/p^{r}\colon m=0,\dots,p^{r}-1 
  \right\}$
  for all $r\in\NN$ and $J:=\bigcup_{r\in\NN}J_{r}$.
  We claim that the canonical map
  \begin{equation}\label{eq:perfd-torus-flat-over-torus-1}
    \widehat{\bigoplus_{m/p^{r}\in J}
    k^{\circ}\left\<T^{\pm}\right\>\cdot T^{\frac{m}{p^{r}}}}
    \isomap k^{\circ}\left\<T^{\pm\frac{1}{p^{\infty}}}\right\>
  \end{equation}
  is an isomorphism of $k^{\circ}$-Banach modules,
  where the completion is the $\pi$-adic one.
  By Lemma~\ref{lem:pi-adicallycomplete-iso-if-grzeroiso},
  we may check that the zeroth graded piece
  with respect to the $\pi$-adic filtrations
  \begin{equation*}
    \psi\colon
    \bigoplus_{m/p^{r}\in J}
    \kappa\left[T^{\pm}\right]\cdot T^{\frac{m}{p^{r}}}
    \longrightarrow \kappa\left[T^{\pm\frac{1}{p^{\infty}}}\right]
  \end{equation*}
  is an isomorphism. This follows from (i) and (ii) below.
  \begin{itemize}
    \item[(i)] To prove injectivity,
      write $V_{r}
      :=\bigoplus_{m/p^{r}\in J_{r}}
      \kappa\left[T^{\pm}\right]\cdot T^{\frac{m}{p^{r}}}$
      for all $r\in\NN$. Every
      $\kappa\left[T^{\pm}\right]$-module $\psi\left(V_{r}\right)$
      is spanned by the set $\left\{T^{\frac{s}{p^{r}}}\colon p\nmid s\right\}$.
      These sets are pairwise disjoint, thus
      $\psi\left(V_{r}\right)\cap\psi\left(V_{r^{\prime}}\right)=\{0\}$
      for any two distinct $r,r^{\prime}\in\NN$. Therefore, it remains to check
      that $\psi|_{V_{r}}$ is injective for all $r\in\NN$.
      This follows from
      \begin{equation*}
        \psi\left( \kappa\left[T^{\pm}\right]\cdot T^{\frac{m}{p^{r}}}\right)
        =\linearspan_{\kappa}\left(\left\{T^{\frac{s}{p^{r}}} \colon s\equiv m\mod p^{r} \right\}\right),
      \end{equation*}
      which is implied by
     $\psi\left( T^{n} \cdot T^{\frac{m}{p^{r}}}\right)=T^{\frac{p^{r}n+m}{p^{r}}}$
     for all $n\in\ZZ$ and $m/p^{r}\in J$.
  \item[(ii)] $\psi$ is surjective: Given $T^{\frac{s}{p^{r}}}$ such that $p$ does not divide $s$,
  we may write $s=s^{\prime}p^{r}+m$ such that $p^{r}$ does not divide $m$. Then
  \begin{equation*}
    T^{\frac{s}{p^{r}}}
    =T^{\frac{s^{\prime}p^{r}+m}{p^{r}}}
    =T^{s^{\prime}}\cdot T^{\frac{m}{p^{r}}}
    =\psi\left(T^{s^{\prime}}\cdot T^{\frac{m}{p^{r}}}\right).
  \end{equation*}
  \end{itemize}
  Recall the definition of $c_{0}(-)$ as in~\cite[\S 3]{Sch02}.
  The isomorphism~(\ref{eq:perfd-torus-flat-over-torus-1}) gives
  \begin{equation}\label{eq:perfd-torus-flat-over-torus-2}    
    k\left\<T^{\pm\frac{1}{p^{\infty}}}\right\>
    \cong k\left\<T^{\pm}\right\>\widehat{\otimes}_{k}c_{0}(J).
  \end{equation}
  Now the flatness follows from
  Lemma~\ref{lem:Banachspace-iso-if-grzeroiso}.
  For $d>2$, proceed via Lemma~\ref{lem:perfdtorus-tensor-products}
  and~(\ref{eq:perfd-torus-flat-over-torus-2}).
\end{proof}

Fix the limit $S$ of a diagram of finite sets $j\mapsto S_{j}$ over a cofiltered index category $J$.

\begin{notation}
  Given a morphism $j\to l$ in $J$, consider the associated $\psi\colon S_{j}\to S_{l}$.
  Its dual gives
  \begin{equation*}
    \prod_{S_{l}}k^{\circ}
    = \Hom\left(S_{l}, k^{\circ} \right)
    \to\Hom\left(S_{j}, k^{\circ} \right)
    =\prod_{S_{j}}k^{\circ},
  \end{equation*}
  where the $\Hom$-sets are the ones in the category of sets.
  This defines a $J$-indexed diagram $j\mapsto\prod_{S_{j}}k^{\circ}$.
  $A_{S}$ is the $k$-Banach space whose unit ball $A_{S}^{\circ}$ is the $\pi$-adic
  completion of $\varinjlim_{j}\prod_{S_{j}}k^{\circ}$.
\end{notation}

Recall Example and Notation~\ref{examplenotation:profinitesets-in-Xproet}.

\begin{lem}\label{lem:sectionsoverUS-is-tensoring-with-AS}
  For any affinoid perfectoid $U$, we find the functorial isomorphism of $k$-Banach spaces
  \begin{equation*}
    A_{S}\widehat{\otimes}_{k}^{\rL}\widehat{\cal{O}}(U)
    \isomap\widehat{\cal{O}}(U\times S).
  \end{equation*}
  Similarly, we have the isomorphism
  $A_{S}^{\circ}\widehat{\otimes}_{k^{\circ}}\widehat{\cal{O}}^{+}(U)\isomap\widehat{\cal{O}}^{+}(U\times S)$
  of $k^{\circ}$-Banach modules.
\end{lem}

\begin{proof}
  The following completions are the $\pi$-adic ones.
  Writing $U$ as a cofiltered limit of affinoids $U_{i}$,
  \begin{equation}\label{eq:sectionsoverUS-is-tensoring-with-AS}
  \begin{split}
    \widehat{\cal{O}}^{+}(U\times S)
    &=\widehat{\varinjlim_{i,j}\cal{O}^{+}\left(U_{i}\times S_{j}\right)}
    \cong\widehat{\varinjlim_{i,j}\prod_{S_{j}}\cal{O}^{+}\left(U_{i}\right)}
    \cong\varinjlim_{j}\prod_{S_{j}}k^{\circ}\widehat{\otimes}_{k^{\circ}}\varinjlim_{j}\cal{O}^{+}\left(U_{i}\right), \\
    A_{S}^{\circ}\widehat{\otimes}_{k^{\circ}}\widehat{\cal{O}}^{+}(U)
    &\cong\widehat{\varinjlim_{j}\prod_{S_{j}}k^{\circ}}\widehat{\otimes}_{k^{\circ}}\widehat{\varinjlim_{j}\cal{O}^{+}\left(U_{i}\right)}.
    \end{split}
  \end{equation}  
  This clarifies how to write down the desired
  $\widehat{\cal{O}}^{+}(U\times S)\to A_{S}^{\circ}\widehat{\otimes}_{k^{\circ}}\widehat{\cal{O}}^{+}(U)$.
  It is an isomorphism as
  \begin{align*}
    \left(\widehat{\cal{O}}^{+}(U\times S)\right)/\pi
    &\stackrel{\text{(\ref{eq:sectionsoverUS-is-tensoring-with-AS})}}{\cong}
    \left(\varinjlim_{j}\prod_{S_{j}}k^{\circ}\widehat{\otimes}_{k^{\circ}}\varinjlim_{j}\cal{O}^{+}\left(U_{i}\right)\right)/\pi \\
    &\cong
    \left(\widehat{\varinjlim_{j}\prod_{S_{j}}k^{\circ}}\widehat{\otimes}_{k^{\circ}}\widehat{\varinjlim_{j}\cal{O}^{+}\left(U_{i}\right)}\right)/\pi
    \stackrel{\text{(\ref{eq:sectionsoverUS-is-tensoring-with-AS})}}{\cong}
    \left(A_{S}^{\circ}\widehat{\otimes}_{k^{\circ}}\widehat{\cal{O}}^{+}(U)\right)/\pi,
  \end{align*}
  thus Lemma~\ref{lem:pi-adicallycomplete-iso-if-grzeroiso} applies.
  This gives the isomorphism
  $A_{S}^{\circ}\widehat{\otimes}_{k^{\circ}}\widehat{\cal{O}}^{+}(U)\isomap\widehat{\cal{O}}^{+}(U\times S)$.  
  Furthermore,
  \begin{equation*}
    \widehat{\cal{O}}(U\times S)
    \cong\widehat{\cal{O}}^{+}(U\times S)\widehat{\otimes}_{k^{\circ}}k
    \cong
    A_{S}^{\circ}\widehat{\otimes}_{k^{\circ}}\widehat{\cal{O}}^{+}(U)\widehat{\otimes}_{k^{\circ}}k
    \cong A_{S}\widehat{\otimes}_{k}\widehat{\cal{O}}(U).
  \end{equation*}
  Here, we applied Lemma~\ref{lem:localise-torsionfree-modules}.
  Finally, use $A_{S}\widehat{\otimes}_{k}^{\rL}\widehat{\cal{O}}(U)=A_{S}\widehat{\otimes}_{k}\widehat{\cal{O}}(U)$
  by~\cite[Theorem 3.50]{BBBK18}.
\end{proof}


We fix the completion $K$ of an algebraic extension of $k$ which is perfectoid.

\begin{lem}\label{lem:hOtildeTKdtimesShotimesOTdOX-isomaphOtildeXKtimesS-reconstructionpaper}
  Suppose $X$ is affinoid and equipped with an étale morpism $X\to\TT^{d}$. Then
  \begin{equation*}
    \widehat{\cal{O}}\left(\widetilde{\TT}_{K}^{d}\times S\right)
    \widehat{\otimes}_{\cal{O}\left(\TT^{d}\right)}^{\rL}
    \cal{O}(X)
    \isomap\widehat{\cal{O}}\left(\widetilde{X}_{K}\times S\right),
  \end{equation*}
  the canonical morphism, is an isomorphism. Here, $\widetilde{X}:=X\times_{\TT^{d}}\widetilde{\TT}^{d}$.
\end{lem}

\begin{proof}
  By Lemma~\ref{lem:sectionsoverUS-is-tensoring-with-AS}, we may assume
  $S=*$. Now apply~\cite[Lemma 6.18]{Sch13pAdicHodge}
  and Lemma~\ref{lem:perfdtorus-flat-over-torus}.
\end{proof}

The following Definition~\ref{defn:proetaleGtorsor} is taken from~\cite[the paragraph preceding Proposition 3.6.3]{BSSW2024_rationalizationoftheKnlocalsphere}.

\begin{defn}\label{defn:proetaleGtorsor}
  Fix a profinite group $G$.
  A \emph{pro-étale $G$-torsor over $X$} is an object
  $Y\to X$ of $X_{\proet}$ admitting an action $G\times Y\to Y$ lying over the trivial
  action on $X$, such that the map $G\times Y\to Y\times_{X}Y$ given by
  $\left(g,y\right)\mapsto (y,gy)$ is an isomorphism.
\end{defn}

From now on, we assume that $K$ contains a compatible system
$1,\zeta_{p},\zeta_{p^{2}},\dots$ of $p$th power roots of unity, which
we fix. The following is~\cite[Lemma 5.5]{Sch13pAdicHodge}.

\begin{lem}\label{lem:ZpGaloiscoverings}
  Given a finite indexing set $I\subseteq\NN_{\geq1}$,
  we find $\widetilde{\TT}_{K}^{I}\to\TT_{K}^{I}$
  to be pro-étale $\ZZ_{p}(1)^{I}:=\prod_{I}\ZZ_{p}(1)$-torsor.
  The action $\ZZ_{p}(1)^{I}$ is as follows:
Fix a $\ZZ_{p}$-basis $\left\{\gamma_{i}\right\}_{i\in I}$ of $\ZZ_{p}(1)^{I}\cong\ZZ_{p}^{I}$.
Then
\begin{equation*}
  \gamma_{i}\cdot T^{\alpha}=\zeta^{\alpha_{i}}T^{\alpha}
\end{equation*}
for all $i\in I$ and $\alpha=\left(\alpha_{i}\right)_{i\in I}\in\NN^{I}$, where $T:=\left(T_{i}\right)_{i\in I}$.
Here, $\zeta^{\alpha_{i}}:=\zeta_{p^{j}}^{\alpha_{i}p^{j}}$ whenever $\alpha_{i}p^{j}\in\ZZ$.
\end{lem}

Let $j\in\{0,\dots,d\}$. Lemma~\ref{lem:ZpGaloiscoverings}
gives a $\ZZ_{p}(1)^{j}$-action
on $\widehat{\cal{O}}^{+}\left(\widetilde{\TT}_{K}^{j}\times S\right)$, which we restrict
to an action of $\ZZ_{p}^{\{j\}}$. But functoriality, this makes
\begin{equation*}
  \widehat{\cal{O}}^{+}\left(\widetilde{\TT}_{K}^{j} \times S\right)
  \widehat{\otimes}_{k^{\circ}}\cal{O}\left(\TT^{\left\{j+1,\dots,d\right\}}\right)^{\circ}
\end{equation*}
a continuous $\ZZ_{p}^{\{j\}}$-representation. 
This sets the stage for

\begin{lem}\label{lem:Rt-S-ZZp-cohomology-i}
  The following map is injective and its cokernel is killed by $\zeta_{p}-1$:
  \begin{equation*}\label{eq:Rt-S-ZZp-cohomology-i}
      \widehat{\cal{O}}^{+}\left(\widetilde{\TT}_{K}^{j-1} \times S\right)
        \widehat{\otimes}_{k^{\circ}}\cal{O}\left(\TT^{\left\{j,\dots,d\right\}}\right)^{\circ}
    \to
    \Ho^{0}\left(\ZZ_{p}(1)^{\left\{j\right\}} , 
      \widehat{\cal{O}}^{+}\left(\widetilde{\TT}_{K}^{j} \times S\right)
        \widehat{\otimes}_{k^{\circ}}\cal{O}\left(\TT^{\left\{j+1,\dots,d\right\}}\right)^{\circ}
      \right).
  \end{equation*}
\end{lem}

\begin{proof}
  Fix an isomorphism $\ZZ_{p}(1)^{\{j\}}\cong\ZZ_{p}\gamma_{j}$.
  Consider the following cochain complex
  \begin{equation}\label{eq:Rt-S-ZZp-cohomology-i-thecomplex}
  \begin{split}
    0
    \longrightarrow
    \widehat{\cal{O}}^{+}\left(\widetilde{\TT}_{K}^{j-1}\times S\right)
      \widehat{\otimes}_{k^{\circ}}\cal{O}\left(\TT^{\left\{j,\dots,d\right\}}\right)^{\circ}
    &\longrightarrow
      \widehat{\cal{O}}^{+}\left(\widetilde{\TT}_{K}^{j}\times S\right)
        \widehat{\otimes}_{k^{\circ}}\cal{O}\left(\TT^{\left\{j+1,\dots,d\right\}}\right)^{\circ} \\
    &\stackrel{\gamma_{j}-1}{\longrightarrow}
      \widehat{\cal{O}}^{+}\left(\widetilde{\TT}_{K}^{j}\times S\right)
        \widehat{\otimes}_{k^{\circ}}\cal{O}\left(\TT^{\left\{j+1,\dots,d\right\}}\right)^{\circ},
      \end{split}
  \end{equation}
  concentrated in degrees $0$, $1$, and $2$.
  By~\cite[Lemma 7.3]{BhattMorrowScholze2018},
  we have to show that it is exact in degree $0$, and $\zeta_{p}-1$
  kills its first cohomology.
  By $p$-adic completeness, it suffices to consider its reductions modulo $\pi^{m}$ for all $m\in\NN$.
  We compute
  \begin{equation*}
  \begin{split}
    \left(\widehat{\cal{O}}^{+}\left(\widetilde{\TT}_{K}^{j-1}\times S\right)
      \widehat{\otimes}_{k^{\circ}}\cal{O}\left(\TT^{\left\{j,\dots,d\right\}}\right)^{\circ}\right)/\pi^{m}
      &\cong
    \widehat{\cal{O}}^{+}\left(\widetilde{\TT}_{K}^{j-1}\times S\right)/\pi^{m}
      \otimes_{k^{\circ}/\pi^{m}}\cal{O}\left(\TT^{\left\{j,\dots,d\right\}}\right)^{\circ}/\pi^{m} \\   
      &\stackrel{\text{\ref{lem:sectionsoverUS-is-tensoring-with-AS}}}{\cong}
    \varinjlim_{j}\prod_{S_{j}}\widehat{\cal{O}}^{+}\left(\widetilde{\TT}_{K}^{j-1}\right)/\pi^{m}
      \otimes_{k^{\circ}/\pi^{m}}\cal{O}\left(\TT^{\left\{j,\dots,d\right\}}\right)^{\circ}/\pi^{m} \\
      &\cong\varinjlim_{j}\prod_{S_{j}}
        \bigoplus_{\substack{\alpha_{1},\dots,\alpha_{j-1} \\ \in [0,1)\cap\ZZ[1/p]}}
        \left(K^{+}\left\<T^{\pm}\right\>/\pi^{m}\right)T_{1}^{\alpha_{1}}\cdots T_{j-1}^{\alpha_{j-1}}
  \end{split}
  \end{equation*}
  Similarly,
  \begin{equation*}
      \left(\widehat{\cal{O}}^{+}\left(\widetilde{\TT}_{K}^{j}\times S\right)
        \widehat{\otimes}_{k^{\circ}}\cal{O}\left(\TT^{\left\{j+1,\dots,d\right\}}\right)^{\circ}\right)/\pi^{m}
      \cong\varinjlim_{j}\prod_{S_{j}}
        \bigoplus_{\substack{\alpha_{1},\dots,\alpha_{j} \\ \in [0,1)\cap\ZZ[1/p]}}
        \left(K^{+}\left\<T^{\pm}\right\>/\pi^{m}\right)T_{1}^{\alpha_{1}}\cdots T_{j}^{\alpha_{j}}.
  \end{equation*}  
  That is, the reduction of~(\ref{eq:Rt-S-ZZp-cohomology-i-thecomplex}) modulo $\pi^{m}$ is
  \begin{align*}
    0
    &\longrightarrow
      \varinjlim_{j}\prod_{S_{j}}\bigoplus_{\substack{\alpha_{1},\dots,\alpha_{j-1} \\ \in [0,1)\cap\ZZ[1/p]}}\left(K^{+}\left\<T^{\pm}\right\>/\pi^{m}\right)T_{1}^{\alpha_{1}}\cdots T_{j-1}^{\alpha_{j-1}} \\
    &\longrightarrow
      \varinjlim_{j}\prod_{S_{j}}\bigoplus_{\substack{\alpha_{1},\dots,\alpha_{j} \\ \in [0,1)\cap\ZZ[1/p]}}\left(K^{+}\left\<T^{\pm}\right\>/\pi^{m}\right)T_{1}^{\alpha_{1}}\cdots T_{j}^{\alpha_{j}} \\
     &\stackrel{f\mapsto\zeta^{\alpha_{j}}f}{\longrightarrow}
         \varinjlim_{j}\prod_{S_{j}}\bigoplus_{\substack{\alpha_{1},\dots,\alpha_{j} \\ \in [0,1)\cap\ZZ[1/p]}}\left(K^{+}\left\<T^{\pm}\right\>/\pi^{m}\right)T_{1}^{\alpha_{1}}\cdots T_{j}^{\alpha_{j}}.
  \end{align*}
  As filtered colimits, products, and direct sums are exact, it suffices to consider
  the cohomology of
  \begin{multline*}
    0
    \longrightarrow
      \left(K^{+}\left\<T^{\pm}\right\>/\pi^{m}\right)T_{1}^{\alpha_{1}}\cdots T_{j-1}^{\alpha_{j-1}}
    \longrightarrow
      \left(K^{+}\left\<T^{\pm}\right\>/\pi^{m}\right)T_{1}^{\alpha_{1}}\cdots T_{j}^{\alpha_{j}} \\
     \stackrel{f\mapsto\zeta^{\alpha_{j}}f}{\longrightarrow}
         \left(K^{+}\left\<T^{\pm}\right\>/\pi^{m}\right)T_{1}^{\alpha_{1}}\cdots T_{j}^{\alpha_{j}},
  \end{multline*}
  for fixed $\alpha_{1},\dots,\alpha_{j}\in [0,1)\cap\ZZ[1/p]$. This complex is exact in degree zero,
  and its first cohomology is killed by $\zeta_{p}$; this has already been observed in
  the~\cite[proof of Lemma 5.5]{Sch13pAdicHodge}. Lemma~\ref{lem:Rt-S-ZZp-cohomology-i}
  follows.
\end{proof}

Equip
$\widehat{\cal{O}}^{+}\left(\widetilde{\TT}_{K}^{j-1} \times S\right)
\widehat{\otimes}_{k^{\circ}}\cal{O}\left(\TT^{\left\{j,\dots,d\right\}}\right)^{\circ}$
with the trivial $\ZZ_{p}(1)^{\left\{j\right\}}$-action.
The following continuous group cohomology refers to the classical one,
and not the one with values in the left heart.

\begin{lem}\label{lem:Rt-S-ZZp-cohomology-ii}
  The following map is injective with cokernel killed by $\zeta_{p}-1$:
  \begin{equation}\label{eq:Rt-S-ZZp-cohomology-ii-themap}
  \begin{split}
    &\Ho_{\cont}^{1}\left(\ZZ_{p}(1)^{\left\{j\right\}},\widehat{\cal{O}}^{+}\left(\widetilde{\TT}_{K}^{j-1} \times S\right)
        \widehat{\otimes}_{k^{\circ}}\cal{O}\left(\TT^{\left\{j,\dots,d\right\}}\right)^{\circ}\right) \\
    &\to
    \Ho_{\cont}^{1}\left(\ZZ_{p}(1)^{\left\{j\right\}} , 
      \widehat{\cal{O}}^{+}\left(\widetilde{\TT}_{K}^{j} \times S\right)
        \widehat{\otimes}_{k^{\circ}}\cal{O}\left(\TT^{\left\{j+1,\dots,d\right\}}\right)^{\circ}
      \right).
    \end{split}
    \end{equation}
\end{lem}
  
\begin{proof}
    Fix $\ZZ_{p}(1)^{\{j\}}\cong\ZZ_{p}\gamma_{j}$ and
    compute with~\cite[Lemma 7.3]{BhattMorrowScholze2018}
  \begin{equation*}
  \begin{split}
    &\Ho_{\cont}^{1}\left(\ZZ_{p}(1)^{\left\{j\right\}},\widehat{\cal{O}}^{+}\left(\widetilde{\TT}_{K}^{j-1} \times S\right)
        \widehat{\otimes}_{k^{\circ}}\cal{O}\left(\TT^{\left\{j,\dots,d\right\}}\right)^{\circ}\right) \\
    &\cong\coker
    \left(
      \widehat{\cal{O}}^{+}\left(\widetilde{\TT}_{K}^{j-1} \times S\right)
        \widehat{\otimes}_{k^{\circ}}\cal{O}\left(\TT^{\left\{j,\dots,d\right\}}\right)^{\circ}
     \stackrel{\gamma_{j}-1}{\longrightarrow}   
      \widehat{\cal{O}}^{+}\left(\widetilde{\TT}_{K}^{j-1} \times S\right)
        \widehat{\otimes}_{k^{\circ}}\cal{O}\left(\TT^{\left\{j,\dots,d\right\}}\right)^{\circ}
    \right) \\
    &=\widehat{\cal{O}}^{+}\left(\widetilde{\TT}_{K}^{j-1} \times S\right)
      \widehat{\otimes}_{k^{\circ}}\cal{O}\left(\TT^{\left\{j,\dots,d\right\}}\right)^{\circ}.
    \end{split}
  \end{equation*}
  The last equality holds because $\gamma_{j}$ acts trivially
  on $\widehat{\cal{O}}^{+}\left(\widetilde{\TT}_{K}^{j-1} \times S\right)
      \widehat{\otimes}_{k^{\circ}}\cal{O}\left(\TT^{\left\{j,\dots,d\right\}}\right)^{\circ}$.
  It remains to understand
  \begin{equation*}
  \begin{split}
    &\Ho_{\cont}^{1}\left(\ZZ_{p}(1)^{\left\{j\right\}},\widehat{\cal{O}}^{+}\left(\widetilde{\TT}_{K}^{j} \times S\right)
        \widehat{\otimes}_{k^{\circ}}\cal{O}\left(\TT^{\left\{j+1,\dots,d\right\}}\right)^{\circ}\right) \\
    &\cong\coker
    \left(
      \widehat{\cal{O}}^{+}\left(\widetilde{\TT}_{K}^{j} \times S\right)
        \widehat{\otimes}_{k^{\circ}}\cal{O}\left(\TT^{\left\{j+1,\dots,d\right\}}\right)^{\circ}
     \stackrel{\gamma_{j}-1}{\longrightarrow}   
      \widehat{\cal{O}}^{+}\left(\widetilde{\TT}_{K}^{j} \times S\right)
        \widehat{\otimes}_{k^{\circ}}\cal{O}\left(\TT^{\left\{j+1,\dots,d\right\}}\right)^{\circ}
    \right).
    \end{split}
  \end{equation*}
    Proceed as in the proof of Lemma~\ref{lem:Rt-S-ZZp-cohomology-i}:
    reduce modulo $\pi^{m}$ for all $m$. Then we arrive, up to filtered colimits, products,
    and direct sums, again at the setting of~\cite[Lemma 5.5]{Sch13pAdicHodge}.
\end{proof}


\section{Period rings}
\label{subsec:locan-period-ring}

This \S\ref{subsec:locan-period-ring} follows the discussion at the beginning
of~\cite[\S 6]{Sch13pAdicHodge}. Our new contribution
is the definition of the positive overconvergent de Rham period ring,
and related constructions.
Fix the completion $K$ of an algebraic extension of $k$ which
is perfectoid. Pick a ring of integral elements $K^{+}\subseteq K$ 
containing $k^{\circ}$ and recall
\emph{tilting}, cf.~\cite[Lemma 3.4]{Sch12perfectoid}.
Fix an element $p^{\flat}\in K^{\flat}$ such that $\left(p^{\flat}\right)^{\sharp}/p\in (K^{+})^{\times}$.
Let $\left(R,R^{+}\right)$ denote an affinoid perfectoid
$\left(K,K^{+}\right)$-algebra. Its tilt is 
 $\left(R^{\flat},R^{\flat+}\right)$, cf.~\cite[Proposition 5.17 and Lemma 6.2]{Sch12perfectoid}.


\subsection{Integral period rings}
\label{subsubsection:integralperiodrings-solutionpaper}


\subsubsection{Infinitesimal period rings}
\label{subsubsection:Ainf}

\begin{equation*}
  \A_{\inf}\left(R,R^{+}\right)
  :=W\left(R^{\flat+}\right)
\end{equation*}
is the \emph{relative infinitesimal period ring}.
Here, the operator $W$ refers to the (always $p$-typical) Witt vectors.
We equip $\A_{\inf}\left(R,R^{+}\right)$ with the
$\left(p,\left[p^{\flat}\right]\right)$-adic seminorm,
cf. Definition~\ref{defn:I-adic-seminorm-norm}. That is,
$\|x\|:=p^{-v}$, where $v\in\NN$ is maximal with respect to
the property that $x\in\left(p,\left[p^{\flat}\right]\right)^{v}$;
$\|x\|:=0$ if $x\in\left(p,\left[p^{\flat}\right]\right)^{v}$ for all $v\in\NN$.
When the underlying perfectoid affinoid field
is understood, write $A_{\inf}:= \A_{\inf}\left(K,K^{+}\right)$.

\begin{lem}\label{Ainf-strictpring}
  The underlying abstract ring of
  $\A_{\inf}\left(R,R^{+}\right)$ is a strict $p$-ring.
\end{lem}

\begin{proof}
  $R^{\flat}$ is a perfectoid $K^{\flat}$-algebra, see
  the discussion in~\cite[\S 5]{Sch12perfectoid}.
  \emph{Loc. cit.} Proposition 5.9 gives that $R^{\flat +}$ is a perfect
  $\FF_{p}$-algebra and thus the lemma.
\end{proof}

Since $\kappa$ is perfect, we get the isomorphism at the left of
the composition
\begin{equation}\label{eq:kappa-to-Kflatplus}
  \kappa
  \cong \varprojlim_{\Phi} \kappa 
  = \varprojlim_{\Phi} k^{\circ}/\pi \to \varprojlim_{\Phi} K^{+}/\pi \cong K^{\flat +}.
\end{equation}
Here, $\Phi$ denotes the Frobenii. The isomorphism at the right-hand side
comes from~\cite[Lemma 3.4]{Sch12perfectoid}. This composition~(\ref{eq:kappa-to-Kflatplus}) is a
morphism of rings, thus it gives a map
\begin{equation*}
  W(\kappa) \to A_{\inf}
\end{equation*}
between the associated rings of Witt vectors. We find that
$A_{\inf}$ is a $W(\kappa)$-algebra, and so is $\A_{\inf}\left(R,R^{+}\right)$.
Once we equip $W(\kappa)$ with the $p$-adic norm, 
Lemma~\ref{lem:bounded-map-adic-rings} implies that
they are both $W(\kappa)$-Banach algebras.

On the other hand, $k^{\circ}$ is a $W(\kappa)$-algebra and so is $K^{+}$.
Lemma~\ref{lem:bounded-map-adic-rings} implies that
$K^{+}$ is a $W(\kappa)$-Banach algebra, and so is
$R^{+}$. We may now follow the~\cite[proof of Proposition 4.4.2]{BrinonConradpadicHodge}
to find that \emph{Fontaine's map}
\begin{equation*}
  \theta_{\inf}\colon\A_{\inf}\left(R,R^{+}\right) \to R^{+},
  \sum_{n\geq0}[a_{n}]p^{n} \mapsto \sum_{n\geq0}a_{n}^{\sharp}p^{n}.
\end{equation*}
is a morphism of $W\left(\kappa\right)$-algebras.
We will see shortly, cf. Lemma~\ref{theta:bd-strict}, that $\theta_{\inf}$
is a morphism of $W\left(\kappa\right)$-Banach algebras.

\begin{lem}\label{lem:kernel-theta}
  There is an element $\xi\in A_{\inf}$ that generates
  $\ker\theta_{\inf}$  and is not a zero-divisor in $\A_{\inf}(R,R^{+})$.
  It is of the form $\xi=\left[p^{\flat}\right]-ap$ for some unit $a\in A_{\inf}$.
\end{lem}

\begin{proof}
  Everything is proven in~\cite[Lemma 6.3]{Sch13pAdicHodge}, except
  that $a$ is a unit. Write $\xi=\left(\xi_{0},\xi_{1},\xi_{2},\dots \right)$
  as a Witt vector. Then $\xi_{1}$ is a unit in $K^{\flat+}$, see~\cite[Remark 3.11]{BhattMorrowScholze2018}.
  That is $\xi=\sum_{n\geq 0}\left[\xi_{n}^{1/p^{n}}\right]p^{n}$, thus
  $a=-\sum_{n\geq 0}\left[\xi_{n+1}^{1/p^{n+1}}\right]p^{n}$ is a unit modulo $p$.
  Because $A_{\inf}$ is $p$-adically complete, by
  Lemma~\ref{Ainf-strictpring}, the
  result follows with~\cite[\href{https://stacks.math.columbia.edu/tag/05GI}{Tag 05GI}]{stacks-project}.
\end{proof}

We note that the definition of $\xi$ requires a choice, see~\cite[Remark 3.11]{BhattMorrowScholze2018}.

\begin{cor}\label{cor:Ainf-complete}
  $\A_{\inf}\left(R,R^{+}\right)$ is a $W(\kappa)$-Banach algebra.
\end{cor}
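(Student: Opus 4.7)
The plan is to verify the four axioms of a Banach ring for $\A_{\inf}(R,R^{+})$ with the $(p,[\pi])$-adic seminorm, and then to produce a bounded $W(\kappa)$-algebra structure. First, submultiplicativity and the ultrametric inequality drop out immediately from $(p,[\pi])^{u}(p,[\pi])^{v}\subseteq(p,[\pi])^{u+v}$; in fact one obtains $\|xy\|\leq\|x\|\|y\|$, so the constant $C$ is $1$. Non-degeneracy of the seminorm will follow from the chain of inclusions
\begin{equation*}
  \bigcap_{v\geq 0}(p,[\pi])^{v}
  \subseteq\bigcap_{v\geq 0}p^{v}\A_{\inf}(R,R^{+})
  =0,
\end{equation*}
where the last equality uses that $\A_{\inf}(R,R^{+})$ is a strict $p$-ring by Lemma~\ref{Ainf-strictpring} and therefore $p$-adically separated.

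The main point is $(p,[\pi])$-adic completeness. My plan is to bootstrap from the two completeness facts we already know: $\A_{\inf}(R,R^{+})$ is $p$-adically complete (Lemma~\ref{Ainf-strictpring}), and $R^{\flat+}=\A_{\inf}(R,R^{+})/p$ inherits from $K^{\flat+}$ a $\pi$-adically complete topology, so that the image of $[\pi]$ generates a topologically nilpotent ideal with complete quotients. Given a $(p,[\pi])$-adic Cauchy sequence $(x_{n})$, I would first reduce modulo $p$, where the sequence converges in $R^{\flat+}$ by $\pi$-adic completeness, then lift this limit and subtract, landing in $p\A_{\inf}(R,R^{+})$, and iterate. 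This is the standard two-step completeness argument, and the hard part, if any, is just bookkeeping that the iterated lifts assemble into a genuine $(p,[\pi])$-adic limit; the estimates are forced by the definition of the seminorm.

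For the $W(\kappa)$-algebra structure, the residue field $\kappa$ embeds into $K^{+}/p\subseteq R^{+}/p$, and since $\kappa$ is perfect it sits canonically inside the tilt $R^{\flat+}$ (concretely via Teichmüller representatives). Functoriality of the Witt vector construction then produces a ring homomorphism $W(\kappa)\to W(R^{\flat+})=\A_{\inf}(R,R^{+})$. Equipping $W(\kappa)=k_{0}^{\circ}$ with its $p$-adic norm, I would check boundedness by noting that Teichmüller lifts of $\kappa$ land in $\A_{\inf}(R,R^{+})^{\circ}=\{x\colon\|x\|\leq 1\}$ and $p$ has norm $p^{-1}$, so the structure map is norm-decreasing, in particular bounded.

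I do not expect any genuine obstacle; the entire proof is an exercise in unwinding the $(p,[\pi])$-adic topology. The step that requires the most care is completeness, because one has to combine the $p$-adic completeness supplied by the strict $p$-ring structure with the $\pi$-adic completeness of the reduction modulo $p$, and it is tempting, but wrong, to assert that $p$-adic completeness alone is enough.
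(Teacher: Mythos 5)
Your overall strategy is genuinely different from the paper's. The paper rewrites $(p,[\pi])=(p,\xi)$, notes that $\xi,p$ is a regular sequence, and invokes Lemma~\ref{lem:S-Icompl-iff-sicomplforalli} to reduce $(p,\xi)$-adic completeness (including separatedness) to $\xi$-adic and $p$-adic completeness, which it then outsources to a reference. You instead propose a self-contained two-step lifting argument for Cauchy sequences and handle the remaining Banach-ring axioms by hand. That is a legitimate and more elementary route for the completeness part, and your treatment of the $W(\kappa)$-algebra structure via Teichm\"uller lifts and Witt-vector functoriality is correct and explicit where the paper is silent.

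However, your non-degeneracy step has a real gap. The inclusion
\begin{equation*}
  \bigcap_{v\geq 0}(p,[\pi])^{v}
  \subseteq\bigcap_{v\geq 0}p^{v}\A_{\inf}\left(R,R^{+}\right)
\end{equation*}
is not immediate, and the way you present it suggests you believe it is. Certainly $(p,[\pi])^{v}\not\subseteq p\A_{\inf}(R,R^{+})$ --- the element $[\pi]^{v}$ is a Teichm\"uller representative and is nonzero modulo $p$ --- so the inclusion of intersections cannot be read off termwise. What one actually has to show is: if $x\in(p,[\pi])^{v}$ for every $v$, then reducing modulo $p$ puts $\bar{x}\in\bigcap_{v}\pi^{v}R^{\flat+}=0$ (using $\pi$-adic separatedness of $R^{\flat+}$), so $x=px_{1}$; and then one must argue that $x_{1}$ again lies in $\bigcap_{v}(p,[\pi])^{v}$, which amounts to $\left((p,\xi)^{v+1}:p\right)=(p,\xi)^{v}$. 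This last identity is exactly where the regularity of the sequence $\xi,p$ enters (e.g. via the associated graded ring being a polynomial ring in which $\bar{p}$ is a nonzerodivisor), so your argument secretly uses the same structural input as the paper's Lemma~\ref{lem:S-Icompl-iff-sicomplforalli}, just without naming it. A cleaner fix within your framework is to run the two-step lifting argument in the form $\A_{\inf}(R,R^{+})\to\varprojlim_{t}\A_{\inf}(R,R^{+})/(p,[\pi])^{t}$ and prove this map is an isomorphism; injectivity then \emph{is} the separatedness you want, and you avoid having to justify the suspect inclusion as a standalone claim.
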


We learned the following proof from~\cite{437338}.

\begin{proof}[Proof of Corollary~\ref{cor:Ainf-complete}]
  We have to check that $\A_{\inf}\left(R,R^{+}\right)$ is complete with respect to the
  $\left(p,\left[p^{\flat}\right]\right)$-adic topology.
  But $\left(p,\left[p^{\flat}\right]\right)=(p,\xi)$, and
  $\xi,p$ is a regular sequence: $\xi$ is not a zero-divisor,
  by Lemma~\ref{lem:kernel-theta}, and
  $p$ is not a zero-divisor
  in $\A_{\inf}\left(R,R^{+}\right)/\xi\cong R^{+}$.
  Thus Lemma~\ref{lem:S-Icompl-iff-sicomplforalli}
  applies so that it suffices to check that
  $\A_{\inf}\left(R,R^{+}\right)$ is $\xi$-adically
  and $p$-adically complete. Now see the~\cite[proof of Proposition 15.3.4]{TCH2019}.
\end{proof}

\begin{cor}\label{theta:bd-strict}
  $\theta_{\inf}$ is a strict epimorphism of $W(\kappa)$-Banach algebras.
\end{cor}

\begin{proof}
  Surjectivity is clear. Lemma~\ref{lem:kernel-theta} implies
    $\left(p,\left[p^{\flat}\right]\right)=(p,\xi)=(p)+\ker\theta_{\inf}$.
  Everything follows now from the Lemmata~\ref{lem:bounded-map-adic-rings}
  and~\ref{lem:BanFcirc-kercoker}.
\end{proof}


\begin{lem}\label{lem:Ainf-divisionbyxi-andp}
  Fix an element $f\in\A_{\inf}\left(R,R^{+}\right)$.
    (i) If $\xi$ divides $fp$ then $\xi$ divides $f$.
    (ii) If $p$ divides $f\xi$ then $p$ divides $f$.
\end{lem}

\begin{proof}
  Let $\xi$ divide $fp$.
  We get $0=\theta_{\inf}\left(fp\right)=\theta_{\inf}(f)p\in R^{+}$.
  That is $\theta_{\inf}(f)=0$, giving $\xi|f$ and thus (i).
  To prove (ii), assume that $p$ divides $f\xi$.
  Consider
  $\sigma\colon\A_{\inf}(R,R^{+})\to\A_{\inf}(R,R^{+})/p\cong R^{\flat +}$.
  Then $0=\sigma(f\xi)=\sigma(f\left[p^{\flat}\right])=\sigma(f)p^{\flat}\in R^{\flat,+}$.
  Thus $\sigma(f)=0$, giving $p|f$.
\end{proof}

  \begin{lem}\label{lem:Ainf-normxi-normp}
    For every $f\in\A_{\inf}(R,R^{+})$,
    (i) $\|f\xi\|=\|f\|p^{-1}$,
    (ii) $\|fp\|=\|f\|p^{-1}$.
  \end{lem}

  \begin{proof}
    First, we prove (i).
    The estimate $\|f\xi\|\leq\|f\|p^{-1}$ is clear.
    Recall the description $\left(p,\left[p^{\flat}\right]\right)=(p,\xi)$ from Lemma~\ref{lem:kernel-theta}.
    To show $\geq$, assume that $f\in(p,\xi)^{v}$,
    and $v$ is maximal with respect to this property.
    Then we have to show that $v$ is maximal with respect to the
    property $f\xi\in(p,\xi)^{v+1}$.
    Suppose it is not, that is $f\xi\in(p,\xi)^{v+2}$.
    We may write
      $f\xi
      = \sum_{i=0}^{v+2}f_{i}p^{i}\xi^{v+2-i}$
    for certain $f_{0},\dots,f_{v+2}\in\A_{\inf}\left( R , R^{+} \right)$.
    Then $f_{v+2}p^{v+2}$ is divisible by $\xi$.
    Write $f_{v+2}p^{v+2}=f_{v+2}^{\prime}p^{v+2}\xi$
    for some $f_{v+2}^{\prime}$
    with Lemma~\ref{lem:Ainf-divisionbyxi-andp}(i)
    such that
      $f\xi
      = \sum_{i=0}^{v+1}f_{i}p^{i}\xi^{v+2-i} + f_{v+2}^{\prime}p^{v+2}\xi$.
    Since $\xi$ is not a zero-divisor, we find the following contradiction:
    \begin{equation*}
      f = \sum_{i=0}^{v+1}f_{i}p^{i}\xi^{v+1-i} + f_{v+2}^{\prime}p^{v+2}
      \in(p,\xi)^{v+1}.
    \end{equation*}
    The proof of (ii) is similar, but we apply Lemma~\ref{lem:Ainf-divisionbyxi-andp}(ii)
    and use that $p$ is not a zero-divisor, cf. Lemma~\ref{Ainf-strictpring}.
  \end{proof}


\subsubsection{Overconvergent integral period rings}
\label{subsubsection:oc-int-periods}

Recall Definition~\ref{defn:Banach-completions} and
Notation~\ref{notation:indexing-sets}.

\begin{defn}
  For all $q\in\NN$, define the $W(\kappa)$-Banach algebra
  \begin{equation*}
    \A_{\dR}^{q}\left( R,R^{+} \right)
    :=\A_{\inf}\left( R,R^{+} \right)\left\<\frac{\ker\theta_{\inf}}{p^{q}}\right\>.
  \end{equation*}
  For $q=\infty$, define the $W(\kappa)$-ind-Banach algebra
  \begin{equation*}
    \A_{\dR}^{\infty}\left( R,R^{+} \right)
    :=\A_{\inf}\left( R,R^{+} \right)\left\<\frac{\ker\theta_{\inf}}{p^{\infty}}\right\>
    =\text{``}\varinjlim_{q\in\NN}\text{"}\A_{\dR}^{q}\left( R,R^{+} \right).
  \end{equation*}
\end{defn}

\begin{notation}\label{notation:AdRdagger}
  Write
  $\A_{\dR}^{\dag}\left( R,R^{+} \right)
    :=\A_{\dR}^{\infty}\left( R,R^{+} \right)
    =\text{``}\varinjlim\text{"}_{q\in\NN}\A_{\dR}^{q}\left( R,R^{+} \right)$.
\end{notation}

\begin{remark}
  We highlight that $\A_{\inf}\left( R, R^{+} \right)$ carries the
  $\left( p , \left[ p^{\flat} \right]\right)$-adic topology, which is equivalent to the
  $\left( p , \xi\right)$-adic topology by Lemma~\ref{lem:kernel-theta}.
  For example, the power series $\sum_{\alpha\geq0}\xi^{\alpha}\left(\zeta/p^{q}\right)^{\alpha}$
  is an element of $\A_{\inf}\left( R, R^{+} \right)\left\<\zeta/p^{q}\right\>^{\alpha}$ with image
  \begin{equation*}
    \sum_{\alpha\geq0}\frac{\xi^{2\alpha}}{p^{q\alpha}}
    \in\A_{\dR}^{q}\left( R , R^{+} \right).
  \end{equation*}
\end{remark}

\begin{lem}\label{lem:Alam-local-complete-at-xi}
  The canonical morphisms
  \begin{align*}
    \A_{\inf}\left( R,R^{+} \right)\left\<\frac{\xi}{p^{q}}\right\>
    &\stackrel{\cong}{\longrightarrow}\A_{\dR}^{q}\left( R,R^{+} \right)
  \end{align*}
  are isomorphisms of $\A_{\inf}\left( R,R^{+} \right)$-Banach algebras
  for every $q\in\NN$. It is an isomorphism of
  $\A_{\inf}\left( R,R^{+} \right)$-ind-Banach algebras
  for $q=\infty$.
\end{lem}

\begin{proof}
  This follows from the Lemmata~\ref{lem:completionsalongideals-generators}
  and~\ref{lem:kernel-theta}.
\end{proof}

\begin{lem}\label{lem:Fontaines-map-for-Ala}
  $\theta_{\inf}\colon\A_{\inf}\left( R , R^{+} \right) \to R^{+}$
  factors through strict epimorphisms
  \begin{equation*}\label{eq:Fontaines-map-for-Ala}
      \theta_{\dR}^{q}\colon\A_{\dR}^{q}\left( R , R^{+} \right) \to R^{+} \\
  \end{equation*}
  of  $W(\kappa)$-Banach algebras for every $q\in\NN$.
  Their kernels are principal ideals generated by $\xi/p^{q}$.
  We exhibit a strict epimorphism
  \begin{equation*}
    \theta_{\dR}^{\infty}\colon\A_{\dR}^{\infty}\left( R , R^{+} \right) \to R^{+}
  \end{equation*}
  of $W(\kappa)$-ind-Banach algebras by passing to the colimit along $q\to\infty$.
\end{lem}

Abusing notation, we refer to $\theta_{\dR}^{q}$
for $q\in\NN\cup\{\infty\}$ again as \emph{Fontaine's maps.}

\begin{notation}
  Write $\theta_{\dR}^{\dag}:=\theta_{\dR}^{\infty}$.
\end{notation}

\begin{proof}[Proof of Lemma~\ref{lem:Fontaines-map-for-Ala}]
  Fix $q$. Colimits preserve strict epimorphisms,
  cf.~\cite[Lemma 3.7]{BBKFrechetModulesDescent}, thus one may assume
  $q<\infty$. We get $\theta_{\dR}^{q}$ from an application of the
  Lemmata~\ref{lem:Banach-completions-extend}
  and~\ref{lem:Alam-local-complete-at-xi}. The proof of \emph{loc. cit.} also
  gives a commutative diagram
  \begin{equation}\label{cd:Fontaines-map-for-Ala-iota-thetainf-thetalaq}
    \begin{tikzcd}
    \A_{\dR}^{q}\left( R , R^{+} \right) \arrow{rd}{\theta_{\dR}^{q}} &  \\
    \A_{\inf}\left( R , R^{+} \right) \arrow{r}{\theta_{\inf}}\arrow{u}{\iota} & R^{+},
    \end{tikzcd}
  \end{equation}
  where $\iota$ is the canonical map.
  Since $\theta_{\inf}$ is a strict epimorphism, cf. Lemma~\ref{theta:bd-strict},
  \cite[Proposition 1.1.8]{Sch99} implies
  that $\theta_{\dR}^{q}$ is a strict epimorphism.

  Now compute the kernel. Let
  $a:=\sum_{\alpha\geq0}a_{\alpha}\left( \xi/p^{q} \right)^{\alpha}\in\ker\theta_{\dR}^{q}$
  with coefficients $a_{\alpha}\in\A_{\inf}\left( R, R^{+}\right)$ for all $\alpha\geq0$.
  Since $\theta_{\dR}^{q}$ is bounded and $\theta_{\inf}\left(\xi\right)=0$,
  \begin{equation}\label{eq:Fontaines-map-for-Ala-1}
  \begin{split}
    \theta_{\dR}^{q}\left( \sum_{\alpha\geq1}a_{\alpha}\left( \frac{\xi}{p^{q}} \right)^{\alpha} \right)
    =0.
  \end{split}
  \end{equation}
  Furthermore,
  \begin{equation*}
    \theta_{\inf}\left( a_{0} \right)
    \stackrel{\text{(\ref{cd:Fontaines-map-for-Ala-iota-thetainf-thetalaq})}}{=}
    \theta_{\dR}^{q}\left( \iota\left( a_{0}\right)\right)
    \stackrel{\text{(\ref{eq:Fontaines-map-for-Ala-1})}}{=}\theta_{\dR}^{q} \left( a \right)=0.
  \end{equation*}
  Lemma~\ref{lem:kernel-theta} implies
    $a_{0} = \widetilde{a}_{0}\xi$
  for some $\widetilde{a}_{0}\in\A_{\inf}\left( R , R^{+}\right)$. We find
  \begin{equation*}
    a
    = a_{0} + \sum_{\alpha\geq1}a_{\alpha}\left( \frac{\xi}{p^{q}} \right)^{\alpha}
    =\frac{\xi}{p^{q}}\left( \widetilde{a}_{0}p^{q} + \sum_{\alpha\geq1}a_{\alpha}\left( \frac{\xi}{p^{q}} \right)^{\alpha-1}\right)
    \in\left( \frac{\xi}{p^{q}} \right)
  \end{equation*}
  and $\ker\theta_{\dR}^{q}=\left( \xi/p^{q} \right)$ follows.
\end{proof}

\begin{lem}\label{lem:idealpmzeta-xi-closed-in-Ainflanglerzetaangle}
  The multiplication-by-$\left(p^{q}\zeta-\xi\right)$-map
  \begin{equation*}
    \A_{\inf}\left(R,R^{+}\right)\left\<\zeta\right\>\to\A_{\inf}\left(R,R^{+}\right)\left\<\zeta\right\>
  \end{equation*}
  is a strict monomorphism for every $q\in\NN_{\geq 2}$.
  Thus the
  $(p^{q}\zeta-\xi)\subseteq\A_{\inf}\left(R,R^{+}\right)\left\<\zeta\right\>$
  are closed and
  $\A_{\dR}^{q}\left(R,R^{+}\right)\cong\A_{\inf}\left(R,R^{+}\right)\left\<\zeta\right\>/\left(p^{q}\zeta-\xi\right)$.
\end{lem}

\begin{proof}
  We have to check that the morphism is injective,
  its image is closed,
  and it is open onto its image,
  cf. Lemma~\ref{lem:BanFcirc-kercoker}(iii).
  To show injectivity, note that
  \begin{equation*}
    \left(p^{q}\zeta-\xi\right)\sum_{\alpha\geq0}a_{\alpha}\zeta^{\alpha}=0
  \end{equation*}
  implies $-a_{0}\xi=0$ and $-a_{\alpha}\xi+p^{q}a_{\alpha-1}=0$
  for all $\alpha\geq 1$.
  By induction and because $\xi$ is not a zero-divisor, cf. Lemma~\ref{lem:kernel-theta},
  this gives $a_{\alpha}=0$ for all $\alpha$, thus injectivity.
  
  Next, we show that
  the ideal $(p^{q}\zeta-\xi)\subseteq\A_{\inf}\left(R,R^{+}\right)\left\<\zeta\right\>$
  is closed. If $f=\sum_{n\geq 0}f_{n}$ is convergent
  in $\A_{\inf}\left(S,S^{+}\right)\<\zeta\>$ with $f_{n}\in(p^{q}\zeta-\xi)$ for all $n\geq 0$,
  we can pick $g_{n}\in\A_{\inf}\left(S,S^{+}\right)\<\zeta\>$ such that
  $f_{n}=g_{n}\left(p^{q}\zeta-\xi\right)$. Because $q\geq 2$,
  $p^{-q}< p^{-1}$. Thus
  \begin{equation*}
    \|g_{n}p^{q}\zeta\|
    \leq \|g_{n}\|p^{-q}
    < \|g_{n}\|p^{-1}.
  \end{equation*}
  On the other hand, Lemma~\ref{lem:Ainf-normxi-normp} implies $\|g_{n}\xi\|=\|g_{n}\|p^{-1}$.
  This gives $\|g_{n}p^{q}\zeta\|<\|g_{n}\xi\|$, therefore~\cite[\S 2.1, Proposition 2]{Bo14}
  applies and we find
  \begin{equation*}
    \|f_{n}\|=\|g_{n}p^{q}\zeta - g_{n}\xi\| = \max\{\|g_{n}p^{q}\zeta\| , \|g_{n}\xi\|\} = \|g_{n}\|p^{-1}.
  \end{equation*}
  Since $\left(f_{n}\right)_{n\geq0}$ is a zero-sequence, this implies
  that $g_{n}\to0$ for $n\to\infty$. In particular,
  $f=\left(\sum_{n\geq0}g_{n}\right)\left(p^{q}\zeta-\xi\right)\in\left( p^{q}\zeta-\xi\right)$.
  
  Regarding openness, this follows from the following fact, which we have already
  proven above:
  $\|g\left(p^{q}\zeta-\xi\right)\|=\|g\|p^{-q}$ for all
  $g\in\A_{\inf}\left(R,R^{+}\right)\left\<\zeta\right\>$.
  Indeed, this
  implies that the
  ball of radius $p^{-N}$ in the image of the
  multiplication-by-$\left(p^{q}\zeta-\xi\right)$-map is contained in the image
  of the pall of radius $p^{-N+q}$. Finally,
  \begin{equation*}
    \A_{\dR}^{q}\left(R,R^{+}\right)
    \stackrel{\text{\ref{lem:Alam-local-complete-at-xi}}}{\cong}
    \A_{\inf}\left(R,R^{+}\right)\left\<\frac{\xi}{p^{q}}\right\>
    \stackrel{\text{\ref{lem:Banach-localisation-in-one-variable-description}}}{\cong}
    \A_{\inf}\left(R,R^{+}\right)\left\<\zeta\right\>/\overline{\left(p^{q}\zeta-\xi\right)}
  \end{equation*}
  implies the second sentence of Lemma~\ref{lem:idealpmzeta-xi-closed-in-Ainflanglerzetaangle}.
\end{proof}

\begin{lem}\label{lem:xi-over-pm-notzerodivisor}
  $\xi/p^{q}\in\A_{\dR}^{q}\left( R , R^{+} \right)$
  is not a zero-divisor for all $q\in\NN_{\geq 2}$.
\end{lem}

\begin{proof}
  The identification
  $\A_{\dR}^{q}\left(R,R^{+}\right)\cong\A_{\inf}\left(R,R^{+}\right)\left\<\zeta\right\>/\left(p^{q}\zeta-\xi\right)$
  from Lemma~\ref{lem:idealpmzeta-xi-closed-in-Ainflanglerzetaangle}
  implies that we have to check the following:
  Given
  $f=\sum_{\alpha\geq0}f_{\alpha}\zeta^{\alpha}\in\A_{\inf}\left( R , R^{+} \right)\left\<\zeta\right\>$,
  \begin{equation}\label{eq:xi-over-pm-notzerodivisor-implies}
    \zeta f\in \left( p^{q}\zeta - \xi \right) \implies
    f\in \left( p^{q}\zeta - \xi \right).
  \end{equation}
  We compute that~(\ref{eq:xi-over-pm-notzerodivisor-implies}) holds.
  Let $g=\sum_{\alpha\geq0}g_{\alpha}\zeta^{\alpha}$ such that
  \begin{equation*}
    \zeta f
    = g \left( p^{q}\zeta - \xi \right)
    =-\xi g_{0} + \sum_{\alpha\geq 1}\left( p^{q}g_{\alpha-1}-\xi g_{\alpha}\right)\zeta^{\alpha}.
  \end{equation*}
  Since $\xi$ is not a zero-divisor,  cf. Lemma~\ref{lem:kernel-theta}, $g_{0}=0$.
  Proceeding by induction, we find $g_{\alpha}=0$ for all $\alpha\geq0$,
  thus $f=0$. In particular, $f\in \left( p^{q}\zeta - \xi \right)$.
\end{proof}

\begin{lem}\label{AdR-ptorsionfree-reconstructionpaper}
  $p\in\A_{\dR}^{q}\left( R,R^{+} \right)$ is not a zero-divisor for all $q\in\NN_{\geq2}$.
\end{lem}

\begin{proof}
  The identification
  $\A_{\dR}^{q}\left(R,R^{+}\right)\cong\A_{\inf}\left(R,R^{+}\right)\left\<\zeta\right\>/\left(p^{q}\zeta-\xi\right)$
  from Lemma~\ref{lem:idealpmzeta-xi-closed-in-Ainflanglerzetaangle}
  implies that we have to check the following:
  Given
  $f=\sum_{\alpha\geq0}f_{\alpha}\zeta^{\alpha}\in\A_{\inf}\left( R , R^{+} \right)\left\<\zeta\right\>$,
  \begin{equation}\label{eq:xi-over-pm-notzerodivisor-implies-ptorsion}
    p f\in \left( p^{q}\zeta - \xi \right) \implies
    f\in \left( p^{q}\zeta - \xi \right).
  \end{equation}
  We compute that~(\ref{eq:xi-over-pm-notzerodivisor-implies-ptorsion}) holds.
  Let $g=\sum_{\alpha\geq0}g_{\alpha}\zeta^{\alpha}$ such that
  \begin{equation*}
    p f
    = g \left( p^{q}\zeta - \xi \right)
    =-\xi g_{0} + \sum_{\alpha\geq 1}\left( p^{q}g_{\alpha-1}-\xi g_{\alpha}\right)\zeta^{\alpha}.
  \end{equation*}
  Lemma~\ref{lem:Ainf-divisionbyxi-andp}(ii)
  implies that $p$ divides $g_{0}$. Induction gives that
  every $g_{\alpha}$ is divisible by $p$. This allows to define
  a formal power series $g^{\prime}$ such that $pg^{\prime}=g^{\prime}$.
  As $p\in\A_{\inf}\left( R , R^{+} \right)$ is not a zero-divisor,
  cf. Lemma~\ref{Ainf-strictpring},
  $f=g^{\prime}\left( p^{q}\zeta - \xi \right)$.
  Using Lemma~\ref{lem:Ainf-normxi-normp}, we find
  $g^{\prime}\in\A_{\inf}\left(R,R^{+}\right)\left\<\zeta\right\>$, as desired.
\end{proof}


\subsubsection{Technical variants: the rings $\A_{\dR}^{>q}\left(R,R^{+}\right)$}
\label{subsubsection:Agreaterthanq}

The rings $\A_{\dR}^{q}\left(R,R^{+}\right)$ are not $\xi/p^{q}$-adically complete.
This motivates:

\begin{defn}
  For any $q\in\NN$, $\A_{\dR}^{>q}\left(R,R^{+}\right)$ is the
  completion of $\A_{\dR}^{q}\left(R,R^{+}\right)$, equipped with the
  $\left(p,\ker\theta_{\dR}^{q}\right)$-adic seminorm,
  cf. Definition~\ref{defn:I-adic-seminorm-norm}.
\end{defn}

\begin{remark}
  We think of $\A_{\dR}^{>q}\left( R,R^{+} \right)$
  as functions on an open tubular neighbourhood $U$ around
  the vanishing locus $\left\{\xi=0\right\}$, where $(\xi)=\ker\theta_{\inf}$.
  Intuitively, this neighbourhood has radius $|p|^{q}$,
  that is $U=\left\{x\colon \log_{p}\left(\dist\left(x,\xi\right)\right)> q \right\}$,
  motivating the superscript~$>q$.
\end{remark}

\begin{lem}\label{lem:AdRgreaterthanqRRplus-Banachalgebra}
  $\A_{\dR}^{>q}\left(R,R^{+}\right)$ is a $W(\kappa)$-Banach algebra for all $q\in\NN$.
\end{lem}

\begin{proof}
  One has to check that the $\left(p,\ker\theta_{\dR}^{q}\right)$-adic completion
  is complete. This follows from~\cite[\href{https://stacks.math.columbia.edu/tag/05GG}{Tag 05GG}]{stacks-project},
  because $\ker\theta_{\dR}^{q}$ is a principal ideal
  by Lemma~\ref{lem:Fontaines-map-for-Ala}.
\end{proof}

\begin{lem}\label{lem:colimitAdRgreaterthanq}
  The $\A_{\dR}^{q}\left(R,R^{+}\right)\to\A_{\dR}^{q+1}\left(R,R^{+}\right)$
  factor canonically through the maps $\A_{\dR}^{q}\left(R,R^{+}\right)\to\A_{\dR}^{>q}\left(R,R^{+}\right)$.
  In particular, we get the canonical isomorphism
  of $W(\kappa)$-ind-Banach algebras
  \begin{equation*}
    \A_{\dR}^{\infty}\left(R,R^{+}\right)
    \isomap\text{``}\varinjlim_{q\in\NN}\text{"}\A_{\dR}^{>q}\left( R,R^{+} \right).
  \end{equation*}
\end{lem}

\begin{proof}
  We have to check that every element in the image of $\left(p,\ker\theta_{\dR}^{q}\right)$
  in $\A_{\dR}^{q+1}\left(R,R^{+}\right)$ is topologically nilpotent.
  By Lemma~\ref{lem:Fontaines-map-for-Ala},
  it suffices to check this for $p$ and $\xi/p^{q}$. This follows because
  $p$ is already topologically nilpotent in $\A_{\inf}\left(R,R^{+}\right)$ and
  $\xi/p^{q}=p \cdot \xi/p^{q+1}\in\A_{\dR}^{q+1}\left(R,R^{+}\right)$.
\end{proof}

\begin{lem}\label{lem:xipq-p-regularseq-inAdRq-recpaper}
  If $q\in\NN_{\geq2}$, then $\xi/p^{q},p\in\A_{\dR}^{q}\left( R,R^{+} \right)$
  is a regular sequence.
\end{lem}

\begin{proof}
  $\xi/p^{q}$ is not a zero-divisor by Lemma~\ref{lem:xi-over-pm-notzerodivisor}
  and the image of $p$ in $\A_{\dR}^{q}\left( R,R^{+} \right)/\left(\xi/p^{q}\right)\cong R^{+}$
  is not a zero-divisor as well, cf. Lemma~\ref{lem:Fontaines-map-for-Ala}.
\end{proof}

\begin{lem}\label{lem:assgr-AdRgreaterthanq-recpaper}
  Let $q\in\NN_{\geq2}$ and equip $\A_{\dR}^{>q}\left( R,R^{+} \right)$
  with the $\left(p,\xi/p^{q}\right)$-adic filtration. We compute
  \begin{equation*}
    \gr\A_{\dR}^{>q}\left( R,R^{+} \right)
    \cong \left(R^{+}/p\right)\left[\sigma\left(p\right),\sigma\left(\frac{\xi}{p^{q}}\right)\right],
  \end{equation*}
  where the principal symbols
  $\sigma\left(p\right)$ of $p$ and $\sigma\left(\xi/p^{q}\right)$ of $\xi/p^{q}$
  are homogenous of degree $1$.
\end{lem}

\begin{proof}
  Equip
  $\A_{\dR}^{q}\left( R,R^{+} \right)$ with the $\left(p,\xi/p^{q}\right)$-adic filtration
  and compute
  \begin{equation*}
    \gr\A_{\dR}^{>q}\left( R,R^{+} \right)
    \cong\gr\A_{\dR}^{q}\left( R,R^{+} \right)
    \cong \left(R^{+}/p\right)\left[\sigma\left(p\right),\sigma\left(\frac{\xi}{p^{q}}\right)\right].
  \end{equation*}
  Here, we used~\cite[Exercise 17.16.a]{Ei95}
  which applies thanks to
  Lemma~\ref{lem:xipq-p-regularseq-inAdRq-recpaper}.  
\end{proof}

The following three Lemma~\ref{lem:Agreaterthanq-multiplybyp-norm-reconstructionpaper},
\ref{lem:onAdR-multbyp-strictinjection-reconstructionpaper}, and
\ref{lem:AdRgreaterthanqptorsionfree} are immedeate consequences of
Lemma~\ref{lem:assgr-AdRgreaterthanq-recpaper}
and~\cite[Chapter I, \S 4.2 page 31-32, Theorem 4(5)]{HuishiOystaeyen1996}.

\begin{lem}\label{lem:Agreaterthanq-multiplybyp-norm-reconstructionpaper}
  For all $q\in\NN_{\geq2}$ and $a\in\A_{\dR}^{>q}\left( R,R^{+} \right)$,
  $\|pa\|=p^{-1}\|a\|$.
\end{lem}


\begin{lem}\label{lem:onAdR-multbyp-strictinjection-reconstructionpaper}
  Let $N\in\NN$ be arbitrary and $q\in\NN_{\geq2}$. Then $\A_{\dR}^{>q}\left(R,R^{+}\right)\to\A_{\dR}^{>q}\left(R,R^{+}\right)$,
  $a\mapsto p^{N}a$ is a strict monomorphism of $W(\kappa)$-Banach modules.
\end{lem}


\begin{lem}\label{lem:AdRgreaterthanqptorsionfree}
  Let $q\in\NN_{\geq 2}$. The ring $\A_{\dR}^{>q}\left( R,R^{+} \right)$ is $p$-torsion free.
\end{lem}


Here is a variant of Lemma~\ref{lem:Fontaines-map-for-Ala}:

\begin{lem}\label{lem:Fontaines-map-for-Alagreatq}
  $\theta_{\inf}\colon\A_{\inf}\left( R , R^{+} \right) \to R^{+}$
  factors through strict epimorphisms
  \begin{equation*}\label{eq:Fontaines-map-for-Ala}
      \theta_{\dR}^{>q}\colon\A_{\dR}^{>q}\left( R , R^{+} \right) \to R^{+} \\
  \end{equation*}
  of $W(\kappa)$-Banach algebras for all $q\in\NN$.
  If $q\geq2$, then their kernels are principal ideals generated by
  $\xi/p^{q}$, which are non-zero divisors.
\end{lem}

\begin{proof}
  The map $\theta_{\dR}^{>q}$ is the completion of bounded linear map
  \begin{equation*}
    \theta_{\dR}^{q}\colon\A_{\dR}^{q}\left( R , R^{+} \right) \to R^{+},
  \end{equation*}
  where $\A_{\dR}^{q}\left( R , R^{+} \right)$ carries the $\left(p,\xi/p^{q}\right)$-adic
  topology and $R^{+}$ is equipped with the $p$-adic topology.
  In particular, we get a commutative diagram
  \begin{equation}\label{cd:Fontaines-map-for-Ala-iota-thetainf-thetalaq}
    \begin{tikzcd}
    \A_{\dR}^{>q}\left( R , R^{+} \right) \arrow{rd}{\theta_{\dR}^{>q}} &  \\
    \A_{\inf}\left( R , R^{+} \right) \arrow{r}{\theta_{\inf}}\arrow{u}{\iota} & R^{+},
    \end{tikzcd}
  \end{equation}
  where $\iota$ is the canonical map.
  Since $\theta_{\inf}$ is a strict epimorphism,
  cf. Lemma~\ref{theta:bd-strict},
  \cite[Proposition 1.1.8]{Sch99} implies
  that $\theta_{\dR}^{>q}$ is a strict epimorphism.  
  To prove the second
  statement, consider
  \begin{equation*}
    0 \longrightarrow
    \A_{\dR}^{>q}\left( R , R^{+} \right) \stackrel{\xi/p^{q}\cdot}{\longrightarrow}
    \A_{\dR}^{>q}\left( R , R^{+} \right) \stackrel{\theta_{\dR}^{>q}}{\longrightarrow}
    R^{+} \longrightarrow
    0.
  \end{equation*}
  If $q\geq 2$, then
  by Lemma~\ref{lem:assgr-AdRgreaterthanq-recpaper},
  its associated graded is
  \begin{align*}
    0 \longrightarrow
    \left(R^{+}/p\right)\left[\sigma\left(p\right),\sigma\left(\frac{\xi}{p^{q}}\right)\right]
     \stackrel{\sigma\left(\xi/p^{q}\right)\cdot}{\longrightarrow}
    &\left(R^{+}/p\right)\left[\sigma\left(p\right),\sigma\left(\frac{\xi}{p^{q}}\right)\right] \\
     &\stackrel{\sigma\left(\xi/p^{q}\right)\mapsto0}{\longrightarrow}
    \left(R^{+}/p\right)\left[\sigma\left(p\right)\right] \longrightarrow
    0
  \end{align*}
  It is exact, thus~\cite[Chapter I, \S 4.2 page 31-32, Theorem 4(5)]{HuishiOystaeyen1996}
  gives the result.
\end{proof}

Here is a variant of Lemma~\ref{lem:Ainf-divisionbyxi-andp}(i):

\begin{lem}\label{lem:Ala-divisionbyxi}
  Fix an $a\in\A_{\dR}^{>q}\left(R,R^{+}\right)$, $q\in\NN_{\geq2}$.
  If $\xi/p^{q}$ divides  $ap$ then $\xi/p^{q}$ divides $a$.
\end{lem}

\begin{proof}
  We get $0=\theta_{\dR}^{>q}\left(ap\right)=\theta_{\dR}^{>q}(a)p\in R^{+}$.
  This gives $\theta_{\dR}^{>q}(a)=0$ and Lemma~\ref{lem:Fontaines-map-for-Alagreatq}
  implies that $\xi/p^{q}$ divides $a$.
\end{proof}

\begin{lem}\label{lem:AdRgreaterthanqUtimesS-isomapHomcontSAdRgreaterthanqU-assumptionsforlemmasatisfied-reconstructionpaper}
  $\A_{\dR}^{>q}\left(R,R^{+}\right)/\left(\xi/p^{q}\right)^{s}$
  has no $p$-power torsion for all $s\geq0$ if $q\geq2$.
\end{lem}

\begin{proof}
  Given $a\in\A_{\dR}^{>q}\left(R,R^{+}\right)$, suppose that $p^{m}a\in\left(\xi/p^{q}\right)^{s}$.
  Then Lemma~\ref{lem:Ala-divisionbyxi} implies that $\left(\xi/p^{q}\right)^{s}$ divides $a$, thus
  the image of $a$ in $\A_{\dR}^{>q}\left(R,R^{+}\right)/\left(\xi/p^{q}\right)^{s}$ is zero.
\end{proof}

Given a Banach ring $S$ and an $S$-Banach algebra $A$,
$|S|$ denotes the underlying ring and $|A|$
denotes the underlying $|S|$-algebra.
We recall that $\A_{\dR}^{>q}\left(R,R^{+}\right)$
carries the $p,\xi/p^{q}$-adic topology.

\begin{defn}\label{defn:Wkappatriv-tildeAdRgreaterthanq}
  $\widetilde{\A}_{\dR}^{>q}\left(R,R^{+}\right)$ is the
  $|W(\kappa)|$-algebra $|\A_{\dR}^{>q}\left(R,R^{+}\right)|$
  with the $\left(\xi/p^{q}\right)$-adic filtration.
\end{defn}

%

\begin{lem}\label{lem:grAdRgreaterthanq-xioverpadicfiltration-reconstructionpaper}
  Let $q\in\NN_{\geq2}$
  and consider $\widetilde{\A}_{\dR}^{>q}\left(R,R^{+}\right)$.
  \begin{itemize}
    \item[(i)] The topology induced by the filtration is separated and complete.
    \item[(ii)] Fontaine's map induces the isomorphism
  \begin{equation*}
    \gr\widetilde{\A}_{\dR}^{>q}\left(R,R^{+}\right)\cong R^{+}\left[\sigma\left(\frac{\xi}{p^{q}}\right)\right],
  \end{equation*}
  \end{itemize}
  where the principal symbol $\sigma\left(\xi/p^{q}\right)$
  of $\xi/p^{q}$ is homogenous of degree one.
\end{lem}

\begin{proof}
  See~\cite[\href{https://stacks.math.columbia.edu/tag/090T}{Tag 090T}]{stacks-project}
  for (i). (ii) follows from Lemma~\ref{lem:Fontaines-map-for-Alagreatq}
  and~\cite[Exercise 17.16.a]{Ei95}.
\end{proof}

\begin{lem}\label{lem:AdRgreaterthanq-toAdRgreaterthanqplus1-mono}
  The morphisms
  $\A_{\dR}^{>q}\left(R,R^{+}\right)\to\A_{\dR}^{>q+1}\left(R,R^{+}\right)$
  are monomorphisms for all $q\geq 2$.
\end{lem}

\begin{proof}
  Similarly, we may consider
  $\widetilde{\A}_{\dR}^{>q}\left(R,R^{+}\right)\to\widetilde{\A}_{\dR}^{>q+1}\left(R,R^{+}\right)$,
  where $\widetilde{\A}_{\dR}^{>h}\left(R,R^{+}\right)$ carries the
  $\xi/p^{h}$-adic filtration for $h\in\left\{q,q+1\right\}$.  
  By Lemma~\ref{lem:grAdRgreaterthanq-xioverpadicfiltration-reconstructionpaper},
  which requires $q\geq2$,
  the induced morphism between the associated gradeds
  is the map
  \begin{equation*}
    R^{+}\left[\sigma\left(\frac{\xi}{p^{q}}\right)\right]
    \to
    R^{+}\left[\sigma\left(\frac{\xi}{p^{q+1}}\right)\right],
    \sigma\left(\frac{\xi}{p^{q}}\right)
    \mapsto
    p\sigma\left(\frac{\xi}{p^{q+1}}\right).
  \end{equation*}
  of $R^{+}$-algebras. It is injective.
  Now apply~\cite[Chapter I, \S 4.1, page 31-32, Theorem 4]{HuishiOystaeyen1996}.
\end{proof}

\begin{lem}\label{lem:galois-cohomology-of-Bla-pidealclosed-inAlaq}
  Let $N\in\NN$ be arbitrary and $q\in\NN_{\geq2}$. Then
  $\widetilde{\A}_{\dR}^{>q}\left(R,R^{+}\right)\to\widetilde{\A}_{\dR}^{>q}\left(R,R^{+}\right)$,
  $a\mapsto p^{N}a$ is a strict monomorphism. 
\end{lem}

\begin{proof}
  This follows from~\cite[Chapter I, \S 4.1, page 31-32, Theorem 4]{HuishiOystaeyen1996}
  and Lemma~\ref{lem:grAdRgreaterthanq-xioverpadicfiltration-reconstructionpaper}.
\end{proof}


\subsection{Inverting $p$}
\label{subsubsection:invertingp-solutionpaper}

Define the seminormed $k_{0}$-algebra
\begin{equation*}
  \BB_{\inf}\left( R,R^{+} \right)
  :=\A_{\inf}\left( R,R^{+} \right)\otimes_{W(\kappa)}k_{0}
\end{equation*}
and the $k_{0}$-Banach algebra
\begin{equation*}
  \widehat{\BB}_{\inf}\left( R,R^{+} \right)
  :=\A_{\inf}\left( R,R^{+} \right)\widehat{\otimes}_{W(\kappa)}k_{0}.
\end{equation*}

\begin{remark}
  $\BB_{\inf}\left( R,R^{+} \right)$ is not complete, as it
  does not contain $\sum_{n\geq0}\xi^{n+1}/p^{n}$.
\end{remark}

Similarly, we introduce the $\widehat{\BB}_{\inf}\left( R,R^{+} \right)$-Banach algebras
\begin{equation*}
  \BB_{\dR}^{q,+}\left( R,R^{+} \right):=\A_{\dR}^{q}\left( R,R^{+} \right)
  \widehat{\otimes}_{W(\kappa)}k_{0}
\end{equation*}
for all $q\in\NN$. For $q=\infty$, we have the
$\widehat{\BB}_{\inf}\left( R,R^{+} \right)$-ind-Banach algebras
\begin{equation*}
  \BB_{\dR}^{\infty,+}\left( R,R^{+} \right)
  :=\A_{\dR}^{\infty}\left( R,R^{+} \right)\widehat{\otimes}_{W(\kappa)}k_{0}
  =\A_{\dR}^{\dag}\left( R,R^{+} \right)\widehat{\otimes}_{W(\kappa)}k_{0},
\end{equation*}
cf. Notation~\ref{notation:AdRdagger}.
Write $\BB_{\dR}^{\dag,+}\left( R,R^{+} \right):=\BB_{\dR}^{\infty,+}\left( R,R^{+} \right)$
and note that
\begin{equation*}
  \BB_{\dR}^{\dag,+}\left( R,R^{+} \right)
  =\text{``}\varinjlim_{q\in\NN}\text{"}\BB_{\dR}^{q}\left( R,R^{+} \right).
\end{equation*}

\begin{defn}\label{defn:relative-la-period-ring}
  The $\widehat{\BB}_{\inf}\left( R,R^{+} \right)$-ind-Banach algebra
  $\BB_{\dR}^{\dag,+}\left( R,R^{+} \right)$ is the \emph{relative positive overconvergent de Rham period ring}.
  Whenever $\left( R, R^{+}\right)=\left(K,K^{+}\right)$ is a well-understood perfectoid field,
  we refer to $B_{\dR}^{\dag,+}:=\BB_{\dR}^{\dag.+}\left(K,K^{+}\right)$
  as the \emph{positive overconvergent de Rham period ring}.
\end{defn}

\begin{lem}\label{lem:Blaqplus-local-complete-at-xi}
  The morphisms
  \begin{equation*}
    \widehat{\BB}_{\inf}\left( R,R^{+} \right)\left\<\frac{\xi}{p^{q}}\right\>
    \stackrel{\cong}{\longrightarrow}\BB_{\dR}^{q,+}\left( R,R^{+} \right)
  \end{equation*}
  are isomorphisms of $\widehat{\BB}_{\inf}\left( R,R^{+} \right)$-Banach algebras
  for every $q\in\NN$. They are a isomorphisms of
  $\widehat{\BB}_{\inf}\left( R,R^{+} \right)$-ind-Banach algebras
  for $q=\infty$.
\end{lem}

\begin{proof}
  We may assume $q<\infty$ without loss of generality.
  Compute
  \begin{align*}
    &\widehat{\BB}_{\inf}\left( R,R^{+} \right)\left\<\frac{\xi}{p^{q}}\right\> \\
    &=\coker\left(\widehat{\BB}_{\inf}\left( R,R^{+} \right)\left\<\frac{\zeta}{p^{q}}\right\>
    \stackrel{\xi-p^{q}\frac{\zeta}{p^{q}}}{\longrightarrow}
    \widehat{\BB}_{\inf}\left( R,R^{+} \right)\left\<\frac{\zeta}{p^{q}}\right\>\right) \\
    &\cong\coker\left(\A_{\inf}\left( R,R^{+} \right)\left\<\frac{\zeta}{p^{q}}\right\>
    \stackrel{\xi-p^{q}\frac{\zeta}{p^{q}}}{\longrightarrow}
    \A_{\inf}\left( R,R^{+} \right)\left\<\frac{\zeta}{p^{q}}\right\>\right)
    \widehat{\otimes}_{W(\kappa)}k_{0} \\
    &\cong\A_{\inf}\left( R,R^{+} \right)\left\<\frac{\xi}{p^{q}}\right\>
    \widehat{\otimes}_{W(\kappa)}k_{0} \\
    &\cong\BB_{\dR}^{q,+}\left(R,R^{+}\right),
  \end{align*}
  where we have used Lemma~\ref{lem:Alam-local-complete-at-xi} in the last step.
\end{proof}

Recall Fontaine's maps $\theta_{\inf}$, $\theta_{\dR}^{q}$
for all $q\in\NN$, and $\theta_{\dR}^{\dag}$. They induce morphisms
\begin{equation*}
  \begin{split}
    \widehat{\vartheta}_{\inf}\colon
    \widehat{\BB}_{\inf}\left( R,R^{+} \right)\stackrel{\theta_{\inf}\widehat{\otimes}_{W(\kappa)}\id_{k_{0}}}{\longrightarrow}
    R^{+}\widehat{\otimes}_{W(\kappa)}k_{0}
    \isomap R \\
    \vartheta_{\dR}^{q,+}\colon
    \BB_{\dR}^{q,+}\left( R,R^{+} \right)\stackrel{\theta_{\dR}^{q}\widehat{\otimes}_{W(\kappa)}\id_{k_{0}}}{\longrightarrow}
    R^{+}\widehat{\otimes}_{W(\kappa)}k_{0}
    \isomap R, \\
    \vartheta_{\dR}^{\dag,+}\colon
    \BB_{\dR}^{\dag,+}\left( R,R^{+} \right)\stackrel{\theta_{\dR}^{\dag}\widehat{\otimes}_{W(\kappa)}\id_{k_{0}}}{\longrightarrow}
    R^{+}\widehat{\otimes}_{W(\kappa)}k_{0}
    \isomap R
  \end{split}
\end{equation*}
of $k_{0}$-Banach, respectively $k_{0}$-ind-Banach algebras.
We refer to them again as \emph{Fontaine's maps}.

\begin{lem}\label{lem:fontainesmaps-after-inverting-p}
  The maps
  $\widehat{\vartheta}_{\inf}$, $\vartheta_{\dR}^{q,+}$, and $\vartheta_{\dR}^{\dag,+}$,
  are strict epimorphisms.
\end{lem}

\begin{proof}
  We have to check that the maps
  $\theta_{\inf}\widehat{\otimes}_{W(\kappa)}\id_{k_{0}}$,
  $\theta_{\dR}^{q,+}\widehat{\otimes}_{W(\kappa)}\id_{k_{0}}$, and
  $\theta_{\dR}^{\dag,+}\widehat{\otimes}_{W(\kappa)}\id_{k_{0}}$ are strict epimorphisms.
  But the completed tensor product preserves strict epimorphisms, cf.~\cite[Lemma 3.7]{BBKFrechetModulesDescent}.
  Now apply Corollary~\ref{theta:bd-strict} and Lemma~\ref{lem:Fontaines-map-for-Ala}.
\end{proof}

We spend the remainder of \S\ref{subsubsection:invertingp-solutionpaper} on the proof of the following result.

\begin{prop}\label{prop:BdRdagplus-bornology-countable-basis-reconstructionpaper}
  The canonical maps
  $\BB_{\dR}^{q,+}\left(R,R^{+}\right)\to\BB_{\dR}^{q+1,+}\left(R,R^{+}\right)$
  are injective if $q\geq 2$.
\end{prop}

We collect a few lemmata before we prove Proposition~\ref{prop:BdRdagplus-bornology-countable-basis-reconstructionpaper}.

\begin{lem}\label{lem:hBinf-multiplybyxi-norm1overp-reconstructionpaper}
  The norm on $\widehat{\BB}_{\inf}\left(R,R^{+}\right)$ is equivalent to a norm
  with the following property:
  For all $f\in\widehat{\BB}_{\inf}\left(R,R^{+}\right)$, (i) $\|f\xi\|=\|f\|p^{-1}$
  and (ii) $\|fp\|=\|f\|p^{-1}$.
\end{lem}

\begin{proof}
  Set $B:=\BB_{\inf}\left(R,R^{+}\right)$
  and $A:=\A_{\inf}\left(R,R^{+}\right)$.  
  By continuity, it suffices to check the statement of Lemma~\ref{lem:hBinf-multiplybyxi-norm1overp-reconstructionpaper}
  for $f\in B$. By Lemma~\ref{lem:localisation-normedotimes},
  we may equip $B$ with the norm
  \begin{equation*}
    \|f\|=\inf\left\{ \frac{\|a\|}{p^{r}} \colon \text{$a\in A$ and $f=\frac{a}{p^{r}}$}\right\}.
  \end{equation*}
  Firstly, we check (i). We have
  \begin{equation*}
    \|f\xi\|=\inf\left\{ \frac{\|c\|}{p^{r}} \colon \text{$c\in A$ and $f\xi=\frac{c}{p^{r}}$}\right\}.
  \end{equation*}
  We compare the indexing sets of both infinums as follows: We claim that
  \begin{equation}\label{eq:hBinf-multiplybyxi-norm1overp-compareindexsets-reconstructionpaper}
    \left\{ \left( a , p^{r} \right) \colon \text{$a\in A$ and $f=\frac{a}{p^{r}}$} \right\}
    \to\left\{ \left( c , p^{r} \right) \colon \text{$c\in A$ and $f\xi=\frac{c}{p^{r}}$} \right\},
    \left( a , p^{r} \right) \mapsto \left( a\xi , p^{r} \right)
  \end{equation}
  is a bijection. Firstly, it is injective because
  $\left( a_{1}\xi , p^{r_{1}} \right)=\left( a_{2}\xi , p^{r_{2}} \right)$ implies
  $a_{1}=a_{2}$ and $r_{1}=r_{2}$, as $\xi$ is not a zero-divisor, cf.~\cite[Lemma 6.3]{Sch13pAdicHodge}.
  To show surjectivity, fix an element $\left( c , p^{r} \right)$ codomain. Then
  $c/p^{r}\in\ker\vartheta_{\inf}$, where
    $\vartheta_{\inf}:=\theta_{\inf}\otimes_{W(\kappa)}\id_{k_{0}}\colon B\to R$
  and $\theta\colon A=\A_{\inf}\left(R,R^{+}\right)\to R^{+}$ is Fontaine's map.
  Then $c\in\ker\theta_{\inf}$, because $1/p^{r}\in R$ is not a zero-divisor, thus
  $c\in(\xi)$ by~\cite[Lemma 6.3]{Sch13pAdicHodge}. Thu
  $\left( c , p^{r} \right)$ lies in the
  image of~(\ref{eq:hBinf-multiplybyxi-norm1overp-compareindexsets-reconstructionpaper}).
  We have thus checked that~(\ref{eq:hBinf-multiplybyxi-norm1overp-compareindexsets-reconstructionpaper})
  is bijective. This implies the second equality in the computation
  \begin{align*}
    \|f\xi\|
    &=\inf\left\{ \frac{\|c\|}{p^{r}} \colon \text{$c\in A$ and $f\xi=\frac{c}{p^{r}}$}\right\} \\
    &=\inf\left\{ \frac{\|a\xi\|}{p^{r}} \colon \text{$a\in A$ and $f=\frac{a}{p^{r}}$}\right\} \\
    &=\inf\left\{ \frac{\|a\|}{p^{r}} \colon \text{$a\in A$ and $f=\frac{a}{p^{r}}$}\right\}p^{-1}
    =\|f\|p^{-1}.
  \end{align*}
  The second equality in this computation follows from Lemma~\ref{lem:Ainf-normxi-normp}.
  We have thus proven (i).
  
  Proceed similarly in order to prove (ii):
  \begin{align*}
    \|fp\|
    &=\inf\left\{ \frac{\|a\|}{p^{r}} \colon \text{$a\in A$ and $fp=\frac{a}{p^{r}}$}\right\} \\
    &=\inf\left\{ \frac{\|a\|}{p^{r-1}} \colon \text{$a\in A$ and $f=\frac{a}{p^{r-1}}$}\right\}p^{-1}
    =\|f\|p^{-1}.
  \end{align*}
  We have thus finished the proof of Lemma~\ref{lem:hBinf-multiplybyxi-norm1overp-reconstructionpaper}.
\end{proof}

\begin{lem}\label{lem:hBBinfzetaoverpqmodxi-zeta-isBdRq-reconstructionpaper}
  For every $q\in\NN_{\geq2}$, we have the canonical isomorphism
  \begin{equation*}
    \widehat{\BB}_{\inf}\left(R,R^{+}\right)\left\<\frac{\zeta}{p^{q}}\right\>/
    \left(\xi-\zeta\right)
    \isomap
    \BB_{\dR}^{q,+}\left(R,R^{+}\right)
  \end{equation*}
  of seminormed $k_{0}$-vector spaces.
\end{lem}

\begin{proof}
  Lemma~\ref{lem:idealpmzeta-xi-closed-in-Ainflanglerzetaangle}
  says that 
  \begin{equation*}
    p^{q}\eta-\xi\colon\A_{\inf}\left(R,R^{+}\right)\left\<\eta\right\>\to\A_{\inf}\left(R,R^{+}\right)\left\<\eta\right\>
  \end{equation*}
  is a strict monomorphism with cokernel
  $\A_{\dR}^{q}\left(R,R^{+}\right)$. Because both
  $\A_{\inf}\left(R,R^{+}\right)\left\<\eta\right\>$
  and $\A_{\dR}^{q}\left(R,R^{+}\right)$ are $p$-torsion free,
  cf. Lemma~\ref{AdR-ptorsionfree-reconstructionpaper}, thus
  Lemma~\ref{lem:widehatotimesFcircF-preserves-strictmonos}
  implies that
  \begin{equation*}
  p^{q}\eta-\xi
    \colon
    \widehat{\BB}_{\inf}\left(R,R^{+}\right)\left\<\eta\right\>
    \to
    \widehat{\BB}_{\inf}\left(R,R^{+}\right)\left\<\eta\right\>
  \end{equation*}
  is a strict monomorphism.
  Substitute $\zeta/p^{q}$ for $\eta$ to find that
  \begin{equation*}
    \xi-\zeta
    \colon
    \widehat{\BB}_{\inf}\left(R,R^{+}\right)\left\<\frac{\zeta}{p^{q}}\right\>
    \to
    \widehat{\BB}_{\inf}\left(R,R^{+}\right)\left\<\frac{\zeta}{p^{q}}\right\>.
  \end{equation*}
  is strict monomorphism. By the proof of Lemma~\ref{lem:Blaqplus-local-complete-at-xi},
  $\BB_{\dR}^{q,+}\left(R,R^{+}\right)$ is the cokernel of this map.
  The strictness implies Lemma~\ref{lem:hBBinfzetaoverpqmodxi-zeta-isBdRq-reconstructionpaper}.
\end{proof}

\begin{lem}\label{keylem:prop:BdRdagplus-bornology-countable-basis-reconstructionpaper}
  Fix $q\in\NN_{\geq2}$ and consider a sequence
  $\left(g_{\alpha}\right)_{\alpha\in\NN_{\geq1}}\subseteq\widehat{\BB}_{\inf}\left(R,R^{+}\right)$
  such that
  \begin{itemize}
    \item[(i)] $p^{(q+1)\alpha}g_{\alpha}\to 0$ for $\alpha\to\infty$, and
    \item[(i)] $p^{q\alpha}f_{\alpha}\to0$ for $\alpha\to\infty$. Here,
      $f_{\alpha}:=g_{\alpha}\xi-g_{\alpha-1}$.
  \end{itemize}
  Then $p^{q\alpha}g_{\alpha}\to 0$ for $\alpha\to\infty$.
\end{lem}

\begin{proof}
  Write $\widehat{B}:=\widehat{\BB}_{\inf}\left(R,R^{+}\right)$.
  Throughout this proof, we fix the norm on $\widehat{B}$ as in
  Lemma~\ref{lem:hBinf-multiplybyxi-norm1overp-reconstructionpaper}.
  We use the identities $\|f\xi\|=\|f\|p^{-1}$ and $\|fp\|=\|f\|p^{-1}$
  for all $f\in\widehat{B}$ without further reference.

  Set $I:=\left\{ i\in\NN_{\geq1} \colon \|g_{i}\xi\| \neq \|g_{i-1}\| \right\}$.
  To streamline this proof of Lemma~\ref{keylem:prop:BdRdagplus-bornology-countable-basis-reconstructionpaper},
  we introduce the notation
  $g_{0}:=0$ and $I_{0}:=I\cup\{0\}$. One reason for this is that $I_{0}\neq\emptyset$,
  whilst $I$ can be empty.
  
  We start with the following observation:
  \begin{equation}\label{eq:keylem:prop:BdRdagplus-bornology-countable-basis-obs1-reconstructionpaper}
    \left( p^{qi} g_{i}\right)_{i\in I} \text{ is a zero sequence}.
  \end{equation}
  Suppose it is not a zero sequence. But then, for $i\in I$,
  \begin{align*}
    \|p^{qi}g_{\alpha}\|
    &\leq p\max\left\{ \| p^{qi} g_{i}\|p^{-1} , \|p^{qi} g_{i-1}\| \right\} \\
    &\stackrel{\text{\ref{lem:hBinf-multiplybyxi-norm1overp-reconstructionpaper}}}{=}
      p\max\left\{ \|p^{qi} g_{i}\xi\| , \|p^{qi} g_{i-1}\| \right\} \\
    &=p\|p^{qi} f_{i}\|\to0 \text{ for $i\to\infty$},
  \end{align*}
  contradaction. Thus~(\ref{eq:keylem:prop:BdRdagplus-bornology-countable-basis-obs1-reconstructionpaper})
  holds. Its proof also implies
  \begin{equation}\label{eq:keylem:prop:BdRdagplus-bornology-countable-basis-obs2-reconstructionpaper}
    \left( p^{qi} g_{i-1}\right)_{i\in I_{0}} \text{ is a zero sequence}.
  \end{equation}
  Next, we prove the following claim: Given $\alpha,\beta\in\NN_{\geq1}$
  such that $\alpha\geq\beta$ and $[\beta,\alpha]\cap I=\emptyset$,
  \begin{equation}\label{eq:keylem:prop:BdRdagplus-bornology-countable-basis-Afiniteclaim1-reconstructionpaper}
    \|g_{\alpha}\|=p^{\alpha-\beta}\|g_{\beta}\|.
  \end{equation}
  We prove~(\ref{eq:keylem:prop:BdRdagplus-bornology-countable-basis-Afiniteclaim1-reconstructionpaper})
  via induction on $\alpha$ for fixed $\beta$. Everything is clear for $\alpha=\beta$. Now suppose
  (\ref{eq:keylem:prop:BdRdagplus-bornology-countable-basis-Afiniteclaim1-reconstructionpaper})
  holds for fixed $\alpha\geq\beta$, and we check it for $\alpha+1\not\in I$ and $\beta$.
  Then, because $\alpha+1\not\in I$,
  \begin{equation*}
    \|g_{\alpha+1}\|
    \stackrel{\text{\ref{lem:hBinf-multiplybyxi-norm1overp-reconstructionpaper}}}{=}
    p\|g_{\alpha+1}\xi\|
    =p\|g_{\alpha}\|
    =pp^{\alpha-\beta}\|g_{\beta}\|
    =p^{(\alpha+1)-\beta}\|g_{\beta}\|,
  \end{equation*}
  Thus~(\ref{eq:keylem:prop:BdRdagplus-bornology-countable-basis-Afiniteclaim1-reconstructionpaper})
  holds by induction.
  
  We can now prove
  Lemma~\ref{keylem:prop:BdRdagplus-bornology-countable-basis-reconstructionpaper}.
  Distinguish two cases: (i) $A$ is finite and (ii) $A$ is infinite.
  \begin{itemize}
    \item[(i)] If $A$ is finite, we can fix an $\beta:=\sup A+1\in\NN$. Then, for all $\alpha\geq\beta$,
    \begin{equation*}
      \|p^{q\alpha}g_{\alpha}\|
      =p^{-q\alpha}\|g_{\alpha}\|
      \stackrel{\text{(\ref{eq:keylem:prop:BdRdagplus-bornology-countable-basis-Afiniteclaim1-reconstructionpaper})}}{=}
        p^{-q\alpha+\alpha-\beta}\|g_{\beta}\|
      \to 0 \text{ for } \alpha\to\infty,
    \end{equation*}
    as desired. Here we used $q\geq 2$.
    \item[(ii)] Let $A$ be infinite and fix $\epsilon>0$.
      By~(\ref{eq:keylem:prop:BdRdagplus-bornology-countable-basis-obs1-reconstructionpaper})
      and~(\ref{eq:keylem:prop:BdRdagplus-bornology-countable-basis-obs2-reconstructionpaper}),
      there exists an $N\in\NN$ such that
      $\|p^{qi}g_{i}\|<\epsilon$ and $\|p^{qi} g_{i-1}\|<\epsilon$ for all $i\in I_{0}$ such that $i\geq N$.
      Now consider an arbitrary $\alpha\in\NN$ such that $\alpha\geq N$.
      Pick $i\in\NN$ which is minimal with respect to the following property:
      $i\in I_{0}$ and $i>\alpha$. Then $qi-(i-1)=(q-1)i+1\geq (q-1)\alpha$, thus
      $qi-(i-1)+\alpha\geq q\alpha$. Therefore,
      \begin{equation}\label{eq:keylem:prop:BdRdagplus-bornology-countable-basis-randomintegerestimate-reconstructionpaper}
        -qi + (i-1 - \alpha) \leq -q\alpha
      \end{equation}
      Now compute
      \begin{equation*}
        \|p^{q\alpha}g_{\alpha}\|
        =p^{-q\alpha}\|g_{\alpha}\|
        \stackrel{\text{(\ref{eq:keylem:prop:BdRdagplus-bornology-countable-basis-randomintegerestimate-reconstructionpaper})}}{\leq}
          p^{-qi + (i-1 - \alpha)}\|g_{\alpha}\|
        \stackrel{\text{(\ref{eq:keylem:prop:BdRdagplus-bornology-countable-basis-Afiniteclaim1-reconstructionpaper})}}{=}
          p^{-qi}\|g_{i-1}\|
          =\|p^{qi}g_{i-1}\|<\epsilon       
      \end{equation*}
      Here,~(\ref{eq:keylem:prop:BdRdagplus-bornology-countable-basis-Afiniteclaim1-reconstructionpaper})
      applies by the minimality of $i$.
      
      We have thus checked that $\|p^{q\alpha}g_{\alpha}\|<\epsilon$
      for all $\alpha\in\NN$ such that $\alpha\geq N$.
  \end{itemize}
  This completes the proof of Lemma~\ref{keylem:prop:BdRdagplus-bornology-countable-basis-reconstructionpaper}.
\end{proof}

\begin{proof}[Proof of Proposition~\ref{prop:BdRdagplus-bornology-countable-basis-reconstructionpaper}]
  Set $\widehat{B}:=\widehat{\BB}_{\inf}\left(R,R^{+}\right)$. Denote the canonical map
  $\widehat{B}\left(R,R^{+}\right)\left\<\zeta / p^{q}\right\>
    \to\widehat{B}\left(R,R^{+}\right)\left\<\zeta / p^{q+1}\right\>$
  by $j^{q}$. We use the description of the rings as in
  Lemma~\ref{lem:hBBinfzetaoverpqmodxi-zeta-isBdRq-reconstructionpaper}
  to see that the injectivity of $\BB_{\dR}^{q,+}\left(R,R^{+}\right)\to\BB_{\dR}^{q+1,+}\left(R,R^{+}\right)$
  is equivalent to the following:
  For every $f\in \widehat{B}\left\<\zeta/p^{q}\right\>$,
  \begin{equation}\label{eq:BdRdagplus-bornology-countable-basis-whattoshow-reconstructionpaper}
    j^{q}(f)\in(\xi-\zeta) \implies f\in(\xi-\zeta). 
  \end{equation}
  In order to check~(\ref{eq:BdRdagplus-bornology-countable-basis-whattoshow-reconstructionpaper}),
  we fix $f=\sum_{\alpha\geq0}f_{\alpha}\zeta^{\alpha}\in\widehat{B}\left\<\zeta/p^{q}\right\>$
  such that $j^{q}(f)\in(\xi-\zeta)$. We may then write $j^{q}(f)=g(\xi-\zeta)$ for some
  $g=\sum_{\alpha\geq0}g_{\alpha}\zeta^{\alpha}\in\widehat{B}\left\<\zeta/p^{q+1}\right\>$.
  A routine computation implies
  \begin{equation*}
    f_{0}=g_{0}\xi\, \text{ and }\, f_{\alpha}=g_{\alpha}\xi-g_{\alpha-1}\text{ for all $\alpha\geq1$.}
  \end{equation*}
  Now apply Lemma~\ref{keylem:prop:BdRdagplus-bornology-countable-basis-reconstructionpaper}
  to find $g\in\widehat{B}\left\<\zeta/p^{q+1}\right\>$. Thus $f=g(\xi-\zeta)\in (\xi-\zeta)$,
  giving~(\ref{eq:BdRdagplus-bornology-countable-basis-whattoshow-reconstructionpaper}).
\end{proof}


\subsection{Inverting $t$, Fontaine's $2\pi i$}\label{subsubsec:invertt}

\begin{notation}\label{notation:Kadmtscompativlesystemetc}
Assume that $K$ admits a compatible system $1,\zeta_{p},\zeta_{p^{2}},\dots$
of primitive $p$th power roots of unity, that is $\zeta_{p^{n+1}}^{p}=\zeta_{p^{n}}$ for all $n\in\NN$.
Fix such a system. Since each $\zeta_{p^{n}}$ satisfies the monic polynomial
$L^{p^{n}}-1\in K^{+}[L]$ and $K^{+}$ is integrally closed, this system
$\left\{ \zeta_{p^{n}}\right\}_{n\in\NN}$ lies in $K^{+}$. Thus
$\epsilon:=\left(1,\zeta_{p},\zeta_{p^{2}},\dots\right)\in K^{\flat+}$.
Write
$\mu:=\left[\epsilon\right]-1\in\ker\theta_{\inf}\subseteq A_{\inf}:=\A_{\inf}\left( K,K^{+}\right)$ and
\begin{equation}\label{eq:Kadmtscompativlesystemetc-defnxi}
  \xi
  := 1 + \left[\epsilon^{1/p}\right] + \left[\epsilon^{1/p}\right]^{2} + \dots + \left[\epsilon^{1/p}\right]^{p-1}
  =\frac{\mu}{\varphi^{-1}\left(\mu\right)}.
\end{equation}
Here, $\varphi$ is the Frobenius on $A_{\inf}$ and $\xi$ generates $\ker\theta_{\inf}$,
cf.~\cite[Example 3.16 and \S 3.3]{BhattMorrowScholze2018}.
\end{notation}

Write $A_{\dR}^{1}:=\A_{\dR}^{1}\left( K,K^{+}\right)$.

\begin{defn}\label{defn:t}
  The following power series convergences because $\mu\in\ker\theta_{\inf}$:
  \begin{align*}
    t:=\log([\epsilon])
    =\log(1 + ([\epsilon]-1))
    &=\sum_{\alpha\geq 1} (-1)^{\alpha+1}\frac{([\epsilon]-1)^{\alpha}}{\alpha} \\
    &=\sum_{\alpha\geq 1} (-1)^{\alpha+1}\frac{p^{\alpha}}{\alpha}\left(\frac{[\epsilon]-1}{p}\right)^{\alpha}
    \in A_{\dR}^{1}.
  \end{align*}
  Here, we are also using that $\alpha$ divides $p^{\alpha}$ in $n$ in $\ZZ_{p}$.
\end{defn}

Following Notation~\ref{notation:indbanach-invert-r}, we have the
$A_{\dR}^{1}$-ind-Banach algebra
  \begin{align*}
    \text{``}\varinjlim_{t\times}\text{"}A_{\dR}^{1}
    &:=
    \text{``}\varinjlim\text{"}\left(
      \dots\stackrel{t\times}{\longrightarrow}A_{\dR}^{1}
        \stackrel{t\times}{\longrightarrow} A_{\dR}^{1}
        \stackrel{t\times}{\longrightarrow}\dots\right).
  \end{align*}

Define, for all $q\in\NN$, the $k_{0}$-ind-Banach algebras
\begin{align*}
  \BB_{\dR}^{q}\left( R,R^{+} \right)
  &:=\BB_{\dR}^{q,+}\left( R,R^{+} \right)
  \widehat{\otimes}_{A_{\dR}^{1}}
  \text{``}\varinjlim_{t\times}\text{"}A_{\dR}^{1} \text{ and} \\
  \BB_{\dR}^{>q}\left( R,R^{+} \right)
  &:=\BB_{\dR}^{>q,+}\left( R,R^{+} \right)
  \widehat{\otimes}_{A_{\dR}^{1}}
  \text{``}\varinjlim_{t\times}\text{"}A_{\dR}^{1}.
\end{align*}
For $q=\infty$, set
\begin{equation*}
  \BB_{\dR}^{\dag}\left( R,R^{+} \right)=
  \BB_{\dR}^{\infty}\left( R,R^{+} \right)
  :=\BB_{\dR}^{\infty,+}\left( R,R^{+} \right)
  \widehat{\otimes}_{A_{\dR}^{1}}
  \text{``}\varinjlim_{t\times}\text{"}A_{\dR}^{1}
  =\BB_{\dR}^{\dag,+}\left( R,R^{+} \right)
  \widehat{\otimes}_{A_{\dR}^{1}}
  \text{``}\varinjlim_{t\times}\text{"}A_{\dR}^{1}.
\end{equation*}

\begin{defn}
  $\BB_{\dR}^{\dag}\left( R,R^{+} \right)$ is the \emph{relative overconvergent de Rham period ring}.
  When $\left( R, R^{+}\right)=\left(K,K^{+}\right)$ is a well-understood perfectoid field,
  $B_{\dR}^{\dag}:=\BB_{\dR}^{\dag}\left(K,K^{+}\right)$
  is the \emph{overconvergent de Rham period ring}.
\end{defn}

The previous definitions depend a priori on the choice of $\epsilon$.

\begin{lem}\label{Bla-independent-of-epsilon}
  Fix another choice $\epsilon^{\prime}\in K^{\flat+}$
  of a compatible system of primitive $p$th power roots of unity.
  Then there exists a unique unit $u\in\A_{\inf}$
  such that $(\left[\epsilon\right]-1)=u\left(\left[\epsilon^{\prime}\right]-1\right)$.
  Writing $t^{\prime}:=\log\left(\left[\epsilon^{\prime}\right]\right)$,
  $u$ induces an isomorphism
  \begin{equation}\label{eq:Bla-independent-of-epsilon}
    \text{``}\varinjlim_{t\times}\text{"}A_{\dR}^{1}
    \cong\text{``}\varinjlim_{t^{\prime}\times}\text{"}A_{\dR}^{1}.
  \end{equation}  
  As a consequence, the definitions of $\BB_{\dR}^{q}\left( R,R^{+} \right)$ and
  $\BB_{\dR}^{>q}\left( R,R^{+} \right)$ for all $q\in\NN$ as well as
  $\BB_{\dR}^{\dag}\left( R,R^{+} \right)$ are independent of the choice of $\epsilon$.
\end{lem}

\begin{proof}
  Write $\mu:=[\epsilon]-1$ and $\mu^{\prime}:=\left[\epsilon^{\prime}\right]-1$.
  The ideals $\left(\mu\right)=\left( \mu^{\prime}\right)\subseteq A_{\inf}$
  coincide by~\cite[Lemma 3.23]{BhattMorrowScholze2018}.
  \emph{Loc. cit.} Proposition 3.17(ii) furthermore
  says that $\mu$ and $\mu^{\prime}$ are non-zero-divisors, thus
  there exists a unit $u$ such that $\mu=u\mu^{\prime}$. Using again
  that $\mu$ and $\mu^{\prime}$ are non-zero-divisors, one deduces
  that $u$ is unique with respect to that property.
  Next, note that $\alpha+1$ divides $p^{\alpha}$ in $\ZZ_{p}$ and
  define
  \begin{equation*}
    v:=\sum_{\alpha\geq 0} (-1)^{\alpha}\frac{p^{\alpha}}{\alpha+1}\left(\frac{\mu}{p}\right)^{\alpha}
    \in A_{\dR}^{1},
  \end{equation*}
  satisfying $t=v\mu$.
  Note that $v\equiv 1 \mod\xi/p$, which is a unit. Since
  $A_{\dR}^{1}$ is complete with respect to the $\xi/p$-adic topology,
  cf. Lemma~\ref{lem:Alam-local-complete-at-xi}
    and Proposition~\ref{prop:adictop-ringsofpowerseries},
  \cite[\href{https://stacks.math.columbia.edu/tag/05GI}{Tag 05GI}]{stacks-project} implies that
  $v$ is a unit. Similarly, write
    $t^{\prime} = \mu^{\prime} v^{\prime}$
  for some unit $v^{\prime}\in A_{\dR}^{1}$. In particular,
    $t = \left( vuv^{\prime}\right) t^{\prime}$.
  Then $vuv^{\prime}$ is again a unit,
  giving rise to the isomorphism~(\ref{eq:Bla-independent-of-epsilon}).
\end{proof}

The previous definitions depend a priori on the choice of $\left(K,K^{+}\right)$.

\begin{lem}\label{lem:tensoruptoAinfK-or-AinfC}
  Let $\left( R,R^{+}\right)$ be an affinoid perfectoid
  $\left(K,K^{+}\right)$-algebra and an affinoid perfectoid
  $\left(L,L^{+}\right)$-algebra,
  where both $K$ and $L$ are completions of algebraic extension of $k$.
  Then
  \begin{align*}
    &\BB_{\dR}^{q,+}\left(R,R^{+}\right)
    \widehat{\otimes}_{\A_{\dR}^{1}\left( K,K^{+} \right)}\text{``}\varinjlim_{t_{K}\times}\text{"}\A_{\dR}^{1}\left( K,K^{+} \right) \\
    &\cong
    \BB_{\dR}^{q,+}\left(R,R^{+}\right)
    \widehat{\otimes}_{\A_{\dR}^{1}\left( L,L^{+} \right)}\text{``}\varinjlim_{t_{L}\times}\text{"}\A_{\dR}^{1}\left( L,L^{+} \right)
    \text{ and} \\
    &\BB_{\dR}^{>q,+}\left(R,R^{+}\right)
    \widehat{\otimes}_{\A_{\dR}^{1}\left( K,K^{+} \right)}\text{``}\varinjlim_{t_{K}\times}\text{"}\A_{\dR}^{1}\left( K,K^{+} \right) \\
    &\cong
    \BB_{\dR}^{>q,+}\left(R,R^{+}\right)
    \widehat{\otimes}_{\A_{\dR}^{1}\left( L,L^{+} \right)}\text{``}\varinjlim_{t_{L}\times}\text{"}\A_{\dR}^{1}\left( L,L^{+} \right).
    \intertext{This implies}
    &\BB_{\dR}^{\dag,+}\left(R,R^{+}\right)
    \widehat{\otimes}_{\A_{\dR}^{1}\left( K,K^{+} \right)}\text{``}\varinjlim_{t_{K}\times}\text{"}\A_{\dR}^{1}\left( K,K^{+} \right) \\
    &\cong
    \BB_{\dR}^{\dag,+}\left(R,R^{+}\right)
    \widehat{\otimes}_{\A_{\dR}^{1}\left( L,L^{+} \right)}\text{``}\varinjlim_{t_{L}\times}\text{"}\A_{\dR}^{1}\left( L,L^{+} \right)
  \end{align*}  
  for all $q\in\NN$.
  Here, both $t_{K}\in\A_{\dR}^{1}\left( K,K^{+} \right)$
  and $t_{L}\in\A_{\dR}^{1}\left( L,L^{+} \right)$
  are given as $t$ in Definition~\ref{defn:t}.
\end{lem}

\begin{proof}
  Suppose $K$ is the completion of $K^{\prime}$ and $L$ is the completion of $L^{\prime}$,
  where both $K^{\prime}/k$ and $L^{\prime}/k$ are algebraic extensions. Let
  $\overline{k}$ denote the algebraic closure which we fixed in
  \S\ref{subsec:conventions-reconstructionpaper}.
  By~\cite[\href{https://stacks.math.columbia.edu/tag/09GU}{Tag 09GU}]{stacks-project},
  we may assume without loss of generality that $K^{\prime}$ and $L^{\prime}$
  are contained in $\overline{k}$. In particular, we may assume $K,L\subseteq C$,
  where $C$ is the completion of $\overline{k}$. The commutative diagram
\[\begin{tikzcd}
	& C \\
	K && L \\
	& k
	\arrow[hook, from=2-1, to=1-2]
	\arrow[hook', from=2-3, to=1-2]
	\arrow[hook', from=3-2, to=1-2]
	\arrow[hook', from=3-2, to=2-1]
	\arrow[hook, from=3-2, to=2-3]
\end{tikzcd}\]
  gives a summary of our setup. Now compute
  \begin{equation*}
  \begin{split}
  &\BB_{\dR}^{q,+}\left(R,R^{+}\right)\widehat{\otimes}_{\A_{\dR}^{1}\left( K,K^{+} \right)}\text{``}\varinjlim_{t_{K}\times}\text{"}\A_{\dR}^{1}\left( K,K^{+} \right)  \\
  &\cong
   \text{``}\varinjlim_{t_{K}\times}\text{"}\BB_{\dR}^{q,+}\left(R,R^{+}\right)\\
   &\cong
   \BB_{\dR}^{q,+}\left(R,R^{+}\right)\widehat{\otimes}_{\A_{\dR}^{1}\left( C,C^{+} \right)}\text{``}\varinjlim_{t_{K}\times}\text{"}\A_{\dR}^{1}\left( C,C^{+} \right).
   \end{split}
   \end{equation*}
   This is an isomorphism of $k$-ind-Banach algebras.
   We do this computation again but over $L$ to get
  \begin{equation*}
  \begin{split}
  &\BB_{\dR}^{q,+}\left(R,R^{+}\right)\widehat{\otimes}_{\A_{\dR}^{1}\left( L,L^{+} \right)}\text{``}\varinjlim_{t_{L}\times}\text{"}\A_{\dR}^{1}\left( L,L^{+} \right)  \\
  &\cong
   \text{``}\varinjlim_{t_{L}\times}\text{"}\BB_{\dR}^{q,+}\left(R,R^{+}\right)\\
   &\cong
   \BB_{\dR}^{q,+}\left(R,R^{+}\right)\widehat{\otimes}_{\A_{\dR}^{1}\left( C,C^{+} \right)}\text{``}\varinjlim_{t_{L}\times}\text{"}\A_{\dR}^{1}\left( C,C^{+} \right) 
   \end{split}
   \end{equation*}
   Similar computations work for $\BB_{\dR}^{>q,+}$.
   Now apply Lemma~\ref{Bla-independent-of-epsilon}.
   Compute the colimit along $q\to\infty$ to get the result
   for $\BB_{\dR}^{\dag,+}$.
\end{proof}

Write $A_{\dR}^{>1}:=\A_{\dR}^{>1}\left(K,K^{+}\right)$ and $B_{\dR}^{>1}:=\BB_{\dR}^{>1}\left(K,K^{+}\right)$.
We record Lemma~\ref{lem:BBdR-fromAAdR-and-BdRgreaterthan1-reconstructionpaper}
for future reference.

\begin{lem}\label{lem:BBdR-fromAAdR-and-BdRgreaterthan1-reconstructionpaper}
  We have the canonical isomorphism
  \begin{equation}\label{eq:BBdR-fromAAdR-and-BdRgreaterthan1-themap-reconstructionpaper}
    \A_{\dR}^{\dag}\left(R,R^{+}\right)\widehat{\otimes}_{A_{\dR}^{>1}}B_{\dR}^{>1}
    \isomap\BB_{\dR}^{\dag}\left(R,R^{+}\right).
  \end{equation}
\end{lem}

\begin{proof}
  Compute
  \begin{align*}
    \A_{\dR}^{>q}\left(R,R^{+}\right)\widehat{\otimes}_{A_{\dR}^{>1}}B_{\dR}^{>1,+}
    &=\A_{\dR}^{>q}\left(R,R^{+}\right)
      \widehat{\otimes}_{A_{\dR}^{>1}}\left(A_{\dR}^{>1}\widehat{\otimes}_{W(\kappa)}k_{0}\right) \\
    &\cong\A_{\dR}^{>q}\left(R,R^{+}\right)\widehat{\otimes}_{W(\kappa)}k_{0}
    =\BB_{\dR}^{>q,+}\left(R,R^{+}\right).
  \end{align*}
  Now pass to the colimit along $q\to\infty$ and
  apply $\varinjlim_{t\times}$.
\end{proof}

Recall Definitions~\ref{defn:bornologicalspace-reconstructionpaper}
and~\ref{defn:bornology-countable-basis-reconstructionpaper}.

\begin{thm}\label{thm:BdRdagRRplus-bornological}
  $\BB_{\dR}^{\dag}\left(R,R^{+}\right)$ is bornological. Its bornology has countable basis.
\end{thm}

We complete the proof of Theorem~\ref{thm:BdRdagRRplus-bornological}
on page~\pageref{proof:thm:BdRdagRRplus-bornological}.
We start with some lemmata.

Recall Definition~\ref{defn:Wkappatriv-tildeAdRgreaterthanq}.

\begin{lem}\label{lem:principlesymbol-of-t}
  Let $q\in\NN_{\geq2}$ 
  and compute
  \begin{equation*}
    \sigma\left(\frac{t}{p^{q}}\right)
    =\left(\zeta_{p}-1\right)\sigma\left(\frac{\xi}{p^{q}}\right)
    \in R^{+}\sigma\left(\frac{\xi}{p^{q}}\right)
    \stackrel{\text{\ref{lem:grAdRgreaterthanq-xioverpadicfiltration-reconstructionpaper}}}{\cong}
    \gr^{1}\widetilde{\A}_{\dR}^{>q}\left(R,R^{+}\right).
  \end{equation*}
\end{lem}

\begin{proof}
  Recall the Definition~(\ref{eq:Kadmtscompativlesystemetc-defnxi}) of $\xi$.
  One finds $t/p^{q}=v \mu/p^{q}=v\varphi^{-1}\left(\mu\right)\xi/p^{q}$, cf.
  the proof of Lemma~\ref{Bla-independent-of-epsilon}
  and~\cite[\S 3.3]{BhattMorrowScholze2018}.
  Here, $v$ is congruent to $1$ modulo $\xi/p^{q}$. Thus we can compute
  \begin{equation*}
    \sigma\left(\frac{t}{p^{q}}\right)
    =\theta_{\dR}^{>q}\left( \varphi^{-1}\left(\mu\right) \right)\sigma\left( \frac{\xi}{p^{q}} \right)
    =\left(\zeta_{p}-1\right)\sigma\left(\frac{\xi}{p^{q}}\right),
  \end{equation*}
  as
  $R^{+}\sigma\left(\xi / p^{q}\right)\stackrel{\text{\ref{lem:grAdRgreaterthanq-xioverpadicfiltration-reconstructionpaper}}}{\cong}\gr^{1}\A_{\dR}^{>q}\left(R,R^{+}\right)$
  comes from Fontaine's map. Lemma~\ref{lem:grAdRgreaterthanq-xioverpadicfiltration-reconstructionpaper}
  required $q\geq2$.
\end{proof}

Lemma~\ref{lem:canetaoperatorAdRgreaterthanqRRplus-i} below
is not used in the proof of Theorem~\ref{thm:BdRdagRRplus-bornological}.
We still include it for future reference.

\begin{lem}\label{lem:canetaoperatorAdRgreaterthanqRRplus-i}
  Let $q\in\NN_{\geq2}$ and $i\in\NN$. Then
  $\left(t/p^{q}\right)^{i}\widetilde{\A}_{\dR}^{>q}\left(R,R^{+}\right)\subseteq\widetilde{\A}_{\dR}^{>q}\left(R,R^{+}\right)$
  is a closed subset.
\end{lem}

\begin{proof}
  We check the following stronger statement:
  $\widetilde{\A}_{\dR}^{>q}\left(R,R^{+}\right)\stackrel{t/p^{q}}{\to}\widetilde{\A}_{\dR}^{>q}\left(R,R^{+}\right)$
  is a strict monomorphism.
  By~\cite[Chapter I, \S 4.2 page 31-32, Theorem 4(5)]{HuishiOystaeyen1996},
  we can equip $\widetilde{\A}_{\dR}^{>q}\left(R,R^{+}\right)$ with the $\xi/p^{q}$-adic filtration
  and check that
  $\gr\widetilde{\A}_{\dR}^{>q}\left(R,R^{+}\right)\stackrel{\sigma\left(t/p^{q}\right)}{\to}\gr\widetilde{\A}_{\dR}^{>q}\left(R,R^{+}\right)$
  is injective.
  This follows from Lemma~\ref{lem:principlesymbol-of-t}
  and Lemma~\ref{lem:grAdRgreaterthanq-xioverpadicfiltration-reconstructionpaper}.
  \emph{Loc. cit.} required $q\geq2$.
\end{proof}

The proof of Lemma~\ref{lem:canetaoperatorAdRgreaterthanqRRplus-i}
implies Lemma~\ref{lem:canetaoperatorAdRgreaterthanqRRplus-ii} below,
which we use in the proof of Theorem~\ref{thm:BdRdagRRplus-bornological}.

\begin{lem}\label{lem:canetaoperatorAdRgreaterthanqRRplus-ii}
  $\widetilde{\A}_{\dR}^{>q}\left(R,R^{+}\right)$, and therefore
  $\A_{\dR}^{>q}\left(R,R^{+}\right)$, does not have $t/p^{q}$-torsion.
\end{lem}

\begin{prop}\label{prop:underlyingspaceBdRdagplus-iso-uncompletedcolim-reconstructionpaper}
  We have the canonical isomorphism of abstract $k_{0}$-algebras
  \begin{equation}\label{eq:themap--prop:underlyingspaceBdRdagplus-iso-uncompletedcolim-reconstructionpaper}
    \varinjlim_{q}|\A_{\dR}^{>q}\left(R,R^{+}\right)|[1/p]\isomap|\BB_{\dR}^{\dag,+}\left(R,R^{+}\right)|.
  \end{equation}
\end{prop}

\begin{proof}
  Injectivity follows from
  Lemma~\ref{lem:underlyingspaceBdRdagplus-iso-uncompletedcolim-injective-reconstructionpaper}
  and surjectivity follows from
  Lemma~\ref{lem:underlyingspaceBdRdagplus-iso-uncompletedcolim-surjective-reconstructionpaper}
  below.
\end{proof}

\begin{lem}\label{lem:underlyingspaceBdRdagplus-iso-uncompletedcolim-injective-reconstructionpaper}
  $\A_{\dR}^{>q}\left(R,R^{+}\right)[1/p]\to \BB_{\dR}^{>q,+}\left(R,R^{+}\right)$
  is injective for all $q\in\NN$.
\end{lem}

\begin{proof}
  Write $A^{>q}:=\A_{\dR}^{>q}\left(R,R^{+}\right)$ and $B^{>q}:=\BB_{\dR}^{>q,+}\left(R,R^{+}\right)$.
  By Lemma~\ref{lem:localisation-normedotimes}, $B^{>q}$ is the completion
  of $A^{>q}[1/p]$ which carries the following seminorm:
  \begin{equation*}
    b
    =\inf_{\substack{b=a/p^{n} \\ a\in A^{>q}}}\|a\|/\|p^{n}\|.
  \end{equation*}
  Thus we have to check: For all $b\in A^{>q}[1/p]$ such that $\|b\|=0$, $b=0$.
  To do this, fix $b\in A^{>q}[1/p]$ with $\|b\|=0$. Then we find a sequence of expressions
  $b=a_{\alpha}/p^{n_{\alpha}}$ with $a_{\alpha}\in A^{>q}$, $\alpha\in\NN$, such that
  \begin{equation}\label{eq:underlyingspaceBdRdagplus-iso-uncompletedcolim-injective-convergence-reconstructionpaper}
    \|a_{\alpha}\|p^{n_{\alpha}}
    \stackrel{\text{\ref{lem:Agreaterthanq-multiplybyp-norm-reconstructionpaper}}}{=}
    \|a_{\alpha}\|/\|p^{n_{\alpha}}\|
    \to 0 \text{ for $\alpha\to\infty$.}
  \end{equation}
  Furthermore, $a_{\alpha}/p^{n_{\alpha}}=b=a_{\alpha+1}/p^{n_{\alpha+1}}$ implies
  $p^{n_{\alpha+1}-n_{\alpha}}a_{\alpha}=a_{\alpha+1}$ for all $\alpha\in\NN$.
  Via induction, we deduce $p^{n_{\alpha}-n_{0}}a_{0}=a_{\alpha}$ for all $\alpha\in\NN$.
  But then
  \begin{equation*}
    \|a_{\alpha}\|p^{n_{\alpha}}
    =\| p^{n_{\alpha}-n_{0}} a_{0} \|p^{n_{\alpha}}
    \stackrel{\text{\ref{lem:Agreaterthanq-multiplybyp-norm-reconstructionpaper}}}{=} p^{n_{\alpha} - n_{\alpha} +n_{0}} \| a_{0} \|
    =p^{n_{0}}\|a_{0}\|.
  \end{equation*}
  (\ref{eq:underlyingspaceBdRdagplus-iso-uncompletedcolim-injective-convergence-reconstructionpaper})
  implies $\|a_{\alpha}\|=0$ for all $\alpha\in\NN$. Since $A^{>q}$ is normed, this gives
  $a_{\alpha}=0$ for all $\alpha\in\NN$.
\end{proof}

\begin{lem}\label{lem:underlyingspaceBdRdagplus-iso-uncompletedcolim-surjective-reconstructionpaper}
  Denote~(\ref{eq:themap--prop:underlyingspaceBdRdagplus-iso-uncompletedcolim-reconstructionpaper})
  in Proposition~\ref{prop:underlyingspaceBdRdagplus-iso-uncompletedcolim-reconstructionpaper} by
  $\psi$. For all $b\in \BB_{\dR}^{>q,+}\left(R,R^{+}\right)$, there exist
  $c\in \A_{\dR}^{>q+1}\left(R,R^{+}\right)[1/p]$ such that $\psi\left(c\right)=b$.
\end{lem}

\begin{proof}
  Write $A^{>q}:=\A_{\dR}^{>q}\left(R,R^{+}\right)$,
  $A^{>q+1}:=\A_{\dR}^{>q+1}\left(R,R^{+}\right)$, and $B^{>q}:=\BB_{\dR}^{>q,+}\left(R,R^{+}\right)$.
  By Lemma~\ref{lem:localisation-normedotimes}, $B^{>q}$ is the completion
  of $A^{>q}[1/p]$, which carries the following seminorm:
  \begin{equation}\label{eq:underlyingspaceBdRdagplus-iso-uncompletedcolim-surjective-norm-reconstructionpaper}
    b
    =\inf_{\substack{b=a/p^{n} \\ a\in A^{>q}}}\|a\|/\|p^{n}\|
    \stackrel{\text{\ref{lem:Agreaterthanq-multiplybyp-norm-reconstructionpaper}}}{=}
      \inf_{\substack{b=a/p^{n} \\ a\in A^{>q}}}\|a\|p^{n} 
  \end{equation}  
  Therefore, every element $b\in B^{>q}$ is of the form
  $b=\sum_{\alpha\geq0}b_{\alpha}$
  with $b_{\alpha}\in A^{>q}[1/p]$ for all $\alpha\in\NN$
  such that $b_{\alpha}\to0$ for $\alpha\to\infty$.
  Furthermore, the
  description~(\ref{eq:underlyingspaceBdRdagplus-iso-uncompletedcolim-surjective-norm-reconstructionpaper})
  implies that there exist expressions $b_{\alpha}=a_{\alpha}/p^{n_{\alpha}}$
  for all $\alpha\in\NN$ such that $\|a_{\alpha}\|p^{n_{\alpha}}\to 0$ for $\alpha\to\infty$.
  Set $s_{\alpha}:=-\log_{p}\|a_{\alpha}\|$ for all $\alpha\in\NN$.
  Then $b_{\alpha}\to0$ for $\alpha\to\infty$ implies
  \begin{equation}\label{eq:underlyingspaceBdRdagplus-iso-uncompletedcolim-surjective-conv2-reconstructionpaper}
    s_{\alpha}-n_{\alpha}\to\infty \text{ for } \alpha\to\infty.
  \end{equation}
  Therefore, we can fix an $\alpha_{0}\in\NN$ such that $s_{\alpha}-n_{\alpha}\geq0$
  for all $\alpha\geq\alpha_{0}$.

  Next, we observe that $a_{\alpha}\in\left(p,\xi/p^{q}\right)^{s_{\alpha}}$; this is because
  $A^{>q}$ carries the $\left(p,\xi/p^{q}\right)$-adic norm. Therefore, we can write
  \begin{equation*}
    a_{\alpha}=\sum_{i=0}^{s_{\alpha}}a_{\alpha,i}p^{s_{\alpha}-i}\left(\frac{\xi}{p^{q}}\right)^{i}
  \end{equation*}
  with $a_{\alpha,0},\dots,a_{\alpha,s_{\alpha}}\in A^{>q}$. Furthermore,
  \begin{equation}\label{eq:underlyingspaceBdRdagplus-iso-uncompletedcolim-surjective-someidentity-reconstructionpaper}
    \frac{a_{\alpha}}{p^{n_{\alpha}}}=\sum_{i=0}^{s_{\alpha}}a_{\alpha,i}p^{s_{\alpha}-n_{\alpha}}\left(\frac{\xi}{p^{q+1}}\right)^{i}
  \end{equation}
  is an element of $A^{>q+1}$ because
  $s_{\alpha}-n_{\alpha}\geq0$ as $\alpha\geq\alpha_{0}$. Furthermore,
  the power series
  \begin{equation*}
    c_{2}:=
      \sum_{\alpha\geq\alpha_{0}}\left( \sum_{i=0}^{s_{\alpha}} a_{\alpha,i}\left(\frac{\xi}{p^{q+1}}\right)^{i} \right)
      p^{s_{\alpha}-n_{\alpha}}  
  \end{equation*}
  converges in $A^{>q+1}$
  by~(\ref{eq:underlyingspaceBdRdagplus-iso-uncompletedcolim-surjective-conv2-reconstructionpaper}).
  Now define $c:=c_{1}+c_{2}\in A^{>q+1}[1/p]$ where $c_{1}$ is the image of
  \begin{equation*}
    c_{1}:=\sum_{\alpha=0}^{\alpha_{0}-1}\frac{a_{\alpha}}{p^{n_{\alpha}}}\in A^{>q}[1/p].
  \end{equation*}
  in $A^{>q+1}[1/p]$. We find
  \begin{equation*}
    \varphi(c)
    =\varphi\left(c_{1}\right)+\varphi\left(c_{2}\right)
    \stackrel{\text{(\ref{eq:underlyingspaceBdRdagplus-iso-uncompletedcolim-surjective-someidentity-reconstructionpaper})}}{=}
      \sum_{\alpha=0}^{\alpha_{0}-1}\frac{a_{\alpha}}{p^{n_{\alpha}}}
      +\sum_{\alpha\geq0}\frac{a_{\alpha}}{p^{n_{\alpha}}}
    =b,
  \end{equation*}
  as desired.
\end{proof}


\begin{cor}\label{cor:tnonzerodivisor-on-BdRqplus-reconstructionpaper}
  The multiplication-by-$t$ map $\BB_{\dR}^{q,+}\left(R,R^{+}\right) \to \BB_{\dR}^{q,+}\left(R,R^{+}\right)$
  is injective for all $q\geq2$.
\end{cor}

\begin{proof}
  Consider the following diagram of abstract $k$-vector spaces
  \begin{equation}\label{eq:tnonzerodivisor-on-BdRqplus-reconstructionpaper}
    \begin{tikzcd}
      {|\BB_{\dR}^{\dag,+}\left(R,R^{+}\right)|}
      \arrow{r}{t\times} &
      {|\BB_{\dR}^{\dag,+}\left(R,R^{+}\right)|} \\
      {|\BB_{\dR}^{q,+}\left(R,R^{+}\right)|}
      \arrow[hook]{u}
      \arrow{r}{t\times} &
      {|\BB_{\dR}^{q,+}\left(R,R^{+}\right)|}
      \arrow[hook]{u}.      
    \end{tikzcd}
  \end{equation}
  The morphism at the top is injective by
  Lemma~\ref{lem:canetaoperatorAdRgreaterthanqRRplus-ii}, which applies
  thanks to Proposition~\ref{prop:underlyingspaceBdRdagplus-iso-uncompletedcolim-reconstructionpaper}.
  The vertical morphisms are injective
  by Proposition~\ref{prop:BdRdagplus-bornology-countable-basis-reconstructionpaper};
  here we use $q\geq2$.
  Thus Corollary~\ref{cor:tnonzerodivisor-on-BdRqplus-reconstructionpaper}
  follows from the commutativity of the diagram~(\ref{eq:tnonzerodivisor-on-BdRqplus-reconstructionpaper}).
\end{proof}

\begin{proof}[Proof of Theorem~\ref{thm:BdRdagRRplus-bornological}]\label{proof:thm:BdRdagRRplus-bornological}
  $\BB_{\dR}^{\dag}\left(R,R^{+}\right)$ is by definition the colimit of the diagram
  \begin{equation*}
    \BB_{\dR}^{2}\left(R,R^{+}\right)
    \stackrel{t\times}{\longrightarrow}
    \BB_{\dR}^{3}\left(R,R^{+}\right)
    \stackrel{t\times}{\longrightarrow}
    \BB_{\dR}^{4}\left(R,R^{+}\right)
    \stackrel{t\times}{\longrightarrow}
    \dots
  \end{equation*}
  where the maps in this diagram are injective
  by Proposition~\ref{prop:BdRdagplus-bornology-countable-basis-reconstructionpaper}
  and Corollary~\ref{cor:tnonzerodivisor-on-BdRqplus-reconstructionpaper}.
\end{proof}

We continue with a few lemmata which will be useful later on in this article.

\begin{notation}
  Let $\zeta_{1},\dots,\zeta_{n}$ be formal variables.
  $\A_{\dR}^{>q}\left(R,R^{+}\right)\left\llbracket \zeta_{1},\dots,\zeta_{n} \right\rrbracket$
  denotes the usual formal power series ring equipped with
  the $\left(p,\xi/p^{q},\zeta_{1},\dots,\zeta_{n}\right)$-adic seminorm.
  Note that $\zeta_{n},\dots,\zeta_{1},\xi/p^{q},p$ is a regular sequence
  by Lemma~\ref{lem:Fontaines-map-for-Ala}. Thus
  $\A_{\dR}^{>q}\left(R,R^{+}\right)\left\llbracket \zeta_{1},\dots,\zeta_{n} \right\rrbracket$
  is a $W(\kappa)$-Banach algebra
  by \emph{loc. cit.} Corollary~\ref*{prop:Scomplete-Spowerseriescomplete-ifKoszulregular}.
\end{notation}

The following two Lemma~\ref{lem:multbyxipq-onAdRgreaterthanq-strictmono-reconstructionpaper}
and~\ref{lem:multbytpq-onAdRgreaterthanq-strictmono-reconstructionpaper} apply in the case $n=0$ as well.

\begin{lem}\label{lem:multbyxipq-onAdRgreaterthanq-strictmono-reconstructionpaper}
  Let $\zeta_{1},\dots,\zeta_{n}$ be formal variables, $n\in\NN$.
  The multiplication-by-$\xi/p^{q}$ map
  \begin{equation*}  
    \xi/p^{q}\times\colon
    \A_{\dR}^{>q}\left(R,R^{+}\right)\left\llbracket \zeta_{1},\dots,\zeta_{n} \right\rrbracket
    \to
    \A_{\dR}^{>q}\left(R,R^{+}\right)\left\llbracket \zeta_{1},\dots,\zeta_{n} \right\rrbracket
  \end{equation*}
  is a strict monomorphism for all $q\in\NN_{\geq2}$.
\end{lem}

\begin{proof}
  By~\cite[Chapter I, \S 4.2 page 31-32, Theorem 4(5)]{HuishiOystaeyen1996},
  this follows once
  \begin{equation*}
    \sigma\left(\frac{\xi}{p^{q}}\right)\in\gr\A_{\dR}^{>q}\left(R,R^{+}\right)\left\llbracket \zeta_{1},\dots,\zeta_{n} \right\rrbracket
  \end{equation*}
  is not a zero-divisor. Here, $\A_{\dR}^{>q}\left(R,R^{+}\right)\left\llbracket \zeta_{1},\dots,\zeta_{n} \right\rrbracket$
  carries the $\left(p,\xi/p^{q},\zeta_{1},\dots,\zeta_{n}\right)$-adic filtration.
  This follows from the description of the associated graded
  \begin{equation*}
    \gr\A_{\dR}^{>q}\left(R,R^{+}\right)\left\llbracket \zeta_{1},\dots,\zeta_{n} \right\rrbracket
    \cong \left(R^{+}/p\right)\left[\sigma(p),\sigma\left(\frac{\xi}{p^{q}}\right),\sigma\left(\zeta_{1}\right),\dots,\sigma\left(\zeta_{n}\right)\right],
  \end{equation*}  
  which in turn follows from~\cite[Exercise 17.16.a]{Ei95}.
  Here, \emph{loc. cit.} applies because 
  $\zeta_{n},\dots,\zeta_{1},\xi/p^{q},p$ is a regular sequence,
  cf. Lemma~\ref{lem:Fontaines-map-for-Alagreatq}.
\end{proof}

\begin{lem}\label{lem:multbytpq-onAdRgreaterthanq-strictmono-reconstructionpaper}
  Let $\zeta_{1},\dots,\zeta_{n}$ be formal variables.
  The multiplication-by-$t/p^{q}$ map
  \begin{equation*}  
    t/p^{q}\times\colon
    \A_{\dR}^{>q}\left(R,R^{+}\right)\left\llbracket \zeta_{1},\dots,\zeta_{n} \right\rrbracket
    \to
    \A_{\dR}^{>q}\left(R,R^{+}\right)\left\llbracket \zeta_{1},\dots,\zeta_{n} \right\rrbracket
  \end{equation*}
  is a strict monomorphism for all $q\in\NN_{\geq2}$.
\end{lem}

\begin{proof}
  Recall $\mu=[\epsilon]-1$. Since
  $t/p^{q}$ and $\mu/p^{q}$
  coincide up to a unit, cf. the proof of Lemma~\ref{Bla-independent-of-epsilon},
  we may as well check that the multiplication-by-$\mu/p^{q}$ map
  is a strict monomorphism. Let $\varphi$ denote the Frobenius. By~\cite[Example 3.16]{BhattMorrowScholze2018},
  $\xi:=\mu / \varphi^{-1}\left(\mu\right)$ is a generator of the kernel
  of Fontaine's map $\theta_{\inf}\colon \A_{\inf}\left(K,K^{+}\right)\to K^{+}$.
  By Lemma~\ref{lem:Fontaines-map-for-Alagreatq},
  $\xi/p^{q}$ is a generator of the kernel of $\theta_{\dR}^{>q}\colon \A_{\dR}^{>q}\left(R,R^{+}\right)\to R^{+}$.
  Thus Lemma~\ref{lem:multbyxipq-onAdRgreaterthanq-strictmono-reconstructionpaper} applies.
  And because $\mu/p^{q}=\varphi^{-1}\left(\mu\right)\xi/p^{q}$, we may check that
  \begin{equation*}  
    \varphi^{-1}\left(\mu\right)\times\colon
    \A_{\dR}^{>q}\left(R,R^{+}\right)\left\llbracket \zeta_{1},\dots,\zeta_{n} \right\rrbracket
    \to
    \A_{\dR}^{>q}\left(R,R^{+}\right)\left\llbracket \zeta_{1},\dots,\zeta_{n} \right\rrbracket
  \end{equation*}  
  is a strict monomorphism, cf.~\cite[Proposition 1.1.7]{Sch99}.
  By~\cite[Proposition 1.1.8]{Sch99}, may check that
  \begin{equation*}  
    \varphi^{-1}\left(\mu\right)^{p-1}\times\colon
    \A_{\dR}^{>q}\left(R,R^{+}\right)\left\llbracket \zeta_{1},\dots,\zeta_{n} \right\rrbracket
    \to
    \A_{\dR}^{>q}\left(R,R^{+}\right)\left\llbracket \zeta_{1},\dots,\zeta_{n} \right\rrbracket
  \end{equation*}  
  is a strict monomorphism.
  In fact, we show that the principal symbol
  \begin{equation*}
    \sigma\left(\varphi^{-1}\left(\mu\right)^{p-1}\right)\in
    \gr\A_{\dR}^{>q}\left(R,R^{+}\right)\left\llbracket \zeta_{1},\dots,\zeta_{n} \right\rrbracket
  \end{equation*}
  is not a zero-divisor, cf.~\cite[Chapter I, \S 4.2 page 31-32, Theorem 4(5)]{HuishiOystaeyen1996}.
  Here, we equipped $\A_{\dR}^{>q}\left(R,R^{+}\right)\left\llbracket \zeta_{1},\dots,\zeta_{n} \right\rrbracket$
  with the $\left(p,\xi/p^{q},\zeta_{1},\dots,\zeta_{n}\right)$-adic filtration.
  We see from the description of the associated as in the proof of
  Lemma~\ref{lem:multbyxipq-onAdRgreaterthanq-strictmono-reconstructionpaper} that it is sufficient
  to check that
  \begin{equation*}
    \sigma\left(\varphi^{-1}\left(\mu\right)^{p-1}\right)\in
    \left(R^{+}/p\right)\left[\sigma(p),\sigma\left(\frac{\xi}{p^{q}}\right)\right]
    \stackrel{\text{\ref{lem:assgr-AdRgreaterthanq-recpaper}}}{\cong}
    \gr\A_{\dR}^{>q}\left(R,R^{+}\right)
  \end{equation*}
  is not a zero-divisor
  We will thus compute $\sigma\left(\varphi^{-1}\left(\mu\right)^{p-1}\right)$.
  Firstly, note
  \begin{equation}\label{eq:multbytpq-onAdRgreaterthanq-strictmono-somecomputation-reconstructionpaper}
    \theta_{\dR}^{>q}\left(\varphi^{-1}\left(\mu\right)^{p-1}\right)
    =\theta_{\dR}^{>q}\left( \left[ \epsilon^{1/p}\right] - 1\right)^{p-1}
    =\left(\zeta_{p}-1\right)^{p-1}
    =up,
  \end{equation}
  where $u\in K^{+}$ is a unit. Since $\xi/p^{q}$ generates
  $\ker\theta_{\dR}^{>q}$, cf. Lemma~\ref{lem:Fontaines-map-for-Alagreatq},
  $\varphi^{-1}\left(\mu\right)^{p-1}\in\left(p,\xi/p^{q}\right)$ follows.
  On the other hand, $\varphi^{-1}\left(\mu\right)^{p-1}\not\in\left(p,\xi/p^{q}\right)^{2}$.
  Indeed, if $\varphi^{-1}\left(\mu\right)^{p-1}\in\left(p,\xi/p^{q}\right)^{2}$, then
  \begin{equation*}
    up
    \stackrel{\text{(\ref{eq:multbytpq-onAdRgreaterthanq-strictmono-somecomputation-reconstructionpaper})}}{=}
    \theta_{\dR}^{>q}\left(\varphi^{-1}\left(\mu\right)^{p-1}\right)\in\left(p^{2}\right),
  \end{equation*}
  contradiction. These considerations imply
  \begin{equation*}
    \sigma\left(\varphi^{-1}\left(\mu\right)^{p-1}\right)
    \in\gr^{1}\A_{\dR}^{>q}\left(R,R^{+}\right).
  \end{equation*}
  Explicitly,~(\ref{eq:multbytpq-onAdRgreaterthanq-strictmono-somecomputation-reconstructionpaper})
  implies $\sigma\left(\varphi^{-1}\left(\mu\right)^{p-1}\right)=\tau(u)\sigma(p)+a\sigma\left(\xi/p^{q}\right)$,
  where $\tau\colon R^{+}\to R^{+}/p$ is the canonical projection and
  $a\in\gr^{0}\A_{\dR}^{>q}\left(R,R^{+}\right)$.
  Now~\cite[\S 7.2, exercise 2, page 238]{DummitFoote2004} implies
  that $\sigma\left(\varphi^{-1}\left(\mu\right)^{p-1}\right)$ is not a zero-divisor, using
  Lemma~\ref{lem:assgr-AdRgreaterthanq-recpaper}
  and that $\tau(u)\in R^{+}/p$ is a unit.
\end{proof}


\subsubsection{Adding $\log t$}

We define an overconvergent version of Fontaine's almost de Rham period ring $B_{\pdR}$,
cf.~\cite[\S 4.3]{Fontaine2004Arithmetic}. For the sake of simplicity, we only work over the completion
$C$ of the algebraic closure $\overline{k}$ of $k$ which we fixed in
\S\ref{subsec:conventions-reconstructionpaper}. This suffices for the purposes of this article.

The action of the Galois group $\cal{G}:=\Gal\left(\overline{k} / k\right)$
on $\overline{k}$ extends by continuity to an action on $C$. Now restrict it to an action on $\cal{O}_{C}$.
By functoriality, it gives rise to a $\cal{G}$-action on $A_{\inf}=\A_{\inf}\left(C,\cal{O}_{C}\right)$. It preserves both
$p$ and the kernel of Fontaine's $A_{\inf}\to\cal{O}_{C}$, thus it lifts to continuous $\cal{G}$-actions
on $B_{\dR}^{q,+}:=\BB_{\dR}^{q}\left(C,\cal{O}_{C}\right)$ and $B_{\dR}^{>q,+}:=\BB_{\dR}^{>q}\left(C,\cal{O}_{C}\right)$,
for all $q\in\NN$.

\begin{notation}\label{notation:addingthelogtthin-reconstructionpaper}
  Let $B\in\left\{B_{\dR}^{q,+},B_{\dR}^{>q,+}\colon q\in\NN\right\}$ and $n\in\NN$.
  We consider the $k_{0}$-Banach space
  \begin{equation*}
    B\left[\log t\right]^{\leq n}:=\bigoplus_{i=0}^{n}B\left(\log t\right)^{i},
  \end{equation*}
  where $\log t$ is a formal symbol. 
  Let $\chi\colon\cal{G}\to\ZZ_{p}^{\times}$ denote the cyclotomic character
  and equip $B\left[\log t\right]^{\leq n}$
  with the continuous $\cal{G}$-action
  \begin{equation*}
    g\left(\sum_{i=0}^{n}b_{i}\left(\log t\right)^{i}\right)
    :=\sum_{i=0}^{n}g\left(b_{i}\right)\left( \log\chi(g) + \log t\right)^{i}
    \text{ for all } g\in\cal{G}.
  \end{equation*}
  Finally, $B\left[\log t\right]:=\text{``}\varinjlim_{n}\text{"}B\left[\log t\right]^{\leq n}$
  is an ind-$\cal{G}$-$k_{0}$-Banach space.
\end{notation}

An \emph{ind-$\cal{G}$-$k_{0}$-algebra} is a monoid in $\IndBan_{k_{0}}\left(\cal{G}\right)$.
Lemma~\ref{lem:B[log t]-monoid-recpaper} is obvious.

\begin{lem}\label{lem:B[log t]-monoid-recpaper}
  Fix $B$ as in Notation~\ref{notation:addingthelogtthin-reconstructionpaper}.
  Then $B\left[\log t\right]$ becomes an ind-$\cal{G}$-$k_{0}$-algebra as follows:
  The multiplication is the colimit the maps
  \begin{equation*}
    B\left[\log t\right]^{\leq n}
    \widehat{\otimes}_{k_{0}}
    B\left[\log t\right]^{\leq n}
    \to
    B\left[\log t\right]^{\leq 2n},
  \end{equation*}
  which are given by the usual multiplication of polynomials in the variable
  $\log t$. Its unit is
  \begin{equation*}
    k_{0} \stackrel{1_{B}}{\to} B = B\left[\log t\right]^{\leq 0} \to B\left[\log t\right],
  \end{equation*}
  where $1_{B}$ denotes the unit of $B$.
\end{lem}

\begin{defn}
  For every $q\in\NN$,
  \begin{align*}
    B_{\pdR}^{q,+} &:=B_{\dR}^{q,+}\left[\log t\right], \\
    B_{\pdR}^{>q,+} &:=B_{\dR}^{>q,+}\left[\log t\right], \\
    B_{\pdR}^{\dagger,+} &:=\text{``}\varinjlim_{q}\text{"}B_{\pdR}^{q,+}
    \stackrel{\text{\ref{lem:colimitAdRgreaterthanq}}}{\cong}\text{``}\varinjlim_{q}\text{"}B_{\pdR}^{>q,+}, \\
    B_{\pdR}^{q} &:= B_{\pdR}^{q,+} \widehat{\otimes}_{A_{\dR}^{1}}
  \text{``}\varinjlim_{t\times}\text{"}A_{\dR}^{1} \\
    B_{\pdR}^{>q} &:=B_{\pdR}^{>q,+} \widehat{\otimes}_{A_{\dR}^{1}}
  \text{``}\varinjlim_{t\times}\text{"}A_{\dR}^{1}, \text{ and} \\
    B_{\pdR}^{\dagger} &:=B_{\pdR}^{\dag,+} \widehat{\otimes}_{A_{\dR}^{1}}
  \text{``}\varinjlim_{t\times}\text{"}A_{\dR}^{1}.
  \end{align*}
  We view all of these objects as ind-$\cal{G}$-$k_{0}$-Banach algebras,
  equipped with the induced multiplications.
\end{defn}

\begin{defn}
  $B_{\pdR}^{\dag,+}$ is the \emph{positive overconvergent almost de Rham period ring}.
  $B_{\pdR}^{\dag}$ is the \emph{overconvergent almost de Rham period ring}.
\end{defn}

Fix an affinoid perfectoid $\left(C,\cal{O}_{C}\right)$-algebra $\left(R,R^{+}\right)$.

\begin{defn}
  For every $q\in\NN$,
  \begin{align*}
    \BB_{\pdR}^{q,+}\left(R,R^{+}\right)
      &:= \BB_{\dR}^{q,+}\left(R,R^{+}\right) \widehat{\otimes}_{B_{\dR}^{q,+}} B_{\pdR}^{q,+} \\
    \BB_{\pdR}^{>q,+}\left(R,R^{+}\right)
      &:= \BB_{\dR}^{>q,+}\left(R,R^{+}\right) \widehat{\otimes}_{B_{\dR}^{>q,+}} B_{\pdR}^{>q,+} \\
    \BB_{\pdR}^{\dagger,+}\left(R,R^{+}\right)
      &:=  \BB_{\dR}^{\dagger,+}\left(R,R^{+}\right) \widehat{\otimes}_{B_{\dR}^{\dagger,+}} B_{\pdR}^{\dagger,+} \\    
    \BB_{\pdR}^{q}\left(R,R^{+}\right)
      &:= \BB_{\dR}^{q}\left(R,R^{+}\right) \widehat{\otimes}_{B_{\dR}^{q}} B_{\pdR}^{q} \\    
    \BB_{\pdR}^{>q}\left(R,R^{+}\right)
      &:= \BB_{\dR}^{>q}\left(R,R^{+}\right) \widehat{\otimes}_{B_{\dR}^{>q}} B_{\pdR}^{>q}, \text{ and} \\
    \BB_{\pdR}^{\dagger}\left(R,R^{+}\right)
      &:= \BB_{\dR}^{\dagger}\left(R,R^{+}\right) \widehat{\otimes}_{B_{\dR}^{\dag}} B_{\pdR}^{\dag}.
  \end{align*}
  We view all of these objects as ind-$\cal{G}$-$k_{0}$-Banach algebras,
  equipped with the induced multiplications.
\end{defn}

Recall Definition~\ref{defn:bornologicalspace-reconstructionpaper}
and~\ref{defn:bornology-countable-basis-reconstructionpaper}.

\begin{lem}\label{lem:BpdRdagplus-bornology-countable-basis-reconstructionpaper}
  $\BB_{\pdR}^{\dag,+}\left(R,R^{+}\right)$ is bornological.
  Its bornology has countable basis.
\end{lem}

\begin{proof}
  Write $B^{q,+}:=\BB_{\dR}^{q,+}\left(R,R^{+}\right)$.
  $\BB_{\pdR}^{\dag,+}\left(R,R^{+}\right)$ is by definition the colimit of the diagram
  \begin{equation*}
    B^{2,+}\left[ \log t \right]^{\leq2}
    \longrightarrow
    B^{3,+}\left[ \log t \right]^{\leq3}
    \longrightarrow
    B^{4,+}\left[ \log t \right]^{\leq4}
    \longrightarrow
    \dots
  \end{equation*}
  where the maps in this diagram are injective
  by Proposition~\ref*{prop:BdRdagplus-bornology-countable-basis-reconstructionpaper}.
\end{proof}

\begin{lem}\label{lem:BpdRdag-bornology-countable-basis-reconstructionpaper}
  $\BB_{\pdR}^{\dag}\left(R,R^{+}\right)$ is bornological.
  Its bornology has countable basis.
\end{lem}

\begin{proof}
  Write $B^{q}:=\BB_{\dR}^{q,+}\left(R,R^{+}\right)$.
  $\BB_{\pdR}^{\dag}\left(R,R^{+}\right)$ is by definition the colimit of the diagram
  \begin{equation*}
    B^{2}\left[ \log t \right]^{\leq2}
    \stackrel{ t \times }{\longrightarrow}
    B^{3}\left[ \log t \right]^{\leq3}
    \stackrel{ t \times }{\longrightarrow}
    B^{4}\left[ \log t \right]^{\leq4}
    \stackrel{ t \times }{\longrightarrow}
    \dots
  \end{equation*}
  where the maps in this diagram are injective
  by Proposition~\ref{prop:BdRdagplus-bornology-countable-basis-reconstructionpaper}
  and Corollary~\ref{cor:tnonzerodivisor-on-BdRqplus-reconstructionpaper}.
\end{proof}


\section{Period sheaves}
\label{subsec:locan-period-sheaf}


\subsection{The positive overconvergent de Rham period sheaf}
\label{subsubsec:defn-periodsheaves-reconstructionpaper}

\subsubsection{Definitions}

Fix a locally Noetherian adic space $X$ over $\Spa(k,k^{\circ})$.
The constructions in the previous \S\ref{subsec:locan-period-ring} are functorial,
in the following sense. For an affinoid perfectoid $U\in X_{\proet}$ with
$\widehat{U}=\Spa\left(R,R^{+}\right)$,
Proposition~\ref{lem:sheavesonXproet-over-affinoidperfectoid}
gives $\left(R,R^{+}\right)=\left(\hO(U),\widehat{\mathcal{O}}^{+}(U)\right)$.
Thus on $X_{\proet,\affperfd}^{\fin}$ and for all $q\in\NN$, we get the presheaves
\begin{align*}
  \A_{\dR}^{q, \psh}\colon U &\mapsto \A_{\dR}^{q}\left(\hO(U),\widehat{\mathcal{O}}^{+}(U)\right), \\
  \A_{\dR}^{>q, \psh}\colon U &\mapsto \A_{\dR}^{>q}\left(\hO(U),\widehat{\mathcal{O}}^{+}(U)\right) \\
  \intertext{of $W(\kappa)$-Banach algebras,}
  \A_{\dR}^{\infty,\psh}\colon U &\mapsto \A_{\dR}^{\infty}\left(\hO(U),\widehat{\mathcal{O}}^{+}(U)\right)
\intertext{of $W(\kappa)$-ind-Banach algebras,}
  \BB_{\dR}^{q, +, \psh}\colon U &\mapsto \BB_{\dR}^{q, +}\left(\hO(U),\widehat{\mathcal{O}}^{+}(U)\right), \\
  \BB_{\dR}^{>q, +, \psh}\colon U &\mapsto \BB_{\dR}^{>q, +}\left(\hO(U),\widehat{\mathcal{O}}^{+}(U)\right) \\
\intertext{of $k_{0}$-Banach algebras, and}
  \BB_{\dR}^{\infty,+, \psh}\colon U &\mapsto \BB_{\dR}^{\infty,+}\left(\hO(U),\widehat{\mathcal{O}}^{+}(U)\right)
\end{align*}
of $k_{0}$-ind-Banach algebras. Write
$\A_{\dR}^{\dag,\psh}:=\A_{\dR}^{\infty,\psh}$ and
$\BB_{\dR}^{\dag,+, \psh}:=\BB_{\dR}^{\infty,+, \psh}$.

We view all the presheaves of Banach algebras
above as presheaves of ind-Banach algebras,
cf. Lemma~\ref{lem:banach-to-indbanach-sheaves}.
Now sheafification is allowed:
Denote the sheafifications of all the presheaves above by
$\A_{\dR}^{q}$,
$\A_{\dR}^{>q}$,
$\A_{\dR}^{\dag}=\A_{\dR}^{\infty}$,
$\BB_{\dR}^{q, +}$, $\BB_{\dR}^{>q, +}$,
and $\BB_{\dR}^{\dag,+}=\BB_{\dR}^{\infty,+}$
for $q\in\NN$,
respectively. Since sheafification is strongly monoidal,
see Lemma~\ref{lem:sh-strongly-monoidal}, all
of these sheaves are sheaves of $W(\kappa)$-ind-Banach
algebras and $k_{0}$-ind-Banach algebras, respectively.
By Lemma~\ref{lem:sheaves-on-Xproet-and-Xproetaffperfdfin},
these extend to sheaves of ind-Banach algebras on the pro-étale site $X_{\proet}$.

\begin{defn}\label{defn:Blaplus-Bla}
  $\BB_{\dR}^{\dag,+}$ is the
  \emph{positive overconvergent de Rham period sheaf}.
\end{defn}

We can make the sections of $\BB_{\dR}^{\dag,+}$ explicit.
We even have two proofs for this explicit description.
This is the content of the Theorems~\ref{thm:subsections-periodsheaves-affperfd}
and~\ref{thm:BdR>qplus+-sections-over-affperfd-recpaper} below.


\subsubsection{One the sections of $\BB_{\dR}^{\dag,+}$ and $\BB_{\dR}^{q,+}$}
\label{subsubsec:sectionsBBdRdag+-BBdRq+}

Here in \S\ref{subsubsec:sectionsBBdRdag+-BBdRq+}, we prove:

\begin{thm}\label{thm:subsections-periodsheaves-affperfd}
  Fix $q\in\NN_{\geq 2}$,
  together with an affinoid perfectoid $U\in X_{\proet}$.
  Let $\widehat{U}=\Spa\left( R , R^{+}\right)$, where
  $\left( R , R^{+}\right)$ denotes an affinoid perfectoid
  algebra over an affinoid perfectoid field $\left( K , K^{+}\right)$.
  Then the canonical morphism
  \begin{equation*}
    \BB_{\dR}^{q,+}\left( R, R^{+} \right) \stackrel{\cong}{\longrightarrow} \BB_{\dR}^{q,+}(U)
  \end{equation*}
  is an isomorphism of $k_{0}$-ind-Banach algebras. Consequently,
  \begin{equation*}
    \BB_{\dR}^{\dag,+}\left( R, R^{+} \right) \stackrel{\cong}{\longrightarrow} \BB_{\dR}^{\dag,+}(U)
  \end{equation*}
  is an isomorphism of $k_{0}$-ind-Banach algebras.
\end{thm}

From now on, we work towards the proof of Theorem~\ref{thm:subsections-periodsheaves-affperfd},
which we complete on page~\pageref{proof:thm:subsections-periodsheaves-affperfd}.
Our arguments rely on the following Proposition~\ref{prop:abstract-BdRqpluspsh-is-sheaf}.

\begin{prop}\label{prop:abstract-BdRqpluspsh-is-sheaf}
  For every $q\in\NN_{\geq2}$,  $\BB_{\dR}^{q,+,\psh}$
  is a sheaf of $k_{0}$-Banach spaces on $X_{\proet,\affperfd}^{\fin}$.
\end{prop}

Now we prepare the proof of Proposition~\ref{prop:abstract-BdRqpluspsh-is-sheaf},
which itself is presented on page~\pageref{proof:prop:abstract-BdRqpluspsh-is-sheaf}.
Until then, we forget about all norms and work in a purely algebraic setting.
Furthermore, we fix an affinoid perfectoid field
$\left(K,K^{+}\right)$ such that $K$ is the completion of an algebraic extension of
$k$. Define the localised 
site $Y:=X_{\proet,\affperfd}^{\fin}/X_{K}\cong X_{K,\proet,\affperfd}^{\fin}$,
cf.~\cite[Proposition 3.15]{Sch13pAdicHodge}.
Write $A_{\inf}:=\A_{\inf}\left(K,K^{+}\right)$.
Let $\mathfrak{m}\subseteq A_{\inf}$ denote the
ideal generated by all $\left[\left(p^{\flat}\right)^{1/p^{N}}\right]$.
By~\cite[\S 2.1.6]{GabberRameroalmostringtheory2003},
$\left(A_{\inf},\mathfrak{m}\right)$ is an \emph{almost setup}.
We refer the reader to appendix~\ref{subsec:almostmaths-appendix}
for a reminder of basic facts which we use
without further reference in the remainder of \S\ref{subsec:locan-period-sheaf}.

  From now on, we fix the setup $\left(A_{\inf},\mathfrak{m}\right)$.
    
  \begin{lem}\label{lem:abstract-Bla-is-sheaf-1}
    The presheaf $\A_{\inf}[\zeta]\colon U\mapsto \A_{\inf}(U)[\zeta]$
    is a sheaf of (abstract) $A_{\inf}$-algebras on $Y$ such that
    $\Ho^{i}\left(U,\A_{\inf}[\zeta]\right)$ is almost zero for every $i>0$ and for all $U\in Y$.
  \end{lem}
  
  \begin{proof}    
    By Lemma~\ref{lem:coprod-almostsheaves},
    $\A_{\inf}[\zeta]
    =\bigoplus_{\alpha\in\NN^{d}}\A_{\inf}\zeta^{\alpha}
    \cong\varinjlim_{I}\bigoplus_{\alpha\in I}\A_{\inf}\zeta^{\alpha}$,
    where the colimit runs over all finite subsets $I\subseteq\NN^{d}$.
    Thus
    \begin{equation*}
      \Ho^{i}\left(U,\A_{\inf}[\zeta]\right)^{\al}
      =\varinjlim_{I}\Ho^{i}\left(U,\bigoplus_{\alpha\in I}\A_{\inf}\zeta^{\alpha}\right)^{\al}
      =\varinjlim_{I}\bigoplus_{\alpha\in I}\Ho^{i}\left(U,\A_{\inf}\zeta^{\alpha}\right)^{\al}
      =0,
    \end{equation*}
    using~\cite[\href{https://stacks.math.columbia.edu/tag/0739}{Tag 0739}]{stacks-project}
    and~\cite[Theorem 6.5(ii)]{Sch13pAdicHodge}.
  \end{proof}

  We aim to reconstruct $\BB_{\dR}^{q,+,\psh}$ out of $\A_{\inf}[\zeta]$
  via the following result.
  
  \begin{lem}\label{lem:abstract-Bla-is-sheaf-2}
    Consider a sheaf $\cal{F}$ of almost $A_{\inf}$-modules on $Y$
    with $\Ho^{i}\left(U,\cal{F}\right)=0$ for every $i>0$ and for all $U\in Y$
    which carries a monomorphism $\phi\colon\cal{F}\to\cal{F}$. Then
    $U\mapsto\coker\phi(U)$ is a sheaf of almost $A_{\inf}$-modules
    with vanishing higher cohomology.
  \end{lem}
  
  \begin{proof}
    Define $\cal{G}$ to be the sheafification of the presheaf
    $U\mapsto\coker\phi(U)$. Because sheafification is exact,
    we get a short exact sequence
    $0\to\cal{F}\stackrel{\phi}{\to}\cal{F}\to\cal{G}\to0$. The result
    follows from the associated long exact sequence.
    \end{proof}
  
    Write $\A_{\inf}(U)\left[\xi/p^{q}\right]:=\A_{\inf}(U)[\zeta]/\left(p^{m}\zeta-\xi\right)$
    for every $U\in Y$.
      
  \begin{lem}\label{lem:abstract-Bla-is-sheaf-4}
    The canonical maps $\A_{\inf}(U)\left[\xi/p^{q}\right]\to\BB_{\inf}(U)$
    are injective.
  \end{lem}

  \begin{proof}
    We consider a polynomial $f=\sum_{\alpha\in\NN}f_{\alpha}\zeta\in\A_{\inf}(U)[\zeta]$
    such that $f\left(\xi / p^{q}\right)=0$ in $\BB_{\inf}(U)$. We have to show that
    there exists a polynomial $g$ with coefficients in $\A_{\inf}(U)$ such that
    $f=g(p^{q}\zeta-\xi)$.
    
    In a general commutative ring, the polynomial division algorithm only works
    when the divisor is monic. Because $(p^{q}\zeta-\xi)$ is not monic,
    we choose to work in the ring $\BB_{\inf}(U)[\zeta]$. It contains $\A_{\inf}(U)[\zeta]$
    as a subring, by Lemma~\ref{Ainf-strictpring}, so this is allowed. This is how we find
    a polynomial $g^{\prime}$ with coefficients in $\BB_{\inf}(U)$ such that
    $f=g^{\prime}(\zeta-\xi / p^{q})$. Thus it suffices to show that the
    coefficients $g_{\alpha}$ of $g:=p^{q}g^{\prime}$ lie $\A_{\inf}$. Indeed, one computes
    \begin{equation*}
      f_{0} = - g_{0}\frac{\xi}{p^{q}} \quad\text{and}\quad
      f_{\alpha} = g_{\alpha-1}-g_{\alpha}\frac{\xi}{p^{q}}
      \text{ for all $\alpha\in\NN_{\geq1}$.}
    \end{equation*}
    Suppose that $g_{0}$ does not lie
    in $\A_{\inf}(U)$. One may write $g_{0}=p^{-n}g^{\prime}$ with
    $g^{\prime}\in\A_{\inf}$, not divisible by $p^{n}$. Then
    $\xi g_{0}^{\prime}=-f_{0}p^{q+n}$ such that Lemma~\ref{lem:Ainf-divisionbyxi-andp}
    implies that $p^{q+n}$ divides $g_{0}^{\prime}$, contradiction. That is,
    $g_{0}\in\A_{\inf}(U)$. Using similar arguments, induction along
    $\alpha$ gives $g_{\alpha}\in\A_{\inf}(U)$ for all $\alpha\in\NN$.
  \end{proof}
    
  For any $s,l\in\NN$, we consider the following three presheaves of $A_{\inf}$-algebras on $Y$:
  \begin{equation*}
  \begin{split}
    \A_{\inf}\left[\frac{\xi}{p^{q}}\right]\colon
    U&\mapsto\A_{\inf}(U)\left[\frac{\xi}{p^{q}}\right], \\
    \left. \A_{\inf}\left[\frac{\xi}{p^{q}}\right] \middle/^{\psh} \xi^{s} \right.\colon
    U&\mapsto \A_{\inf}(U)\left[\frac{\xi}{p^{q}}\right]/\xi^{s}, \\
    \left. \left(\A_{\inf}\left[\frac{\xi}{p^{q}}\right] \middle/^{\psh} \xi^{s}\right)\middle/^{\psh}p^{q} \right.\colon
    U&\mapsto\left(\A_{\inf}(U)\left[\frac{\xi}{p^{q}}\right]/\xi^{s}\right)/p^{l}.
  \end{split}
  \end{equation*}
  The superscript $\psh$ highlights that we compute the quotients in
  the category of presheaves.
  
  \begin{lem}\label{lem1:apply-lem:abstract-Bla-is-sheaf-2-threetimes}
    $\A_{\inf}\left[\xi / p^{q}\right]^{\al}$ is a sheaf with vanishing higher cohomology.
  \end{lem}
  
  \begin{proof}
     $p^{q}\zeta-\xi\in\A_{\inf}(U)[\zeta]$
     is not a zero-divisor
     for any $U\in Y$. To prove this, let
     $f=\sum_{\alpha\in\NN}a_{\alpha}\zeta\in\A_{\inf}(U)[\zeta]$ such that
     $f\left(p^{q}\zeta-\xi\right)=0$. This implies $f_{0}p^{q}=0$,
     thus $f_{0}=0$, by Lemma~\ref{Ainf-strictpring}.
     Furthermore, $f_{\alpha-1}p^{q}-f_{\alpha}\xi=0$
     for all $\alpha\in\NN_{\geq1}$.
     Using that $\xi$ is not a zero-divisor, cf. Lemma~\ref{lem:kernel-theta},
     a simply induction gives $f_{\alpha}=0$ for all $\alpha\geq0$.
     That is $f=0$, as desired.
     
     We can now apply Lemma~\ref{lem:abstract-Bla-is-sheaf-2} to
     $\A_{\inf}[\zeta]^{\al}$ with
     $p^{q}\zeta-\xi\colon\A_{\inf}[\zeta]^{\al}\hookrightarrow\A_{\inf}[\zeta]^{\al}$,
     thanks to the previous paragraph.
  \end{proof}
  
  \begin{lem}\label{lem2:apply-lem:abstract-Bla-is-sheaf-2-threetimes}
    For any $s\in\NN$,
    $\left(\A_{\inf}\left[\xi / p^{q}\right] \middle/^{\psh} \xi^{s}\right)^{\al}$ is a sheaf
    with vanishing higher cohomology.
  \end{lem}
  
  \begin{proof}
    $\xi\in\BB_{\inf}$(U) is not a zero-divisor for any $U\in Y$,
    cf. Lemma~\ref{lem:kernel-theta}.
    Lemma~\ref{lem:abstract-Bla-is-sheaf-4} implies
    that $\xi^{s}\in\A_{\inf}(U)\left[ \xi / p^{m}\right]$
    is not a zero-divisor. Thanks to this and Lemma~\ref{lem1:apply-lem:abstract-Bla-is-sheaf-2-threetimes},
    we can now Lemma~\ref{lem:abstract-Bla-is-sheaf-2} to
    $\A_{\inf}\left[ \xi / p^{q}\right]^{\al}$ and
    $\xi^{s}\colon\A_{\inf}\left[\xi / p^{q}\right]^{\al}
      \hookrightarrow\A_{\inf}\left[ \xi / p^{q}\right]^{\al}$.
  \end{proof}  
  
  \begin{lem}\label{lem3:apply-lem:abstract-Bla-is-sheaf-2-threetimes}
    For any $s,l\in\NN$,
    $\left(\left(\A_{\inf}\left[\xi / p^{q}\right] \middle/^{\psh} \xi^{s}\right)\middle/^{\psh}p^{l}\right)^{\al}$
    is a sheaf with vanishing higher cohomology.
  \end{lem}
  
  \begin{proof}
    We claim that $p^{l}\in\A_{\inf}(U)\left[\frac{\xi}{p^{q}}\right]/\xi^{s}$
    is not a zero-divisor for any $U\in Y$. To prove this, we first note that
    $p$ is not a zero-divisor in $\A_{\inf}(U)\left[\frac{\xi}{p^{m}}\right]$,
    cf. Lemma~\ref{lem:abstract-Bla-is-sheaf-4}. Next,
    $\theta(p)=p\in\hO(U)$ is not zero, that is $\xi$ does not divide $p$.
    This implies the claim.
   
    We can now apply Lemma~\ref{lem:abstract-Bla-is-sheaf-2} to
    $\left(\A_{\inf}\left[\xi / p^{q}\right] \middle/^{\psh} \xi^{s}\right)^{\al}$
    carrying
    $p^{l}\colon
      \left(\A_{\inf}\left[\xi / p^{q}\right] \middle/^{\psh} \xi^{s}\right)^{\al}
      \hookrightarrow\left(\A_{\inf}\left[\xi / p^{q}\right] \middle/^{\psh} \xi^{s}\right)^{\al}$,
    thanks to the previous paragraph and Lemma~\ref{lem2:apply-lem:abstract-Bla-is-sheaf-2-threetimes}.
  \end{proof}  
  
  Define the presheaf
  \begin{equation*}
    \cal{F}:=
    \varprojlim_{s,l\in\NN}
    \left. \left(\A_{\inf}\left[\frac{\xi}{p^{q}}\right] \middle/^{\psh} \xi^{s}\right)\middle/^{\psh}p^{q} \right. .
  \end{equation*}
  Lemma~\ref{lem3:apply-lem:abstract-Bla-is-sheaf-2-threetimes} and~\ref{lem:lim-preserves-almost-sheafy}  
  imply that $\cal{F}^{\al}$is a sheaf of almost $A_{\inf}$-modules. This implies the second sentence
  in the following result:
  
  \begin{lem}\label{lemAdRqpshal-sheaf}
    We have the canonical isomorphism $\cal{F}\isomap\A_{\dR}^{q,\psh}$ of presheaves.
    Consequently, $\A_{\dR}^{q,\psh,\al}$ is a sheaf of almost $A_{\inf}$-modules on $Y$.
  \end{lem}
  
  \begin{proof}
    Fix $U\in Y$ and compute
    \begin{equation*}
      \cal{F}(U)
      \cong \varprojlim_{s\in\NN} \A_{\inf}(U)\left[\frac{\xi}{p^{q}}\right]/\left(\xi^{s},p^{s}\right)
      \cong \varprojlim_{s\in\NN} \A_{\inf}(U)\left[\frac{\xi}{p^{q}}\right]/\left(\xi,p\right)^{s}.
    \end{equation*}  
    We may therefore check that the following canonical map is an isomorphism:
    \begin{equation}\label{lem:abstract-Bla-is-sheaf-3-1}
      \A_{\dR}^{q,\psh}(U) \to
      \varprojlim_{s\in\NN} \A_{\inf}(U)\left[\frac{\xi}{p^{q}}\right]/\left(\xi,p\right)^{s}.
    \end{equation}  
    To do this, we consider the exact sequence
    \begin{equation}\label{eq:abstract-Bla-is-sheaf-3-2}
      \A_{\inf}(U)[\zeta]
      \to \A_{\inf}(U)[\zeta]
      \to \A_{\inf}(U)\left[\frac{\xi}{p^{q}}\right]
      \to 0
    \end{equation}
    of abstract $\A_{\inf}(U)$-modules. Equip
    $\A_{\inf}(U)[\zeta]$ with the
    $(p,\left[p^{\flat}\right])\stackrel{\text{(\ref{lem:kernel-theta})}}{=}\left(\xi,p\right)$-adic norm
    with base $p$, cf. Definition~\ref{defn:I-adic-seminorm-norm}.
    Let $\A_{\inf}(U)\left[\xi / p^{q}\right]$ carry
    the quotient seminorm. Thus, for $f\in\A_{\inf}(U)\left[\xi/p^{q}\right]$,
    \begin{equation*}
      \|f\|:=\inf \|\sum_{\alpha\in\NN}a_{\alpha}\zeta\|
    \end{equation*}
    where the infinum varies over all expressions
    $f=\sum_{\alpha\in\NN}a_{\alpha}\left(\xi / p^{m}\right)^{\alpha}$
    with $a_{\alpha}\in\A_{\inf}(U)$.
    It follows from~\cite[Lemma 3.9]{BBB16} that
    this turns~(\ref{eq:abstract-Bla-is-sheaf-3-2})
    into a costrictly exact complex of seminormed $\A_{\inf}$-modules.
    Therefore, in order to show that~(\ref{lem:abstract-Bla-is-sheaf-3-1})
    is an isomorphism, we may compute that
    the seminorm on $\A_{\inf}(U)\left[ \xi / p^{q}\right]$
    induces the $\left(\xi,p\right)$-adic topology.
    Indeed, applying the separated completion functor
    to~(\ref{eq:abstract-Bla-is-sheaf-3-2}) would then
    give the exact complex 
    \begin{equation*}
      \A_{\inf}(U)\<\zeta\>
      \to \A_{\inf}(U)\<\zeta\>
      \to 
      \begin{array}{l}
        \widehat{\A_{\inf}(U)\left[\frac{\xi}{p^{q}}\right]} \\
        \cong\varprojlim_{s\in\NN} \A_{\inf}(U)\left[\frac{\xi}{p^{q}}\right]/(\xi,p)^{s}
      \end{array}
      \to 0,
    \end{equation*}    
    and Lemma~\ref{lem:idealpmzeta-xi-closed-in-Ainflanglerzetaangle} would finish the proof.
    
    Now we check that the seminorm on $\A_{\inf}(U)\left[ \xi / p^{q}\right]$
    induces the $\left(\xi,p\right)$-adic topology.
    On $\A_{\inf}(U)\left[\xi / p^{q}\right]$, we denote the quotient norm
    by $\|\cdot\|^{\prime}$ and the $(\xi,p)$-adic norm with base $p$ by $\|\cdot\|$.
    Denote the epimorphism
    $\A_{\inf}(U)[\zeta]
      \to \A_{\inf}(U)\left[ \xi / p^{q}\right]$ by $\sigma$.
    Fix $f\in\A_{\inf}(U)\left[ \xi / p^{q}\right]$ with $\|f\|^{\prime}=p^{-s^{\prime}}$ and $\|f\|=p^{-s}$.
    In particular, $f\in(\xi,p)^{s}$, thus we may write
    $f=\sum_{i=0}^{s}f_{i}\xi^{i}p^{t-i}$. Pick lifts $\widetilde{f}_{i}$ of the
    $f_{i}$ along $\sigma$ such that $g:=\|\sum_{i=0}^{t}\widetilde{f}_{i}\xi^{i}p^{t-i}\|$
    is a lift of $f$. Then compute
    \begin{equation*}
      \|f\|^{\prime}
      \leq\|g\|
      \leq\max_{i=0,\dots,s}\|\widetilde{f}_{i}\xi^{i}p^{t-i}\|
      \leq p^{-s}\|\widetilde{f}_{i}\|
      \leq p^{-s}
      = \|f\|.
    \end{equation*}
    On the other hand, we may pick a lift $\widetilde{f}$
    of $f$ along $\sigma$ such that $\|\widetilde{f}\|=p^{s^{\prime}}$.
    Then the coefficients of $\widetilde{f}$ lie all in the ideal
    $\left(\xi,p\right)^{s^{\prime}}\subseteq\A_{\inf}(U)$ and we deduce
    that $\widetilde{f}$ lies in the ideal $\left(\xi,p\right)^{s^{\prime}}\subseteq\A_{\inf}(U)[\zeta]$.
    We find $\|f\|\leq p^{-s^{\prime}}=\|f\|$. This implies
    $\|f\|^{\prime}=\|f\|$, as desired.
  \end{proof}
  
  Having established Lemma~\ref{lemAdRqpshal-sheaf}, we now invert $p$.

  \begin{lem}\label{lemAdRqpshinvertp-sheaf}
    $\A_{\dR}^{q,\psh}[1/p]\colon U\mapsto\A_{\dR}^{q,\psh}(U)[1/p]$ is a sheaf of $B_{\inf}$-modules on $Y$.
  \end{lem}

  In the following proof, we write
  $B_{\inf}:=\BB_{\inf}\left(K,K^{+}\right)=A_{\inf}[1/p]$.
  
  \begin{proof}[Proof of Lemma~\ref{lemAdRqpshinvertp-sheaf}]
    We claim that $\left[p^{\flat}\right]$ divides $p$ in $\A_{\dR}^{q,\psh}(U)$.
    Without less of generality, it suffices to check that
    $\left[p^{\flat}\right]$ divides $p$ in the ring $A_{\dR}^{2}:=A_{\inf}\left\< \xi / p^{2}\right\>$.
    Fix $a\in A_{\inf}$ as in Lemma~\ref{lem:kernel-theta}
    such that $w:=pa+\eta$ is a lift of $\left[p^{\flat}\right]$
    along the map $A_{\inf}[\eta]\to A_{\inf}$, $\eta\mapsto\xi$.
    This $w$ is invertible in the ring $B_{\inf}\llbracket \eta \rrbracket$:
    \begin{equation*}
      w^{-1}
      =(pa)^{-1}\left(1+(pa)^{-1}\eta\right)^{-1}
      =(pa)^{-1}\sum_{i\geq0}(pa)^{-i}\eta^{i}.
    \end{equation*}
    Now rewrite
    $f:=\sum_{i\geq0}(pa)^{-i}\eta^{i}
    =\sum_{i\geq0}a^{-i}p^{i}\left( \eta / p^{2}\right)^{i}$
    and note that $\|a\|=1$ in $A_{\inf}$, because it is a unit.
    We observe that $f$ is an element in $A_{\inf}\left\< \eta / p^{m}\right\>$
    and we get that $[\pi]$ divides $p$ in this ring of convergent
    power series. Passing through the map
    $A_{\inf}\left\< \eta / p^{m} \right\>\to A_{\dR}^{2}$
    gives the claim.    
    
    Now Lemma~\ref{lemAdRqpshinvertp-sheaf} follows from
    Lemma~\ref{lem:ractusinvertibly-then-almostsheaf-implies-sheaf}.    
  \end{proof}

\begin{proof}[Proof of Proposition~\ref{prop:abstract-BdRqpluspsh-is-sheaf}]
\label{proof:prop:abstract-BdRqpluspsh-is-sheaf}
  Fix an affinoid perfectoid $U\in X_{\proet}$ together with
  a finite covering $\mathfrak{U}$ by affinoid perfectoids. We show that
  the following cochain complex of $k_{0}$-Banach spaces
  \begin{equation}\label{proof:prop:abstract-BdRqpluspsh-is-sheaf-toshow:exact}
    0 \to \BB_{\dR}^{q,+,\psh}\left(U\right)
    \to \prod_{V\in\mathfrak{U}}\BB_{\dR}^{q,+,\psh}\left(V\right)
    \to \prod_{W,W^{\prime}\in\mathfrak{U}}\BB_{\dR}^{q,+,\psh}\left(W\times_{U}W^{\prime}\right)
  \end{equation}
  is exact. Suppose that $\left(\hO(U),\widehat{\cal{O}}^{+}(U)\right)$ is a perfectoid affinoid
  $\left(K,K^{+}\right)$-algebra. We may then restrict to the localised
  site $Y=X_{\proet,\affperfd}^{\fin}/X_{K}$. Now Lemma~\ref{lemAdRqpshinvertp-sheaf}
  applies, giving that
  \begin{equation}\label{proof:prop:abstract-BdRqpluspsh-is-sheaf-toshow:exactdecompleted}
    0 \to \A_{\dR}^{q,\psh}\left(U\right)[1/p]
    \stackrel{\phi}{\to} \prod_{V\in\mathfrak{U}}\A_{\dR}^{q,\psh}\left(V\right)[1/p]
    \to \prod_{W,W^{\prime}\in\mathfrak{U}}\A_{\dR}^{q,\psh}\left(W\times_{U}W^{\prime}\right)[1/p]
  \end{equation}  
  is an exact complex of abstract $k_{0}$-vector spaces. As all products here are finite,
  since $\mathfrak{U}$ is a finite covering, (\ref{proof:prop:abstract-BdRqpluspsh-is-sheaf-toshow:exact})
  is the completion of~(\ref{proof:prop:abstract-BdRqpluspsh-is-sheaf-toshow:exactdecompleted}).
  By Lemma~\ref{lem:sepcompl-SNrmF-BanF-exact},
  we may therefore check that~(\ref{proof:prop:abstract-BdRqpluspsh-is-sheaf-toshow:exactdecompleted})
  is strictly exact as a cochain complex of normed $k$-vector spaces, which is what we do.
  
  Since~\ref{proof:prop:abstract-BdRqpluspsh-is-sheaf-toshow:exactdecompleted} is exact
  as a complex of abstract $k_{0}$-vector spaces, we only have to check that $\phi$ is strict.
  Its image is closed, as it is the kernel of a map. It remains to check that it is open.
  Similarly, we may check that the morphism $\varphi$
  in the following cochain complex is open:
  \begin{equation}\label{proof:prop:abstract-BdRqpluspsh-is-sheaf-toshow:exactdecompletedintegral}
    0 \to \A_{\dR}^{q,\psh}\left(U\right)
    \stackrel{\varphi}{\to} \prod_{V\in\mathfrak{U}}\A_{\dR}^{q,\psh}\left(V\right)
    \stackrel{\psi}{\to} \prod_{W,W^{\prime}\in\mathfrak{U}}\A_{\dR}^{q,\psh}\left(W\times_{U}W^{\prime}\right).
  \end{equation}    
  From the isomorphism~(\ref{lem:abstract-Bla-is-sheaf-3-1})
  and~\cite[\href{https://stacks.math.columbia.edu/tag/05GG}{Tag 05GG}]{stacks-project},
  we find that that $\A_{\dR}^{q}(U)$, $\prod_{V\in\mathfrak{U}}\A_{\dR}^{q}(V)$,
  and $\ker\psi$ carry the $\left(p,\xi\right)$-adic topology. Therefore,
  it suffices to compute that
  $\varphi\left(\left(p,\xi\right)\right)\subseteq\ker\psi$
  is open. This requires the following two
  Lemma~\ref{lem:open1-prop:abstract-BdRqpluspsh-is-sheaf}
  and~\ref{lem:open2-prop:abstract-BdRqpluspsh-is-sheaf}.
  
  \begin{lem}\label{lem:open1-prop:abstract-BdRqpluspsh-is-sheaf}
    $\left[p^{\flat}\right]^{2}\left(\left(p,\xi\right)\cap\ker\psi\right)
    \subseteq
    \varphi\left(\left(p,\xi\right)\right)$.
  \end{lem}

  \begin{proof}
    For any $Z\in Y$,
    the isomorphism~(\ref{lem:abstract-Bla-is-sheaf-3-1})
    and~\cite[\href{https://stacks.math.columbia.edu/tag/05GG}{Tag 05GG}]{stacks-project} give
    \begin{equation*}
      \A_{\dR}^{q}(Z)/(p,\xi) \cong \A_{\inf}(Z)\left[\frac{\xi}{p^{q}}\right]/(p,\xi)
    \end{equation*}
    Denote the projections $\A_{\dR}^{q}(Z)\to\A_{\inf}(Z)\left[ \xi / p^{q}\right]/(p,\xi)$
    by $\sigma$ and consider the commutative diagram
    \begin{equation*}
\adjustbox{scale= 0.9,center}{%
      \begin{tikzcd}
        0 \arrow{r} &
        \A_{\dR}^{q}(U) \arrow{r}{\varphi}\arrow{d}{\sigma} &
        \prod_{V\in\mathfrak{U}}\A_{\dR}^{q}(V) \arrow{r}{\psi}\arrow{d}{\sigma} &
        \prod_{W,W^{\prime}\in\mathfrak{U}}\A_{\dR}^{q}\left(W\times_{U} W^{\prime}\right)\arrow{d}{\sigma} \\
        0 \arrow{r} &
        \A_{\inf}(U)\left[\frac{\xi}{p^{q}}\right]/(p,\xi) \arrow{r} {\varphi^{\prime}}&
        \prod_{V\in\mathfrak{U}}\A_{\inf}(V)\left[\frac{\xi}{p^{q}}\right]/(p,\xi) \arrow{r}{\psi^{\prime}} &
        \prod_{W,W^{\prime}\in\mathfrak{U}}\A_{\inf}\left(W\times_{U} W^{\prime}\right)\left[\frac{\xi}{p^{q}}\right]/(p,\xi)
      \end{tikzcd}
}
    \end{equation*}
    Lemma~\ref{lemAdRqpshal-sheaf}
    and~\ref{lem3:apply-lem:abstract-Bla-is-sheaf-2-threetimes}
    say that its rows are almost exact with respect to a suitable almost setup
    $(A_{\inf},\mathfrak{m})$. Now let $f\in\left(p,\xi\right)\cap\ker\psi$
    be arbitrary. Then $\sigma(f)=0$, and there exists an element $\widetilde{f}\in\A_{\dR}^{q}(U)$
    such that $\varphi\left( \widetilde{f} \right) = \left[p^{\flat}\right]f$. We find
    $\sigma\left( \widetilde{f} \right)\in\ker\varphi^{\prime}$, thus the almost exactness
    of the diagram above implies $\left[p^{\flat}\right]\sigma\left( \widetilde{f} \right)=0$.
    We find $\left[p^{\flat}\right]\widetilde{f}\in\ker\sigma=(p,\xi)$ and
    \begin{equation*}
      \varphi\left(\left[p^{\flat}\right]\widetilde{f}\right)
      =\left[p^{\flat}\right]^{2}f,
    \end{equation*}
    as desired.
  \end{proof}
  
  \begin{lem}\label{lem:open2-prop:abstract-BdRqpluspsh-is-sheaf}
    The multiplication-by-$\left[p^{\flat}\right]$ map on $\prod_{V\in\mathfrak{U}}\A_{\dR}^{m}(V)$
    is open.
  \end{lem}
  
  \begin{proof}
    We claim that $(p,\xi)^{s} \subseteq \left[p^{\flat}\right](p,\xi)^{s}$.
    The ideal $(p,\xi)^{s+1}$ is generated by the elements
    $p^{(s+1)-i}\xi^{i}$ for $i=0,\dots,s+1$. As $\left[p^{\flat}\right]$ divides $p$,
    cf. the proof of Lemma~\ref{lemAdRqpshinvertp-sheaf}, we find
    $p^{(s+1)-i}\xi^{i}\in\left[p^{\flat}\right](p,\xi)^{s}$ for $i\neq0$.
    The case $i=0$ follows from the description of $\xi$ in
    Lemma~\ref{lem:kernel-theta} which gives
    $\xi^{s+1}=\left[p^{\flat}\right]\xi^{s}-ap\xi^{s}\in\left[p^{\flat}\right](p,\xi)^{s}$.
  \end{proof}
  
  Lemma~\ref{lem:open2-prop:abstract-BdRqpluspsh-is-sheaf} implies that
  $\left[p^{\flat}\right]^{2}\left(\left(p,\xi\right)\cap\ker\psi\right)\subseteq\ker\psi$
  is an open subset. Lemma~\ref{lem:open1-prop:abstract-BdRqpluspsh-is-sheaf} gives that
  $\varphi\left(\left(p,\xi\right)\right)$ is open.
  As explained above, this finishes the proof of Proposition~\ref{prop:abstract-BdRqpluspsh-is-sheaf}.
\end{proof}

\begin{proof}[Proof of Theorem~\ref{thm:subsections-periodsheaves-affperfd}]
\label{proof:thm:subsections-periodsheaves-affperfd}
  The first isomorphism comes directly from Proposition~\ref{prop:abstract-BdRqpluspsh-is-sheaf}.
  Together with Corollary~\ref{cor:filteredcol-inIndBan-stronglyexact}
  and Lemma~\ref{lem:calFsheaf-FcirccalF-sheaf-exactfunctor}, it also implies that
  $\BB_{\dR}^{\dag,+,\psh}$ is a sheaf of $k_{0}$-ind-Banach spaces
  on $X_{\proet,\affperfd}^{\fin}$.
  This gives the second isomorphism in Theorem~\ref{thm:subsections-periodsheaves-affperfd}.
\end{proof}


\subsubsection{One the sections of $\BB_{\dR}^{\dag,+}$ and $\BB_{\dR}^{>q,+}$}
\label{subsubsec:sectionsBBdRdag+-BBdRgreaterthanq+}

Here in \S\ref{subsubsec:sectionsBBdRdag+-BBdRgreaterthanq+}, we prove:

\begin{thm}\label{thm:BdR>qplus+-sections-over-affperfd-recpaper}
  Fix $q\in\NN_{\geq 2}$,
  together with an affinoid perfectoid $U\in X_{\proet}$.
  Let $\widehat{U}=\Spa\left( R , R^{+}\right)$, where
  $\left( R , R^{+}\right)$ denotes an affinoid perfectoid
  algebra over an affinoid perfectoid field $\left( K , K^{+}\right)$.
  Then we have the canonical isomorphisms
  \begin{align*}
    \A_{\dR}^{>q,+}\left( R, R^{+} \right) &\stackrel{\cong}{\longrightarrow} \A_{\dR}^{>q,+}(U) \text{ and}\\
    \BB_{\dR}^{>q,+}\left( R, R^{+} \right) &\stackrel{\cong}{\longrightarrow} \BB_{\dR}^{>q,+}(U)
  \end{align*}
  of $W(\kappa)$-ind-Banach algebras and $k_{0}$-ind-Banach algebras.
  Consequently,
  \begin{equation*}
    \BB_{\dR}^{\dag,+}\left( R, R^{+} \right) \stackrel{\cong}{\longrightarrow} \BB_{\dR}^{\dag,+}(U)
  \end{equation*}
  is an isomorphism of $k_{0}$-ind-Banach algebras.
\end{thm}

See page~\pageref{proof:prop:BdR>qplus+-sections-over-affperfd-recpaper}
below for the proof of Theorem~\ref{thm:BdR>qplus+-sections-over-affperfd-recpaper}.
We start with Definition~\ref{defn:poperator}.

\begin{defn}\label{defn:poperator}
  Given a filtered group $M$, $\p\left(M\right)$ is the topological group $M$
  with open neighbourhood basis
  \begin{equation*}
    \left\{ p^{n}M + \Fil^{n}M\right\}_{n\geq0}.
  \end{equation*}
  This construction is functorial. In particular, we may associate to a given cochain complex $M^{\bullet}$
  of filtered groups a cochain complex $\p\left(M^{\bullet}\right)$ of topological groups.
\end{defn}

\begin{lem}\label{lem:subsections-periodsheaves-affperfd-1}
  Consider a strictly exact sequence of filtered abelian groups
  \begin{equation*}
    M^{\bullet}\colon
    M^{\prime} \stackrel{d^{\prime}}{\longrightarrow}
    M \stackrel{d}{\longrightarrow}
    M^{\prime\prime}.
  \end{equation*}
  If
  $M^{\prime\prime}/\Fil^{s}M^{\prime\prime}$ has no $p$-power torsion for all $s\geq0$,
  \begin{equation*}
    \p\left(M^{\bullet}\right)\colon
    \p\left(M^{\prime}\right) \stackrel{d^{\prime}}{\longrightarrow}
    \p\left(M\right) \stackrel{d}{\longrightarrow}
    \p\left(M^{\prime\prime}\right)
  \end{equation*}
  is strictly exact as a complex of topological abelian groups.
\end{lem}


\begin{proof}
    Since $M^{\bullet}$ is strictly exact as a complex of filtered abelian groups,
    we have the exact sequences
    \begin{equation*}
      M^{\bullet,s}\colon
      \frac{M^{\prime}}{\Fil^{s}M^{\prime}} \stackrel{d^{\prime,s}}{\longrightarrow}
      \frac{M}{\Fil^{s}M} \stackrel{d^{s}}{\longrightarrow}
      \frac{M^{\prime\prime}}{\Fil^{s}M^{\prime\prime}}
    \end{equation*}
    for every $s\geq0$. Equip every
    $N^{s}\in\left\{
    M^{\prime} / \Fil^{s}M^{\prime},
    M / \Fil^{s}M,
    M^{\prime\prime} / \Fil^{s}M^{\prime\prime}\right\}$
    with the $p$-adic filtration:
    \begin{equation*}
      \Fil^{l}N^{s}:=p^{l}N^{s} \text{ for all } l\in\NN.
    \end{equation*}
    Then $M^{\bullet,s}$ is strictly
    exact: This follows from the observations
    \begin{equation*}
      d^{s}\left(p^{s}\frac{M}{\Fil^{s}M}\right)
      =p^{s}d^{s}\left(\frac{M}{\Fil^{s}M}\right)
      =p^{s}\frac{M^{\prime\prime}}{\Fil^{s}M^{\prime\prime}}
    \end{equation*}
    and
    \begin{equation*}
      d^{\prime,s}\left( p^{s} \frac{M^{\prime}}{\Fil^{s}M^{\prime}} \right)
      =p^{s}d^{\prime,s}\left( \frac{M^{\prime}}{\Fil^{s}M^{\prime}} \right)
      =p^{s}\ker d^{s}
      =\left(p^{s}\frac{M}{\Fil^{s}M}\right)\ker d^{\prime,s}.
    \end{equation*}
    This implies that the complexes
    \begin{equation*}
      M^{\bullet,s}\colon
      \left. \frac{M^{\prime}}{\Fil^{s}M^{\prime}}\middle/p^{s} \right. \longrightarrow
      \left. \frac{M}{\Fil^{s}M}\middle/p^{s} \right. \longrightarrow 
      \left. \frac{M^{\prime\prime}}{\Fil^{s}M^{\prime\prime}}\middle/p^{s} \right.
    \end{equation*}
    are exact for all $s\geq0$. But these complexes are isomorphic to
    \begin{equation*}
      M^{\bullet,s}\colon
      \frac{M^{\prime}}{p^{s}M^{\prime}+\Fil^{s}M^{\prime}}\longrightarrow
      \frac{M}{p^{s}M+\Fil^{s}M} \longrightarrow
      \frac{M^{\prime\prime}}{p^{s}M^{\prime\prime}+\Fil^{s}M^{\prime\prime}}.
    \end{equation*}
    This gives
    \begin{equation*}
    \begin{split}
      d^{\prime}\left( p^{s}M^{\prime} + \Fil^{s}M^{\prime} \right)
      =d^{\prime}\left( M^{\prime} \right)
      \cap \left( p^{s}M + \Fil^{s}M\right) \text{ and} \\
      d\left( p^{s}M+\Fil^{s}M \right) \supset p^{s}M^{\prime\prime}+\Fil^{s}M^{\prime\prime}.
    \end{split}
    \end{equation*} 
    That is, $\p\left(M^{\bullet}\right)$ is strictly exact as a complex of topological abelian groups.
\end{proof}

In the following,
we often confuse $W(\kappa)$-Banach modules with their underlying topological groups.
Furthermore, we view every $\widetilde{\A}_{\dR}^{>q}\left(R,R^{+}\right)$ as a filtered algebra,
equipped with the $\xi/p^{q}$-adic filtration.

\begin{lem}\label{lem:pwidetildeAdRisAdR-reconstructionpaper}
  For any affinoid perfectoid $U$ with $\widehat{U}=\Spa\left( R , R^{+}\right)$,
  \begin{equation}\label{eq:pwidetildeAdRisAdR-reconstructionpaper}
    \p\left(\widetilde{\A}_{\dR}^{>q}\left(R,R^{+}\right)\right) = \A_{\dR}^{>q}\left(R,R^{+}\right)
  \end{equation}  
\end{lem}

\begin{proof}
  This is true by definition.
\end{proof}

\begin{proof}[Proof of Theorem~\ref{thm:BdR>qplus+-sections-over-affperfd-recpaper}]
\label{proof:prop:BdR>qplus+-sections-over-affperfd-recpaper}
  Assume that $\left(R,R^{+}\right)$ is an affinoid perfectoid $\left(K,K^{+}\right)$-algebra
  where $K$ is the completion of an algebraic extension of $k$ which is perfectoid.
  Then we may check the following fact: both $\A_{\dR}^{>q,\psh}$ and $\BB_{\dR}^{>q,\psh}$
  are sheaves on $X_{\proet,\affperfd}^{\fin}/X_{K}$, where we have used Notation~\ref{notatio:base-change-in-proet}.
  
  We start with the following claim:
  for every covering $\mathfrak{V}$ of a $V\in X_{\proet,\affperfd}^{\fin}/X_{K}$,
  \begin{equation}\label{eq:exactsequence--prop:BdR>qplus+-sections-over-affperfd-recpaper}
    0\to\widetilde{\A}_{\dR}^{>q,\psh}(V)
     \to\prod_{W\in\mathfrak{V}}\widetilde{\A}_{\dR}^{>q,\psh}(W)
     \to\prod_{Z,Z^{\prime}\in\mathfrak{V}}\widetilde{\A}_{\dR}^{>q,\psh}\left(Z\times_{V} Z^{\prime}\right)
  \end{equation}
  is a strictly exact sequence of filtered rings. Here, every ring carries
  the $\xi/p^{q}$-adic filtration, where $\xi\in\A_{\inf}\left(K,K^{+}\right)$ is a generator 
  of Fontaine's map.
  By~\cite[Chapter I, \S 4.2 page 31-32, Theorem 4(5)]{HuishiOystaeyen1996},
  which applies thanks to Lemma~\ref{lem:grAdRgreaterthanq-xioverpadicfiltration-reconstructionpaper}(i),
  we may check that its associated graded is exact. But this follows from
  Lemma~\ref{lem:grAdRgreaterthanq-xioverpadicfiltration-reconstructionpaper}(ii)
  and~\cite[Lemma 4.10(iii)]{Sch13pAdicHodge}.
  Thus~(\ref{eq:exactsequence--prop:BdR>qplus+-sections-over-affperfd-recpaper})
  is strictly exact.
  
  Now apply the operator $\p$ as in Definition~\ref{defn:poperator}
  to~(\ref{eq:exactsequence--prop:BdR>qplus+-sections-over-affperfd-recpaper}).
  By Lemma~\ref{lem:pwidetildeAdRisAdR-reconstructionpaper}, we get the complex
  \begin{equation}\label{eq:exactsequence2--prop:BdR>qplus+-sections-over-affperfd-recpaper}
    0\to\A_{\dR}^{>q,\psh}(V)
     \to\prod_{W\in\mathfrak{V}}\A_{\dR}^{>q,\psh}(W)
     \to\prod_{Z,Z^{\prime}\in\mathfrak{V}}\A_{\dR}^{>q,\psh}\left(Z\times_{V} Z^{\prime}\right).
  \end{equation}
  It is a strictly exact complex of topological abelian groups
  by Lemma~\ref{lem:subsections-periodsheaves-affperfd-1},
  which applies by
  Lemma~\ref{lem:AdRgreaterthanqUtimesS-isomapHomcontSAdRgreaterthanqU-assumptionsforlemmasatisfied-reconstructionpaper}.
  Hence~(\ref{eq:exactsequence2--prop:BdR>qplus+-sections-over-affperfd-recpaper})
  is a strictly exact complex of $W(\kappa)$-Banach modules.
  That is $\A_{\dR}^{>q,\psh}$ is a sheaf on $X_{\proet,\affperfd}^{\fin}$,
  as desired.
  Now apply $-\widehat{\otimes}_{W(\kappa)}k_{0}$
  to~(\ref{eq:exactsequence2--prop:BdR>qplus+-sections-over-affperfd-recpaper}).
  We get the complex
  \begin{equation*}
    0\to\BB_{\dR}^{>q,\psh}(V)
     \to\prod_{W\in\mathfrak{V}}\BB_{\dR}^{>q,\psh}(W)
     \to\prod_{Z,Z^{\prime}\in\mathfrak{V}}\BB_{\dR}^{>q,\psh}\left(Z\times_{V} Z^{\prime}\right)
  \end{equation*}  
  of $k_{0}$-Banach spaces.
  It is strictly exact by Corollary~\ref{cor:completed-localisation-strictlyexact},
  which applies by Lemma~\ref{lem:AdRgreaterthanqptorsionfree}.
  That is $\BB_{\dR}^{>q,\psh}$ is a sheaf on $X_{\proet,\affperfd}^{\fin}$,
  as desired.

  Fix the notation as in Theorem~\ref{thm:BdR>qplus+-sections-over-affperfd-recpaper}.
  As $\BB_{\dR}^{\dag,+}=\varinjlim_{q}\BB_{\dR}^{>q,+}$, we can deduce
  that the canonical
  \begin{equation*}
    \BB_{\dR}^{\dag,+}\left( R, R^{+} \right) \stackrel{\cong}{\longrightarrow} \BB_{\dR}^{\dag,+}(U)
  \end{equation*}
  is an isomorphism $k_{0}$-ind-Banach algebras.
\end{proof}


\subsection{Inverting $t$ and adding $\log t$}

Since $k^{\circ}$ does not contain
a compatible system of $p$th power roots of unity, the element $t$ does not exist
on the whole site $X_{\proet,\affperfd}^{\fin}$.
Therefore, we first work over the point $*:=\Spa\left(k,k^{\circ}\right)$.
Set
\begin{equation}\label{eq:defnBdRdag-BpdRdag+-Bpddag-overpt-recpaper}
  \BB_{\dR,*}^{\dag}
    :=\lambda^{-1}\cal{F}_{B_{\dR}^{\dag}},
  \BB_{\pdR,*}^{\dag,+}
    :=\lambda^{-1}\cal{F}_{B_{\pdR}^{\dag,+}}, \text{ and }
  \BB_{\pdR,*}^{\dag}
    :=\lambda^{-1}\cal{F}_{B_{\pdR}^{\dag}},
\end{equation}
with the notation as in~(\ref{eq:indBanachGaloisrep-toShonproetsite-reconstructionpaper})
on page~\pageref{eq:indBanachGaloisrep-toShonproetsite-reconstructionpaper}.
Also,
$B_{\dR}^{\dag}:=\BB_{\dR}^{\dag}\left(C,\cal{O}_{C}\right)$,
$B_{\pdR}^{\dag,+}:=\BB_{\pdR}^{\dag,+}\left(C,\cal{O}_{C}\right)$, and
$B_{\pdR}^{\dag}:=\BB_{\dR}^{\dag}\left(C,\cal{O}_{C}\right)$.
$\BB_{\dR,*}^{\dag}$, $\BB_{\pdR,*}^{\dag,+}$, and $\BB_{\pdR,*}^{\dag}$
are sheaves of $k_{0}$-ind-Banach algebras
on $*_{\proet}$. Next, denote the canonical projection $X_{\proet}\to *_{\proet}$ by $j$.
Then
\begin{equation}\label{eq:defnBdRdag-BpdRdag+-Bpddag-recpaper}
\begin{split}
  \BB_{\dR}^{\dag}&:=\BB_{\dR}^{\dag,+}
    \widehat{\otimes}_{j^{-1}\BB_{\dR,*}^{\dag,+}}j^{-1}\BB_{\dR,*}^{\dag} \\
  \BB_{\pdR}^{\dag,+}&:=\BB_{\dR}^{\dag,+}
    \widehat{\otimes}_{j^{-1}\BB_{\dR,*}^{\dag,+}}j^{-1}\BB_{\pdR,*}^{\dag,+}, \text{ and} \\
  \BB_{\pdR}^{\dag}&:=\BB_{\dR}^{\dag,+}
    \widehat{\otimes}_{j^{-1}\BB_{\dR,*}^{\dag,+}}j^{-1}\BB_{\pdR,*}^{\dag}
\end{split}
\end{equation}
are sheaves of $k_{0}$-ind-Banach algebras on $X_{\proet}$.

\begin{defn}\label{defn:namesfortheperiodsheaves-reconstructionpaper}
  $\BB_{\dR}^{\dag}$ is the
  \emph{overconvergent de Rham period sheaf}.
  $\BB_{\pdR}^{\dag,+}$ is the
  \emph{positive overconvergent almost de Rham period sheaf}.
  $\BB_{\pdR}^{\dag}$ is the
  \emph{overconvergent almost de Rham period sheaf}.
\end{defn}

\begin{lem}\label{lem:multonBpdRdag-factors-throughBdRdagplus-reconstructionpaper}
  The multiplication $\BB_{\pdR}^{\dag}\widehat{\otimes}_{k_{0}}\BB_{\pdR}^{\dag}\to\BB_{\pdR}^{\dag}$
  factors through a morphism
  \begin{equation}\label{eq:multonBpdRdag-factors-throughBdRdagplus-reconstructionpaper}
    \BB_{\pdR}^{\dag}\widehat{\otimes}_{\BB_{\dR}^{\dag,+}}\BB_{\pdR}^{\dag}\to\BB_{\pdR}^{\dag}.
  \end{equation}
\end{lem}

Lemma~\ref{lem:multonBpdRdag-factors-throughBdRdagplus-reconstructionpaper} follows
directly from Lemma~\ref{lem:morphism-of-monoids-factorsthrough} below.

\begin{lem}\label{lem:morphism-of-monoids-factorsthrough}
  Consider a morphism of commutative monoids $\phi\colon R\to S$ in a given additive symmetric
  monoidal category $\left(\C,\otimes,1\right)$. Then the multiplication
  $\mu\colon S\otimes S\to S$ factors through a morphism $S\otimes_{R}S\to S$.
\end{lem}

\begin{proof}
  Denote the induced $R$-module action map by $\act\colon R\otimes S\to S$.
  Unwrapping the definition of $S\otimes_{R}S$, we observe that one has to check
    $\mu\circ\left( \left( \act \circ \swap \right) \otimes \id \right)
    =\mu\circ\left( \id \otimes \act \right)$,
  where $\swap\colon R\otimes S\isomap S\otimes R$ is the usual swap of entries.
  This is straightforward.
\end{proof}


\subsection{Sections over affinoid perfectoids $U\times S$}
\label{subsubsec:sectionsoveraffperfdsUtimesS-recpaper}

\begin{remark}
  All results in \S\ref{subsubsec:sectionsoveraffperfdsUtimesS-recpaper} include the case $d=0$.
\end{remark}

\begin{prop}\label{prop:AdRgreaterthanqUtimesS-isomapHomcontSAdRgreaterthanqU-reconstructionpaper}
  Fix $q\in\NN_{\geq 2}$. Suppose $X$ is affinoid and equipped with an étale morphism $X\to\TT^{d}$.
  Let $U\in X_{\proet}/\widetilde{\TT}^{d}$ denote an affinoid perfectoid.
  Then, for every profinite set $S$, functorially,
  \begin{equation*}
    \A_{\dR}^{>q}(U\times S)
    \isomap\intHom_{\cont}\left(S , \A_{\dR}^{>q}(U) \right).
  \end{equation*}
\end{prop}

\begin{proof}
  We have morphisms of $W(\kappa)$-Banach algebras, functorial in $U$ and $q$,
  \begin{equation*}
    \A_{\inf}(U\times S)
    \cong\Hom_{\cont}\left( S , \A_{\inf}(U) \right)
    \to\Hom_{\cont}\left(S , \A_{\dR}^{>q}(U)\right),
  \end{equation*}
  cf.~\cite[Corollary 6.6]{Sch13pAdicHodge}.
  By the definitions, these factor through the map
  \begin{equation*}
    \phi\colon\A_{\dR}^{>q}(U\times S)
    \to\intHom_{\cont}\left(S , \A_{\dR}^{>q}(U) \right).
  \end{equation*}
  Both the domain and codomain of $\phi$ carry the $\left(p,\ker\theta_{\dR}^{>q}\right)$-adic topology,
  and therefore it suffices to check that $\phi$ is an isomorphism of abstract rings.
  We may argue via the associated graded with respect to the $\xi/p^{q}$-adic filtration.
  Since $\xi/p^{q}$ is a regular element in both rings in question, we only have to consider
  the zeroth piece of the associated graded:
  \begin{equation*}
    \widehat{\cal{O}}^{+}(U\times S)
    \cong\A_{\dR}^{>q}(U\times S)/\frac{\xi}{p^{q}}
    \to\intHom_{\cont}\left(S , \A_{\dR}^{>q}(U) \right)/\frac{\xi}{p^{q}}
    \cong\intHom_{\cont}\left(S , \widehat{\cal{O}}^{+}(U) \right).
  \end{equation*}
  This is an isomorphism by~\cite[Corollary 6.6]{Sch13pAdicHodge}.
\end{proof}


\begin{lem}\label{lem:HomcontS-commutes-completed-localisation-overWkappa-reconstructionpaper}
  Consider a profinite $S$ and a $W(\kappa)$-Banach module $M$
  such that $\|pm\|=|p|\|m\|$ for all $m\in M$. Here, $\|-\|$
  denotes the norm on $M$ and $|-|$ is the absolut value on $k_{0}$.
  Then
  \begin{equation}\label{eq:HomcontS-commutes-completed-localisation-THEmap}
    \intHom_{\cont}\left( S , M \right) \widehat{\otimes}_{W(\kappa)} k_{0}
    \isomap \intHom_{\cont}\left( S , M \widehat{\otimes}_{W(\kappa)} k_{0} \right).
  \end{equation}
\end{lem}

\begin{proof}
  Consider the map
  \begin{equation*}
    \Phi\colon\Hom_{\lc}\left( S , M \right) \otimes_{W(\kappa)} k_{0}
    \isomap \Hom_{\lc}\left( S , M \otimes_{W(\kappa)} k_{0} \right).  
  \end{equation*}
  Here, $\Hom_{\lc}$ refers to the locally constant maps, equipped with the
  supremum norm. We claim that $\Phi$
  is an isomorphism of seminormed $k_{0}$-vector spaces.
    
  $M$ is $p$-torsion free because for every $m\in M$, $pm=0$ implies
  $0=\|pm\|=|p|\|m\|$, that is $m=0$. Thus $\Phi$ is injective.
  The compactness of $S$ implies the surjectivity of $\Phi$.
  $\Phi$ is by construction bounded. It remains to show that it is strict.
  Consider an arbitrary element
  $g\in\Hom_{\lc}\left( S , M \otimes_{W(\kappa)} k_{0}\right)$.
  Because $S$ is compact, we can assume without
  loss of generalist that $g=f\otimes 1/p^{j}$
  for some locally constant function $f\colon S \to M$ and $i\in\NN$.
  Fix the norm on $M\otimes_{W(\kappa)}k_{0}$ as in
  Lemma~\ref{lem:HomcontS-commutes-completed-localisation-somenorms}. Then,
  \begin{equation*}
    \|\Phi\left(g\right)\|
    =\sup_{s\in S}\frac{\|f(s)\|}{|p^{i}|}
    =\frac{\|f\|}{|p|^{i}}
    =\|g\|.
  \end{equation*}
  Applying Lemma~\ref{lem:HomcontS-commutes-completed-localisation-somenorms} again
  but to the module $\Hom_{\lc}\left( S , M \right) \otimes_{W(\kappa)} k_{0}$ implies that
  $\Phi$ is strict. It is thus an isomorphism of normed $k_{0}$-vector spaces.
  Now the completion of $\Phi$
  recovers~(\ref{eq:HomcontS-commutes-completed-localisation-THEmap}):
  \begin{equation*}
  \begin{split}
    \widehat{\Hom_{\lc}\left( S , M \right) \otimes_{W(\kappa)} k_{0}}
    \cong\widehat{\Hom_{\lc}\left( S , M \right)} \widehat{\otimes}_{W(\kappa)} k_{0}
    \cong \intHom_{\cont}\left( S , M \right) \widehat{\otimes}_{W(\kappa)} k_{0} \text{ and} \\
    \widehat{\Hom_{\lc}\left( S , M \otimes_{W(\kappa)} k_{0} \right)}
    \cong\intHom_{\cont}\left( S , \widehat{M \otimes_{W(\kappa)} k_{0}} \right)
    \cong\intHom_{\cont}\left( S , M \widehat{\otimes}_{W(\kappa)} k_{0} \right),
  \end{split}
  \end{equation*}
  where we have used the~\cite[proof of Proposition 8.19]{CSanalyticgeometry}.
  Therefore,~(\ref{eq:HomcontS-commutes-completed-localisation-THEmap})
  is an isomorphism.
\end{proof}

\begin{prop}\label{prop:BdRgreaterthanqUtimesS-isomapHomcontSAdRgreaterthanqU-reconstructionpaper}
  Fix $q\in\NN_{\geq 2}$. Suppose $X$ is affinoid and equipped with an étale morphism $X\to\TT^{d}$.
  Let $U\in X_{\proet}/\widetilde{\TT}^{d}$ denote an affinoid perfectoid.
  Then, for every profinite set $S$, functorially,
  \begin{equation*}
    \BB_{\dR}^{>q,+}(U\times S)
    \isomap\intHom_{\cont}\left(S , \BB_{\dR}^{>q,+}(U) \right).
  \end{equation*}
\end{prop}

\begin{proof}
  Invoke Proposition~\ref{prop:AdRgreaterthanqUtimesS-isomapHomcontSAdRgreaterthanqU-reconstructionpaper}
  and Lemma~\ref{lem:HomcontS-commutes-completed-localisation-overWkappa-reconstructionpaper},
  which applies by Lemma~\ref{lem:Agreaterthanq-multiplybyp-norm-reconstructionpaper}.
\end{proof}

\begin{prop}\label{prop:BdRdaggerplusUtimesS-isomapHomcontSAdRgreaterthanqU-reconstructionpaper}
  Fix $q\in\NN_{\geq 2}$. Suppose $X$ is affinoid and equipped with an étale morphism $X\to\TT^{d}$.
  Let $U\in X_{\proet}/\widetilde{\TT}^{d}$ denote an affinoid perfectoid.
  Then, for every profinite set $S$, functorially,
  \begin{equation*}
    \BB_{\dR}^{\dag,+}(U\times S)
    \isomap\intHom_{\cont}\left(S , \BB_{\dR}^{\dag,+}(U) \right).
  \end{equation*}
\end{prop}

\begin{proof}
  $\BB_{\dR}^{\dag,+}(V)=\text{``}\varinjlim\text{''}_{q}\BB_{\dR}^{>q,+}(V)$
  for all affinoid perfectoids $V\in X_{\proet}/\widetilde{\TT}^{d}$,
  cf. Theorem~\ref{thm:subsections-periodsheaves-affperfd},
  Theorem~\ref{thm:BdR>qplus+-sections-over-affperfd-recpaper},
  and Lemma~\ref{lem:colimitAdRgreaterthanq}.
  Since $\intHom_{\cont}\left(S,-\right)$ commutes with formal filtered
  colimits, the result follows from
  Proposition~\ref{prop:BdRgreaterthanqUtimesS-isomapHomcontSAdRgreaterthanqU-reconstructionpaper}.
\end{proof}


\subsection{Cohomology over affinoid perfectoids}
\label{subsubsec:cohoveraffperfd-recpaper}

Continue to fix the notation as in \S\ref{subsec:conventions-reconstructionpaper}.
In the following, we work over $C$ rather than more general perfectoid fields; this is sufficient
for the purposes of this article.
Furthermore, in \S\ref{subsubsec:cohoveraffperfd-recpaper},
we work in left hearts of quasi-abelian categories,
as this ensures the existence of enough injectives, cf.
\S\ref{subsec:LHindBanachmodules-reconstructionpaper}
and \S\ref{subsec:sheavesindbanspaces-reconstructionpaper}.

\begin{thm}\label{thm:derivedlocalsectionsofBdRdagplus-recostructionpaper}
  Fix an affinoid perfectoid $U\in X_{\proet}$.
  Let $\widehat{U}=\Spa\left( R , R^{+}\right)$, where
  $\left( R , R^{+}\right)$ denotes an affinoid perfectoid
  algebra over $\left( C , \cal{O}_{C}\right)$.
  Then the canonical morphism
  \begin{equation*}
    \I\left(\BB_{\dR}^{\dag}\left( R, R^{+} \right)\right)
    \stackrel{\cong}{\longrightarrow} \R\Gamma\left( U , \I\left(\BB_{\dR}^{\dag}\right)\right)
  \end{equation*}
  is an isomorphism in the derived category of $\IndBan_{\I\left(k_{0}\right)}$.
  In particular,
  $\BB_{\dR}^{\dag}\left( R, R^{+} \right)\isomap\BB_{\dR}^{\dag}(U)$.
\end{thm}

\begin{proof}
  Fix the notation as in~(\ref{eq:defnBdRdag-BpdRdag+-Bpddag-recpaper}).
  We may work on the localised site $X_{\proet}/X_{C}$ where we compute
  \begin{align*}
    \BB_{\dR}^{\dag}|_{X_{C}}
    &=\BB_{\dR}^{\dag,+}|_{X_{C}}
      \widehat{\otimes}_{j^{-1}\BB_{\dR,*}^{\dag,+}|_{X_{C}}} j^{-1}\BB_{\dR,*}^{\dag}|_{X_{C}} \\
    &\stackrel{\text{\ref{lem:rep-to-proetalesheaf-commutes-with-filtered-colimits}}}{\cong}
      \varinjlim_{t\times}
      \BB_{\dR}^{\dag,+}|_{X_{C}}
      \widehat{\otimes}_{j^{-1}\BB_{\dR,*}^{\dag,+}|_{X_{C}}} j^{-1}\BB_{\dR,*}^{\dag,+}|_{X_{C}}
    = \varinjlim_{t\times}\BB_{\dR}^{\dag,+}|_{X_{C}}.
  \end{align*}
  We may thus show that the presheaf $\BB_{\dR}^{\dag,\psh}$ given by
  \begin{equation*}
    V\mapsto\varinjlim_{t\times}\BB_{\dR}^{\dag,+,\psh}(V)=\BB_{\dR}^{\dag,\psh}(V)
  \end{equation*}
  is a sheaf on $X_{\proet,\affperfd}^{\fin}/X_{C}$ with vanishing higer \v{C}ech cohomology.
  Indeed, then Lemma~\ref{lem:strictlyexactcechcomplexes-sheafcohomology-reconstructionpaper}
  would imply Theorem~\ref{thm:derivedlocalsectionsofBdRdagplus-recostructionpaper}.  
  
  $\BB_{\dR}^{\dag,\psh}$ is a presheaf of bornological $k_{0}$-vector spaces
  whose bornology has countable basis, cf. Theorem~\ref{thm:BdRdagRRplus-bornological}.
  Thanks to a version of the open mapping theorem, cf.~\cite[Theorem 4.9]{Ba15},
  it suffices to check that the underlying presheaf $|\BB_{\dR}^{\dag,\psh}|$ of abstract $k_{0}$-vector spaces
  \begin{equation*}
    V\mapsto|\BB_{\dR}^{\dag,\psh}(V)|
  \end{equation*}
  is a sheaf on $X_{\proet}/X_{C}$ with vanishing higher \v{C}ech cohomology.
  In the following, we therefore only work with the underlying sheaves of abstract
  modules and omit the notation $|\cdot|$ for clarity.
  By Propositions~\ref{prop:underlyingspaceBdRdagplus-iso-uncompletedcolim-reconstructionpaper}
  and~\ref{thm:BdR>qplus+-sections-over-affperfd-recpaper},
  we may as well check that for every $q\in\NN_{\geq2}$,
  $\varinjlim_{t\times}\A_{\dR}^{>q}=\A_{\dR}^{>q}[1/t]$ has vanishing higher \v{C}ech cohomology
  as a sheaf on $X_{\proet}/X_{C}$.
  Here, $t$ is as in Definition~\ref{defn:t}.
  To do this, we fix a covering $\mathfrak{V}$ in $X_{\proet,\affperfd}^{\fin}/X_{C}$
  and consider the \v{C}ech complex
  \begin{equation*}
    C^{>q,\bullet}:=\check{C}^{\bullet}\left( \mathfrak{V} , \A_{\dR}^{>q} \right).
  \end{equation*}
  By the exactness of localisation and Lemma~\ref{lem:eta-operator-kills-torsion},
  it suffices to check that $\eta_{t/p^{q}}C^{>q,\bullet}$ is exact in strictly positive degrees.
  We would like to argue via a suitable associated graded. We consider
  $C^{>q,\bullet}$ again, but we equip the module in every degree
  with the $\xi/p^{q}$-adic filtration. In alignment with the notation as in
  Definition~\ref{defn:Wkappatriv-tildeAdRgreaterthanq},
  we denote the resulting complex by $\widetilde{C}^{>q,\bullet}$.
  That is, we have to check that $\eta_{t/p^{q}}\widetilde{C}^{>q,\bullet}$ is exact in strictly positive degrees.
  
  To do this, we first observe that
  Condition~\ref{cond:filteredmodulesdecalage-reconstructionpaper}
  is satisfied for the cochain complex of $W(\kappa)$-modules $\widetilde{C}^{>q,\bullet}$
  and $r=t/p^{q}$, where $W(\kappa)$ carries the trivial filtration:
  \begin{itemize}
    \item[(i)] It is concentrated in non-negative degrees.
    \item[(ii)] This is Lemma~\ref{lem:canetaoperatorAdRgreaterthanqRRplus-i},
    \item[(iii)] this is Lemma~\ref{lem:canetaoperatorAdRgreaterthanqRRplus-ii}, and
    \item[(iv)] this is Lemma~\ref{lem:principlesymbol-of-t}.
  \end{itemize}  
  Thus, Lemma~\ref{lem:decalagefilteredmodules-defined}
  and~\ref{lem:grAdRgreaterthanq-xioverpadicfiltration-reconstructionpaper}(i)
  imply that the induced filtration on
  $\eta_{t/p^{q}}\widetilde{C}^{>q,\bullet}$ is exhaustive, separated, and complete. To show its exactness,
  we may therefore show that its associated graded is exact,
  cf.~\cite[Chapter I, \S 4.1, page 31-32, Theorem 4]{HuishiOystaeyen1996}. Compute
  \begin{equation*}
    \gr\eta_{t/p^{q}}\widetilde{C}^{>q,\bullet}
    \stackrel{\text{\ref{lem:decalage-commutes-gr}}}{\cong}
      \eta_{\sigma\left(t/p^{q}\right)}\gr\widetilde{C}^{>q,\bullet}
    \underset{\text{\ref{lem:principlesymbol-of-t}}}{\stackrel{\text{\ref{lem:grAdRgreaterthanq-xioverpadicfiltration-reconstructionpaper}(ii)}}{\cong}}
      \eta_{\left(\zeta_{p}-1\right)\sigma\left(\xi/p^{q}\right)}
      \check{C}^{\bullet}\left( \mathfrak{V} ,
      \widehat{\cal{O}}^{+}\left[\sigma\left(\frac{\xi}{p^{q}}\right)\right] \right).
  \end{equation*}
  Thus the exactness of $\gr\eta_{t/p^{q}}\widetilde{C}^{>q,\bullet}$
  in strictly positive degrees follows
  from~\cite[Lemma 4.10(iii) and (v)]{Sch13pAdicHodge} and Lemma~\ref{lem:eta-operator-kills-torsion}.  
\end{proof}

In the following, we make implicit use of Notation~\ref{notatio:base-change-in-proet}.

\begin{cor}\label{cor:BdRdaggerUtimesS-isomapHomcontSAdRgreaterthanqU-reconstructionpaper}
  Fix $q\in\NN_{\geq 2}$. Suppose $X$ is affinoid and equipped with an étale morphism $X\to\TT^{d}$.
  Let $U\in X_{\proet}/\widetilde{\TT}_{C}^{d}$ denote an affinoid perfectoid.
  Then, for every profinite set $S$, functorially,
  \begin{equation*}
    \BB_{\dR}^{\dag}(U\times S)
    \isomap\intHom_{\cont}\left(S , \BB_{\dR}^{\dag}(U) \right).
  \end{equation*}
\end{cor}

\begin{proof}
  Theorem~\ref{thm:derivedlocalsectionsofBdRdagplus-recostructionpaper}
  implies $\BB_{\dR}^{\dag}(V)=\varinjlim_{t\times}\BB_{\dR}^{\dag,+}(V)$
  for all affinoid perfectoids $V$ over $\widetilde{\TT}_{C}^{d}$.
  As $\intHom_{\cont}\left(S,-\right)$ commutes with filtered
  colimits, the result follows from
  Proposition~\ref{prop:BdRdaggerplusUtimesS-isomapHomcontSAdRgreaterthanqU-reconstructionpaper}.
\end{proof}

\begin{lem}\label{lem:describeBpdRdagoverXC}
  We have the following isomorphism of sheaves of $k$-ind-Banach algebras
  \begin{equation}\label{eq:describeBpdRdagoverXC-themap}
    \bigoplus_{\alpha\in\NN}\BB_{\dR}^{\dag}|_{X_{C}}\left( \log t \right)^{\alpha}
    \isomap\BB_{\pdR}^{\dag}|_{X_{C}}.
  \end{equation}
\end{lem}

\begin{proof}
  By~\cite[Proposition 3.15]{Sch13pAdicHodge} and~\cite[Remark 2.5]{Bosco21},
  it suffices to compare the sections of the sheaves
  $\bigoplus_{\alpha\in\NN}\BB_{\dR}^{\dag}\left( \log t \right)^{\alpha}$
  and $\BB_{\pdR}^{\dag}$ over affinoid perfectoids
  of the form $\Spa\left(C,\cal{O}_{C}\right)\times S$ for any profinite set $S$,
  cf. Example~\ref{examplenotation:profinitesets-in-Xproet}. Here, we find
  with the notation as in~(\ref{eq:defnBdRdag-BpdRdag+-Bpddag-overpt-recpaper})
  on page~\pageref{eq:defnBdRdag-BpdRdag+-Bpddag-overpt-recpaper},
  \begin{equation}\label{lem:describeBpdRdagoverXC-intererstingcomputation}
  \begin{split}
    \BB_{\pdR}^{\dag}\left( \Spa\left(C,\cal{O}_{C}\right) \times S \right)
    &=\left(\lambda^{-1}\cal{F}_{B_{\pdR}^{\dag}}\right)\left( \Spa\left(C,\cal{O}_{C}\right) \times S \right) \\
    &\stackrel{\text{\ref{cor:comparepullbackalonglambda-reconstructionpaper}}}{\cong}
      \cal{F}_{B_{\pdR}^{\dag}}\left( \cal{G}\times S \right) \\
    &=\intHom_{\cont,\cal{G}}\left(\cal{G}\times S , B_{\pdR}^{\dag} \right) \\
    &=\intHom_{\cont}\left( S , B_{\pdR}^{\dag} \right) \\
    &\stackrel{\text{\ref{lem:finitedirectsum-comutes-withHomcont-reconstructionpaper}}}{\cong}
      \bigoplus_{\alpha\in\NN}\intHom_{\cont}\left( S , B_{\dR}^{\dag} \right)\left( \log t \right)^{\alpha} \\
    &\underset{\text{\ref{cor:BdRdaggerUtimesS-isomapHomcontSAdRgreaterthanqU-reconstructionpaper}}}{\stackrel{\text{\ref{thm:derivedlocalsectionsofBdRdagplus-recostructionpaper}}}{\cong}}
      \bigoplus_{\alpha\in\NN}\BB_{\dR}^{\dag}\left( \Spa\left(C,\cal{O}_{C}\right) \times S \right)\left( \log t \right)^{\alpha}
  \end{split}
  \end{equation}
  where $\cal{G}:=\Gal\left(\overline{k} / k \right)$.
  These isomorphisms are functorial, thus they give the desired~(\ref{eq:describeBpdRdagoverXC-themap}).
\end{proof}

\begin{cor}\label{cor:derivedlocalsectionsofBpdRdag-recostructionpaper}
  Fix an affinoid perfectoid $U\in X_{\proet}$.
  Let $\widehat{U}=\Spa\left( R , R^{+}\right)$, where
  $\left( R , R^{+}\right)$ denotes an affinoid perfectoid
  algebra over $\left( C , \cal{O}_{C}\right)$.
  Then the canonical morphism
  \begin{equation*}
    \I\left(\BB_{\pdR}^{\dag}\left( R, R^{+} \right)\right)
    \stackrel{\cong}{\longrightarrow} \R\Gamma\left( U , \I\left(\BB_{\pdR}^{\dag}\right)\right)
  \end{equation*}
  is an isomorphism in the derived category of $\IndBan_{\I\left(k_{0}\right)}$.
  In particular,
  $\BB_{\pdR}^{\dag}\left( R, R^{+} \right)\isomap\BB_{\pdR}^{\dag}(U)$.
\end{cor}

\begin{proof}
  Combining Lemma~\ref{lem:directsumsheaves-finitecoverings},
  Lemma~\ref{lem:describeBpdRdagoverXC}, and
  Theorem~\ref{thm:derivedlocalsectionsofBdRdagplus-recostructionpaper},
  we find the desired description of the sections of $\BB_{\pdR}^{\dag}$.
  From the proof of Theorem~\ref{thm:derivedlocalsectionsofBdRdagplus-recostructionpaper}
  and the exactness of direct sums, cf.~\cite[Proposition 2.1.15(b)]{Sch99}
  which applies by Lemma~\ref{lem:indBanFcirc-structureresult},
  we furthermore find that $\BB_{\pdR}^{\dag}$ has vanishing higher \v{C}ech cohomology
  as a sheaf on $X_{\proet,\affperfd}^{\fin}/X_{C}$.
  Now apply Lemma~\ref{lem:strictlyexactcechcomplexes-sheafcohomology-reconstructionpaper}.
\end{proof}

\begin{cor}\label{cor:BpdRdaggerUtimesS-isomapHomcontSAdRgreaterthanqU-reconstructionpaper}
  Fix $q\in\NN_{\geq 2}$. Suppose $X$ is affinoid and equipped with an étale morphism $X\to\TT^{d}$.
  Let $U\in X_{\proet}/\widetilde{\TT}_{C}^{d}$ denote an affinoid perfectoid.
  Then, for every profinite set $S$, functorially,
  \begin{equation*}
    \BB_{\pdR}^{\dag}(U\times S)
    \isomap\intHom_{\cont}\left(S , \BB_{\pdR}^{\dag}(U) \right).
  \end{equation*}
\end{cor}

\begin{proof}
  By Corollary~\ref{cor:derivedlocalsectionsofBpdRdag-recostructionpaper},
  $\BB_{\pdR}^{\dag}(V)
  =\bigoplus_{\alpha\geq0}\BB_{\dR}^{\dag}(V)\left(\log t\right)^{\alpha}$
  for all affinoid perfectoids $V$ over $\widetilde{\TT}_{C}^{d}$.
  Thus the result follows from
  Lemma~\ref{lem:finitedirectsum-comutes-withHomcont-reconstructionpaper} and
  Corollary~\ref{cor:BdRdaggerUtimesS-isomapHomcontSAdRgreaterthanqU-reconstructionpaper}.
\end{proof}


\section{Comparison with $B_{\dR}^{+}$ and $\BB_{\dR}^{+}$}

\cite{Sch13pAdicHodge} works with the sheaf $\BB_{\dR}^{+}$.
For future reference, we explain that $\BB_{\dR}^{+}$
carries a canonical algebra structure from its overconvergent counterpart $\BB_{\dR}^{\dag,+}$.


\subsection{Relative period rings}
\label{subsubsec:comp-scholzefunctorI-relativeperiodrings}

We recall constructions from~\cite[\S 6]{Sch13pAdicHodge}.
Consider the completion $K$ of an algebraic extension of $k$ which
is perfectoid. Pick a ring of integral elements $K^{+}\subseteq K$ 
containing $k^{\circ}$ and fix an affinoid perfectoid
$\left(K,K{+}\right)$-algebra $\left(R,R^{+}\right)$.
Consider Fontaine's map
$\theta_{\inf}\colon\A_{\inf}\left(R,R^{+}\right)\to R^{+}$
and invert $p$ to get $\vartheta_{\inf}\colon\BB_{\inf}\left(R,R^{+}\right)\to R$.
The~\emph{relative positive de Rham period ring} is
\begin{equation}\label{eq:Scholze-BdRplus}
  \BB_{\dR}^{+}\left(R,R^{+}\right)
  :=\varprojlim_{j\in\NN}\BB_{\inf}\left(R,R^{+}\right)/\left(\ker\vartheta_{\inf}\right)^{j}.
\end{equation}
We aim to relate this period ring to the overconvergent one.

\begin{notation}
 $\A_{\inf}^{(p)}\left(R,R^{+}\right)$ is the seminormed $W(\kappa)$-algebra
 $\A_{\inf}\left(R,R^{+}\right)$, equipped with the $p$-adic
 seminorm. It is a Banach algebra by Lemma~\ref{Ainf-strictpring}.
\end{notation}

\begin{lem}\label{lem:Ainf-mod-xi-complete}
  $\A_{\inf}^{(p)}\left(R,R^{+}\right)/\left(\ker\theta_{\inf}\right)^{j}$ is complete for every $j\in\NN$.
\end{lem}

\begin{proof}
  By Lemma~\ref{lem:kernel-theta}, $\ker\theta_{\inf}=(\xi)$.
  Lemma~\ref{lem:Ainf-divisionbyxi-andp}(ii) implies
  \begin{equation}\label{eq:Ainf-mod-xi-complete-1}
    \|a\xi^{j}\|=\|a\|
  \end{equation}
  for all $a\in\A_{\inf}^{(p)}\left(R,R^{+}\right)$.
  We show that the ideal
  $\left(\xi^{j}\right)$ is closed.
  Consider a sequence $\left(f_{i}\xi^{j}\right)_{i\in\NN}\subseteq\left(\xi^{j}\right)$
  which converges to an element $h$. But then $\left( f_{i}\right)$ is Cauchy:
  \begin{equation*}
    \|f_{i}-f_{i+1}\|
    \stackrel{\text{(\ref{eq:Ainf-mod-xi-complete-1})}}{=}
    \|f_{i}\xi^{j} - f_{i+1}\xi^{j} \|
    \to 0 \text{ for $i\to\infty$},
  \end{equation*}
  and $h=\lim_{i\to\infty}\left(f_{i}\xi^{j}\right)=\left(\lim_{i\to\infty}f_{i}\right)\xi^{j}\in\left(\xi^{j}\right)$
  follows.
\end{proof}

\begin{notation}
 $\BB_{\inf}^{(p)}\left(R,R^{+}\right)$ is the seminormed $k_{0}$-algebra
 $\BB_{\inf}\left(R,R^{+}\right)$ with unit ball
 $\A_{\inf}\left(R,R^{+}\right)$. It is a Banach algebra by Lemma~\ref{Ainf-strictpring}.
\end{notation}

\begin{lem}\label{lem:Binfpmodkervarthetaj-complete}
  $\BB_{\inf}^{(p)}\left(R,R^{+}\right)/\left(\ker\vartheta_{\inf}\right)^{j}$
  is complete for every $j\in\NN$.
\end{lem}

\begin{proof}
  By Lemma~\ref{lem:localise-torsionfree-modules},
  \begin{equation*}
    \A_{\inf}^{(p)}\left(R,R^{+}\right)\widehat{\otimes}_{W(\kappa)}k_{0}
    \isomap\BB_{\inf}^{(p)}\left(R,R^{+}\right).
  \end{equation*}
  By Lemma~\ref{cor:completed-loc-exact-overF}, which applies because
  $\A_{\inf}^{(p)}\left(R,R^{+}\right)$ does not have $p$-torsion by Lemma~\ref{Ainf-strictpring},
  the result would follow once
  \begin{equation}\label{eq:Ainfp-xij-Ainfp}
    \A_{\inf}^{(p)}\left(R,R^{+}\right) \stackrel{\xi^{j}}{\longrightarrow}
    \A_{\inf}^{(p)}\left(R,R^{+}\right)
  \end{equation}
  is a strict monomorphism. It is injective by Lemma~\ref{lem:kernel-theta},
  has closed image by Lemma~\ref{lem:Ainf-mod-xi-complete} and is open onto its image
  because $\|a\xi\|=\|a\|$ for every $a\in\A_{\inf}^{(p)}\left(R,R^{+}\right)$,
  cf. Lemma~\ref{lem:Ainf-divisionbyxi-andp}(ii).
  (\ref{eq:Ainfp-xij-Ainfp}) is thus a strict monomorphism by Lemma~\ref{lem:BanFcirc-kercoker}(iii).
\end{proof}

$\A_{\inf}\left(R,R^{+}\right)$ still carries the $(p,\xi)$-adic topology.
Consider
\begin{equation*}
  \A_{\inf}\left(R,R^{+}\right)
  \to\A_{\inf}^{(p)}\left(R,R^{+}\right)/\left(\ker\theta_{\inf}\right)^{j}.
\end{equation*}
It map is a morphism of $W(\kappa)$-Banach algebras by
Lemma~\ref{lem:bounded-map-adic-rings}.
It lifts to a morphism
\begin{equation*}
  \BB_{\dR}^{q,+}\left(R,R^{+}\right)
  \to\BB_{\inf}^{(p)}\left(R,R^{+}\right)/\left(\ker\vartheta_{\inf}\right)^{j}
\end{equation*}
of $k_{0}$-Banach algebras for every $q\in\NN$.
Now compute the inverse limit along $j$ to get
\begin{equation*}
  |\BB_{\dR}^{q,+}\left(R,R^{+}\right)|
  \to\BB_{\dR}\left(R,R^{+}\right),
\end{equation*}
where $| - |$ refers to the underlying abstract ring.
Finally, we get the morphism
\begin{equation*}
  |\BB_{\dR}^{\dag,+}\left(R,R^{+}\right)|
  =\varinjlim|\BB_{\dR}^{q,+}\left(R,R^{+}\right)|
  \to\BB_{\dR}^{+}\left(R,R^{+}\right)
\end{equation*}
of $k_{0}$-algebras.

\begin{prop}\label{prop:BdRq+-to-BdR+-inj}
  The canonical map $|\BB_{\dR}^{q,+}\left(R,R^{+}\right)|\to\BB_{\dR}\left(R,R^{+}\right)$
  are injective if $q\geq 2$.
\end{prop}

\begin{proof}
  Because of Proposition~\ref{prop:underlyingspaceBdRdagplus-iso-uncompletedcolim-reconstructionpaper}
  and since
  $\A_{\dR}^{>q}\left(R,R^{+}\right)$ is $p$-torsion free by Lemma~\ref{lem:AdRgreaterthanqptorsionfree},
  it suffices to check that the composition
  \begin{equation*}
    |\A_{\dR}^{>q}\left(R,R^{+}\right)|\to|\BB_{\dR}^{q,+}\left(R,R^{+}\right)|\to\BB_{\dR}\left(R,R^{+}\right)
  \end{equation*}
  is injective. This follows by considering the associated gradeds with respect to the $\xi/p^{q}$-adic filtrations.
\end{proof}


\begin{subappendices}

\section{Appendix: Almost mathematics}
\label{subsec:almostmaths-appendix}

Our proof of Theorem~\ref{prop:abstract-BdRqpluspsh-is-sheaf} relies
on the framework of \emph{almost ring theory} as developed
in~\cite{GabberRameroalmostringtheory2003}. Here in \S\ref{subsec:almostmaths-appendix},
we collect some general results.

We fix an \emph{almost setup} $(R,\mathfrak{m})$. That is,
$R$ denotes a commutative ring together with an ideal
$\mathfrak{m}\subseteq R$ such that $\mathfrak{m}^{2}=\mathfrak{m}$
and $\mathfrak{m}\otimes_{R}\mathfrak{m}$ is a flat
$R$-module. An $R$-module $M$ is \emph{almost zero}
if $\mathfrak{m}M=0$. The full subcategory $\Sigma$ of
$\Mod(R)$ consisting of all $R$-modules that are almost
zero is a Serre subcategory, hence we can form the quotient
category $\Mod\left(R^{\al}\right):=\Mod(R)/\Sigma$.
This is the category of almost $R$-modules. In this article,
we use the following Lemma~\ref{sec:almostmaths-appendix-1}
and its immediate consequences without further reference.

\begin{lem}\label{sec:almostmaths-appendix-1}
  The category of almost $R$-modules is a Grothendieck abelian
  category.
\end{lem}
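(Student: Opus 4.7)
The plan is to verify the Grothendieck axioms for $\Mod\left(R^{\al}\right)$ directly from the fact that it is constructed as the Serre quotient $\Mod(R)/\Sigma$. Since $\Sigma$ is a Serre subcategory of the abelian category $\Mod(R)$, the quotient $\Mod\left(R^{\al}\right)$ is automatically abelian and the localisation functor $(-)^{\al} \colon \Mod(R) \to \Mod\left(R^{\al}\right)$ is exact (see, for instance, \cite[\href{https://stacks.math.columbia.edu/tag/02MS}{Tag 02MS}]{stacks-project}). It therefore remains to exhibit small colimits, to verify the AB5 axiom, and to produce a generator.

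First, I would observe that $\Sigma$ is closed under arbitrary direct sums: a direct sum of modules annihilated by $\mathfrak{m}$ is annihilated by $\mathfrak{m}$. Combined with the Serre property, this shows that $\Sigma$ is a \emph{localising} subcategory of $\Mod(R)$. The general theory of localising subcategories (see, e.g., \cite[§2.2]{GabberRameroalmostringtheory2003}) then provides a right adjoint $M \mapsto M_{*}$ to $(-)^{\al}$, explicitly given by $M_{*} := \Hom_{R}(\mathfrak{m}, M)$. Because $(-)^{\al}$ is a left adjoint, it preserves colimits; consequently any diagram $I \to \Mod\left(R^{\al}\right)$ may be lifted to $\Mod(R)$ (say via the right adjoint) and its colimit computed as the almostification of the colimit in $\Mod(R)$. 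This proves cocompleteness, and since filtered colimits are exact in $\Mod(R)$ and $(-)^{\al}$ is exact, AB5 follows immediately.

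Second, I would show that $R^{\al}$ is a generator of $\Mod\left(R^{\al}\right)$. Given a nonzero morphism $\phi \colon M^{\al} \to N^{\al}$, the adjunction $(-)^{\al} \dashv (-)_{*}$ identifies
\begin{equation*}
  \Hom_{R^{\al}}\left(R^{\al}, M^{\al}\right) \cong (M^{\al})_{*} = \Hom_{R}(\mathfrak{m}, M),
\end{equation*}
and, more generally, for every $R$-module $X$ one has $\Hom_{R^{\al}}(X^{\al}, M^{\al}) = \Hom_{R}(\mathfrak{m} \otimes_{R} X, M)^{\al}$. Since $R$ generates $\Mod(R)$ and $\mathfrak{m}^{2} = \mathfrak{m}$, one checks that the images of the maps $R^{\al} \to M^{\al}$ collectively cover $M^{\al}$ up to almost isomorphism, so that $R^{\al}$ is a generator. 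Equivalently, one may simply note that the family of objects $\{(\mathfrak{m} \otimes_{R} R)^{\al}\} = \{R^{\al}\}$ forms a set of generators because $\mathfrak{m} \otimes_{R} -$ is essentially the identity on the almost category.

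The main obstacle is organising the right adjoint cleanly enough to transport colimits and the generator back and forth, and in particular verifying that the candidate $R^{\al}$ really does separate morphisms in the quotient category---which is where the hypothesis $\mathfrak{m}^{2} = \mathfrak{m}$ enters decisively, since it guarantees that the counit $\mathfrak{m} \otimes_{R} M \to M$ becomes an isomorphism after almostification. Once this is settled, the Grothendieck axioms are in place, and the lemma follows.
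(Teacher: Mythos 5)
Your proof is correct, and it follows the same route as the Gabber--Ramero reference that the paper simply cites: recognise $\Sigma$ as a localising subcategory so that $(-)^{\al}$ admits the right adjoint $M\mapsto\Hom_{R}(\mathfrak{m},M)$, use the adjunction to transport colimits and AB5, and recognise $R^{\al}$ as a generator because $\Hom_{R^{\al}}(R^{\al},-)\cong(-)_{*}$ is faithful. Two small blemishes worth noting: the displayed formula $\Hom_{R^{\al}}(X^{\al},M^{\al})=\Hom_{R}(\mathfrak{m}\otimes_{R}X,M)^{\al}$ has a spurious $(-)^{\al}$ on the right-hand side (the adjunction gives an identification of plain $\Hom$-sets, $\Hom_{R^{\al}}(X^{\al},M^{\al})\cong\Hom_{R}(\mathfrak{m}\otimes_{R}X,M)$), and the ``collectively cover'' phrasing for the generator is vaguer than needed --- the clean statement is simply that the right adjoint to a localisation at a localising subcategory is fully faithful, hence faithful, and $\Hom_{R^{\al}}(R^{\al},-)$ coincides with it. Neither point affects the validity of the argument.
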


\begin{proof}
  See~\cite[\S 2.2.16]{GabberRameroalmostringtheory2003}.
\end{proof}

Denote the canonical functor $\Mod(R)\to\Mod(R)/\Sigma$
by $M\to M^{\al}$.

\begin{lem}\label{lem:M-to-Mal}
 $M\to M^{\al}$ is exact and commutes with all colimits.
\end{lem}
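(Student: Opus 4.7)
The plan is to recognise $M \mapsto M^{\al}$ as the Gabriel quotient functor of $\Mod(R)$ by the Serre subcategory $\Sigma$ of almost zero modules and then to read off the claimed properties from standard abstract nonsense about such quotients; no honest calculation is needed.

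First I would establish exactness. This is a general feature of Gabriel quotients by Serre subcategories: both kernels and cokernels in $\Mod(R)/\Sigma$ are computed by applying $(-)^{\al}$ to kernels and cokernels in $\Mod(R)$, since a short exact sequence of $R$-modules is sent to a short exact sequence in the quotient whenever neither the outer terms nor the image lies in $\Sigma$, and in general the image and coimage in the quotient differ from those in $\Mod(R)$ only by almost zero modules. A one-line reference to~\cite[section 2.2]{GabberRameroalmostringtheory2003} suffices.

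Next I would show that $M \mapsto M^{\al}$ preserves all colimits by exhibiting an explicit right adjoint, namely the module of almost elements
\begin{equation*}
   N \mapsto N_{*} := \Hom_{R^{\al}}\left(R^{\al}, N\right),
\end{equation*}
regarded as an honest $R$-module. The required adjunction $\Hom_{R^{\al}}\left(M^{\al}, N\right) \cong \Hom_{R}\left(M, N_{*}\right)$ is the universal property of the Gabriel quotient (loc. cit.), so being a left adjoint, $M \mapsto M^{\al}$ preserves arbitrary colimits.

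Since both ingredients are already packaged in the reference cited immediately before the statement, there is no substantive obstacle; the only minor subtlety would be verifying that $\mathfrak{m}^{2} = \mathfrak{m}$ and flatness of $\mathfrak{m} \otimes_{R} \mathfrak{m}$ are actually used to make $(-)_{*}$ well-defined on almost modules (rather than just on $R$-modules up to almost isomorphism), but this is again handled in~\cite{GabberRameroalmostringtheory2003}.
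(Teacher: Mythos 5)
Your proposal is correct and takes essentially the same route as the paper: exactness is pure abstract nonsense about quotients by a Serre subcategory (the paper cites~\cite[\href{https://stacks.math.columbia.edu/tag/02MS}{Tag 02MS}]{stacks-project} for this), and preservation of colimits is exactly the content of~\cite[Proposition 2.2.13]{GabberRameroalmostringtheory2003}, which proceeds as you do by exhibiting the right adjoint $(-)_{*}$. One small wrinkle worth flagging: the exactness of the Serre-quotient functor is unconditional — every short exact sequence in $\Mod(R)$ maps to a short exact sequence in $\Mod(R^{\al})$ — so your qualifier \emph{``whenever neither the outer terms nor the image lies in $\Sigma$''} is superfluous and, read literally, wrong; your subsequent clause (image and coimage differ only by almost zero modules) is the correct substance and is what actually drives the argument.
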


\begin{proof}
  The exactness follows from abstract nonsense,
  see~\cite[\href{https://stacks.math.columbia.edu/tag/02MS}{Tag 02MS}]{stacks-project}.
  \cite[Proposition 2.2.13]{GabberRameroalmostringtheory2003}
  implies the second statement.
\end{proof}

We also use Lemma~\ref{lem:M-to-Mal} without further reference.

A chain complex $C^{\bullet}$ of $R$-modules
is \emph{almost exact} if $C^{\bullet,\al}$ is an exact complex of
almost $R$-modules, cf. the notation in \S\ref{subsec:notation}.

\begin{lem}\label{lem:ractusinvertibly-then-almostexact-implies-exact}
  Consider a complex $M_{1}\to M_{2} \to M_{3}$
  of $R$-modules such that an element $r\in R$
  acts as an automorphism on $M_{i}$, for each
  $i=1,2,3$. If this complex is almost exact, then it is exact. 
\end{lem}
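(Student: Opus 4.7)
My plan is to reduce the statement to showing that the homology object $N := \ker(\psi)/\operatorname{im}(\phi)$ vanishes, where $\phi\colon M_1\to M_2$ and $\psi\colon M_2\to M_3$ denote the two maps of the complex. Almost exactness means precisely that $N^{\al}=0$, i.e.\ $\mathfrak{m}\cdot N=0$. Since the lemma is invoked in Proposition~\ref{prop:abstract-Bla-is-sheaf} with $r=[\pi]$, which lies in $\mathfrak{m}$ (using $\mathfrak{m}^2=\mathfrak{m}$), I will treat the hypothesis ``$r\in R$ acts as an automorphism'' with the implicit assumption $r\in\mathfrak{m}$, which is what makes the statement non-vacuous.

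The first main step is to show that $r$ induces an automorphism on $N$. Since $r$ is $R$-linear, it restricts to endomorphisms of the submodules $\ker(\psi),\operatorname{im}(\phi)\subseteq M_2$, and then descends to an endomorphism of $N$. For injectivity on $\ker(\psi)$, use that $r$ is injective on $M_2$. For surjectivity on $\ker(\psi)$, given $m\in\ker(\psi)$ choose a preimage $m'\in M_2$ with $rm'=m$; then $r\psi(m')=\psi(m)=0$, and the invertibility of $r$ on $M_3$ forces $\psi(m')=0$, so $m'\in\ker(\psi)$. For $\operatorname{im}(\phi)$, an element $\phi(m_1)$ is hit since $r$ is surjective on $M_1$: write $m_1=rm_1'$ to obtain $\phi(m_1)=r\phi(m_1')$; injectivity is similar. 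Hence $r$ is an automorphism of both the sub and the subquotient, and therefore of $N$.

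Once this is in hand, the conclusion is immediate: since $r\in\mathfrak{m}$ and $\mathfrak{m} N=0$, one has $rN=0$; but $r$ acts as an automorphism, so $rN=N$, forcing $N=0$. This is exactness of the original complex.

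I do not expect a serious obstacle here; the only subtle point is the implicit hypothesis $r\in\mathfrak{m}$, without which the statement fails (take $r=1$). Everything else is a direct diagram chase using $R$-linearity of $r$ and the invertibility assumption on each $M_i$, together with the defining property $\mathfrak{m}N=0$ of almost zero modules.
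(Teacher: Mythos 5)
Your proof is correct and follows essentially the same route as the paper's, which compresses the argument by viewing the complex as one of $R[1/r]$-modules: the homology $N$ is then automatically an $R[1/r]$-module, so $N = rN \subseteq \mathfrak{m}N = 0$. Your explicit verification that $r$ induces an automorphism of $\ker(\psi)$, $\operatorname{im}(\phi)$, and hence of $N$ just spells out what ``view the complex as $R[1/r]$-modules'' packages silently. You are also right to flag that the statement as printed is missing the hypothesis $r\in\mathfrak{m}$ (or at least $r^{n}\in\mathfrak{m}$ for some $n$), without which the lemma is false by the $r=1$ example; this hypothesis is used implicitly and holds in the only application, where $r=[\pi]$ lies in the ideal $\mathfrak{m}$ generated by the $[\pi^{1/p^{N}}]$.
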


\begin{proof}
  We may view the complex as a complex $R[1/r]$-modules,
  cf.~\cite[\href{https://stacks.math.columbia.edu/tag/07JY}{Tag 07JY}]{stacks-project},
  and the result follows from an elementary computation.
\end{proof}

In general, the functor $M\mapsto M^{\al}$ does not commute with limits.

\begin{lem}\label{lem:lim-preserve-leftalmostexact}
  We consider complexes
  \begin{equation*}
    C_{t}\colon
    0 \stackrel{\empty}{\longrightarrow}
    M_{t,1} \stackrel{\varphi_{t}}{\longrightarrow}
    M_{t,2} \stackrel{\psi_{t}}{\longrightarrow}
    M_{t,3}
  \end{equation*}
  of $R$-modules for every $t\in\NN$ together with maps
  $\omega_{t}\colon M_{t,i}\to M_{t-1,i}$ 
  for every $i=1,2,3$ and $t\geq 1$ such that the obvious squares commute.
  If those sequences $C_{t}$ are almost exact, then
  their inverse limit $\varprojlim_{t\in\NN}C_{t}$ is almost exact as well.
\end{lem}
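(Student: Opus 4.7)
The plan is to set $L_i := \varprojlim_t M_{t,i}$, $\varphi := \varprojlim_t \varphi_t$, $\psi := \varprojlim_t \psi_t$, and verify separately that $\ker\varphi$ is almost zero and that the cohomology $\ker\psi/\im\varphi$ is almost zero. The first step is immediate; the second requires $\mathfrak{m}^2=\mathfrak{m}$ in an essential way.

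For the first, inverse limits commute with kernels in $\Mod(R)$, so $\ker\varphi = \varprojlim_t\ker\varphi_t$. Almost exactness of each $C_t$ at degree one means $\mathfrak{m}\cdot\ker\varphi_t = 0$, hence any $\epsilon\in\mathfrak{m}$ annihilates each component of an element in $\varprojlim_t\ker\varphi_t$, and therefore annihilates the whole element.

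For the second, I would fix $y=(y_t)_t \in \ker\psi$ and $\epsilon\in\mathfrak{m}$, and exhibit a preimage of $\epsilon y$ under $\varphi$. Using $\mathfrak{m}^2=\mathfrak{m}$, write $\epsilon = \sum_{j=1}^n \alpha_j\beta_j$ with $\alpha_j,\beta_j\in\mathfrak{m}$. Almost exactness of $C_t$ in the middle provides, for each $t$ and $j$, an element $z_{t,j}\in M_{t,1}$ with $\varphi_t(z_{t,j}) = \beta_j y_t$. These lifts will in general fail to be compatible along the transition maps $\omega_t$; however, applying $\varphi_{t-1}$ to $\omega_t(z_{t,j})-z_{t-1,j}$ gives $\beta_j y_{t-1}-\beta_j y_{t-1}=0$, so the discrepancy lies in $\ker\varphi_{t-1}$, which is killed by $\alpha_j\in\mathfrak{m}$. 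Hence $\omega_t(\alpha_j z_{t,j}) = \alpha_j z_{t-1,j}$, so $w_j := (\alpha_j z_{t,j})_t$ defines a genuine element of $L_1$, and $\sum_{j=1}^n w_j$ is a preimage of $\epsilon y$ under $\varphi$.

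The main obstacle is precisely this compatibility issue across levels: the lifts $z_{t,j}$ exist freely at each $t$ but cannot be organised into a coherent sequence. The standard almost-mathematical manoeuvre of splitting $\epsilon = \sum\alpha_j\beta_j$ is what repairs this, by trading the incompatibility of the $z_{t,j}$ against multiplication by $\alpha_j$, which kills the obstruction sitting inside $\ker\varphi_{t-1}$. This is where the hypothesis $\mathfrak{m}^2=\mathfrak{m}$ from the almost setup is indispensable.
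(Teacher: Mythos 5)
Your proof is correct and follows essentially the same strategy as the paper: take $\epsilon\in\mathfrak{m}$, split it via $\mathfrak{m}^2=\mathfrak{m}$ into factors in $\mathfrak{m}$, lift $\beta_j y_t$ levelwise under $\varphi_t$, observe that the failure of compatibility of the lifts lands in $\ker\varphi_{t-1}$, and kill it with the second factor $\alpha_j$. The only cosmetic difference is that the paper says ``without loss of generality $x=yz$'' and works with a single product, whereas you carry the full finite sum $\epsilon=\sum_j\alpha_j\beta_j$ explicitly and add the resulting preimages $w_j$ at the end; this makes the WLOG reduction transparent but is otherwise the same argument.
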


\begin{proof}
  Write $M_{i}:=\varprojlim_{t\in\NN}M_{t,i}$ for $i=1,2,3$,
  $\varphi:=\varprojlim_{t\in\NN}\varphi_{t}$, and
  $\psi:=\varprojlim_{t\in\NN}\psi_{t}$. We show that
  \begin{equation*}
    0 \stackrel{\empty}{\longrightarrow}
    M_{1}^{\al} \stackrel{\varphi^{\al}}{\longrightarrow}
    M_{2}^{\al} \stackrel{\psi^{\al}}{\longrightarrow}
    M_{3}^{\al}
  \end{equation*}
  is exact.
  
  Let $m=\left(m_{t}\right)_{t\in\NN}\in\ker\varphi$.
  Then $m_{t}\in\ker\varphi_{t}$ for all $t\in\NN$.
  Thus $xm_{t}=0$ for all $x\in\mathfrak{m}$ and all
  $t\in\NN$ and we find
  $\mathfrak{m}\ker\varphi=0$. That is,
  $\ker\varphi^{\al}=(\ker\varphi)^{\al}=0$.
  
  Next, pick $m=\left(m_{t}\right)_{t\in\NN}\in\ker\psi$
  and fix an arbitrary $x\in\mathfrak{m}$. We have
  to show that $xm\in\im\varphi$. Indeed, this would imply
  $\mathfrak{m}\ker\psi/\im\varphi=0$, that is
  $\ker\psi^{\al}/\im\varphi^{\al}=\left(\ker\psi/\im\varphi\right)^{\al}=0$.
  Since $\mathfrak{m}^{2}=\mathfrak{m}$ we may write,
  without loss of generality, $x=yz$ for some $y,z\in\mathfrak{m}$.
  Because the complexes $C_{t}$ are almost exact,
  we find that $ym_{t}\in\im\varphi_{t}$, for all $t\in\NN$.
  Pick $n_{t}\in\varphi_{t}^{-1}\left(zm_{t}\right)$. Compute
  \begin{equation*}
    \varphi_{t}\left( \omega_{t+1}\left( n_{t+1} \right) - n_{t}\right)
    =\omega_{t+1}\left(\varphi_{t+1}\left( n_{t+1} \right) \right)- ym_{t}
    =ym_{t} - ym_{t}
    =0,
  \end{equation*}
  that is $\omega_{t+1}\left( n_{t+1} \right) - n_{t}\in\ker\varphi_{t}$,
  for every $t\in\NN$. But the $\varphi_{t}$ are almost injective,
  therefore $\omega_{t+1}\left( yn_{t+1} \right)=y\omega_{t+1}\left( n_{t+1} \right)=yn_{t}$.
  This implies that the sequence $n_{y}:=\left(yn_{t}\right)_{t\in\NN}$
  defines an element in $M_{1}$. It is clear from the definition that
  $\varphi\left( n_{y} \right)=\left( yzm_{t}\right)_{t\in\NN}=xm$.
  That is, $xm\in\im\varphi$.
\end{proof}

Now fix a site $X$ which admits \emph{only finite coverings}.
Because the category of almost $R$-modules is Grothendieck,
we get a well-behaved abelian category
$\Sh\left( X, \Mod\left(R^{\al}\right)\right)$ of sheaves of
almost $R$-modules on $X$. For example, we have an exact
sheafification functor $\cal{F}\mapsto\cal{F}^{\sh}$.

\begin{lem}\label{lem:coprod-almostsheaves}
  Consider a collection $\cal{G}_{i}$ of sheaves almost $R$-modules on $X$.
  Their coproduct is given by $U\mapsto\bigoplus_{i}\cal{G}_{i}(U)$.
\end{lem}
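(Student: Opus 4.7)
The plan is to show that the presheaf $\cal{F}\colon U\mapsto\bigoplus_{i}\cal{G}_{i}(U)$, where the direct sum is taken in $\Mod(R^{\al})$, is already a sheaf and satisfies the universal property of a coproduct. The crucial input is that $X$ admits only finite coverings, together with the fact that $\Mod(R^{\al})$ is a Grothendieck abelian category, cf. Lemma~\ref{sec:almostmaths-appendix-1}, so direct sums are exact and finite products agree with finite direct sums.

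First I would check the sheaf condition. Fix a finite covering $\{U_{j}\to U\}_{j\in J}$ and consider the complex
\begin{equation*}
0\to\bigoplus_{i}\cal{G}_{i}(U)\to\prod_{j\in J}\bigoplus_{i}\cal{G}_{i}(U_{j})\to\prod_{j,j'\in J}\bigoplus_{i}\cal{G}_{i}(U_{j}\times_{U}U_{j'}).
\end{equation*}
Since $J$ is finite and we are in an abelian category, each finite product may be rewritten as a finite direct sum, and direct sums commute, so this identifies with $\bigoplus_{i}$ of the corresponding sheaf sequences for the $\cal{G}_{i}$. Exactness of each of those sequences together with exactness of the direct sum functor in $\Mod(R^{\al})$ then gives the sheaf property. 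Because $X$ admits only finite coverings, this is enough; here I would invoke (the almost analogue of) Proposition~\ref{prop:siteqc-reducesheafcondtofincov}, or more directly just the definition of an $\mathbf{A}$-sheaf for $\mathbf{A}=\Mod(R^{\al})$, to conclude that $\cal{F}$ is a sheaf.

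Next I would verify the universal property. The inclusion maps $\cal{G}_{i}(U)\hookrightarrow\bigoplus_{i}\cal{G}_{i}(U)$ are natural in $U$ and hence define morphisms $\iota_{i}\colon\cal{G}_{i}\to\cal{F}$ of sheaves of almost $R$-modules. Given any sheaf $\cal{H}$ together with morphisms $\varphi_{i}\colon\cal{G}_{i}\to\cal{H}$, the universal property of $\bigoplus_{i}$ in $\Mod(R^{\al})$ applied sectionwise produces a unique morphism of presheaves $\cal{F}\to\cal{H}$ extending the $\varphi_{i}$, and this morphism of presheaves is automatically a morphism of sheaves since $\cal{F}$ and $\cal{H}$ already are.

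The only subtle step will be the commutation of the finite product with the direct sum in the sheaf-condition sequence; but since $J$ is finite this is formal in any additive category, and then the exactness of direct sums in $\Mod(R^{\al})$ finishes the argument. I expect no real obstacle here, and this completes the proof.
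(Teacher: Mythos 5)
Your proof is correct and takes essentially the same approach as the paper: both identify the key points as (a) the coverings being finite means the sheaf-condition sequences involve only finite products, (b) finite products are finite biproducts in an additive category and so commute with arbitrary direct sums, and (c) coproducts are exact in a Grothendieck abelian category, so the sectionwise direct sum of the exact sequences for the $\cal{G}_{i}$ is still exact. Your additional verification of the universal property is a slightly more explicit way of saying what the paper leaves implicit, namely that once the presheaf coproduct is a sheaf it automatically agrees with the coproduct in the category of sheaves.
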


\begin{proof}
  Given $U\in X$ and any covering $\mathfrak{U}$ of $U$, we have to check that
  \begin{equation*}
    0\to\bigoplus_{i}\cal{G}_{i}(U)
     \to\prod_{V\in\mathfrak{U}}\bigoplus_{i}\cal{G}_{i}(V)
     \to\prod_{W,W^{\prime}\in\mathfrak{U}}\bigoplus_{i}\cal{G}_{i}\left(W\times_{U} W^{\prime}\right)
  \end{equation*}
  is exact. But because $\mathfrak{U}$ is finite,
  all the products are finite.
  Thus the operator $\bigoplus_{i}$ commutes
  with these products. Exactness now follows because
  the $\cal{G}_{i}$ are sheaves and because
  coproducts are exact in Grothendieck abelian categories.
\end{proof}

Given a presheaf $\cal{F}$ of $R$-modules, denote the
presheaf $U\mapsto\cal{F}(U)^{\al}$ of almost $R$-modules
by $\cal{F}^{\al}$.

\begin{lem}\label{lem:RGammaUFal-RGammaUFal}
  Consider $\cal{F}\mapsto\cal{F}^{\al}$.
  \begin{itemize}
    \item[(i)] It restricts to a functor
    $\Sh\left( X, \Mod\left( R \right)\right)\to\Sh\left( X, \Mod\left( R^{\al} \right)\right)$.
    \item[(ii)] Its restriction to the categories of sheaves is exact.
    \item[(iii)] For every sheaf $\cal{F}$ of $R$-modules,
      \begin{equation*}
        R\Gamma(U,\cal{F}^{\al})\cong R\Gamma(U,\cal{F})^{\al}
      \end{equation*}
      for all $U\in X$.
  \end{itemize}
\end{lem}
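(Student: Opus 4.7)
The plan is to prove (i) and (ii) by formal use of the exactness of $(-)^{\al}$ at the module level (Lemma~\ref{lem:M-to-Mal}), and then to reduce (iii) to a Čech-cohomology comparison via an injective resolution argument.

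For (i), the sheaf condition for a presheaf $\cal{F}$ of $R$-modules on a covering $\mathfrak{U}$ is the exactness of a complex involving only finite products of sections of $\cal{F}$, since $X$ admits only finite coverings. The functor $(-)^{\al}\colon\Mod(R)\to\Mod(R^{\al})$ is exact and, being additive and commuting with all colimits, commutes with finite coproducts and hence with finite products; applied termwise to the sheaf sequence, it produces an exact sequence of almost $R$-modules, so $\cal{F}^{\al}$ is a sheaf. For (ii), let $0\to\cal{F}'\to\cal{F}\to\cal{F}''\to 0$ be short exact in $\Sh(X,\Mod(R))$. The sheaf kernel is computed section-wise, so $(-)^{\al}$ preserves it by exactness on modules. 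The sheaf cokernel is the sheafification of the presheaf cokernel; since sheafification is built from filtered colimits and $(-)^{\al}$ commutes with colimits, $(-)^{\al}$ commutes with sheafification. Combining these gives the exactness of $(-)^{\al}\colon\Sh(X,\Mod(R))\to\Sh(X,\Mod(R^{\al}))$.

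For (iii), I take an injective resolution $\cal{F}\to\cal{I}^{\bullet}$ in $\Sh(X,\Mod(R))$. By (ii), $\cal{F}^{\al}\to\cal{I}^{\bullet,\al}$ is an exact resolution in $\Sh(X,\Mod(R^{\al}))$, and taking sections gives a complex $\Gamma(U,\cal{I}^{\bullet,\al})=\Gamma(U,\cal{I}^{\bullet})^{\al}$ whose cohomology is $R\Gamma(U,\cal{F})^{\al}$. The main obstacle is to show that each $\cal{I}^{i,\al}$ is $\Gamma(U,-)$-acyclic in $\Sh(X,\Mod(R^{\al}))$, so that the same complex represents $R\Gamma(U,\cal{F}^{\al})$ as well. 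I would establish this via Čech cohomology: for any finite covering $\mathfrak{U}$, the Čech complex $\check{\rC}^{\bullet}(\mathfrak{U},-)$ involves only finite products, hence commutes with $(-)^{\al}$, and exactness on modules yields $\check{\Ho}^p(\mathfrak{U},\cal{G}^{\al})\cong\check{\Ho}^p(\mathfrak{U},\cal{G})^{\al}$ for any sheaf $\cal{G}$ of $R$-modules. Injective sheaves $\cal{I}$ have vanishing higher Čech cohomology on every covering, since by injectivity $H^q(V,\cal{I})=0$ for $q>0$ and all $V$, whence the Čech-to-derived spectral sequence $\check{\Ho}^p(\mathfrak{U},\underline{\Ho}^q(\cal{I}))\Rightarrow \Ho^{p+q}(U,\cal{I})$ degenerates to give $\check{\Ho}^p(\mathfrak{U},\cal{I})=0$ for $p>0$. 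The same vanishing then holds for $\cal{I}^{\al}$, and the standard criterion that a sheaf in a Grothendieck abelian category whose Čech cohomology vanishes on all coverings of all opens has vanishing higher derived-functor cohomology forces $\Ho^i(U,\cal{I}^{\al})=0$ for $i>0$, completing the argument.
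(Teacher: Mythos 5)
Parts (i) and (ii) are essentially the paper's argument: exactness of $M\mapsto M^{\al}$ at the module level, compatibility with finite products (via commuting with colimits), and, for (ii), the observation that $(-)^{\al}$ commutes with the kernels, finite products, and filtered colimits entering into sheafification. No comment needed there.

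For (iii), your route is correct but differs from the paper's in its choice of resolution. The paper works with a flabby resolution $\cal{F}\to\cal{I}^{\bullet}$, where ``flabby'' means vanishing higher \v{C}ech cohomology; since \v{C}ech complexes on this site involve only finite products, $(-)^{\al}$ preserves flabbiness immediately, so $\cal{F}^{\al}\to\cal{I}^{\bullet,\al}$ is again a flabby resolution and computes $R\Gamma(U,\cal{F}^{\al})$ with no further work. You instead start from an injective resolution and have to re-earn the acyclicity of each $\cal{I}^{i,\al}$: first via the \v{C}ech-to-derived spectral sequence to get \v{C}ech-acyclicity of $\cal{I}^{i}$, then the compatibility of $(-)^{\al}$ with \v{C}ech complexes, then the Cartan criterion (Stacks Tag 01EV) to conclude $\Gamma(U,-)$-acyclicity of $\cal{I}^{i,\al}$. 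Both proofs ultimately rest on the same pair of facts --- preservation of \v{C}ech vanishing and the Cartan criterion --- so the difference is organizational rather than substantive; the paper's version is a step shorter because injectivity never has to be invoked, only \v{C}ech-acyclicity, which is precisely what $(-)^{\al}$ manifestly preserves.
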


\begin{proof}
  $M\mapsto M^{\al}$ is exact and commutes with finite products
  because it commutes with all colimits. This gives (i) because $X$
  admits only finite coverings. Furthermore, the exactness of $M\mapsto M^{\al}$ implies
    the exactness of the functor $\cal{F}\mapsto\cal{F}^{\al}$ between the categories
    of presheaves. Now $\cal{F}\mapsto\cal{F}^{\al}$ commutes
    with the sheafification functor; this follows because
    $M\mapsto M^{\al}$ commutes with kernels, finite products, and filtered colimits. 
    This proves (ii). Finally, we consider (iii).
      Pick a flabby resolution $\cal{F}\to\cal{I}^{\bullet}$.
  Then $\cal{F}^{\al}\to\cal{I}^{\bullet,\al}$ is a flabby resolution as well,
  cf. the notation in \S\ref{subsec:notation},
  thus it computes the derived sections. Now compute
  \begin{align*}
    R\Gamma(U,\cal{F}^{\al})
    =R\Gamma(U,\cal{I}^{\al,\bullet})
    \cong R\Gamma(U,\cal{I}^{\bullet})^{\al}
    =R\Gamma(U,\cal{F})^{\al}
  \end{align*}
  with (ii).
\end{proof}

\begin{lem}\label{lem:ractusinvertibly-then-almostsheaf-implies-sheaf}
  Consider a presheaf $\cal{F}$ of $R$-modules on $X$ and
  suppose that there exists an element $r\in R$ which acts on
  $\cal{F}$ as an automorphism. If $\cal{F}^{\al}$ is a sheaf, then $\cal{F}$ is a sheaf.
\end{lem}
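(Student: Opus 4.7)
The plan is to verify the sheaf axiom term-wise via a direct reduction to Lemma~\ref{lem:ractusinvertibly-then-almostexact-implies-exact}. Fix an arbitrary covering $\{U_i \to U\}$ in $X$, which by hypothesis on $X$ may be assumed finite, and consider the associated sequence
\begin{equation*}
  0 \to \mathcal{F}(U) \to \prod_i \mathcal{F}(U_i) \to \prod_{i,j} \mathcal{F}(U_i \times_U U_j)
\end{equation*}
of $R$-modules. I need to show this is exact, which amounts to two things: injectivity of the first map, and exactness at the middle term.

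First I would note that $r$ acts as an automorphism on each of the three modules. Indeed, this holds on $\mathcal{F}(V)$ for every open $V \in X$ by hypothesis, and finite products of automorphisms are automorphisms. Second, applying the functor $(-)^{\al}$ term-wise yields the corresponding sequence for $\mathcal{F}^{\al}$; since $M \mapsto M^{\al}$ commutes with finite products (it is exact and preserves all colimits, hence in particular finite biproducts in the abelian category of $R$-modules), this is literally the Čech sequence of the presheaf $\mathcal{F}^{\al}$ with respect to the covering $\{U_i \to U\}$. By assumption $\mathcal{F}^{\al}$ is a sheaf, so this sequence of almost $R$-modules is exact; that is, the original sequence is almost exact.

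Now I would invoke Lemma~\ref{lem:ractusinvertibly-then-almostexact-implies-exact} twice. Applying it to the three-term complex with $M_1 = 0$, $M_2 = \mathcal{F}(U)$, $M_3 = \prod_i \mathcal{F}(U_i)$ (on which $r$ trivially acts invertibly) yields injectivity of $\mathcal{F}(U) \to \prod_i \mathcal{F}(U_i)$. Applying it to the complex with $M_1 = \mathcal{F}(U)$, $M_2 = \prod_i \mathcal{F}(U_i)$, $M_3 = \prod_{i,j} \mathcal{F}(U_i \times_U U_j)$ yields exactness in the middle. Since the covering was arbitrary, $\mathcal{F}$ is a sheaf.

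There is essentially no obstacle here: the argument is a clean two-line combination of the cited lemmata and the observation that $(-)^{\al}$ commutes with the finite products appearing in the sheaf axiom (which in turn uses the finiteness assumption on coverings in $X$). The only thing to be slightly careful about is this commutation with finite products, but it is immediate from the fact that $(-)^{\al}$ preserves all colimits and is exact.
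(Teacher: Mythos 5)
Your proof is correct and follows essentially the same route as the paper's: apply $(-)^{\al}$ to the Čech sequence, use commutation with finite products to identify the result with the Čech sequence of $\mathcal{F}^{\al}$, and then invoke Lemma~\ref{lem:ractusinvertibly-then-almostexact-implies-exact}. The paper's proof is just a one-line summary of exactly this; your spelling out of the two applications of the cited lemma (injectivity, then middle-exactness) is a clean way to make the reduction explicit.
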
  

\begin{proof}  
  Given $U\in X$ and any covering $\mathfrak{U}$ of $U$, we have to check that
  the following complex is exact:
  \begin{equation*}
    0\to\cal{F}(U)^{\al}
     \to\prod_{V\in\mathfrak{U}}\cal{F}(V)^{\al}
     \to\prod_{W,W^{\prime}\in\mathfrak{U}}\cal{F}\left(W\times_{U} W^{\prime}\right)^{\al}.
  \end{equation*}
  Since these products are finite, they commute with $M\mapsto M^{\al}$.
  Now apply Lemma~\ref{lem:ractusinvertibly-then-almostexact-implies-exact}.
\end{proof}
 
\begin{lem}\label{lem:lim-preserves-almost-sheafy}
  Consider
  an inverse system $\dots\to\cal{F}_{2}\to\cal{F}_{1}\to\cal{F}_{0}$
  of presheaves of $R$-modules on a site $X$ which
  admits only finite coverings. If $\cal{F}_{t}^{\al}$ is a sheaf of
  almost $R$-modules for every $t\in\NN$, then
  $\left(\varprojlim_{t\in\NN}\cal{F}_{t}\right)^{\al}$ is a sheaf
  of almost $R$-modules as well.
\end{lem}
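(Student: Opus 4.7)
The plan is to reduce directly to Lemma~\ref{lem:lim-preserve-leftalmostexact} via the sheaf condition. Fix an open $U \in X$ together with a covering $\mathfrak{U} = \{U_i \to U\}_{i \in I}$; since $X$ admits only finite coverings, $I$ is finite. I need to show that the complex
\[
0 \to \left(\varprojlim_{t\in\NN} \cal{F}_t\right)(U) \to \prod_{i\in I} \left(\varprojlim_{t\in\NN} \cal{F}_t\right)(U_i) \to \prod_{i,j\in I} \left(\varprojlim_{t\in\NN} \cal{F}_t\right)(U_i \times_U U_j)
\]
is almost exact.

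First, I would observe that for each $t\in\NN$, the hypothesis that $\cal{F}_t^{\al}$ is a sheaf of almost $R$-modules gives that the complex
\[
C_t\colon 0 \to \cal{F}_t(U) \xrightarrow{\varphi_t} \prod_{i\in I} \cal{F}_t(U_i) \xrightarrow{\psi_t} \prod_{i,j\in I} \cal{F}_t(U_i \times_U U_j)
\]
is almost exact. Indeed, the functor $M \mapsto M^{\al}$ is exact and commutes with the finite products appearing here (it commutes with all colimits and is exact, hence preserves finite products as well), so the almost exactness of $C_t$ is equivalent to the exactness of $C_t^{\al}$, which is precisely the sheaf condition for $\cal{F}_t^{\al}$ against $\mathfrak{U}$. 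The transition maps $\cal{F}_{t}\to\cal{F}_{t-1}$ induce compatible maps $C_{t}\to C_{t-1}$.

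Next, I would apply Lemma~\ref{lem:lim-preserve-leftalmostexact} to the inverse system $(C_t)_{t\in\NN}$ to conclude that $\varprojlim_{t\in\NN} C_t$ is almost exact. Finally, since inverse limits commute with (finite) products, $\varprojlim_{t\in\NN} C_t$ is naturally identified with the complex
\[
0 \to \left(\varprojlim_{t\in\NN}\cal{F}_t\right)(U) \to \prod_{i\in I} \left(\varprojlim_{t\in\NN}\cal{F}_t\right)(U_i) \to \prod_{i,j\in I} \left(\varprojlim_{t\in\NN}\cal{F}_t\right)(U_i \times_U U_j),
\]
where we use that $\varprojlim_{t}\cal{F}_t$ is defined sectionwise as a presheaf. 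The almost exactness of this complex for every covering $\mathfrak{U}$ is exactly the statement that $(\varprojlim_{t\in\NN}\cal{F}_t)^{\al}$ is a sheaf of almost $R$-modules.

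There is no real obstacle here: the essential content is already isolated in Lemma~\ref{lem:lim-preserve-leftalmostexact}, and the only additional ingredients are the finiteness of the coverings (which lets products commute both with $M\mapsto M^{\al}$ and with $\varprojlim$) together with the sectionwise description of the inverse limit presheaf. The proof is therefore a short formal verification.
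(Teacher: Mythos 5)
Your proposal is correct and follows the same route as the paper: pass to a fixed covering, observe that each $C_t$ is almost exact because $\cal{F}_t^{\al}$ is a sheaf (using that $M\mapsto M^{\al}$ commutes with the finite products that occur), apply Lemma~\ref{lem:lim-preserve-leftalmostexact}, and identify the limit complex with the sheaf-condition complex for $\varprojlim_t\cal{F}_t$. You merely spell out the commutations with finite products and sectionwise limits a bit more explicitly than the paper does.
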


\begin{proof}
  For any covering $\mathfrak{U}$ of an open
  $U\in X$, apply Lemma~\ref{lem:lim-preserve-leftalmostexact}
  to the almost exact sequences
  \begin{equation*}
    0
    \to\cal{F}_{t}(U)
    \to\prod_{V\in\mathfrak{U}}\cal{F}_{t}(V)
    \to\prod_{W,W^{\prime}\in\mathfrak{U}}\cal{F}_{t}\left(W\times_{U}W^{\prime}\right).
  \end{equation*}
  Here we use again that $M\mapsto M^{\al}$ commutes with finite products.
\end{proof}

\end{subappendices}



\chapter{Period structure sheaves}
\label{ch:period-structure-sheaves}


\section{Definitions}
\label{subsec:OBla}

Keep the notation as in \S\ref{subsec:locan-period-sheaf} fixed.
$X$ denotes again a locally Noetherian adic space over $\Spa(k,k^{\circ})$.

For every $q\in\NN\cup\{\infty\}$,
we introduce the presheaves of $k$-ind-Banach algebras
\begin{align*}
  \widetilde{\OB}_{\dR}^{q,+,\psh}\colon
  U=\text{``}\varprojlim_{i\in I}\text{"}U
  &\mapsto
  \text{``}\varinjlim_{i\in I}\text{"}
  \left(\mathcal{O}^{+}(U_{i})\widehat{\otimes}_{W(\kappa)}\A_{\inf}(U)\right)
  \left\<\frac{\ker\Otheta_{\inf}}{p^{q}}\right\>\widehat{\otimes}_{k^{\circ}}k
\end{align*}
on $X_{\proet,\affperfd}^{\fin}$.
Here, $\Otheta_{\inf}$ denotes the composition
of the surjections
\begin{equation*}
  \mathcal{O}^{+}(U_{i})\widehat{\otimes}_{W(\kappa)}\A_{\inf}(U)
  \stackrel{\id\widehat{\otimes}\theta_{\inf}}{\longrightarrow}
  \mathcal{O}^{+}(U_{i})\widehat{\otimes}_{W(\kappa)}\widehat{\mathcal{O}}^{+}(U)
  \stackrel{\mu^{+}}{\longrightarrow}
  \widehat{\mathcal{O}}^{+}(U),
\end{equation*}
where $\mu^{+}$ denotes the multiplication.
$\mathcal{O}^{+}(U_{i})$ carries the $\pi$-adic seminorm.

\begin{remark}
  The kernels of the maps $\Otheta_{\inf}$ are finitely generated.
  This follows from Lemma~\ref{lem:Fontaines-map-for-Ala} and because the
  kernel of $\mu^{+}$ is finitely generated. Now Lemma~\ref{lem:completionsalongideals-generators}
  allows to give a more concrete definition of
  $\OB_{\dR}^{q,+,\psh}(U)$.
\end{remark}

Since sheafification is strongly monoidal, cf. Lemma~\ref{lem:sh-strongly-monoidal},
the sheafifications $\widetilde{\OB}_{\dR}^{q,+}$ of $\OB_{\dR}^{q,+,\psh}$
are sheaves of $k$-ind-Banach algebras.
They extend, by Lemma~\ref{lem:sheaves-on-Xproet-and-Xproetaffperfdfin},
to sheaves on the whole pro-étale site, which we denote again by $\widetilde{\OB}_{\dR}^{q,+}$.

We recall that in any quasi-abelian category, the coimage of a given morphism
$f\colon E\to F$ is $\coim f:= \coker\left(\ker(f)\to F\right)$. In general, it is not isomorphic
to its image $\im f$. This is the case, for example, for the morphism
$\widetilde{\OB}_{\dR}^{q,+}\to\widetilde{\OB}_{\dR}^{\infty,+}$ in the following
Definition~\ref{defn:OBlaplus}.

\begin{defn}\label{defn:OBlaplus}
  For every $q\in\NN\cup\{\infty\}$, we set
  \begin{equation*}
    \OB_{\dR}^{q,+}:=\coim\left( \widetilde{\OB}_{\dR}^{q,+}\to\widetilde{\OB}_{\dR}^{\infty,+} \right).
  \end{equation*}
  $\OB_{\dR}^{\dag,+}:=\OB_{\dR}^{\infty,+}\cong\widetilde{\OB}_{\dR}^{\infty,+}$
  is the \emph{positive overconvergent de Rham period structure sheaf}.
\end{defn}

\begin{remark}
  We chose this somewhat cumbersome Definition~\ref{defn:OBlaplus}
  of $\OB_{\dR}^{q,+}$ because we wanted to have
  a local description as in Theorem~\ref{thm:localdescription-of-OBqplus}.
  The proof of this theorem relies heavily on
  Proposition~\ref{prop:coordinates-germ-diagonal}.
  A similar local description does not hold for
  $\widetilde{\OB}_{\dR}^{q,+}$, because the corresponding version
  of Proposition~\ref{prop:coordinates-germ-diagonal} fails:
  that is, in the notation of \emph{loc. cit.}
  \begin{equation*}
    \tau_{i}\colon R_{i}\left\< \frac{Z_{1},\dots,Z_{d}}{p^{q}}\right\>
    \to
    \left(R_{i}\widehat{\otimes}_{k_{0}}R_{i}\right)
    \left\< \frac{s_{1},\dots,s_{n}}{p^{q}}\right\>
   \end{equation*}
  is usually not an isomorphism for $q<\infty$. 
\end{remark}

\begin{remark}
  Use Lemma~\ref{lem:banach-to-indbanach-sheaves} to
  view the structure sheaf $\mathcal{O}$ on $X$ as a sheaf of $k$-ind-Banach algebras.
  The canonical morphisms
  \begin{equation*}
    \nu^{-1}\mathcal{O} \widehat{\otimes}_{k_{0}}\BB_{\dR}^{q,+}
    \to \OB_{\dR}^{q,+}
  \end{equation*}
  are morphisms of sheaves of $k$-ind-Banach algebras, cf. Lemma~\ref{lem:sh-strongly-monoidal}.
  This makes $\OB_{\dR}^{q,+}$ a sheaf of
  $\nu^{-1}\mathcal{O}$-ind-Banach algebras, and a sheaf of
  $\BB_{\dR}^{q,+}$-ind-Banach algebras.
\end{remark}

\begin{remark}
  The maps $\Otheta_{\inf}$ induce morphisms
  \begin{equation*}
    \Ovartheta_{\dR}^{q,+}\colon\OB_{\dR}^{q,+} \to \hO
  \end{equation*}
  of sheaves of $k$-ind-Banach spaces for every $q\in\NN\cup\{\infty\}$,
  by Lemma~\ref{lem:Banach-completions-extend} and~\ref{lem:sh-strongly-monoidal}.
\end{remark}

We continue with the following definitions:
\begin{align*}
  \OB_{\dR}^{\dag}&:=\OB_{\dR}^{\dag,+}\widehat{\otimes}_{\BB_{\dR}^{\dag,+}}\BB_{\dR}^{\dag} \\
  \OB_{\pdR}^{\dag,+}&:=\OB_{\dR}^{\dag,+}\widehat{\otimes}_{\BB_{\dR}^{\dag,+}}\BB_{\pdR}^{\dag,+}, \text{ and} \\
  \OB_{\pdR}^{\dag}&:=\OB_{\dR}^{\dag,+}\widehat{\otimes}_{\BB_{\dR}^{\dag,+}}\BB_{\pdR}^{\dag}
\end{align*}
are sheaves of
$\nu^{-1}\cal{O}\widehat{\otimes}_{k_{0}}\BB_{\dR}^{\dag}$-,
$\nu^{-1}\cal{O}\widehat{\otimes}_{k_{0}}\BB_{\pdR}^{\dag,+}$-, and
$\nu^{-1}\cal{O}\widehat{\otimes}_{k_{0}}\BB_{\pdR}^{\dag}$-ind-Banach algebras on $X_{\proet}$.

\begin{defn}\label{defn:namesfortheperiodsheaves-reconstructionpaper}
  $\OB_{\dR}^{\dag}$ is the
  \emph{overconvergent de Rham period structure sheaf}.
  $\OB_{\pdR}^{\dag,+}$ is the
  \emph{positive overconvergent almost de Rham period structure sheaf}.
  $\OB_{\pdR}^{\dag}$ is the
  \emph{overconvergent almost de Rham period structure sheaf}.
\end{defn}


\section{Local descriptions}
\label{section:localdescriptions-periodstructuresheaves}


\subsection{Main results}

We aim to describe $\OB_{\dR}^{\dag,+}$ 
locally, in the spirit of~\cite[Proposition 6.10]{Sch13pAdicHodge}. Assume that $X$ is
affinoid and equipped with an étale map $X\to\TT^{d}:=\TT_{0}^{d}$. Here we define,
for all $e\in\NN$,
\begin{equation*}
  \TT_{e}^{d}:=\Spa\left(k\left\<T_{1}^{\pm1/p^{e}},\dots,T_{d}^{\pm1/p^{e}}\right\>,
  k^{\circ}\left\<T_{1}^{\pm1/p^{e}},\dots,T_{d}^{\pm1/p^{e}}\right\>\right)
\end{equation*}
Write $\widetilde{\TT}^{d}:=\text{``}\varprojlim_{e\in\NN}\text{"}\TT_{e}^{d}\in\TT_{\proet}^{d}$
and $\widetilde{X}:=X\times_{\TT^{d}}\widetilde{\TT}^{d}$.
Every $U=\text{``}\varprojlim_{i\in I}\text{"}U_{i}\in X_{\proet}/\widetilde{X}$ gives rise
to a morphism $U\to\widetilde{X}\to\widetilde{\TT}^{d}$ in
$\Pro\left(\TT_{\proet}^{d}\right)$. It is thus given by a compatible system
of étale maps $U_{i_{0}}\maps\TT_{e}^{d}$ for a fixed
$i_{0}\in I$ and varying $e$.

\begin{notation}\label{notation:i0}
  Fix such an $i_{0}\in I$.
\end{notation}

Consider the images of the elements $T_{1},\dots,T_{d}\in\mathcal{O}^{+}\left(\TT_{e}^{d}\right)$
in $\mathcal{O}^{+}\left(U_{i}\right)$ for all $i\geq i_{0}$
and $e$ large enough. Abusing notation, denote them again by $T_{1},\dots,T_{d}$. 
\begin{equation}\label{eq:defn-ul}
  u_{l}:=T_{l}\widehat{\otimes}1 - 1 \widehat{\otimes}\left[T_{l}^{\flat}\right]
  \in \mathcal{O}^{+}(U_{i})\widehat{\otimes}_{W(\kappa)}\A_{\inf}(U)
\end{equation}
for all $l=1,\dots,d$, every $U=\text{``}\varprojlim_{i\in I}\text{"}U_{i}\in X_{\proet,\affperfd}/\widetilde{X}$,
and $i\geq i_{0}$. Here
\begin{equation*}
  T_{l}^{\flat}:=\left(T_{l},T_{l}^{1/p},T_{l}^{1/p^{2}},\dots\right)\in\left(\widehat{\mathcal{O}}^{+}(U)\right)^{\flat}.
\end{equation*}
Since those $u_{l}$ lie in the kernel of $\Otheta_{\inf}$,
the canonical maps
\begin{equation*}
  \A_{\dR}^{q}(U) \to
  \left(\mathcal{O}^{+}(U_{i})\widehat{\otimes}_{W(\kappa)}\A_{\inf}(U)\right)\left\< \frac{\ker\vartheta}{p^{q}} \right\>
\end{equation*}
extend with Corollary~\ref{cor:extend-maps-to-restrictedpowerseries}
to the morphisms
\begin{equation*}
  \A_{\dR}^{q}(U)\left\<\frac{Z_{1},\dots,Z_{d}}{p^{q}}\right\>
  \to\left(\mathcal{O}^{+}(U_{i})\widehat{\otimes}_{W(\kappa)}\A_{\inf}(U)\right)\left\< \frac{\ker\vartheta}{p^{q}} \right\>,
  Z_{l}/p^{q} \mapsto u_{l}/p^{q}
\end{equation*}
of $\A_{\dR}^{q}(U)$-Banach algebras for every $q\in\NN$.
Here, the $Z_{l}$ denote formal variables
\footnote{\cite{Sch13pAdicHodge} denotes $Z_{l}$ as $X_{l}$.
We use different symbols to avoid confusion with the space $X$.}.
Invert $p$ and
pass to the colimit along $i\in I$ to get
\begin{equation*}
  \Phi^{q,+}(U)\colon
  \BB_{\dR}^{q,+}(U)\left\<\frac{Z_{1},\dots,Z_{d}}{p^{q}}\right\>
  \to\widetilde{\OB}_{\dR}^{q,+,\psh}(U).
\end{equation*}
The data of these maps $\Phi_{\dR}^{q,+}(U)$ define morphisms
\begin{equation*}
  \widetilde{\Phi}^{q, +,\psh}\colon
  \BB_{\dR}^{q, +}|_{\widetilde{X}}\left\<\frac{Z_{1},\dots,Z_{d}}{p^{q}}\right\>^{\psh}
  \to\widetilde{\OB}_{\dR}^{q, +,\psh}|_{\widetilde{X}}
\end{equation*}
of presheaves of $\BB_{\dR}^{q, +}|_{\widetilde{X}}$-ind-Banach algebras on $X_{\proet}/\widetilde{X}$,
where
\begin{equation*}
  \BB_{\dR}^{q,+}|_{\widetilde{X}}\left\<\frac{Z_{1},\dots,Z_{d}}{p^{q}}\right\>^{\psh}
  \colon
  V\mapsto\BB_{\dR}^{q,+}(V)\left\<\frac{Z_{1},\dots,Z_{d}}{p^{q}}\right\>.
\end{equation*}
By Lemma~\ref{lem:sh-strongly-monoidal}, its sheafification
\begin{equation*}
  \widetilde{\Phi}^{q,+}\colon
  \BB_{\dR}^{q,+}|_{\widetilde{X}}\left\<\frac{Z_{1},\dots,Z_{d}}{p^{q}}\right\>
  \to\widetilde{\OB}_{\dR}^{q,+}|_{\widetilde{X}}
\end{equation*}
is a morphism of sheaves of $\BB_{\dR}^{q,+}|_{\widetilde{X}}$-ind-Banach algebras.
Here,
\begin{equation*}
  \BB_{\dR}^{q,+}|_{\widetilde{X}}\left\<\frac{Z_{1},\dots,Z_{d}}{p^{q}}\right\>:=
  \left(\BB_{\dR}^{q,+}|_{\widetilde{X}}\left\<\frac{Z_{1},\dots,Z_{d}}{p^{q}}\right\>^{\psh}\right)^{\sh}.
\end{equation*}
Now pass to the colimit along
$q\to\infty$ to obtain the morphism
\begin{equation*}
  \widetilde{\Phi}^{\dag,+}\colon
  \BB_{\dR}^{\dag,+}|_{\widetilde{X}}\left\<\frac{Z_{1},\dots,Z_{d}}{p^{\infty}}\right\>
  \to\OB_{\dR}^{\dag,+}|_{\widetilde{X}}
\end{equation*}
of sheaves of $\BB_{\dR}^{+}|_{\widetilde{X}}$-ind-Banach algebras.

\begin{thm}\label{thm:localdescription-of-OBla}
  Let $X$ be affinoid and equipped with
  an étale map $X\to\TT^{d}$, giving rise to the
  pro-étale covering $\widetilde{X}\to X$. Then the morphism
  \begin{equation*}
    \Phi^{\dag,+}\colon
    \BB_{\dR}^{\dag,+}|_{\widetilde{X}}\left\<\frac{Z_{1},\dots,Z_{d}}{p^{\infty}}\right\>
    \stackrel{\cong}{\longrightarrow}\OB_{\dR}^{\dag,+}|_{\widetilde{X}}
  \end{equation*}
  is an isomorphism of sheaves of
  $\BB_{\dR}^{\dag,+}|_{\widetilde{X}}$-ind-Banach algebras.
\end{thm}

\begin{remark}
  Theorem~\ref{thm:localdescription-of-OBla} implies
  its version~\cite[Proposition 6.10]{Sch13pAdicHodge} for $\OB_{\dR}^{+}$
  via taking $t$-adic completions, locally on the pro-étale site.
\end{remark}

We prove Theorem~\ref{thm:localdescription-of-OBla}
in \S\ref{subsec:proof-localdescrption-of-OBla}.

\begin{cor}\label{cor:localdescription-of-subsections-OBla}
  Let $X$ be affinoid and equipped with
  an étale map $X\to\TT^{d}$, giving rise to the
  pro-étale covering $\widetilde{X}\to X$.
  For every affinoid perfectoid $U\in X_{\proet}/\widetilde{X}$
  with $\widehat{U}=\Spa\left( R,R^{+} \right)$,
  \begin{equation*}
  \begin{split}
    \BB_{\dR}^{\dag,+}\left( R , R^{+} \right)\left\< \frac{Z_{1},\dots,Z_{d}}{p^{\infty}}\right\>
    &\stackrel{\cong}{\longrightarrow}\OB_{\dR}^{\dag,+}(U) \\
    Z_{l}&\longmapsto u_{l}
  \end{split}
  \end{equation*}
  is an isomorphism
  of $\BB_{\dR}^{\dag,+}\left( R , R^{+} \right)$-ind-Banach algebras.
\end{cor}

\begin{proof}
  It suffices to check that
  \begin{equation}\label{eq:cor-localdescription-of-subsections-OBla}
    U\mapsto \BB_{\dR}^{\dag,+}\left(\hO(U),\widehat{\mathcal{O}}^{+}(U)\right)\left\<\frac{Z_{1},\dots,Z_{d}}{p^{\infty}}\right\>
  \end{equation}
  is a sheaf on $X_{\proet,\affperfd}^{\fin}$. Indeed,
  then we can apply the Theorems~\ref{thm:subsections-periodsheaves-affperfd} and~\ref{thm:localdescription-of-OBla}.
  But the sheafiness of~(\ref{eq:cor-localdescription-of-subsections-OBla})
  follows from Theorem~\ref{thm:subsections-periodsheaves-affperfd}
  together with Lemma~\ref{lem:calFsheaf-FcirccalF-sheaf-exactfunctor},
  which applies because of Corollary~\ref{lem:restrictedpowerseries-exact-strongerconvergenceinfty}.
\end{proof}

\begin{remark}\label{remark:subsectionsofOBdRdagpluspsh}
  Write $\OB_{\dR}^{\dag,+,\psh}:=\OB_{\dR}^{\infty,+,\psh}$.
  Then, for any $U\in X_{\proet}/\widetilde{X}$,
  \begin{equation*}
    \OB_{\dR}^{\dag,+,\psh}(U)
    \cong\BB_{\dR}^{\dag,\psh}(U)\left\<\frac{Z_{1},\dots,Z_{d}}{p^{\infty}}\right\>
    \cong\BB_{\dR}^{\dag}(U)\left\<\frac{Z_{1},\dots,Z_{d}}{p^{\infty}}\right\>
    \cong\OB_{\dR}^{\dag,+}(U)
  \end{equation*}
  by the proofs of Theorem~\ref{thm:localdescription-of-OBla}
  and Corollary~\ref{cor:localdescription-of-subsections-OBla}.
\end{remark}

Fix again $q\in\NN$. We continue by composing $\widetilde{\Phi}^{q,+}$
with the canonical morphism
\begin{equation*}
  \widetilde{\OB}_{\dR}^{q,+}
  \to\OB_{\dR}^{q,+}
\end{equation*}
to get the morphism $\Phi^{q,+}$ in the following Theorem~\ref{thm:localdescription-of-OBqplus}

\begin{thm}\label{thm:localdescription-of-OBqplus}
  Let $X$ be affinoid and equipped with
  an étale map $X\to\TT^{d}$, giving rise to the
  pro-étale covering $\widetilde{X}\to X$. Then, for $q\gg0$ large enough (depending on $X\to\TT^{d}$), the morphism
  \begin{equation*}
    \Phi^{q,+}\colon
    \BB_{\dR}^{q,+}|_{\widetilde{X}}\left\<\frac{Z_{1},\dots,Z_{d}}{p^{q}}\right\>
    \stackrel{\cong}{\longrightarrow}\OB_{\dR}^{q,+}|_{\widetilde{X}}
  \end{equation*}
  is an isomorphism of sheaves of
  $\BB_{\dR}^{q,+}|_{\widetilde{X}}$-ind-Banach algebras.
\end{thm}

We prove Theorem~\ref{thm:localdescription-of-OBqplus}
in \S\ref{subsec:proof-localdescrption-of-OBla-finitelevel}.

\begin{notation}\label{notation:qgbound}
  Given an affinoid $X$ equipped with
  an étale map $g\colon X\to\TT^{d}$, define $q_{g}\in\NN_{\geq2}$ to be the smallest
  number such that $\Phi^{q,+}$ in Theorem~\ref{thm:localdescription-of-OBqplus} is an isomorphism.
\end{notation}

\begin{cor}\label{cor:localdescription-of-subsections-OBqplus}
  Let $X$ be affinoid and equipped with
  an étale map $g\colon X\to\TT^{d}$, giving rise to the
  pro-étale covering $\widetilde{X}\to X$.
  Fix $q\geq g_{g}$.
  For every affinoid perfectoid $U\in X_{\proet}/\widetilde{X}$
  with $\widehat{U}=\Spa\left( R,R^{+} \right)$,
  \begin{equation*}
  \begin{split}
    \BB_{\dR}^{q,+}\left( R , R^{+} \right)\left\< \frac{Z_{1},\dots,Z_{d}}{p^{q}}\right\>
    &\stackrel{\cong}{\longrightarrow}\OB_{\dR}^{q,+}(U) \\
    Z_{l}&\longmapsto u_{l}
  \end{split}
  \end{equation*}
  is an isomorphism
  of $\BB_{\dR}^{q,+}\left( R , R^{+} \right)$-ind-Banach algebras.
\end{cor}

\begin{proof}
  It suffices to check that
  \begin{equation}\label{eq:cor-localdescription-of-subsections-OBqplus}
    U\mapsto \BB_{\dR}^{q,+}\left(\hO(U),\widehat{\mathcal{O}}^{+}(U)\right)\left\<\frac{Z_{1},\dots,Z_{d}}{p^{q}}\right\>
  \end{equation}
  is a sheaf on $X_{\proet,\affperfd}^{\fin}$. Indeed,
  then we can apply the Theorems~\ref{thm:subsections-periodsheaves-affperfd} and~\ref{thm:localdescription-of-OBqplus}.
  But the sheafiness of~(\ref{eq:cor-localdescription-of-subsections-OBqplus})
  follows from Theorem~\ref{thm:subsections-periodsheaves-affperfd},
  which requires $q\geq 2$,
  together with Lemma~\ref{lem:calFsheaf-FcirccalF-sheaf-exactfunctor},
  which applies because of Lemma~\ref{lem:restrictedpowerseries-exact-strongerconvergence}.
\end{proof}

\begin{remark}\label{rem:OBdRlocaldescription-discreteval}
  Theorem~\ref{thm:localdescription-of-OBla},
  and hence Corollary~\ref{cor:localdescription-of-subsections-OBla},
  Theorem~\ref{thm:localdescription-of-OBqplus}, and
  Corollary~\ref{cor:localdescription-of-subsections-OBqplus}, require
  $k$ to be discretely valued. Indeed, the proof of
  Theorem~\ref{thm:localdescription-of-OBla} relies on
  Proposition~\ref{prop:coordinates-germ-diagonal},
  which uses that $k$ is discretely valued,
  cf. Remark~\ref{rem:discreteval-prop:coordinates-germ-diagonal}.
\end{remark}


\subsection{Miscellaneous consequences of Theorems~\ref{thm:localdescription-of-OBla} and~\ref{thm:localdescription-of-OBqplus}}


\begin{prop}\label{prop:localdescription-of-subsections-OBla-reconstructionpaper}
  Suppose $X$ is affinoid and equipped with an étale morphism $g\colon X\to\TT^{d}$,
  giving rise to the pro-étale covering $\widetilde{X}\to X$. For every $q\geq q_{g}$,
  there is an isomorphism
  \begin{equation*}
    \BB_{\dR}^{q,+}|_{\widetilde{X}}
      \widehat{\otimes}_{k_{0}}k_{0}\left\<\frac{Z_{1},\dots,Z_{d}}{p^{q}}\right\>_{X_{\proet}/\widetilde{X}}
    \isomap \OB_{\dR}^{q,+}|_{\widetilde{X}}
  \end{equation*}
  of sheaves of $\BB_{\dR}^{q,+}$-ind-Banach algebras on $X_{\proet}/\widetilde{X}$.
  This implies
  \begin{equation*}
    \BB_{\dR}^{\dag,+}|_{\widetilde{X}}
      \widehat{\otimes}_{k_{0}}k_{0}\left\<\frac{Z_{1},\dots,Z_{d}}{p^{\infty}}\right\>_{X_{\proet}/\widetilde{X}}
    \isomap \OB_{\dR}^{\dag,+}|_{\widetilde{X}}.
  \end{equation*}
\end{prop}

\begin{proof}
  Let $k_{0}\left\<\frac{Z_{1},\dots,Z_{d}}{p^{q}}\right\>_{X_{\proet}/\widetilde{X},\psh}$
  denote the constant presheaf on $X_{\proet}/\widetilde{X}$ with value $k_{0}\left\<\frac{Z_{1},\dots,Z_{d}}{p^{q}}\right\>$.
  For any two presheaves $\cal{F}$ and $\cal{G}$ of $k_{0}$-ind-Banach spaces, we have the presheaf
  \begin{equation*}
    \cal{F}\widehat{\otimes}_{k_{0},\psh}\cal{G}\colon U\mapsto\cal{F}(U)\widehat{\otimes}_{k_{0}}\cal{G}(U)
  \end{equation*}
  of $k_{0}$-ind-Banach spaces. Finally, denote sheafification by $-^{\sh}$ and compute
  \begin{align*}
    \OB_{\dR}^{q,+}|_{\widetilde{X}}
    &\cong\left(\BB_{\dR}^{q,+}|_{\widetilde{X}}
      \widehat{\otimes}_{k_{0},\psh}k_{0}\left\<\frac{Z_{1},\dots,Z_{d}}{p^{q}}\right\>_{X_{\proet}/\widetilde{X},\psh}\right)^{\sh} \\
    &\cong\BB_{\dR}^{q,+}|_{\widetilde{X}}
      \widehat{\otimes}_{k_{0}}k_{0}\left\<\frac{Z_{1},\dots,Z_{d}}{p^{q}}\right\>_{X_{\proet}/\widetilde{X}}.
  \end{align*}
  Here we used Lemma~\ref{lem:sh-strongly-monoidal},
  Corollary~\ref{cor:localdescription-of-subsections-OBla}, and
  Corollary~\ref{cor:localdescription-of-subsections-OBqplus}.
\end{proof}

In the following, we work over $C$ rather than more general perfectoid fields; this is sufficient
for the purposes of this article.
Furthermore, in \S\ref{subsubsec:cohoveraffperfd-recpaper},
we work in left hearts of quasi-abelian categories,
as this ensures the existence of enough injectives, cf.
\S\ref{subsec:LHindBanachmodules-reconstructionpaper}
and \S\ref{subsec:sheavesindbanspaces-reconstructionpaper}.

\begin{thm}\label{thm:cohomology-OBdRdag-over-affperfd-reconstructionpaper}
  Suppose $X$ is affinoid and equipped with an étale morphism $X\to\TT^{d}$.
  Let $U\in X_{\proet}$ denote an affinoid perfectoid over $\widetilde{X}_{C}$,
  with $\widehat{U}=\Spa\left(R,R^{+}\right)$.
  Then there is an isomorphism
  \begin{equation*}
    \I\left(\BB_{\dR}^{\dag}\left(R,R^{+}\right)\left\<\frac{Z_{1},\dots,Z_{d}}{p^{\infty}}\right\>\right)
    \isomap \R\Gamma\left(U,\I\left(\OB_{\dR}^{\dag}\right)\right)
  \end{equation*}
  in $\D\left(\IndBan_{\I\left(k\right)}\right)$, functorial in $U$.
  In particular,
  $\BB_{\dR}^{\dag}\left( R, R^{+} \right)\left\<\frac{Z_{1},\dots,Z_{d}}{p^{\infty}}\right\>\isomap\OB_{\dR}^{\dag}(U)$.
\end{thm}

\begin{proof}
  Firstly, we compute
  \begin{align*}
    \OB_{\dR}^{\dag}|_{\widetilde{X}_{C}}
    &\cong
    \OB_{\dR}^{\dag,+}|_{\widetilde{X}_{C}}
      \widehat{\otimes}_{\BB_{\dR}^{\dag,+}|_{\widetilde{X}_{C}}}
      \BB_{\dR}^{\dag}|_{\widetilde{X}_{C}} \\
    &\stackrel{\text{\ref{prop:localdescription-of-subsections-OBla-reconstructionpaper}}}{\cong}
    \left(\BB_{\dR}^{\dag,+}|_{\widetilde{X}_{C}}
      \widehat{\otimes}_{k_{0}}k_{0}\left\<\frac{Z_{1},\dots,Z_{d}}{p^{\infty}}\right\>_{X_{\proet}/\widetilde{X}_{C}}\right)
      \widehat{\otimes}_{\BB_{\dR}^{\dag,+}|_{\widetilde{X}_{C}}}
      \BB_{\dR}^{\dag}|_{\widetilde{X}_{C}} \\
    &\cong
    \BB_{\dR}^{\dag}|_{\widetilde{X}_{C}}
      \widehat{\otimes}_{k_{0}}k_{0}\left\<\frac{Z_{1},\dots,Z_{d}}{p^{\infty}}\right\>_{X_{\proet}/\widetilde{X}_{C}}.
  \end{align*}
  Together with Lemma~\ref{lem:sh-strongly-monoidal},
  we find that $\OB_{\dR}^{\dag}|_{\widetilde{X}_{C}}$ is the sheafification of the presheaf
  \begin{equation*}
    V \mapsto \BB_{\dR}^{\dag}(V)\left\<\frac{Z_{1},\dots,Z_{d}}{p^{\infty}}\right\>
  \end{equation*}
  on the site $X_{\proet,\affperfd}^{\fin}/\widetilde{X}_{C}$.
  But this is a sheaf with vanishing higher \v{C}ech cohomology;
  this follows because $\BB_{\dR}^{\dag}$ is a sheaf with vanishing higher \v{C}ech cohomology
  on $X_{\proet,\affperfd}^{\fin}/\widetilde{X}_{C}$, cf. the
  proof of Theorem~\ref{thm:derivedlocalsectionsofBdRdagplus-recostructionpaper},
  and by Lemma~\ref{lem:restrictedpowerseries-exact-strongerconvergenceinfty}.
  Note that for the application of the aforementioned lemma, it is crucial that we only allow finite coverings as in this case,
  the the \v{C}ech complexes are build out of finite products, which commute with completed tensor products.
  Now Lemma~\ref{lem:strictlyexactcechcomplexes-sheafcohomology-reconstructionpaper}
  implies Theorem~\ref{thm:cohomology-OBdRdag-over-affperfd-reconstructionpaper}.
\end{proof}

Recall Definition~\ref{defn:solidificationssheaves} and
denote the solidification of $\OB_{\dR}^{\dag}$ by $\underline{\OB}_{\dR}^{\dag}:=\underline{\OB_{\dR}^{\dag}}$.

\begin{cor}\label{cor:cohomology-solidOBdRdag-over-affperfd-reconstructionpaper}
  Suppose $X$ is affinoid and equipped with an étale morphism $X\to\TT^{d}$.
  Let $U\in X_{\proet}$ denote an affinoid perfectoid over $\widetilde{X}_{C}$
  with $\widehat{U}=\Spa\left(R,R^{+}\right)$.
  Then there is an isomorphism
  \begin{equation*}
    \underline{\BB_{\dR}^{\dag}\left(R,R^{+}\right)\left\<\frac{Z_{1},\dots,Z_{d}}{p^{\infty}}\right\>}
    \isomap \R\Gamma\left(U,\underline{\OB}_{\dR}^{\dag}\right)
  \end{equation*}
  in the derived category of $\Vect_{k}^{\solid}$. It is functorial in $U$.
\end{cor}

\begin{proof}
  We work on the site $X_{\proet,\affperfd}^{\fin}$.
  Now the description of the sections of $\underline{\OB}_{\dR}^{\dag}$
  follows directly from
  Lemma~\ref{lem:sectionsofunderlineF} and
  Theorem~\ref{thm:cohomology-OBdRdag-over-affperfd-reconstructionpaper}.  
  Its proof establishes that $\OB_{\dR}^{\dag}$ has vanishing
  higher \v{C}ech cohmology on $X_{\proet,\affperfd}^{\fin}$.
  Therefore, Lemma~\ref{lem:IndBan-to-Solid-strictlyexact-reconstructionpaper}
  implies that $\underline{\OB}_{\dR}^{\dag}$ is a sheaf with vanishing
  higher \v{C}ech cohmology.
  This implies $ \Ho^{i}\left(U,\underline{\OB}_{\dR}^{\dag}\right)=0$
  for all $i>0$ and thus the Corollary~\ref{cor:cohomology-solidOBdRdag-over-affperfd-reconstructionpaper}.
\end{proof}

\begin{thm}\label{thm:cohomology-OBpdRdag-over-affperfd-reconstructionpaper}
  Suppose $X$ is affinoid and equipped with an étale morphism $X\to\TT^{d}$.
  Let $U\in X_{\proet}$ denote an affinoid perfectoid over $\widetilde{X}_{C}$
  with $\widehat{U}=\Spa\left(R,R^{+}\right)$.
  Then there is an isomorphism
  \begin{equation*}
    \I\left(\BB_{\pdR}^{\dag}\left(R,R^{+}\right)\left\<\frac{Z_{1},\dots,Z_{d}}{p^{\infty}}\right\>\right)
    \isomap \R\Gamma\left(U,\I\left(\OB_{\pdR}^{\dag}\right)\right)
  \end{equation*}
  in $\D\left(\IndBan_{\I\left(k\right)}\right)$, functorial in $U$.
  In particular,
  $\BB_{\pdR}^{\dag}\left( R, R^{+} \right)\left\<\frac{Z_{1},\dots,Z_{d}}{p^{\infty}}\right\>\isomap\OB_{\pdR}^{\dag}(U)$.\end{thm}

\begin{proof}
  Proceed as in the proof of Theorem~\ref{thm:cohomology-OBdRdag-over-affperfd-reconstructionpaper},
  applying Corollary~\ref{cor:derivedlocalsectionsofBpdRdag-recostructionpaper}
  instead of Theorem~\ref{thm:derivedlocalsectionsofBdRdagplus-recostructionpaper}.
\end{proof}

\begin{cor}\label{cor:localdescription--OBpdRdag-sheafy-reconstructionpaper}
  Suppose $X$ is affinoid and equipped with an étale morphism $X\to\TT^{d}$,
  giving rise to the pro-étale covering $\widetilde{X}\to X$.
  There is an isomorphism
  \begin{equation*}
    \BB_{\pdR}^{\dag}|_{\widetilde{X}}\widehat{\otimes}_{k_{0}}k_{0}\left\<\frac{Z_{1},\dots,Z_{d}}{p^{q}}\right\>_{X_{\proet}/\widetilde{X}}
    \isomap \OB_{\pdR}^{\dag}|_{\widetilde{X}}
  \end{equation*}
  of sheaves of $\BB_{\pdR}^{\dag}$-ind-Banach algebras on $X_{\proet}/\widetilde{X}$.
\end{cor}

\begin{proof}
  Proceed as in the proof of Proposition~\ref{prop:localdescription-of-subsections-OBla-reconstructionpaper},
  but apply Theorem~\ref{thm:cohomology-OBpdRdag-over-affperfd-reconstructionpaper} instead
  of Corollary~\ref{cor:localdescription-of-subsections-OBqplus} or
  Corollary~\ref{cor:localdescription-of-subsections-OBla}.
\end{proof}

Denote the solidification of $\OB_{\pdR}^{\dag}$ by $\underline{\OB}_{\pdR}^{\dag}:=\underline{\OB_{\pdR}^{\dag}}$.

\begin{cor}\label{cor:cohomology-solidOBpdRdag-over-affperfd-reconstructionpaper}
  Suppose $X$ is affinoid and equipped with an étale morphism $X\to\TT^{d}$.
  Let $U\in X_{\proet}$ denote an affinoid perfectoid over $\widetilde{X}_{C}$
  with $\widehat{U}=\Spa\left(R,R^{+}\right)$.
  Then there is an isomorphism
  \begin{equation*}
    \underline{\BB_{\pdR}^{\dag}\left(R,R^{+}\right)\left\<\frac{Z_{1},\dots,Z_{d}}{p^{\infty}}\right\>}
    \isomap \R\Gamma\left(U,\underline{\OB}_{\pdR}^{\dag}\right)
  \end{equation*}
  in the derived category of $\Vect_{k}^{\solid}$. It is functorial in $U$.
\end{cor}

\begin{proof}
  We work on the site $X_{\proet,\affperfd}^{\fin}$.
  Now the description of the sections of $\underline{\OB}_{\pdR}^{\dag}$
  follows directly from
  Lemma~\ref{lem:sectionsofunderlineF} and
  Theorem~\ref{thm:cohomology-OBpdRdag-over-affperfd-reconstructionpaper}.  
  Its proof establishes that $\OB_{\pdR}^{\dag}$ has vanishing
  higher \v{C}ech cohmology.
  Therefore, Lemma~\ref{lem:IndBan-to-Solid-strictlyexact-reconstructionpaper}
  implies that $\underline{\OB}_{\pdR}^{\dag}$ is a sheaf with
  higher \v{C}ech cohmology.
  This implies $ \Ho^{i}\left(U,\underline{\OB}_{\pdR}^{\dag}\right)=0$
  for all $i>0$ and thus the Corollary~\ref{cor:cohomology-solidOBdRdag-over-affperfd-reconstructionpaper}.
\end{proof}

We record the following consequence of Corollary~\ref{cor:cohomology-solidOBpdRdag-over-affperfd-reconstructionpaper}
for future reference.

\begin{lem}\label{lem:solidOBpdRdag-directsum-of-solidOBdRdag}
  Suppose $X$ is affinoid and equipped with an étale morphism $X\to\TT^{d}$.
  Let $U\in X_{\proet}$ denote an affinoid perfectoid over $\widetilde{X}_{C}$.
  Then there is an isomorphism
  \begin{equation*}
    \bigoplus_{\alpha\geq0}\underline{\OB}_{\dR}^{\dag}(U)\left(\log t\right)^{\alpha}
    \isomap \underline{\OB}_{\pdR}^{\dag}(U)
  \end{equation*}
  of solid $k$-vector spaces.
\end{lem}

\begin{proof}
  Fix the notation as in~(\ref{eq:defnBdRdag-BpdRdag+-Bpddag-recpaper})
  on page~\pageref{eq:defnBdRdag-BpdRdag+-Bpddag-recpaper} and compute
  \begin{equation*}
    \OB_{\pdR}^{\dag}|_{X_{C}}
    =\OB_{\dR}^{\dag,+}|_{X_{C}}
      \widehat{\otimes}_{j^{-1}\BB_{\dR,*}^{\dag,+}}|_{X_{C}} j^{-1}\BB_{\pdR,*}^{\dag}|_{X_{C}}
    \stackrel{\text{\ref{lem:describeBpdRdagoverXC}}}{\cong}
      \bigoplus_{\alpha\geq0}\OB_{\dR}^{\dag}|_{X_{C}}\left(\log t\right)^{\alpha}.
  \end{equation*}
  Now use Lemma~\ref{lem:sheaves-on-Xproet-and-Xproetaffperfdfin}
  to work on the site $X_{\proet,\affperfd}^{\fin}$ and apply Lemma~\ref{lem:directsumsheaves-finitecoverings}
  to find
  \begin{equation*}
    \bigoplus_{\alpha\geq0}\OB_{\dR}^{\dag}(U)\left(\log t\right)^{\alpha}
    \isomap \OB_{\pdR}^{\dag}(U).
  \end{equation*}
  Now Lemma~\ref{lem:solidOBpdRdag-directsum-of-solidOBdRdag} follows from
  Lemma~\ref{lem:IndBan-to-Solid-preservesdirectsums-reconstructionpaper}, thanks to
  the descriptions of the sections of the involved sheaves as in
  Corollary~\ref{cor:cohomology-solidOBdRdag-over-affperfd-reconstructionpaper}
  and~\ref{cor:cohomology-solidOBpdRdag-over-affperfd-reconstructionpaper}.
\end{proof}


\subsection{Sections over affinoid perfectoids $U\times S$}

\begin{prop}\label{prop:S-to-OBlaU-isOBlaUtimesS}
  Suppose $X$ is affinoid and equipped with an étale morphism $X\to\TT^{d}$.
  Let $U\in X_{\proet}$ denote an affinoid perfectoid over $\widetilde{\TT}^{d}$.
  Then, for every profinite set $S$, functorially,
  \begin{equation*}
    \OB_{\dR}^{\dag,+}(U\times S)
    \cong
    \intHom_{\cont}\left(S,\OB_{\dR}^{\dag,+}(U)\right).
  \end{equation*}
\end{prop}

In the following, we establish preliminaries which are required in our proof of
Proposition~\ref{prop:S-to-OBlaU-isOBlaUtimesS}. Along the
way, we freely use the explicit description of the sections of
$\A_{\dR}^{>q}$ as in Theorem~\ref{thm:BdR>qplus+-sections-over-affperfd-recpaper}.

\begin{notation}\label{notationAdRgreaterthanqllbracketrrbracket-reconstructionpaper}
  Given $q\in\NN_{\geq2}$ and any affinoid perfectoid $V\in X_{\proet}$ over $\widetilde{\TT}^{d}$, write
  \begin{equation*}
    \A_{\dR}^{>q,+}\left(V\right)\left\llbracket \frac{Z_{1},\dots,Z_{d}}{p^{q}} \right\rrbracket
  \end{equation*}  
  for the $\left( p , \ker\theta_{\dR}^{>q}, Z_{1}/p^{q} , \dots, Z_{d}/p^{q}\right)$-adic completion of
  $\A_{\dR}^{q,+}\left( V\right)\left\< \frac{Z_{1},\dots,Z_{d}}{p^{q}} \right\>$.
  We equip it with the $\left( p , \ker\theta_{\dR}^{>q}, Z_{1}/p^{q} , \dots, Z_{d}/p^{q}\right)$-adic norm,
  such that it becomes an $\A_{\dR}^{>q,+}\left(V\right)$-Banach algebra
  by~\cite[\href{https://stacks.math.columbia.edu/tag/05GG}{Tag 05GG}]{stacks-project}.
  Here, we are using that $\ker\theta_{\dR}^{>q}$ is a principle ideal,
  cf. Lemma~\ref{lem:Fontaines-map-for-Alagreatq}.
\end{notation}

\begin{lem}\label{lem:sections-OBdRdag-over-affperfd-formalpowerseries-reconstructionpaper}
  Given any affinoid perfectoid $V\in X_{\proet}/\widetilde{\TT}^{d}$,
  canonically, 
  \begin{equation*}
    \text{``}\varinjlim_{q\in\NN_{\geq2}}\text{"}
    \A_{\dR}^{>q,+}\left(V\right)\left\llbracket \frac{Z_{1},\dots,Z_{d}}{p^{q}} \right\rrbracket
    \widehat{\otimes}_{W(\kappa)}k_{0}
    \isomap \OB_{\dR}^{\dag,+}\left(V\right).
  \end{equation*}
\end{lem}

\begin{proof}
  Following the proof of Lemma~\ref{lem:colimitAdRgreaterthanq}
  and using Theorem~\ref{thm:subsections-periodsheaves-affperfd},
  we find:
  \begin{equation*}
    \text{``}\varinjlim_{q\in\NN_{\geq2}}\text{"}
    \A_{\dR}^{>q,+}\left(V\right)\left\llbracket \frac{Z_{1},\dots,Z_{d}}{p^{q}} \right\rrbracket
    \isomap
    \text{``}\varinjlim_{q\in\NN}\text{"}
    \A_{\dR}^{q,+}\left(V\right)\left\< \frac{Z_{1},\dots,Z_{d}}{p^{q}} \right\>.
  \end{equation*}
  Now Lemma~\ref{lem:sections-OBdRdag-over-affperfd-formalpowerseries-reconstructionpaper}
  follows with Theorem~\ref{thm:cohomology-OBdRdag-over-affperfd-reconstructionpaper}.
\end{proof}

\begin{lem}\label{lem:AdRgreaterthanqRRpluszeta1-dots-zetad-reconstructionpaper}
  Fix $q\in\NN_{\geq2}$.
  Given any affinoid perfectoid $V\in X_{\proet}$ over $\widetilde{\TT}^{d}$,
  let $\A_{\dR}^{>q}\left(V\right)\llbracket \zeta_{1},\dots,\zeta_{d}\rrbracket$ denote the
  formal power series ring with coefficients in $\A_{\dR}^{>q}\left(V\right)$ in the formal
  variables $\zeta_{1},\dots,\zeta_{d}$, equipped with the
  $\left(p,\ker\theta_{\dR}^{>q},\zeta_{1},\dots,\zeta_{d}\right)$-adic
  seminorm. Then there exists an isomorphism
  \begin{equation}\label{eq:themap--lem:AdRgreaterthanqRRpluszeta1-dots-zetad-reconstructionpaper}
    \A_{\dR}^{>q}\left(V\right)\llbracket \zeta_{1},\dots,\zeta_{d}\rrbracket
    \isomap\A_{\dR}^{>q}\left(V\right)\left\llbracket \frac{Z_{1},\dots,Z_{d}}{p^{q}} \right\rrbracket
  \end{equation}
  of seminormed $\A_{\dR}^{>q}\left(V\right)$-Banach algebras,
  determined by $\zeta_{i}\mapsto Z_{i}/p^{q}$ for all $i=1,\dots,d$.
\end{lem}

\begin{proof}
  Indeed, the desired morphism exists. We continue by equipping
  $\A_{\dR}^{>q}\left(V\right)\llbracket \zeta_{1},\dots,\zeta_{d}\rrbracket$
  with the $\left(p,\ker\theta_{\dR}^{>q},\zeta_{1},\dots,\zeta_{d}\right)$-adic filtration
  and $\A_{\dR}^{>q}\left(V\right)\left\llbracket \frac{Z_{1},\dots,Z_{d}}{p^{q}} \right\rrbracket$
  with the $\left( p , \ker\theta_{\dR}^{>q}, Z_{1}/p^{q} , \dots, Z_{d}/p^{q}\right)$-adic filtration.
  By Lemma~\ref{lem:Fontaines-map-for-Alagreatq},
  $\ker\theta_{\dR}^{>q}$ admits a generator $\xi/p^{q}$. Furthermore,
  \emph{loc. cit.} implies that $\zeta_{d},\dots,\zeta_{1},\xi/p^{q},p$ and
  $Z_{d}/p^{q} , \dots, Z_{q}/p^{q},\xi/p^{q},p$ are regular sequences.
  Thus we can compute with~\cite[Exercise 17.16.a]{Ei95}:
  \begin{equation}\label{eq:AdRgreaterthanqRRpluszeta1-dots-zetad-computationintheproof-reconstructionpaper}
  \begin{split}
    \gr\A_{\dR}^{>q}\left(V\right)\llbracket \zeta_{1},\dots,\zeta_{d}\rrbracket
    &\cong\left(\widehat{\cal{O}}^{+}\left(V\right)/p\right)\left[\sigma\left(p\right),\sigma\left(\frac{\xi}{p^{q}}\right),\sigma\left(\zeta_{1}\right),\dots,\sigma\left(\zeta_{d}\right)\right], \text{ and } \\
    \gr\A_{\dR}^{>q}\left(V\right)\left\llbracket \frac{Z_{1},\dots,Z_{d}}{p^{q}} \right\rrbracket
    &\cong\gr\A_{\dR}^{q}\left(V\right)\left\< \frac{Z_{1},\dots,Z_{d}}{p^{q}} \right\> \\
    &\cong\left(\widehat{\cal{O}}^{+}\left(V\right)/p\right)\left[\sigma\left(p\right),\sigma\left(\frac{\xi}{p^{q}}\right),\sigma\left(\frac{Z_{1}}{p^{q}}\right),\dots,\sigma\left(\frac{Z_{d}}{p^{q}}\right)\right].
  \end{split}
  \end{equation}
  Here, $\sigma\left(-\right)$ refers to the principal symbol of a given element.
  It follows that the associated graded
  of~(\ref{eq:themap--lem:AdRgreaterthanqRRpluszeta1-dots-zetad-reconstructionpaper})
  is an isomorphism.
  Now~\cite[Chapter I, \S 4.2 page 31-32, Theorem 4(5)]{HuishiOystaeyen1996}
  implies Lemma~\ref{lem:AdRgreaterthanqRRpluszeta1-dots-zetad-reconstructionpaper}.
  Indeed, \emph{loc. cit.} applies because
  $\A_{\dR}^{>q}\left(V\right)\llbracket \zeta_{1},\dots,\zeta_{d}\rrbracket$ is complete,
  cf. Proposition~\ref{prop:Scomplete-Spowerseriescomplete-ifKoszulregular}.
\end{proof}

\begin{lem}\label{lem:S-to-OBlaU-isOBlaUtimesS-checkthattheHomcommuteswithlocalisation-reconstructionpaper}
  Given $q\in\NN_{\geq2}$ and
  $f\in\A_{\dR}^{>q,+}\left(V\right)\left\llbracket \frac{Z_{1},\dots,Z_{d}}{p^{q}} \right\rrbracket$,
  $\|pf\|=|p|\|f\|$.  
\end{lem}

\begin{proof}
  This follows from the computation of the associated graded as in
  the proof of Lemma~\ref{lem:AdRgreaterthanqRRpluszeta1-dots-zetad-reconstructionpaper}.
\end{proof}

\begin{lem}\label{lem:S-to-OBlaU-isOBlaUtimesS-inverselimit-reconstructionpaper}
  For $q\in\NN_{\geq2}$ and any affinoid perfectoid $V$ over $\widetilde{\TT}^{d}$, we have the isomorphism
  \begin{equation*}
    \A_{\dR}^{>q,+}\left( V \right)\left\llbracket \frac{Z_{1},\dots,Z_{d}}{p^{q}} \right\rrbracket
    \isomap\varprojlim_{j}\bigoplus_{\substack{\alpha\in\NN^{d} \\ |\alpha|<j}}\A_{\dR}^{>q,+}\left( V \right)\left(\frac{Z}{p^{q}}\right)^{\alpha}
  \end{equation*}
  of topological groups. Here, the direct sum is the direct sum of topological abelian groups,
  which in this instance coincides with the direct sum of Banach modules.
\end{lem}

\begin{proof}
  Using the identification as in Lemma~\ref{lem:AdRgreaterthanqRRpluszeta1-dots-zetad-reconstructionpaper},
  the desired isomorphism is
  \begin{equation*}
    \A_{\dR}^{>q}\left(V\right)\llbracket \zeta_{1},\dots,\zeta_{d}\rrbracket
    \isomap\varprojlim_{j}\A_{\dR}^{>q}\left(V\right)\llbracket \zeta_{1},\dots,\zeta_{d}\rrbracket/\left(\zeta_{1},\dots,\zeta_{d}\right)^{j}.
  \end{equation*}
  This is an isomorphism of rings
  by Proposition~\ref{prop:Scomplete-Spowerseriescomplete-ifKoszulregular}.
  Note that \emph{loc. cit.} applies by the discussion in the proof of
  Lemma~\ref{lem:AdRgreaterthanqRRpluszeta1-dots-zetad-reconstructionpaper}.
  It remains to compare the topologies, which is straightforward.
\end{proof}

\begin{proof}[Proof of Proposition~\ref{prop:S-to-OBlaU-isOBlaUtimesS}]
  Compute
  \begin{align*}
    \A_{\dR}^{>q,+}\left( U \times S \right)\left\llbracket \frac{Z_{1},\dots,Z_{d}}{p^{q}} \right\rrbracket
    &\stackrel{\text{\ref{lem:S-to-OBlaU-isOBlaUtimesS-inverselimit-reconstructionpaper}}}{\cong}
      \varprojlim_{j}\bigoplus_{\substack{\alpha\in\NN^{d} \\ |\alpha|<j}}\A_{\dR}^{>q,+}\left( U \times S \right)\left(\frac{Z}{p^{q}}\right)^{\alpha} \\
    &\stackrel{\text{\ref{prop:AdRgreaterthanqUtimesS-isomapHomcontSAdRgreaterthanqU-reconstructionpaper}}}{\cong}
    \varprojlim_{j}\bigoplus_{\substack{\alpha\in\NN^{d} \\ |\alpha|<j}}\intHom_{\cont}\left(S,\A_{\dR}^{>q,+}\left( U \right)\right)\left(\frac{Z}{p^{q}}\right)^{\alpha} \\
    &\stackrel{\text{\ref{lem:finitedirectsum-comutes-withHomcont-reconstructionpaper}}}{\cong}
    \intHom_{\cont}\left(S,\varprojlim_{j}\bigoplus_{\substack{\alpha\in\NN^{d} \\ |\alpha|<j}}\A_{\dR}^{>q,+}\left( U \right)\left(\frac{Z}{p^{q}}\right)^{\alpha}\right) \\
    &\stackrel{\text{\ref{lem:S-to-OBlaU-isOBlaUtimesS-inverselimit-reconstructionpaper}}}{\cong}
    \intHom_{\cont}\left(S,\A_{\dR}^{>q,+}\left( U \right)\left\llbracket \frac{Z_{1},\dots,Z_{d}}{p^{q}} \right\rrbracket\right)
  \end{align*}
  Thus Proposition~\ref{prop:S-to-OBlaU-isOBlaUtimesS} follows
  from Lemma~\ref{lem:sections-OBdRdag-over-affperfd-formalpowerseries-reconstructionpaper}
  and~\ref{lem:HomcontS-commutes-completed-localisation-overWkappa-reconstructionpaper},
  which applies by Lemma~\ref{lem:S-to-OBlaU-isOBlaUtimesS-checkthattheHomcommuteswithlocalisation-reconstructionpaper}.
\end{proof}

We end this section with the following Corollaries~\ref{cor:OBdRdag-overUtimesS-reconstructionpaper}
and~\ref{cor:OBpdRdag-overUtimesS-reconstructionpaper}.

\begin{cor}\label{cor:OBdRdag-overUtimesS-reconstructionpaper}
  Suppose $X$ is affinoid and equipped with an étale morphism $X\to\TT^{d}$.
  Let $U\in X_{\proet}$ denote an affinoid perfectoid over $\widetilde{\TT}_{C}^{d}$.
  Then, for every profinite set $S$, functorially,
  \begin{equation*}
    \OB_{\dR}^{\dag}(U\times S)
    \cong
    \intHom_{\cont}\left(S,\OB_{\dR}^{\dag}(U)\right).
  \end{equation*}
\end{cor}

\begin{proof}
  For any affinoid perfectoid $V$ over $\widetilde{\TT}_{C}^{d}$,
  Theorem~\ref{thm:cohomology-OBdRdag-over-affperfd-reconstructionpaper}
  gives a functorial isomorphism
  \begin{equation*}
    \OB_{\dR}^{\dag}(V)
    \cong
    \varinjlim_{t\times}\OB_{\dR}^{\dag,+}(V).
  \end{equation*}  
  Thus the result follows from
  Proposition~\ref{prop:S-to-OBlaU-isOBlaUtimesS}
  and the cocontinuity  of $\Hom_{\cont}\left(S,-\right)$.
\end{proof}

\begin{cor}\label{cor:OBpdRdag-overUtimesS-reconstructionpaper}
  Suppose $X$ is affinoid and equipped with an étale morphism $X\to\TT^{d}$.
  Let $U\in X_{\proet}$ denote an affinoid perfectoid over $\widetilde{\TT}_{C}^{d}$.
  Then, for every profinite set $S$, functorially,
  \begin{equation*}
    \OB_{\pdR}^{\dag}(U\times S)
    \cong
    \intHom_{\cont}\left(S,\OB_{\pdR}^{\dag}(U)\right).
  \end{equation*}
\end{cor}

\begin{proof}
  For any affinoid perfectoid $V$ over $\widetilde{\TT}_{C}^{d}$,
  Theorem~\ref{thm:cohomology-OBpdRdag-over-affperfd-reconstructionpaper}
  gives a functorial isomorphism
  \begin{equation*}
    \OB_{\pdR}^{\dag}(V)
    \cong
    \bigoplus_{\alpha\in\NN}\OB_{\dR}^{\dag}(V)\left(\log t\right)^{\alpha}.
  \end{equation*}  
  Thus the result follows from
  Corollary~\ref{cor:OBdRdag-overUtimesS-reconstructionpaper} and
  Lemma~\ref{lem:finitedirectsum-comutes-withHomcont-reconstructionpaper}.
\end{proof}


\section{Proofs of the local descriptions}


\subsection{Proof of Theorem~\ref{thm:localdescription-of-OBla}}
\label{subsec:proof-localdescrption-of-OBla}

Fix the setup as described in Theorem~\ref{thm:localdescription-of-OBla}.
It suffices to check that
$\Phi_{\dR}^{\dag,+}$ is an isomorphism. We may show that the maps
\begin{equation*}
  \Phi_{\dR}^{\dag,+,\psh}\left(U\right)\colon
  \BB_{\dR}^{\dag,+}\left(U\right)\left\<\frac{Z_{1},\dots,Z_{d}}{p^{\infty}}\right\>
  \to\OB_{\dR}^{\dag,+,\psh}\left(U\right)
\end{equation*}
are isomorphisms for every affinoid perfectoid $U\in X_{\proet}/\widetilde{X}$.
Fix such a $U$ with $\widehat{U}=\Spa\left( R , R^{+} \right)$, together
with a pro-étale presentation
$U=\text{``}\varprojlim_{i\in I}\text{"}U_{i}\in X_{\proet}/\widetilde{X}$,
$U_{i}=\Spa\left(R_{i} , R_{i}^{+} \right)$ for all $i\in I$.
Let $i\geq i_{0}$ be arbitrary, cf. Notation~\ref{notation:i0}.
We will show that the morphism
\begin{equation}\label{eq:localdescription-of-OBla-phii}
  \phi_{i}\colon
  \BB_{\dR}^{\dag,+}\left( R,R^{+} \right)\left\<\frac{Z_{1},\dots,Z_{d}}{p^{\infty}}\right\>
  \to
  \left(
  R_{i}^{+}\widehat{\otimes}_{W(\kappa)}\A_{\inf}\left( R,R^{+} \right)\right)
  \left\<\frac{\ker\Otheta_{\inf}}{p^{\infty}}\right\>\widehat{\otimes}_{k^{\circ}}k
\end{equation}
is an isomorphism. This suffices because
$\Phi_{\dR}^{\dag,+,\psh}\left(U\right)=\varinjlim_{i\geq i_{0}}\phi_{i}$.

Along the way, we will use the following
Lemma~\ref{lem:Ri-andRiplus-Banachalgebra}
without further reference.
         
\begin{lem}\label{lem:Ri-andRiplus-Banachalgebra}
  $R_{i}$ is an affinoid $k$-algebra.
\end{lem}
    
\begin{proof}
  The étale map $U_{i}\to \TT^{d}$ induces an étale morphism
    $k\left\< T_{1}^{\pm} , \dots , T_{d}^{\pm}\right\> \to R_{i}$
  of Huber rings. Now apply~\cite[Proposition 1.7.1(iii) and Corollary 1.7.2(ii)]{Huber96Etale}.
\end{proof}

\begin{lem}\label{lem:presentation--lem:Ri-andRiplus-Banachalgebra}
  There exists a presentation
  \begin{equation*}
    R_{i}
    =k_{0}\left\<T_{1}^{\pm},\dots,T_{d}^{\pm}, L_{i1},\dots,L_{in_{i}}\right\>
      / \left( P_{i1},\dots,P_{in_{i}}\right)
  \end{equation*}
  for some formal variables $L_{i1},\dots,L_{in_{i}}$ such that
  the Jacobian $J_{P_{i}}\in R_{i}$, that is the determinant of the $n\times n$-matrix
  \begin{equation*}
    D_{P_{i}}:=
    \begin{pmatrix}
      \frac{\partial P_{i1}}{\partial L_{i1}} & \cdots & \frac{\partial P_{i1}}{\partial L_{in}} \\
        \vdots & \ddots & \vdots \\
        \frac{\partial P_{in}}{\partial L_{i1}} & \cdots & \frac{\partial P_{in}}{\partial L_{in}}
    \end{pmatrix}
  \in \Mat_{n}\left(R_{i}\right).
\end{equation*}
\end{lem}

\begin{proof}
  This is because the composition
  $k_{0}\left\< T_{1}^{\pm} , \dots , T_{d}^{\pm}\right\>\to k\left\< T_{1}^{\pm} , \dots , T_{d}^{\pm}\right\> \to R_{i}$
  is étale.
\end{proof}

Lemma~\ref{lem:presentation--lem:Ri-andRiplus-Banachalgebra} makes Notation~\ref{notation:qi} meaningful.

\begin{notation}\label{notation:qi}
  We fix $q_{i}\in\NN_{\geq2}$ such that
  $p^{q_{i}}J_{P_{i}}^{-1}\in R_{i}^{+}$.
\end{notation}

\begin{lem}\label{lem:imagetildenu-Ainf}
  For $\widetilde{q}_{i}>q_{i}$ large enough, the assignment
  $T_{l} \mapsto [T_{l}^{\flat}]+Z_{l}$
  defines a unique morphism
  \begin{equation*}
    \epsilon_{i}
    \colon
    R_{i} \maps \BB_{\dR}^{\widetilde{q}_{i},+}\left(R,R^{+}\right)\left\< \frac{Z_{1},\dots,Z_{d}}{p^{\widetilde{q}_{i}}} \right\>.
  \end{equation*}
  of $k_{0}$-Banach algebras which fits into the commutative diagram
  \begin{equation}\label{cd:imagetildenu-Ainf}
    \begin{tikzcd}
      R_{i}
        \arrow{rd}\arrow{r}{\epsilon_{i}} &
      \BB_{\dR}^{\widetilde{q}_{i},+}\left(R,R^{+}\right)\left\< \frac{Z_{1},\dots,Z_{d}}{\widetilde{q}_{i}} \right\>
        \arrow{d}{\Ovartheta_{\dR}^{\widetilde{q}_{i},+,\prime}} \\
      \empty &
      R,
    \end{tikzcd}
  \end{equation}
  where $\Ovartheta_{\dR}^{\widetilde{q}_{i},+,\prime}$ is the map
  $\sum_{\alpha\in\NN^{d}}b_{\alpha}Z^{\alpha}\mapsto\vartheta_{\dR}^{\widetilde{q}_{i},+}\left(b_{0}\right)$.
\end{lem}

\begin{proof}
  We closely follow the~\cite[proof of Proposition 6.10]{Sch13pAdicHodge}.

  Fix a presentation as in Lemma~\ref{lem:presentation--lem:Ri-andRiplus-Banachalgebra}:
  \begin{equation*}
    R_{i}
    =k_{0}\left\<T_{1}^{\pm},\dots,T_{d}^{\pm}, L_{i1},\dots,L_{in_{i}}\right\>
      / \left( P_{i1},\dots,P_{in_{i}}\right).
  \end{equation*}
  We use multi-index notation
  $L_{i}:=\left(L_{i1},\dots,L_{in_{i}}\right)$ and
  $P_{i}:=\left(P_{i1},\dots,P_{in_{i}}\right)$, omitting the index
  $_{i}i$ for clarity.
  With loss of generality, we assume that
  \begin{equation*}
    P_{i1},\dots,P_{in_{i}}\in W(\kappa)\left\<T_{1}^{\pm},\dots,T_{d}^{\pm}, L_{i1},\dots,L_{in_{i}}\right\>;
  \end{equation*}
  otherwise, we rescale the $P_{ij}$ by a nonzero scalar in $k_{0}$.

  The $\left[T_{l}^{\flat}\right]+Z_{l}$ are invertible in
  $\A_{\inf}\left(R,R^{+}\right)\left\<Z_{1},\dots,Z_{d}\right\>$,
  with inverses
  $\sum_{n\geq 0}(-1)^{n}\left[T_{l}^{\flat}\right]^{-n-1}Z_{l}^{n}$.
  Therefore, we have a morphism
  \begin{equation}\label{eq:imagetildenu-Ainf-1}
    W(\kappa)\left\<T_{1}^{\pm},\dots,T_{d}^{\pm}, L_{i1},\dots,L_{in_{i}}\right\>
    \to
    \A_{\inf}\left(R,R^{+}\right)\left\<Z_{1},\dots,Z_{d},L_{i1},\dots,L_{in_{i}}\right\>
  \end{equation}
  of $W\left(\kappa\right)$-Banach algebras
  sending $T_{l}$ to $\left[T_{l}^{\flat}\right]+Z_{l}$ for all $l=1,\dots,d$.
  Let $P_{ij}^{\prime}$ denote the image of the restricted power series
  $P_{ij}$ with respect to the map~(\ref{eq:imagetildenu-Ainf-1}).
  
  Next, we pick and fix a arbitrary tuple
  \begin{equation*}
    L_{i}^{\prime}=\left(L_{i1}^{\prime},\dots,L_{in}^{\prime}\right)
    \in\prod_{j=1}^{n_{i}}\A_{\inf}\left(R,R^{+}\right)\left\<Z_{1},\dots,Z_{d}\right\>
  \end{equation*}
  such that $\Otheta_{\inf}^{\prime}\left(L_{ij}^{\prime}\right)= L_{ij}$ for all $j=1,\dots n_{i}$;
  this is possible because $\Otheta_{\inf}^{\prime}$ is surjective.

  \begin{claim}\label{claim:applicationHensel}
    There exists a unique tuple
    \begin{equation*}
      \widetilde{L}_{i}^{\prime}=\left(\widetilde{L}_{i1}^{\prime},\dots,\widetilde{L}_{in}^{\prime}\right)
      \in\prod_{j=1}^{n_{i}}\BB_{\dR}^{\widetilde{q}_{i},+}\left(R,R^{+}\right)\left\< \frac{Z_{1},\dots,Z_{d}}{p^{\widetilde{q}_{i}}}\right\>
    \end{equation*}
    such that $P_{ij}^{\prime}\left(\widetilde{L}_{ij}^{\prime}\right)=0$ and
    $\Ovartheta_{\dR}^{\widetilde{q}_{i},+,\prime}\left(\widetilde{L}_{ij}^{\prime}\right)= L_{ij}$ for all $j=1,\dots n_{i}$.
    These $\widetilde{L}_{ij}^{\prime}$ are furthermore power-bounded.
  \end{claim}

  \begin{proof}[Proof of Claim~\ref{claim:applicationHensel}]
    We start with existence. We would like to use the methods in~\S\ref{subsec:Hensel}.
    For clarity, we compared \emph{loc. cit.} with the setup here in the proof of Claim~\ref{claim:applicationHensel}
    in the table~\ref{table:compare-notation} on page~\pageref{table:compare-notation}.
\begin{table}
  \centering
  \caption{Comparison of Notation}
  \label{table:compare-notation}
  \begin{tabular}{|l|l|}
    \hline
    \textbf{Notation in~\S\ref{subsec:Hensel}} &
    \textbf{Notation in the proof of Claim~\ref{claim:applicationHensel}} \\
    \hline
    $F$ & $k_{0}$ \\
    \hline
    $R=F^{\circ}$ & $W(\kappa)$ \\
    \hline
    $\pi\in F^{\circ}$ pseudo-uniformiser & $p\in W(\kappa)$ \\
    \hline
    $q\in\NN$ & $q_{i}$ as in Notation~\ref{notation:qi} \\
    \hline
    $S$ & $\A_{\inf}\left(R,R^{+}\right)\left\< Z_{1},\dots,Z_{d}\right\>$ \\
    \hline
    $s=\left(s_{1},\dots,s_{n}\right)\in S^{n}$ & $L_{i}^{\prime}=\left(L_{i1}^{\prime},\dots,L_{in_{i}}^{\prime}\right)\in\prod_{j=1}^{n_{i}}\A_{\inf}\left(R,R^{+}\right)\left\< Z_{1},\dots,Z_{d}\right\>$ \\
    \hline
    $I\subseteq S$ & $\left(\ker\theta_{\inf}, Z_{1},\dots,Z_{d}\right)\subseteq\A_{\inf}\left(R,R^{+}\right)\left\< Z_{1},\dots,Z_{d}\right\>$ \\
    \hline
    $S^{(q)}:=S\left\<I/\pi^{q}\right\>$ & $\A_{\dR}^{q_{i}}\left(R,R^{+}\right)\left\< \frac{Z_{1},\dots,Z_{d}}{p^{q_{i}}}\right\>$ \\
    \hline
    $T^{(q)}:= S^{(q)}\widehat{\otimes}_{F^{\circ}}F$ & $\BB_{\dR}^{q_{i}}\left(R,R^{+}\right)\left\< \frac{Z_{1},\dots,Z_{d}}{p^{q_{i}}}\right\>$ \\
    \hline
    $\phi^{(q)}\colon S^{(q)} \to S/I$ &
      \makecell[l]{$\begin{aligned}
        \Otheta_{\dR}^{q_{i},\prime}\colon\A_{\dR}^{q_{i}}\left(R,R^{+}\right)\left\< \frac{Z_{1},\dots,Z_{d}}{p^{q_{i}}}\right\>&\to R^{+}, \\
        {\sum}_{\alpha\in\NN^{d}}a_{\alpha}Z^{\alpha}&\mapsto\theta_{\dR}^{q_{i}}\left(a_{0}\right)
        \end{aligned}$} \\
    \hline
    $\psi^{(q)}\colon T^{(q)} \to \left(S/I\right)\widehat{\otimes}_{F^{\circ}} F$ &
    \makecell[l]{$\begin{aligned}
        \Ovartheta_{\dR}^{q_{i},+,\prime}\colon\BB_{\dR}^{q_{i},+}\left(R,R^{+}\right)\left\< \frac{Z_{1},\dots,Z_{d}}{p^{q_{i}}}\right\>&\to R, \\
        {\sum}_{\alpha\in\NN^{d}}b_{\alpha}Z^{\alpha}&\mapsto\vartheta_{\dR}^{q_{i},+}\left(b_{0}\right)
        \end{aligned}$} \\
    \hline
    $I^{(q)}:=\ker\phi^{(q)} \subseteq S^{(q)}$ & $\ker\Otheta_{\dR}^{q_{i},\prime}\subseteq\A_{\dR}^{q_{i}}\left(R,R^{+}\right)\left\< \frac{Z_{1},\dots,Z_{d}}{p^{q_{i}}}\right\>$ \\
    \hline
    $J^{(q)}:=\ker\psi^{(q)} \subseteq T^{(q)}$ & $\ker\Ovartheta_{\dR}^{q_{i},+\prime}\subseteq\BB_{\dR}^{q_{i},+}\left(R,R^{+}\right)\left\< \frac{Z_{1},\dots,Z_{d}}{p^{q_{i}}}\right\>$ \\
    \hline
    $L=\left(L_{1},\dots,L_{n}\right)$ & $L_{i}=\left(L_{i1},\dots,L_{in}\right)$  \\
    \hline
    \makecell[l]{$H=\left(H_{1},\dots,H_{n}\right)$, where \\
    $H_{j}\in S\<L\>=S\left\<L_{1},\dots,L_{n}\right\>$ for all $j=1,\dots,n$} &
      $P_{i}^{\prime}=\left(P_{i1}^{\prime},\dots,P_{in_{i}}^{\prime}\right)$\\
    \hline
    $D_{H}$ is the Jacobian of $H$ & $D_{P_{i}^{\prime}}$ is the Jacobian of $P_{i}^{\prime}$ \\
    \hline
    $J_{H}:=\det D_{H}\in S\left\< L \right\>$ & $J_{P_{i}^{\prime}}:=\det D_{P_{i}^{\prime}}\in\A_{\inf}\left(R,R^{+}\right)\left\< Z_{1},\dots,Z_{d},L_{i1},\dots,L_{in}\right\>$ \\
    \hline
  \end{tabular}
\end{table}
With this notation, we now check Condition~\ref{condition:Hensel-analytic} in
the setup of Claim~\ref{claim:applicationHensel}:
\begin{itemize}
  \item[(a)] Set $G:= p^{q_{i}}J_{P_{i}}^{-1}\in R_{i}^{+}$, cf. Notation~\ref{notation:qi}.
    Fix any $G^{\prime}\in\A_{\inf}\left(R,R^{+}\right)\left\<Z_{1},\dots,Z_{d}\right\>$
    such that $\Otheta_{\inf}\left(G^{\prime}\right)=G$. Now we compute that
    $G^{\prime}$ is the desired element $r$ in Condition~\ref{condition:Hensel-analytic}:
    \begin{equation*}
      \Otheta_{\inf}^{\prime}\left(J_{P_{i}}\left(L_{i}^{\prime}\right)G^{\prime}-p^{q}\right)
      =J_{P_{i}}G-p^{q}=0,
    \end{equation*}
    that is $J_{P_{i}}\left(L_{i}^{\prime}\right)G^{\prime}-p^{q}\in\ker\Otheta_{\inf}$,
    as desired.
  \item[(b)] It suffices to check that $p\in\A_{\dR}^{q}\left(R,R^{+}\right)\left\<\frac{Z_{1},\dots,Z_{d}}{p^{q}}\right\>$
    is not a zero-divisor for all $q\geq 2$.
    Now simply apply Lemma~\ref{AdR-ptorsionfree-reconstructionpaper}.
  \item[(c)] Indeed, we have
  \begin{equation*}
    \Otheta_{\inf}\left(P_{ij}^{\prime}\left(L_{i}^{\prime}\right)\right)
    =P_{ij}\left(L_{i}\right)
    =0,
  \end{equation*}
  that is $P_{ij}^{\prime}\left(L_{i}^{\prime}\right)\equiv0\mod\ker\Otheta_{\inf}^{\prime}$,
  for all $j=1,\dots,n_{i}$.
\end{itemize}
We have thus verified Condition~\ref{condition:Hensel-analytic}. Therefore,
Theorem~\ref{thm:Hensel-analytic} applies. This gives us a desired choice of
$\widetilde{L}_{i}^{\prime}$, which \emph{loc. cit.} denotes by $\widetilde{s}$.
Theorem~\ref{thm:Hensel-analytic} also implies that the $\widetilde{L}_{ij}^{\prime}$
are power-bounded.

    However, these methods only yield uniqueness with respect
    to the condition in Theorem~\ref{thm:Hensel-analytic}(iii).
    Now we prove the stronger uniquness to finish the proof of Claim~\ref{claim:applicationHensel}.
    
    Since $\BB_{\dR}^{\widetilde{q}_{i},+}\left(R,R^{+}\right)\subseteq\BB_{\dR}^{+}\left(R,R^{+}\right)$,
    cf. Proposition~\ref{prop:BdRq+-to-BdR+-inj},
    Claim~\ref{claim:applicationHensel} follows from the following Lemma~\ref{lem:applicationHenselBdR+}: 
\end{proof}

  \begin{claim}\label{lem:applicationHenselBdR+}
    There exists a unique tuple
    \begin{equation*}
      \widetilde{L}_{i}^{\prime}=\left(\widetilde{L}_{i1}^{\prime},\dots,\widetilde{L}_{in}^{\prime}\right)
      \in\prod_{j=1}^{n_{i}}\BB_{\dR}^{+}\left(R,R^{+}\right)\left\llbracket Z_{1},\dots,Z_{d} \right\rrbracket
    \end{equation*}
    such that $P_{ij}^{\prime}\left(\widetilde{L}_{ij}^{\prime}\right)=0$ and
    $\Ovartheta_{\dR}^{,+,\prime}\left(\widetilde{L}_{ij}^{\prime}\right)= L_{ij}$ for all $j=1,\dots n_{i}$.
    Here, $\Ovartheta_{\dR}^{+}$ is the map
    $\BB_{\dR}^{+}\left(R,R^{+}\right)\left\llbracket Z_{1},\dots,Z_{d} \right\rrbracket\to R$,
    $\sum_{\alpha\in\NN^{d}}b_{\alpha}Z^{\alpha}\mapsto\vartheta_{\dR}^{+}\left(b_{0}\right)$,
    where $\vartheta_{\dR}^{+}\colon\BB_{\dR}^{+}\left(R,R^{+}\right)\to R$ is Fontaine's map.
  \end{claim}  
  
  \begin{proof}
    This follows from an application of a multidimensional version of Hensel's Lemma
    (see, for example, \cite[Corollary 4.5.2]{BourbakiCommAlgebra1972}) and is explained
    in the~\cite[proof of Proposition 6.10]{Sch13pAdicHodge}.
  \end{proof}
  
  Back to the proof of Lemma~\ref{lem:imagetildenu-Ainf}.
  Fix $\widetilde{L}_{i}^{\prime}$ as in Claim~\ref{claim:applicationHensel}.
  This allows us to define the morphism of $k_{0}$-Banach algebras
  \begin{equation*}
    \epsilon_{i}
    \colon
    R_{i}= k_{0}\left\<T_{1}^{\pm},\dots,T_{d}^{\pm}, L_{i1},\dots,L_{in_{i}}\right\> / \left( P_{i1},\dots,P_{in_{i}}\right)
      \maps \BB_{\dR}^{\widetilde{q}_{i},+}\left(R,R^{+}\right)\left\< \frac{Z_{1},\dots,Z_{d}}{p^{\widetilde{q}_{i}}} \right\>.
  \end{equation*}
  via $T_{l}\mapsto \left[T_{l}^{\flat}\right]+Z_{l}$ for all $l=1,\dots,d$
  and $L_{ij}\mapsto\widetilde{L}_{ij}^{\prime}$ for all $j=1,\dots,n_{i}$.
  Claim~\ref{claim:applicationHensel}
  immediately implies that the diagram~(\ref{lem:imagetildenu-Ainf})
  commutes, and that $\epsilon_{i}$ is unique with this property.
  This finishes the proof of Lemma~\ref{lem:imagetildenu-Ainf}.
\end{proof}

\begin{notation}\label{notation:Riplus-algebra-Ainf}
  Introduce the following bounded linear map
  \begin{alignat*}{3}
    \delta_{i}\colon R_{i}
    \stackrel{\epsilon_{i}^{+}}{\longrightarrow}
    \BB_{\dR}^{\widetilde{q}_{i}}\left(R,R^{+}\right)\left\<\frac{Z_{1},\dots,Z_{d}}{p^{\widetilde{q}_{i}}}\right\>&
    &\longrightarrow
    &\BB_{\dR}^{\widetilde{q}_{i}}\left(R,R^{+}\right) \\
    \sum_{\alpha\in\NN^{d}}b_{\alpha}Z^{\alpha}&
    &\longmapsto
    &b_{0}.
  \end{alignat*}
\end{notation}

The map $\delta_{i}$ gives $\BB_{\dR}^{\widetilde{q}_{i}}\left(R,R^{+}\right)$
the structure of an $R_{i}$-Banach algebra. We want
exhibit $\phi_{i}$ as a base-change of a certain $\tau_{i}$
along $\delta_{i}$, following the proof of the local description
of $\OB_{\dR}^{+}$~\cite{Sch13pAdicHodgeErratum}.

\begin{notation}
  Denote the multiplication $R_{i}\widehat{\otimes}_{k_{0}}R_{i}\to R_{i}$
  by $\mu_{i}$.
\end{notation}

\begin{lem}\label{lem:localdescription-of-OBla-defineG}
  The subset
  \begin{equation*}
    G:=\left(\id_{R_{i}}\widehat{\otimes}_{k_{0}}\delta_{i}\right)\left(\ker\mu_{i}\right)
    \cup\left\{1\widehat{\otimes}\xi\right\}
    \subseteq
    R_{i}\widehat{\otimes}_{k_{0}}\BB_{\dR}^{\widetilde{q}_{i}}\left(R,R^{+}\right)
  \end{equation*}
  generates the ideal $\ker\Ovartheta_{\dR}^{\widetilde{q}_{i}}$.
\end{lem}

\begin{proof}
  Drop the subscripts $k_{0}$ for clarity.
  $\ker\Ovartheta_{\dR}^{\widetilde{q}_{i}}$ is the composition
  \begin{equation*}
    R_{i}\widehat{\otimes}_{W(\kappa)}\BB_{\dR}^{\widetilde{q}_{i}}\left(R,R^{+}\right)
    \stackrel{\id\widehat{\otimes}\vartheta_{\dR}^{\widetilde{q}_{i}}}{\longrightarrow}
    R_{i}\widehat{\otimes}R
    \stackrel{\mu}{\longrightarrow}
    R,
  \end{equation*}
  where $\mu$ is the multiplication.
  Clearly, $1\widehat{\otimes}\xi\in\ker\Ovartheta_{\dR}^{\widetilde{q}_{i}}$. For every
  $g\in\ker\mu_{i}$,
  \begin{equation}\label{eq:localdescription-of-OBla-defineG}
  \begin{split}
    \Ovartheta_{\dR}^{\widetilde{q}_{i}}\left(\left(\id_{R_{i}}\widehat{\otimes}\delta_{i}\right)\left(g\right)\right)
    &=\left(
    \mu
    \circ\left( \id_{R_{i}}\widehat{\otimes}\vartheta_{\dR}^{\widetilde{q}_{i}}\right)
    \circ\left(\id_{R_{i}}\widehat{\otimes}\delta_{i}\right)
    \right)\left( g\right) \\
    &=\left(
    \mu
    \circ\left( \id_{R_{i}}\widehat{\otimes}\left(\vartheta_{\dR}^{\widetilde{q}_{i}}\circ\delta_{i}\right)\right)
    \right)\left( g\right) \\
    &\stackrel{\text{\ref{lem:imagetildenu-Ainf}}}{=}\left(
    \mu
    \circ\left( \id_{R_{i}}\widehat{\otimes}\iota_{i}\right)
    \right)\left( g\right) \\
    &=0,
  \end{split}
  \end{equation}
  where $\iota_{i}\colon R_{i}\to R$ is the canonical map.
  This proves $\left(G\right)\subseteq\ker\Ovartheta_{\dR}^{\widetilde{q}_{i}}$.
  We aim to show $\supseteq$ via computing that $\Ovartheta_{\dR}^{\widetilde{q}_{i}}$
  induces an isomorphism
  \begin{equation*}
    \left. \left(R_{i}\widehat{\otimes}\BB_{\dR}^{\widetilde{q}_{i}}\left( R,R^{+}\right)\right)
    \middle/ \left( G\right) \right.
    \stackrel{\cong}{\longrightarrow}
    R.
  \end{equation*}
  By the third isomorphism theorem, this breaks into two parts. First,
  \begin{equation}\label{eq:localdescription-of-OBla-defineG-1}
      \left. \left(R_{i}\widehat{\otimes}\BB_{\dR}^{\widetilde{q}_{i}}\left( R,R^{+}\right)\right)
      \middle/ \left( 1\widehat{\otimes}\xi\right) \right.
      \stackrel{\cong}{\longrightarrow}
      R_{i}\widehat{\otimes}R;
  \end{equation}
  this isomorphism is induced by $\id_{R_{i}}\widehat{\otimes}\Ovartheta_{\dR}^{\widetilde{q}_{i}}$.
  Second, $\mu$ induces
  \begin{equation}\label{eq:localdescription-of-OBla-defineG-2}
      \left.\left(R_{i}\widehat{\otimes}R\right)
      \middle/
      \left(\left(\id_{R_{i}}\widehat{\otimes}\vartheta_{\dR}^{\widetilde{q}_{i}}\right)\left(
      \left(\id_{R_{i}}\widehat{\otimes}\delta_{i}\right)\left(\ker\mu_{i}\right)
      \right)
      \right)
      \right.
      \stackrel{\cong}{\longrightarrow}
      R.
  \end{equation}
  The first isomorphism
  comes from the strictly coexact sequence
  \begin{equation*}
    \BB_{\dR}^{\widetilde{q}_{i}}\left( R,R^{+}\right)
    \stackrel{\xi}{\longrightarrow}
    \BB_{\dR}^{\widetilde{q}_{i}}\left( R,R^{+}\right)
    \longrightarrow
    R
    \longrightarrow
    0.
  \end{equation*}
  Indeed, applying $R_{i}\widehat{\otimes}-$
  gives the sequence
  \begin{equation*}
    R_{i}\widehat{\otimes}\BB_{\dR}^{\widetilde{q}_{i}}\left( R,R^{+}\right)
    \stackrel{\xi}{\longrightarrow}
    R_{i}\widehat{\otimes}\BB_{\dR}^{\widetilde{q}_{i}}\left( R,R^{+}\right)
    \longrightarrow
    R_{i}\widehat{\otimes}R
    \longrightarrow
    0,
  \end{equation*}
  which is strictly coexact.
  This gives~(\ref{eq:localdescription-of-OBla-defineG-1}).
  Regarding the second isomorphism~(\ref{eq:localdescription-of-OBla-defineG-2}),
  \begin{equation*}
    \left(\left(\id_{R_{i}}\widehat{\otimes}\vartheta_{\dR}^{\widetilde{q}_{i}}\right)\left(
    \left(\id_{R_{i}}\widehat{\otimes}\delta_{i}\right)\left(\ker\mu_{i}\right)
    \right)
    \right)
    =
    \left( \id_{R_{i}}\widehat{\otimes}\iota_{i}\right)
    \left( \ker\mu_{i}\right),
  \end{equation*}
  by the computation~\ref{eq:localdescription-of-OBla-defineG} above.
  We may thus write~(\ref{eq:localdescription-of-OBla-defineG-2}) as
  \begin{equation}\label{eq:localdescription-of-OBla-defineG-3}
      \left.\left(R_{i}\widehat{\otimes}R^{+}\right)
      \middle/
      \left(\left( \id_{R_{i}}\widehat{\otimes}\iota_{i}\right)
    \left( \ker\mu_{i}
      \right)
      \right)
      \right.
      \stackrel{\cong}{\longrightarrow}
      R.
  \end{equation}
  We claim that the image of
  $\ker\mu_{i}\subseteq A:=R_{i}\widehat{\otimes}R_{i}$ in
  $B:=R_{i}\widehat{\otimes}R$ generates the kernel of
  the multiplication map as an ideal.
  Apply~\cite[Theorem 4.1.4]{EquivariantD2021}
  to get an exact sequence of finitely presented
  $A$-modules
  \begin{equation*}
    A^{n}
    \longrightarrow
    A
   \stackrel{\mu_{i}}{\longrightarrow}
   R_{i}
   \longrightarrow 0.
  \end{equation*}
  Apply the functor $-\widehat{\otimes}_{A}B$
  to get an exact sequence of finitely presented $B$-modules
  \begin{equation*}
    B^{n}
    \longrightarrow
    B
   \stackrel{\mu_{i}\widehat{\otimes}_{A}\id_{B}}{\longrightarrow}
   R_{i}\widehat{\otimes}_{A}B
   \longrightarrow 0.
  \end{equation*}
  The functors
  $-\widehat{\otimes}_{R_{i}}R$ and
  $-\widehat{\otimes}_{A}B$ are isomorphic on finitely
  generated $A$-modules. This is because both are suitably
  right exact, and both send $A$ to $B$, up to natural isomorphism.
  Hence we get the exact sequence
  \begin{equation*}
    B^{n}
    \longrightarrow
    B
   \stackrel{\upsilon}{\longrightarrow}
   R_{i}\widehat{\otimes}_{R_{i}}R
   \cong R
   \longrightarrow 0.
  \end{equation*}  
  One the other hand, $\mu\colon B\to R$
  kills the image of $A^{n}$ in $B$, giving a complex
  \begin{equation*}
    B^{n} \longrightarrow B \stackrel{\mu}{\longrightarrow} R \longrightarrow 0.
  \end{equation*}
  One checks that this complex is equal to the previous
  exact sequence: the only non-trivial part is to show that $\mu$
  and $\upsilon$ coincide. But $\upsilon$ is the composition
  \begin{equation*}
  \begin{tikzcd}[row sep=tiny]
    R_{i}\widehat{\otimes}R = B \arrow{r}{\cong}
    & A\widehat{\otimes}_{A}B \arrow{r}{\mu_{i}\widehat{\otimes}_{A}B}
    & R_{i}\widehat{\otimes}_{A}B \arrow{r}{\cong}
    & R_{i} \\
    r\widehat{\otimes}s \arrow[maps to]{r}
    & \left(r\widehat{\otimes}1\right)\widehat{\otimes}\left(1\widehat{\otimes}s\right) \arrow[maps to]{r}
    & r\widehat{\otimes}\left(1\widehat{\otimes}s\right) \arrow[maps to]{r}
    & rs,
  \end{tikzcd}
  \end{equation*}
  which shows the claim.
  It follows that the image of
  $\ker\mu_{i}\subseteq A$ in
  $B$ generates the multiplication map. This gives~(\ref{eq:localdescription-of-OBla-defineG-3}).
 \end{proof}

\begin{lem}\label{lem:coordinates-germ-diagonal}
  Let $A$ denote a $k_{0}$-affinoid algebra and fix a finite set of elements
  $s_{1},\dots,s_{n}\in A$. Set $I:=\left(s_{1},\dots,s_{n}\right)$ and consider
  \begin{equation*}
    \omega^{q}\colon A\left\<\frac{s_{1},\dots,s_{n}}{p^{q}}\right\>
    \to\varprojlim_{l}A/I^{l}
  \end{equation*}
  for all $q\in\NN$. Then the $\ker\omega^{q}$ are Banach spaces and
  $\varinjlim_{q}\ker\omega^{q}=0$ as a $k_{0}$-ind-Banach space.
\end{lem}

\begin{proof}
  Write
    $I_{q}:=A\left\< \frac{s_{1},\dots,s_{n}}{\pi^{q}} \right\>I$.
  Then
  \begin{equation*}
    A\left\< \frac{s_{1},\dots,s_{n}}{\pi^{q}} \right\>/I_{q}^{l}
    \stackrel{\cong}{\longrightarrow} A/I^{l}
  \end{equation*}
  for all $l$, thus the kernel of $\omega^{q}$ is the intersection
  of the ideals $I_{q}^{l}$. But any ideal in an affinoid algebra is
  closed, cf.~\cite[\S 6.1.1 Proposition 3]{BGR84}. Therefore
  $\ker\omega^{q}$ is closed, thus complete. In particular, we may
  view it as a Banach space with the restriction of the norm
  on $A\left\< \frac{s_{1},\dots,s_{n}}{\pi^{q}} \right\>$.
  
  By the Krull intersection theorem, there exists an element
  $f\in I_{q}$ such that
  \begin{equation*}
    (1-f)\ker\omega_{m} = (1-f)\bigcap_{l}I_{q}^{l} = 0.
  \end{equation*}
  Now consider the commutative diagram
  \begin{equation*}
    \begin{tikzcd}
      A\left\< \frac{\zeta_{1},\dots,\zeta_{n}}{\pi^{q}} \right\>
      \arrow{r}{\upsilon^{q}}\arrow{d}{\widetilde{\iota}_{q,q^{\prime}}} &
      A\left\< \frac{s_{1},\dots,s_{n}}{\pi^{q}} \right\>\arrow{d}{\iota_{q,q^{\prime}}} \\
      A\left\< \frac{\zeta_{1},\dots,\zeta_{n}}{\pi^{q^{\prime}}} \right\>
      \arrow{r}{\upsilon^{q^{\prime}}} &
      A\left\< \frac{s_{1},\dots,s_{n}}{\pi^{q^{\prime}}} \right\>  
    \end{tikzcd}
  \end{equation*}
  for any $q^{\prime}\geq q$,
  where the horizontal maps send each $\zeta_{i}$ to $s_{i}$.
  Pick $\widetilde{f}$ such that $\upsilon^{q}\left( \widetilde{f} \right)=f$.
  Since $f\in I_{q}$, we may assume that
  $\widetilde{f}\in\left( \zeta_{1},\dots,\zeta_{n}\right)$.
  In particular,
  \begin{equation*}
    \|\iota_{q,q^{\prime}}(f)\|
    \leq\|\tau^{q^{\prime}}\left( \widetilde{\iota}_{q,q^{\prime}}\left( \widetilde{f} \right)\right)\|
    \leq\|\widetilde{\iota}_{q,q^{\prime}}\left( \widetilde{f} \right)\|
    \leq|\pi|^{q^{\prime}-q}\|\widetilde{f}\|.
  \end{equation*}
  That is, we may chose $q^{\prime}$ large enough such that
  $\|\iota_{q,q^{\prime}}(f)\|\leq 1$. But then
  $\iota_{q,q^{\prime}}\left(1-f\right)$ becomes a unit: its inverse is
  $\sum_{\alpha\geq0}\iota_{q,q^{\prime}}\left(f\right)^{\alpha}$.
  Thus $\iota_{q,q^{\prime}}\left( \ker\omega^{q} \right)$
  is killed by a unit, thus it is zero. In other words,
    $\ker\omega^{q}
    \subseteq\ker\iota^{q,q^{\prime}}$.
  It follows that
  \begin{equation*}
    \varinjlim_{q\geq0}\ker\omega^{q}
    \subseteq\varinjlim_{q^{\prime}\geq q\geq0}\ker\iota_{q,q^{\prime}}
    =\ker\varinjlim_{q^{\prime}\geq q\geq0}\iota_{q,q^{\prime}}
    =0.
  \end{equation*}
  But $\iota:=\varinjlim_{m^{\prime}\geq m\geq0}\iota_{q,q^{\prime}}$ is
  by definition an automorphism of
  $\varinjlim_{m\geq0}A\left\< \frac{s_{1},\dots,s_{n}}{\pi^{q}} \right\>$,
  thus $\ker\iota=0$. This implies $\varinjlim_{q\geq0}\ker\omega^{q}=0$,
  as desired.
\end{proof}

\begin{prop}\label{prop:coordinates-germ-diagonal}
  Fix generators $\left( s_{1},\dots,s_{n}\right)=\ker\mu_{i}$.
  We have an isomorphism 
  \begin{equation*}
    \tau_{i}\colon R_{i}\left\< \frac{Z_{1},\dots,Z_{d}}{p^{\infty}}\right\>
    \stackrel{\cong}{\longrightarrow}
    \left(R_{i}\widehat{\otimes}_{k_{0}}R_{i}\right)
    \left\< \frac{s_{1},\dots,s_{n}}{p^{\infty}}\right\>
   \end{equation*}
   extending $r\mapsto 1\widehat{\otimes}r$ and sending
   $Z_{l}$ to $T_{l}\widehat{\otimes}1-1\widehat{\otimes}T_{l}$
   of $k$-ind-Banach algebras.
\end{prop}

\begin{proof}
  By~\cite[Lemma 2.3]{KisinLocalconstancy99} the canonical morphism
  \begin{equation*}
  \left(R_{i}\widehat{\otimes}_{k_{0}}R_{i}\right)
    \left\< \frac{s_{1},\dots,s_{n}}{p^{\infty}}\right\>
    \isomap\left(R_{i}\widehat{\otimes}_{k_{0}}R_{i}\right)_{U}
  \end{equation*}
  is an isomorphism of $k$-ind-Banach algebras.
  Here, the colimit on the left runs through the system of affinoid open
  neighbourhoods $U\supseteq\Delta\left(\Sp\left(R_{i}\right)\right)$
  and $( - )_{U}$ denotes the corresponding localisation.
  In particular, the constructions in the~\cite[proof of Proposition 6.31]{soor2024sixfunctorformalismquasicoherentsheaves}
  apply. \emph{Loc. cit.} constructs
  \footnote{\cite{soor2024sixfunctorformalismquasicoherentsheaves}
    denotes $\sigma_{i}$ by $\psi$ and $\tau_{i}$ by $\varphi$.}
  \begin{equation*}
      \sigma_{i}\colon
      \left(R_{i}\widehat{\otimes}_{k_{0}}R_{i}\right)
      \left\< \frac{s_{1},\dots,s_{n}}{p^{\infty}}\right\>
      \longrightarrow
      R_{i}\left\< \frac{Z_{1},\dots,Z_{d}}{p^{\infty}}\right\>,
  \end{equation*}
  such that $\sigma_{i}\circ\tau_{i}$ is the identity.
  ~\cite[Proposition 1.1.8]{Sch99} implies that $\sigma_{i}$ is a strict epimorphism,
  thus it remains to check that it is a monomorphism. To do this,
  we apply Lemma~\ref{lem:coordinates-germ-diagonal} for
  $A:=R_{i}\widehat{\otimes}_{k_{0}}R_{i}$ and $I:=\ker\mu_{i}$.
  \emph{Loc. cit.} considers the morphism $\omega_{i}$, which fit into the
  commutative diagrams
  \begin{equation}\label{cd:coordinates-germ-diagonal}
  \begin{tikzcd}
    \left(R_{i}\widehat{\otimes}_{k_{0}}R_{i}\right)
    \left\< \frac{s_{1},\dots,s_{n}}{p^{q}}\right\>
    \arrow{r}{\tau_{i}^{q}}
    \arrow{d}{\omega_{i}^{q}} &
    R_{i}\left\< \frac{Z_{1},\dots,Z_{d}}{p^{\infty}}\right\>
    \arrow[hook]{d} \\
    \varprojlim_{j}\left(R_{i}\widehat{\otimes}_{k_{0}}R_{i}\right)/\left(\ker\mu_{i}\right)^{j}
    \arrow{r}{\cong} &
    R_{i}\left\llbracket Z_{1},\dots,Z_{d} \right\rrbracket.
  \end{tikzcd}
  \end{equation}  
  The morphisms $\tau_{i}^{q}$ at the top are the compositions
  \begin{equation*}
    \left(R_{i}\widehat{\otimes}_{k_{0}}R_{i}\right)
    \left\< \frac{s_{1},\dots,s_{n}}{p^{q}}\right\>
    \longrightarrow
    \left(R_{i}\widehat{\otimes}_{k_{0}}R_{i}\right)
    \left\< \frac{s_{1},\dots,s_{n}}{p^{\infty}}\right\>
    \stackrel{\sigma_{i}}{\longrightarrow}
    R_{i}\left\< \frac{Z_{1},\dots,Z_{d}}{p^{\infty}}\right\>,
  \end{equation*}
  and the isomorphism at the bottom of the commutative diagram
  comes from~\cite[Lemma B.7 and Corollary B.10]{soor2024sixfunctorformalismquasicoherentsheaves}.
  The commutative diagram~(\ref{cd:coordinates-germ-diagonal})
  implies $\ker\tau_{i}^{q}=\ker\omega_{i}^{q}$. Together with the
  aforementioned Lemma~\ref{lem:coordinates-germ-diagonal}, we find
  \begin{equation*}
    \ker\tau_{i}
    =\varinjlim_{q}\ker\tau_{i}
    =\varinjlim_{q}\ker\omega_{i}^{q}
    =0,
  \end{equation*}
  as desired.
\end{proof}

\begin{remark}\label{rem:discreteval-prop:coordinates-germ-diagonal}
  The proof of Proposition~\ref{prop:coordinates-germ-diagonal}
  requires $k$ to be discretely valued. Indeed, it follows that
  $k$ is finite over $k_{0}$, thus
  $R_{i}\widehat{\otimes}_{k_{0}}R_{i}$ is an affinoid $k$-algebra
  and~\cite[Lemma 2.3]{KisinLocalconstancy99} applies.
\end{remark}

\begin{proof}[Proof of Theorem~\ref{thm:localdescription-of-OBla}]
  As explained at the beginning of \S\ref{subsec:proof-localdescrption-of-OBla},
  it suffices to check that the morphism~(\ref{eq:localdescription-of-OBla-phii})
  is an isomorphism.
  Recall that $\BB_{\dR}^{\dag,+}\left(R,R^{+}\right)$ is an $R_{i}$-Banach algebra
  via $\delta_{i}$, cf. Notation~\ref{notation:Riplus-algebra-Ainf}.
  See Proposition~\ref{prop:coordinates-germ-diagonal} for $\tau_{i}$ and write
  $\psi_{i}:=\id_{\BB_{\dR}^{\dag,+}\left(R,R^{+}\right)}\widehat{\otimes}_{R_{i}}\tau_{i}$.
  We claim
  \begin{equation*}
    \phi_{i}\cong\psi_{i}.
  \end{equation*}
  Because $\psi_{i}$ is an isomorphism, this would imply that
  $\phi_{i}$ is an isomorphism.
  
  \emph{Step 1.} The domain of $\psi_{i}$ coincides with the domain of $\phi_{i}$:
  \begin{align*}
    \BB_{\dR}^{\dag,+}\left( R,R^{+} \right)
    \widehat{\otimes}_{R_{i}}
    R_{i}\left\< \frac{Z_{1},\dots,Z_{d}}{p^{\infty}}\right\>
    \stackrel{\text{\ref{lem:restrictedpowerseries-from-hotimes}}}{\cong}
    \BB_{\dR}^{\dag,+}\left( R,R^{+} \right)
    \left\< \frac{Z_{1},\dots,Z_{d}}{p^{\infty}}\right\>.
  \end{align*}
   
  \emph{Step 2.} We compute the codomain of $\psi_{i}$.
  Recall the generating set $G$ of $\ker\Ovartheta_{\dR}^{\widetilde{q}_{i}}$
  considered in Lemma~\ref{lem:localdescription-of-OBla-defineG}.
  Additionally, fix a finite generating set $S=\left\{s_{1},\dots,s_{n}\right\}$ of $\ker\mu_{i}$. This implies
  \begin{equation*}
    G:\left(\id_{R_{i}}\widehat{\otimes}_{k_{0}}\delta_{i}\right)\left(s_{1},\dots,s_{n}\right)
    \cup\left\{1\widehat{\otimes}\xi\right\}
    \subseteq
    R_{i}\widehat{\otimes}_{k_{0}}\widehat{\BB}_{\inf}\left( R,R^{+} \right)
  \end{equation*}
  and compute, for every $\widetilde{q}_{i}\in\NN$ as in Lemma~\ref{lem:imagetildenu-Ainf},
  \begin{align*}
    &\BB_{\dR}^{q,+}\left( R,R^{+} \right)
    \widehat{\otimes}_{R_{i}}
    \left(R_{i}\widehat{\otimes}_{k_{0}}R_{i}\right)
        \left\< \frac{S}{p^{q}}\right\> \\
    &\stackrel{\text{\ref{lem:Blaqplus-local-complete-at-xi}}}{\cong}
    \coker\left( \widehat{\BB}_{\inf}\left( R,R^{+} \right)\left\< \frac{\zeta}{p^{q}} \right\>\left(p^{q}\frac{\zeta}{p^{q}}-\xi\right)
    \to\widehat{\BB}_{\inf}\left( R,R^{+} \right)\left\< \frac{\zeta}{p^{q}} \right\>\right) \\
    &\qquad\widehat{\otimes}_{R_{i}}
    \coker\left(
    \begin{aligned}
      \bigoplus_{j=1}^{n}\left(R_{i}\widehat{\otimes}_{k_{0}}R_{i}\right)
      \left\<\frac{\zeta_{s}}{p^{q}}\colon s\in S\right\>
      \left(p^{q}\frac{\zeta_{s_{j}}}{p^{q}}-s_{j}\right) \\
      \to
      \left(R_{i}\widehat{\otimes}_{k_{0}}R_{i}\right)
      \left\<\frac{\zeta_{s}}{p^{q}}\colon s\in S\right\>
    \end{aligned}
    \right) \\
    &\cong\coker\left(
    \begin{aligned}
      \bigoplus_{g\in G}\left(R_{i}\widehat{\otimes}_{k_{0}}\widehat{\BB}_{\inf}\left( R,R^{+} \right)\right)
      \left\<\frac{\zeta_{g}}{p^{q}}\colon g\in G\right\>
      \left(p^{q}\frac{\zeta_{g}}{p^{q}}-g\right) \\
      \to
      \left(R_{i}\widehat{\otimes}_{k_{0}}\widehat{\BB}_{\inf}\left( R,R^{+} \right)\right)
      \left\<\frac{\zeta_{g}}{p^{q}}\colon g\in G\right\>
   \end{aligned}
    \right) \\
    &= \left(R^{+}\widehat{\otimes}_{k_{0}}\widehat{\BB}_{\inf}\left( R,R^{+} \right)\right)\left\<\frac{G}{p^{\widetilde{q}_{i}}}\right\> \\
    &= \left(R_{i}^{+}\widehat{\otimes}_{W(\kappa)}\A_{\inf}\left( R,R^{+} \right)\right)
    \left\< \frac{\ker\Otheta_{\inf}}{p^{\widetilde{q}_{i}}}\right\>\widehat{\otimes}_{W(\kappa)}k_{0}
  \end{align*}
  is an isomorphism.
  Now pass to the colimit along $\widetilde{q}_{i}\to\infty$ to see that $\phi_{i}$ and $\psi_{i}$
  have the same codomain:
  \begin{align*}
    &\BB_{\dR}^{\dag,+}\left( R,R^{+} \right)
    \widehat{\otimes}_{R_{i}}
    \left(R_{i}\widehat{\otimes}_{k_{0}}R_{i}\right)
        \left\< \frac{S}{p^{\infty}}\right\> \\
    &\cong
    \left(R_{i}^{+}\widehat{\otimes}_{W(\kappa)}\A_{\inf}\left(R,R^{+}\right)\right)
    \left\<\frac{\ker\Otheta_{\inf}}{p^{\infty}}\right\>\widehat{\otimes}_{W(\kappa)}k_{0}.
  \end{align*}  

  \emph{Step 3.} Both $\phi_{i}$ and $\psi_{i}$ are colimits of
  completed localisations of morphisms
  \begin{equation*}
    \BB_{\dR}^{\widetilde{q}_{i}}\left( R,R^{+} \right)
    \left\< \frac{Z_{1},\dots,Z_{d}}{p^{\widetilde{q}_{i}}} \right\>
    \to\left(R_{i}^{+}\widehat{\otimes}_{W(\kappa)}\A_{\inf}(U)\right)
    \left\<\frac{\ker\Otheta_{\inf}}{p^{\widetilde{q}_{i}}}\right\>\widehat{\otimes}_{W(\kappa)}k_{0}
  \end{equation*}
  of $\BB_{\dR}^{\widetilde{q}_{i}}\left( R,R^{+} \right)$-Banach algebras.
  Therefore, it suffices to check that both $\phi_{i}$
  and $\psi_{i}$ coincide on the variables $Z_{1},\dots,Z_{d}$.
  This is an easy computation:
  \begin{align*}
    \phi_{i}\left(Z_{l}\right)
    &=T_{l}\widehat{\otimes}_{W(\kappa)}1-1\widehat{\otimes}_{W(\kappa)}\left[T_{l}^{\flat}\right], \text{ and} \\
    \psi_{i}\left(Z_{l}\right)
    &=\left( \id_{R_{i}^{+}}\widehat{\otimes}_{W(\kappa)}\delta_{i}\right)
    \left(\tau_{i}\left(Z_{l}\right)\right) \\
    &=\left( \id_{R_{i}^{+}}\widehat{\otimes}_{W(\kappa)}\delta_{i}\right)
    \left(T_{l}\widehat{\otimes}_{W(\kappa)}1-1\widehat{\otimes}_{W(\kappa)}T_{l}\right) \\
    &=T_{l}\widehat{\otimes}_{W(\kappa)}1-1\widehat{\otimes}_{W(\kappa)}\left[T_{l}^{\flat}\right]
  \end{align*}
  for all $l=1,\dots,d$.
\end{proof}


\subsection{Proof of Theorem~\ref{thm:localdescription-of-OBqplus}}
\label{subsec:proof-localdescrption-of-OBla-finitelevel}

Fix the notation as in \S\ref{subsec:proof-localdescrption-of-OBla}
as well as $q\in\NN_{\geq2}$.
Similarly to~(\ref{eq:localdescription-of-OBla-phii}), we have the morphism
\begin{equation*}\label{eq:localdescription-of-OBla-phiiq}
  \phi_{i}^{q}\colon
  \BB_{\dR}^{q,+}\left( R,R^{+} \right)\left\<\frac{Z_{1},\dots,Z_{d}}{p^{q}}\right\>
  \to
  \left(
  R_{i}^{+}\widehat{\otimes}_{W(\kappa)}\A_{\inf}\left( R,R^{+} \right)\right)
  \left\<\frac{\ker\Otheta_{\inf}}{p^{q}}\right\>\widehat{\otimes}_{k^{\circ}}k
\end{equation*}
of $\BB_{\dR}^{q,+}\left( R,R^{+} \right)$-Banach algebras.
Using Lemma~\ref{lem:imagetildenu-Ainf} and~\ref{lem:Banach-completions-extend}, we construct
a unique morphism
\begin{equation*}\label{eq:localdescription-of-OBla-phiiq}
  \widetilde{\chi}_{i}^{q}
  \colon
  \left(
  R_{i}^{+}\widehat{\otimes}_{W(\kappa)}\A_{\inf}\left( R,R^{+} \right)\right)
  \left\<\frac{\ker\Otheta_{\inf}}{p^{q}}\right\>\widehat{\otimes}_{k^{\circ}}k
  \to
  \BB_{\dR}^{q,+}\left( R,R^{+} \right)\left\<\frac{Z_{1},\dots,Z_{d}}{p^{q}}\right\>
\end{equation*}
of $\BB_{\dR}^{q,+}\left( R,R^{+} \right)$-Banach algebras
for $q\gg0$. (In the notation of Lemma~\ref{lem:imagetildenu-Ainf},
we choose $q\geq\widetilde{q}_{i}$.)

Next, $\normalOB_{i}$ is the coimage of
\begin{equation*}
  \left(
  R_{i}^{+}\widehat{\otimes}_{W(\kappa)}\A_{\inf}\left( R,R^{+} \right)\right)
  \left\<\frac{\ker\Otheta_{\inf}}{p^{q}}\right\>\widehat{\otimes}_{k^{\circ}}k 
  \to
  \left(
  R_{i}^{+}\widehat{\otimes}_{W(\kappa)}\A_{\inf}\left( R,R^{+} \right)\right)
  \left\<\frac{\ker\Otheta_{\inf}}{p^{\infty}}\right\>\widehat{\otimes}_{k^{\circ}}k.
\end{equation*}

\begin{lem}
  $\widetilde{\chi}_{i}^{q}$ factors canonically through the morphism
  of $\BB_{\dR}^{q}\left(R,R^{+}\right)$-Banach algebras
  \begin{equation*}
    \chi_{i}^{q}\colon
    \normalOB_{i}^{q}
    \to
    \BB_{\dR}^{q,+}\left( R,R^{+} \right)\left\<\frac{Z_{1},\dots,Z_{d}}{p^{q}}\right\>.
  \end{equation*}
\end{lem}

\begin{proof}
  This follows from the commutative diagram
  \begin{equation*}
  \begin{tikzcd}
    \left(
    R_{i}^{+}\widehat{\otimes}_{W(\kappa)}\A_{\inf}\left( R,R^{+} \right)\right)
    \left\<\frac{\ker\Otheta_{\inf}}{p^{q}}\right\>\widehat{\otimes}_{k^{\circ}}k
    \arrow{d}\arrow{r} &
    \BB_{\dR}^{q,+}\left( R,R^{+} \right)\left\<\frac{Z_{1},\dots,Z_{d}}{p^{q}}\right\>.
    \arrow[hook]{d} \\    
    \left(
    R_{i}^{+}\widehat{\otimes}_{W(\kappa)}\A_{\inf}\left( R,R^{+} \right)\right)
    \left\<\frac{\ker\Otheta_{\inf}}{p^{\infty}}\right\>\widehat{\otimes}_{k^{\circ}}k
    \arrow{r} &
    \BB_{\dR}^{\infty,+}\left( R,R^{+} \right)\left\<\frac{Z_{1},\dots,Z_{d}}{p^{\infty}}\right\>.
  \end{tikzcd}
  \end{equation*}
  Here, the morphism at the right-hand side
  is a monomorphism by Proposition~\ref{prop:BdRdagplus-bornology-countable-basis-reconstructionpaper}.
\end{proof}

\begin{lem}\label{lem:2sidedinverse1-thm:localdescription-of-OBqplus}
  $\chi_{i}^{q}\circ\phi_{i}^{q}$ is the identity.
\end{lem}

\begin{proof}
  We may check that $\widetilde{\chi}_{i}^{q}\circ\phi_{i}^{q}$ is the identity.
  Both $\widetilde{\chi}_{i}^{q}$ and $\phi_{i}^{q}$ and are morphisms of
  of $\BB_{\dR}^{q}\left( R,R^{+} \right)$-Banach algebras. Therefore,
  it suffices to compute
  \begin{equation*}
    \left(\widetilde{\chi}_{i}^{q}\circ\phi_{i}^{q}\right)\left( Z_{l} \right)
    =\widetilde{\chi}_{i}^{q}\left(T_{l}\widehat{\otimes}1 - 1\widehat{\otimes}\left[ T_{l}^{\flat} \right]\right)
    \stackrel{\text{\ref{lem:imagetildenu-Ainf}}}{=}
      \left(\left[T_{l}^{\flat}\right]+Z_{l}\right) - \left[ T_{l}^{\flat} \right]
    =Z_{l}
  \end{equation*}
  for all $l=1,\dots,d$.
\end{proof}

\begin{lem}\label{lem:2sidedinverse2-thm:localdescription-of-OBqplus}
  $\phi_{i}^{q}\circ\chi_{i}^{q}$ is the identity.
\end{lem}

\begin{proof}
  Using Lemma~\ref{lem:2sidedinverse1-thm:localdescription-of-OBqplus},
  we see that
  $\chi_{i}^{q}\circ\left(\phi_{i}^{q}\circ\chi_{i}^{q}-\id\right)
    \left(\chi_{i}^{q}\circ\phi_{i}^{q}-\id\right)\circ\chi_{i}^{q}=0$.
  Therefore, it suffices to check that $\chi_{i}^{q}$ is a monomorphism.
  This follows from the commutative diagram
  \begin{equation*}
  \begin{tikzcd}
    \normalOB_{i}^{q}
    \arrow{d}\arrow{r}{\chi_{i}^{q}} &
    \BB_{\dR}^{q,+}\left( R,R^{+} \right)\left\<\frac{Z_{1},\dots,Z_{d}}{p^{q}}\right\>.
    \arrow[hook]{d} \\    
    \normalOB_{i}^{\infty}
    \arrow{r}{\chi_{i}^{\infty}} &
    \BB_{\dR}^{\infty,+}\left( R,R^{+} \right)\left\<\frac{Z_{1},\dots,Z_{d}}{p^{\infty}}\right\>.
  \end{tikzcd}
  \end{equation*}
  Here, the morphism at the right-hand side
  is a monomorphism by Proposition~\ref{prop:BdRdagplus-bornology-countable-basis-reconstructionpaper}.
\end{proof}

\begin{proof}[Proof of Theorem~\ref{thm:localdescription-of-OBqplus}]
  Fix $q\gg0$ as in the previous discussion.
  
  Compute the following coimage in the category of presheaves:
  \begin{equation*}
    \OB_{\dR}^{q,+,\psh}:=\coim\left( \widetilde{\OB}_{\dR}^{q,+,\psh}\to\widetilde{\OB}_{\dR}^{\infty,+,\psh} \right).
  \end{equation*}
  As sheafification is strongly exact, cf.~\cite[Lemma 3.15]{Bo21}, we may as well show that
  the morphism of presheaves of $\BB_{\dR}^{q,+}|_{\widetilde{X}}$-ind-Banach algebras
  \begin{equation*}
    \BB_{\dR}^{q,+}|_{\widetilde{X}}\left\<\frac{Z_{1},\dots,Z_{d}}{p^{q}}\right\>
    \stackrel{\cong}{\longrightarrow}\OB_{\dR}^{q,+,\psh}|_{\widetilde{X}},
    Z_{l}\mapsto u_{l}
  \end{equation*}
  is an isomorphism of presheaves on $X_{\proet,\affperfd}^{\fin}/\widetilde{X}$,
  cf. Definition~\ref{defn:XproetaffperfdU}. Consequently, fixing the notation from the previous discussion,
  we may as well check that
  \begin{equation*}
    \phi_{i}^{q}\colon\BB_{\dR}^{q,+}\left(R,R^{+}\right)\left\<\frac{Z_{1},\dots,Z_{d}}{p^{q}}\right\>
    \to\normalOB_{i}^{q}
  \end{equation*}
  is an isomorphism of $\BB_{\dR}^{q,+}\left(R,R^{+}\right)$-Banach algebras.
  This follows from Lemma~\ref{lem:2sidedinverse1-thm:localdescription-of-OBqplus}
  and~\ref{lem:2sidedinverse2-thm:localdescription-of-OBqplus}.
\end{proof}


\chapter{Continuous Galois cohomology of overconvergent period rings}
\label{ch:Galoiscoh-recpaper}

In \S\ref{ch:Galoiscoh-recpaper}, we compute the continuous Galois cohomology of the
period rings $B_{\dR}^{\dag,+}:=\BB_{\dR}^{\dag,+}\left(C,\mathcal{O}_{C}\right)$,
$B_{\dR}^{\dag}:=\BB_{\dR}^{\dag}\left(C,\mathcal{O}_{C}\right)$,
$B_{\pdR}^{\dag,+}:=\BB_{\pdR}^{\dag,+}\left(C,\mathcal{O}_{C}\right)$, and
$B_{\pdR}^{\dag}:=\BB_{\pdR}^{\dag}\left(C,\mathcal{O}_{C}\right)$.
Our proofs also yield explicit descriptions of the Galois cohomology of
the solidifications $\underline{B}_{\dR}^{\dag,+}$, $\underline{B}_{\dR}^{\dag}$, $\underline{B}_{\pdR}^{\dag,+}$,
and $\underline{B}_{\pdR}^{\dag}$.


\section{A variant of the $\eta$-operator}
\label{subsec:the-eta-operator-variants}

Fix a commutative ring $R$ and a non zero-divisor $r\in R$.
The $\eta$-operator kills $r$-torsion in the cohomology of a given complex of $R$-modules.
Lemma~\ref{lem:killtorsionincontrolledwayincplx} gives a construction which annihilates
$r$-torsion in a single degree.

  \begin{lem}\label{lem:killtorsionincontrolledwayincplx}
    Fix $n\in\NN$. Given the cochain complex $C^{\bullet}$ of $R$-modules
    without $r$-torsion
    \begin{equation*}
      C^{0}
      \stackrel{d^{0}}{\longrightarrow}
      C^{1}
      \stackrel{d^{1}}{\longrightarrow}
      C^{2}
      \stackrel{d^{2}}{\longrightarrow}
      \dots, 
    \end{equation*}
    we consider the cochain complex $D^{\bullet}$ as follows:
    \begin{equation*}
      C^{0} \times_{d^{0} , C^{1}} rC^{1}
      \stackrel{e^{0}}{\longrightarrow}
      rC^{1}
      \stackrel{e^{1}}{\longrightarrow}
      rC^{2}
      \stackrel{e^{2}}{\longrightarrow}
      \dots. 
    \end{equation*}
    Here, $e^{0}$ is $(x,y)\mapsto y$ and $e^{i}:=d^{i}|_{rC^{i}}$ for all $i\geq1$. Then we compute, for all $i\in\NN$,
    \begin{equation*}
      \Ho^{i}\left(D^{\bullet}\right)
      \cong
      \begin{cases}
        \Ho^{0}\left(C^{\bullet}\right) & \text{if $i=0$,} \\
        \Ho^{1}\left(C^{\bullet}\right) / \text{($r$-torsion)} & \text{if $i=1$,} \\
        r\Ho^{i}\left(C^{\bullet}\right) & \text{if $i\geq2$}.
      \end{cases}
    \end{equation*}
  \end{lem}
  
  \begin{proof}
    There is a canonical morphism $D^{\bullet}\to C^{\bullet}$,
    given by $(x,y)\mapsto x$ in degree zero. It induces an injective map
    $\Ho^{0}\left(D^{\bullet}\right)\to\Ho^{0}\left(C^{\bullet}\right)$.
    On the other hand, consider an element $c$ in its codomain,
    that is the kernel of $d^{0}$. Then it has a preimage
    $(d,0)\in\ker e^{0}=\Ho^{0}\left(D^{\bullet}\right)$,
    showing that $\Ho^{0}\left(\phi^{\bullet}\right)$ is surjective.
    $\Ho^{0}\left(D^{\bullet}\right)\cong\Ho^{0}\left(C^{\bullet}\right)$ follows.
     
    Next, consider the multiplication-by-$r$-map
    $\psi\colon\Ho^{1}\left(C^{\bullet}\right)\to\Ho^{1}\left(D^{\bullet}\right)$.
    It is surjective: given $[rc]\in\Ho^{1}\left(D^{\bullet}\right)$,
    $r$-torsion freeness implies $c\in\ker d^{1}$, and we find
    $[c]\in\Ho^{1}\left(C^{\bullet}\right)$. On the other hand,
    $[c]\in\ker\psi$ if and only there exists $\widetilde{c}\in C^{0}$ such that
    $d^{0}\left(\widetilde{c}\right)=rc$. That is, $[c]\in\ker\psi$
    precisely if $[c]$ is $r$-torsion.
    $\Ho^{1}\left(C^{\bullet}\right)/\text{($r$-torsion)}\cong\Ho^{1}\left(D^{\bullet}\right)$
    follows.
 
    The computation of $\Ho^{i}\left(D^{\bullet}\right)$ for $i\geq 2$ is clear.
  \end{proof}


\section{Classical computations revisited}

We reformulate classical computations
in terms of the continuous Galois cohomology as in Definition~\ref{defn:indBan-ctsRGamma-recpaper}.

 \begin{prop}\label{prop:Galois-cohomology-Banach-k0-finiteextensionofk}
  Fix a finite Galois extension $k^{\prime}$ of $k$, which we view as a $k$-Banach space. Then
  \begin{equation*}
    \Ho_{\cont}^{i}\left(\Gal\left( k^{\prime} / k \right) , k^{\prime} \right)
    \cong
    \begin{cases}
      \I\left(k\right), &\text{ if $i=0$ and} \\
      0, &\text{otherwise}.
    \end{cases}
  \end{equation*}
\end{prop}
 
\begin{proof}
  We have to check that the following complex of $k$-Banach spaces
  is strictly exact:
  \begin{equation}\label{eq:Galois-cohomology-Banach-k0-finiteextensionofk-thecomplex-reconstructionpaper}
    0\longrightarrow
    k\longrightarrow
    k^{\prime}
    \stackrel{d^{0}}{\longrightarrow}
    \intHom_{\cont}\left( \Gal\left( k^{\prime} / k \right)^{1} , k^{\prime}\right)
    \stackrel{d^{1}}{\longrightarrow}\dots.
  \end{equation}
  Indeed, in this case~\cite[Corollary 1.2.28]{Sch99}
  would imply that 
  \begin{equation*}
    0\longrightarrow
    \I\left(k\right)\longrightarrow
    \I\left(k^{\prime}\right)
    \stackrel{\I\left(d^{0}\right)}{\longrightarrow}
    \I\left(\intHom_{\cont}\left( \Gal\left( k^{\prime} / k \right)^{1} , k^{\prime}\right)\right)
    \stackrel{\I\left(d^{1}\right)}{\longrightarrow}\dots.
  \end{equation*}
  is exact, and Proposition~\ref{prop:Galois-cohomology-Banach-k0-finiteextensionofk}
  would follow from Lemma~\ref{lem:LH-vs-H-reconstructionpaper}.
  
  Consider again~(\ref{eq:Galois-cohomology-Banach-k0-finiteextensionofk-thecomplex-reconstructionpaper}).
  As the topology on $\Gal\left(k^{\prime}/k\right)$ is discrete and thanks
  to the open mapping theorem, it suffices to check that the following
  complex of abstract groups is exact:
  \begin{equation*}
    0\longrightarrow
    k\longrightarrow
    k^{\prime}
    \stackrel{d^{0}}{\longrightarrow}
    \Hom\left( \Gal\left( k^{\prime} / k \right)^{1} , k^{\prime}\right)
    \stackrel{d^{1}}{\longrightarrow}\dots.
  \end{equation*}
  In degree zero, this follows from a well-known result of
  Ax, cf.~\cite{Ax70Zerosofpolynomialsoverlocalfields}, that is
  $\Ho^{0}\left( \Gal\left( k^{\prime} / k \right) , k^{\prime}\right)=k$. In higher degrees,
  we may apply Hilbert's Theorem 90 as in~\cite[Theorem 6.3.7]{Wei94}
  \footnote{We refer the reader to \emph{loc. cit.} Example 6.4.8 for an interesting discussion regarding cyclic Galois extensions.}.
\end{proof}

In Corollary~\ref{cor:Galois-cohomology-Banach-k0-finiteextensionofk-solid-reconstructionpaper},
the continuous cohomology refers to the one in Notation~\ref{notation:solidctscoh-recpaper}.

\begin{cor}\label{cor:Galois-cohomology-Banach-k0-finiteextensionofk-solid-reconstructionpaper}
  Fix a finite Galois extension $k^{\prime}$ of $k$, which we view as a $k$-Banach space. Then
  \begin{equation*}
    \Ho_{\cont}^{i}\left(\Gal\left( k^{\prime} / k \right) , \underline{k^{\prime}} \right)
    \cong
    \begin{cases}
      \underline{k}, &\text{ if $i=0$ and} \\
      0, &\text{otherwise}.
    \end{cases}
  \end{equation*}
\end{cor}

\begin{proof}
  Apply Lemma~\ref{lem:contgpcoh-indban-vs-solid-reconstructionpaper}
  to Proposition~\ref{prop:Galois-cohomology-Banach-k0-finiteextensionofk}.
\end{proof}

\begin{lem}\label{lem:HomcontSintoBanachunitball-reconstructionpaper}
  Given a profinite set $S$ and a $k$-Banach space $V$ with unit ball $V^{\circ}$, we have
  \begin{equation}\label{eq:HomcontSintoBanachunitball-reconstructionpaper}
    \Hom_{\cont}\left(S,V^{\circ}\right)[1/p]
    \isomap\Hom_{\cont}\left(S,V\right),
  \end{equation}
  as $k$-normed spaces. Its domain is the $k$-normed space
  with unit ball $\Hom_{\cont}\left(S,V^{\circ}\right)$.
\end{lem}

\begin{proof}
  Boundedness and injectivity are clear.
  Given a continuous map $f\colon S\to V$, its image is compact because $S$ is compact.
  That is, we may write $f=f^{\prime}/p^{m}$ for a given continuous map $f^{\prime}\colon S\to V^{\circ}$,
  and the surjectivity of~(\ref{eq:HomcontSintoBanachunitball-reconstructionpaper}) follows.
  Finally, apply the open mapping theorem.
\end{proof}

$C$ is the completion of a fixed algebraic closure $\overline{k}$
of $k$, cf. \S\ref{subsec:conventions-reconstructionpaper}.

 \begin{prop}\label{prop:Galois-cohomology-Banach-C}
  We compute the continuous Galois cohomology of the $k$-Banach space $C$:
  \begin{equation*}
    \Ho_{\cont}^{i}\left(\Gal\left( \overline{k} / k \right) , C \right)
    \cong
    \begin{cases}
      \I\left(k\right), &\text{ if $i=0,1$ and} \\
      0, &\text{otherwise}.
    \end{cases}
  \end{equation*}
\end{prop}
 
\begin{proof}
  Consider the complex
  \begin{equation}\label{eq:Galois-cohomology-Banach-C-thecomplex-reconstructionpaper}
    0\longrightarrow
    k\longrightarrow
    C
    \stackrel{d^{0}}{\longrightarrow}
    \intHom_{\cont}\left( \Gal\left( \overline{k} / k \right)^{1} , C \right)
    \stackrel{d^{1}}{\longrightarrow}\dots.
  \end{equation}
  By~\cite[Corollary 1.2.28]{Sch99} and Proposition~\ref{lem:LH-vs-H-reconstructionpaper},
  we have to check that (i)
  (\ref{eq:Galois-cohomology-Banach-C-thecomplex-reconstructionpaper}) is strictly exact in all degrees $n\neq 1$, and
  (ii) its cohomology in degree $n=1$ is isomorphic to $\I\left(k\right)$.
  Regarding (i), the exactness follows from~\cite[Theorem 1]{tatepdivisiblegroups},
  see also~\cite[Theorem 4.0.3(2)]{BSSW2024_rationalizationoftheKnlocalsphere},
  and the strict exactness follows from the open mapping theorem.
  Next, we work in degree $n=1$. Restrict (\ref{eq:Galois-cohomology-Banach-C-thecomplex-reconstructionpaper}) to
  \begin{equation*}
    0\longrightarrow
    \cal{O}_{k}\longrightarrow
    \cal{O}_{C}
    \stackrel{e^{0}}{\longrightarrow}
    \intHom_{\cont}\left( \Gal\left( \overline{k} / k \right)^{1} , \cal{O}_{C} \right)
    \stackrel{e^{1}}{\longrightarrow}\dots.
  \end{equation*}
  Then~\cite[Theorem 4.0.3]{BSSW2024_rationalizationoftheKnlocalsphere}
  gives the complex
  \begin{equation*}
    \cal{O}_{C}
    \stackrel{e^{0}}{\longrightarrow}
    \ker e^{1}
    \longrightarrow
    \cal{O}_{k}
    \longrightarrow
    0,
  \end{equation*}
  which is exact up to bounded $p$-torsion. Invert $p$ to get the exact complex
  \begin{equation}\label{eq:Galois-cohomology-Banach-C-thecomplex2-reconstructionpaper}
    C
    \stackrel{d^{0}}{\longrightarrow}
    \ker d^{1}
    \longrightarrow
    k
    \longrightarrow
    0.
  \end{equation}  
  Here, we used Lemma~\ref{lem:HomcontSintoBanachunitball-reconstructionpaper}.
  From the construction, it follows that (\ref{eq:Galois-cohomology-Banach-C-thecomplex2-reconstructionpaper})
  is an exact complex of $k$-Banach spaces.
  The open mapping theorem implies that it is strictly exact, and we find
  \begin{equation*}
    \Ho_{\cont}^{1}\left(\Gal\left( \overline{k} / k \right) , C \right)
    \stackrel{\text{\ref{lem:LH-vs-H-reconstructionpaper}}}{\cong}\coker\left(\I\left(C\right)
    \stackrel{\I\left(d^{0}\right)}{\longrightarrow}
    \ker \I\left(d^{1}\right)\right)
    \cong\I\left(k\right).
  \end{equation*}
  Here, the second isomorphism follows from~\cite[Corollary 1.2.28]{Sch99}.
\end{proof}

 \begin{prop}\label{prop:Galois-cohomology-Banach-C-solid-reconstructionpaper}
  We compute the continuous Galois cohomology of the solid $k$-vector space $\underline{C}$:
  \begin{equation*}
    \Ho_{\cont}^{i}\left(\Gal\left( \overline{k} / k \right) , \underline{C} \right)
    \cong
    \begin{cases}
      \underline{k}, &\text{ if $i=0,1$ and} \\
      0, &\text{otherwise}.
    \end{cases}
  \end{equation*}
\end{prop}

\begin{proof}
  Apply Lemma~\ref{lem:contgpcoh-indban-vs-solid-reconstructionpaper}
  to Proposition~\ref{prop:Galois-cohomology-Banach-C}.
\end{proof}


\section{The Galois cohomology of $B_{\dR}^{\dag,+}$}

Recall the computation of the continuous Galois cohomology of $C$ as in
Proposition~\ref{prop:Galois-cohomology-Banach-C}.

\begin{thm}\label{thm:galois-cohomology-of-solidBdRdaggerplus-born}
  Fontaine's map $B_{\dR}^{\dag,+}\to C$ induces the isomorphism
  \begin{equation*}
    \Ho_{\cont}^{i}\left(\Gal\left( \overline{k} / k \right) , B_{\dR}^{\dag,+} \right)
    \cong
    \begin{cases}
      \I\left(k\right), &\text{ if $i=0,1$ and} \\
      0, &\text{otherwise}.
    \end{cases}
  \end{equation*}
\end{thm}

From now on, we work towards the proof of Theorem~\ref{thm:galois-cohomology-of-solidBdRdaggerplus-born}.


\subsection{Overview of the proof of Theorem~\ref{thm:galois-cohomology-of-solidBdRdaggerplus-born}}

\begin{notation}\label{notation:galois-cohomology-of-BdRdaggerplus-kzero}
  Fix an intermediate extension $\overline{k} / k^{\prime} / k$ such that $k^{\prime} / k$ is finite Galois
  and $k^{\prime}$ contains a primitive $p$th root of unity $\zeta_{p}$.
\end{notation}

The proof of Theorem~\ref{thm:galois-cohomology-of-solidBdRdaggerplus-born}
proceeds in two steps: We compute the continuous Galois cohomology of $B_{\dR}^{\dag,+}$ with respect to
$\overline{k}/k^{\prime}$, and then with respect to $k^{\prime}/k$. The first step is achieved
by Proposition~\ref{prop:galois-cohomology-of-BdRdaggerplus-primitiverootofunity}.
The second one is Theorem~\ref{thm:galois-cohomology-of-solidBdRdaggerplus}.

\begin{remark}\label{remark:thmgalois-cohomology-of-solidBdRdaggerplus-indBansolidmethods}
  The first step of the computation is carried out in the setting of ind-Banach spaces,
  as this allows to work with associated gradeds. The second step, 
  that is the proof of of Theorem~\ref{thm:galois-cohomology-of-solidBdRdaggerplus},
  works with the solid formalism, where the Hochschild-Serre spectral sequence is available.
  From this, we are then able to deduce
  Theorem~\ref{thm:galois-cohomology-of-solidBdRdaggerplus-born} in the ind-Banach setting.
\end{remark}


\subsection{On Proposition~\ref{prop:galois-cohomology-of-BdRdaggerplus-primitiverootofunity}
  and its proof}\label{subsec:on:prop:galois-cohomology-of-BdRdaggerplus-primitiverootofunity}

\begin{prop}\label{prop:galois-cohomology-of-BdRdaggerplus-primitiverootofunity}
  Fontaine's map $B_{\dR}^{\dag,+}\to C$ induces the isomorphism
  \begin{equation*}
    \Ho_{\cont}^{i}\left(\Gal\left( \overline{k} / k^{\prime} \right) , B_{\dR}^{\dag,+} \right)
    \cong
    \begin{cases}
      \I\left(k^{\prime}\right), &\text{ if $i=0,1$ and} \\
      0, &\text{otherwise}.
    \end{cases}
  \end{equation*}
  Here, $k^{\prime}$ denotes the fixed extension of $k$ as in
  Notation~\ref{notation:galois-cohomology-of-BdRdaggerplus-kzero}.
\end{prop}

From now on, we work towards the proof of Proposition~\ref{prop:galois-cohomology-of-BdRdaggerplus-primitiverootofunity}.
For the convenience of the reader, we choose to give an overview of our arguments in
\S\ref{subsec:on:prop:galois-cohomology-of-BdRdaggerplus-primitiverootofunity}.

Write $\mathcal{G}:=\Gal\left( \overline{k} / k^{\prime} \right)$.
The following complex computes the Galois cohomology of $C$:
\begin{equation*}
  C
  \stackrel{d^{0}}{\longrightarrow}
  \intHom_{\cont}\left(\cal{G},C\right)
  \stackrel{d^{1}}{\longrightarrow}
  \intHom_{\cont}\left(\cal{G}^{2},C\right)
  \stackrel{d^{2}}{\longrightarrow}\dots.
\end{equation*}
By Lemma~\ref{lem:sheavesonXproet-over-affinoidperfectoid}
and~\ref{lem:hcalOUtimesS-is-HomctsScalO}, this complex
is isomorphic to
\begin{equation*}
  \widehat{\cal{O}}\left(*\right)
  \stackrel{d^{0}}{\longrightarrow}
  \widehat{\cal{O}}\left(\cal{G} \times *\right)
  \stackrel{d^{1}}{\longrightarrow}
  \widehat{\cal{O}}\left(\cal{G}^{2} \times *\right)
  \stackrel{d^{2}}{\longrightarrow}\dots,
\end{equation*}
where $*:=\Spa\left(C,\cal{O}_{C}\right)$. In the following, we denote this complex by
$\widehat{\cal{O}}\left(\cal{G}\times*\right)$.

Similarly, Theorem~\ref{thm:subsections-periodsheaves-affperfd}
and Proposition~\ref{prop:BdRdaggerplusUtimesS-isomapHomcontSAdRgreaterthanqU-reconstructionpaper}
imply that the following complex
\begin{equation*}
  \BB_{\dR}^{\dag,+}\left(*\right)
  \stackrel{\delta^{0}}{\longrightarrow}
  \BB_{\dR}^{\dag,+}\left(\cal{G} \times *\right)
  \stackrel{\delta^{1}}{\longrightarrow}
  \BB_{\dR}^{\dag,+}\left(\cal{G}^{2} \times *\right)
  \stackrel{\delta^{2}}{\longrightarrow}\dots
\end{equation*}
computes the continuous Galois cohomology of $B_{\dR}^{\dag,+}$.
Denote it by $\BB_{\dR}^{\dag,+}\left(\cal{G}^{\bullet}\times*\right)$.

\begin{lem}\label{lem:galois-cohomology-of-BdRdaggerplus-primitiverootofunity-whatistheretoshow}
  Fontaine's maps induce a morphism of complexes of $k$-ind-Banach spaces
  \begin{equation}\label{eq:galois-cohomology-of-BdRdaggerplus-primitiverootofunity-whatistheretoshow}
    \BB_{\dR}^{\dag,+}\left(\cal{G}^{\bullet} \times *\right)
    \to\widehat{\cal{O}}\left(\cal{G}^{\bullet} \times *\right).
  \end{equation}
  If it is a quasi-isomorphism
  in the sense of Definition~\ref{defn:quasiabelian-quasiiso-recpaper},
  Proposition~\ref{prop:galois-cohomology-of-BdRdaggerplus-primitiverootofunity}
  follows.
\end{lem}

\begin{proof}
  This is clear from the previous discussion.
\end{proof}

We proceed by proving that~(\ref{eq:galois-cohomology-of-BdRdaggerplus-primitiverootofunity-whatistheretoshow})
is a quasi-isomorphism of cochain complexes of ind-$k_{0}$-Banach modules.
We give an overview of our arguments in the remainder
of this \S\ref{subsec:on:prop:galois-cohomology-of-BdRdaggerplus-primitiverootofunity}.

Since $\BB_{\dR}^{\dag,+}=\text{``}\varinjlim\text{''}_{q\in\NN}\BB_{\dR}^{>q,+}$,
one may aim to show that the morphisms
\begin{equation*}
  \BB_{\dR}^{>q,+}\left(\cal{G}^{\bullet} \times *\right)
  \to\widehat{\cal{O}}\left(\cal{G}^{\bullet} \times *\right),
\end{equation*}
induced by Fontaine's maps, are quasi-isomorphisms large $q\in\NN$.
Since filtered colimits are strictly exact, this would imply
that~(\ref{eq:galois-cohomology-of-BdRdaggerplus-primitiverootofunity-whatistheretoshow})
is a quasi-isomorphism. Unfortunately, this approach does not work.
We explain now why this is the case.

  $\BB_{\dR}^{>q,+}\left(\cal{G}^{\bullet} \times *\right)$ is the completed localisation
  of $\A_{\dR}^{>q,+}\left(\cal{G}^{\bullet} \times *\right)$. This complex is more
  accessible, as it carries a well-behaved filtration, namely the $\left(p,\xi/p^{q}\right)$-adic one.
  One could therefore aim to compute the cohomology by looking at the associated gradeds
  \begin{equation*}
    \gr \A_{\dR}^{>q,+}\left(\cal{G}^{\bullet} \times *\right)\cong
    \left(\widehat{\cal{O}}^{+}\left(\cal{G}^{\bullet} \times *\right)/p\right)\left[\sigma(p),\sigma\left(\frac{\xi}{p^{q}}\right)\right].
  \end{equation*}
  For the zeroth graded piece, this requires to study
  the cohomology of the complex
  $\widehat{\cal{O}}^{+}\left(\cal{G}^{\bullet} \times *\right)/p$,
  that is the Galois cohomology of $\cal{O}_{C}/p$.
  As we are not aware of such computations in the literature,
  we choose to take a different route:
  Instead of $\A_{\dR}^{>q,+}\left(\cal{G}^{\bullet} \times *\right)$,
  we consider
  $\widetilde{\A}_{\dR}^{>q,+}\left(\cal{G}^{\bullet} \times *\right)$.
  Both cochain complexes coincide as complexes of abstract $W(\kappa)$-modules, but the
  latter one carries the $p/\xi^{p}$-adic filtration. Then
  \begin{equation*}
    \gr\widetilde{\A}_{\dR}^{>q,+}\left(\cal{G}^{\bullet} \times *\right)
    \cong\widehat{\cal{O}}^{+}\left(\cal{G}^{\bullet} \times *\right)\left[\sigma\left(\frac{\xi}{p^{q}}\right)\right].
  \end{equation*}
  The cohomology of this complex can be computed in terms of
  the continuous Galois cohomology of Tate twists of the rings of integers $\cal{O}_{C}$ of $C$:
  \begin{equation}\label{eq:howtoproceedintheproofofGaloiscohoverkprime-reconstructionpaper}
    \Ho^{i}\left(\gr^{n}\widetilde{\A}_{\dR}^{>q,+}\left(\cal{G}^{\bullet} \times *\right)\right)
    \cong\Ho_{\cont}^{i}\left(\cal{G},
    \cal{O}_{C}(n)\right).
  \end{equation}  
  Here, we are crucially using that we are working over $k^{\prime}$,
  cf. the arguments at the end of the proof of
  Lemma~\ref{lem:galois-cohomology-of-BdRdagger-Dgreaterthanqbulletcomputedgrn}.
  The continuous cohomology groups~(\ref{eq:howtoproceedintheproofofGaloiscohoverkprime-reconstructionpaper})
  are well known, cf.~\cite[Theorem 4.0.4 and 4.0.5]{BSSW2024_rationalizationoftheKnlocalsphere}.
  To be precise, \emph{loc. cit.} computes the cohomology up $p^{N}$-torsion,
  where $N$ is explicit. One may therefore hope that
  one could deduce an explicit description of the cohomology
  of $\widetilde{\A}_{\dR}^{>q,+}\left(\cal{G}^{\bullet} \times *\right)$
  up to $p^{N}$-torsion, say with Lemma~\ref{lem:decalage-commutes-gr},  
  and thus a computation of the
  desired cohomology of $\BB_{\dR}^{>q,+}\left(\cal{G}^{\bullet} \times *\right)$.
  
  This hope turns out to be unjustified since $N$ depends on $n$.
  But luckily, $N$ depends linearly on $n$. This motivates the definition of a slight modification
  $D^{>q,\bullet}$ of $\A_{\dR}^{>q,+}\left(\cal{G}^{\bullet} \times *\right)$,
  cf. \S\ref{subsubsec:galois-cohomology-of-BdRdaggerplus-primitiverootofunity1}.
  Similarly, we consider the variant
  $\widetilde{D}^{>q,\bullet}$ of $\widetilde{\A}_{\dR}^{>q,+}\left(\cal{G}^{\bullet} \times *\right)$
  in \S\ref{subsubsec:galois-cohomology-of-BdRdaggerplus-primitiverootofunity2}.
  In \S\ref{subsubsec:galois-cohomology-of-BdRdaggerplus-primitiverootofunity3},
  we observe that this modification indeed kills the junk torsion in the cohomology of
  each degree of the associated graded~(\ref{eq:howtoproceedintheproofofGaloiscohoverkprime-reconstructionpaper}).
  We then finish the proof of
  Proposition~\ref{prop:galois-cohomology-of-BdRdaggerplus-primitiverootofunity}
  in \S\ref{subsubsec:galois-cohomology-of-BdRdaggerplus-primitiverootofunity6}
  after setting up further technical machinery in
  \S\ref{subsubsec:galois-cohomology-of-BdRdaggerplus-primitiverootofunity4}
  and \S\ref{subsubsec:galois-cohomology-of-BdRdaggerplus-primitiverootofunity5}.


\subsection{The cochain complex $D^{>q,\bullet}$}\label{subsubsec:galois-cohomology-of-BdRdaggerplus-primitiverootofunity1}

\begin{notation}\label{defn:galois-cohomology-of-BdRdaggerplus-primitiverootofunity-Dgreaterthanqbullet-somedifferentials}
  We introduce the following notation for the differentials in the cochain complex
  \begin{equation*}
    \A_{\dR}^{>q}\left(*\right)
    \stackrel{\delta^{>q,0}}{\longrightarrow}
    \A_{\dR}^{>q}\left(\cal{G} \times *\right)
    \stackrel{\delta^{>q,1}}{\longrightarrow}
    \A_{\dR}^{>q}\left(\cal{G}^{2} \times *\right)
    \stackrel{\delta^{>q,2}}{\longrightarrow}\dots.
  \end{equation*}
\end{notation}

\begin{defn}\label{defn:galois-cohomology-of-BdRdaggerplus-primitiverootofunity-Dgreaterthanqbullet}
For every $q\in\NN_{\geq1}$, denote the following sequence of maps by $D^{>q,\bullet}$:
\begin{equation*}
  \A_{\dR}^{>q}\left(*\right)
  \times_{\delta^{>q,0} , \A_{\dR}^{>q}\left(\cal{G} \times *\right) }
  \A_{\dR}^{>q-1}\left(\cal{G} \times *\right)
  \stackrel{\epsilon^{>q,0}}{\longrightarrow}
  \A_{\dR}^{>q-1}\left(\cal{G} \times *\right)
  \stackrel{\epsilon^{>q,1}}{\longrightarrow}
  \A_{\dR}^{>q-1}\left(\cal{G}^{2} \times *\right)
  \stackrel{\epsilon^{>q,2}}{\longrightarrow}\dots.
\end{equation*}
Here, $\epsilon^{>q,0}$ is $(a,b)\mapsto b$ and $\epsilon^{>q,i}:=\delta^{>q-1,i}$
for all $i>0$.
\end{defn}

\begin{lem}\label{lem:galois-cohomology-of-BdRdagger-Dgreaterthanqbullet-cochaincomplex}
  $D^{>q,\bullet}$ is a cochain complex if $q\geq3$.
\end{lem}

\begin{proof}
  Clearly, $\epsilon^{>q,i+1}\circ\epsilon^{>q,i}=\delta^{>q-1,i+1}\circ\delta^{>q-1,i}=0$ for all $i\geq 1$.
  To check $\epsilon^{>q,1}\circ\epsilon^{>q,0}=0$, consider
  an arbitrary $\left(a,b\right)\in\A_{\dR}^{>q}\left(*\right)
  \times_{\delta^{>q,0} , \A_{\dR}^{>q}\left(\cal{G} \times *\right) }
  \A_{\dR}^{>q-1}\left(\cal{G} \times *\right)$.
  Then
  \begin{align*}
    \iota^{>q-1}\left(\left(\epsilon^{>q,1}\circ\epsilon^{>q,0}\right)\left(a,b\right)\right)
    &=\iota^{>q-1}\left(\delta^{>q-1,1}(b)\right) \\
    &=\delta^{>q,1}\left(\iota^{>q-1}(b)\right) \\
    &=\delta^{>q,1}\left(\delta^{>q,0}(a)\right) \\
    &=0
  \end{align*}
  where $\iota^{q-1}$ denotes any of the canonical maps
  $\A_{\dR}^{>q-1}\left(\cal{G}^{i} \times *\right)\to\A_{\dR}^{>q}\left(\cal{G}^{i} \times *\right)$.
  Lemma~\ref{lem:AdRgreaterthanq-toAdRgreaterthanqplus1-mono},
  which applies thanks to Theorem~\ref{thm:BdR>qplus+-sections-over-affperfd-recpaper},
  implies $\left(\epsilon^{>q,1}\circ\epsilon^{>q,0}\right)\left(a,b\right)=0$.
  Lemma~\ref{lem:galois-cohomology-of-BdRdagger-Dgreaterthanqbullet-cochaincomplex} follows.
\end{proof}

Fix $q\in\NN_{\geq3}$ and consider the commutative diagram
\begin{equation*}
  \begin{tikzcd}
      \A_{\dR}^{>q}\left(*\right)
      \arrow{r}{\delta^{>q,0}} &
      \A_{\dR}^{>q}\left(\cal{G} \times*\right)
      \arrow{r}{\delta^{>q,1}} &
      \A_{\dR}^{>q}\left(\cal{G}^{2} \times*\right)
      \arrow{r}{\delta^{>q,2}} &
      \dots \\
      \A_{\dR}^{>q}\left(*\right)
      \times_{\delta^{>q,0} , \A_{\dR}^{>q,+}\left(\cal{G} \times *\right) }
      \A_{\dR}^{>q-1}\left(\cal{G} \times *\right)
      \arrow{r}{\epsilon^{>q,0}}\arrow{u}{\tau^{0}} &
      \A_{\dR}^{>q-1}\left(\cal{G} \times*\right)
      \arrow{r}{\epsilon^{>q,1}}\arrow{u} &
      \A_{\dR}^{>q-1}\left(\cal{G}^{2} \times*\right)
      \arrow{r}{\epsilon^{>q,2}}\arrow{u} &
      \dots  
  \end{tikzcd}
\end{equation*}
where $\tau^{0}$ is $(a,b)\mapsto a$ and all other vertical maps are canonical.
It defines a morphism
\begin{equation}\label{eq:Phigreatanqbullet}
  \Phi^{>q,\bullet}\colon D^{>q,\bullet} \to \A_{\dR}^{>q}\left(\cal{G}^{\bullet}\times*\right)
\end{equation}
of cochain complexes.

\begin{lem}\label{lem:galois-cohomology-of-BdRdagger-Dgreaterthanqbullet-isotoBdRdagplusGbulletstar}
  $\text{``}\varinjlim\text{"}_{q\in\NN_{\geq3}}\Phi^{>q,\bullet}\widehat{\otimes}_{W(\kappa)}\id_{k_{0}}$
  is a canonical isomorphism of cochain complexes
  \begin{equation*}
    \text{``}\varinjlim\text{"}_{q\in\NN_{\geq3}}D^{>q,\bullet}\widehat{\otimes}_{W(\kappa)}k_{0}
    \isomap\BB_{\dR}^{\dag,+}\left(\cal{G}^{\bullet}\times*\right).
  \end{equation*}
\end{lem}

\begin{proof}
    The following application of Lemma~\ref{lem:completed-localisation-preserves-fiberproducts}
    is allowed by Lemma~\ref{lem:AdRgreaterthanqptorsionfree}:
    \begin{align*}
      &\text{``}\varinjlim_{q\in\NN_{\geq1}}\text{"}
      \left(\A_{\dR}^{>q}\left(*\right)
      \times_{\delta^{>q,0} , \A_{\dR}^{>q}\left(\cal{G} \times *\right) }
      \A_{\dR}^{>q-1}\left(\cal{G} \times *\right)\right)\widehat{\otimes}_{W(\kappa)}k_{0} \\
      &\stackrel{\text{\ref{lem:completed-localisation-preserves-fiberproducts}}}{\cong}
      \text{``}\varinjlim_{q\in\NN_{\geq2}}\text{"}
      \left(\BB_{\dR}^{>q,+}\left(*\right)
      \times_{\delta^{>q,0} , \BB_{\dR}^{>q,+}\left(\cal{G} \times *\right) }
      \BB_{\dR}^{>q-1,+}\left(\cal{G} \times *\right)\right) \\
      &\cong
      \BB_{\dR}^{\dag,+}\left(*\right)
      \times_{\delta^{0} , \BB_{\dR}^{\dag,+}\left(\cal{G} \times *\right) }
      \BB_{\dR}^{\dag,+}\left(\cal{G} \times *\right).
    \end{align*}
    Therefore, we have the following depiction of
    $\text{``}\varinjlim\text{"}_{q\in\NN_{\geq1}}\Phi^{>q,\bullet}\widehat{\otimes}_{W(\kappa)}\id_{k_{0}}$, which
    implies Lemma~\ref{lem:galois-cohomology-of-BdRdagger-Dgreaterthanqbullet-isotoBdRdagplusGbulletstar}: 
    \begin{equation*}
    \begin{tikzcd}
      \BB_{\dR}^{\dag,+}\left(*\right)
      \arrow{r}{\delta^{0}} &
      \BB_{\dR}^{\dag,+}\left(\cal{G} \times *\right)
      \arrow{r}{\delta^{1}} &
      \BB_{\dR}^{\dag,+}\left(\cal{G}^{2} \times *\right)
      \arrow{r}{\delta^{2}} &
      \dots \\
      \BB_{\dR}^{\dag,+}\left(*\right)
      \times_{\delta^{0} , \BB_{\dR}^{\dag,+}\left(\cal{G} \times *\right) }
      \BB_{\dR}^{\dag,+}\left(\cal{G} \times *\right)
      \arrow{r}{\epsilon^{0}}\arrow{u}{\tau}\arrow[swap]{u}{\cong} &
      \BB_{\dR}^{\dag,+}\left(\cal{G}^{1} \times*\right)
      \arrow{r}{\delta^{1}}\arrow[equal]{u} &
      \BB_{\dR}^{\dag,+}\left(\cal{G}^{2} \times*\right)
      \arrow{r}{\delta^{2}}\arrow[equal]{u} &
      \dots.
    \end{tikzcd}
    \end{equation*}
    Here, $\tau$ and $\epsilon^{0}$ are the projections onto the first and second variables,
    respectively.
\end{proof}

Lemma~\ref{lem:galois-cohomology-of-BdRdagger-Dgreaterthanqbullet-isotoBdRdagplusGbulletstar}
says that one can compute the desired Galois cohomology
\begin{equation*}
  \Ho_{\cont}^{i}\left(\Gal\left( \overline{k} / k^{\prime} \right) , B_{\dR}^{\dag,+} \right)
\end{equation*}
via the cohomology of $D^{>q,\bullet}$.
More precisely, Fontaine's maps induce a morphism
\begin{equation*}
  \vartheta^{>q,\bullet}\colon D^{>q,\bullet} \to \widehat{\cal{O}}^{+}\left(\cal{G}^{\bullet}\times*\right)
\end{equation*}
of cochain complexes of $W(\kappa)$-Banach modules. Here, the right-hand side denotes
the complex
\begin{equation*}
    \widehat{\cal{O}}^{+}\left(*\right)
    \stackrel{d^{+,0}}{\longrightarrow}
    \widehat{\cal{O}}^{+}\left(\cal{G} \times *\right)
    \stackrel{d^{+,1}}{\longrightarrow}
    \widehat{\cal{O}}^{+}\left(\cal{G}^{2} \times *\right)
    \stackrel{d^{+,2}}{\longrightarrow}\dots.
\end{equation*}
We will see later on that $\vartheta^{>q,\bullet}$ is a quasi-isomorphism.


\subsection{The cochain complex $\widetilde{D}^{>q,\bullet}$}\label{subsubsec:galois-cohomology-of-BdRdaggerplus-primitiverootofunity2}

Recall the Definition~\ref{defn:galois-cohomology-of-BdRdaggerplus-primitiverootofunity-Dgreaterthanqbullet}
of the complex $D^{>q,\bullet}$. In the following, we introduce
the complex $\widetilde{D}^{>q,\bullet}$. It coincides with $D^{>q,\bullet}$
as a complex of abstract $W(\kappa)$-modules, but it carries different topologies.

Given a Banach ring $S$ and an $S$-Banach algebra $A$,
$|S|$ denotes the underlying ring and $|A|$
denotes the underlying $|S|$-algebra.
We recall that $\A_{\dR}^{>q}\left(\cal{G}^{i}\times*\right)$
carries the $\left(p,\xi/p^{q}\right)$-adic topology.

\begin{defn}\label{defn:Wkappatriv-tildeAdRgreaterthanqofU-reconstructionpaper}
  $\widetilde{\A}_{\dR}^{>q}\left(\cal{G}^{i}\times*\right)$ is the
  $|W(\kappa)|$-algebra $|\A_{\dR}^{>q}\left(\cal{G}^{i}\times*\right)|$
  with the $\left(\xi/p^{q}\right)$-adic filtration.
\end{defn}

Thanks to Theorem~\ref{thm:BdR>qplus+-sections-over-affperfd-recpaper},
Definition~\ref{defn:Wkappatriv-tildeAdRgreaterthanqofU-reconstructionpaper}
is compatible with Definition~\ref{defn:Wkappatriv-tildeAdRgreaterthanq},
at least for $q\geq 2$.

Recall Notation~\ref{defn:galois-cohomology-of-BdRdaggerplus-primitiverootofunity-Dgreaterthanqbullet-somedifferentials}.

\begin{defn}\label{defn:galois-cohomology-of-BdRdaggerplus-primitiverootofunity-Dgreaterthanqbullettilde}
We introduce the following sequences $\widetilde{D}^{>q,\bullet}$ for every $q\in\NN_{\geq1}$:
\begin{equation*}
  \widetilde{\A}_{\dR}^{>q}\left(*\right)
  \times_{\delta^{>q,0} , \widetilde{\A}_{\dR}^{>q}\left(\cal{G} \times *\right) }
  \widetilde{\A}_{\dR}^{>q-1}\left(\cal{G} \times *\right)
  \stackrel{\epsilon^{>q,0}}{\longrightarrow}
  \widetilde{\A}_{\dR}^{>q-1}\left(\cal{G} \times *\right)
  \stackrel{\epsilon^{>q,1}}{\longrightarrow}
  \widetilde{\A}_{\dR}^{>q-1}\left(\cal{G}^{2} \times *\right)
  \stackrel{\epsilon^{>q,2}}{\longrightarrow}\dots.
\end{equation*}
Here, $\epsilon^{>q,0}$ is $(a,b)\mapsto b$ and $\epsilon^{>q,i}:=\delta^{>q-1,i}$
for all $i>0$.
\end{defn}

\begin{lem}\label{lem:galois-cohomology-of-BdRdagger-Dgreaterthanqbullettilde-cochaincomplex}
  $\widetilde{D}^{>q,\bullet}$ is a cochain complex if $q\geq3$.
\end{lem}

\begin{proof}
  See Lemma~\ref{lem:galois-cohomology-of-BdRdagger-Dgreaterthanqbullet-cochaincomplex}.
\end{proof}

In the following, we mimic the previous discussion and equip
$\widehat{\cal{O}}^{+}\left(\cal{G}^{\bullet}\times*\right)$ with a similar filtration.

\begin{defn}\label{defn:puttrivialnormonring-reconstructionpaper}
  Given a commutative ring $S$, $S^{\triv}$ denotes $S$ equipped
  with the trivial filtration. That is, $\Fil^{n}S^{\triv}=S$ for $n\leq0$
  and $\Fil^{n}S^{\triv}=0$ otherwise.
\end{defn}

The operation $S\mapsto S^{\triv}$ is functorial. Thus we get the cochain complex
\begin{equation*}
    \widehat{\cal{O}}^{+}\left(*\right)^{\triv}
    \stackrel{d^{+,0}}{\longrightarrow}
    \widehat{\cal{O}}^{+}\left(\cal{G} \times *\right)^{\triv}
    \stackrel{d^{+,1}}{\longrightarrow}
    \widehat{\cal{O}}^{+}\left(\cal{G}^{2} \times *\right)^{\triv}
    \stackrel{d^{+,2}}{\longrightarrow}\dots,
\end{equation*}
of filtered $W(\kappa)^{\triv}$-modules which
which we denote by $\widehat{\cal{O}}^{+}\left(\cal{G}^{\bullet}\times*\right)^{\triv}$.

\begin{lem}\label{lem:needthistodefinewidetildevarthetagreaterthanq-reconstructionpaper}
  Fontaine's
  $\theta_{\dR}^{>q}\colon\widetilde{\A}_{\dR}^{>q}\left(\cal{G}^{i}\times*\right)
    \to\widehat{\cal{O}}^{+}\left(\cal{G}^{i}\times*\right)^{\triv}$
  for $q\in\NN_{\geq2}$
  preserve the filtrations.
\end{lem}

\begin{proof}
  This follows from $\theta_{\dR}^{>q}\left(\xi/p^{q}\right)=0$ and $\theta_{\dR}^{>q}\left(p\right)=p$.
\end{proof}

Lemma~\ref{lem:needthistodefinewidetildevarthetagreaterthanq-reconstructionpaper} implies
that Fontaine's maps induce morphisms
\begin{equation*}
  \widetilde{\vartheta}^{>q,\bullet}\colon
  \widetilde{D}^{>q,\bullet} \to \widehat{\cal{O}}^{+}\left(\cal{G}^{\bullet}\times*\right)^{\triv}
\end{equation*}
of cochain complexes of filtered $W(\kappa)^{\triv}$-modules.
We remark that
$\vartheta^{>q,\bullet}$, cf. \S\ref{subsubsec:galois-cohomology-of-BdRdaggerplus-primitiverootofunity1},
and $\widetilde{\vartheta}^{>q,\bullet}$ coincide
as morphisms of cochain complexes of abstract $W(\kappa)$-modules.


\subsection{The cohomology of $\gr\widetilde{D}^{>q,\bullet}$}
\label{subsubsec:galois-cohomology-of-BdRdaggerplus-primitiverootofunity3}

Fix $q\in\NN_{\geq3}$ and recall the Definition~\ref{defn:galois-cohomology-of-BdRdaggerplus-primitiverootofunity-Dgreaterthanqbullettilde}
of $\widetilde{D}^{>q,\bullet}$. We consider
\begin{equation*}
  \gr\widetilde{\vartheta}^{>q,\bullet}\colon
  \gr \widetilde{D}^{>q,\bullet}
  \to
  \gr\widehat{\cal{O}}^{+}\left(\cal{G}^{\bullet}\times*\right)^{\triv}
\end{equation*}
where $\widetilde{\vartheta}^{>q,\bullet}$ is the morphism
of cochain complexes of filtered $W(\kappa)^{\triv}$-modules
introduced in \S\ref{subsubsec:galois-cohomology-of-BdRdaggerplus-primitiverootofunity2}.
Now apply the $\eta$-operator
as in Definition~\ref{defn:algebraic-decalage} to get the
morphism~(\ref{eq:galois-cohomology-of-BdRdagger-whatistheretoshowquestionsmark-themorphism})
in Proposition~\ref{prop:galois-cohomology-of-BdRdagger-whatistheretoshowquestionsmark}.
This Proposition~\ref{prop:galois-cohomology-of-BdRdagger-whatistheretoshowquestionsmark}
is the key technical input in our proof of
Proposition~\ref{prop:galois-cohomology-of-BdRdaggerplus-primitiverootofunity}.
 
\begin{prop}\label{prop:galois-cohomology-of-BdRdagger-whatistheretoshowquestionsmark}
  There exists a constant $N\in\NN$ such that the morphism of abstract $W(\kappa)$-modules
  \begin{equation}\label{eq:galois-cohomology-of-BdRdagger-whatistheretoshowquestionsmark-themorphism}
    \eta_{p^{N}}\gr\widetilde{\vartheta}^{>q,\bullet}\colon
    \eta_{p^{N}}\gr \widetilde{D}^{>q,\bullet}
    \to
    \eta_{p^{N}}\gr\widehat{\cal{O}}^{+}\left(\cal{G}^{\bullet}\times*\right)^{\triv}
  \end{equation}
  induced by Fontaine's maps is a quasi-isomorphism.
\end{prop}

The remainder of this \S is devoted towards the proof of
Proposition~\ref{prop:galois-cohomology-of-BdRdagger-whatistheretoshowquestionsmark},
which itself is presented on page~\pageref{proof:prop:galois-cohomology-of-BdRdagger-whatistheretoshowquestionsmark}.

Given a $\cal{G}$-$W(\kappa)$-Banach module $M$,
$\Ho_{\cont}^{i}\left( \cal{G} , M \right)$ is by Definition~\ref{defn:indBan-ctsRGamma-recpaper}
an object of the left heart
$\Ban_{\I\left(W(\kappa)\right)}$ of the category of $W(\kappa)$-Banach modules.
To relate our notation to Tate's classical continuous cohomology as
in~\cite{Tate1976K2andGaloisCohomology}
or~\cite[\href{https://stacks.math.columbia.edu/tag/0DVG}{Tag 0DVG}]{stacks-project},
we introduce the following.

\begin{notation}\label{notation:Tatesclassicalgroupcohomology}
  Given a $\cal{G}$-$W(\kappa)$-Banach module $M$, we denote by
  $\Ho_{\clcont}^{i}\left( \cal{G} , M \right)$ Tate's classical continuous groups cohomology
  for all $i\in\NN$. We view these merely as abstract groups.
\end{notation}

\begin{lem}\label{lem:galois-cohomology-of-BdRdagger-Dgreaterthanqbulletcomputedgrn}
  For any $n\geq0$, $\cal{O}_{C}(n)$ denotes the $n$th Tate twist of $\cal{O}_{C}$. Compute
  \begin{equation*}
    \Ho^{i}\left(\gr^{n}\widetilde{D}^{>q,\bullet}\right) =
    \begin{cases}
      \Ho^{0}_{\clcont}\left(\cal{G},\cal{O}_{C}(n)\right) & \text{ if $i=0$}, \\
      \Ho^{1}_{\clcont}\left(\cal{G},\cal{O}_{C}(n)\right) / \text{($p^{n}$-torsion)} & \text{ if $i=1$, and} \\
      p^{n}\Ho^{i}_{\clcont}\left(\cal{G},\cal{O}_{C}(n)\right) & \text{ if $i\geq 2$.}
    \end{cases}
  \end{equation*}
\end{lem}

In the following, we make implicit use
of Lemma~\ref{lem:sheavesonXproet-over-affinoidperfectoid}
and Theorem~\ref{thm:BdR>qplus+-sections-over-affperfd-recpaper}.

\begin{proof}[Proof of Lemma~\ref{lem:galois-cohomology-of-BdRdagger-Dgreaterthanqbulletcomputedgrn}]
  In degree zero, we find
  \begin{align*}
    \gr^{n}\widetilde{D}^{>q,0}
    &\stackrel{\text{\ref{lem:gr:commutes-pullback}}}{\cong}
    \gr^{n}\widetilde{\A}_{\dR}^{>q}\left(*\right)
      \times_{\gr^{n}\delta^{0} , \gr^{n}\widetilde{\A}_{\dR}^{>q,+}\left(\cal{G} \times *\right) }
      \gr^{n}\widetilde{\A}_{\dR}^{>q-1}\left(\cal{G} \times *\right) \\
    &\stackrel{\text{\ref{lem:grAdRgreaterthanq-xioverpadicfiltration-reconstructionpaper}}}{\cong}
    \widehat{\cal{O}}^{+}\left(*\right)x^{n}
      \times_{\gr^{n}\delta^{0} , \widehat{\cal{O}}^{+}\left(\cal{G} \times *\right)x^{n} }
      \widehat{\cal{O}}^{+}\left(\cal{G} \times *\right)(px)^{n}
  \end{align*}
  where $x$ denotes the principal symbol of $\xi/p^{q}$;
  consequently, $px$ is the principal symbol of
  $p\cdot\xi/p^{q}=\xi/p^{q-1}$. Another application of Lemma~\ref{lem:grAdRgreaterthanq-xioverpadicfiltration-reconstructionpaper}
  now gives the following depiction of $\gr^{n} D^{>q,\bullet}$:
  \begin{equation}\label{eq:galois-cohomology-of-BdRdagger-Dgreaterthanqbulletcomputedgrn-needthislater}
  \begin{split}
    \widehat{\cal{O}}^{+}\left(*\right)x^{n}
    \times_{\gr^{n}\delta^{0} , \widehat{\cal{O}}^{+}\left(\cal{G} \times *\right)x^{n} }
    \widehat{\cal{O}}^{+}\left(\cal{G} \times *\right)(px)^{n}
    &\stackrel{\gr^{n}\epsilon^{>q,0}}{\longrightarrow}
    \widehat{\cal{O}}^{+}\left(\cal{G} \times *\right)(px)^{n} \\
    &\stackrel{\gr^{n}\epsilon^{>q,1}}{\longrightarrow}
    \widehat{\cal{O}}^{+}\left(\cal{G}^{2} \times *\right)(px)^{n}
    \stackrel{\gr^{n}\epsilon^{>q,2}}{\longrightarrow}\dots.
    \end{split}
  \end{equation}
  By Lemma~\ref{lem:killtorsionincontrolledwayincplx},
  \begin{equation}\label{eq:computeHoigrnDgreatahnqbullet-firstversion-reconstructionpaper}
    \Ho^{i}\left(\gr^{n}D^{>q,\bullet}\right) \cong
    \begin{cases}
      \Ho^{0}\left(\widehat{\cal{O}}^{+}\left(\cal{G}^{\bullet}\times*\right)x^{n}\right) & \text{ if $i=0$}, \\
      \Ho^{1}\left(\widehat{\cal{O}}^{+}\left(\cal{G}^{\bullet}\times*\right)x^{n}\right) / \text{($p^{n}$-torsion)} & \text{ if $i=1$, and} \\
      p^{n}\Ho^{i}\left(\widehat{\cal{O}}^{+}\left(\cal{G}^{\bullet}\times*\right)x^{n}\right) & \text{ if $i\geq 2$.}
    \end{cases}
  \end{equation}
  Next, we apply~\cite[Lemma 4.10 and Corollary 6.6]{Sch13pAdicHodge} to find
  \begin{equation*}
    \widehat{\cal{O}}^{+}\left(\cal{G}^{\bullet} \times *\right)x^{n}
    \cong\Hom_{\cont}\left(\cal{G}^{\bullet},\widehat{\cal{O}}^{+}\left(*\right)x^{n}\right)
    =\Hom_{\cont}\left(\cal{G}^{\bullet},\cal{O}_{C}x^{n}\right).
  \end{equation*}
  Thus, we may rewrite~(\ref{eq:computeHoigrnDgreatahnqbullet-firstversion-reconstructionpaper})
  as follows:
  \begin{equation*}
    \Ho^{i}\left(\gr^{n}D^{>q,\bullet}\right) \cong
    \begin{cases}
      \Ho_{\clcont}^{0}\left(\cal{G} , \cal{O}_{C}x^{n}\right) & \text{ if $i=0$}, \\
      \Ho_{\clcont}^{1}\left(\cal{G} , \cal{O}_{C}x^{n}\right) / \text{($p^{n}$-torsion)} & \text{ if $i=1$, and} \\
      p^{n}\Ho_{\clcont}^{i}\left(\cal{G} , \cal{O}_{C}x^{n}\right) & \text{ if $i\geq 2$.}
    \end{cases}
  \end{equation*}  
  It remains to compute that
  \begin{equation}\label{calOCxnconggrnAdRgreaterthanqstar}
    \cal{O}_{C}x^{n} \isomap \cal{O}_{C}(n),
    f x^{n} \mapsto \theta_{\dR}^{>q}(f)
  \end{equation}
  is an isomorphism of $\cal{G}$-representations.
  As it is clearly an isomorphism of Banach modules, it remains to show that it is
  equivariant. Since $\theta_{\dR}^{>q}$ is by definition
  equivariant, it remains to compute the $\cal{G}$-action on $x^{n}$.
  That is, we have to prove that
  $\alpha\cdot x^{n} = \chi(\alpha)^{n} x^{n}$
  for all $\alpha\in\cal{G}$,
  where $\chi\colon\cal{G}\to\ZZ_{p}^{\times}$ denotes the cyclomotic character.
  To do this, fix
  $\alpha\in\cal{G}$ and note that $\alpha$ fixes $\zeta_{p}\in k^{\prime}$. Then
  \begin{equation*}
    \left(\zeta_{p}-1\right)^{n}\alpha x^{n}
    =\alpha\left(\zeta_{p}-1\right)^{n}x^{n}
    \stackrel{\text{\ref{lem:principlesymbol-of-t}}}{=}
    \alpha\sigma\left(\frac{t}{p^{q}}\right)^{n}.
  \end{equation*}
  It is well known that the Galois action on $t$ is via the cyclomotic character
  $\chi$. This gives
  \begin{equation*}
    \left(\zeta_{p}-1\right)^{n}\alpha x^{n}
    =\chi\left(\alpha\right)^{n}\sigma\left(\frac{t}{p^{q}}\right)^{n}
    \stackrel{\text{\ref{lem:principlesymbol-of-t}}}{=}
    \left(\zeta_{p}-1\right)^{n}\chi\left(\alpha\right)^{n}x^{n}.
  \end{equation*}
  As $\left(\zeta_{p}-1\right)^{n}$ is not a zero-divisor, we find $\alpha x^{n}=\chi\left(\alpha\right)x^{n}$,
  as desired.
\end{proof}

\begin{lem}\label{lem:galois-cohomology-of-BdRdaggerplus-primitiverootofunity-morelemma1-reconstructionpaper}
  $\gr^{0}\widehat{\cal{O}}^{+}\left(\cal{G}^{\bullet}\times*\right)^{\triv}
  \cong
  \widehat{\cal{O}}^{+}\left(\cal{G}^{\bullet}\times*\right)$,
  and
  $\gr^{n}\widehat{\cal{O}}^{+}\left(\cal{G}^{\bullet}\times*\right)^{\triv}=0$ for $n>0$.
\end{lem}

\begin{proof}
  This is because the filtration on $\widehat{\cal{O}}^{+}\left(\cal{G}^{\bullet}\times*\right)$
  is trivial.
\end{proof}

Finally, here is the

\begin{proof}[Proof of Proposition~\ref{prop:galois-cohomology-of-BdRdagger-whatistheretoshowquestionsmark}]\label{proof:prop:galois-cohomology-of-BdRdagger-whatistheretoshowquestionsmark}
  We choose $N:=\max\left\{M,6\right\}$ where $M$ is as
  in~\cite[Theorem 4.4.3]{BSSW2024_rationalizationoftheKnlocalsphere}.
  \emph{Loc. cit.} as well as 
  Lemma~\ref{lem:galois-cohomology-of-BdRdagger-Dgreaterthanqbulletcomputedgrn} and
  \ref{lem:galois-cohomology-of-BdRdaggerplus-primitiverootofunity-morelemma1-reconstructionpaper}
  now imply that in each degree $n$, $\gr^{n} \widetilde{D}^{>q,\bullet}\to
    \gr^{n}\widehat{\cal{O}}^{+}\left(\cal{G}^{\bullet}\times*\right)^{\triv}$
  is a quasi-isomorphism up to $p^{N}$-torsion.
  Thus Proposition~\ref{prop:galois-cohomology-of-BdRdagger-whatistheretoshowquestionsmark} follows
  from Lemma~\ref{lem:eta-operator-kills-torsion}.
\end{proof}


\subsection{Applying the $\eta$-operator}\label{subsubsec:galois-cohomology-of-BdRdaggerplus-primitiverootofunity4}


Fix arbitrary $q\in\NN_{\geq3}$ and $N\in\NN$.
In the proof of Proposition~\ref{prop:galois-cohomology-of-BdRdaggerplus-primitiverootofunity},
we apply the $\eta$-operator as in Definition~\ref{defn:algebraic-decalage}
to specific cochain complexes of Banach modules. In this technical
\S\ref{subsubsec:galois-cohomology-of-BdRdaggerplus-primitiverootofunity4},
we check that these applications are well-behaved. The main results are
Corollary~\ref{cor:galois-cohomology-of-BdRdaggerplus-primitiverootofunity-aaaaaletaoperatorconditions-corollary}
and Lemma~\ref{lem:cohomologyofBdRdaggerplus-etagr-is-greta-forvarthetagreaterthanq-reconstructionpaper}.

\begin{lem}\label{lem:Dgreaterqisatisfiesconditionsformetaoperator-periodringcohomology-reconstructionpaper}
  For all $i\in\NN$, $p^{N}D^{>q,i}\subseteq D^{>q,i}$
  is a closed subset.
\end{lem}

\begin{proof}
  In degree $i=0$, we have the equality 
  \begin{equation}\label{eq:Dgreaterqisatisfiesconditionsformetaoperator-periodringcohomology-someequality-reconstructionpaper}
    p^{N}D^{>q,0}
    =
    p^{N}\A_{\dR}^{>q}\left(*\right)
    \times_{\delta^{>q,0} , p^{N}\A_{\dR}^{>q}\left(\cal{G} \times *\right) }
    p^{N}\A_{\dR}^{>q-1}\left(\cal{G} \times *\right).
  \end{equation}
  The inclusion $\subseteq$ is clear. To prove $\supseteq$,
  fix $a\in\A_{\dR}^{>q}\left(*\right)$ and
  $b\in\A_{\dR}^{>q-1}\left(\cal{G} \times *\right)$ such that
  \begin{equation*}
    \left(p^{N}a,p^{N}b\right)\in
    p^{N}\A_{\dR}^{>q}\left(*\right)
    \times_{\delta^{>q,0} , p^{N}\A_{\dR}^{>q}\left(\cal{G} \times *\right) }
    p^{N}\A_{\dR}^{>q-1}\left(\cal{G} \times *\right).
  \end{equation*}
  That is $\delta^{>q,0}\left(p^{N}a\right)=\iota^{>q}\left(p^{N}b\right)$,
  where $\iota^{>q}\colon\A_{\dR}^{>q-1}\left(\cal{G} \times *\right)\to\A_{\dR}^{>q}\left(\cal{G} \times *\right)$
  denotes the canonical map. But then Lemma~\ref{lem:AdRgreaterthanqptorsionfree}
  applies,
  giving $\delta^{>q,0}\left(a\right)=\iota^{>q}\left(b\right)$.
  That is $(a,b)\in D^{>q,0}$, thus $\left(p^{N}a,p^{N}b\right)\in p^{N}D^{>q,0}$. This proves $\supseteq$
  and~(\ref{eq:Dgreaterqisatisfiesconditionsformetaoperator-periodringcohomology-someequality-reconstructionpaper})
  follows.
  
  Now apply Lemma~\ref{lem:limpreservestrictmonomorphisms}
  and~\ref{lem:onAdR-multbyp-strictinjection-reconstructionpaper} to find that
  \begin{multline*}
    p^{N}D^{>q,0}
    \stackrel{\text{(\ref{eq:Dgreaterqisatisfiesconditionsformetaoperator-periodringcohomology-someequality-reconstructionpaper})}}{=}
    p^{N}\A_{\dR}^{>q}\left(*\right)
    \times_{\delta^{>q,0} , p^{N}\A_{\dR}^{>q}\left(\cal{G} \times *\right) }
    p^{N}\A_{\dR}^{>q-1}\left(\cal{G} \times *\right) \\
    \to
    \A_{\dR}^{>q}\left(*\right)
    \times_{\delta^{>q,0} , \A_{\dR}^{>q}\left(\cal{G} \times *\right) }
    \A_{\dR}^{>q-1}\left(\cal{G} \times *\right)
    =D^{>q,0}
  \end{multline*}
  is a strict monomorphism. In particular,
  $p^{N}D^{>q,0}\subseteq D^{>q,0}$ is closed.
  
  Next, let $i>0$. Then we apply Lemma~\ref{lem:onAdR-multbyp-strictinjection-reconstructionpaper} again
  to find that
  \begin{equation*}
    p^{N}D^{>q,i}=p^{N}\A_{\dR}^{>q}\left(\cal{G}^{i}\times*\right)
    \to\A_{\dR}^{>q}\left(\cal{G}^{i}\times*\right)=D^{>q,i}
  \end{equation*}
  is a strict monomorphism.
  Lemma~\ref{lem:Dgreaterqisatisfiesconditionsformetaoperator-periodringcohomology-reconstructionpaper}
  follows.
\end{proof}

\begin{lem}\label{lem:Dgreaterqtildeisatisfiesconditionsformetaoperator-periodringcohomology-reconstructionpaper}
  For all $i\in\NN$, $p^{N}\widetilde{D}^{>q,i}\subseteq\widetilde{D}^{>q,i}$
  is a closed subset.
\end{lem}

\begin{proof}
  We refer the reader to the proof of
  Lemma~\ref{lem:Dgreaterqisatisfiesconditionsformetaoperator-periodringcohomology-reconstructionpaper}
  for~(\ref{eq:Dgreaterqisatisfiesconditionsformetaoperator-periodringcohomology-someequality-reconstructionpaper}).
  \begin{multline*}
    p^{N}\widetilde{D}^{>q,0}
    \stackrel{\text{(\ref{eq:Dgreaterqisatisfiesconditionsformetaoperator-periodringcohomology-someequality-reconstructionpaper})}}{=}
    p^{N}\widetilde{\A}_{\dR}^{>q}\left(*\right)
    \times_{\delta^{>q,0} , p^{N}\widetilde{\A}_{\dR}^{>q}\left(\cal{G} \times *\right) }
    p^{N}\widetilde{\A}_{\dR}^{>q-1}\left(\cal{G} \times *\right) \\
    \to
    \widetilde{\A}_{\dR}^{>q}\left(*\right)
    \times_{\delta^{>q,0} , \A_{\dR}^{>q}\left(\cal{G} \times *\right) }
    \widetilde{\A}_{\dR}^{>q-1}\left(\cal{G} \times *\right)
    =\widetilde{D}^{>q,0}
  \end{multline*}
  is a strict monomorphism by Lemma~\ref{lem:limpreservestrictmonomorphisms}
  and~\ref{lem:galois-cohomology-of-Bla-pidealclosed-inAlaq}. In particular,
  $p^{N}\widetilde{D}^{>q,0}\subseteq \widetilde{D}^{>q,0}$ is closed.
  
  Next, let $i>0$. Then we apply Lemma~\ref{lem:onAdR-multbyp-strictinjection-reconstructionpaper} again
  to find that
  \begin{equation*}
    p^{N}\widetilde{D}^{>q,i}=p^{N}\widetilde{\A}_{\dR}^{>q}\left(\cal{G}^{i}\times*\right)
    \to\widetilde{\A}_{\dR}^{>q}\left(\cal{G}^{i}\times*\right)=\widetilde{D}^{>q,i}
  \end{equation*}
  is a strict monomorphism.
  Lemma~\ref{lem:Dgreaterqtildeisatisfiesconditionsformetaoperator-periodringcohomology-reconstructionpaper}
  follows.
\end{proof}

\begin{lem}\label{lem:galois-cohomology-of-BdRdaggerplus-primitiverootofunity-aaaaaletaoperatorconditions}
  Condition~\ref{cond:Banachmoduledecalage-reconstructionpaper} is satisfied for
  \begin{itemize}
    \item
      $R=W(\kappa)$, $r=p^{N}$, and $M^{\bullet}=D^{>q,\bullet}$, as well as
    \item
      $R=W(\kappa)$, $r=p^{N}$, and $M^{\bullet}=\widehat{\cal{O}}^{+}\left(\cal{G}^{\bullet}\times*\right)$,
  \end{itemize}
\end{lem}

\begin{proof}
  Firstly, we check the conditions for
  $R=W(\kappa)$, $r=p^{N}$, and $M^{\bullet}=D^{>q,\bullet}$:
  \begin{itemize}
    \item[(i)] $D^{>q,\bullet}$ is indeed concentrated in non-negative degrees.
    \item[(ii)] This is Lemma~\ref{lem:Dgreaterqisatisfiesconditionsformetaoperator-periodringcohomology-reconstructionpaper}.
    \item[(iii)] For every $i\in\NN$, $D^{>q,i}$ does not have $p^{N}$-torsion
      by Lemma~\ref{lem:AdRgreaterthanqptorsionfree}.
  \end{itemize}
  Secondly, we check the conditions for
  $R=W(\kappa)$, $r=p^{N}$, and $M^{\bullet}=\widehat{\cal{O}}^{+}\left(\cal{G}^{\bullet}\times*\right)$:
  \begin{itemize}
    \item[(i)] $\widehat{\cal{O}}^{+}\left(\cal{G}^{\bullet}\times*\right)$
      is indeed concentrated in non-negative degrees.
    \item[(ii)] This follows since every $\widehat{\cal{O}}^{+}\left(\cal{G}^{i}\times*\right)$ carries the $p$-adic topology.
    \item[(iii)] $\widehat{\cal{O}}^{+}\left(\cal{G}^{i}\times*\right)$
      does not have $p^{N}$-torsion for every $i\in\NN$.
  \end{itemize}
\end{proof}

\begin{cor}\label{cor:galois-cohomology-of-BdRdaggerplus-primitiverootofunity-aaaaaletaoperatorconditions-corollary}
  The subcomplexes
  $\eta_{p^{N}} D^{>q,\bullet}\subseteq D^{>q,\bullet}$ and
  $\eta_{p^{N}}\widehat{\cal{O}}^{+}\left(\cal{G}^{\bullet}\times*\right)\subseteq\widehat{\cal{O}}^{+}\left(\cal{G}^{\bullet}\times*\right)$,
  equipped with the induced norms,
  are complexes $W(\kappa)$-Banach algebras.
\end{cor}

\begin{proof}
  Thanks to Lemma~\ref{lem:galois-cohomology-of-BdRdaggerplus-primitiverootofunity-aaaaaletaoperatorconditions},
  this follows from Lemma~\ref{lem:decalage-defined-reconstructionpaper}.
\end{proof}

\begin{lem}\label{lem:galois-cohomology-of-BdRdaggerplus-primitiverootofunity-aaaaaletaoperatorconditionsfiltered}
  Condition~\ref{cond:filteredmodulesdecalage-reconstructionpaper} is satisfied for
  \begin{itemize}
    \item 
      $R=W(\kappa)^{\triv}$, $r=p^{N}$, and $M^{\bullet}=\widetilde{D}^{>q,\bullet}$, as well as
    \item
      $R=W(\kappa)^{\triv}$, $r=p^{N}$, and $M^{\bullet}=\widehat{\cal{O}}^{+}\left(\cal{G}^{\bullet}\times*\right)^{\triv}$.
  \end{itemize}
\end{lem}

\begin{proof}
  Firstly, we check the conditions for $R=W(\kappa)^{\triv}$, $r=p^{N}$, and
  $M^{\bullet}=\widetilde{D}^{>q,\bullet}$:
  \begin{itemize}
    \item[(i)] The cochain complex is indeed concentrated in nonnegative degrees.
    \item[(ii)] This is Lemma~\ref{lem:Dgreaterqtildeisatisfiesconditionsformetaoperator-periodringcohomology-reconstructionpaper}.
    \item[(iii)] See Lemma~\ref{lem:AdRgreaterthanqptorsionfree}.
    \item[(iv)] The description~(\ref{eq:galois-cohomology-of-BdRdagger-Dgreaterthanqbulletcomputedgrn-needthislater})
      of $\gr\widetilde{D}^{>q,\bullet}$ implies that it is $p^{i}$-torsion free in every degree $i\in\NN$.
  \end{itemize}
  Secondly, we check the conditions for $R=W(\kappa)^{\triv}$, $r=p^{N}$, and
  $M^{\bullet}=\widehat{\cal{O}}^{+}\left(\cal{G}^{\bullet}\times*\right)^{\triv}$:
  \begin{itemize}
    \item[(i)] The cochain complex is indeed concentrated in nonnegative degrees.
    \item[(ii)] This follows since every subset is closed.
    \item[(iii)] The $\widehat{\cal{O}}^{+}\left(\cal{G}^{i}\times*\right)^{\triv}$ are clearly $p$-torsion free.
    \item[(iv)] This follows from (iii), because
      $\gr \widehat{\cal{O}}^{+}\left(\cal{G}^{i}\times*\right)^{\triv}\cong\widehat{\cal{O}}^{+}\left(\cal{G}^{i}\times*\right)$
      for all $i\in\NN$.
  \end{itemize}
\end{proof}

\begin{cor}\label{cor:galois-cohomology-of-BdRdaggerplus-primitiverootofunity-aaaaaletaoperatorconditionsfiltered-corollary}
  Consider the induced filtrations on the subcomplexes
  $\eta_{p^{N}} \widetilde{D}^{>q,\bullet}\subseteq \widetilde{D}^{>q,\bullet}$ and
  $\eta_{p^{N}}\widehat{\cal{O}}^{+}\left(\cal{G}^{\bullet}\times*\right)^{\triv}
  \subseteq\widehat{\cal{O}}^{+}\left(\cal{G}^{\bullet}\times*\right)^{\triv}$.
  These are exhaustive, separated, and complete.
\end{cor}

\begin{proof}
  Thanks to Lemma~\ref{lem:galois-cohomology-of-BdRdaggerplus-primitiverootofunity-aaaaaletaoperatorconditionsfiltered},
  this follows from Lemma~\ref{lem:decalagefilteredmodules-defined}.
\end{proof}

\begin{lem}\label{lem:cohomologyofBdRdaggerplus-etagr-is-greta-forvarthetagreaterthanq-reconstructionpaper}
  We have the isomorphism of graded complexes
  \begin{equation*}
    \eta_{p^{N}}\gr\widetilde{\vartheta}^{>q,\bullet}\cong\gr\eta_{p^{N}}\widetilde{\vartheta}^{>q,\bullet}.
  \end{equation*}
\end{lem}

\begin{proof}
  Thanks to Lemma~\ref{lem:galois-cohomology-of-BdRdaggerplus-primitiverootofunity-aaaaaletaoperatorconditionsfiltered},
  this follows from Lemma~\ref{lem:decalage-commutes-gr}.
\end{proof}


\subsection{From $\widetilde{D}^{>q,\bullet}$ to $D^{>q,\bullet}$}\label{subsubsec:galois-cohomology-of-BdRdaggerplus-primitiverootofunity5}

Fix $q\in\NN_{\geq3}$.
So far, our goal was to study the cohomology of the complex $D^{>q,\bullet}$.
Instead, we chose to work with $\widetilde{D}^{>q,\bullet}$, as its associated graded
is more accessible. In this \S, we explain how to
recover $D^{>q,\bullet}$ from $\widetilde{D}^{>q,\bullet}$, therefore deducing
information about the former complex from the latter. This relies
on the operator $\p(-)$ introduced in Definition~\ref{defn:poperator}.

\begin{lem}\label{lem:poperator-commutes-with-pullbacks}
  Given a diagram $M\to T\leftarrow N$ of filtered groups $M$,
  \begin{equation*}
    \p\left( M \times_{T} N \right) = \p\left(M\right) \times_{p\left(T\right)} \p\left(N\right).
  \end{equation*}
\end{lem}

\begin{proof}
  Topological groups share the same underlying abstract groups, thus it remains to show that
  their topologies coincide. By definition, $\p\left( M \times_{T} N \right)$ has the open neighbourhood basis
  \begin{equation*}
    \left\{ p^{n}\left( M \times_{T} N\right) + \left(\Fil^{n}M\times_{T}\Fil^{n}M\right) \right\}_{n\geq0}.
  \end{equation*}
  This system of subsets coincides with the set
  \begin{equation*}
    \left\{ \left( p^{n}M + \Fil^{n}M \right) \times_{T} \left( p^{n}N + \Fil^{n}N \right) \right\}_{n\geq0}.
  \end{equation*}
  But this is an open neighborhood basis for $\p\left(M\right) \times_{p\left(T\right)} \p\left(N\right)$,
  as desired.
\end{proof}

In the following, we confuse $W(\kappa)$-Banach modules with their underlying topological groups.

\begin{lem}\label{lem:poperator-D-to-tildeD}
  $D^{>q,\bullet}=\p\left(\widetilde{D}^{>q,\bullet}\right)$.
\end{lem}

\begin{proof}
  This follows from Lemma~\ref{lem:poperator-commutes-with-pullbacks}
  and the observation
  \begin{equation}\label{eq:pwidetildeAdRisAdR-reconstructionpaper}
    \p\left(\widetilde{\A}_{\dR}^{>h}\left(\cal{G}^{i}\times*\right)\right) = \A_{\dR}^{>h}\left(\cal{G}^{i}\times*\right)
  \end{equation}
  for all $i\in\NN$ and $h\in\left\{q,q-1\right\}$,
  see also Lemma~\ref{lem:pwidetildeAdRisAdR-reconstructionpaper}.
\end{proof}

\begin{lem}\label{lem:galois-cohomology-of-BdRdaggerplus-primitiverootofunity-morelemma2-reconstructionpaper}
  $\cone\left(\eta_{p^{N}}\vartheta^{>q,\bullet}\right)=\p\left(\cone\left(\eta_{p^{N}}\widetilde{\vartheta}^{>q,\bullet}\right)\right)$
  for arbitrary $N\in\NN$.
\end{lem}

\begin{proof}
  It suffices to check
  \begin{equation*}
  \begin{split}
    \eta_{p^{N}}D^{>q,\bullet}
    &=\p\left(\eta_{p^{N}}\widetilde{D}^{>q,\bullet}\right)
    \text{ and } \\
    \eta_{p^{N}}\widehat{\cal{O}}^{+}\left(\cal{G}^{\bullet}\times*\right)
    &=\p\left(\eta_{p^{N}}\widehat{\cal{O}}^{+}\left(\cal{G}^{\bullet}\times*\right)^{\triv}\right).
  \end{split}
  \end{equation*}
  The first equality follows from Lemma~\ref{lem:poperator-D-to-tildeD}
  and Definition~\ref{defn:decalage-for-Banachmodules}. The second
  one follows since every $\widehat{\cal{O}}^{+}\left(\cal{G}^{i}\times*\right)$ carries the $p$-adic topology
  and, again, by Definition~\ref{defn:decalage-for-Banachmodules}.
\end{proof}


\subsection{Proof of Proposition~\ref{prop:galois-cohomology-of-BdRdaggerplus-primitiverootofunity}}\label{subsubsec:galois-cohomology-of-BdRdaggerplus-primitiverootofunity6}


\begin{lem}\label{lem:galois-cohomology-of-BdRdaggerplus-primitiverootofunity-morelemma3-reconstructionpaper}
  Given $q\in\NN_{\geq3}$, $N\in\NN$, and
  the filtered complex $\widetilde{C}^{\bullet}:=\cone\left(\eta_{p^{N}}\widetilde{\vartheta}^{>q,\bullet}\right)$,
  we observe that $\widetilde{C}^{i}/\Fil^{n}$ does not have any $p$-power torsion for all $i,n\in\ZZ$.
  Here, $\Fil^{n}\subseteq \widetilde{C}^{i}$ denotes the $n$th piece in the filtration on $\widetilde{C}^{i}$.
\end{lem}

\begin{proof}
  This follows from Lemma~\ref{lem:AdRgreaterthanqUtimesS-isomapHomcontSAdRgreaterthanqU-assumptionsforlemmasatisfied-reconstructionpaper}
  and because $\widetilde{\cal{O}}^{+}\left(\cal{G}^{\bullet}\times*\right)^{\triv}$ carries the trivial filtration.
\end{proof}

\begin{proof}[Proof of Proposition~\ref{prop:galois-cohomology-of-BdRdaggerplus-primitiverootofunity}]
  Fix $N\in\NN$ as in Proposition~\ref{prop:galois-cohomology-of-BdRdagger-whatistheretoshowquestionsmark}.
  \emph{Loc. cit.} implies that
  \begin{equation*}
    \cone\left(\eta_{p^{N}}\gr\widetilde{\vartheta}^{>q,\bullet}\right)
    \stackrel{\text{\ref{lem:cohomologyofBdRdaggerplus-etagr-is-greta-forvarthetagreaterthanq-reconstructionpaper}}}{\cong}
    \cone\left(\gr\eta_{p^{N}}\widetilde{\vartheta}^{>q,\bullet}\right)
    =\gr\cone\left(\eta_{p^{N}}\widetilde{\vartheta}^{>q,\bullet}\right)
  \end{equation*}
  is acyclic. It follows from~\cite[Chapter I, \S 4.1, page 31-32, Theorem 4]{HuishiOystaeyen1996}
  that $\cone\left(\eta_{p^{N}}\widetilde{\vartheta}^{>q,\bullet}\right)$
  is strictly exact as a filtered complex;
  note that \emph{loc. cit.} applies because 
  $\cone\left(\eta_{p^{N}}\widetilde{\vartheta}^{>q,\bullet}\right)$ is
  separated and complete, which in turn follows from
  Corollary~\ref{cor:galois-cohomology-of-BdRdaggerplus-primitiverootofunity-aaaaaletaoperatorconditionsfiltered-corollary}.
  Now Lemma~\ref{lem:subsections-periodsheaves-affperfd-1}
  applies to $\cone\left(\eta_{p^{N}}\widetilde{\vartheta}^{>q,\bullet}\right)$
  thanks to Lemma~\ref{lem:galois-cohomology-of-BdRdaggerplus-primitiverootofunity-morelemma3-reconstructionpaper}.
  With
  Lemma~\ref{lem:galois-cohomology-of-BdRdaggerplus-primitiverootofunity-morelemma2-reconstructionpaper},
  it follows that that $\cone\left(\eta_{p^{N}}\vartheta^{>q,\bullet}\right)$
  is strictly exact as a complex of topological groups. Consequently, it
  is strictly exact as a complex of $W(\kappa)$-Banach modules.
  Because both $\eta_{p^{N}}D^{>q,\bullet}$ and $\widehat{\cal{O}}^{+}\left(\cal{G}^{\bullet}\times*\right)$
  are complexes of $p$-torsion free $W(\kappa)$-Banach modules,
  cf. Lemma~\ref{lem:AdRgreaterthanqptorsionfree} and
  Corollary~\ref{cor:galois-cohomology-of-BdRdaggerplus-primitiverootofunity-aaaaaletaoperatorconditions-corollary},
  Corollary~\ref{cor:completed-localisation-strictlyexact} applies,
  giving that $\cone\left(\eta_{p^{N}}\vartheta^{>q,\bullet}\right)\widehat{\otimes}_{W(\kappa)}k_{0}$
  is strictly exact. Now apply
  Lemma~\ref{lem:quasiiso-if-cone-strictlyexact-reconstructionpaper} to find that
  \begin{equation*}
    \eta_{p^{N}}\vartheta^{>q,\bullet}\widehat{\otimes}_{W(\kappa)}\id_{k_{0}}
    \colon\eta_{p^{N}}D^{>q,\bullet}\widehat{\otimes}_{W(\kappa)}k_{0}
    \to\eta_{p^{N}}\widehat{\cal{O}}^{+}\left(\cal{G}^{\bullet}\times*\right)\widehat{\otimes}_{W(\kappa)}k_{0}
  \end{equation*}
  is a quasi-isomorphism. The right-hand side is isomorphic to
  $\widehat{\cal{O}}\left(\cal{G}^{\bullet}\times*\right)$,
  cf. Lemma~\ref{lem:decalage-completed-localisation-is-just-completed-localisation},
  which applies by
  Corollary~\ref{lem:galois-cohomology-of-BdRdaggerplus-primitiverootofunity-aaaaaletaoperatorconditions}.
  Now pass to the colimit along $q\to\infty$.
  By Corollary~\ref{cor:filteredcol-inIndBan-stronglyexact},
  \begin{equation*}
    \text{``}\varinjlim_{q\in\NN_{\geq3}}\text{"}
    \eta_{p^{N}}\vartheta^{>q,\bullet}\widehat{\otimes}_{W(\kappa)}\id_{k_{0}}
    \colon\text{``}\varinjlim_{q\in\NN_{\geq3}}\text{"}\eta_{p^{N}}D^{>q,\bullet}\widehat{\otimes}_{W(\kappa)}k_{0}
    \to\widehat{\cal{O}}\left(\cal{G}^{\bullet}\times*\right)
  \end{equation*}
  is again a quasi-isomorphism. But the domain of this morphism is
  \begin{equation*}
    \text{``}\varinjlim_{q\in\NN_{\geq3}}\text{"}\eta_{p^{N}}D^{>q,\bullet}\widehat{\otimes}_{W(\kappa)}k_{0}
    \stackrel{\text{\ref{lem:decalage-completed-localisation-is-just-completed-localisation}}}{\cong}
    \text{``}\varinjlim_{q\in\NN_{\geq3}}\text{"}D^{>q,\bullet}\widehat{\otimes}_{W(\kappa)}k_{0}
    \stackrel{\text{\ref{lem:galois-cohomology-of-BdRdagger-Dgreaterthanqbullet-isotoBdRdagplusGbulletstar}}}{\cong}
    \BB_{\dR}^{\dag,+}\left(\cal{G}^{\bullet}\times*\right),
  \end{equation*}
  where Lemma~\ref{lem:decalage-completed-localisation-is-just-completed-localisation} applies
  because of Lemma~\ref{lem:galois-cohomology-of-BdRdaggerplus-primitiverootofunity-aaaaaletaoperatorconditions}.
  We have thus found
  that~(\ref{eq:galois-cohomology-of-BdRdaggerplus-primitiverootofunity-whatistheretoshow})
  is a quasi-isomorphism. 
  Lemma~\ref{lem:galois-cohomology-of-BdRdaggerplus-primitiverootofunity-whatistheretoshow}
  implies Proposition~\ref{prop:galois-cohomology-of-BdRdaggerplus-primitiverootofunity}.
\end{proof}


\subsection{Proof of Theorem~\ref{thm:galois-cohomology-of-solidBdRdaggerplus-born}}\label{subsubsec:proofofthm:galois-cohomology-of-solidBdRdaggerplus-born}

Recall the computation of the continuous Galois cohomology of $\underline{C}$ as in
Proposition~\ref{prop:Galois-cohomology-Banach-C-solid-reconstructionpaper}.

\begin{cor}\label{cor:galois-cohomology-of-solidBdRdaggerplus-primitiverootofunity}
  Fontaine's $B_{\dR}^{\dag,+}\to C$ induces the isomorphism
  \begin{equation*}
    \Ho_{\cont}^{i}\left(\Gal\left( \overline{k} / k^{\prime} \right) , \underline{B}_{\dR}^{\dag,+} \right)
    \cong
    \begin{cases}
      \underline{k}^{\prime}, &\text{ if $i=0,1$ and} \\
      0, &\text{otherwise}.
    \end{cases}
  \end{equation*}
  Here, $k^{\prime}$ denotes the fixed extension of $k$ as in
  Notation~\ref{notation:galois-cohomology-of-BdRdaggerplus-kzero}.
\end{cor}

\begin{proof}
  Apply Lemma~\ref{lem:contgpcoh-indban-vs-solid-reconstructionpaper}
  to Proposition~\ref{prop:galois-cohomology-of-BdRdaggerplus-primitiverootofunity}.
\end{proof}

\begin{thm}\label{thm:galois-cohomology-of-solidBdRdaggerplus}
  Fontaine's $B_{\dR}^{\dag,+}\to C$ induces the isomorphism
  \begin{equation*}
    \Ho_{\cont}^{i}\left(\Gal\left( \overline{k} / k \right) , \underline{B}_{\dR}^{\dag,+} \right)
    \cong
    \begin{cases}
      \underline{k}, &\text{ if $i=0,1$ and} \\
      0, &\text{otherwise}.
    \end{cases}
  \end{equation*}
\end{thm}

\begin{proof}
  Fix the intermediate extension $\overline{k} / k^{\prime} / k$
  as in Notation~\ref{notation:galois-cohomology-of-BdRdaggerplus-kzero}
  and compute
  \begin{align*}
    \Ho^{0}\left(
      \Gal\left( \overline{k} / k\right) , B_{\dR}^{\dag,+}
    \right)
    &=\Ho^{0}\left(
      \Gal\left( k^{\prime} / k\right),
      \Ho^{0}\left(
        \Gal\left( \overline{k} / k^{\prime}\right) , B_{\dR}^{\dag,+}
      \right)\right) \\
    &\stackrel{\text{\ref{prop:galois-cohomology-of-BdRdaggerplus-primitiverootofunity}}}{\cong}
    \Ho^{0}\left(
      \Gal\left( k^{\prime} / k\right), k^{\prime}\right)
    \stackrel{\text{\ref{prop:Galois-cohomology-Banach-k0-finiteextensionofk}}}{\cong} k.
  \end{align*}
  Together with Lemma~\ref{lem:contgpcoh-indban-vs-solid-reconstructionpaper}, this
  gives the desired computation of the zeroth continuous cohomology.
  To compute the higher cohomology, we consider thes spectral sequence
  as in Lemma~\ref{lem:solidHochschild-Serre-reconstructiontheorem}:
  \begin{equation*}
    \Ho_{\cont}^{i}\left(
      \Gal\left( k^{\prime} / k\right) ,
      \Ho_{\cont}^{j}\left( \Gal\left(\overline{k}/k^{\prime}\right) , \underline{B}_{\dR}^{\dag,+}\right)
    \right)
    \implies 
    \Ho_{\cont}^{i+j}\left(
      \Gal\left( \overline{k} / k\right) , \underline{B}_{\dR}^{\dag,+}
    \right).
  \end{equation*}
  Corollary~\ref{cor:Galois-cohomology-Banach-k0-finiteextensionofk-solid-reconstructionpaper}
  and~\ref{cor:galois-cohomology-of-solidBdRdaggerplus-primitiverootofunity}
  imply the vanishing of
  $\Ho_{\cont}^{n}\left(
      \Gal\left( \overline{k} / k\right) , \underline{B}_{\dR}^{\dag,+}
    \right)$
  for $n\geq 2$.
  Furthermore, \emph{loc. cit.} gives an explicit description of the
  five-term exact sequence associated to the spectral sequence:
  \begin{equation*}
    0
    \to
    0
    \to
    \Ho_{\cont}^{1}\left(
      \Gal\left( \overline{k} / k\right) , \underline{B}_{\dR}^{\dag,+}
    \right)
    \to
    \underline{k}
    \to
    0.
  \end{equation*}
  We have thus found the desired isomorphism
  $\Ho_{\cont}^{1}\left(\Gal\left( \overline{k} / k \right) , \underline{B}_{\dR}^{\dag,+} \right)\cong\underline{k}$.
\end{proof}

\begin{proof}[Proof of Theorem~\ref{thm:galois-cohomology-of-solidBdRdaggerplus-born}]\label{proof:thm:galois-cohomology-of-BdRdaggerplus-born}
  Recall Definition~\ref{defn:indBanachmodule-Ccontcomplex-reconstructionpaper}.
  We have to check that
  \begin{equation*}
    C_{\cont}^{\bullet}\left(\Gal\left( \overline{k} / k\right),B_{\dR}^{\dag,+}\right)
    \to C_{\cont}^{\bullet}\left(\Gal\left( \overline{k} / k\right),C\right)
  \end{equation*}
  is a quasi-isomorphism of cochain complexes of $k_{0}$-ind-Banach spaces.
  By Lemma~\ref{lem:quasiiso-if-cone-strictlyexact-reconstructionpaper},
  this is equivalent to checking that its mapping cone $K^{\bullet}$ is strictly exact.
  We would like to use that
  \begin{equation*}
    \underline{K^{\bullet}}
    \stackrel{\text{\ref{lem:intHomunderlineSunderlineV-is-underlineHomcontSV-reconstructionpaper}}}{=}
    \cone\left(C_{\cont}^{\bullet}\left(\Gal\left( \overline{k} / k\right),\underline{B}_{\dR}^{\dag,+}\right)
      \to C_{\cont}^{\bullet}\left(\Gal\left( \overline{k} / k\right),\underline{C}\right)\right)
  \end{equation*}
  is exact, which follows from Theorem~\ref{thm:galois-cohomology-of-solidBdRdaggerplus}
  and the arguments in the proof of Lemma~\ref{lem:contgpcoh-indban-vs-solid-reconstructionpaper}.
  
  We note that for any profinite set $S$, the functor $\intHom_{\cont}\left(S,-\right)$ preserves injective maps between
  Banach spaces. Therefore,
  Proposition~\ref{prop:BdRdagplus-bornology-countable-basis-reconstructionpaper}
  implies that $K^{\bullet}$ is a cochain complex of bornological $k$-vector spaces whose bornology has a countable basis,
  cf. Definitions~\ref{defn:bornologicalspace-reconstructionpaper} and
  \ref{defn:bornology-countable-basis-reconstructionpaper}. Thus
  Proposition~\ref{prop:solidexact-implies-indBanachstrictlyexact-reconstructionpaper} applies,
  that is the exactness of $\underline{K^{\bullet}}$ implies the strict exactnes of
  $K^{\bullet}$.
\end{proof}


\section{The Galois cohomology of $B_{\dR}^{\dag}$}
\label{subsec:Galoiscoh-of-overconvergentdeRhamperiodring}

\begin{thm}\label{thm:galois-cohomology-of-solidBdRdagger-born}
  \begin{equation*}
    \Ho_{\cont}^{i}\left(\Gal\left( \overline{k} / k \right) , B_{\dR}^{\dag} \right)
    \cong
    \begin{cases}
      \I\left(k\right), &\text{ if $i=0,1$ and} \\
      0, &\text{otherwise}.
    \end{cases}
  \end{equation*}
\end{thm}

From now on, we work towards the proof of Theorem~\ref{thm:galois-cohomology-of-solidBdRdagger-born}.


\subsection{Overview of the proof of Theorem~\ref{thm:galois-cohomology-of-solidBdRdagger-born}}

Fix an intermediate extension $\overline{k} / k^{\prime} / k$ as in
Notation~\ref{notation:galois-cohomology-of-BdRdaggerplus-kzero}.
The proof of Theorem~\ref{thm:galois-cohomology-of-solidBdRdagger-born}
proceeds in two steps: We compute Galois cohomology with respect to
$\overline{k}/k^{\prime}$, and then with respect to $k^{\prime}/k$. The first step is achieved
by Proposition~\ref{prop:galois-cohomology-of-BdRdaggerplus-kprime-injectintotminusj}.
The second one is given in \S\ref{subsubsec:proofthm:galois-cohomology-of-solidBdRdagger}.

\begin{remark}
  Just as in the proof of Theorem~\ref{thm:galois-cohomology-of-solidBdRdaggerplus-born},
  our proof of Theorem~\ref{thm:galois-cohomology-of-solidBdRdagger-born}
  uses techniques from both the ind-Banach and solid worlds.
  See also remark~\ref{remark:thmgalois-cohomology-of-solidBdRdaggerplus-indBansolidmethods}.
\end{remark}


\subsection{On Proposition~\ref{prop:galois-cohomology-of-BdRdaggerplus-kprime-injectintotminusj}
  and its proof}
\label{subsubsec:--prop:galois-cohomology-of-BdRdaggerplus-kprime-injectintotminusj--anditsproof-recpaper}
  
Recall Proposition~\ref{prop:galois-cohomology-of-BdRdaggerplus-primitiverootofunity}.

\begin{prop}\label{prop:galois-cohomology-of-BdRdaggerplus-kprime-injectintotminusj}
  Given $j\in\NN$, the canonical morphism
  $B_{\dR}^{\dag,+}\to t^{-j}B_{\dR}^{\dag,+}$
  induces the isomorphism
  \begin{equation*}
    \Ho_{\cont}^{i}\left(\Gal\left( \overline{k} / k^{\prime} \right) , t^{-j}B_{\dR}^{\dag,+} \right)
    \cong
    \begin{cases}
      \I\left(k^{\prime}\right), &\text{ if $i=0,1$ and} \\
      0, &\text{otherwise}.
    \end{cases}
  \end{equation*}
\end{prop}

Fix $j\in\NN$ for the remainder of \S\ref{subsec:Galoiscoh-of-overconvergentdeRhamperiodring}.
From now on, we work towards the proof of Proposition~\ref{prop:galois-cohomology-of-BdRdaggerplus-kprime-injectintotminusj}. We choose to give an overview of our arguments in
\S\ref{subsubsec:--prop:galois-cohomology-of-BdRdaggerplus-kprime-injectintotminusj--anditsproof-recpaper}.

Write $\mathcal{G}:=\Gal\left( \overline{k} / k^{\prime} \right)$.
Theorem~\ref{thm:subsections-periodsheaves-affperfd}
and Proposition~\ref{prop:BdRdaggerplusUtimesS-isomapHomcontSAdRgreaterthanqU-reconstructionpaper}
imply that the complex
\begin{equation*}
  \BB_{\dR}^{\dag,+}\left(*\right)
  \stackrel{\delta^{0}}{\longrightarrow}
  \BB_{\dR}^{\dag,+}\left(\cal{G} \times *\right)
  \stackrel{\delta^{1}}{\longrightarrow}
  \BB_{\dR}^{\dag,+}\left(\cal{G}^{2} \times *\right)
  \stackrel{\delta^{2}}{\longrightarrow}\dots
\end{equation*}
computes the continuous Galois cohomology of $B_{\dR}^{\dag,+}$.
Denote it by $\BB_{\dR}^{\dag,+}\left(\cal{G}^{\bullet}\times*\right)$.
Similarly, the Galois cohomology of $t^{-j}B_{\dR}^{\dag,+}$
is computed by the complex
\begin{equation*}
  t^{-j}\BB_{\dR}^{\dag,+}\left(*\right)
  \stackrel{\delta_{-j}^{0}}{\longrightarrow}
  t^{-j}\BB_{\dR}^{\dag,+}\left(\cal{G} \times *\right)
  \stackrel{\delta_{-j}^{1}}{\longrightarrow}
  t^{-j}\BB_{\dR}^{\dag,+}\left(\cal{G}^{2} \times *\right)
  \stackrel{\delta_{-j}^{2}}{\longrightarrow}\dots
\end{equation*}
which we denote by $t^{-j}\BB_{\dR}^{\dag,+}\left(\cal{G}^{\bullet}\times*\right)$.
The following result is clear from the previous discussion.

\begin{lem}\label{lem:galois-cohomology-of-BdRdaggerplus-primitiverootofunity-j-whatistheretoshow}
  $B_{\dR}^{\dag,+}\to t^{-j}B_{\dR}^{\dag,+}$ induces a canonical inclusion
  \begin{equation}\label{eq:galois-cohomology-of-BdRdaggerplus-primitiverootofunity-j-whatistheretoshow}
    \BB_{\dR}^{\dag,+}\left(\cal{G}^{\bullet} \times *\right)
    \to t^{-j}\BB_{\dR}^{\dag,+}\left(\cal{G}^{\bullet} \times *\right).
  \end{equation}
  If it is a quasi-isomorphism
  in the sense of Definition~\ref{defn:quasiabelian-quasiiso-recpaper},
  Proposition~\ref{prop:galois-cohomology-of-BdRdaggerplus-kprime-injectintotminusj}
  follows.
\end{lem}

We proceed by proving that~(\ref{eq:galois-cohomology-of-BdRdaggerplus-primitiverootofunity-j-whatistheretoshow})
is a quasi-isomorphism of cochain complexes of $k_{0}$-ind-Banach modules.
Along the way, we encounter similar technical difficulties as in the proof of
Proposition~\ref{prop:galois-cohomology-of-BdRdaggerplus-primitiverootofunity},
cf. the discussion following
Lemma~\ref{lem:galois-cohomology-of-BdRdaggerplus-primitiverootofunity-whatistheretoshow}.
Luckily, many ideas from the proof of Proposition~\ref{prop:galois-cohomology-of-BdRdaggerplus-primitiverootofunity}
can be recycled to overcome these complications. For example,
we introduced the modifications
$D^{>q,\bullet}$ of $\A_{\dR}^{>q,+}\left(\cal{G}^{\bullet} \times *\right)$ and
$\widetilde{D}^{>q,\bullet}$ of $\widetilde{\A}_{\dR}^{>q,+}\left(\cal{G}^{\bullet} \times *\right)$
in \S\ref{subsubsec:galois-cohomology-of-BdRdaggerplus-primitiverootofunity1}
and \S\ref{subsubsec:galois-cohomology-of-BdRdaggerplus-primitiverootofunity2}.
In \S\ref{subsubsec:proof:prop:galois-cohomology-of-BdRdaggerplus-kprime-injectintotminusj-1}
and \S\ref{subsubsec:proof:prop:galois-cohomology-of-BdRdaggerplus-kprime-injectintotminusj-2},
we introduce the modifications
$D_{-j}^{>q,\bullet}$ of $\left(t/p^{q}\right)^{-j}\A_{\dR}^{>q,+}\left(\cal{G}^{\bullet} \times *\right)$ and
$\widetilde{D}_{-j}^{>q,\bullet}$ of $\left(t/p^{q}\right)^{-j}\widetilde{\A}_{\dR}^{>q,+}\left(\cal{G}^{\bullet} \times *\right)$.
In \S\ref{subsubsec:proof:prop:galois-cohomology-of-BdRdaggerplus-kprime-injectintotminusj-3},
we observe that the latter modification kills the junk torsion in the cohomology of
each degree of the associated graded of
$\left(t/p^{q}\right)^{-j}\widetilde{\A}_{\dR}^{>q,+}\left(\cal{G}^{\bullet} \times *\right)$.
We then finish the proof of
Proposition~\ref{prop:galois-cohomology-of-BdRdaggerplus-kprime-injectintotminusj}
in \S\ref{subsubsec:proof:prop:galois-cohomology-of-BdRdaggerplus-kprime-injectintotminusj-6},
after setting up further technical machinery in
\S\ref{subsubsec:proof:prop:galois-cohomology-of-BdRdaggerplus-kprime-injectintotminusj-4}
and \S\ref{subsubsec:proof:prop:galois-cohomology-of-BdRdaggerplus-kprime-injectintotminusj-5}.


\subsection{The cochain complex $D_{-j}^{>q,\bullet}$}
\label{subsubsec:proof:prop:galois-cohomology-of-BdRdaggerplus-kprime-injectintotminusj-1}

Fix $q\in\NN_{\geq2}$ and $j\in\NN$. We continue to write $\cal{G}:=\Gal\left(\overline{k}/k^{\prime}\right)$.

\begin{notation}\label{notation:twisted-AdRgreaterthanq-recpaper}
  Let $i\in\NN$. Then
  $\A_{-j}^{>q}\left(\cal{G}^{i}\times*\right):=\left(t/p^{q}\right)^{-j}\A_{\dR}^{>q}\left(\cal{G}^{i}\times*\right)$,
  equipped with the norm
  \begin{equation*}
    \|\left(\frac{t}{p^{q}}\right)^{-j}a\|:=\|a\|
    \text{ for all $a\in\A_{\dR}^{>q}\left(\cal{G}^{i}\times*\right)$.}
  \end{equation*}
  This is well-defined as
  $\left( t / p^{q}\right)$ is not a zero-divisor, cf.
  Lemma~\ref{lem:multbytpq-onAdRgreaterthanq-strictmono-reconstructionpaper}
  and Theorem~\ref{thm:BdR>qplus+-sections-over-affperfd-recpaper}.
\end{notation}

$A_{-j}^{>q}:=\A_{-j}^{>q}\left(*\right)$ carries the continuous $\cal{G}$-action given by
\begin{equation*}
  \alpha\left(\left(\frac{t}{p^{q}}\right)^{-j}a\right)
  :=\left(\frac{t}{p^{q}}\right)^{-j}\left(\chi\left(\alpha\right)^{-j}\cdot\left(\alpha a\right)\right)
\end{equation*}
for all $\alpha\in\cal{G}$ and $a\in A_{\dR}^{>q}$.
Here, $\chi\colon\cal{G}\to\ZZ_{p}^{\times}$ is the cyclotomic character.
We are interested in the continuous cohomology
$\R\Gamma_{\cont}\left(\cal{G},A_{-j}^{>q}\right)$.
It is represented by the complex $C_{\cont}^{\bullet}\left(\cal{G},A_{-j}^{>q}\right)$, that is
\begin{equation*}
  A_{-j}^{>q}
  \longrightarrow
  \intHom_{\cont}\left(\cal{G} , A_{-j}^{>q} \right)
  \longrightarrow
  \intHom_{\cont}\left(\cal{G}^{2} , A_{-j}^{>q} \right)
  \longrightarrow
  \dots
\end{equation*}
where the differentials are as in Definition~\ref{defn:Banachmodule-Ccontcomplex-reconstructionpaper}
Proposition~\ref{prop:AdRgreaterthanqUtimesS-isomapHomcontSAdRgreaterthanqU-reconstructionpaper}
implies that it is isomorphic to
\begin{equation}\label{eq:notationbefore--defn:galois-cohomology-of-BdRdaggerplus-primitiverootofunity-Dgreaterthanqbullet-j}
  \A_{-j}^{>q}\left(*\right)
  \stackrel{\delta_{-j}^{>q,0}}{\longrightarrow}
  \A_{-j}^{>q}\left(\cal{G} \times *\right)
  \stackrel{\delta_{-j}^{>q,1}}{\longrightarrow}
  \A_{-j}^{>q}\left(\cal{G}^{2} \times *\right)
  \stackrel{\delta_{-j}^{>q,2}}{\longrightarrow}\dots.
\end{equation}
Denote this complex by $\A_{-j}^{>q}\left(\cal{G}^{\bullet} \times *\right)$.
It recovers the complex in Notation~\ref{defn:galois-cohomology-of-BdRdaggerplus-primitiverootofunity-Dgreaterthanqbullet-somedifferentials}
for $j=0$.

\begin{defn}\label{defn:galois-cohomology-of-BdRdaggerplus-primitiverootofunity-Dgreaterthanqbullet-j}
For every $q\in\NN_{\geq3}$, denote the following sequence of maps by $D_{-j}^{>q,\bullet}$:
\begin{equation*}
  \A_{-j}^{>q}\left(*\right)
  \times_{\delta_{-j}^{>q,0} , \A_{-j}^{>q}\left(\cal{G} \times *\right) }
  \A_{-j}^{>q-1}\left(\cal{G} \times *\right)
  \stackrel{\epsilon_{-j}^{>q,0}}{\longrightarrow}
  \A_{-j}^{>q-1}\left(\cal{G} \times *\right)
  \stackrel{\epsilon_{-j}^{>q,1}}{\longrightarrow}
  \A_{-j}^{>q-1}\left(\cal{G}^{2} \times *\right)
  \stackrel{\epsilon_{-j}^{>q,2}}{\longrightarrow}\dots.
\end{equation*}
Here, $\epsilon_{-j}^{>q,0}$ is $(a,b)\mapsto b$ and $\epsilon_{-j}^{>q,i}:=\delta_{-j}^{>q-1,i}$
for all $i>0$.
\end{defn}

\begin{lem}\label{lem:galois-cohomology-of-BdRdagger-Dgreaterthanqbullet-j-cochaincomplex}
  $D_{-j}^{>q,\bullet}$ is a cochain complex for all $q\in\NN_{\geq3}$.
\end{lem}

\begin{proof}
  The proof of Lemma~\ref{lem:galois-cohomology-of-BdRdagger-Dgreaterthanqbullet-cochaincomplex}
  goes through verbatim.
\end{proof}

Fix $q\in\NN_{\geq3}$ and consider the commutative diagram
\begin{equation*}
  \begin{tikzcd}
      \A_{-j}^{>q}\left(*\right)
      \arrow{r}{\delta_{-j}^{>q,0}} &
      \A_{-j}^{>q}\left(\cal{G} \times*\right)
      \arrow{r}{\delta_{-j}^{>q,1}} &
      \A_{-j}^{>q}\left(\cal{G}^{2} \times*\right)
      \arrow{r}{\delta_{-j}^{>q,2}} &
      \dots \\
      \A_{-j}^{>q}\left(*\right)
      \times_{\delta_{-j}^{>q,0} , \A_{-j}^{>q,+}\left(\cal{G} \times *\right) }
      \A_{-j}^{>q-1}\left(\cal{G} \times *\right)
      \arrow{r}{\epsilon_{-j}^{>q,0}}\arrow{u}{\tau_{-j}^{0}} &
      \A_{-j}^{>q-1}\left(\cal{G} \times*\right)
      \arrow{r}{\epsilon_{-j}^{>q,1}}\arrow{u} &
      \A_{-j}^{>q-1}\left(\cal{G}^{2} \times*\right)
      \arrow{r}{\epsilon_{-j}^{>q,2}}\arrow{u} &
      \dots  
  \end{tikzcd}
\end{equation*}
where $\tau_{-j}^{0}$ is $(a,b)\mapsto a$ and all other vertical maps are canonical.
It defines a morphism
\begin{equation*}
  \Phi_{-j}^{>q,\bullet}\colon D_{-j}^{>q,\bullet} \to \A_{-j}^{>q}\left(\cal{G}^{\bullet}\times*\right)
\end{equation*}
of cochain complexes. We recover $\Phi_{0}^{>q,\bullet}=\Phi^{>q,\bullet}$
as in~(\ref{eq:Phigreatanqbullet})

\begin{lem}\label{lem:galois-cohomology-of-BdRdagger-Dgreaterthanqbullet-j-isotoBdRdagplusGbulletstar}
  $\text{``}\varinjlim\text{"}_{q\in\NN_{\geq3}}\Phi_{-j}^{>q,\bullet}\widehat{\otimes}_{W(\kappa)}\id_{k_{0}}$
  is a canonical isomorphism of cochain complexes
  \begin{equation*}
    \text{``}\varinjlim\text{"}_{q\in\NN_{\geq3}}D_{-j}^{>q,\bullet}\widehat{\otimes}_{W(\kappa)}k_{0}
    \isomap t^{-j}\BB_{\dR}^{\dag,+}\left(\cal{G}^{\bullet}\times*\right).
  \end{equation*}
\end{lem}

\begin{proof}
  Again, the proof of Lemma~\ref{lem:galois-cohomology-of-BdRdagger-Dgreaterthanqbullet-isotoBdRdagplusGbulletstar}
  goes through verbatim.
\end{proof}

We aim to show that the canonical inclusion $B_{\dR}^{\dag,+}\to t^{-j}B_{\dR}^{\dag,+}$
induces an isomorphism on continuous Galois cohomology.
Lemma~\ref{lem:galois-cohomology-of-BdRdagger-Dgreaterthanqbullet-isotoBdRdagplusGbulletstar}
says that one can compute the Galois cohomology of $B_{\dR}^{\dag,+}$
via the cohomology of $D^{>q,\bullet}$. Similarly, $D_{-j}^{>q,\bullet}$ computes the cohomology of
$t^{-j}B_{\dR}^{\dag,+}$ by
Lemma~\ref{lem:galois-cohomology-of-BdRdagger-Dgreaterthanqbullet-j-isotoBdRdagplusGbulletstar}.
Therefore, we may compare the cohomology of $D^{>q,\bullet}$ and $D_{-j}^{>q,\bullet}$.
In fact, $B_{\dR}^{\dag,+}\to t^{-j}B_{\dR}^{\dag,+}$ comes from the
inclusions $A_{\dR}^{>q,+}\to A_{-j}^{>q,+}$ for varying $q\in\NN_{\geq3}$. These induce
the morphisms
\begin{equation*}
  \A_{\dR}^{>q,+}\left(\cal{G}^{\bullet}\times*\right)
  \to \A_{-j}^{>q,+}\left(\cal{G}^{\bullet}\times*\right).
\end{equation*}
They gives rise to the morphisms
\begin{equation*}
  \gamma_{-j}^{>q,\bullet}\colon D^{>q,\bullet} \to D_{-j}^{>q,\bullet}
\end{equation*}
of cochain complexes of $W(\kappa)$-Banach modules.
We see later on that the $\gamma_{-j}^{>q,\bullet}$ are quasi-isomorphisms.


\subsection{The cochain complex $\widetilde{D}_{-j}^{>q,\bullet}$}
\label{subsubsec:proof:prop:galois-cohomology-of-BdRdaggerplus-kprime-injectintotminusj-2}

We continue to fix $q\in\NN_{\geq3}$ and $j\in\NN$. Recall the
Definition~\ref{defn:galois-cohomology-of-BdRdaggerplus-primitiverootofunity-Dgreaterthanqbullet-j}
of the complex $D_{-j}^{>q,\bullet}$. In the following, we introduce the complex
$\widetilde{D}_{-j}^{>q,\bullet}$. It coincides with $D_{-j}^{>q,\bullet}$ as a complex of
abstract $W(\kappa)$-modules, but it carries different topologies. Our discussion
follows \S\ref{subsubsec:galois-cohomology-of-BdRdaggerplus-primitiverootofunity2}.

\begin{notation}\label{notation:A-jgreaterthanq-recpaper}
  Let $i\in\NN$. Then
  $\widetilde{\A}_{-j}^{>q}\left(\cal{G}^{i}\times*\right)$
  denotes the abstract $W(\kappa)$-module
  $|\A_{-j}^{>q}\left(\cal{G}^{i}\times*\right)|$,
  equipped with the filtration
  \begin{equation*}
    \Fil^{n}\widetilde{\A}_{-j}^{>q}\left(\cal{G}^{i}\times*\right)
    :=\begin{cases}
      |\A_{-j}^{>q}\left(\cal{G}^{i}\times*\right)| \cdot \left(\frac{\xi}{p^{q}}\right)^{n+j}
      &\text{ if $n\geq-j$, and} \\
      0 & \text{otherwise}.
    \end{cases}
  \end{equation*}
\end{notation}

In what follows, we use the notation for the differentials as
in~(\ref{eq:notationbefore--defn:galois-cohomology-of-BdRdaggerplus-primitiverootofunity-Dgreaterthanqbullet-j}).

\begin{defn}\label{defn:galois-cohomology-of-BdRdaggerplus-primitiverootofunity-tildeDgreaterthanqbullet-j}
Denote the following sequence of maps by $\widetilde{D}_{-j}^{>q,\bullet}$:
\begin{equation*}
  \widetilde{\A}_{-j}^{>q}\left(*\right)
  \times_{\delta_{-j}^{>q,0} , \widetilde{\A}_{-j}^{>q}\left(\cal{G} \times *\right) }
  \widetilde{\A}_{-j}^{>q-1}\left(\cal{G} \times *\right)
  \stackrel{\epsilon_{-j}^{>q,0}}{\longrightarrow}
  \widetilde{\A}_{-j}^{>q-1}\left(\cal{G} \times *\right)
  \stackrel{\epsilon_{-j}^{>q,1}}{\longrightarrow}
  \widetilde{\A}_{-j}^{>q-1}\left(\cal{G}^{2} \times *\right)
  \stackrel{\epsilon_{-j}^{>q,2}}{\longrightarrow}\dots.
\end{equation*}
Here, $\epsilon_{-j}^{>q,0}$ is $(a,b)\mapsto b$ and $\epsilon_{-j}^{>q,i}:=\delta_{-j}^{>q-1,i}$
for all $i>0$.
\end{defn}

\begin{lem}\label{lem:galois-cohomology-of-BdRdagger-tildeDgreaterthanqbullet-j-cochaincomplex}
  $\widetilde{D}_{-j}^{>q,\bullet}$ is a cochain complex.
\end{lem}

\begin{proof}
  This follows from Lemma~\ref{lem:galois-cohomology-of-BdRdagger-Dgreaterthanqbullet-j-cochaincomplex},
  because the complexes of abstract $W(\kappa)$-modules underlying
  $D_{-j}^{>q,\bullet}$ and $\widetilde{D}_{-j}^{>q,\bullet}$ coincide.
\end{proof}

The canonical inclusion
$\widetilde{A}_{\dR}^{>q,+}\to\widetilde{A}_{-j}^{>q,+}$
induces a morphism
\begin{equation*}
  \widetilde{\gamma}_{-j}^{>q,\bullet}\colon \widetilde{D}^{>q,\bullet} \to \widetilde{D}_{-j}^{>q,\bullet}
\end{equation*}
of cochain complexes of filtered modules. We remark that
the morphisms of complexes of abstract $W(\kappa)$-modules
underlying $\gamma_{-j}^{>q,\bullet}$,
cf. \S\ref{subsubsec:proof:prop:galois-cohomology-of-BdRdaggerplus-kprime-injectintotminusj-1},
and $\widetilde{\gamma}_{-j}^{>q,\bullet}$ coincide.


\subsection{The cohomology of $\gr\widetilde{D}_{-j}^{>q,\bullet}$}
\label{subsubsec:proof:prop:galois-cohomology-of-BdRdaggerplus-kprime-injectintotminusj-3}

We fix again $q\in\NN_{\geq3}$ and $j\in\NN$.
Recall the Definition~\ref{defn:galois-cohomology-of-BdRdaggerplus-primitiverootofunity-tildeDgreaterthanqbullet-j}
of $\widetilde{D}_{-j}^{>q,\bullet}$; see also
Lemma~\ref{lem:galois-cohomology-of-BdRdagger-tildeDgreaterthanqbullet-j-cochaincomplex}.
We observe that the morphism
$\widetilde{\gamma}_{-j}^{>q,\bullet}$ introduced in
\S\ref{subsubsec:proof:prop:galois-cohomology-of-BdRdaggerplus-kprime-injectintotminusj-2}
preserves the filtrations. 
In particular, it induces a morphism between the associated gradeds
\begin{equation*}
  \gr\widetilde{\gamma}_{-j}^{>q,\bullet}\colon
  \gr\widetilde{D}^{>q,\bullet}
  \to
  \gr\widetilde{D}_{-j}^{>q,\bullet}.
\end{equation*}
One now applies the $\eta$-operator
as in Definition~\ref{defn:algebraic-decalage} to get the
morphism~(\ref{eq:galois-cohomology-of-BdRdagger-whatistheretoshowquestionsmark-j-themorphism})
in Proposition~\ref{prop:galois-cohomology-of-BdRdagger-j-whatistheretoshowquestionsmark}.
 
\begin{prop}\label{prop:galois-cohomology-of-BdRdagger-j-whatistheretoshowquestionsmark}
  There exists a constant $N_{-j}\in\NN$ such that
  \begin{equation}\label{eq:galois-cohomology-of-BdRdagger-whatistheretoshowquestionsmark-j-themorphism}
    \eta_{p^{N_{-j}}}\gr\widetilde{\gamma}_{-j}^{>q,\bullet}\colon
    \eta_{p^{N_{-j}}}\gr\widetilde{D}^{>q,\bullet}
    \to
    \eta_{p^{N_{-j}}}\gr\widetilde{D}_{-j}^{>q,\bullet}
  \end{equation}
  is a quasi-isomorphism of cochain complexes of
  abstract $W(\kappa)$-modules.
\end{prop}

Recall Notation~\ref{notation:Tatesclassicalgroupcohomology}.
For any $s\in\ZZ$,
$\cal{O}_{C}(s)$ denotes the $s$th Tate twist of $\cal{O}_{C}$.
Our proof of
Proposition~\ref{prop:galois-cohomology-of-BdRdagger-j-whatistheretoshowquestionsmark}
relies on the following result:

\begin{lem}\label{lem:galois-cohomology-of-BdRdagger-injectintotminusj-Dgreaterthanqbulletcomputedgrn}
  For any $n\in\NN_{\geq-j}$,
  \begin{equation*}
    \Ho^{i}\left(\gr^{n}\widetilde{D}_{-j}^{>q,\bullet}\right) =
    \begin{cases}
      \Ho^{0}_{\clcont}\left(\cal{G},\cal{O}_{C}(n)\right) & \text{ if $i=0$}, \\
      \Ho^{1}_{\clcont}\left(\cal{G},\cal{O}_{C}(n)\right) / \text{($p^{n+j}$-torsion)} & \text{ if $i=1$, and} \\
      p^{n+j}\Ho^{i}_{\clcont}\left(\cal{G},\cal{O}_{C}(n)\right) & \text{ if $i\geq 2$.}
    \end{cases}
  \end{equation*}
\end{lem}

\begin{proof}
  The action of $\cal{G}$ on $t$ is given by the cyclotomic character
  $\chi\colon\cal{G}\to\ZZ_{p}^{\times}$.
  Consequently, the proof of Lemma~\ref{lem:galois-cohomology-of-BdRdagger-Dgreaterthanqbulletcomputedgrn}
  goes through verbatim, once we twist everything by $\chi^{-j}$.
\end{proof}

\begin{proof}[Proof of Proposition~\ref{prop:galois-cohomology-of-BdRdagger-j-whatistheretoshowquestionsmark}]\label{proof:prop:galois-cohomology-of-BdRdagger-j-whatistheretoshowquestionsmark}
  Fix $N$ as in Proposition~\ref{prop:galois-cohomology-of-BdRdagger-whatistheretoshowquestionsmark}.
  Then $\gr^{n}\widetilde{\gamma}_{-j}^{>q,\bullet}$
  is a quasi-isomorphism up to $p^{N}$-torsion for all $n\geq-j$
  by Lemma~\ref{lem:galois-cohomology-of-BdRdagger-Dgreaterthanqbulletcomputedgrn}
  and~\ref{lem:galois-cohomology-of-BdRdagger-injectintotminusj-Dgreaterthanqbulletcomputedgrn},
  as well as \cite[Theorem 4.4.3]{BSSW2024_rationalizationoftheKnlocalsphere}.
  Furthermore, set $L:=\max\left\{L^{\prime},3\right\}$ where $L^{\prime}:=\max_{i=1,\dots,j}M+\nu_{p}(i)$,
  where $\nu_{p}$ denotes the $p$-adic valuation. \emph{Loc. cit.} proves that
  $\gr^{n}\widetilde{D}^{>q,\bullet}$ is acyclic up to $p^{L}$-torsion for all $n<0$.
  Since $\gr^{n}\widetilde{D}^{>q,\bullet}=0$ for such $n<0$,
  $\gr^{n}\widetilde{\gamma}_{-j}^{>q,\bullet}$ is a quasi-isomorphism up to
  $p^{L}$-torsion for $n<0$.
  
  We have thus checked that $\gr\widetilde{\gamma}_{-j}^{>q,\bullet}$
  is a quasi-isomorphism up to $p^{N_{-j}}$-torsion, where
  $N_{-j}:=\max\left\{N,L\right\}$.
  Thus Proposition~\ref{prop:galois-cohomology-of-BdRdagger-j-whatistheretoshowquestionsmark} follows
  from Lemma~\ref{lem:eta-operator-kills-torsion}.
\end{proof}


\subsection{Applying the $\eta$-operator}\label{subsubsec:galois-cohomology-of-BdRdaggerplus-primitiverootofunity4}
\label{subsubsec:proof:prop:galois-cohomology-of-BdRdaggerplus-kprime-injectintotminusj-4}


In the proof of Proposition~\ref{prop:galois-cohomology-of-BdRdaggerplus-kprime-injectintotminusj},
we apply the $\eta$-operator as in Definition~\ref{defn:algebraic-decalage}
to specific cochain complexes of Banach modules. In this technical \S, we check
that our constructions are well-behaved. 
Throughout, we fix arbitrary $q\in\NN_{\geq3}$ and $N\in\NN$.

\begin{lem}\label{lem:Dgreaterqisatisfiesconditionsformetaoperator-periodringcohomology-j-reconstructionpaper}
  For all $i\geq0$, $p^{N}D_{-j}^{>q,i}\subseteq D_{-j}^{>q,i}$
  is a closed subset.
\end{lem}

\begin{proof}
  The proof of
  Lemma~\ref{lem:Dgreaterqisatisfiesconditionsformetaoperator-periodringcohomology-reconstructionpaper}
  generalises straightforwardly.
\end{proof}

\begin{lem}\label{lem:Dgreaterqtildeisatisfiesconditionsformetaoperator-periodringcohomology-j-reconstructionpaper}
  For all $i\geq0$, $p^{N}\widetilde{D}_{-j}^{>q,i}\subseteq\widetilde{D}_{-j}^{>q,i}$
  is a closed subset.
\end{lem}

\begin{proof}
  The proof of
  Lemma~\ref{lem:Dgreaterqtildeisatisfiesconditionsformetaoperator-periodringcohomology-reconstructionpaper}
  generalises straightforwardly.
\end{proof}

\begin{lem}\label{lem:galois-cohomology-of-BdRdaggerplus-primitiverootofunity-j-aaaaaletaoperatorconditions}
  Condition~\ref{cond:Banachmoduledecalage-reconstructionpaper} is satisfied for
  $R=W(\kappa)$, $r=p^{N}$, and $M^{\bullet}=D_{-j}^{>q,\bullet}$.
\end{lem}

\begin{proof}
  We check:
  \begin{itemize}
    \item[(i)] $D_{-j}^{>q,\bullet}$ is indeed concentrated in non-negative degrees.
    \item[(ii)] This is Lemma~\ref{lem:Dgreaterqisatisfiesconditionsformetaoperator-periodringcohomology-j-reconstructionpaper}.
    \item[(iii)] For every $i\in\NN$, $D_{-j}^{>q,i}$ does not have $p^{N}$-torsion
      by Lemma~\ref{lem:AdRgreaterthanqptorsionfree}.
  \end{itemize}
\end{proof}

\begin{cor}\label{cor:galois-cohomology-of-BdRdaggerplus-primitiverootofunity-j-aaaaaletaoperatorconditions-corollary}
  The subcomplex
  $\eta_{p^{N}} D_{-j}^{>q,\bullet}\subseteq D_{-j}^{>q,\bullet}$
  equipped with the induced norms,
  is a complex of $W(\kappa)$-Banach algebras.
\end{cor}

\begin{proof}
  Thanks to Lemma~\ref{lem:galois-cohomology-of-BdRdaggerplus-primitiverootofunity-j-aaaaaletaoperatorconditions},
  this follows directly from Lemma~\ref{lem:decalage-defined-reconstructionpaper}.
\end{proof}

In the following, we use the notation as in Definition~\ref{defn:puttrivialnormonring-reconstructionpaper}.

\begin{lem}\label{lem:galois-cohomology-of-BdRdaggerplus-primitiverootofunity-j-aaaaaletaoperatorconditions-filtered}
  Condition~\ref{cond:filteredmodulesdecalage-reconstructionpaper}
  is satisfied for $R=W(\kappa)^{\triv}$, $r=p^{N}$, and
  $M^{\bullet}=\widetilde{D}_{-j}^{>q,\bullet}$.
\end{lem}

\begin{proof}
  We check:
  \begin{itemize}
    \item[(i)] $\widetilde{D}_{-j}^{>q,\bullet}$ is indeed concentrated in nonnegative degrees.
    \item[(ii)] This is Lemma~\ref{lem:Dgreaterqtildeisatisfiesconditionsformetaoperator-periodringcohomology-j-reconstructionpaper}.
    \item[(iii)] For every $i\in\NN$, $\widetilde{D}_{-j}^{>q,i}$ does not have $p^{N}$-torsion
      by Lemma~\ref{lem:AdRgreaterthanqptorsionfree}.
    \item[(iv)] $\gr\widetilde{D}_{-j}^{>q,i}$ is $p^{i}$-torsion free for every $i\in\ZZ$. This follows
    from a description similar to the
    one~(\ref{eq:galois-cohomology-of-BdRdagger-Dgreaterthanqbulletcomputedgrn-needthislater})
    of $\gr\widetilde{D}^{>q,i}$.
  \end{itemize}
\end{proof}

\begin{cor}\label{cor:galois-cohomology-of-BdRdaggerplus-primitiverootofunity-j-aaaaaletaoperatorconditions-filtered-corollary}
  Consider the induced filtration on the subcomplex
  $\eta_{p^{N}} \widetilde{D}_{-j}^{>q,\bullet}\subseteq \widetilde{D}_{-j}^{>q,\bullet}$.
  It is exhaustive, separated, and complete.
\end{cor}

\begin{proof}
  Thanks to
  Lemma~\ref{lem:galois-cohomology-of-BdRdaggerplus-primitiverootofunity-j-aaaaaletaoperatorconditions-filtered},
  this follows directly from Lemma~\ref{lem:decalagefilteredmodules-defined}.
\end{proof}

\begin{lem}\label{lem:cohomologyofBdRdaggerplus-etagr-is-greta-forvarthetagreaterthanq-j-reconstructionpaper}
  We have the isomorphism of graded complexes
  \begin{equation*}
    \eta_{p^{N}}\gr\widetilde{\gamma}_{-j}^{>q,\bullet}\cong\gr\eta_{p^{N}}\widetilde{\gamma}_{-j}^{>q,\bullet}.
  \end{equation*}
\end{lem}

\begin{proof}
  By Lemma~\ref{lem:galois-cohomology-of-BdRdaggerplus-primitiverootofunity-aaaaaletaoperatorconditionsfiltered}
  and~\ref{lem:galois-cohomology-of-BdRdaggerplus-primitiverootofunity-j-aaaaaletaoperatorconditions-filtered},  
  we can apply Lemma~\ref{lem:decalage-commutes-gr} to get the result.
\end{proof}


\subsection{From $\widetilde{D}_{-j}^{>q,\bullet}$ to $D_{-j}^{>q,\bullet}$}
\label{subsubsec:proof:prop:galois-cohomology-of-BdRdaggerplus-kprime-injectintotminusj-5}

Fix $q\in\NN_{\geq3}$ and $j\in\NN$.
So far, our goal was to study the cohomology of the complex $D_{-j}^{>q,\bullet}$.
Instead, we chose to work with $\widetilde{D}_{-j}^{>q,\bullet}$, as its associated graded
is more accessible. In this \S, we explain how to
recover $D_{-j}^{>q,\bullet}$ from $\widetilde{D}_{-j}^{>q,\bullet}$, therefore deducing
information about the former complex from the latter. This relies
on the operator $\p(-)$ introduced in Definition~\ref{defn:poperator}.

In the following, we often confuse $W(\kappa)$-modules with their underlying topological groups.

\begin{lem}\label{lem:poperator-D-j-to-tildeD-j}
  Compute $D_{-j}^{>q,\bullet}=\p\left(\widetilde{D}_{-j}^{>q,\bullet}\right)$.
\end{lem}

\begin{proof}
  This follows from Lemma~\ref{lem:poperator-commutes-with-pullbacks}
  and the observation
  \begin{equation}\label{eq:pwidetildeAdRisAdR-reconstructionpaper}
    \p\left(\widetilde{\A}_{-j}^{>h}\left(\cal{G}^{i}\times*\right)\right)
      =\A_{-j}^{>h}\left(\cal{G}^{i}\times*\right)
  \end{equation}
  for all $i\in\NN$ and $h\in\left\{q,q-1\right\}$,
  see also Lemma~\ref{lem:pwidetildeAdRisAdR-reconstructionpaper}.
\end{proof}

\begin{lem}\label{lem:galois-cohomology-of-BdRdaggerplus-primitiverootofunity-morelemma2-j-reconstructionpaper}
  Compute $\cone\left(\eta_{p^{N}}\gamma_{-j}^{>q,\bullet}\right)=\p\left(\cone\left(\eta_{p^{N}}\widetilde{\gamma}_{-j}^{>q,\bullet}\right)\right)$ for arbitrary $N\in\NN$.
\end{lem}

\begin{proof}
  It suffices to check
  \begin{equation*}
  \begin{split}
    \eta_{p^{N}}D^{>q,\bullet}
    &=\p\left(\eta_{p^{N}}\widetilde{D}^{>q,\bullet}\right)
    \text{ and } \\
    \eta_{p^{N}}D_{-j}^{>q,\bullet}
    &=\p\left(\eta_{p^{N}}\widetilde{D}_{-j}^{>q,\bullet}\right)
  \end{split}
  \end{equation*}
  See the proof of
  Lemma~\ref{lem:galois-cohomology-of-BdRdaggerplus-primitiverootofunity-morelemma2-reconstructionpaper}
  for the first equality. The second
  one follows from Lemma~\ref{lem:poperator-D-j-to-tildeD-j}
  and Definition~\ref{defn:decalage-for-Banachmodules}.
\end{proof}


\subsection{Proof of Proposition~\ref{prop:galois-cohomology-of-BdRdaggerplus-kprime-injectintotminusj}}
\label{subsubsec:proof:prop:galois-cohomology-of-BdRdaggerplus-kprime-injectintotminusj-6}


\begin{lem}\label{lem:galois-cohomology-of-BdRdaggerplus-primitiverootofunity-morelemma3-j-reconstructionpaper}
  Given the filtered complex $\widetilde{C}^{\bullet}:=\cone\left(\eta_{p^{N}}\widetilde{\gamma}^{>q,\bullet}\right)$,
  we observe that $C^{i}/\Fil^{n}$ does not have any $p$-power torsion for all $i\in\ZZ$ and all $n\in\ZZ$.
  Here, $\Fil^{n}\subseteq \widetilde{C}^{i}$ denotes the $n$th piece in the filtration on $C^{i}$.
\end{lem}

\begin{proof}
  This follows from Lemma~\ref{lem:AdRgreaterthanqUtimesS-isomapHomcontSAdRgreaterthanqU-assumptionsforlemmasatisfied-reconstructionpaper}.
\end{proof}

\begin{proof}[Proof of Proposition~\ref{prop:galois-cohomology-of-BdRdaggerplus-kprime-injectintotminusj}]
  Fix $N_{-j}\in\NN$ as in
  Proposition~\ref{prop:galois-cohomology-of-BdRdagger-j-whatistheretoshowquestionsmark}.
  \emph{Loc. cit.} shows that $\eta_{p^{N_{-j}}}\gr\widetilde{\gamma}_{-j}^{>q,\bullet}$
  is a quasi-isomorphism of abstract $W(\kappa)$-modules.
  With other words, its mapping cone
  \begin{equation*}
    \cone\left(\eta_{p^{N_{-j}}}\gr\widetilde{\gamma}_{-j}^{>q,\bullet}\right)
    \stackrel{\text{\ref{lem:cohomologyofBdRdaggerplus-etagr-is-greta-forvarthetagreaterthanq-j-reconstructionpaper}}}{\cong}
    \cone\left(\gr\eta_{p^{N_{-j}}}\widetilde{\gamma}_{-j}^{>q,\bullet}\right)
    =\gr\cone\left(\eta_{p^{N_{-j}}}\widetilde{\gamma}_{-j}^{>q,\bullet}\right)
  \end{equation*}
  is acyclic. It follows from~\cite[Chapter I, \S 4.1, page 31-32, Theorem 4]{HuishiOystaeyen1996}
  that $\cone\left(\eta_{p^{N_{-j}}}\widetilde{\gamma}_{-j}^{>q,\bullet}\right)$
  is strictly exact as a filtered complex;
  note that \emph{loc. cit.} applies by
  Corollary~\ref{cor:galois-cohomology-of-BdRdaggerplus-primitiverootofunity-aaaaaletaoperatorconditionsfiltered-corollary}
  and~\ref{cor:galois-cohomology-of-BdRdaggerplus-primitiverootofunity-j-aaaaaletaoperatorconditions-filtered-corollary}. 
  Now Lemma~\ref{lem:subsections-periodsheaves-affperfd-1}
  applies to $\cone\left(\eta_{p^{N_{-j}}}\widetilde{\gamma}^{>q,\bullet}\right)$
  by Lemma~\ref{lem:galois-cohomology-of-BdRdaggerplus-primitiverootofunity-morelemma3-j-reconstructionpaper}.
  Together with
  Lemma~\ref{lem:galois-cohomology-of-BdRdaggerplus-primitiverootofunity-morelemma2-j-reconstructionpaper},
  it implies that that $\cone\left(\eta_{p^{N_{-j}}}\gamma_{-j}^{>q,\bullet}\right)$
  is strictly exact as a complex of topological groups. Consequently, it
  is strictly exact as a complex of $W(\kappa)$-Banach modules.
  Because both $\eta_{p^{N_{-j}}}D^{>q,\bullet}$ and $\eta_{p^{N_{-j}}}D_{-j}^{>q,\bullet}$
  are complexes of $p$-torsion free $W(\kappa)$-Banach modules,
  cf. Lemma~\ref{lem:AdRgreaterthanqptorsionfree},
  Corollary~\ref{cor:galois-cohomology-of-BdRdaggerplus-primitiverootofunity-aaaaaletaoperatorconditions-corollary},
  and Corollary~\ref{cor:galois-cohomology-of-BdRdaggerplus-primitiverootofunity-j-aaaaaletaoperatorconditions-corollary},
  one can apply
  Corollary~\ref{cor:completed-localisation-strictlyexact}.
  This gives that $\cone\left(\eta_{p^{N_{-j}}}\gamma_{-j}^{>q,\bullet}\right)\widehat{\otimes}_{W(\kappa)}\id_{k_{0}}$
  is strictly exact. Now apply
  Lemma~\ref{lem:quasiiso-if-cone-strictlyexact-reconstructionpaper} to find that
  \begin{equation*}
    \eta_{p^{N_{-j}}}\gamma_{-j}^{>q,\bullet}\widehat{\otimes}_{W(\kappa)}\id_{k_{0}}
    \colon\eta_{p^{N_{-j}}}D^{>q,\bullet}\widehat{\otimes}_{W(\kappa)}k_{0}
    \to\eta_{p^{N_{-j}}}D_{-j}^{>q,\bullet}\widehat{\otimes}_{W(\kappa)}k_{0}
  \end{equation*}
  is a quasi-isomorphism.
  Pass to the colimit along $q\to\infty$.
  By Corollary~\ref{cor:filteredcol-inIndBan-stronglyexact},
  \begin{equation*}
    \text{``}\varinjlim_{q\in\NN_{\geq3}}\text{"}
    \eta_{p^{N_{-j}}}\gamma_{-j}^{>q,\bullet}\widehat{\otimes}_{W(\kappa)}\id_{k_{0}}
    \colon\text{``}\varinjlim_{q\in\NN_{\geq3}}\text{"}\eta_{p^{N_{-j}}}D^{>q,\bullet}\widehat{\otimes}_{W(\kappa)}k_{0}
    \to\text{``}\varinjlim_{q\in\NN_{\geq3}}\text{"}\eta_{p^{N_{-j}}}D_{-j}^{>q,\bullet}\widehat{\otimes}_{W(\kappa)}k_{0}
  \end{equation*}
  is again a quasi-isomorphism.
  But the domain of this morphism is
  \begin{equation*}
    \text{``}\varinjlim_{q\in\NN_{\geq3}}\text{"}\eta_{p^{N_{-j}}}D^{>q,\bullet}\widehat{\otimes}_{W(\kappa)}k_{0}
    \stackrel{\text{\ref{lem:decalage-completed-localisation-is-just-completed-localisation}}}{\cong}
    \text{``}\varinjlim_{q\in\NN_{\geq3}}\text{"}D^{>q,\bullet}\widehat{\otimes}_{W(\kappa)}k_{0}
    \stackrel{\text{\ref{lem:galois-cohomology-of-BdRdagger-Dgreaterthanqbullet-isotoBdRdagplusGbulletstar}}}{\cong}
    \BB_{\dR}^{\dag,+}\left(\cal{G}^{\bullet}\times*\right),
  \end{equation*}
  where Lemma~\ref{lem:decalage-completed-localisation-is-just-completed-localisation} applies
  because of Lemma~\ref{lem:galois-cohomology-of-BdRdaggerplus-primitiverootofunity-aaaaaletaoperatorconditions}.
  Similarly, the codomain is  
  \begin{equation*}
    \text{``}\varinjlim_{q\in\NN_{\geq1}}\text{"}\eta_{p^{N_{-j}}}D_{-j}^{>q,\bullet}\widehat{\otimes}_{W(\kappa)}k_{0}
    \stackrel{\text{\ref{lem:decalage-completed-localisation-is-just-completed-localisation}}}{\cong}
    \text{``}\varinjlim_{q\in\NN_{\geq1}}\text{"}D_{-j}^{>q,\bullet}\widehat{\otimes}_{W(\kappa)}k_{0}
    \stackrel{\text{\ref{lem:galois-cohomology-of-BdRdagger-Dgreaterthanqbullet-j-isotoBdRdagplusGbulletstar}}}{\cong}
    t^{-j}\BB_{\dR}^{\dag,+}\left(\cal{G}^{\bullet}\times*\right),
  \end{equation*}
  where Lemma~\ref{lem:decalage-completed-localisation-is-just-completed-localisation} applies
  by Lemma~\ref{lem:galois-cohomology-of-BdRdaggerplus-primitiverootofunity-j-aaaaaletaoperatorconditions}.
  We have thus found 
  that~(\ref{eq:galois-cohomology-of-BdRdaggerplus-primitiverootofunity-j-whatistheretoshow})
  is a quasi-isomorphism of cochain complexes of
  $k_{0}$-ind-Banach spaces. Proposition~\ref{prop:galois-cohomology-of-BdRdaggerplus-kprime-injectintotminusj}
  follows by Lemma~\ref{lem:galois-cohomology-of-BdRdaggerplus-primitiverootofunity-j-whatistheretoshow}.
\end{proof}


\subsection{Proof of Theorem~\ref{thm:galois-cohomology-of-solidBdRdagger-born}}\label{subsubsec:proofthm:galois-cohomology-of-solidBdRdagger}

Recall the computation of the continuous Galois cohomology of $\underline{B}_{\dR}^{\dag,+}$
as in Corollary~\ref{cor:galois-cohomology-of-solidBdRdaggerplus-primitiverootofunity}.

\begin{cor}\label{cor:galois-cohomology-of-BdRdaggerplus-kprime-injectintotminusj-solid}
  Given $j\in\NN$, the canonical morphism
  $B_{\dR}^{\dag,+}\to t^{-j}B_{\dR}^{\dag,+}$ induces
  \begin{equation*}
    \Ho_{\cont}^{i}\left(\Gal\left( \overline{k} / k^{\prime} \right) , \underline{t^{-j}B_{\dR}^{\dag,+}} \right)
    \cong
    \begin{cases}
      \underline{k^{\prime}}, &\text{ if $i=0,1$ and} \\
      0, &\text{otherwise}.
    \end{cases}
  \end{equation*}
\end{cor}

\begin{proof}
  Apply Lemma~\ref{lem:contgpcoh-indban-vs-solid-reconstructionpaper}
  to Proposition~\ref{prop:galois-cohomology-of-BdRdaggerplus-kprime-injectintotminusj}.
\end{proof}

\begin{cor}\label{cor:galois-cohomology-of-BdRdaggerplus-k-injectintotminusj-solid}
  Given $j\in\NN$, the canonical morphism
  $B_{\dR}^{\dag,+}\to t^{-j}B_{\dR}^{\dag,+}$
  induces
  \begin{equation*}
    \Ho_{\cont}^{i}\left(\Gal\left( \overline{k} / k \right) , \underline{t^{-j}B_{\dR}^{\dag,+}} \right)
    \cong
    \begin{cases}
      \underline{k}, &\text{ if $i=0,1$ and} \\
      0, &\text{otherwise}.
    \end{cases}
  \end{equation*}
\end{cor}

\begin{proof}
  Proceed as in the proof of Theorem~\ref{thm:galois-cohomology-of-solidBdRdaggerplus},
  applying Corollary~\ref{cor:galois-cohomology-of-BdRdaggerplus-kprime-injectintotminusj-solid}
  instead of~\ref{cor:galois-cohomology-of-solidBdRdaggerplus-primitiverootofunity}.
\end{proof}

\begin{thm}\label{thm:galois-cohomology-of-BdRdaggerplus-k-injectintotminusj}
  Given $j\in\NN$, the canonical morphism
  $B_{\dR}^{\dag,+}\to t^{-j}B_{\dR}^{\dag,+}$
  induces
  \begin{equation*}
    \Ho_{\cont}^{i}\left(\Gal\left( \overline{k} / k \right) , t^{-j}B_{\dR}^{\dag,+} \right)
    \cong
    \begin{cases}
      \I\left(k\right), &\text{ if $i=0,1$ and} \\
      0, &\text{otherwise}.
    \end{cases}
  \end{equation*}
\end{thm}


\begin{proof}
  Recall Definition~\ref{defn:indBanachmodule-Ccontcomplex-reconstructionpaper}
  and~\ref{defn:indBan-ctsRGamma-recpaper}.
  We have to check that
  \begin{equation*}
    C_{\cont}^{\bullet}\left(\Gal\left( \overline{k} / k\right),B_{\dR}^{\dag,+}\right)
    \to C_{\cont}^{\bullet}\left(\Gal\left( \overline{k} / k\right), t^{-j}B_{\dR}^{\dag,+}\right)
  \end{equation*}
  is a quasi-isomorphism of cochain complexes of $k_{0}$-ind-Banach spaces.
  By Lemma~\ref{lem:quasiiso-if-cone-strictlyexact-reconstructionpaper},
  this is equivalent to checking that its mapping cone $K^{\bullet}$ is strictly exact.
  We would like to use that
  \begin{equation*}
    \underline{K^{\bullet}}\stackrel{\text{\ref{lem:intHomunderlineSunderlineV-is-underlineHomcontSV-reconstructionpaper}}}{=}
    \cone\left(C_{\cont}^{\bullet}\left(\Gal\left( \overline{k} / k\right),\underline{B}_{\dR}^{\dag,+}\right)
      \to C_{\cont}^{\bullet}\left(\Gal\left( \overline{k} / k\right),\underline{t^{-j}B_{\dR}^{\dag,+}}\right)\right)
  \end{equation*}
  is exact, which follows from Corollary~\ref{cor:galois-cohomology-of-BdRdaggerplus-k-injectintotminusj-solid}
  and the arguments in the proof of Lemma~\ref{lem:contgpcoh-indban-vs-solid-reconstructionpaper}.
  
  We note that for any profinite set $S$, the functor $\intHom_{\cont}\left(S,-\right)$ preserves injective maps between
  Banach spaces. Therefore, the proof of Theorem~\ref{thm:BdRdagRRplus-bornological}
  one page~\pageref{proof:thm:BdRdagRRplus-bornological} implies
  that $K^{\bullet}$ is a cochain complex of bornological $k_{0}$-vector spaces whose bornology has a countable basis,
  cf. Definition~\ref{defn:bornologicalspace-reconstructionpaper} and
  \ref{defn:bornology-countable-basis-reconstructionpaper}.
  Therefore, Proposition~\ref{prop:solidexact-implies-indBanachstrictlyexact-reconstructionpaper}
  implies that $K^{\bullet}$ is strictly exact, as desired.
\end{proof}

\begin{proof}[Proof of Theorem~\ref{thm:galois-cohomology-of-solidBdRdagger-born}]\label{proof:thm:galois-cohomology-of-solidBdRdagger-born}
  Since $B_{\dR}^{\dag}=\text{``}\varinjlim\text{''}_{j\geq0}t^{-j}B_{\dR}^{\dag,+}$,
  Theorem~\ref{thm:galois-cohomology-of-BdRdaggerplus-k-injectintotminusj}
  and Lemma~\ref{lem:indbanachcontcohomology-commuteswith-filteredcolimits-reconstructionpaper}
  apply.
\end{proof}

Finally, we record the following version of Theorem~\ref{thm:galois-cohomology-of-solidBdRdagger-born}
in the solid setting.

\begin{thm}\label{thm:galois-cohomology-of-solidBdRdagger-solid}
  \begin{equation*}
    \Ho_{\cont}^{i}\left(\Gal\left( \overline{k} / k \right) , \underline{B}_{\dR}^{\dag} \right)
    \cong
    \begin{cases}
      \underline{k}, &\text{ if $i=0,1$ and} \\
      0, &\text{otherwise}.
    \end{cases}
  \end{equation*}
\end{thm}

\begin{proof}
  Since $\underline{B}_{\dR}^{\dag}=\varinjlim_{j\geq0}\underline{t^{-j}B_{\dR}^{\dag,+}}$,
  Corollary~\ref{cor:galois-cohomology-of-BdRdaggerplus-k-injectintotminusj-solid}
  and Lemma~\ref{lem:formalcolimits-and-solid-group-cohomology-reconstructionpaper}
  give the result.
\end{proof}


\section{The Galois cohomology of $B_{\pdR}^{\dag,+}$}
\label{subsec:galoiscoh-of-positiveoverconvalmostdRperiodring-recpaper}

\begin{thm}\label{thm:galois-cohomology-of-BpdRdaggerplus}
  The canonical morphism
  $k\isomap\R\Gamma_{\cont}\left(\Gal\left( \overline{k} / k \right) , B_{\pdR}^{\dag,+} \right)$
  is an isomorphism.
\end{thm}

We split the proof of Theorem~\ref{thm:galois-cohomology-of-BpdRdaggerplus} into two parts.
Firstly, we prove that it is an isomorphism in degree zero,
cf. \S\ref{subsubsec:proofthm:galois-cohomology-of-BpdRdaggerplus-degree0}.
In \S\ref{subsubsec:proofthm:galois-cohomology-of-BpdRdaggerplus-positivedegree},
we check the vanishing of $\Ho_{\cont}^{i}\left(\Gal\left( \overline{k} / k \right) , B_{\pdR}^{\dag,+} \right)$
for $i>0$.
Finally, everything comes together in \S\ref{subsubsec:proofthm:galois-cohomology-of-BpdRdaggerplus}.

\subsection{Computations in degree 0}\label{subsubsec:proofthm:galois-cohomology-of-BpdRdaggerplus-degree0}

Fix $k^{\prime}$ as in
Notation~\ref{notation:galois-cohomology-of-BdRdaggerplus-kzero}
and write $\cal{G}:=\Gal\left(\overline{k}/k^{\prime}\right)$.

\begin{lem}\label{lem:GaloisinvarianceAdRgreathanqlogtninvertp-gives-GaloisinvarianceBdRgreathanqlogt-reconstructionpaper}
  Let $n\in\NN$. For all $q\geq 2$, we have the canonical isomorphism
  \begin{equation*}
    \left(A_{\dR}^{>q,+}\left[\log t\right]^{\leq n}\right)^{\cal{G}}\widehat{\otimes}_{W(\kappa)}k_{0}
    \isomap
    \left(B_{\dR}^{>q,+}\left[\log t\right]^{\leq n}\right)^{\cal{G}}.
  \end{equation*}
\end{lem}

\begin{proof}
  Following Definition~\ref{defn:Ginv-recpaper},
  \begin{equation*}
    \left(A_{\dR}^{>q,+}\left[\log t\right]^{\leq n}\right)^{\cal{G}}
    =\ker\left(A_{\dR}^{>q,+}\left[\log t\right]^{\leq n}\longrightarrow
      \intHom_{\cont}\left( \cal{G} , A_{\dR}^{>q,+}\left[\log t\right]^{\leq n} \right)\right).
  \end{equation*}
  Now apply Corollary~\ref{cor:completed-localisation-strictlyexact};
  this is allowed because of
  Lemma~\ref{lem:AdRgreaterthanqptorsionfree}.
  We find
  \begin{align*}
    &\left(A_{\dR}^{>q,+}\left[\log t\right]^{\leq n}\right)^{\cal{G}}\widehat{\otimes}_{W(\kappa)}k_{0} \\
    &\cong
      \ker\left(A_{\dR}^{>q,+}\left[\log t\right]^{\leq n} \widehat{\otimes}_{W(\kappa)}k_{0} \longrightarrow
      \intHom_{\cont}\left( \cal{G} , A_{\dR}^{>q,+}\left[\log t\right]^{\leq n} \right) \widehat{\otimes}_{W(\kappa)}k_{0} \right).
  \end{align*}
  Next, apply Lemma~\ref{lem:HomcontS-commutes-completed-localisation-overWkappa-reconstructionpaper};
  this is allowed, because of Lemma~\ref{lem:Agreaterthanq-multiplybyp-norm-reconstructionpaper}.
  We find
  \begin{align*}
    &\left(A_{\dR}^{>q,+}\left[\log t\right]^{\leq n}\right)^{\cal{G}}\widehat{\otimes}_{W(\kappa)}k_{0} \\
    &\cong
      \ker\left(A_{\dR}^{>q,+}\left[\log t\right]^{\leq n} \widehat{\otimes}_{W(\kappa)}k_{0} \longrightarrow
      \intHom_{\cont}\left( \cal{G} , A_{\dR}^{>q,+}\left[\log t\right]^{\leq n}\widehat{\otimes}_{W(\kappa)}k_{0} \right) \right) \\
    &\cong
      \ker\left(B_{\dR}^{>q,+}\left[\log t\right]^{\leq n} \longrightarrow
      \intHom_{\cont}\left( \cal{G} , B_{\dR}^{>q,+}\left[\log t\right]^{\leq n} \right) \right) \\
    &\cong\left(B_{\dR}^{>q,+}\left[\log t\right]^{\leq n}\right)^{\cal{G}},
  \end{align*}
  as desired.
\end{proof}

\begin{lem}\label{lem:galois-cohomology-of-BpdRdaggerplus-keylemma1-kprime-thefinallemma-reconstructionpaper}
  $\cal{O}_{C}(s)$ denotes the $s$th Tate twist of $\cal{O}_{C}$ for all $s\in\ZZ$. Let $n\in\NN$ and consider
  \begin{equation*}
    \psi_{s}\colon
    \cal{O}_{C}(s)\left[\log t\right]^{\leq n+1} \longrightarrow
    \cal{O}_{C}(s),
    \sum_{\alpha=0}^{n+1}\lambda_{\alpha}\left(\log t\right)^{\alpha}\mapsto \lambda_{n+1}.
  \end{equation*}
  If $\lambda\in\cal{O}_{C}(s)\left[\log t\right]^{\leq n+1}$ is
  $\cal{G}$-invariant, then $\psi_{s}\left(\lambda\right)=0$.
\end{lem}

\begin{proof}
  Writing $\lambda=\sum_{\alpha=0}^{n+1}\lambda_{\alpha}\left(\log t\right)^{\alpha}$,
  we have to check $\lambda_{n+1}=0$. We distinguish two cases:
  \begin{itemize}
    \item Let $s\neq0$. Because $\lambda$ is $\cal{G}$-invariant, $\lambda_{n+1}$
      is $\cal{G}$-invariant. By~\cite[Theorem 1 and 2]{tatepdivisiblegroups}, this implies $\lambda_{n+1}=0$.
    \item Let $s=0$. Note that $\lambda$ is an element of $C\left[\log t\right]^{\leq n+1}$,
    which~\cite{Fontaine2004Arithmetic} denotes by $C(0;n+1)$. Now apply \emph{loc. cit.} Proposition 2.15
    to deduce $\lambda=\lambda_{0}$. In particular, $\lambda_{n+1}=0$.
  \end{itemize}
  This finishes the proof of
  Lemma~\ref{lem:galois-cohomology-of-BpdRdaggerplus-keylemma1-kprime-thefinallemma-reconstructionpaper}.
\end{proof}

\begin{lem}\label{lem:galois-cohomology-of-BpdRdaggerplus-keylemma1-kprime}
  For any $n\in\NN$, the canonical
  $k^{\prime}\to B_{\dR}^{\dag,+}\hookrightarrow B_{\dR}^{\dag,+}\left[\log t\right]^{\leq n}$ induces
  the isomorphism
  \begin{equation*}
    k^{\prime}\isomap\left(B_{\dR}^{\dag,+}\left[\log t\right]^{\leq n}\right)^{\cal{G}}.
  \end{equation*}
\end{lem}

\begin{proof}
  We proceed via induction along $n$. Lemma~\ref{lem:galois-cohomology-of-BpdRdaggerplus-keylemma1-kprime} holds
  for $n=0$ by Theorem~\ref{thm:galois-cohomology-of-solidBdRdaggerplus-born},
  because of $B_{\dR}^{\dag,+}\left[\log t\right]^{\leq 0}=B_{\dR}^{\dag,+}$
  and Lemma~\ref{lem:indBanachcontgpcohzero-is-invariance},
  which applies thanks to the fully faithfulness of $\I$,
  cf.~\cite[Corollary 1.2.28]{Sch99}.
  Now assume the result holds for fixed $n\in\NN$. For all $q\in\NN$, we have the short exact sequence
  \begin{equation*}
    0 \longrightarrow
    B_{\dR}^{>q,+}\left[\log t\right]^{\leq n} \longrightarrow
    B_{\dR}^{>q,+}\left[\log t\right]^{\leq n+1} \stackrel{\varphi^{>q}}{\longrightarrow}
    B_{\dR}^{>q,+} \longrightarrow
    0
  \end{equation*}
  of $\cal{G}$-$k^{\prime}$-Banach modules, where
  $\varphi^{>q}\colon B_{\dR}^{>q,+}\left[\log t\right]^{\leq n+1}\to B_{\dR}^{>q,+}$,
  $\sum_{\alpha=0}^{n+1}b_{\alpha}\left(\log t\right)^{\alpha}\mapsto b_{n+1}$.
  This short exact sequence is strictly exact by the open mapping theorem.
  Now pass to the colimit along $q\to\infty$ and recall
  Corollary~\ref{cor:filteredcol-inIndBan-stronglyexact}
   to get the strictly exact sequence
   \begin{equation}\label{eq:galois-cohomology-of-BpdRdaggerplus-keylemma1-kprime-strictlyshortexactsequence}
      0 \longrightarrow
      B_{\dR}^{\dag,+}\left[\log t\right]^{\leq n} \longrightarrow
      B_{\dR}^{\dag,+}\left[\log t\right]^{\leq n+1} \stackrel{\varphi}{\longrightarrow}
      B_{\dR}^{\dag,+} \longrightarrow
      0.
  \end{equation}
  Next, apply Lemma~\ref{lem:indBanachinvariance-cocont-leftexact} to get the strictly exact sequence
  \begin{equation*}
    0 \longrightarrow
    \left(B_{\dR}^{\dag,+}\left[\log t\right]^{\leq n}\right)^{\cal{G}} \stackrel{f}{\longrightarrow}
    \left(B_{\dR}^{\dag,+}\left[\log t\right]^{\leq n+1}\right)^{\cal{G}} \stackrel{g}{\longrightarrow}
    \left(B_{\dR}^{\dag,+}\right)^{\cal{G}}
  \end{equation*}  
  of $k^{\prime}$-ind-Banach modules. We claim that $g$ is zero,
  as this would imply that $f$ is an isomorphism. In this case, the induction hypothesis would give
  \begin{equation*}
    k^{\prime}\cong\left(B_{\dR}^{\dag,+}\left[\log t\right]^{\leq n}\right)^{\cal{G}}
    \stackrel{f}{\cong}\left(B_{\dR}^{\dag,+}\left[\log t\right]^{\leq n+1}\right)^{\cal{G}},
  \end{equation*}
  which would finish the proof of Lemma~\ref{lem:galois-cohomology-of-BpdRdaggerplus-keylemma1-kprime}.

  To show that $g$ is zero,
  we note with Lemma~\ref{lem:indBanachinvariance-cocont-leftexact} that $g$ is the colimit of the morphisms
  \begin{equation*}
    g^{>q}\colon\left(B_{\dR}^{>q,+}\left[\log t\right]^{\leq n+1}\right)^{\cal{G}} \longrightarrow
    \left(B_{\dR}^{>q,+}\right)^{\cal{G}},
    \lambda\mapsto\varphi^{>q}\left(\lambda\right).
  \end{equation*}
  We may therefore check $g^{>q}=0$ for large $q$.
  By Lemma~\ref{lem:GaloisinvarianceAdRgreathanqlogtninvertp-gives-GaloisinvarianceBdRgreathanqlogt-reconstructionpaper},
  we may also prove that the morphisms
  \begin{equation*}
    \left(A_{\dR}^{>q}\left[\log t\right]^{\leq n+1}\right)^{\cal{G}} \longrightarrow
    \left(A_{\dR}^{>q}\right)^{\cal{G}},
  \end{equation*}  
  are zero, which are induced by the morphisms
  $\phi^{>q}\colon A_{\dR}^{>q}\left[\log t\right]^{\leq n+1}\to A_{\dR}^{>q}$,
  $\sum_{\alpha=0}^{n+1}a_{\alpha}\left(\log t\right)^{\alpha}\mapsto a_{n+1}$.
  This is a statement about abstract $\cal{G}$-representations, thus we can introduce filtrations
  which do not recover the topologies on $A_{\dR}^{>q}$ and $A_{\dR}^{>q}\left[\log t\right]^{\leq n+1}$.
  We choose to work with the filtered ring $\widetilde{A}_{\dR}^{>q}$, which coincides
  with $A_{\dR}^{>q}$ as an abstract $\cal{G}$-representation, but it carries the $\xi/p^{q}$-adic filtration.
  Consequently, may consider the $\cal{G}$-representation $\widetilde{A}_{\dR}^{>q}\left[\log t\right]^{\leq n+1}$,
  which coincides with $A_{\dR}^{>q}\left[\log t\right]^{\leq n+1}$ as an abstract $\cal{G}$-representation,
  but it carries the filtration
  \begin{equation*}
    \Fil^{i}\left(\widetilde{A}_{\dR}^{>q}\left[\log t\right]^{\leq n+1}\right)
    =\Fil^{i}\left(\widetilde{A}_{\dR}^{>q}\right)\left[\log t\right]^{\leq n+1} \,
    \text{ for all $i\in\ZZ$.}
  \end{equation*}
  Now we have to check that the morphism
  \begin{equation*}
    \widetilde{\phi}^{>q}\colon \widetilde{A}_{\dR}^{>q}\left[\log t\right]^{\leq n+1} \longrightarrow
    \widetilde{A}_{\dR}^{>q}, \sum_{\alpha=0}^{n+1}a_{\alpha}\left(\log t\right)^{\alpha}\mapsto a_{n+1}
  \end{equation*}
  sends any invariant
  $a=\sum_{\alpha=0}^{n+1}a_{\alpha}\left(\log t\right)^{\alpha}\in\widetilde{A}_{\dR}^{>q}\left[\log t\right]^{\leq n+1}$
  to zero.
  With other words, one has to check that $a_{n+1}=0$. As the filtration on
  $\widetilde{A}_{\dR}^{>q}$ is separated,
  cf. Lemma~\ref{lem:grAdRgreaterthanq-xioverpadicfiltration-reconstructionpaper}(i), this is the case if
  the principal symbol $\sigma\left(a_{n+1}\right)\in\gr\widetilde{A}_{\dR}^{>q}$ is zero.
  This happens if $\sigma(a)$ is in the kernel of
  \begin{equation*}
    \gr\widetilde{\varphi}^{>q}\colon
    \gr\widetilde{A}_{\dR}^{>q}\left[\log t\right]^{\leq n+1} \longrightarrow
    \gr\widetilde{A}_{\dR}^{>q}.
  \end{equation*}
  As $\sigma(a)$ is again $\cal{G}$-invariant, one may show the
  following: $\gr\widetilde{\varphi}^{>q}$ sends any $\cal{G}$-invariant element to zero. 
  To show this, we further describe the associated graded with
  Lemma~\ref{lem:grAdRgreaterthanq-xioverpadicfiltration-reconstructionpaper}(ii)
  to get
  \begin{equation*}
    \gr\widetilde{\varphi}^{>q}\colon
    \cal{O}_{C}\left[x\right]\left[\log t\right]^{\leq n+1} \longrightarrow
    \cal{O}_{C}\left[x\right],
    \sum_{\alpha=0}^{n+1}\lambda_{\alpha}\left(\log t\right)^{\alpha}\mapsto \lambda_{n+1}.
  \end{equation*}
  Here, $x$ denotes the principal symbol of $\xi/p^{q}$. From the
  isomorphism~(\ref{calOCxnconggrnAdRgreaterthanqstar})
  in the proof of Lemma~\ref{lem:galois-cohomology-of-BdRdagger-Dgreaterthanqbulletcomputedgrn},
  we get $\cal{O}_{C}\left[x\right]\cong\bigoplus_{s\geq0}\cal{O}_{C}(s)$.
  In particular, $\gr\widetilde{\varphi}^{>q}$ is the direct sum of the maps
  \begin{equation*}
    \psi_{s}\colon
    \cal{O}_{C}(s)\left[\log t\right]^{\leq n+1} \longrightarrow
    \cal{O}_{C}(s),
    \sum_{\alpha=0}^{n+1}\lambda_{\alpha}\left(\log t\right)^{\alpha}\mapsto \lambda_{n+1}.
  \end{equation*}
  of $\cal{G}$-representations. Now apply Lemma~\ref{lem:galois-cohomology-of-BpdRdaggerplus-keylemma1-kprime-thefinallemma-reconstructionpaper}.
  Thus $\gr\widetilde{\varphi}^{>q}$ kills the $\cal{G}$-invariant elements.
\end{proof}

\begin{lem}\label{lem:galois-cohomology-of-BpdRdaggerplus-keylemma1}
  For any $n\in\NN$, the canonical $k\to B_{\dR}^{\dag,+}\hookrightarrow B_{\dR}^{\dag,+}\left[\log t\right]^{\leq n}$ induces
  the isomorphism
  \begin{equation*}
    \I\left(k\right)\isomap\Ho_{\cont}^{0}\left(\Gal\left( \overline{k} / k \right) , B_{\dR}^{\dag,+}\left[\log t\right]^{\leq n} \right).
  \end{equation*}
\end{lem}

\begin{proof}
  Fix the intermediate extension $\overline{k} / k^{\prime} / k$
  as in Notation~\ref{notation:galois-cohomology-of-BdRdaggerplus-kzero}.
  Now compute
  \begin{align*}
    \left(B_{\dR}^{\dag,+}\left[\log t\right]^{\leq n}\right)^{\Gal\left( \overline{k} / k\right)}
    \cong
    \left(\left(B_{\dR}^{\dag,+}\left[\log t\right]^{\leq n}\right)^{\Gal\left( \overline{k} / k^{\prime}\right)}\right)^{\Gal\left( k^{\prime} / k\right)}
    \stackrel{\text{\ref{lem:galois-cohomology-of-BpdRdaggerplus-keylemma1-kprime}}}{\cong}
    \left(k^{\prime}\right)^{\Gal\left( k^{\prime} / k\right)}=k
  \end{align*}
  and apply Lemma~\ref{lem:indBanachcontgpcohzero-is-invariance}.
\end{proof}

We proceed with the solid formalism such that the long exact
sequence~(\ref{lem:galois-cohomology-of-BpdRdaggerplus-keylemma2-leq-recpaper}) below is available.

\begin{notation}\label{notation:underlineBdRdaggerpluslogtn-reconstructionpaper}
For any $n\in\NN$, write $\underline{B}_{\dR}^{\dag,+}\left[\log t\right]^{\leq n}:=\underline{B_{\dR}^{\dag,+}\left[\log t\right]^{\leq n}}$.
\end{notation}

\begin{prop}\label{prop:galois-cohomology-of-BpdRdaggerplus-keylemma1-solid}
  For any $n\in\NN$, the canonical $k\to B_{\dR}^{\dag,+}\hookrightarrow B_{\dR}^{\dag,+}\left[\log t\right]^{\leq n}$ induces
  \begin{equation*}
    \underline{k}\isomap\Ho_{\cont}^{0}\left(\Gal\left( \overline{k} / k \right) , \underline{B}_{\dR}^{\dag,+}\left[\log t\right]^{\leq n} \right).
  \end{equation*}
\end{prop}

\begin{proof}
  Apply Lemma~\ref{lem:contgpcoh-indban-vs-solid-reconstructionpaper}
  to Lemma~\ref{lem:galois-cohomology-of-BpdRdaggerplus-keylemma1}.
\end{proof}


\subsection{Computations in positive degree}\label{subsubsec:proofthm:galois-cohomology-of-BpdRdaggerplus-positivedegree}

Recall Notation~\ref{notation:underlineBdRdaggerpluslogtn-reconstructionpaper}.

\begin{lem}\label{lem:galois-cohomology-of-BpdRdaggerplus-keylemma2}
  For any $n\in\NN$, we compute the following:
  \begin{itemize}
    \item[(i)] $\Ho_{\cont}^{1}\left(\Gal\left( \overline{k} / k \right) , \underline{B}_{\dR}^{\dag,+}\left[\log t\right]^{\leq n} \right)=\underline{k}$, and
    \item[(ii)] $\Ho_{\cont}^{i}\left(\Gal\left( \overline{k} / k \right) , \underline{B}_{\dR}^{\dag,+}\left[\log t\right]^{\leq n} \right)=0$
    for all $i\geq2$.
  \end{itemize}
  Furthermore, the following canonical morphism is zero:
  \begin{equation*}
    \Ho_{\cont}^{1}\left(\Gal\left( \overline{k} / k \right) , \underline{B}_{\dR}^{\dag,+}\left[\log t\right]^{\leq n} \right)
    \to
    \Ho_{\cont}^{1}\left(\Gal\left( \overline{k} / k \right) , \underline{B}_{\dR}^{\dag,+}\left[\log t\right]^{\leq n+1} \right).
  \end{equation*}
\end{lem}

\begin{proof}
  We proceed via induction along $n$. Both (i) and (ii) hold for $n=0$ by
  Theorem~\ref{thm:galois-cohomology-of-solidBdRdaggerplus},
  because $\underline{B}_{\dR}^{\dag,+}\left[\log t\right]^{\leq 0}=\underline{B}_{\dR}^{\dag,+}$.
  Now assume (i) and (ii) hold for fixed $n$. We have the short exact sequence
  \begin{equation*}
    0 \longrightarrow
    B_{\dR}^{\dag,+}\left[\log t\right]^{\leq n} \longrightarrow
    B_{\dR}^{\dag,+}\left[\log t\right]^{\leq n+1} \stackrel{\varphi}{\longrightarrow}
    B_{\dR}^{\dag,+} \longrightarrow
    0
  \end{equation*}
  of $\Gal\left(\overline{k}/k\right)$-$k$-Banach modules,
  cf.~(\ref{eq:galois-cohomology-of-BpdRdaggerplus-keylemma1-kprime-strictlyshortexactsequence})
  in the proof of Lemma~\ref{lem:galois-cohomology-of-BpdRdaggerplus-keylemma1-kprime}.
  Now apply Lemma~\ref{lem:IndBan-to-Solid-strictlyexact-reconstructionpaper}
  to find the short exact sequence
  \begin{equation}\label{eq:galois-cohomology-of-BpdRdagger-keylemma2-someshortexactsequence}
    0 \longrightarrow
    \underline{B}_{\dR}^{\dag,+}\left[\log t\right]^{\leq n} \longrightarrow
    \underline{B}_{\dR}^{\dag,+}\left[\log t\right]^{\leq n+1} \stackrel{\underline{\varphi}}{\longrightarrow}
    \underline{B}_{\dR}^{\dag,+} \longrightarrow
    0
  \end{equation}  
  of solid $k$-vector spaces with continuous $G$-action.
  In this setting, the continuous group cohomology is defined
  as an honest derived functor. Therefore,~(\ref{eq:galois-cohomology-of-BpdRdagger-keylemma2-someshortexactsequence})
  gives rise to a long exact sequence 
  \begin{equation}\label{lem:galois-cohomology-of-BpdRdaggerplus-keylemma2-leq-recpaper}
  \begin{tikzcd}
    0 \arrow{r} &
    \Ho^{0}\left(\underline{B}_{\dR}^{\dag,+}\left[\log t\right]^{\leq n}\right) \arrow{r}{f^{0}} &
    \Ho^{0}\left(\underline{B}_{\dR}^{\dag,+}\left[\log t\right]^{\leq n+1}\right) \arrow{r}{g^{0}} &
    \Ho^{0}\left(\underline{B}_{\dR}^{\dag,+}\right) \\
    \empty \arrow{r}{\delta^{0}} &
    \Ho^{1}\left(\underline{B}_{\dR}^{\dag,+}\left[\log t\right]^{\leq n}\right) \arrow{r}{f^{1}} &
    \Ho^{1}\left(\underline{B}_{\dR}^{\dag,+}\left[\log t\right]^{\leq n+1}\right) \arrow{r}{g^{1}} &
    \Ho^{1}\left(\underline{B}_{\dR}^{\dag,+}\right) \\
    \empty \arrow{r}{\delta^{1}} &
    \Ho^{2}\left(\underline{B}_{\dR}^{\dag,+}\left[\log t\right]^{\leq n}\right) \arrow{r}{f^{2}} &
    \Ho^{2}\left(\underline{B}_{\dR}^{\dag,+}\left[\log t\right]^{\leq n+1}\right) \arrow{r}{g^{2}} &
    \Ho^{2}\left(\underline{B}_{\dR}^{\dag,+}\right) \\
    \empty \arrow{r}{\delta^{2}} &
    \dots, &&&
  \end{tikzcd}
  \end{equation}
  where we abbreviated $\Ho^{*}=\Ho_{\cts}^{*}\left(\Gal\left(\overline{k}/k\right),-\right)$.
  By the induction hypothesis, all the $\Ho^{i}$ vanish for $i\geq 2$. This gives (ii).
  Proposition~\ref{prop:galois-cohomology-of-BpdRdaggerplus-keylemma1-solid} implies that $f^{0}$ is an isomorphism,
  implying $g^{0}=0$. Thus $\delta^{0}$ is a monomorphism,
  and the induction hypothesis implies $\Ho^{1}\left(B_{\dR}^{\dag,+}\left[\log t\right]^{\leq n}\right)\cong k$,
  which gives $\delta^{0}$ is an isomorphism. Then $f^{1}$ is the zero map such that
  $g^{1}$ is a monomorphism. But $g^{1}$ is an epimorphism because the $\Ho^{2}$ vanish. Thus
  $g^{1}$ is an isomorphism. Apply Theorem~\ref{thm:galois-cohomology-of-solidBdRdaggerplus}
  to find (i).
  
  Along the way, we deduced $f^{1}=0$.
  This settles the final sentence of Lemma~\ref{lem:galois-cohomology-of-BpdRdaggerplus-keylemma2}.
\end{proof}


\subsection{Proof of Theorem~\ref{thm:galois-cohomology-of-BpdRdaggerplus}}\label{subsubsec:proofthm:galois-cohomology-of-BpdRdaggerplus}

We can now prove the main result of \S\ref{subsec:galoiscoh-of-positiveoverconvalmostdRperiodring-recpaper}.

\begin{proof}[Proof of Theorem~\ref{thm:galois-cohomology-of-BpdRdaggerplus}]
  We have to check that the following cochain complex is strictly exact:
  \begin{equation}\label{thm:galois-cohomology-of-BpdRdaggerplus-thisisstrictlyexact}
    0
    \to
    k
    \to\Hom_{\cont}\left( \Gal\left(\overline{k}/k\right) , B_{\pdR}^{\dag,+}\right)
    \to\Hom_{\cont}\left( \Gal\left(\overline{k}/k\right)^{2} , B_{\pdR}^{\dag,+}\right)
    \to\dots.
  \end{equation}
  As the functors
  $\Hom_{\cont}\left( \Gal\left(\overline{k}/k\right)^{i} , - \right)$
  preserve injections between $k_{0}$-Banach spaces,
  it follows
  from Lemma~\ref{lem:BpdRdagplus-bornology-countable-basis-reconstructionpaper}
  that~(\ref{thm:galois-cohomology-of-BpdRdaggerplus-thisisstrictlyexact})
  is a cochain complex of complete bornological $k_{0}$-vector spaces whose
  bornologies have countable basis, cf.
  Definitions~\ref{defn:bornologicalspace-reconstructionpaper}
  and~\ref{defn:bornology-countable-basis-reconstructionpaper}.
  Therefore, by Proposition~\ref{prop:solidexact-implies-indBanachstrictlyexact-reconstructionpaper}
  and Lemma~\ref{lem:intHomunderlineSunderlineV-is-underlineHomcontSV-reconstructionpaper},
  we may check that the colimit of the cochain complexes
  \begin{equation*}
    0
    \to
    \underline{k}
    \to\Hom_{\cont}\left( \Gal\left(\overline{k}/k\right) , \underline{B}_{\dR}^{\dag,+}\left[\log t\right]^{\leq n} \right)
    \to\Hom_{\cont}\left( \Gal\left(\overline{k}/k\right)^{2} , \underline{B}_{\dR}^{\dag,+}\left[\log t\right]^{\leq n} \right)
    \to\dots
  \end{equation*}
  along $n\in\NN$ is an exact cochain complex of solid $k$-vector spaces;
  see also Notation~\ref{notation:underlineBdRdaggerpluslogtn-reconstructionpaper}.
  This follows directly from
  Proposition~\ref{prop:galois-cohomology-of-BpdRdaggerplus-keylemma1-solid}
  and the second half of Lemma~\ref{lem:galois-cohomology-of-BpdRdaggerplus-keylemma2}.
\end{proof}

Finally, we record the following version of Theorem~\ref{thm:galois-cohomology-of-BpdRdaggerplus}
in the solid setting.

\begin{thm}\label{thm:galois-cohomology-of-solidBdRdaggerplus-solid}
  The canonical morphism
  $\underline{k}\isomap\R\Gamma_{\cont}\left(\Gal\left( \overline{k} / k \right) , \underline{B}_{\pdR}^{\dag,+} \right)$
  is an isomorphism.
\end{thm}

\begin{proof}
  This follows from Theorem~\ref{thm:galois-cohomology-of-BpdRdaggerplus}
  and Lemma~\ref{lem:contgpcoh-indban-vs-solid-reconstructionpaper}.
\end{proof}


\section{The Galois cohomology of $B_{\pdR}^{\dag}$}
\label{subsec:Galoiscoh-overconvalmostdeRhamperiodring-recpaper}

\begin{thm}\label{thm:galois-cohomology-of-BpdRdagger}
  The canonical morphism
  $k\isomap\R\Gamma_{\cont}\left(\Gal\left( \overline{k} / k \right) , B_{\pdR}^{\dag} \right)$
  is an isomorphism.
\end{thm}

We split the proof of Theorem~\ref{thm:galois-cohomology-of-BpdRdagger} into two parts.
Firstly, we prove that it is an isomorphism in degree zero in
\S\ref{subsubsec:proofthm:galois-cohomology-of-BpdRdagger-degree0}.
In \S\ref{subsubsec:proofthm:galois-cohomology-of-BpdRdagger-positivedegree},
we check the vanishing of $\Ho_{\cont}^{i}\left(\Gal\left( \overline{k} / k \right) , B_{\pdR}^{\dag,+} \right)$
for $i>0$.
Finally, everything comes together in \S\ref{subsubsec:proofthm:galois-cohomology-of-BpdRdagger}.

\subsection{Computations in degree 0}\label{subsubsec:proofthm:galois-cohomology-of-BpdRdagger-degree0}

Fix $k^{\prime}$ as in
Notation~\ref{notation:galois-cohomology-of-BdRdaggerplus-kzero},
write $\cal{G}:=\Gal\left(\overline{k}/k^{\prime}\right)$,
and recall Notation~\ref{notation:twisted-AdRgreaterthanq-recpaper}.

\begin{lem}\label{lem:GaloisinvarianceAdRgreathanqlogtninvertp-gives-GaloisinvarianceBdRgreathanqlogt-j-reconstructionpaper}
  Let $n,j\in\NN$. For all $q\in\NN_{\geq3}$, we have the canonical isomorphism
  \begin{equation*}
    \left(A_{-j}^{>q,+}\left[\log t\right]^{\leq n}\right)^{\cal{G}}\widehat{\otimes}_{W(\kappa)}k_{0}
    \isomap
    \left(t^{-j}B_{\dR}^{>q,+}\left[\log t\right]^{\leq n}\right)^{\cal{G}}.
  \end{equation*}
\end{lem}

\begin{proof}
  The proof of Lemma~\ref{lem:GaloisinvarianceAdRgreathanqlogtninvertp-gives-GaloisinvarianceBdRgreathanqlogt-reconstructionpaper} generalises directly.
\end{proof}

The following Lemma~\ref{lem:galois-cohomology-of-BpdRdagger-j-keylemma1-kprime}
is a straightforward generalisation of Lemma~\ref{lem:galois-cohomology-of-BpdRdaggerplus-keylemma1-kprime}.
For the convenience of the reader, we include the full proof of below.

\begin{lem}\label{lem:galois-cohomology-of-BpdRdagger-j-keylemma1-kprime}
  For any $n,j\in\NN$, the canonical
  $k^{\prime}\to B_{\dR}^{\dag,+}\hookrightarrow t^{-j}B_{\dR}^{\dag,+}\left[\log t\right]^{\leq n}$ induces
  \begin{equation*}
    k^{\prime}\isomap\left(t^{-j}B_{\dR}^{\dag,+}\left[\log t\right]^{\leq n}\right)^{\cal{G}}.
  \end{equation*}
\end{lem}

\begin{proof}
  We proceed via induction along $n$. Lemma~\ref{lem:galois-cohomology-of-BpdRdagger-j-keylemma1-kprime} holds
  for $n=0$ by Proposition~\ref{prop:galois-cohomology-of-BdRdaggerplus-kprime-injectintotminusj},
  because $t^{-j}B_{\dR}^{\dag,+}\left[\log t\right]^{\leq 0}=t^{-j}B_{\dR}^{\dag,+}$. Indeed,
  here we use Lemma~\ref{lem:indBanachcontgpcohzero-is-invariance} and the fully faithfulness
  of $\I$, cf.~\cite[Corollary 1.2.28]{Sch99}.
  Now assume Lemma~\ref{lem:galois-cohomology-of-BpdRdagger-j-keylemma1-kprime}
  holds for fixed $n\in\NN$. For all $q\in\NN$, we have the short exact sequence
  \begin{equation*}
    0 \longrightarrow
    t^{-j}B_{\dR}^{>q,+}\left[\log t\right]^{\leq n} \longrightarrow
    t^{-j}B_{\dR}^{>q,+}\left[\log t\right]^{\leq n+1} \stackrel{\varphi_{-j}^{>q}}{\longrightarrow}
    t^{-j}B_{\dR}^{>q,+} \longrightarrow
    0
  \end{equation*}
  of $\cal{G}$-$k^{\prime}$-Banach modules, where
  $\varphi_{-j}^{>q}\colon t^{-j}B_{\dR}^{>q,+}\left[\log t\right]^{\leq n+1}\to t^{-j}B_{\dR}^{>q,+}$,
  $\sum_{\alpha=0}^{n+1}t^{-j}b_{\alpha}\left(\log t\right)^{\alpha}\mapsto t^{-j}b_{n+1}$.
  This short exact sequence is strictly exact by the open mapping theorem.
  Now pass to the colimit along $q\to\infty$ and recall
  Corollary~\ref{cor:filteredcol-inIndBan-stronglyexact}
   to get the strictly exact sequence
   \begin{equation}\label{eq:galois-cohomology-of-BpdRdaggerplus-keylemma1-kprime-strictlyshortexactsequence}
      0 \longrightarrow
      t^{-j}B_{\dR}^{\dag,+}\left[\log t\right]^{\leq n} \longrightarrow
      t^{-j}B_{\dR}^{\dag,+}\left[\log t\right]^{\leq n+1} \stackrel{\varphi_{-j}}{\longrightarrow}
      t^{-j}B_{\dR}^{\dag,+} \longrightarrow
      0
  \end{equation}
  Next, apply Lemma~\ref{lem:indBanachinvariance-cocont-leftexact} to get the strictly exact sequence
  \begin{equation*}
    0 \longrightarrow
    \left(t^{-j}B_{\dR}^{\dag,+}\left[\log t\right]^{\leq n}\right)^{\cal{G}} \stackrel{f_{-j}}{\longrightarrow}
    \left(t^{-j}B_{\dR}^{\dag,+}\left[\log t\right]^{\leq n+1}\right)^{\cal{G}} \stackrel{g_{-j}}{\longrightarrow}
    \left(t^{-j}B_{\dR}^{\dag,+}\right)^{\cal{G}}
  \end{equation*}  
  of $k^{\prime}$-ind-Banach modules. We claim that $g_{-j}$ is zero,
  as this would imply that $f_{-j}$ is an isomorphism. In this case, the induction hypothesis would give
  \begin{equation*}
    k^{\prime}\cong\left(t^{-j}B_{\dR}^{\dag,+}\left[\log t\right]^{\leq n}\right)^{\cal{G}}
    \stackrel{f_{-j}}{\cong}\left(t^{-j}B_{\dR}^{\dag,+}\left[\log t\right]^{\leq n+1}\right)^{\cal{G}},
  \end{equation*}
  which would finish the proof of Lemma~\ref{lem:galois-cohomology-of-BpdRdagger-j-keylemma1-kprime}.

  Next, we note with Lemma~\ref{lem:indBanachinvariance-cocont-leftexact} that $g$ is the colimit of the morphisms
  \begin{equation*}
    g_{-j}^{>q}\colon\left(t^{-j}B_{\dR}^{>q,+}\left[\log t\right]^{\leq n+1}\right)^{\cal{G}} \longrightarrow
    \left(t^{-j}B_{\dR}^{>q,+}\right)^{\cal{G}},
  \end{equation*}
  which are induced by the $\varphi_{-j}^{>q}$.
  We may therefore check that every $g_{-j}^{>q}$ is zero.
  By Lemma~\ref{lem:GaloisinvarianceAdRgreathanqlogtninvertp-gives-GaloisinvarianceBdRgreathanqlogt-j-reconstructionpaper},
  we may also check that the morphisms
  \begin{equation*}
    \left(A_{-j}^{>q}\left[\log t\right]^{\leq n+1}\right)^{\cal{G}} \longrightarrow
    \left(A_{-j}^{>q}\right)^{\cal{G}},
  \end{equation*}
  are zero, which are induced by
  $\phi_{-j}^{>q}\colon A_{-j}^{>q}\left[\log t\right]^{\leq n+1}\to A_{-j}^{>q}$,
  $\sum_{\alpha=0}^{n+1}\left(t/p^{q}\right)^{-j}a_{\alpha}\left(\log t\right)^{\alpha}\mapsto \left(t/p^{q}\right)^{-j}a_{n+1}$.
  This is a statement about abstract $\cal{G}$-representations, thus we can introduce filtrations
  which do not recover the topologies on
  $A_{-j}^{>q}\left[\log t\right]^{\leq n+1}$
  and $A_{-j}^{>q}$.
  We choose to work with $\widetilde{A}_{-j}^{>q}:=\widetilde{\A}_{-j}^{>q}\left(*\right)$,
  cf. Notation~\ref{notation:A-jgreaterthanq-recpaper}.
  Furthermore, we consider the $\cal{G}$-representation
  $\widetilde{A}_{-j}^{>q}\left[\log t\right]^{\leq n+1}$, carrying the filtration
  \begin{equation*}
    \Fil^{i}\left(\widetilde{A}_{-j}^{>q}\left[\log t\right]^{\leq n+1}\right)
    =\Fil^{i}\left(\widetilde{A}_{-j}^{>q}\right)\left[\log t\right]^{\leq n+1} \,
    \text{ for all $i\in\ZZ$.}
  \end{equation*}
  Now we have to check that the morphism
  \begin{equation*}
    \widetilde{\varphi}_{-j}^{>q}\colon
     \widetilde{A}_{-j}^{>q}\left[\log t\right]^{\leq n+1} \longrightarrow
     \widetilde{A}_{-j}^{>q},
     \sum_{\alpha=0}^{n+1}\left(\frac{t}{p^{q}}\right)^{-j}a_{\alpha}\left(\log t\right)^{\alpha}
      \mapsto \left(\frac{t}{p^{q}}\right)^{-j}a_{n+1}
  \end{equation*}
  sends any invariant
  $a=\sum_{\alpha=0}^{n+1}\left(t/p^{q}\right)^{-j}a_{\alpha}\left(\log t\right)^{\alpha}\in\widetilde{A}_{\dR}^{>q}\left[\log t\right]^{\leq n+1}$ to zero.
  With other words, one has to check $\left(t/p^{q}\right)^{-j}a_{n+1}=0$. As the filtration on
  $\widetilde{A}_{-j}^{>q}$ is separated,
  by Lemma~\ref{lem:grAdRgreaterthanq-xioverpadicfiltration-reconstructionpaper}(i), this is the case if
  the principal symbol
  $\sigma\left(\left(t/p^{q}\right)^{-j}a_{n+1}\right)\in\widetilde{A}_{-j}^{>q}$ is zero.
  This happens if $\sigma(a)$ is in the kernel of
  \begin{equation*}
    \gr\widetilde{\varphi}_{-j}^{>q}\colon
    \gr\widetilde{A}_{-j}^{>q,+}\left[\log t\right]^{\leq n+1} \longrightarrow
    \gr\widetilde{A}_{-j}^{>q,+}.
  \end{equation*}
  As $\sigma(a)$ is again $\cal{G}$-invariant, one may show the
  following: $\gr\widetilde{\varphi}^{>q}$ sends any $\cal{G}$-invariant element to zero. 
  To show this, we further describe the associated graded.
  Arguing as in the proof of Lemma~\ref{lem:galois-cohomology-of-BdRdagger-Dgreaterthanqbulletcomputedgrn},
  we find $\gr\widetilde{A}_{\dR}^{>q}\cong\bigoplus_{s\geq0}\cal{O}_{C}(s)$, where the
  $s$th Tate twist $\cal{O}_{C}(s)$ of $\cal{O}_{C}$ sits in degree $s$.
  Since $\cal{G}$ acts on $t/p^{q}$ via the cyclotomic character, we find
  $\gr\widetilde{A}_{-j}^{>q}\cong\bigoplus_{s\geq-j}\cal{O}_{C}(s)$.
  In particular, $\gr\widetilde{\varphi}_{-j}^{>q}$ is the direct sum of the maps
  \begin{equation*}
    \psi_{s}\colon
    \cal{O}_{C}(s)\left[\log t\right]^{\leq n+1} \longrightarrow
    \cal{O}_{C}(s),
    \sum_{\alpha=0}^{n+1}\lambda_{\alpha}\left(\log t\right)^{\alpha}\mapsto \lambda_{n+1}.
  \end{equation*}
  of $\cal{G}$-representations. Now apply Lemma~\ref{lem:galois-cohomology-of-BpdRdaggerplus-keylemma1-kprime-thefinallemma-reconstructionpaper}.
  Thus $\gr\widetilde{\varphi}_{-j}^{>q}$ kills the $\cal{G}$-invariant elements.
\end{proof}

\begin{lem}\label{lem:galois-cohomology-of-BpdRdagger-j-keylemma1}
  For any $n,j\in\NN$, the canonical $k\to B_{\dR}^{\dag,+}\hookrightarrow t^{-j}B_{\dR}^{\dag,+}\left[\log t\right]^{\leq n}$ induces
  \begin{equation*}
    \I\left(k\right)\isomap\Ho_{\cont}^{0}\left(\Gal\left( \overline{k} / k \right) ,
      t^{-j}B_{\dR}^{\dag,+}\left[\log t\right]^{\leq n} \right).
  \end{equation*}
\end{lem}

\begin{proof}
  Compute
  \begin{align*}
    \left(t^{-j}B_{\dR}^{\dag,+}\left[\log t\right]^{\leq n}\right)^{\Gal\left( \overline{k} / k\right)}
    \stackrel{\text{\ref{lem:galois-cohomology-of-BpdRdagger-j-keylemma1-kprime}}}{\cong}
    \left(\left(t^{-j}B_{\dR}^{\dag,+}\left[\log t\right]^{\leq n}\right)^{\Gal\left( \overline{k} / k^{\prime}\right)}\right)^{\Gal\left( k^{\prime} / k\right)}
    \cong\left(k^{\prime}\right)^{\Gal\left( k^{\prime} / k\right)}=k
  \end{align*}
  and apply Lemma~\ref{lem:indBanachcontgpcohzero-is-invariance}.
\end{proof}

We translate Lemma~\ref{lem:galois-cohomology-of-BpdRdagger-j-keylemma1}
into the solid language such that the long exact sequence in the proof of
Lemma~\ref{lem:galois-cohomology-of-BpdRdagger-keylemma2} becomes available.

\begin{notation}\label{notation:underlineBdRdaggerpluslogtn-j-reconstructionpaper}
For any $n,j\in\NN$, write
$t^{-j}\underline{B}_{\dR}^{\dag,+}\left[\log t\right]^{\leq n}:=\underline{t^{-j}B_{\dR}^{\dag,+}\left[\log t\right]^{\leq n}}$.
\end{notation}

\begin{prop}\label{prop:galois-cohomology-of-BpdRdaggerplus-j-keylemma1-solid}
  For any $n,j\in\NN$, the canonical
  $k\to B_{\dR}^{\dag,+}\hookrightarrow t^{-j}B_{\dR}^{\dag,+}\left[\log t\right]^{\leq n}$
  induces
  \begin{equation*}
    \underline{k}\isomap\Ho_{\cont}^{0}\left(\Gal\left( \overline{k} / k \right) ,
      t^{-j}\underline{B}_{\dR}^{\dag,+}\left[\log t\right]^{\leq n} \right).
  \end{equation*}
\end{prop}

\begin{proof}
  Apply Lemma~\ref{lem:contgpcoh-indban-vs-solid-reconstructionpaper}
  to Lemma~\ref{lem:galois-cohomology-of-BpdRdagger-j-keylemma1}.
\end{proof}

\subsection{Computations in positive degree}\label{subsubsec:proofthm:galois-cohomology-of-BpdRdagger-positivedegree}

Recall Notation~\ref{notation:underlineBdRdaggerpluslogtn-j-reconstructionpaper}.

\begin{lem}\label{lem:galois-cohomology-of-BpdRdagger-keylemma2}
  For any $n,j\in\NN$, we compute the following:
  \begin{itemize}
    \item[(i)] $\Ho_{\cont}^{1}\left(\Gal\left( \overline{k} / k \right) , t^{-j}\underline{B}_{\dR}^{\dag,+}\left[\log t\right]^{\leq n} \right)=\underline{k}$, and
    \item[(ii)] $\Ho_{\cont}^{i}\left(\Gal\left( \overline{k} / k \right) , t^{-j}\underline{B}_{\dR}^{\dag,+}\left[\log t\right]^{\leq n} \right)=0$
    for all $i\geq2$.
  \end{itemize}
  Furthermore, the following canonical morphism is zero:
  \begin{equation*}
    \Ho_{\cont}^{i}\left(\Gal\left( \overline{k} / k \right) , t^{-j}\underline{B}_{\dR}^{\dag,+}\left[\log t\right]^{\leq n} \right)
    \to
    \Ho_{\cont}^{i}\left(\Gal\left( \overline{k} / k \right) , t^{-j}\underline{B}_{\dR}^{\dag,+}\left[\log t\right]^{\leq n+1} \right).
  \end{equation*}
\end{lem}

\begin{proof}
  Proceed as in the proof of Lemma~\ref{lem:galois-cohomology-of-BpdRdaggerplus-keylemma2},
  applying Corollary~\ref{cor:galois-cohomology-of-BdRdaggerplus-k-injectintotminusj-solid} instead of
  Theorem~\ref{thm:galois-cohomology-of-solidBdRdaggerplus}.
\end{proof}


\subsection{Proof of Theorem~\ref{thm:galois-cohomology-of-BpdRdagger}}\label{subsubsec:proofthm:galois-cohomology-of-BpdRdagger}

We can now prove the main result of \S\ref{subsec:Galoiscoh-overconvalmostdeRhamperiodring-recpaper}.

\begin{proof}[Proof of Theorem~\ref{thm:galois-cohomology-of-BpdRdagger}]
  We have to check that the following cochain complex is strictly exact:
  \begin{equation}\label{thm:galois-cohomology-of-BpdRdagger-thisisstrictlyexact}
    0
    \to
    k
    \to\Hom_{\cont}\left( \Gal\left(\overline{k}/k\right) , B_{\pdR}^{\dag}\right)
    \to\Hom_{\cont}\left( \Gal\left(\overline{k}/k\right)^{2} , B_{\pdR}^{\dag}\right)
    \to\dots.
  \end{equation}
  As the functors
  $\Hom_{\cont}\left( \Gal\left(\overline{k}/k\right)^{i} , - \right)$
  preserve injections between $k_{0}$-Banach spaces,
  it follows from Theorem~\ref{lem:BpdRdag-bornology-countable-basis-reconstructionpaper}  
  that~(\ref{thm:galois-cohomology-of-BpdRdagger-thisisstrictlyexact})
  is a cochain complex of complete bornological $k_{0}$-vector spaces whose
  bornologies have countable basis, cf.
  Definitions~\ref{defn:bornologicalspace-reconstructionpaper}
  and~\ref{defn:bornology-countable-basis-reconstructionpaper}.
  Therefore, by Proposition~\ref{prop:solidexact-implies-indBanachstrictlyexact-reconstructionpaper}
  and Lemma~\ref{lem:intHomunderlineSunderlineV-is-underlineHomcontSV-reconstructionpaper},
  we may check that the colimit of the complexes
  \begin{equation*}
    0
    \to
    \underline{k}
    \to\Hom_{\cont}\left( \Gal\left(\overline{k}/k\right) , t^{-j}\underline{B}_{\pdR}^{\dag,+}\left[\log t\right]^{\leq n} \right)
    \to\Hom_{\cont}\left( \Gal\left(\overline{k}/k\right)^{2} , t^{-j}\underline{B}_{\pdR}^{\dag,+}\left[\log t\right]^{\leq n} \right)
    \to\dots
  \end{equation*}
  along $n,j\in\NN$ is an exact complex of solid $k_{0}$-vector spaces;
  see also Notation~\ref{notation:underlineBdRdaggerpluslogtn-reconstructionpaper}.
  This follows directly from
  Proposition~\ref{prop:galois-cohomology-of-BpdRdaggerplus-keylemma1-solid}
  and the second half of Lemma~\ref{lem:galois-cohomology-of-BpdRdaggerplus-keylemma2}.
\end{proof}

Finally, we record the following version of Theorem~\ref{thm:galois-cohomology-of-BpdRdagger}
in the solid setting.

\begin{thm}\label{thm:galois-cohomology-of-solidBpdRdagger-solid}
  The canonical morphism
  $\underline{k}\isomap\R\Gamma_{\cont}\left(\Gal\left( \overline{k} / k \right) , \underline{B}_{\pdR}^{\dag} \right)$
  is an isomorphism.
\end{thm}

\begin{proof}
  This follows from Theorem~\ref{thm:galois-cohomology-of-BpdRdagger}
  and Lemma~\ref{lem:contgpcoh-indban-vs-solid-reconstructionpaper}.
\end{proof}

\chapter{Derived pushforwards of overconvergent period structure sheaves}
\label{ch:pushfowardsofperiodstructuresheaves-reconstructionpaper}

Fix the conventions and notation as in \S\ref{subsec:conventions-reconstructionpaper}.
$X$ denotes an arbitrary smooth rigid-anaytic $k$-variety, and $\nu\colon X_{\proet}\to X$ is the canonical
projection. In \S\ref{ch:pushfowardsofperiodstructuresheaves-reconstructionpaper},
we compute the derived pushfowards $\R\nu_{*}\OB_{\dR}^{\dag}$ and
$\R\nu_{*}\OB_{\pdR}^{\dag}$
As a byproduct of our arguments, we also get explicit descriptions of the pushforwards
of the solidifications $\R\nu_{*}\underline{\OB}_{\dR}^{\dag}$ and
$\R\nu_{*}\underline{\OB}_{\pdR}^{\dag}$.


\section{Preparations}

The following will be used later on and should be skipped on a first reading.

\begin{lem}\label{lem:iso-after-modulo-torsion}
  Consider a split injection $\iota\colon M\hookrightarrow N$
  of modules over an abstract commutative ring $R$, whose cokernel
  is killed by $r\in R$. If $M$ has no $r$-torsion, then $\iota$ induces
  an isomorphism
  \begin{equation*}
    \overline{\iota} \colon M \isomap N/N[r].
  \end{equation*}
  Here, $N[r]\subseteq N$ denotes the $r$-torsion submodule. 
\end{lem}

\begin{proof}
  Fix a section $s$ of $\iota$.
  To see that $\overline{\iota}$ is injective, note that
  $\overline{\iota}(m)=0$ imples $\iota(m)\in N[r]$.
  That is, $\iota(rm)=r\iota(m)=0$, implying $rm=0$.
  Since $M$ does not have any $r$-torsion, $m=0$.

  It remains to check that any fixed $[n]\in N/N[r]$ lies in the
  image of $\overline{\iota}$. Since $r\coker\iota=0$, we
  find an $m$ such that $rn=\iota(m)$. Compute
  \begin{equation*}
    rn
    =\iota(m)
    =\iota(s\iota(m))
    =\iota(s(rm))
    =r\iota(s(n)).
  \end{equation*}
  This implies $r(n-\iota(s(n)))=0$. In particular,
    $[n]=[\iota(s(n))]=\overline{\iota}(s(n))\in\im\overline{\iota}$.
\end{proof}


\section{The initial computation}\label{subsec:initialcomputation}

Fix a compatible system $1,\zeta_{p},\zeta_{p^{2}},\dots\in C$
of primitive $p$th roots of unity.
This gives rise to $\epsilon:=\left(1,\zeta_{p},\zeta_{p^{2}},\dots\right)\in C^{\flat}$.
Write $\mu:=[\epsilon]-1\in A_{\inf}:=\A_{\inf}\left(C,\cal{O}_{C}\right)$.
Let $d\in\NN_{\geq1}$. $S$ denotes a profinite set.
In the following, we frequently use Notation~\ref{examplenotation:torus-variable-index}.

\begin{notation}\label{notation:normalOBi-reconstructionpaper}
  $\normalOB_{i}
    :=\OB_{\dR}^{\dag}\left(\widetilde{\TT}_{C}^{\left\{1,\dots,i\right\}}\times S\right)
    \widehat{\otimes}_{k}\cal{O}\left(\TT^{\left\{i+1,\dots,d\right\}}\right)$
    for all $i=0,\dots,d$.
\end{notation}

\begin{remark}\label{remark--notation:normalOBi-reconstructionpaper}
  $\normalOB_{d}=\OB_{\dR}^{\dag}\left(\widetilde{\TT}_{C}^{d}\times S\right)$
  and
  $\normalOB_{0}=\BB_{\dR}^{\dag}\left(\Spa\left(C,\cal{O}_{C}\right)\times S\right)\widehat{\otimes}\cal{O}\left(\TT^{d}\right)$,
  cf. Notation~\ref{notatio:base-change-in-proet}.
\end{remark}

Fix $i=1,\dots,d$ and consider the $\ZZ_{p}(1)^{i}$-action on
$\widetilde{\TT}_{C}^{\left\{1,\dots,i\right\}}$ as in Lemma~\ref{lem:ZpGaloiscoverings}.
Concretely, we fix a $\ZZ_{p}$-basis $\gamma_{1},\dots,\gamma_{i}$ of $\ZZ_{p}(1)^{i}\cong\ZZ_{p}^{i}$
such that
\begin{equation*}
  \gamma_{l}\cdot T_{1}^{e_{1}}\cdots T_{i}^{e_{i}}=\zeta^{e_{l}}T_{1}^{e_{1}}\cdots T_{i}^{e_{i}}
\end{equation*}
for all $i=1,\dots,l$. Here, $\zeta^{e_{l}}=\zeta_{p^{j}}^{e_{l}p^{j}}$ whenever $e_{l}p^{j}\in\ZZ$.
This $\ZZ_{p}(1)^{i}$-action on $\widetilde{\TT}_{K}^{i}$
gives a $\ZZ_{p}(1)^{i}$-action on $\normalOB_{i}$
by functoriality. In particular, we have an automorphism
$\gamma_{i}\colon\normalOB_{i}\to\normalOB_{i}$.

On the other hand, we denote the canonical inclusion $\normalOB_{i-1}\to\normalOB_{i}$
by $\iota$. Because $\gamma_{i}$ fixes $T_{1},\dots,T_{i-1}$, we find
$\im\iota\subset\gamma_{i}-1$.
Thus we get the cochain complex~(\ref{eq:cont-group-coh-OBlatTTKd-the-sequence})
in Proposition~\ref{prop:cont-group-coh-OBlatTTKd}.

\begin{prop}\label{prop:cont-group-coh-OBlatTTKd}
  The sequence
  \begin{equation}\label{eq:cont-group-coh-OBlatTTKd-the-sequence}
    0
    \to\normalOB_{i-1}
    \stackrel{\iota}{\longrightarrow} \normalOB_{i}
    \stackrel{\gamma_{i}-1}{\longrightarrow}
    \normalOB_{i} \longrightarrow 0
  \end{equation}
  of $k$-ind-Banach spaces is strictly exact.
\end{prop}

We spend the remainder of \S\ref{subsec:initialcomputation} on the proof of
Proposition~\ref{prop:cont-group-coh-OBlatTTKd}.
In the following, we summarise our arguments for the convenience of the reader.

In \S\ref{subsubsec:initialcomputation-usefulpresentationsOBiOBi-1-reconstructionpaper},
we give descriptions of $\normalOB_{i}$ and $\normalOB_{i-1}$
which make the complex~(\ref{eq:cont-group-coh-OBlatTTKd-the-sequence})
explicit. However, at this point it is still difficult to give a direct proof of the strict
exactness of~(\ref{eq:cont-group-coh-OBlatTTKd-the-sequence}).
The issue is that both $\normalOB_{i}$ and $\normalOB_{i-1}$
do not carry useful filtrations, therefore we cannot argue via associated gradeds.
We resolve this by replacing $\normalOB_{i}$ and $\normalOB_{i-1}$
with certain Banach algebras $\normalOA_{i-1,i}^{>q-1,q}$,
$\normalOA_{i-1}^{>q-1}$, and $\normalOA_{i}^{>q-1}$;
these carry filtrations which are at least complete.
We do this in \S\ref{subsubsec:initialcomputation-normalOAi-1ietc},
and then build a family of cochain complexes $E^{>q,\bullet}$
in \S\ref{subsubsec:initialcomputation-thecomplexEgreathanqbullet}
which carries complete filtrations.
Their cohomology is directly related to the strict exactness of~(\ref{eq:cont-group-coh-OBlatTTKd-the-sequence}),
cf. Lemma~\ref{lem:etamupq-decelagemakessense}. One would then like to proceed
by consideration of the associated graded $\gr E^{>q,\bullet}$, which we failed to do.

Therefore, in \S\ref{subsubsec:initialcomputation-normalOtildeAi-1ietc},
we introduce variants
$\normalOtildeA_{i-1,i}^{>q-1,q}$,
$\normalOtildeA_{i-1}^{>q-1}$, and $\normalOtildeA_{i}^{>q-1}$
of $\normalOA_{i-1,i}^{>q-1,q}$,
$\normalOA_{i-1}^{>q-1}$, and $\normalOA_{i}^{>q-1}$,
carrying slightly different filtrations. We use these to build the variants
$\widetilde{E}^{>q,\bullet}$ of the cochain complexes $E^{>q,\bullet}$
in \S\ref{subsubsec:initialcomputation-thecomplextildeEgreathanqbullet}.
Both coincide as cochain complexes of abstract rings, but their filtrations differ.
Nevertheless, we are able to relate the cohomology of $\widetilde{E}^{>q,\bullet}$
to the strict exactness of~(\ref{eq:cont-group-coh-OBlatTTKd-the-sequence}),
cf. Lemma~\ref{lem:cont-group-coh-OBlatTTKd-Dbullet-reducetotildeE}.

In \S\ref{subsubsec:initialcomputation-associatedgradedofEgreaterthanqbullet-reconstructionpaper},
we continue by studying the cohomology of $\widetilde{E}^{>q,\bullet}$
through its associated graded. We relate it to the cohomology of a certain the complex $H^{\bullet}$,
whose cohomology we compute in \S\ref{subsubsec:initialcomputation-Ho-of-Hbullet}.
We then use these computations in \S\ref{subsubsec:initialcomputation-finallytheproof}
to complete the proof of Proposition~\ref{prop:cont-group-coh-OBlatTTKd}.


\subsection{A useful presentation of $\normalOB_{i}$}
\label{subsubsec:initialcomputation-usefulpresentationsOBiOBi-1-reconstructionpaper}

Theorem~\ref{thm:cohomology-OBdRdag-over-affperfd-reconstructionpaper}
yields the isomorphism
\begin{equation}\label{eq:aprioridescription-normalOBi-reconstructionpaper}
  \normalOB_{i}
  \cong
  \BB_{\dR}^{\dag}\left(\widetilde{\TT}_{C}^{\left\{1,\dots,i\right\}}\times S\right)
  \left\<\frac{Z_{1},\dots,Z_{i}}{p^{\infty}}\right\>
  \widehat{\otimes}_{k}
  \cal{O}\left(\TT^{\left\{i+1,\dots,d\right\}}\right).
\end{equation}
Instead, we would like to give a different description of
$\normalOB_{i}$ which interacts well with the $\gamma_{i}$-action.

\begin{notation}
  $T_{j}:=\left[T_{j}^{\flat}\right]+Z_{j}\in
  \A_{\inf}\left(\widetilde{\TT}_{C}^{\left\{1,\dots,i\right\}}\times S\right)
  \llbracket Z_{1},\dots,Z_{i} \rrbracket$
  for all $j=1,\dots,i$.
\end{notation}

\begin{equation*}
  \A_{\inf}\left(\widetilde{\TT}_{C}^{\left\{1,\dots,i\right\}}\times S\right)
  \llbracket Z_{1},\dots,Z_{i} \rrbracket
  \to\widehat{\cal{O}}^{+}\left(\widetilde{\TT}_{C}^{\left\{1,\dots,i\right\}}\times S\right),
  \sum_{\alpha\in\NN^{d}}a_{\alpha}Z^{\alpha}\mapsto\theta_{\inf}\left(a_{0}\right)
\end{equation*}
sends $T_{j}$ to $T_{j}\in \widehat{\cal{O}}^{+}\left(\widetilde{\TT}_{C}^{\left\{1,\dots,i\right\}}\times S\right)$,
which is a unit. Its kernel is $\left(\xi,Z_{1},\dots,Z_{d}\right)$, and
because $\A_{\inf}\left(\widetilde{\TT}_{C}^{\left\{1,\dots,i\right\}}\times S\right)
  \llbracket Z_{1},\dots,Z_{i} \rrbracket$
is complete with respect to the $\left(\xi,Z_{1},\dots,Z_{d}\right)$-adic topology,
cf. Proposition~\ref{prop:Scomplete-Spowerseriescomplete-ifKoszulregular},
\cite[\href{https://stacks.math.columbia.edu/tag/05GI}{Tag 05GI}]{stacks-project}
implies that $T_{j}$ is a unit in
$\A_{\inf}\left(\widetilde{\TT}_{C}^{\left\{1,\dots,i\right\}}\times S\right)
  \llbracket Z_{1},\dots,Z_{i} \rrbracket$,
for all $j=1,\dots,i$. Thus the following Notation~\ref{notation:Fj} makes sense.

\begin{notation}\label{notation:Fj}
  $F_{j}:=\sum_{n\geq 0}\frac{\left(-1\right)^{n}}{(n+1)T_{j}^{n}}Z_{j}^{n}
  \in\BB_{\inf}\left(\widetilde{\TT}_{C}^{\left\{1,\dots,i\right\}}\times S\right)
  \llbracket Z_{1},\dots,Z_{i} \rrbracket$
  for all $j=1,\dots,i$.
\end{notation}

\begin{notation}\label{notation:Uj}
    $U_{j}:=Z_{j}F_{j}$
  for all $j=1,\dots,i$.
\end{notation}

\begin{lem}\label{lem:Ujconverges}
  For $q$ large enough,
  $U_{j}\in\BB_{\dR}^{q}\left(\widetilde{\TT}_{C}^{\left\{1,\dots,i\right\}}\times S\right)\left\< \frac{Z_{1},\dots,Z_{i}}{p^{q}}\right\>$
  for all $j=1,\dots,i$. 
\end{lem}

\begin{proof}
  It suffices to check
  $F_{j}\in\BB_{\dR}^{q}\left(\widetilde{\TT}_{C}^{\left\{1,\dots,i\right\}}\times S\right)\left\< \frac{Z_{1},\dots,Z_{i}}{p^{q}}\right\>$.
  If the multiplication on the Banach algebra is bounded by $C>0$, then
  \begin{equation*}
    p^{-nq}\|\frac{\left(-1\right)^{n}}{(n+1)T_{j}^{n}}\|
    \leq p^{-nq}\frac{C^{n}}{\|n+1\|\|T_{j}\|^{n}}\to0 \text{ for } n\to\infty
  \end{equation*}
  whenever $q$ is large enough.
\end{proof}

\begin{lem}\label{eq:aposteroridescription-normalOBi-reconstructionpaper}
  $\normalOB_{i}
    \cong
    \BB_{\dR}^{\dag}\left(\widetilde{\TT}_{C}^{\left\{1,\dots,i\right\}}\times S\right)
    \left\<
      \frac{U_{1},\dots,U_{i}}{p^{\infty}}
    \right\>
    \widehat{\otimes}_{k}
    \cal{O}\left(\TT^{\left\{i+1,\dots,d\right\}}\right)$.
\end{lem}
  
\begin{proof}
  Thanks to the
  isomorphism~(\ref{eq:aprioridescription-normalOBi-reconstructionpaper}),
  it suffices to check
  \begin{equation*}
    \BB_{\dR}^{q,+}\left(\widetilde{\TT}_{C}^{\left\{1,\dots,i\right\}}\times S\right)
    \left\<
      \frac{Z_{1},\dots,Z_{i}}{p^{q}}
    \right\>
    \cong
    \BB_{\dR}^{q,+}\left(\widetilde{\TT}_{C}^{\left\{1,\dots,i\right\}}\times S\right)
    \left\<
      \frac{U_{1},\dots,U_{i}}{p^{q}}
    \right\>
  \end{equation*}
  for large $q$.
  Because $U_{j}=Z_{j}F_{j}$
  we may check that the
  $F_{j}\in\BB_{\dR}^{q,+}\left(\widetilde{\TT}_{C}^{\left\{1,\dots,i\right\}}\times S\right)
    \left\<
      \frac{Z_{1},\dots,Z_{i}}{p^{q}}
    \right\>$
  are units. To do this, write
  \begin{equation}\label{eq:Fjrewritteninintegralsetting-reconstructionpaper}
    F_{j}=\sum_{n\geq 0}\frac{(-p)^{n}}{(n+1)T^{n}}\left(\frac{Z}{p}\right)^{n}.
  \end{equation}
  The fractions $(-p)^{n}/(n+1)$ are $p$-adic integers, thus (\ref{eq:Fjrewritteninintegralsetting-reconstructionpaper})
  implies that $F_{j}$ lies in the formal power series ring
  $R:=\A_{\inf}\left(\widetilde{\TT}_{C}^{\{j\}}\times S\right)\llbracket Z_{j}/p\rrbracket$.
  This ring $R$ is $Z_{j}/p$-adically complete
  and the reduction of $F_{j}$ modulo $Z_{j}/p$ is $1$,
  thus~\cite[\href{https://stacks.math.columbia.edu/tag/05GI}{Tag 05GI}]{stacks-project}
  implies that $F_{j}$ is a unit in $R$. Consequently, $F_{j}$ is a unit in
  $\BB_{\dR}^{q,+}\left(\widetilde{\TT}_{C}^{\left\{1,\dots,i\right\}}\times S\right)
    \left\<
      \frac{Z_{1},\dots,Z_{i}}{p^{q}}
    \right\>$.
\end{proof}

For future reference, we record the following variant of
Lemma~\ref{eq:aposteroridescription-normalOBi-reconstructionpaper}.

\begin{lem}\label{eq:aposteroridescription-normalOBi-1-reconstructionpaper}
  $\normalOB_{i-1}
    \cong
    \BB_{\dR}^{\dag}\left(\widetilde{\TT}_{C}^{\left\{1,\dots,i-1\right\}}\times S\right)
    \left\<
      \frac{U_{1},\dots,U_{i-1}}{p^{\infty}}
    \right\>
    \widehat{\otimes}_{k}
    \cal{O}\left(\TT^{\left\{i,\dots,d\right\}}\right)$.
\end{lem}

Using Lemma~\ref{eq:aposteroridescription-normalOBi-reconstructionpaper},
we determine the $\gamma_{i}$-action on both $\normalOB_{i}$ and $\normalOB_{i-1}$.
Everything is clear, except the action on the variables $U_{1},\dots,U_{i}$,
which we describe in Corollary~\ref{cor:gammai-acts-on-Uj} below.

\begin{lem}\label{lem:Ujislogarithm}
  $U_{j}=\log\left( \left[T_{j}^{\flat}\right] / T_{j} \right)$ for all $j=1,\dots,i$.
\end{lem}

\begin{proof}
  $U_{j}=Z_{j}F_{j}
  =\sum_{n \geq0}\frac{\left(-1\right)^{n}}{n+1}\left(\frac{-Z_{j}}{T_{j}}\right)^{n}
  =\sum_{n \geq0}\frac{\left(-1\right)^{n}}{n+1}\left(\frac{\left[T_{j}^{\flat}\right]}{T_{j}}-1\right)^{n}
  =\log\left( \left[T_{j}^{\flat}\right] / T_{j} \right)$.
\end{proof}

\begin{cor}\label{cor:gammai-acts-on-Uj}
  $\gamma_{i}\cdot U_{i}= t + U_{i} $ and $\gamma_{i}\cdot U_{j}=U_{j}$ for all $j=1,\dots,i-1$.
\end{cor}

\begin{proof}
  The first identity follows from
  \begin{equation*}
    \gamma_{i}\cdot U_{i}
    \stackrel{\text{\ref{lem:Ujislogarithm}}}{=}
    \log\left(\frac{\left[\gamma_{i}T_{i}^{\flat}\right]}{T_{i}}\right)
    =\log\left(\frac{\left[\epsilon\right]\left[T_{i}^{\flat}\right]}{T_{i}}\right)
    =\log\left(\left[\epsilon\right]\right)+\log\left(\frac{\left[T_{i}^{\flat}\right]}{T_{i}}\right)
    \stackrel{\text{\ref{lem:Ujislogarithm}}}{=}    
    t+U_{i}.
  \end{equation*}
  The second identity follows because
  $\gamma_{i}$ fixes $T_{j}^{1/p^{e}}$ for all $e\in\NN$ and $j=1,\dots,i-1$.
\end{proof}


\subsection{The Banach algebras $\normalOA_{i-1,i}^{>q-1,q}$,
$\normalOA_{i-1}^{>q-1}$, and $\normalOA_{i}^{>q-1}$}\label{subsubsec:initialcomputation-normalOAi-1ietc}

The difficulty in computing the strict exactness of~(\ref{eq:cont-group-coh-OBlatTTKd-the-sequence})
is that this complex does not carry a useful filtration. We would like to change this
by recovering~(\ref{eq:cont-group-coh-OBlatTTKd-the-sequence}) from certain filtered complexes
$E^{>q,\bullet}$, which we can then study through their associated gradeds.
These complexes are defined in the following
\S\ref{subsubsec:initialcomputation-thecomplexEgreathanqbullet}.
Here, we introduce their building blocks
$\normalOA_{i-1,i}^{>q-1,q}$, $\normalOA_{i-1}^{>q-1}$, and $\normalOA_{i}^{>q-1}$.

\begin{notation}\label{notation:initialcomputation-thecomplexEgreathanqbullet-largeformalpowerseriesrings}
  Let $q\in\NN_{\geq1}$.
  \begin{itemize}
    \item[(i)]
      $\A_{\dR}^{>q}\left(\widetilde{\TT}_{C}^{\left\{1,\dots,i\right\}}\times S\right)
      \left\llbracket
        \frac{U_{1},\dots,U_{i-1}}{p^{q-1}},\frac{U_{i}}{p^{q}}
      \right\rrbracket$
      is the $\left(p,\xi/p^{q},U_{1} / p^{q-1},\dots,U_{i-1} / p^{q-1},U_{i} / p^{q}\right)$-adic
      completion of
      $\A_{\dR}^{q}\left(\widetilde{\TT}_{C}^{\left\{1,\dots,i\right\}}\times S\right)
      \left\<
        \frac{U_{1},\dots,U_{i-1}}{p^{q-1}}\right\>\left\<\frac{U_{i}}{p^{q}}
      \right\>$.
    \item[(ii)]
      $\A_{\dR}^{>q}\left(\widetilde{\TT}_{C}^{\left\{1,\dots,i-1\right\}}\times S\right)
      \left\llbracket
        \frac{U_{1},\dots,U_{i-1}}{p^{q-1}}
      \right\rrbracket$
      is the $\left(p,\xi/p^{q},U_{1} / p^{q-1},\dots,U_{i-1} / p^{q-1}\right)$-adic
      completion of
      $\A_{\dR}^{q}\left(\widetilde{\TT}_{C}^{\left\{1,\dots,i-1\right\}}\times S\right)
      \left\<
        \frac{U_{1},\dots,U_{i-1}}{p^{q-1}}\right\>$.
    \item[(iii)]
      $\A_{\dR}^{>q}\left(\widetilde{\TT}_{C}^{\left\{1,\dots,i\right\}}\times S\right)
      \left\llbracket
        \frac{U_{1},\dots,U_{i}}{p^{q-1}}
      \right\rrbracket$
      is the $\left(p,\xi/p^{q},U_{1} / p^{q-1},\dots,U_{i} / p^{q-1}\right)$-adic
      completion of \\
      $\A_{\dR}^{q}\left(\widetilde{\TT}_{C}^{\left\{1,\dots,i\right\}}\times S\right)
      \left\<
        \frac{U_{1},\dots,U_{i}}{p^{q-1}}\right\>$.
  \end{itemize}
\end{notation}

Write $A_{\dR}^{>1}:=\A_{\dR}^{>1}\left(C,\cal{O}_{C}\right)$.
We view the rings introduced in
Notation~\ref{notation:initialcomputation-thecomplexEgreathanqbullet-largeformalpowerseriesrings}
as seminormed $A_{\dR}^{>1}$-algebras equipped with the
\begin{itemize}
  \item[(i)] $\left(p,\xi/p^{q},U_{1} / p^{q-1},\dots,U_{i-1} / p^{q-1},U_{i} / p^{q}\right)$-adic seminorm,
  \item[(ii)] $\left(p,\xi/p^{q},U_{1} / p^{q-1},\dots,U_{i-1} / p^{q-1}\right)$-adic seminorm, and
  \item[(iii)] $\left(p,\xi/p^{q},U_{1} / p^{q-1},\dots,U_{i} / p^{q-1}\right)$-adic seminorm,
\end{itemize}
respectively. In fact, this defines $A_{\dR}^{>1}$-Banach algebras
by~\cite[\href{https://stacks.math.columbia.edu/tag/05GG}{Tag 05GG}]{stacks-project}.

\begin{lem}\label{lem:grwithpnormalOAiq-etc}
  Let $q\in\NN_{\geq2}$.
  Equip the rings in
  Notation~\ref{notation:initialcomputation-thecomplexEgreathanqbullet-largeformalpowerseriesrings}
  with the
  \begin{itemize}
  \item[(i)] $\left(p,\xi/p^{q},U_{1} / p^{q-1},\dots,U_{i-1} / p^{q-1},U_{i} / p^{q}\right)$-adic filtration,
  \item[(ii)] $\left(p,\xi/p^{q},U_{1} / p^{q-1},\dots,U_{i-1} / p^{q-1}\right)$-adic filtration, and
  \item[(iii)] $\left(p,\xi/p^{q},U_{1} / p^{q-1},\dots,U_{i} / p^{q-1}\right)$-adic filtration,
\end{itemize}
respectively. Then we compute
  \begin{equation*}  
  \begin{split}
  &\gr\A_{\dR}^{>q}\left(\widetilde{\TT}_{C}^{\left\{1,\dots,i\right\}}\times S\right)
      \left\llbracket
        \frac{U_{1},\dots,U_{i-1}}{p^{q-1}},\frac{U_{i}}{p^{q}}
      \right\rrbracket \\
    &\quad\cong \left(\widehat{\cal{O}}^{+}\left( \widetilde{\TT}_{C}^{\left\{1,\dots,i\right\}}\times S \right)/p\right)\left[
      \sigma\left(p\right),
      \sigma\left(\frac{\xi}{p^{q}}\right),
      \sigma\left(\frac{U_{1}}{p^{q-1}}\right),
      \dots
      \sigma\left(\frac{U_{i-1}}{p^{q-1}}\right),
      \sigma\left(\frac{U_{i}}{p^{q}}\right)
      \right], \\
    &\gr\A_{\dR}^{>q}\left(\widetilde{\TT}_{C}^{\left\{1,\dots,i-1\right\}}\times S\right)
      \left\llbracket
        \frac{U_{1},\dots,U_{i-1}}{p^{q-1}}
      \right\rrbracket \\
    &\quad\cong \left(\widehat{\cal{O}}^{+}\left( \widetilde{\TT}_{C}^{\left\{1,\dots,i-1\right\}}\times S \right)/p\right)\left[
      \sigma\left(p\right),
      \sigma\left(\frac{\xi}{p^{q}}\right),
      \sigma\left(\frac{U_{1}}{p^{q-1}}\right),
      \dots
      \sigma\left(\frac{U_{i-1}}{p^{q-1}}\right)
      \right], \text{ and}\\
    &\gr\A_{\dR}^{>q}\left(\widetilde{\TT}_{C}^{\left\{1,\dots,i\right\}}\times S\right)
      \left\llbracket
        \frac{U_{1},\dots,U_{i}}{p^{q-1}}
      \right\rrbracket \\
    &\quad\cong \left(\widehat{\cal{O}}^{+}\left( \widetilde{\TT}_{C}^{\left\{1,\dots,i\right\}}\times S \right)/p\right)\left[
      \sigma\left(p\right),
      \sigma\left(\frac{\xi}{p^{q}}\right),
      \sigma\left(\frac{U_{1}}{p^{q-1}}\right),
      \dots
      \sigma\left(\frac{U_{i}}{p^{q-1}}\right)
      \right].
    \end{split}
    \end{equation*}
    All the principal symbols $\sigma(-)$ are of degree one.
  \end{lem}

\begin{proof}
  Here, we only establish the first isomorphism. The other two follow similarly.
  
  Equip
  $\A_{\dR}^{q}\left(\widetilde{\TT}_{C}^{\left\{1,\dots,i\right\}}\times S\right)
    \left\<
      \frac{U_{1},\dots,U_{i-1}}{p^{q-1}}\right\>\left\<\frac{U_{i}}{p^{q}}
    \right\>$
  with the $\left(p,\xi/p^{q},U_{1} / p^{q-1},\dots,U_{i-1} / p^{q-1},U_{i} / p^{q}\right)$-adic
  filtration. By definition, the canonical morphism
  \begin{equation*}
    \gr\A_{\dR}^{q}\left(\widetilde{\TT}_{C}^{\left\{1,\dots,i\right\}}\times S\right)
    \left\<
      \frac{U_{1},\dots,U_{i-1}}{p^{q-1}}\right\>\left\<\frac{U_{i}}{p^{q}}
    \right\>
    \stackrel{\cong}{\longrightarrow}
    \gr\A_{\dR}^{>q}\left(\widetilde{\TT}_{C}^{\left\{1,\dots,i\right\}}\times S\right)
      \left\llbracket
        \frac{U_{1},\dots,U_{i-1}}{p^{q-1}},\frac{U_{i}}{p^{q}}
      \right\rrbracket
  \end{equation*}
  is an isomorphism. Therefore, we may compute
  $\gr\A_{\dR}^{q}\left(\widetilde{\TT}_{C}^{\left\{1,\dots,i\right\}}\times S\right)
  \left\<
    \frac{U_{1},\dots,U_{i-1}}{p^{q-1}}\right\>\left\<\frac{U_{i}}{p^{q}}
  \right\>$.
  Note that
  \begin{equation*}
      \frac{U_{i}}{p^{q}},\frac{U_{i-1}}{p^{q-1}},\dots,\frac{U_{1}}{p^{q-1}},\frac{\xi}{p^{q}},p
      \in
      \A_{\dR}^{q}\left(\widetilde{\TT}_{C}^{\left\{1,\dots,i\right\}}\times S\right)
      \left\<
        \frac{U_{1},\dots,U_{i-1}}{p^{q-1}}\right\>\left\<\frac{U_{i}}{p^{q}}
      \right\>
  \end{equation*}
  is a regular sequence. This follows because the
  $U_{1} / p^{q-1},\dots,U_{i-1} / p^{q-1},U_{i} / p^{q}$ are formal variables,
  and because $\xi/p^{q},p$ is a regular sequence
  by Lemma~\ref{lem:xipq-p-regularseq-inAdRq-recpaper}.
  Therefore, by~\cite[Exercise 17.16]{Ei95}, it remains to compute the zeroth graded piece.
  This can be done with Lemma~\ref{lem:Fontaines-map-for-Ala}.
\end{proof}

\begin{lem}\label{lem:OAiq-etc-formalpowerseriesdescription}
  Fix $q\in\NN_{\geq2}$ and formal variables $\zeta_{1},\dots,\zeta_{i},\eta$.
  We have the isomorphisms
  \begin{equation}\label{eq:themaps--lem:OAiq-etc-formalpowerseriesdescription}
  \begin{split}
      \A_{\dR}^{>q}\left(\widetilde{\TT}_{C}^{\left\{1,\dots,i\right\}}\times S\right)
      \left\llbracket
        \zeta_{1},\dots,\zeta_{i-1},\eta
      \right\rrbracket      
      &\stackrel{\cong}{\longrightarrow}
      \A_{\dR}^{>q}\left(\widetilde{\TT}_{C}^{\left\{1,\dots,i\right\}}\times S\right)
      \left\llbracket
        \frac{U_{1},\dots,U_{i}}{p^{q-1}},\frac{U_{i}}{p^{q}}
      \right\rrbracket, \\
        \zeta_{j}&\mapsto\frac{U_{j}}{p^{q-1}} \text{ of all $j=1,\dots,i-1$,} \\
        \eta&\mapsto\frac{U_{i}}{p^{q}} \\
      \A_{\dR}^{>q}\left(\widetilde{\TT}_{C}^{\left\{1,\dots,i-1\right\}}\times S\right)
      \left\llbracket
        \zeta_{1},\dots,\zeta_{i-1}
      \right\rrbracket      
      &\stackrel{\cong}{\longrightarrow}
      \A_{\dR}^{>q}\left(\widetilde{\TT}_{C}^{\left\{1,\dots,i-1\right\}}\times S\right)
      \left\llbracket
        \frac{U_{1},\dots,U_{i-1}}{p^{q-1}}
      \right\rrbracket, \text{ and} \\
      \zeta_{j}&\mapsto\frac{U_{j}}{p^{q-1}} \text{ of all $j=1,\dots,i-1$} \\
      \A_{\dR}^{>q}\left(\widetilde{\TT}_{C}^{\left\{1,\dots,i\right\}}\times S\right)
      \left\llbracket
        \zeta_{1},\dots,\zeta_{i}
      \right\rrbracket      
      &\stackrel{\cong}{\longrightarrow}
      \A_{\dR}^{>q}\left(\widetilde{\TT}_{C}^{\left\{1,\dots,i\right\}}\times S\right)
      \left\llbracket
        \frac{U_{1},\dots,U_{i}}{p^{q-1}}
      \right\rrbracket \\
      \zeta_{j}&\mapsto\frac{U_{j}}{p^{q-1}} \text{ of all $j=1,\dots,i$}
  \end{split}
  \end{equation}
  of $\A_{\dR}^{>q}\left(\widetilde{\TT}_{C}^{\left\{1,\dots,l\right\}}\times S\right)$-Banach algebras,
  $l\in\{i-1,i\}$.
  Here, the domains of the maps carry the
  \begin{itemize}
    \item[(i)] $\left(p,\xi/p^{q},\zeta_{1},\dots,\zeta_{i-1},\eta\right)$-adic seminorm,
    \item[(ii)] $\left(p,\xi/p^{q},\zeta_{1},\dots,\zeta_{i-1}\right)$-adic seminorm, and
    \item[(iii)] $\left(p,\xi/p^{q},\zeta_{1},\dots,\zeta_{i}\right)$-adic seminorm.
  \end{itemize}
\end{lem}

\begin{proof}
  Firstly, the aforementioned algebras at the left-hand side are indeed Banach algebras,
  because $\A_{\dR}^{>q}\left(\widetilde{\TT}_{C}^{\left\{1,\dots,i\right\}}\times S\right)$
  is a Banach algebra carrying the $\left(p,\xi/p^{q}\right)$-adic norm
  and Proposition~\ref{prop:Scomplete-Spowerseriescomplete-ifKoszulregular}.
  \emph{Loc. cit.} applies because $\xi/p^{q},p$ is a regular sequence,
  which follows from Lemma~\ref{lem:Fontaines-map-for-Alagreatq}.

  The isomorphisms~(\ref{eq:themaps--lem:OAiq-etc-formalpowerseriesdescription}) are established
  by similar arguments. We thus give details only for
  \begin{equation*}
    \A_{\dR}^{>q}\left(\widetilde{\TT}_{C}^{\left\{1,\dots,i\right\}}\times S\right)
    \left\llbracket
      \zeta_{1},\dots,\zeta_{i-1},\eta
    \right\rrbracket      
    \stackrel{\cong}{\longrightarrow}
    \A_{\dR}^{>q}\left(\widetilde{\TT}_{C}^{\left\{1,\dots,i\right\}}\times S\right)
    \left\llbracket
      \frac{U_{1},\dots,U_{i-1}}{p^{q-1}},\frac{U_{i}}{p^{q}}
    \right\rrbracket.
  \end{equation*}
  It exists because the
  $U_{1} / p^{q-1},\dots,U_{i-1} / p^{q-1},U_{i} / p^{q}\in
    \A_{\dR}^{>q}\left(\widetilde{\TT}_{C}^{\left\{1,\dots,i\right\}}\times S\right)
    \left\llbracket
      \frac{U_{1}}{p^{q-1}},\dots,\frac{U_{i-1}}{p^{q-1}},\frac{U_{i}}{p^{q}}
    \right\rrbracket$
  are topologically nilpotent units. To show that it is an isomorphism,
  it suffices to check that their associated gradeds coincide, where
  the domain carries the
  $\left(p,\xi/p^{q},\zeta_{1},\dots,\zeta_{i-1},\eta\right)$-adic filtration,
  and the codomain carries the
  $\left(p,\xi/p^{q},U_{1} / p^{q-1},\dots,U_{i-1} / p^{q-1},U_{i} / p^{q}\right)$-adic filtration;
  this follows from~\cite[Chapter I, \S 4.2 page 31-32, Theorem 4(5)]{HuishiOystaeyen1996}.
  Compute with~\cite[Exercise 17.16]{Ei95}
  \begin{equation*}
    \begin{split}
    &\gr\A_{\dR}^{>q}\left(\widetilde{\TT}_{C}^{\left\{1,\dots,i\right\}}\times S\right)
    \left\llbracket
      \zeta_{1},\dots,\zeta_{i-1},\eta
    \right\rrbracket \\
    &\quad\cong
     \left(\widehat{\cal{O}}^{+}\left( \widetilde{\TT}_{C}^{\left\{1,\dots,i\right\}}\times S \right)/p\right)\left[
      \sigma\left(p\right),
      \sigma\left(\frac{\xi}{p^{q}}\right),
      \sigma\left(\zeta_{1}\right),
      \dots
      \sigma\left(\zeta_{i-1}\right),
      \sigma\left(\eta\right),
      \right].
    \end{split}
  \end{equation*}  
  \emph{Loc. cit.} is using that $\eta,\zeta_{i-1},\dots,\zeta_{1},\xi/p^{q},p$
  is a regular sequence, which follows because the
  $\zeta_{1},\dots,\zeta_{i-1},\eta$ are formal variables,
  and because $\xi/p^{q},p$ is a regular sequence
  by Lemma~\ref{lem:xipq-p-regularseq-inAdRq-recpaper}.
  It then remains to compute the zeroth graded piece, which can be
  done with Lemma~\ref{lem:Fontaines-map-for-Alagreatq}.
  Now use the description of the other associated graded as in Lemma~\ref{lem:grwithpnormalOAiq-etc}
  to finish this proof.
\end{proof}

\begin{notation}
  Let $q\in\NN_{\geq1}$ and define
  \begin{equation*}  
  \begin{split}
    \normalOA_{i-1,i}^{>q-1,q}
      &:=\A_{\dR}^{>q}\left(\widetilde{\TT}_{C}^{\left\{1,\dots,i\right\}}\times S\right)
      \left\llbracket
        \frac{U_{1},\dots,U_{i-1}}{p^{q-1}},\frac{U_{i}}{p^{q}}
      \right\rrbracket
      \widehat{\otimes}_{k^{\circ}}
      \cal{O}\left( \TT^{\left\{i+1,\dots,d\right\}}\right)^{\circ}, \\
    \normalOA_{i-1}^{>q-1}
      &:=\A_{\dR}^{>q}\left(\widetilde{\TT}_{C}^{\left\{1,\dots,i-1\right\}}\times S\right)
      \left\llbracket
        \frac{U_{1},\dots,U_{i-1}}{p^{q-1}}
      \right\rrbracket
      \widehat{\otimes}_{k^{\circ}}
      \cal{O}\left( \TT^{\left\{i,\dots,d\right\}}\right)^{\circ}, \text{ and}\\
    \normalOA_{i}^{>q-1}
      &:=\A_{\dR}^{>q}\left(\widetilde{\TT}_{C}^{\left\{1,\dots,i\right\}}\times S\right)
      \left\llbracket
        \frac{U_{1},\dots,U_{i}}{p^{q-1}}
      \right\rrbracket
      \widehat{\otimes}_{k^{\circ}}
      \cal{O}\left( \TT^{\left\{i+1,\dots,d\right\}}\right)^{\circ}.
  \end{split}
  \end{equation*}
\end{notation}

\begin{lem}\label{lem:calOTTecirc-preservesstrictmono-reconstructionpaper}
  For every $e\in\NN$,
  $-\widehat{\otimes}_{k^{\circ}}\cal{O}\left( \TT^{e} \right)^{\circ}\colon\Ban_{k^{\circ}}\to\Ban_{k^{\circ}}$
  is strongly exact.
\end{lem}

\begin{proof}
  We refer the reader to~\cite[chapter 10]{Sch02}
  for the definition of the Banach spaces $c_{0}(\Omega)$ for a given set $\Omega$.
  \emph{Loc. cit.} Proposition 10.1 implies that
  $\cal{O}\left( \TT^{e} \right)^{\circ}$ is in fact isomorphic to a
  $k$-Banach space $c_{0}(\Omega)$, for some set $\Omega$ which we fix.
  In the notation of \S\ref{subsec:Tubular},
  we observe
  \begin{equation*}
    c_{0}(\Omega)^{\circ}
    \cong k\left\<\zeta_{\omega}\colon \omega\in \Omega\right\>^{\circ}
    \cong k^{\circ}\left\<\zeta_{\omega}\colon \omega\in \Omega\right\>
    \cong{\coprod_{\alpha\in\NN^{(\Omega)}}}^{\leq 1} k^{\circ} \zeta^{\alpha}
  \end{equation*}
  Here, we utilised \emph{loc. cit.} Remark~\ref*{rem:restrictedpowerseries-is-c0space}
  and Lemma~\ref*{lem:infiniteTatealgebrasascontractingcoproductandc0}.
  Therefore, Lemma~\ref{lem:calOTTecirc-preservesstrictmono-reconstructionpaper}
  follows from the strong flatness of
  the $k^{\circ}$-Banach module
  ${\coprod_{\alpha\in\NN^{(\Omega)}}}^{\leq 1} k^{\circ} \zeta^{\alpha}$,
  cf.~\cite[Proposition 5.1.16]{benbassat2024perspectivefoundationsderivedanalytic}.
\end{proof}

\begin{lem}\label{lem:etamupq-decelagemakessense-checkconditionii}
  For all $q\in\NN_{\geq2}$, the following inclusions are strict monomorphisms:
  \begin{equation}\label{lem:etamupq-decelagemakessense-checkconditionii-theinclusions}
  \begin{split}
    \frac{t}{p^{q}}\normalOA_{i-1,i}^{>q-1,q} &\hookrightarrow \normalOA_{i-1,i}^{>q-1,q}, \\
    \frac{t}{p^{q}}\normalOA_{i-1}^{>q-1} &\hookrightarrow \normalOA_{i-1}^{>q-1}, \text{ and} \\
    \frac{t}{p^{q}}\normalOA_{i}^{>q-1} &\hookrightarrow \normalOA_{i}^{>q-1}.
  \end{split}
  \end{equation}
\end{lem}

\begin{proof}
  The strict exactness of the first map would follows from the strict exactness of
  \begin{equation*}
    \frac{t}{p^{q}}\times\colon\normalOA_{i-1,i}^{>q-1,q} \to \normalOA_{i-1,i}^{>q-1,q}.
  \end{equation*}
  By Lemma~\ref{lem:calOTTecirc-preservesstrictmono-reconstructionpaper},
  it suffices to check that the multiplication-by-$t/p^{q}$ map
  \begin{equation*}
    \frac{t}{p^{q}}\times\colon
    \A_{\dR}^{>q}\left(\widetilde{\TT}_{C}^{\left\{1,\dots,i\right\}}\times S\right)
    \left\llbracket
      \frac{U_{1},\dots,U_{i-1}}{p^{q-1}},\frac{U_{i}}{p^{q}}
    \right\rrbracket
    \to
    \A_{\dR}^{>q}\left(\widetilde{\TT}_{C}^{\left\{1,\dots,i\right\}}\times S\right)
    \left\llbracket
      \frac{U_{1},\dots,U_{i-1}}{p^{q-1}},\frac{U_{i}}{p^{q}}
    \right\rrbracket    
  \end{equation*}
  is a strict monomorphism. Thanks to
  Lemma~\ref{lem:OAiq-etc-formalpowerseriesdescription},
  it suffices to check that
  \begin{equation*}
    \frac{t}{p^{q}}\times\colon
    \A_{\dR}^{>q}\left(\widetilde{\TT}_{C}^{\left\{1,\dots,i\right\}}\times S\right)
    \left\llbracket
      \zeta_{1},\dots,\zeta_{i-1},\eta
    \right\rrbracket
    \to
    \A_{\dR}^{>q}\left(\widetilde{\TT}_{C}^{\left\{1,\dots,i\right\}}\times S\right)
    \left\llbracket
      \zeta_{1},\dots,\zeta_{i-1},\eta
    \right\rrbracket    
  \end{equation*}
  is a strict monomorphism.
  This is Lemma~\ref{lem:multbytpq-onAdRgreaterthanq-strictmono-reconstructionpaper}.
  
  The strict exactness of the second and third inclusion
  in~(\ref{lem:etamupq-decelagemakessense-checkconditionii-theinclusions})
  follows from similar arguments.
\end{proof}

\begin{lem}\label{lem:etamupq-decelagemakessense-checkconditioniii}
  $\normalOA_{i-1,i}^{>q-1,q}$,
  $\normalOA_{i-1}^{>q-1}$, and $\normalOA_{i}^{>q-1}$
  are $t/p^{q}$-torsion free.
\end{lem}

\begin{proof}
  We check that $\normalOA_{i-1,i}^{>q-1,q}$ is $t/p^{q}$-torsion free.
  Thanks to Lemma~\ref{lem:calOTTecirc-preservesstrictmono-reconstructionpaper},
  it remains to check that
  $\A_{\dR}^{>q}\left(\widetilde{\TT}_{C}^{\left\{1,\dots,i\right\}}\times S\right)
    \left\llbracket
      \frac{U_{1},\dots,U_{i-1}}{p^{q-1}},\frac{U_{i}}{p^{q}}
    \right\rrbracket$
  is $t/p^{q}$-torsion free. By Lemma~\ref{lem:OAiq-etc-formalpowerseriesdescription},
  it suffices to check that
  $\A_{\dR}^{>q}\left(\widetilde{\TT}_{C}^{\left\{1,\dots,i\right\}}\times S\right)$
  is $t/p^{q}$-torsion free, which we have already checked:
  see Lemma~\ref{lem:canetaoperatorAdRgreaterthanqRRplus-ii}.

  One proceeds similarly to check that $\normalOA_{i-1}^{>q-1}$,
  and $\normalOA_{i}^{>q-1}$ are $t/p^{q}$-torsion free.
\end{proof}

Write $B_{\dR}^{>1}:=\BB_{\dR}^{>1}\left(C,\cal{O}_{C}\right)$.

\begin{lem}\label{lem:normalOAila-and-normalOBila}
  We have the canonical isomorphisms of $k$-ind-Banach spaces
  \begin{equation*}  
  \begin{split}
    \varinjlim_{q\in\NN_{\geq1}}
    \normalOA_{i-1,i}^{>q-1,q}\widehat{\otimes}_{A_{\dR}^{>1}}B_{\dR}^{>1}
      &\stackrel{\cong}{\longrightarrow}\normalOB_{i}, \\
    \varinjlim_{q\in\NN_{\geq1}}
    \normalOA_{i-1}^{>q-1}\widehat{\otimes}_{A_{\dR}^{>1}}B_{\dR}^{>1}
      &\stackrel{\cong}{\longrightarrow}\normalOB_{i-1}, \text{ and} \\
    \varinjlim_{q\in\NN_{\geq1}}
    \normalOA_{i}^{>q-1}\widehat{\otimes}_{A_{\dR}^{>1}}B_{\dR}^{>1}
      &\stackrel{\cong}{\longrightarrow}\normalOB_{i}.
  \end{split}
  \end{equation*}
\end{lem}
  
\begin{proof}
  Apply Lemma~\ref{lem:BBdR-fromAAdR-and-BdRgreaterthan1-reconstructionpaper}
  to find the canonical isomorphism
  \begin{equation}\label{eq:normalOAila-and-normalOBila-iso1}
    \varinjlim_{q\in\NN_{\geq1}}
    \left(\A_{\dR}^{q}\left(\widetilde{\TT}_{C}^{\left\{1,\dots,i\right\}}\times S\right)
      \left\<
        \frac{U_{1},\dots,U_{i-1}}{p^{q-1}}\right\>\left\<\frac{U_{i}}{p^{q}}
      \right\>\widehat{\otimes}_{k^{\circ}}
      \cal{O}\left( \TT^{\left\{i+1,\dots,d\right\}}\right)^{\circ}\right)
      \widehat{\otimes}_{A_{\dR}^{>1}}B_{\dR}^{>1}
    \isomap\normalOB_{i}.
  \end{equation}
  Next, we observe that the canonical maps
  \begin{equation*}
    \iota^{q}\colon
    \A_{\dR}^{q}\left(\widetilde{\TT}_{C}^{\left\{1,\dots,i\right\}}\times S\right)
      \left\<
        \frac{U_{1},\dots,U_{i-1}}{p^{q-1}}\right\>\left\<\frac{U_{i}}{p^{q}}
      \right\>
    \to
    \A_{\dR}^{q+1}\left(\widetilde{\TT}_{C}^{\left\{1,\dots,i\right\}}\times S\right)
      \left\<
        \frac{U_{1},\dots,U_{i-1}}{p^{q}}\right\>\left\<\frac{U_{i}}{p^{q+1}}
      \right\>
  \end{equation*}
  send all elements $p,\xi/p^{q},U_{1} / p^{q-1},\dots,U_{i-1} / p^{q-1}$, and $U_{i} / p^{q}$
  to topologically nilpotent elements. Therefore, the $\iota^{q}$
  factor canonically through bounded linear maps
  \begin{equation*}
    \A_{\dR}^{>q}\left(\widetilde{\TT}_{C}^{\left\{1,\dots,i\right\}}\times S\right)
      \left\llbracket
        \frac{U_{1},\dots,U_{i-1}}{p^{q-1}}\frac{U_{i}}{p^{q}}
      \right\rrbracket
    \to
    \A_{\dR}^{q+1}\left(\widetilde{\TT}_{C}^{\left\{1,\dots,i\right\}}\times S\right)
      \left\<
        \frac{U_{1},\dots,U_{i-1}}{p^{q}}\right\>\left\<\frac{U_{i}}{p^{q+1}}
      \right\>.
  \end{equation*}  
  This allows to construct a two-sided inverse of the canonical morphism
  \begin{equation}\label{eq:normalOAila-and-normalOBila-iso2}  
    \varinjlim_{q\in\NN_{\geq1}}
      \A_{\dR}^{q}\left(\widetilde{\TT}_{C}^{\left\{1,\dots,i\right\}}\times S\right)
      \left\<
        \frac{U_{1},\dots,U_{i-1}}{p^{q-1}}\right\>\left\<\frac{U_{i}}{p^{q}}
      \right\>
    \isomap
    \varinjlim_{q\in\NN_{\geq1}}
    \A_{\dR}^{>q}\left(\widetilde{\TT}_{C}^{\left\{1,\dots,i\right\}}\times S\right)
      \left\llbracket
        \frac{U_{1},\dots,U_{i-1}}{p^{q-1}}\frac{U_{i}}{p^{q}}
      \right\rrbracket,
  \end{equation}
  which is thus an isomorphism. We can now put everything together:
  \begin{align*}
    \normalOB_{i}
    &\stackrel{\text{(\ref{eq:normalOAila-and-normalOBila-iso1})}}{\cong}
    \varinjlim_{q\in\NN_{\geq1}}
    \left(\A_{\dR}^{q}\left(\widetilde{\TT}_{C}^{\left\{1,\dots,i\right\}}\times S\right)
      \left\<
        \frac{U_{1},\dots,U_{i-1}}{p^{q-1}}\right\>\left\<\frac{U_{i}}{p^{q}}
      \right\>\widehat{\otimes}_{k^{\circ}}
      \cal{O}\left( \TT^{\left\{i+1,\dots,d\right\}}\right)^{\circ}\right)
      \widehat{\otimes}_{A_{\dR}^{>1}}B_{\dR}^{>1} \\
    &\cong
    \left(\varinjlim_{q\in\NN_{\geq1}}\A_{\dR}^{q}\left(\widetilde{\TT}_{C}^{\left\{1,\dots,i\right\}}\times S\right)
      \left\<
        \frac{U_{1},\dots,U_{i-1}}{p^{q-1}}\right\>\left\<\frac{U_{i}}{p^{q}}
      \right\>\widehat{\otimes}_{k^{\circ}}
      \cal{O}\left( \TT^{\left\{i+1,\dots,d\right\}}\right)^{\circ}\right)
      \widehat{\otimes}_{A_{\dR}^{>1}}B_{\dR}^{>1} \\
    &\stackrel{\text{(\ref{eq:normalOAila-and-normalOBila-iso2})}}{\cong}
    \left(
        \varinjlim_{q\in\NN_{\geq1}}
    \A_{\dR}^{>q}\left(\widetilde{\TT}_{C}^{\left\{1,\dots,i\right\}}\times S\right)
      \left\llbracket
        \frac{U_{1},\dots,U_{i-1}}{p^{q-1}}\frac{U_{i}}{p^{q}}
      \right\rrbracket
    \widehat{\otimes}_{k^{\circ}}
      \cal{O}\left( \TT^{\left\{i+1,\dots,d\right\}}\right)^{\circ}\right)
      \widehat{\otimes}_{A_{\dR}^{>1}}B_{\dR}^{>1} \\ 
      &\cong\varinjlim_{q\in\NN_{\geq1}}
    \normalOA_{i-1,i}^{>q-1,q}\widehat{\otimes}_{A_{\dR}^{>1}}B_{\dR}^{>1}.
  \end{align*}
  This establishes the first isomorphism in Lemma~\ref{lem:normalOAila-and-normalOBila}.
  The other two follow similarly.
\end{proof}


\subsection{The cochain complexes $E^{>q,\bullet}$}\label{subsubsec:initialcomputation-thecomplexEgreathanqbullet}

We would like to recover~(\ref{eq:cont-group-coh-OBlatTTKd-the-sequence})
from complexes $E^{>q,\bullet}$ built out of $\normalOA_{i-1,i}^{>q-1,q}$,
$\normalOA_{i-1}^{>q-1}$, and $\normalOA_{i}^{>q-1}$, which we
introduced in the previous \S\ref{subsubsec:initialcomputation-normalOAi-1ietc}.
These objects have the advantage that they carry complete filtrations,
thus we could proceed by computing the associated gradeds.
We already defined the building blocks for the objects of $E^{>q,\bullet}$.
Now we work towards the definition of their differentials.

\begin{lem}\label{lem:liftmapstoTatealgebras-reconstructionpaper}
  Consider a morphism $\phi\colon A_{1}\to A_{2}$ of $A_{\dR}^{>1}$-Banach algebras.
  Let $\zeta_{1},\dots,\zeta_{j}$ denote formal variables, $h\in\NN$,
  and fix power-bounded elements $a_{1},\dots,a_{j}\in A_{2}$.
  Then there exists a unique morphism
  $A_{1}\left\<\frac{\zeta_{1},\dots,\zeta_{j}}{p^{h}}\right\>\to A_{2}$
  of $A_{\dR}^{>1}$-Banach algebras as follows:
   \begin{itemize}
     \item[(i)] It restricts to $\phi$ on $A_{1}$, and
     \item[(ii)] $\zeta_{l}/p^{h} \mapsto a_{l}$ for all $l=1,\dots,j$.
   \end{itemize} 
\end{lem}

\begin{proof}
  Uniqueness is clear. To show existence, we recall
  \begin{equation*}
    A_{1}\left\<\frac{\zeta_{1},\dots,\zeta_{j}}{p^{h}}\right\>
    =A_{1}\left\<\zeta_{1},\dots,\zeta_{j}\right\>\left\<\eta_{1},\dots,\eta_{j}\right\> /
    \overline{\left( p^{h}\eta_{1}-\zeta_{1} , \dots , p^{h}\eta_{j}-\zeta_{j} \right)}.
  \end{equation*}
  It remains to construct a morphism
  $\psi\colon A_{1}\left\<\eta_{1},\dots,\eta_{j}\right\>\to A_{2}$
  of $A_{\dR}^{>1}$-Banach algebras as follows:
   \begin{itemize}
     \item[(i)] It restricts to $\phi$ on $A_{1}$, and
     \item[(ii)] $\eta_{l} \mapsto a_{l}$ for all $l=1,\dots,j$.
   \end{itemize}
   (i) and (ii) force
   $\psi\left(\sum_{\alpha\in\NN^{j}}\lambda_{\alpha}\zeta^{\alpha}\right):=
   \sum_{\alpha\in\NN^{j}}\phi\left(\lambda_{\alpha}\right)a^{\alpha}$,
   where we have used the multi-index notation
   $\eta=\left(\eta_{1},\dots,\eta_{j}\right)$ and
   $a=\left(a_{1},\dots,a_{j}\right)$. We check well-definedness:
   \begin{equation*}
     \|\phi\left(\lambda_{\alpha}\right)a^{\alpha}\|
     \leq C \|\phi\|\left(\lambda_{\alpha}\right)\|\|a^{\alpha}\|
     \leq C^{j} \|\phi\|\|\lambda_{\alpha}\|\|a_{1}^{\alpha_{1}}\|\cdots\|a_{j}^{\alpha_{j}}\|
     \to 0 \text{ for } |\alpha|\to\infty,
   \end{equation*}
   where $C$ denotes a bound on the multiplication on $A_{2}$.
   Furthermore, this computation shows
   \begin{equation*}
     \|\psi\| \leq C^{j} \|\phi\|\|a_{1}^{\alpha_{1}}\|\cdots\|a_{j}^{\alpha_{j}}\|,
   \end{equation*}
   that is the boundedness of $\psi$.
   It follows that $\psi$ is a morphism of $A_{\dR}^{>1}$-algebras.
\end{proof}

Recall that we fixed a suitable $\ZZ_{p}$-basis
$\gamma_{i},\dots,\gamma_{i}$ of $\ZZ_{p}(1)^{i}\cong\ZZ_{p}^{i}$,
cf. the discussion following Remark~\ref{remark--notation:normalOBi-reconstructionpaper}.
By functoriality, the action of $\gamma_{i}$ on $\widetilde{\TT}_{C}^{\{1,\dots,i\}}$
gives rise to an endomorphism
\begin{equation*}
  \gamma_{i}\colon
  \A_{\dR}^{q}\left(\widetilde{\TT}_{C}^{\left\{1,\dots,i\right\}}\times S\right)
  \to\A_{\dR}^{q}\left(\widetilde{\TT}_{C}^{\left\{1,\dots,i\right\}}\times S\right)
\end{equation*}
for every $q\in\NN_{\geq1}$.
It appears in the following Lemma~\ref{lem:definegammai-on-theintegraltatering1}.

\begin{lem}\label{lem:definegammai-on-theintegraltatering1}
  Let $q\in\NN_{\geq1}$.
  The $A_{\dR}^{>1}$-Banach algebra
  $\A_{\dR}^{q}\left(\widetilde{\TT}_{C}^{\left\{1,\dots,i\right\}}\times S\right)
    \left\<
    \frac{U_{1},\dots,U_{i-1}}{p^{q-1}}\right\>\left\<\frac{U_{i}}{p^{q}}
    \right\>$
     admits a unique endomorphism
     which is determined as follows:
   \begin{itemize}
     \item[(i)] It restricts to $\gamma_{i}$ on $\A_{\dR}^{q}\left(\widetilde{\TT}_{C}^{\left\{1,\dots,i\right\}}\times S\right)$, and
     \item[(ii)] $U_{i}/p^{q} \mapsto t/p^{q} + U_{i}/p^{q}$ as well as $U_{j}/p^{q-1} \mapsto U_{j}/p^{q-1}$ for all $j=1,\dots,i-1$.
   \end{itemize} 
\end{lem}

\begin{proof}
  By Lemma~\ref{lem:liftmapstoTatealgebras-reconstructionpaper},
  we have to check that $t/p^{q} + U_{i}/p^{q}$ and $U_{j}/p^{q-1}$ for all $j=1,\dots,i-1$
  are power-bounded. For $t/p^{q} + U_{i}/p^{q}$, the follows from the power boundedness of
  $t/p^{q}$ and $U_{i}/p^{q}$, since
  \begin{equation*}
    \|\left(\frac{t}{p^{q}} + \frac{U_{i}}{p^{q}}\right)^{n}\|
    =\|\sum_{m=0}^{n}\binom{n}{m}\left(\frac{t}{p^{q}}\right)^{m}\left(\frac{U_{i}}{p^{q}}\right)^{n-m} \|
    \leq\max_{m=0,\dots,n}\max\left\{\|\left(\frac{t}{p^{q}}\right)^{m}\| , \|\left(\frac{U_{i}}{p^{q}}\right)^{n-m}\|\right\}.
  \end{equation*}
  Clearly, $U_{j}/p^{q-1}$ is power-bounded whenever $j=1,\dots,i$.
\end{proof}

\begin{cor}\label{cor:definegammai-on-theintegralformalpowerseriesring1}
  Let $q\in\NN_{\geq1}$.
  The $A_{\dR}^{>1}$-Banach algebra
  $\A_{\dR}^{>q}\left(\widetilde{\TT}_{C}^{\left\{1,\dots,i\right\}}\times S\right)
    \left\llbracket
    \frac{U_{1},\dots,U_{i-1}}{p^{q-1}}, \frac{U_{i}}{p^{q}}\right\rrbracket$
  admits a unique endomorphism which is determined as follows:
   \begin{itemize}
     \item[(i)] Its composition with
     $\A_{\dR}^{q}\left(\widetilde{\TT}_{C}^{\left\{1,\dots,i\right\}}\times S\right)
       \to\A_{\dR}^{>q}\left(\widetilde{\TT}_{C}^{\left\{1,\dots,i\right\}}\times S\right)
    \left\llbracket
    \frac{U_{1},\dots,U_{i-1}}{p^{q-1}}, \frac{U_{i}}{p^{q}}\right\rrbracket$
     is $a\mapsto\gamma_{i}(a)$. 
     \item[(ii)] $U_{i}/p^{q} \mapsto t/p^{q} + U_{i}/p^{q}$ as well as $U_{j}/p^{q-1} \mapsto U_{j}/p^{q-1}$ for all $j=1,\dots,i-1$.
   \end{itemize} 
\end{cor}

\begin{proof}
  Uniqueness is clear. To show existence, consider the composition
  \begin{multline}\label{multline:definegammai-on-theintegralformalpowerseriesring1-eq1}
    \A_{\dR}^{q}\left(\widetilde{\TT}_{C}^{\left\{1,\dots,i\right\}}\times S\right)
    \left\<
    \frac{U_{1},\dots,U_{i-1}}{p^{q-1}}\right\>\left\<\frac{U_{i}}{p^{q}}
    \right\>
    \to
    \A_{\dR}^{q}\left(\widetilde{\TT}_{C}^{\left\{1,\dots,i\right\}}\times S\right)
    \left\<
    \frac{U_{1},\dots,U_{i-1}}{p^{q-1}}\right\>\left\<\frac{U_{i}}{p^{q}}
    \right\> \\
    \to
    \A_{\dR}^{>q}\left(\widetilde{\TT}_{C}^{\left\{1,\dots,i\right\}}\times S\right)
    \left\llbracket
    \frac{U_{1},\dots,U_{i-1}}{p^{q-1}}, \frac{U_{i}}{p^{q}}\right\rrbracket
  \end{multline}
  of the endomorphism described by Lemma~\ref{lem:definegammai-on-theintegraltatering1}
  and the canonical map. It follows from \emph{loc. cit.} that it sends
  $p,t/p^{q},U_{1}/p^{q-1},\dots,U_{i-1}/p^{q-1}$, and $U_{i}/p^{q}$
  to topologically nilpotent elements. Therefore,
  we may extend~(\ref{multline:definegammai-on-theintegralformalpowerseriesring1-eq1})
  by continuity to get
  \begin{equation*}
    \A_{\dR}^{>q}\left(\widetilde{\TT}_{C}^{\left\{1,\dots,i\right\}}\times S\right)
    \left\llbracket
    \frac{U_{1},\dots,U_{i-1}}{p^{q-1}}, \frac{U_{i}}{p^{q}}\right\rrbracket
    \to
    \A_{\dR}^{>q}\left(\widetilde{\TT}_{C}^{\left\{1,\dots,i\right\}}\times S\right)
    \left\llbracket
    \frac{U_{1},\dots,U_{i-1}}{p^{q-1}}, \frac{U_{i}}{p^{q}}\right\rrbracket,
  \end{equation*}
  the desired endomorphism.
\end{proof}


\begin{defn}\label{defn:gammai-on-overlinenormalOAigreaterthanq-reconstructionpaper}
  Fix $q\in\NN_{\geq1}$.
  Let $\gamma_{i}^{\prime}$ denote the endomorphism described in
  Corollary~\ref{cor:definegammai-on-theintegralformalpowerseriesring1}.
  Then
  \begin{equation*}
    \gamma_{i}:=\gamma_{i}^{\prime}\widehat{\otimes}_{k^{\circ}}\id_{\cal{O}\left(\TT^{\left\{ i+1 , \dots, d \right\}}\right)^{\circ}}
  \end{equation*}
  is an endomorphism of the $A_{\dR}^{>1}$-Banach algebra $\normalOA_{i-1,i}^{>q-1,q}$.
\end{defn}

\begin{remark}
  Definition~\ref{defn:gammai-on-overlinenormalOAigreaterthanq-reconstructionpaper} is not
  an abuse of notation. To be precise, the diagram
  \begin{equation*}
  \begin{tikzcd}
    \normalOB_{i}
    \arrow{r}{\gamma_{i}} &
    \normalOB_{i} \\
    \normalOA_{i-1,i}^{>q-1,q}
    \arrow{u}
    \arrow{r}{\gamma_{i}} &
    \normalOA_{i-1,i}^{>q-1,q}
    \arrow{u},
  \end{tikzcd}
  \end{equation*}
  where the vertical maps are the canonical ones,
  is commutative. This follows from Corollary~\ref{cor:gammai-acts-on-Uj}.
\end{remark}

Here is the main definition of this \S\ref{subsubsec:initialcomputation-thecomplexEgreathanqbullet}:

\begin{defn}\label{defn:Egreaterthanqbullet}
  For any $q\in\NN_{\geq1}$, $E^{>q,\bullet}$ denotes the sequence of maps
  of $A_{\dR}^{>1}$-Banach algebras
  \begin{multline*}
    0
    \longrightarrow
    \normalOA_{i-1}^{>q-1}\times_{\normalOA_{i-1,i}^{>q-1,q}}
    \left(
      \normalOA_{i-1,i}^{>q-1,q} \times_{\gamma_{i}-1,\normalOA_{i-1,i}^{>q-1,q}}\normalOA_{i}^{>q-1}
    \right) \\
    \stackrel{\phi_{0}}{\longrightarrow}
    \normalOA_{i-1,i}^{>q-1,q} \times_{\gamma_{i}-1,\normalOA_{i-1,i}^{>q-1,q}}\normalOA_{i}^{>q-1}
    \stackrel{\phi_{1}}{\longrightarrow}
    \normalOA_{i}^{>q} \longrightarrow
    0.
  \end{multline*}
  Both $\phi_{0}$ and $\phi_{1}$ are given by $\left(f , g\right)\mapsto g$.
  See Definition~\ref{defn:gammai-on-overlinenormalOAigreaterthanq-reconstructionpaper}
  for $\gamma_{i}$.
\end{defn}

\begin{lem}\label{lem:Egreaterthanqbullet-cochaincomplex-reconstructionpaper}
  $E^{>q,\bullet}$ is a cochain complex of $A_{\dR}^{>1}$-Banach modules
  for all $q\in\NN_{\geq1}$.
\end{lem}

\begin{proof}
  We have to show $\left(\phi_{1}\circ\phi_{0}\right)\left(\left(f , \left( g , h\right)\right)\right)=0$ for all elements
  \begin{equation*}
    \left(f , \left( g , h\right)\right) \in
    \normalOA_{i-1}^{>q-1}\times_{\normalOA_{i-1,i}^{>q-1,q}}
    \left(
      \normalOA_{i-1,i}^{>q-1,q} \times_{\gamma_{i}-1,\normalOA_{i-1,i}^{>q-1,q}}\normalOA_{i}^{>q-1}
    \right).
  \end{equation*}
  That is, we have to check $h=0$. To do this, we note that
  the restriction of $\gamma_{i}$ to $\normalOA_{i-1}^{>q-1}$
  is the identity. This is the case because $\gamma_{i}$
  fixes $\A_{\dR}^{>q}\left(\widetilde{\TT}_{K}^{\left\{1,\dots,i-1\right\}}\right)$
  and by Corollary~\ref{cor:gammai-acts-on-Uj}.
  But $f\in\normalOA_{i-1}^{>q-1}$, therefore
  $h=\left(\gamma_{i}-1\right)(g)=\left(\gamma_{i}-1\right)(f)=0$.
\end{proof}

We introduce Convention~\ref{conv:Egreaterthanqbullet-concentratedindegree012}
to apply the $\eta$-operator to $E^{>q,\bullet}$,
cf. Definition~\ref{defn:algebraic-decalage}.

\begin{conv}\label{conv:Egreaterthanqbullet-concentratedindegree012}
  Let $q\in\NN_{\geq1}$. $E^{>q,\bullet}$ is, as a cochain complex,
  concentrated in degrees $0$, $1$ and $2$.
\end{conv}

See the beginning of \S\ref{subsec:initialcomputation}
for the definition $\mu:=[\epsilon]-1\in A_{\inf}$. We denote its image
under the canonical map $A_{\inf}\to A_{\dR}^{>1}$ again by $\mu$.
  
\begin{lem}\label{lem:etamupq-decelagemakessense}
  $\eta_{\mu/p^{q}}E^{>q,\bullet}\subseteq E^{>q,\bullet}$,
  equipped with the induced norms, is a cochain complex
  of $A_{\dR}^{>1}$-Banach modules.
\end{lem}
  
\begin{proof}
  $E^{>q,\bullet}$ is a cochain complex by Lemma~\ref{lem:Egreaterthanqbullet-cochaincomplex-reconstructionpaper}.
  Next, we check that Condition~\ref{cond:Banachmoduledecalage-reconstructionpaper}
  is satisfied for
  $R=A_{\dR}^{>1}$,
  $r=\mu/p^{q}$, and
  $M^{\bullet}=E^{>q,\bullet}$:
  \begin{itemize}
    \item[(i)] $E^{>q,\bullet}$ is concentrated in non-negative degrees by
      Convention~\ref{conv:Egreaterthanqbullet-concentratedindegree012}.
    \item[(ii)] As $\mu/p^{q}$ and $t/p^{q}$ coincide up to a unit in $A_{\dR}^{>1}$,
      cf. the proof of Lemma~\ref{Bla-independent-of-epsilon},
      Lemma~\ref{lem:etamupq-decelagemakessense-checkconditionii} implies that
      the inclusions
      \begin{align*}
        \frac{\mu}{p^{q}}\normalOA_{i-1,i}^{>q-1,q} &\hookrightarrow \normalOA_{i-1,i}^{>q-1,q}, \\
        \frac{\mu}{p^{q}}\normalOA_{i-1}^{>q-1} &\hookrightarrow \normalOA_{i-1}^{>q-1}, \text{ and} \\
        \frac{\mu}{p^{q}}\normalOA_{i}^{>q-1} &\hookrightarrow \normalOA_{i}^{>q-1}.
      \end{align*}    
      are strict monomorphisms. Because taking pullbacks preserves strict monomorphisms,
      cf.~\cite[Lemma 3.7]{BBKFrechetModulesDescent}, this gives (ii).
    \item[(iii)] This is Lemma~\ref{lem:etamupq-decelagemakessense-checkconditioniii},
      again because
      as $\mu/p^{q}$ and $t/p^{q}$ coincide up to a unit in $A_{\dR}^{>1}$.
  \end{itemize}
  Now apply Lemma~\ref{lem:decalage-defined-reconstructionpaper}.
\end{proof}

Lemma~\ref{lem:cont-group-coh-OBlatTTKd-Dbullet}
reduces the proof of Proposition~\ref{prop:cont-group-coh-OBlatTTKd}
to a study of the cochain complexes $\eta_{\mu/p^{q}}E^{>q,\bullet}$.
  
  \begin{lem}\label{lem:cont-group-coh-OBlatTTKd-Dbullet}
   Proposition~\ref{prop:cont-group-coh-OBlatTTKd}
   follows if the $\eta_{\mu/p^{q}}E^{>q,\bullet}$ are strictly exact for $q\in\NN$ large enough.
  \end{lem}
  
  \begin{proof}
  The upper row of the following commutative diagram is~(\ref{eq:cont-group-coh-OBlatTTKd-the-sequence}):
  \begin{equation*}
  \begin{tikzcd}
    0 \arrow{r} &
    \normalOB_{i-1} \arrow{r}{\iota}\arrow{d}{\tau_{0}} & 
    \normalOB_{i} \arrow{r}{\gamma_{i}-1}\arrow{d}{\tau_{1}} &
    \normalOB_{i} \arrow{r}\arrow[equal]{d} &
    0 \\
    0 \arrow{r} &
      \normalOB_{i-1}\times_{\normalOB_{i}}
      \left(
        \normalOB_{i} \times_{\gamma_{i}-1,\normalOB_{i}}\normalOB_{i}
      \right)
      \arrow{r}{\Phi_{0}} &
    \normalOB_{i} \times_{\gamma_{i}-1,\normalOB_{i}}\normalOB_{i}
      \arrow{r}{\Phi_{1}} &
    \normalOB_{i} \arrow{r} &
    0.
  \end{tikzcd}
  \end{equation*}
  Furthermore, we have the morphisms
  \begin{alignat*}{3}
    \tau_{0}\colon f
      &\mapsto \left(f , \left( \iota\left( f\right) , 0 \right)\right),& \quad
    \Phi_{0}\colon \left(f , g\right)
      &\mapsto g \\
    \tau_{1}\colon f
      &\mapsto \left(f , \left(\gamma_{i}-1\right)(f) \right),& \quad
    \Phi_{1}\colon \left(f , g\right)
      &\mapsto g.
  \end{alignat*}
  Both $\tau_{0}$ and $\tau_{1}$ are isomorphisms.
  This implies that~(\ref{eq:cont-group-coh-OBlatTTKd-the-sequence}) is, as a
  cochain complex, isomorphic to
  \begin{equation*}
    0 \longrightarrow
    \normalOB_{i-1}\times_{\normalOB_{i}}
    \left(
      \normalOB_{i} \times_{\gamma_{i}-1,\normalOB_{i}}\normalOB_{i}
    \right)
    \stackrel{\Phi_{0}}{\longrightarrow}
    \normalOB_{i} \times_{\gamma_{i}-1,\normalOB_{i}}\normalOB_{i}
      \stackrel{\Phi_{1}}{\longrightarrow}
    \normalOB_{i} \longrightarrow
    0.
  \end{equation*}  
  By Lemmata~\ref{lem:completed-localisation-preserves-fiberproducts}
  and~\ref{lem:normalOAila-and-normalOBila}, this complex is
  isomorphic to
  \begin{equation*}\label{eq:claim:cont-group-coh-OBlatTTKd-Dbullet-final-computation}
  \begin{split}
    \varinjlim_{q\in\NN_{\geq1}}\left(E^{>q,\bullet}
      \widehat{\otimes}_{A_{\dR}^{>1}}B_{\dR}^{>1}\right)
    &\cong
    \varinjlim_{q\in\NN_{\geq1}}\left(E^{>q,\bullet}
      \widehat{\otimes}_{A_{\dR}^{>1}}\text{``}\varinjlim_{t\times}\text{''}A_{\dR}^{>1}\right) \\
    &\stackrel{\text{\ref{lem:decalage-indBanach-localisation-is-just-completed-localisation}}}{\cong}
    \varinjlim_{q\in\NN_{\geq1}}\left(\eta_{\mu/p^{q}}E^{>q,\bullet}
      \widehat{\otimes}_{A_{\dR}^{>1}}\text{``}\varinjlim_{t\times}\text{''}A_{\dR}^{>1}\right)
    \cong    
    \varinjlim_{q\in\NN_{\geq1}}\varinjlim_{t\times}\eta_{\mu/p^{q}}E^{>q,\bullet}.
  \end{split}
  \end{equation*}
  Here, Lemma~\ref{lem:decalage-indBanach-localisation-is-just-completed-localisation}
  applies because
  Condition~\ref{cond:Banachmoduledecalage-reconstructionpaper}
  is satisfied for
  $R=A_{\dR}^{>1}$,
  $r=\mu/p^{q}$, and
  $M^{\bullet}=E^{>q,\bullet}$,
  cf. the proof of Lemma~\ref{lem:etamupq-decelagemakessense}.
  Now apply Corollary~\ref{cor:filteredcol-inIndBan-stronglyexact}
  to find that $\varinjlim_{q\in\NN_{\geq1}}\varinjlim_{t\times}\eta_{\mu/p^{q}}E^{>q,\bullet}$,
  that is~(\ref{eq:cont-group-coh-OBlatTTKd-the-sequence}), is strictly exact.
\end{proof}


\subsection{The filtered rings $\normalOtildeA_{i-1,i}^{>q-1,q}$,
$\normalOtildeA_{i-1}^{>q-1}$, and $\normalOtildeA_{i}^{>q-1}$}\label{subsubsec:initialcomputation-normalOtildeAi-1ietc}

Fix $q\in\NN_{\geq2}$.
In \S\ref{subsubsec:initialcomputation-normalOAi-1ietc},
we studied the Banach algebras $\normalOA_{i-1,i}^{>q-1,q}$,
$\normalOA_{i-1}^{>q-1}$, and $\normalOA_{i}^{>q-1}$.
In thus \S\ref{subsubsec:initialcomputation-normalOtildeAi-1ietc},
we equip these algebras with filtrations which do \emph{not}
recover the topologies induced by the norms. We will therefore denote
these filtered algebras by
$\normalOtildeA_{i-1,i}^{>q-1,q}$,
$\normalOtildeA_{i-1}^{>q-1}$, and $\normalOtildeA_{i}^{>q-1}$

Given a Banach algebra $R$, $|R|$ is its underlying abstract algebra.
$\widetilde{A}_{\dR}^{>1}$ is the filtered ring whose underlying abstract ring
is $|A_{\dR}^{>1}|$, equipped with the $\xi/p^{q}$-adic filtration.

\begin{defn}
  We define the following three filtered $\widetilde{A}_{\dR}^{>1}$-algebras:
\begin{itemize}
  \item[(i)] $\normalOtildeA_{i-1,i}^{>q-1,q}$ is $|\normalOA_{i-1,i}^{>q-1,q}|$,
  carrying the $\left(\xi/p^{q},U_{1} / p^{q-1},\dots,U_{i-1} / p^{q-1},U_{i} / p^{q}\right)$-adic filtration.
  \item[(ii)] $\normalOtildeA_{i-1}^{>q-1}$ is
  $|\normalOA_{i-1}^{>q-1}|$, carrying the
  $\left(\xi/p^{q},U_{1} / p^{q-1},\dots,U_{i-1} / p^{q-1} \right)$-adic filtration.
  \item[(iii)] $\normalOtildeA_{i}^{>q-1}$ is
  $|\normalOA_{i}^{>q-1}|$, carrying the
  $\left(\xi/p^{q},U_{1} / p^{q-1},\dots,U_{i} / p^{q-1}\right)$-adic filtration.  
  \end{itemize}
\end{defn}


\begin{lem}\label{lem:normalOtildeA-allofthem-Banachalgebra-recpaper}
  $\normalOtildeA_{i-1,i}^{>q-1,q}$, $\normalOtildeA_{i-1}^{>q-1}$,
  and $\normalOtildeA_{i}^{>q-1}$ are separated and complete.
\end{lem}

\begin{proof}
  $\normalOA_{i-1,i}^{>q-1,q}$ carries the
  $\left( p , \xi/p^{q} , U_{1} / p^{q-1} , \dots , U_{i-1} / p^{q-1} , U_{i} / p^{q} \right)$-adic norm
  by Lemma~\ref{lem:thenormonAhotimesB-Iadic-piadic-reconstructionpaper}.
  The separated completeness of $\normalOA_{i-1,i}^{>q-1,q}$
  and~\cite[\href{https://stacks.math.columbia.edu/tag/05GG}{Tag 05GG}]{stacks-project}
  imply the separated completeness of $\normalOtildeA_{i-1,i}^{>q-1,q}$.
  Proceed similarly for $\normalOtildeA_{i-1}^{>q-1}$
  and $\normalOtildeA_{i}^{>q-1}$.
\end{proof}

We continue by computing the associated gradeds of
$\normalOtildeA_{i-1,i}^{>q-1,q}$,
$\normalOtildeA_{i-1}^{>q-1}$, and $\normalOtildeA_{i}^{>q-1}$.

\begin{lem}\label{lem:xipq-regularelements-onnormalOtildeAi-1ietc-reconstructionpaper}
  The mulitplication-by-$\xi/p^{q}$ maps
  \begin{equation*}  
    \begin{split}
    \frac{\xi}{p^{q}}\colon
      \normalOtildeA_{i-1,i}^{>q-1,q} &\to \normalOtildeA_{i-1,i}^{>q-1,q} \\
    \frac{\xi}{p^{q}}\colon
      \normalOtildeA_{i-1}^{>q-1} &\to \normalOtildeA_{i-1}^{>q-1} \\
    \frac{\xi}{p^{q}}\colon
      \normalOtildeA_{i}^{>q-1} &\to \normalOtildeA_{i}^{>q-1}
    \end{split}
    \end{equation*}  
  are injective.
\end{lem}

\begin{proof}
  We only consider the first map
  $\normalOtildeA_{i-1,i}^{>q-1,q} \to \normalOtildeA_{i-1,i}^{>q-1,q}$
  and leave the other ones to the reader. Firstly,
  Lemma~\ref{lem:calOTTecirc-preservesstrictmono-reconstructionpaper}
  implies that it suffices to check that
  \begin{equation*}
    \frac{\xi}{p^{q}}\colon
    \A_{\dR}^{>q}\left(\widetilde{\TT}_{C}^{\left\{1,\dots,i\right\}}\times S\right)
      \left\llbracket
        \frac{U_{1},\dots,U_{i-1}}{p^{q-1}},\frac{U_{i}}{p^{q}}
      \right\rrbracket
    \to\A_{\dR}^{>q}\left(\widetilde{\TT}_{C}^{\left\{1,\dots,i\right\}}\times S\right)
      \left\llbracket
        \frac{U_{1},\dots,U_{i-1}}{p^{q-1}},\frac{U_{i}}{p^{q}}
      \right\rrbracket
  \end{equation*}  
  is injective. By Lemma~\ref{lem:OAiq-etc-formalpowerseriesdescription},
  it remains to check that
  \begin{equation*}
    \frac{\xi}{p^{q}}\colon
    \A_{\dR}^{>q}\left(\widetilde{\TT}_{C}^{\left\{1,\dots,i\right\}}\times S\right)
    \to\A_{\dR}^{>q}\left(\widetilde{\TT}_{C}^{\left\{1,\dots,i\right\}}\times S\right)
  \end{equation*}
  is injective. This follows from Lemma~\ref{lem:Fontaines-map-for-Alagreatq}.
\end{proof}

\begin{defn}\label{defn:defn-Rpluswidetildej}
  For all $j=0,\dots,d$,
  $R^{+,\widetilde{j}}
    :=
      \widehat{\cal{O}}^{+}\left(\widetilde{\TT}_{K}^{\left\{1,\dots,j\right\}}\times S\right)
      \widehat{\otimes}_{k^{\circ}}\cal{O}\left(\TT^{\left\{j+1,\dots,d\right\}}\right)^{\circ}$.
\end{defn}
    
  \begin{lem}\label{lem:grnormalOtildeAiq-etc}
   Compute
    \begin{equation*}  
    \begin{split}
    \gr\normalOtildeA_{i-1,i}^{>q-1,q}
      &\cong R^{+,\widetilde{i}}\left[
        \sigma\left(\frac{\xi}{p^{q}}\right),
        \sigma\left(\frac{U_{1}}{p^{q-1}}\right),
        \dots
        \sigma\left(\frac{U_{i-1}}{p^{q-1}}\right),
        \sigma\left(\frac{U_{i}}{p^{q}}\right)
        \right], \\
    \gr\normalOtildeA_{i-1}^{>q-1}
      &\cong R^{+,\widetilde{i-1}}\left[
        \sigma\left(\frac{\xi}{p^{q}}\right),
        \sigma\left(\frac{U_{1}}{p^{q-1}}\right),
        \dots
        \sigma\left(\frac{U_{i-1}}{p^{q-1}}\right)
        \right], \\
    \gr\normalOtildeA_{i}^{>q-1}
      &\cong R^{+,\widetilde{i}}\left[
        \sigma\left(\frac{\xi}{p^{q-1}}\right),
        \sigma\left(\frac{U_{1}}{p^{q-1}}\right),
        \dots
        \sigma\left(\frac{U_{i}}{p^{q-1}}\right),
      \right].
    \end{split}
    \end{equation*}  
    All the principal symbols $\sigma(-)$ are of degree one.
  \end{lem}
  
  \begin{proof}    
    First, we consider $\gr\normalOA_{i-1,i}^{>q-1,q}$. We find that
    $U_{i}/p^{q},U_{i-1}/p^{q-1},\dots,U_{1}/p^{q-1},\xi/p^{q}$
    is a regular sequence in $\normalOA_{i-1}^{>q-1}$;
    this follows because the
    $U_{i}/p^{q},U_{i-1}/p^{q-1},\dots,U_{1}/p^{q-1}$    
    are formal elements and because of
    Lemma~\ref{lem:xipq-regularelements-onnormalOtildeAi-1ietc-reconstructionpaper}.
    Thus~\cite[Exercise 17.16]{Ei95} applies, so that it remains to compute
    \begin{equation*}
    \begin{split}
      \gr^{0}\normalOtildeA_{i-1,i}^{>q-1,q}
      &=\normalOtildeA_{i-1,i}^{>q-1,q} / 
        \left(\xi/p^{q}, U_{1}/p^{q-1}, \dots, U_{i-1}/p^{q-1}, U_{i}/p^{q}\right) \\
      &\cong\widehat{\cal{O}}^{+}\left(\widetilde{\TT}_{K}^{\left\{1,\dots,i\right\}}\times S\right)
         \widehat{\otimes}_{k^{\circ}}\cal{O}\left(\TT_{k}^{\left\{i+1,\dots,d\right\}}\right) 
       =R^{+,\widetilde{i}},
    \end{split}
    \end{equation*}
    as desired. Proceed similarly to compute
    $\gr\normalOtildeA_{i-1}^{>q-1}$ and $\gr\normalOtildeA_{i}^{>q-1}$.
  \end{proof}

\begin{lem}\label{lem:etamupq-decelagemakessense-tilde-checkconditionii}
  The following inclusions are strict monomorphisms of filtered rings:
  \begin{equation}\label{lem:etamupq-decelagemakessense-tilde-checkconditionii-theinclusions}
  \begin{split}
    \frac{t}{p^{q}}\normalOtildeA_{i-1,i}^{>q-1,q} &\hookrightarrow \normalOtildeA_{i-1,i}^{>q-1,q}, \\
    \frac{t}{p^{q}}\normalOtildeA_{i-1}^{>q-1} &\hookrightarrow \normalOtildeA_{i-1}^{>q-1}, \text{ and} \\
    \frac{t}{p^{q}}\normalOtildeA_{i}^{>q-1} &\hookrightarrow \normalOtildeA_{i}^{>q-1}.
  \end{split}
  \end{equation}
\end{lem}

\begin{proof}
  By~\cite[Chapter I, \S 4.2 page 31-32, Theorem 4(5)]{HuishiOystaeyen1996},
  which applies by Lemma~\ref{lem:normalOtildeA-allofthem-Banachalgebra-recpaper},
  the strict exactness of the first map would follow if
  \begin{equation*}
    \sigma\left(\frac{t}{p^{q}}\right)\times
    \colon\gr\normalOtildeA_{i-1,i}^{>q-1,q} \hookrightarrow\gr\normalOtildeA_{i-1,i}^{>q-1,q}
  \end{equation*}
  is injective. This follows directly from Lemma~\ref{lem:grnormalOtildeAiq-etc}
  and the description of $\sigma\left(t/p^{q}\right)$ as in Lemma~\ref{lem:principlesymbol-of-t}.
  Similarly, one checks that the second and third inclusion
  in~(\ref{lem:etamupq-decelagemakessense-tilde-checkconditionii-theinclusions})
  are strict monomorphisms.
\end{proof}

\begin{lem}\label{lem:cont-group-coh-OBlatTTKd-Dbullet-reducetotildeE-neededlemma}
  $\normalOtildeA_{i-1,i}^{>q-1,q}/\Fil^{s}$,
  $\normalOtildeA_{i-1}^{>q-1}/\Fil^{s}$, and $\normalOtildeA_{i}^{>q-1}/\Fil^{s}$
  have no $p$-power torsion for $s\in\NN$.
\end{lem}

\begin{proof}
  As the arguments for
  $\normalOtildeA_{i-1}^{>q-1}/\Fil^{s}$ and $\normalOtildeA_{i}^{>q-1}/\Fil^{s}$
  are similar, we only show here that $\normalOtildeA_{i-1,i}^{>q-1,q}/\Fil^{s}$
  has no $p$-power torsion.
  Lemma~\ref{lem:OAiq-etc-formalpowerseriesdescription}
  gives the isomorphism
  \begin{equation*}
    \normalOtildeA_{i-1,i}^{>q-1,q}/\Fil^{s}
    \cong
    \bigoplus\left(\A_{\dR}^{>q}\left(\widetilde{\TT}_{C}^{\left\{1,\dots,i\right\}}\times S\right) \middle/ \left(\frac{\xi}{p^{q}}\right)^{\alpha_{0}}\right)
      \zeta_{1}^{\alpha_{1}}\cdots\zeta_{i-1}^{\alpha_{i-1}}\eta^{\alpha_{i}},
  \end{equation*}
  where the direct sum runs over all
  $\alpha_{0},\dots,\alpha_{i}\in\NN$ such that $\sum_{j=0}^{i}\alpha_{j}\leq s$.
  Apply
  Lemma~\ref{lem:AdRgreaterthanqUtimesS-isomapHomcontSAdRgreaterthanqU-assumptionsforlemmasatisfied-reconstructionpaper}.
\end{proof}


\subsection{The cochain complexes $\widetilde{E}^{>q,\bullet}$}
\label{subsubsec:initialcomputation-thecomplextildeEgreathanqbullet}

Fix $q\in\NN_{\geq2}$ and recall the Definition~\ref{defn:Egreaterthanqbullet}
of the complex $E^{>q,\bullet}$. In the following, we introduce the
filtered complex
$\widetilde{E}^{>q,\bullet}$. It coincides with $E^{>q,\bullet}$
as a complex of abstract $A_{\dR}^{>1}$-modules, but the topologies induced
by the filtration and norms do not coincide. It is built out
of the filtered rings $\normalOtildeA_{i-1,i}^{>q-1,q}$,
$\normalOtildeA_{i-1}^{>q-1}$, and $\normalOtildeA_{i}^{>q-1}$,
which we introduced in the previous \S\ref{subsubsec:initialcomputation-normalOtildeAi-1ietc}.
It remains to construct the differentials of $\widetilde{E}^{>q,\bullet}$.

\begin{lem}\label{lem:tildeEgreaterthanqbullet-definetildegamma-recpaper}
  The morphism
  \begin{equation*}
    \widetilde{\gamma}_{i}\colon\normalOtildeA_{i-1,i}^{>q-1,q}\to\normalOtildeA_{i-1,i}^{>q-1,q},
    x\mapsto\widetilde{\gamma}_{i}(x):=\gamma_{i}(x),
  \end{equation*}
  where $\gamma_{i}$ is as in
  Definition~\ref{defn:gammai-on-overlinenormalOAigreaterthanq-reconstructionpaper},
  is a morphism of filtered $\widetilde{A}_{\dR}^{>1}$-algebras.
\end{lem}

\begin{proof}
  By Lemma~\ref{lem:bounded-map-adic-rings},
  we may check that the following inclusion holds:
  \begin{equation*}
    \widetilde{\gamma}_{i}\left( \left( \frac{\xi}{p^{q}}, \frac{U_{1}}{p^{q-1}} , \dots , \frac{U_{i-1}}{p^{q-1}} , \frac{U_{i}}{p^{q}} \right) \right)
    \subseteq\left( \frac{\xi}{p^{q}}, \frac{U_{1}}{p^{q-1}} , \dots , \frac{U_{i-1}}{p^{q-1}} , \frac{U_{i}}{p^{q}} \right).
  \end{equation*}
  Everything is clear, except
  \begin{equation*}
    \frac{U_{i}}{p^{q}} + \frac{t}{p^{q}}=\widetilde{\gamma}_{i}\left( \frac{U_{i}}{p^{q}} \right)
    \in\left( \frac{\xi}{p^{q}}, \frac{U_{1}}{p^{q-1}} , \dots , \frac{U_{i-1}}{p^{q-1}} , \frac{U_{i}}{p^{q}} \right).
  \end{equation*}
  But $t/p^{q}$ is in the kernel of Fontaine's $\theta_{\dR}^{q}\colon A_{\dR}^{q}\to \cal{O}_{C}$.
  Therefore, Lemma~\ref{lem:Fontaines-map-for-Ala} implies that
  it is divisible by $\xi/p^{q}$, as desired.
\end{proof}

Fix the notation as in Lemma~\ref{lem:tildeEgreaterthanqbullet-definetildegamma-recpaper}.
Here is the main definition of \S\ref{subsubsec:initialcomputation-thecomplextildeEgreathanqbullet}.

\begin{defn}\label{defn:tildeEgreaterthanqbullet}
  $\widetilde{E}^{>q,\bullet}$ denotes the sequence of maps
  of filtered $\widetilde{A}_{\dR}^{>1}$-modules
  \begin{multline*}
    0
    \longrightarrow
    \normalOtildeA_{i-1}^{>q-1}\times_{\normalOtildeA_{i-1,i}^{>q-1,q}}
    \left(
      \normalOtildeA_{i-1,i}^{>q-1,q} \times_{\widetilde{\gamma}_{i}-1,\normalOtildeA_{i-1,i}^{>q-1,q}}\normalOtildeA_{i}^{>q-1}
    \right) \\
    \stackrel{\widetilde{\phi}_{0}}{\longrightarrow}
    \normalOtildeA_{i-1,i}^{>q-1,q} \times_{\widetilde{\gamma}_{i}-1,\normalOtildeA_{i-1,i}^{>q-1,q}}\normalOtildeA_{i}^{>q-1}
    \stackrel{\widetilde{\phi}_{1}}{\longrightarrow}
    \normalOtildeA_{i}^{>q} \longrightarrow
    0.
  \end{multline*}
  Both $\widetilde{\phi}_{0}$ and $\widetilde{\phi}_{1}$ are given by $\left(f , g\right)\mapsto g$.
\end{defn}

\begin{lem}\label{lem:tildeEgreaterthanqbullet-cochaincomplex-reconstructionpaper}
  $\widetilde{E}^{>q,\bullet}$ is a cochain complex of filtered $\widetilde{A}_{\dR}^{>1}$-modules
  for all $q\in\NN_{\geq1}$.
\end{lem}

\begin{proof}
  $\widetilde{E}^{>q,\bullet}$ is a sequence of filtered $\widetilde{A}_{\dR}^{>1}$-modules
  by Lemma~\ref{lem:tildeEgreaterthanqbullet-definetildegamma-recpaper}.
  It is a cochain complex because its underlying complex
  of abstract $\widetilde{A}_{\dR}^{>1}$-modules is a cochain complex by
  Lemma~\ref{lem:Egreaterthanqbullet-cochaincomplex-reconstructionpaper}.
\end{proof}

We introduce Convention~\ref{conv:tildeEgreaterthanqbullet-concentratedindegree012}
to apply the $\eta$-operator to $\widetilde{E}^{>q,\bullet}$,
cf. Definition~\ref{defn:algebraic-decalage}.

\begin{conv}\label{conv:tildeEgreaterthanqbullet-concentratedindegree012}
  $\widetilde{E}^{>q,\bullet}$ is, as a cochain complex,
  concentrated in degrees $0$, $1$, and $2$.
\end{conv}

We recover $E^{>q,\bullet}$ from $\widetilde{E}^{>q,\bullet}$ as follows.

\begin{lem}\label{lem:poperator-to-widetildeEgreaterthanq-recopaper}
  Recall Definition~\ref{defn:poperator}
  and compute $E^{>q,\bullet}=\p\left(\widetilde{E}^{>q,\bullet}\right)$.
\end{lem}

\begin{proof}
  $\normalOA_{i-1,i}^{>q-1,q}$ carries the
  $\left( p , \xi/p^{q} , U_{1} / p^{q-1} , \dots , U_{i-1} / p^{q-1} , U_{i} / p^{q} \right)$-adic norm
  by Lemma~\ref{lem:thenormonAhotimesB-Iadic-piadic-reconstructionpaper}.
  Therefore, we get the identity $\normalOA_{i-1,i}^{>q-1,q} = \p\left(\normalOtildeA_{i-1,i}^{>q-1,q}\right)$
  of topological abelian groups immediately from Definition~\ref{defn:poperator}.
  Similarly, $\normalOA_{i-1}^{>q-1} = \p\left(\normalOtildeA_{i-1}^{>q-1}\right)$
  and $\normalOA_{i}^{>q-1} = \p\left(\normalOtildeA_{i}^{>q-1}\right)$.
  Finally, apply Lemma~\ref{lem:poperator-commutes-with-pullbacks}.
\end{proof}

As a consequence, we prove the following
Lemma~\ref{lem:cont-group-coh-OBlatTTKd-Dbullet-reducetotildeE}.
It reduces the proof of Proposition~\ref{prop:cont-group-coh-OBlatTTKd}
to a study of the cochain complexes $\eta_{\mu/p^{q}}\tilde{E}^{>q,\bullet}$.
Here, we follow the notation in \S\ref{subsubsec:initialcomputation-thecomplexEgreathanqbullet},
and denote the image of $\mu=[\epsilon]-1$
under the canonical map $A_{\inf}\to A_{\dR}^{>1}$ again by $\mu$.
  
\begin{lem}\label{lem:cont-group-coh-OBlatTTKd-Dbullet-reducetotildeE}
  Proposition~\ref{prop:cont-group-coh-OBlatTTKd}
  follows if the $\eta_{\mu/p^{q}}\widetilde{E}^{>q,\bullet}$ are strictly exact for all $q\in\NN_{\geq 2}$.
\end{lem}

\begin{proof}
  Let $q\in\NN_{\geq 2}$ be arbitrary.
  By Lemma~\ref{lem:cont-group-coh-OBlatTTKd-Dbullet-reducetotildeE-neededlemma},
  $\widetilde{E}^{>q,\bullet}/\Fil^{s}$ has no $p$-power torsion
  for all $s\in\NN$. Therefore, we can
  apply Lemma~\ref{lem:subsections-periodsheaves-affperfd-1}, giving
  the strict exactness of
  \begin{equation*}
    \p\left(\eta_{\mu/p^{q}}\widetilde{E}^{>q,\bullet}\right)=\eta_{\mu/p^{q}}E^{>q,\bullet};
  \end{equation*}
  this equality follows from Lemma~\ref{lem:poperator-to-widetildeEgreaterthanq-recopaper}
  and Definition~\ref{defn:decalage-for-Banachmodules}.
  
  As the choice of $q$ was arbitrary, we have verified that the complexes
  $\eta_{\mu/p^{q}}E^{>q,\bullet}$ are strictly exact for all $q\in\NN_{\geq2}$.
  Now apply Lemma~\ref{lem:cont-group-coh-OBlatTTKd-Dbullet}.
\end{proof}


\subsection{On the associated graded of $\widetilde{E}^{>q,\bullet}$}\label{subsubsec:initialcomputation-associatedgradedofEgreaterthanqbullet-reconstructionpaper}

Fix $q\in\NN_{\geq2}$. Motivated by Lemma~\ref{lem:cont-group-coh-OBlatTTKd-Dbullet-reducetotildeE},
we aim to check that $\eta_{\mu/p^{q}}\widetilde{E}^{>q,\bullet}$ is strictly exact. We proceed by consideration
of the associated graded.
   
\begin{lem}\label{lem:gretatEgreaterthanqbullet-isetaGbullet-conditionsatisfied}
  Condition~\ref{cond:filteredmodulesdecalage-reconstructionpaper}
  is satisfied for $R=\widetilde{A}_{\dR}^{>1}$, $r=\mu/p^{q}$, and
  $M^{\bullet}=\widetilde{E}^{>q,\bullet}$.
\end{lem}

\begin{proof}
  We freely use that $\mu/p^{q}$ and $t/p^{q}$ coincide up to a unit in $A_{\dR}^{>1}$,
      cf. the proof of Lemma~\ref{Bla-independent-of-epsilon}.
  \begin{itemize}
    \item[(i)] $\widetilde{E}^{>q,\bullet}$ is indeed concetrated in nonnegative
      degrees, cf. Convention~\ref{conv:tildeEgreaterthanqbullet-concentratedindegree012}.
    \item[(ii)]
      Lemma~\ref{lem:etamupq-decelagemakessense-tilde-checkconditionii} implies that
      the inclusions
      \begin{align*}
        \frac{\mu}{p^{q}}\normalOtildeA_{i-1,i}^{>q-1,q} &\hookrightarrow \normalOtildeA_{i-1,i}^{>q-1,q}, \\
        \frac{\mu}{p^{q}}\normalOtildeA_{i-1}^{>q-1} &\hookrightarrow \normalOtildeA_{i-1}^{>q-1}, \text{ and} \\
        \frac{\mu}{p^{q}}\normalOtildeA_{i}^{>q-1} &\hookrightarrow \normalOtildeA_{i}^{>q-1}.
      \end{align*}
      are strict monomorphisms. Because taking pullbacks preserves strict monomorphisms,
      cf.~\cite[Lemma 3.7]{BBKFrechetModulesDescent}, this gives (ii).
    \item[(iii)] This is Lemma~\ref{lem:etamupq-decelagemakessense-checkconditioniii}.
    \item[(iv)] There is nothing to check for $\gr\widetilde{E}^{>q,0}$. Next, we compute
      \begin{equation*}
        \gr\widetilde{E}^{>q,1}
        \stackrel{\text{\ref{lem:gr:commutes-pullback}}}{=}
        \gr\normalOtildeA_{i-1,i}^{>q-1,q}
          \times_{\gr\left(\gamma_{i}-1\right),\gr\normalOtildeA_{i-1,i}^{>q-1,q}}
          \gr\normalOtildeA_{i}^{>q-1}.
      \end{equation*}
      From Lemma~\ref{lem:grnormalOtildeAiq-etc}
      and the description of the $\sigma\left(t/p^{q}\right)$ as in Lemma~\ref{lem:principlesymbol-of-t},
      we find that $\gr\widetilde{E}^{>q,1}$ is
      $\sigma\left(\mu/p^{q}\right)=\sigma\left(t/p^{q}\right)$-torsion free.
      Proceed similarly for
      $\gr\widetilde{E}^{>q,2}$.
  \end{itemize}
  We have thus checked Condition~\ref{cond:filteredmodulesdecalage-reconstructionpaper}.
\end{proof}
    
\begin{notation}
  $G^{\bullet}:=\gr\widetilde{E}^{>q,\bullet}$.
\end{notation}
  
\begin{cor}\label{cor:gretatEgreaterthanqbullet-isetaGbullet}
  We have the canonical isomorphism
  $\gr\eta_{\mu/p^{q}}\widetilde{E}^{>q,\bullet}\cong\eta_{\sigma(\mu/p^{q})}G^{\bullet}$.
\end{cor}
  
\begin{proof}
    $\gr\eta_{\mu/p^{q}}\widetilde{E}^{>q,\bullet}
    \stackrel{\text{\ref{lem:decalage-commutes-gr}}}{\cong}
    \eta_{\sigma(\mu/p^{q})}\gr\widetilde{E}^{>q,\bullet}
    =\eta_{\sigma(\mu/p^{q})}G^{\bullet}$;
  Lemma~\ref{lem:decalage-commutes-gr} applies
  by Lemma~\ref{lem:gretatEgreaterthanqbullet-isetaGbullet-conditionsatisfied}.
\end{proof}
  
To show that $\eta_{\mu/p^{q}}\widetilde{E}^{>q,\bullet}$
is strictly exact, we may check that its associated graded is exact.
By Corollary~\ref{cor:gretatEgreaterthanqbullet-isetaGbullet},
this reduces the computation to showing that $G^{\bullet}$
is strictly exact up to $\sigma(\mu/p^{q})$-torsion. Therefore, we
give the following explicit description of $G^{\bullet}$.
Recall Definition~\ref{defn:defn-Rpluswidetildej}.

\begin{lem}\label{lem:initialcomputation-describe-Gbullet}
  $G^{\bullet}$ is isomorphic to the cochain complex
  \begin{equation}\label{eqlarge:initialcomputation-describe-Gbullet}
  \begin{tikzcd}
      0
      \arrow{d} \\
      \begin{multlined}
        R^{+,\widetilde{i-1}}\left[x,py_{1}.\dots,py_{i-1}\right] \\
        \times_{R^{+,\widetilde{i}}\left[x,py_{1}.\dots,py_{i-1},y_{i}\right]}
        \left(
          \begin{array}{l}
          R^{+,\widetilde{i}}\left[x,py_{1},\dots,py_{i-1},y_{i}\right] \hfill \\
          \quad\times_{\gamma_{i}-1,R^{+,\widetilde{i}}\left[x,py_{1},\dots,py_{i-1},y_{i}\right]}
          R^{+,\widetilde{i}}\left[px,py_{1},\dots,py_{i}\right]
          \end{array}
          \right)
       \end{multlined}
    \arrow{d}{\varphi_{0}} \\
      R^{+,\widetilde{i}}\left[x,py_{1},\dots,py_{i-1},y_{i}\right]
        \times_{\gamma_{i}-1,R^{+,\widetilde{i}}\left[x,py_{1},\dots,py_{i-1},y_{i}\right]}
      R^{+,\widetilde{i}}\left[px,py_{1},\dots,py_{i}\right]
    \arrow{d}{\varphi_{1}} \\
      R^{+,\widetilde{i}}\left[x,py_{1}.\dots,py_{i}\right]
    \arrow{d} \\
      0.
  \end{tikzcd}
  \end{equation}  
  Both $\varphi_{0}$ and
  $\varphi_{1}$ are given by $(f,g) \mapsto g$.
  Furthermore,
    \begin{itemize}
      \item[(i)] the action of $\gamma_{i}$ on $R^{+,\widetilde{i}}$
        is given as in Lemma~\ref{lem:ZpGaloiscoverings}.
        It is thus the identity on
        $R^{+,\widetilde{i-1}}\subseteq R^{+,\widetilde{i}}$.
      \item[(ii)] $\gamma_{i}$ fixes $x,y_{1},\dots,y_{i-1}$, and
      \item[(iii)] $\gamma_{i}\left(y_{i}\right)=y_{i}+\left(\zeta_{p}-1\right)x$.
    \end{itemize}
    Furthermore, with respect to this description of $G^{\bullet}$,
    $\eta_{\sigma(\mu/p^{q})}G^{\bullet}=\eta_{\left(\zeta_{p}-1\right)x}G^{\bullet}$.
\end{lem}
  
\begin{proof}
  Apply Lemma~\ref{lem:gr:commutes-pullback} to see that
  $G^{\bullet}:=\gr\widetilde{E}^{>q,\bullet}$ is isomorphic to
  \begin{multline*}
      0
    \longrightarrow
      \gr\normalOA_{i-1}^{>q-1}\times_{\gr\normalOA_{i-1,i}^{>q-1,q}}
      \left(
        \gr\normalOA_{i-1,i}^{>q-1,q} \times_{\gr\left(\gamma_{i}-1\right),\gr\normalOA_{i-1,i}^{>q-1,q}}
        \gr\normalOA_{i}^{>q-1}
      \right) \\
    \stackrel{\gr\phi_{0}}{\longrightarrow}
      \gr\normalOA_{i-1,i}^{>q-1,q} \times_{\gr\left(\gamma_{i}-1\right),\gr\normalOA_{i-1,i}^{>q-1,q}}
      \gr\normalOA_{i}^{>q-1}
    \stackrel{\gr\phi_{1}}{\longrightarrow}
      \gr\normalOA_{i}^{>q}
    \longrightarrow
      0.
  \end{multline*}
  Using Lemma~\ref{lem:grnormalOtildeAiq-etc},
  one sees that this cochain complex coincides with
  (\ref{eqlarge:initialcomputation-describe-Gbullet}).
  Here we identify
  $x$ with $\sigma\left(\xi / p^{q}\right)$ and
  $y_{l}$ with $\sigma\left(U_{l} / p^{q}\right)$ for all $l=1,\dots,i$.
  Also, $\varphi_{0}:=\gr\phi_{0}$, $\varphi_{1}:=\gr\phi_{1}$,
  and, by abuse of notation, $\gamma_{i}:=\gr\left(\gamma_{i}\right)$.
  The description of $\gamma_{i}$ as in (i),(ii), and (iii) comes directly from
  the Definition~\ref{defn:gammai-on-overlinenormalOAigreaterthanq-reconstructionpaper}
  and Lemma~\ref{lem:principlesymbol-of-t}.
  
  Finally, $\eta_{\sigma(\mu/p^{q})}G^{\bullet}=\eta_{\left(\zeta_{p}-1\right)x}G^{\bullet}$
  follows because $\mu/p^{q}$ and $t/p^{q}$ coincide up to a unit $v\in A_{\dR}^{1}$,
  cf. the proof of Lemma~\ref{Bla-independent-of-epsilon}. Indeed, \emph{loc. cit.} also shows that
  $v$ is congruent to $1$ modulo $\xi/p^{q}$, which gives the identity at the left-hand side here:
  \begin{equation*}
    \sigma\left( \frac{\mu}{p^{q}} \right)
    =\sigma\left( \frac{t}{p^{q}} \right)
    =\left(\zeta_{p}-1\right)\sigma\left( \frac{\xi}{p^{q}} \right)
    =\left(\zeta_{p}-1\right)x
  \end{equation*}
  The second equality in this computation comes from Lemma~\ref{lem:principlesymbol-of-t}.
\end{proof}


Next, we would like to relate $G^{\bullet}$ to the following sequence of maps which we denote by
$H^{\bullet}$:
  \begin{equation*}
  \begin{tikzcd}
      0
      \arrow{d} \\
      \begin{multlined}
        R^{+,\widetilde{i-1}}\left[x,py_{1}.\dots,py_{i-1}\right] \\
        \times_{R^{+,\widetilde{i-1}}\left[x,py_{1}.\dots,py_{i-1},y_{i}\right]}
        \left(\begin{array}{l}
          R^{+,\widetilde{i-1}}\left[x,py_{1},\dots,py_{i-1},y_{i}\right] \\
          \quad\times_{\gamma_{i}-1,R^{+,\widetilde{i-1}}\left[x,py_{1},\dots,py_{i-1},y_{i}\right]}
        R^{+,\widetilde{i-1}}\left[px,py_{1},\dots,py_{i}\right]
        \end{array}
        \right)
      \end{multlined}
    \arrow{d}{\varphi_{0}} \\
      R^{+,\widetilde{i-1}}\left[x,py_{1},\dots,py_{i-1},y_{i}\right]
        \times_{\gamma_{i}-1,R^{+,\widetilde{i-1}}\left[x,py_{1},\dots,py_{i-1},y_{i}\right]}
      R^{+,\widetilde{i-1}}\left[px,py_{1},\dots,py_{i}\right]
    \arrow{d}{\varphi_{1}} \\
      R^{+,\widetilde{i-1}}\left[x,py_{1}.\dots,py_{i}\right]
    \arrow{d} \\
      0.
  \end{tikzcd}
\end{equation*}
Here, both $\varphi_{0}$ and
$\varphi_{1}$ are given by $(f,g) \mapsto g$. Also,
$\gamma_{i}$ is defined by (i),(ii), and (iii) as in Lemma~\ref{lem:initialcomputation-describe-Gbullet}.

Heuristically, $H^{\bullet}$ is $G^{\bullet}$ but without the $p$th power roots
of $T_{i}^{\pm}$; that is, we swapped all the $R^{+,\widetilde{i}}$ to $R^{+,\widetilde{i-1}}$.
Therefore, $H^{\bullet}\subseteq G^{\bullet}$. This inclusion gives that $H^{\bullet}$ is a cochain complex
as well. Furthermore, this inclusion induces the morphism in Lemma~\ref{lem:etaC-qiso-etatC} below.
  
\begin{conv}\label{conv:GbulletHbullet-degrees-012}
  View
  $H^{\bullet}$ and $G^{\bullet}$ as cochain complexes concentrated in degrees $0$, $1$, and $2$.
  This allows to apply the $\eta$-operator as in Definition~\ref{defn:algebraic-decalage}.
\end{conv}
  
  \begin{lem}\label{lem:etaC-qiso-etatC}
    The canonical map
    $\eta_{\left(\zeta_{p}-1\right)x}H^{\bullet}
    \to\eta_{\left(\zeta_{p}-1\right)x}G^{\bullet}$
    is a quasi-isomorphism.
  \end{lem}

  \begin{proof}
    We introduce the following descending filtrations: For all $s\in\ZZ$,
    \begin{align*}
      \Fil^{s}R^{+,\widetilde{i-1}}\left[x,py_{1},\dots,py_{i-1}\right]
      &:= 
      \begin{cases}
        R^{+,\widetilde{i-1}}\left[x,py_{1},\dots,py_{i-1}\right], & \text{for all $s\leq 0$,} \\
        0, & \text{for all $s>0$,}
      \end{cases}
      \\
      \Fil^{s}R^{+,\widetilde{i}}\left[x,py_{1},\dots,py_{i-1},y_{i}\right]
      &:=
      \left\{f\colon
        \begin{multlined}
          \text{$\deg(f)\leq -s$, where we view} \\
          \text{$f$ as a polynomial in $y_{i}$}
        \end{multlined}\right\}, \text{ and}
        \\
      \Fil^{s}R^{+,\widetilde{i}}\left[x,py_{1},\dots,py_{i}\right]
      &:=
      \left\{f\colon
          \begin{multlined}
            \text{$\deg(f)\leq -s$, where we view} \\
            \text{$f$ as a polynomial in $py_{i}$}
        \end{multlined}\right\}.
    \end{align*}
    This gives rise to filtrations on both
    $G^{\bullet}$ and $H^{\bullet}$.
    These induce filtrations on the subcomplexes
    $\eta_{\left(\zeta_{p}-1\right)x}G^{\bullet}\subseteq G^{\bullet}$
    and
    $\eta_{\left(\zeta_{p}-1\right)x}H^{\bullet}\subseteq H^{\bullet}$.
    As these filtrations induce the discrete topology,
    one checks directly that Condition~\ref{cond:filteredmodulesdecalage-reconstructionpaper}
    is satisfied in the following two instances:
    \begin{itemize}
      \item $R=k^{\circ}[x]$, equipped with the trivial filtration, $r=\left(\zeta_{p}-1\right)x$, and $M^{\bullet}=G^{\bullet}$.
      \item $R=k^{\circ}[x]$, equipped with the trivial filtration, $r=\left(\zeta_{p}-1\right)x$, and $M^{\bullet}=H^{\bullet}$.    
    \end{itemize}
    Therefore, Lemma~\ref{lem:decalage-commutes-gr} applies. This gives
    \begin{equation}\label{eq:graded-by-filtrations-by-degree-commutes-with-eta}
    \begin{split}
      \gr\eta_{\left(\zeta_{p}-1\right)x}G^{\bullet}
      &\cong
        \eta_{\left(\zeta_{p}-1\right)x}\gr G^{\bullet},\text{ and} \\
      \gr\eta_{\left(\zeta_{p}-1\right)x}H^{\bullet}
      &\cong
        \eta_{\left(\zeta_{p}-1\right)x}\gr H^{\bullet}.
    \end{split}
    \end{equation}
    Recall that we aim to show that
    $\eta_{\left(\zeta_{p}-1\right)x}H^{\bullet}
    \to\eta_{\left(\zeta_{p}-1\right)x}G^{\bullet}$
    is a quasi-isomorphism. This morphism is filtered
    because $H^{\bullet}\to G^{\bullet}$ is filtered.
    As the filtrations are bounded below
    and exhaustive, \cite[Comparison Theorem 5.2.12]{Wei94}
    applies, giving that it suffices to check that
    $\gr\eta_{\left(\zeta_{p}-1\right)x}H^{\bullet}
      \to\gr\eta_{\left(\zeta_{p}-1\right)x}G^{\bullet}$
    is a quasi-isomorphism. By~(\ref{eq:graded-by-filtrations-by-degree-commutes-with-eta}),
    we may check that 
    $\eta_{\left(\zeta_{p}-1\right)x}\gr H^{\bullet}
      \to\eta_{\left(\zeta_{p}-1\right)x}\gr G^{\bullet}$
    is a quasi-isomorphism. This is what we will do, via an explicit computation.
    Firstly, $\gr H^{\bullet}$ is
    \begin{equation*}
    \begin{tikzcd}
      0
      \arrow{d} \\
      \begin{multlined}
        R^{+,\widetilde{i-1}}\left[x,py_{1}.\dots,py_{i-1}\right] \\
        \times_{R^{+,\widetilde{i-1}}\left[x,py_{1}.\dots,py_{i-1}\right]\left[\sigma\left(y_{i}\right)\right]}
        \left(\begin{array}{l}
          R^{+,\widetilde{i-1}}\left[x,py_{1},\dots,py_{i-1}\right]\left[\sigma\left(y_{i}\right)\right] \\
          \quad
          \times_{\gr\left(\gamma_{i}\right)-1,R^{+,\widetilde{i-1}}\left[x,py_{1},\dots,py_{i-1}\right]\left[\sigma\left(y_{i}\right)\right]}
          R^{+,\widetilde{i-1}}\left[px,py_{1},\dots,py_{i-1}\right]\left[\sigma\left(y_{i}\right)\right]
          \end{array}
        \right)
      \end{multlined}
    \arrow{d}{d_{0}} \\
        \left(
          R^{+,\widetilde{i-1}}\left[x,py_{1},\dots,py_{i-1}\right]\left[\sigma\left(y_{i}\right)\right]
          \times_{\gr\left(\gamma_{i}\right)-1,R^{+,\widetilde{i-1}}\left[x,py_{1},\dots,py_{i-1}\right]\left[\sigma\left(y_{i}\right)\right]}
        R^{+,\widetilde{i-1}}\left[px,py_{1},\dots,py_{i-1}\right]\left[\sigma\left(y_{i}\right)\right]
        \right)
    \arrow{d}{d_{1}} \\
      R^{+,\widetilde{i-1}}\left[px,py_{1},\dots,py_{i-1}\right]\left[\sigma\left(y_{i}\right)\right]
    \arrow{d} \\
      0.
  \end{tikzcd}
  \end{equation*}
  Here, $\sigma\left(y_{i}\right)$ denotes the principal symbol of $y_{i}$.
  Both $d_{0}:=\gr\varphi_{0}$ and
  $d_{1}:=\gr\varphi_{0}$ are given by
    $(f,g) \mapsto g$.
  From the description in Lemma~\ref{lem:initialcomputation-describe-Gbullet},
  we find that
  \begin{itemize}
    \item[(i)] $\gr\left(\gamma_{i}\right)\colon R^{+,\widetilde{i}}\to R^{+,\widetilde{i}}$
      is given by $\gamma_{i}$ as in Lemma~\ref{lem:ZpGaloiscoverings}.
      It is thus the identity on $R^{+,\widetilde{i-1}}\subseteq R^{+,\widetilde{i}}$.
    \item[(ii)] $\gr\left(\gamma_{i}\right)$ fixes $x,y_{1},\dots,y_{i-1}$.
    \item[(iii)] $\gr\left(\gamma_{i}\right)$ fixes $\sigma\left(y_{i}\right)$ as well.
  \end{itemize}  
  Similarly, $\gr G^{\bullet}$ is
    \begin{equation*}
    \begin{tikzcd}
      0
      \arrow{d} \\
      \begin{multlined}
        R^{+,\widetilde{i-1}}\left[x,py_{1}.\dots,py_{i-1}\right] \\
        \times_{R^{+,\widetilde{i-1}}\left[x,py_{1}.\dots,py_{i-1}\right]\left[\sigma\left(y_{i}\right)\right]}
        \left(\begin{array}{l}
          R^{+,\widetilde{i}}\left[x,py_{1},\dots,py_{i-1}\right]\left[\sigma\left(y_{i}\right)\right] \\
          \quad
          \times_{\gr\left(\gamma_{i}\right)-1,R^{+,\widetilde{i}}\left[x,py_{1},\dots,py_{i-1}\right]\left[\sigma\left(y_{i}\right)\right]}
          R^{+,\widetilde{i}}\left[px,py_{1},\dots,py_{i-1}\right]\left[\sigma\left(y_{i}\right)\right]
          \end{array}
        \right)
      \end{multlined}
    \arrow{d}{d_{0}} \\
        \left(
          R^{+,\widetilde{i}}\left[x,py_{1},\dots,py_{i-1}\right]\left[\sigma\left(y_{i}\right)\right]
          \times_{\gr\left(\gamma_{i}\right)-1,R^{+,\widetilde{i}}\left[x,py_{1},\dots,py_{i-1}\right]\left[\sigma\left(y_{i}\right)\right]}
        R^{+,\widetilde{i}}\left[px,py_{1},\dots,py_{i-1}\right]\left[\sigma\left(y_{i}\right)\right]
        \right)
    \arrow{d}{d_{1}} \\
      R^{+,\widetilde{i}}\left[px,py_{1},\dots,py_{i-1}\right]\left[\sigma\left(y_{i}\right)\right]
    \arrow{d} \\
      0
  \end{tikzcd}
  \end{equation*}
  where $d_{0}$ and
  $d_{1}$ are $(f,g) \mapsto g$.
  and $\gr\left(\gamma_{i}\right)$ is given as before.

  In the following, we denote $\gr\gamma_{i}$ by $\gamma_{i}$.
  This is not an abuse of notation, by the explicit description of $\gr\gamma_{i}$ given above.
  Following Convention~\ref{conv:GbulletHbullet-degrees-012},
  we now compute cohomology:
  \begin{equation}\label{eq:ho-grHbullet}
  \begin{split}
    \Ho^{0}\left(\gr H^{\bullet}\right)
    &=0, \\
    \Ho^{1}\left(\gr H^{\bullet}\right)
    &=
    \frac{\ker\left( 
        R^{+,\widetilde{i-1}}\left[x,py_{1},\dots,py_{i-1}\right]\left[\sigma\left(y_{i}\right)\right]
        \stackrel{\gamma_{i}-1}{\longrightarrow}
          R^{+,\widetilde{i-1}}\left[x,py_{1},\dots,py_{i-1}\right]\left[\sigma\left(y_{i}\right)\right]\right)}
      {R^{+,\widetilde{i-1}}\left[px,py_{1},\dots,py_{i-1}\right]} \\ 
    &=
    \frac{ 
          \ker\left( R^{+,\widetilde{i-1}}\stackrel{\gamma_{i}-1}{\longrightarrow}
            R^{+,\widetilde{i-1}}\right)\left[x,py_{1},\dots,py_{i-1}\right]\left[\sigma\left(y_{i}\right)\right]}
      {R^{+,\widetilde{i-1}}\left[px,py_{1},\dots,py_{i-1}\right]} \\ 
    &=
    \frac{ 
          \Ho_{\cont}^{0}\left( \ZZ_{p}\gamma_{i} , R^{+,\widetilde{i-1}}\right)
          \left[x,py_{1},\dots,py_{i-1}\right]\left[\sigma\left(y_{i}\right)\right]}
      {R^{+,\widetilde{i-1}}\left[px,py_{1},\dots,py_{i-1}\right]}, \\ 
    \Ho^{2}\left(\gr H^{\bullet}\right)
    &=
    \frac{R^{+,\widetilde{i-1}}\left[px,py_{1},\dots,py_{i-1}\right]\left[p\sigma\left(y_{i}\right)\right]}{
      \left(
      \begin{multlined}
        \im\left(
        R^{+,\widetilde{i-1}}\left[x,py_{1},\dots,py_{i-1}\right]\left[\sigma\left(y_{i}\right)\right]
        \stackrel{\gamma_{i}-1}{\longrightarrow}
        R^{+,\widetilde{i-1}}\left[x,py_{1},\dots,py_{i-1}\right]\left[\sigma\left(y_{i}\right)\right]\right) \\
        \cap R^{+,\widetilde{i-1}}\left[px,py_{1},\dots,py_{i-1}\right]\left[p\sigma\left(y_{i}\right)\right]
      \end{multlined}
      \right)} \\
    &=\Ho_{\cont}^{1}\left( \ZZ_{p}\gamma_{i} , R^{+,\widetilde{i-1}}\right)
      \left[px,py_{1},\dots,py_{i-1}\right]\left[p\sigma\left(y_{i}\right)\right].
  \end{split}
  \end{equation}
  Similarly,
  \begin{equation}\label{eq:ho-grGbullet}
  \begin{split}
    \Ho^{0}\left(\gr G^{\bullet}\right)
    &=0, \\
    \Ho^{1}\left(\gr G^{\bullet}\right)
    &=
    \frac{ 
          \Ho_{\cont}^{0}\left( \ZZ_{p}\gamma_{i} , R^{+,\widetilde{i}}\right)
          \left[x,py_{1},\dots,py_{i-1}\right]\left[\sigma\left(y_{i}\right)\right]}
      {R^{+,\widetilde{i-1}}\left[px,py_{1},\dots,py_{i-1}\right]}, \\ 
    \Ho^{2}\left(\gr G^{\bullet}\right)
    &=\Ho_{\cont}^{1}\left( \ZZ_{p}\gamma_{i} , R^{+,\widetilde{i}}\right)
      \left[px,py_{1},\dots,py_{i-1}\right]\left[p\sigma\left(y_{i}\right)\right].
  \end{split}
  \end{equation}
  Recall that we wanted to check that
  $\eta_{\left(\zeta_{p}-1\right)x}\gr H^{\bullet}
      \to\eta_{\left(\zeta_{p}-1\right)x}\gr G^{\bullet}$
  is a quasi-isomorphism. By Lemma~\ref{lem:eta-operator-kills-torsion},
  we have to check that the induced maps
  \begin{equation*}
  \begin{split}
    \tau_{0}\colon\Ho^{0}\left(\gr H^{\bullet}\right)
    &\longrightarrow
    \Ho^{0}\left(\gr G^{\bullet}\right) \\
    \tau_{1}\colon
    \Ho^{1}\left(\gr H^{\bullet}\right)
    &\longrightarrow
    \Ho^{1}\left(\gr G^{\bullet}\right) \\
    \tau_{2}\colon
    \Ho^{2}\left(\gr H^{\bullet}\right)
    &\longrightarrow
    \Ho^{2}\left(\gr G^{\bullet}\right)  
  \end{split}
  \end{equation*}
  are isomorphisms up to $\left(\zeta_{p}-1\right)x$-torsion.
  From the computations~(\ref{eq:ho-grHbullet}) and~(\ref{eq:ho-grGbullet}) above,
  we see that this boils down to checking the following two facts:
  \begin{itemize}
    \item[(i)] $\Ho_{\cont}^{0}\left( \ZZ_{p}\gamma_{i} , R^{+,\widetilde{i-1}}\right)
    \isomap\Ho_{\cont}^{0}\left( \ZZ_{p}\gamma_{i} , R^{+,\widetilde{i}}\right)$
      is injective with cokernel killed $\zeta_{p}-1$.
    \item[(ii)] $\Ho_{\cont}^{1}\left( \ZZ_{p}\gamma_{i} , R^{+,\widetilde{i-1}}\right)
    \to\Ho_{\cont}^{1}\left( \ZZ_{p}\gamma_{i} , R^{+,\widetilde{i}}\right)$
    is injective with cokernel killed by $\zeta_{p}-1$. Indeed,
    Lemma~\ref{lem:iso-after-modulo-torsion} then implies that $\tau_{1}$ is an isomorphism
    up to $\left(\zeta_{p}-1\right)x$-torsion.
  \end{itemize}
  Both (i) and (ii) follow from Lemma~\ref{lem:Rt-S-ZZp-cohomology-i} and~\ref{lem:Rt-S-ZZp-cohomology-ii},
  as the $\ZZ_{p}\gamma_{i}$-action on
  $R^{+,\widetilde{i-1}}$ is trivial.
  \end{proof}


\subsection{On the cohomology of $H^{\bullet}$}\label{subsubsec:initialcomputation-Ho-of-Hbullet}

Fix $q\in\NN_{\geq2}$.
Recall that, because of Lemma~\ref{lem:cont-group-coh-OBlatTTKd-Dbullet-reducetotildeE},
we aim to check that $\eta_{\mu/p^{q}}\widetilde{E}^{>q,\bullet}$ is strictly exact.
In \S\ref{subsubsec:initialcomputation-associatedgradedofEgreaterthanqbullet-reconstructionpaper},
we reduced this aim to showing that $\eta_{\mu/p^{q}}G^{\bullet}$ is exact.
By Lemma~\ref{lem:etaC-qiso-etatC}, we may as well check consider the cohomology of
$H^{\bullet}$. This is what we do in this \S\ref{subsubsec:initialcomputation-Ho-of-Hbullet}.

Write
$S := R^{+,\widetilde{i-1}}\left[py_{1},\dots,py_{i-1}\right]$,
$z := y_{i}$, $u := \zeta_{p}-1 \in S$, and
$\delta := \gamma_{i}$.
Recall that $\delta$ acts as the identity on $S[x]$ and $\delta(z)=ux+z$.
  With this notation, $H^{\bullet}$ is the cochain complex
  \begin{multline}\label{eq:cont-group-coh-OBlatTTKd-finalversionofCbullet}
    0\to
    S[x] \times_{S[x,z]}\left( S[x,z]\times_{\delta-1,S[x,z]}S[px,pz] \right) \\
    \stackrel{d^{0}}{\to}
    \left( S[x,z]\times_{\delta-1,S[x,z]}S[px,pz] \right)
    \stackrel{d^{1}}{\to}
    S[px,pz]
    \to
    0,
  \end{multline}
  concentrated in degree $0,1$, and $2$,
  cf. Convention~\ref{conv:GbulletHbullet-degrees-012}.
  Here, $d^{0}$ and $d^{1}$ are both given by $(f,g)\mapsto g$.

  \begin{lem}\label{prop:curiouscomplex-defn-Cbullet-etacomplex-exact-0}
    $\Ho^{0}\left(H^{\bullet}\right)=0$.
  \end{lem}

  \begin{proof}
    Given $(f,g)\in\ker d^{0}$, we find $g=0$. This forces $f=0$ as well.
  \end{proof}

  \begin{lem}\label{prop:curiouscomplex-defn-Cbullet-etacomplex-exact-1}
    The canonical inclusion
      $S[x] \isomap \ker\left(
        \delta-1 \colon S[x,z] \to S[x,z] \right)$
    is an isomorphism.
  \end{lem}

  \begin{proof}
    Injectivity is clear, thus it remains to prove surjectivity.
    Consider $f\in\ker(\delta-1)$.
    Because $\delta-1$ preserves the degree of the polynomials,
    we may assume $f=sx^{n}z^{m}$ for
    $s\in S$ and $n,m\in\NN$. Now compute
    \begin{equation*}
    \begin{split}
      0=\left(\delta-1\right)\left(sx^{n}z^{m}\right)
      &=sx^{n}\left(\delta\left(z^{m}\right)-z^{m}\right) \\
      &=sx^{n}\left(\left(ux+z\right)^{m}-z^{m}\right) \\
      &=sx^{n}\left(\sum_{v=0}^{m}\binom{m}{v}(ux)^{m-v}z^{v}-z^{m}\right) \\
      &=sx^{n}\left(\sum_{v=0}^{m-1}\binom{m}{v}(ux)^{m-v}z^{v}\right) \\
      &=\left(\sum_{v=0}^{m-1}\left(\binom{m}{v}su^{m-v}\right)x^{m-v+n}z^{v}\right).
    \end{split}
    \end{equation*}
    This implies $\binom{m}{v}su^{m-v}=0$ for all $v=0,\dots,m-1$.
    Because $S$ is of characteristic zero and $u$ is not a zero-divisor, this gives
    $m=0$. That is $f=sx^{n}\in S[x]$.
  \end{proof}

  \begin{lem}\label{prop:curiouscomplex-defn-Cbullet-etacomplex-exact-2}
    Given $g\in S\left[px,pz\right]$, there exists an
    $f\in S[x,z]$ such that $\left(\delta-1\right)(f)=ux\cdot g$
  \end{lem}

  \begin{proof}
    First, we prove the following fact:
    \begin{equation}\label{eq:Ho1Cbullet-is-torsion-neededfact}
      (n+1)!uxz^{n}\in\im\left(\delta-1\right) \text{ for all } n\geq0.
    \end{equation}
    We proceed by induction along $n$.
    The case $n=0$ is clear as $\left(\delta-1\right)(z)=ux$.  
    Now assume~(\ref{eq:Ho1Cbullet-is-torsion-neededfact}) is known for
    $n$; this implies
    \begin{equation*}
    \begin{split}
      \left(\delta-1\right)\left(n!z^{n+1}\right)
      =\left(n+1\right)!uxz^{n}
      +\sum_{i=1}^{n-1}
      \underbrace{\binom{n+1}{i}\frac{n!}{(i+1)!}(ux)^{(n+1)-i}(i+1)z^{i}}_{\in\im\left(\delta-1\right) \text{ for all } i=1,\dots,n-1}.
    \end{split}
    \end{equation*}
    In particular, $(n+1)!uxz^{n}$ is in the image of $\delta-1$.
    This proves~(\ref{eq:Ho1Cbullet-is-torsion-neededfact}).
    
    To prove the lemma,
    we may assume $g=\left(pz\right)^{n}$ because $\delta-1$ is $S\left[px\right]$-linear.
    \begin{equation*}
      \left(\delta-1\right)(z)
      =ux=ux \cdot \left(pz\right)^{0}
    \end{equation*}
    verifies the case $n=0$.
    It remains to consider $n\geq1$.
    But then we apply~(\ref{eq:Ho1Cbullet-is-torsion-neededfact}) to find
    \begin{equation*}
      ux \cdot \left(pz\right)^{n}
      =\frac{p^{n}}{(n+1)!}\left(\left(n+1\right)! ux z^{n}\right)
      \in\im\left(\delta-1\right).
    \end{equation*}
    Here we are also using that $p^{n}/(n+1)!$ is a $p$-adic integer.
    \end{proof}

\begin{prop}\label{prop:curiouscomplex-defn-Cbullet-etacomplex-exact}
  The complex $\eta_{\left(\zeta_{p}-1\right)x}H^{\bullet}=\eta_{ux}H^{\bullet}$ is exact.
\end{prop}

\begin{proof}
  Lemma~\ref{prop:curiouscomplex-defn-Cbullet-etacomplex-exact-0}
  says $\Ho^{0}\left(H^{\bullet}\right)=0$,
  Lemma~\ref{prop:curiouscomplex-defn-Cbullet-etacomplex-exact-1}
  implies $\Ho^{1}\left(H^{\bullet}\right)=0$, and
  Lemma~\ref{prop:curiouscomplex-defn-Cbullet-etacomplex-exact-2}
  gives $ux\cdot\Ho^{2}\left(H^{\bullet}\right)=0$.
  Thus Proposition~\ref{prop:curiouscomplex-defn-Cbullet-etacomplex-exact} follows
  from Lemma~\ref{lem:eta-operator-kills-torsion}
\end{proof}


\subsection{Proof of Proposition~\ref{prop:cont-group-coh-OBlatTTKd}}\label{subsubsec:initialcomputation-finallytheproof}
  
    By Lemma~\ref{lem:cont-group-coh-OBlatTTKd-Dbullet-reducetotildeE},
    we have to check that the
    $\eta_{\mu/p^{q}}\widetilde{E}^{>q,\bullet}$ are strictly exact for all $q\in\NN_{\geq 2}$.
    To do this, we fix such $q$. Lemma~\ref{lem:gretatEgreaterthanqbullet-isetaGbullet-conditionsatisfied}
    and~\ref{lem:decalagefilteredmodules-defined} imply that the filtration topology on
    $\eta_{\mu/p^{q}}\widetilde{E}^{>q,\bullet}$ is separated and complete.
    Therefore~\cite[Chapter I, \S 4.2 page 31-32, Theorem 4(5)]{HuishiOystaeyen1996} applies:
    thus it remains to check that $\gr\eta_{\mu/p^{q}}\widetilde{E}^{>q,\bullet}$ is exact.
    By Corollary~\ref{cor:gretatEgreaterthanqbullet-isetaGbullet}, we have to prove that
    $\eta_{\sigma\left(\mu/p^{q}\right)}G^{\bullet}$ is exact. The last sentence of
    Lemma~\ref{lem:initialcomputation-describe-Gbullet} implies that this complex coincides with
    $\eta_{\left(\zeta_{p}-1\right)x}G^{\bullet}$. It is exact by Lemma~\ref{lem:etaC-qiso-etatC}
    and Proposition~\ref{prop:curiouscomplex-defn-Cbullet-etacomplex-exact}.
    \qed



\section{On the cohomology over $X_{C}$}


\subsection{Preparations}

We continue to fix a compatible system $1,\zeta_{p},\zeta_{p^{2}},\dots\in C$
of primitive $p$th roots of unity and a profinite set $S$.
Next, we recall the $\ZZ_{p}(1)^{d}$-action on
$\widetilde{\TT}_{C}^{d}$ as in Lemma~\ref{lem:ZpGaloiscoverings}.
Concretely, we fix a $\ZZ$-basis $\gamma_{1},\dots,\gamma_{d}$ of $\ZZ_{p}(1)^{d}\cong\ZZ_{p}^{d}$
such that
\begin{equation*}
  \gamma_{i}\cdot T_{1}^{e_{1}}\cdots T_{d}^{e_{d}}=\zeta^{e_{i}}T_{1}^{e_{1}}\cdots T_{d}^{e_{d}}
\end{equation*}
for all $i=1,\dots,d$. Here, $\zeta^{e_{i}}=\zeta_{p^{j}}^{e_{i}p^{j}}$ whenever $e_{i}p^{j}\in\ZZ$.
Bu functoriality, this gives a $\ZZ_{p}(1)^{d}$-action on $\OB_{\dR}^{\dag}\left(\widetilde{\TT}_{K}^{d}\times S\right)$.
In particular, we have, for all $i=1,\dots,d$, the automorphisms
\begin{equation*}
  \gamma_{i}\colon
  \OB_{\dR}^{\dag}\left(\widetilde{\TT}_{C}^{d}\times S\right)\to\OB_{\dR}^{\dag}\left(\widetilde{\TT}_{C}^{d}\times S\right).
\end{equation*}

In the following, the symbol $\simeq$ denotes a quasi-isomorphism.
Furthermore, we use Notation~\ref{notatio:base-change-in-proet} such that
$\BB_{\dR}^{\dag}\left(\Spa\left(C,\cal{O}_{C}\right)\times S\right)$
in Proposition~\ref{prop:cont-group-coh-OBlatTTKd-S}
below is well-defined.

\begin{prop}\label{prop:cont-group-coh-OBlatTTKd-S}
  We have the canonical quasi-isomorphism
  \begin{equation*}
    \cal{O}\left(\TT^{d}\right)\widehat{\otimes}_{k}\BB_{\dR}^{\dag}\left(\Spa\left(C,\cal{O}_{C}\right)\times S\right)
    \stackrel{\simeq}{\longrightarrow}
    \Kos_{\OB_{\dR}^{\dag}\left(\widetilde{\TT}_{C}^{d}\times S\right)}\left(\gamma_{1}-1,\dots,\gamma_{d}-1\right).
  \end{equation*}
\end{prop}

\begin{proof}  
  Recall Proposition~\ref{notation:normalOBi-reconstructionpaper}. We aim to check that the canonical morphisms
  \begin{equation*}
    \normalOB_{i-1}
    \stackrel{\simeq}{\longrightarrow}
    \Kos_{\normalOB_{d}}\left(\gamma_{i}-1,\dots,\gamma_{d}-1\right)
  \end{equation*}
  are quasi-isomorphisms for all $i=1,\dots,d$. This is more general than
  Proposition~\ref{prop:cont-group-coh-OBlatTTKd-S}, which itself is the special case
  $i=1$, cf. Remark~\ref{notation:normalOBi-reconstructionpaper}.
  We proceed via induction along $i=d,\dots,1$.
  The claim is already proven for $i=d$:
  \begin{equation*}
    \normalOB_{d-1}\stackrel{\simeq}{\longrightarrow}\Kos_{\normalOB_{d}}\left(\gamma_{d}-1\right)
  \end{equation*}
  is a quasi-isomorphism by Proposition~\ref{prop:cont-group-coh-OBlatTTKd}.
  Now consider the induction hypothesis for $i+1$. This gives the second quasi-isomorphism here:
  \begin{equation*}
  \begin{split}
    \normalOB_{i-1}
    &\simeq\cone\left(\normalOB_{i}[-1]\stackrel{\gamma_{i}-1}{\longrightarrow}\normalOB_{i}\right) \\
    &\simeq\cone\left(
    \Kos_{\normalOB_{d}}\left(\gamma_{i+1}-1,\dots,\gamma_{d}-1\right)
    \stackrel{\gamma_{i}-1}{\longrightarrow}
    \Kos_{\normalOB_{d}}\left(\gamma_{i+1}-1,\dots,\gamma_{d}-1\right)
    \right) \\
    &=\Kos_{\normalOB_{d}}\left(\gamma_{i}-1,\dots,\gamma_{d}-1\right).
  \end{split}
  \end{equation*}
  The first quasi-isomorphism in this computation comes again from
  Proposition~\ref{prop:cont-group-coh-OBlatTTKd}.
\end{proof}

One may utilise Proposition~\ref{prop:cont-group-coh-OBlatTTKd-S}
to compute the cohomology of $\OB_{\dR}^{\dag}$ over $\widetilde{\TT}_{C}^{d}\times S$.
The following Proposition~\ref{prop:OBlatT-otimes-OTOX-is-OBlaX}
will enable us to generalise to the setting of an arbitrary
smooth affinoid rigid-analytic $k$-variety $X$, equipped with a fixed étale morphism
$X\to\TT^{d}$.

\begin{prop}\label{prop:OBlatT-otimes-OTOX-is-OBlaX}
  We have the following canonical isomorphism in $\D\left(\IndBan_{k}\right)$:
  \begin{equation*}
    \OB_{\dR}^{\dag}\left( \widetilde{\TT}_{C}^{d}\times S\right)
    \widehat{\otimes}_{\cal{O}\left(\TT^{d}\right)}^{\rL}
    \cal{O}(X)
    \isomap
    \OB_{\dR}^{\dag}\left(\widetilde{X}_{C}\times S\right).
  \end{equation*}
\end{prop}

We complete the proof of Proposition~\ref{proofprop:OBlatT-otimes-OTOX-is-OBlaX}
on page~\pageref{proofprop:OBlatT-otimes-OTOX-is-OBlaX}. We start with two lemmata.

\begin{lem}\label{lem:OBlatT-otimes-OTOX-is-OBlaX-1}
  Let $Y$ denote a smooth rigid-analytic $k$-variety, equipped with an étale morphism
  $Y\to\TT^{d}$. For any affinoid perfectoid $U\in Y_{\proet}$,
  we find that the canonical morphism
  $\left(\nu^{-1}\cal{O}\right)(U)\to\OB_{\dR}^{\dag}(U)$ lifts
  to the isomorphism of $k$-ind-Banach spaces
  \begin{equation}\label{eq:OBlatT-otimes-OTOX-is-OBlaX-1-themap}
    \widehat{\cal{O}}(U)\left\<\frac{\zeta,Z_{1},\dots,Z_{d}}{p^{\infty}}\right\>\to\OB_{\dR}^{\dag,+}(U).
  \end{equation}
  where $\zeta\mapsto\xi$ and $Z_{i}\mapsto Z_{i}$ for all $i=1,\dots,d$,
  cf. Theorem~\ref{thm:cohomology-OBpdRdag-over-affperfd-reconstructionpaper}.
\end{lem}

\begin{proof}
  Construct the map in the opposite direction with Lemma~\ref{lem:imagetildenu-Ainf}.
\end{proof}

Let $B_{\dR}^{\dag}:=\BB_{\dR}^{\dag}\left(C,\cal{O}_{C}\right)$.
  
\begin{lem}\label{lem:OBlatT-otimes-OTOX-is-OBlaX-2}
  Let $Y$ denote a smooth rigid-analytic $k$-variety, equipped with an étale morphism
  $Y\to\TT^{d}$. For any affinoid perfectoid $U\in Y_{\proet}/Y_{C}$, we have the
  following canonical isomorphism 
  \begin{equation*}
    B_{\dR}^{\dag}\widehat{\otimes}_{B_{\dR}^{\dag,+}}^{\rL}\OB_{\dR}^{\dag,+}\left(U\right)
    \isomap\OB_{\dR}\left(U\right).
  \end{equation*}
\end{lem}  

\begin{proof}
  Write $\widehat{U}=\Spa\left(R,R^{+}\right)$ and compute
  \begin{align*}
    B_{\dR}^{\dag}\widehat{\otimes}_{B_{\dR}^{\dag,+}}^{\rL}\OB_{\dR}^{\dag,+}\left(U\right)
    &=\varinjlim_{t\times}B_{\dR}^{\dag,+}\widehat{\otimes}_{B_{\dR}^{\dag,+}}^{\rL}\OB_{\dR}^{\dag,+}\left(U\right) \\
    &\cong\varinjlim_{t\times}\OB_{\dR}^{\dag,+}\left(U\right) \\
    &\cong
      \varinjlim_{t\times}\BB_{\dR}^{\dag,+}\left(R,R^{+}\right)\left\< \frac{Z_{1},\dots,Z_{d}}{p^{\infty}}\right\> \\
     &\cong\BB_{\dR}^{\dag}\left(R,R^{+}\right)\left\< \frac{Z_{1},\dots,Z_{d}}{p^{\infty}}\right\> \\
     &\stackrel{\text{\ref{thm:cohomology-OBdRdag-over-affperfd-reconstructionpaper}}}{\cong}\OB_{\dR}^{\dag}(U).
  \end{align*}
  Here, we used the local description
  of $\OB_{\dR}^{\dag,+}$ as in Corollary~\ref{cor:localdescription-of-subsections-OBla}.
\end{proof}

\begin{proof}[Proof of Proposition~\ref{prop:OBlatT-otimes-OTOX-is-OBlaX}]\label{proofprop:OBlatT-otimes-OTOX-is-OBlaX}
  Proceed via a direct computation:
  \begin{equation*}
  \begin{split}
    &\OB_{\dR}^{\dag}\left( \widetilde{X}_{C}\times S\right)
    \widehat{\otimes}^{\rL}_{\cal{O}\left(\TT^{d}\right)}
    \cal{O}(X) \\
    &\stackrel{\text{\ref{lem:OBlatT-otimes-OTOX-is-OBlaX-2}}}{\cong}
    B_{\dR}^{\dag}\widehat{\otimes}_{B_{\dR}^{\dag,+}}^{\rL}
    \OB_{\dR}^{\dag,+}\left( \widetilde{X}_{C} \times S\right)
    \widehat{\otimes}_{\cal{O}\left(\TT^{d}\right)}^{\rL}
    \cal{O}(X) \\
    &\stackrel{\text{\ref{lem:OBlatT-otimes-OTOX-is-OBlaX-1}}}{\cong}
    B_{\dR}^{\dag}\widehat{\otimes}_{B_{\dR}^{\dag,+}}^{\rL}
    \widehat{\cal{O}}\left(\widetilde{X}_{C} \times S\right)\left\<\frac{\zeta,Z_{1},\dots,Z_{d}}{p^{\infty}}\right\>
    \widehat{\otimes}_{\cal{O}\left(\TT^{d}\right)}^{\rL}
    \cal{O}(X) \\
    &\cong
    B_{\dR}^{\dag}\widehat{\otimes}_{B_{\dR}^{\dag,+}}^{\rL}
    \left(\widehat{\cal{O}}\left(\widetilde{\TT}_{K}^{d}\times S\right)\widehat{\otimes}_{k_{0}}
    k_{0}\left\<\frac{\zeta,Z_{1},\dots,Z_{d}}{p^{\infty}}\right\>\right)
    \widehat{\otimes}_{\cal{O}\left(\TT^{d}\right)}^{\rL}
    \cal{O}(X) \\
    &\cong
    B_{\dR}^{\dag}\widehat{\otimes}_{B_{\dR}^{\dag,+}}^{\rL}
    \left(\left(\widehat{\cal{O}}\left(\widetilde{\TT}_{K}^{d}\times S\right)
    \widehat{\otimes}_{\cal{O}\left(\TT^{d}\right)}^{\rL}
    \cal{O}(X)\right)
    \widehat{\otimes}_{k_{0}}^{\rL}k_{0}\left\<\frac{\zeta,Z_{1},\dots,Z_{d}}{p^{\infty}}\right\>
    \right) \\
    &\stackrel{\text{\ref{lem:hOtildeTKdtimesShotimesOTdOX-isomaphOtildeXKtimesS-reconstructionpaper}}}{\cong}
    B_{\dR}^{\dag}\widehat{\otimes}_{B_{\dR}^{\dag,+}}^{\rL}
    \left(\widehat{\cal{O}}\left(\widetilde{X}_{K}\times S\right)
    \widehat{\otimes}_{k_{0}}^{\rL}k_{0}\left\<\frac{\zeta,Z_{1},\dots,Z_{d}}{p^{\infty}}\right\>
    \right) \\
    &\cong
    B_{\dR}^{\dag}\widehat{\otimes}_{B_{\dR}^{\dag,+}}^{\rL}
    \left(\widehat{\cal{O}}\left(\widetilde{X}_{K}\times S\right)
    \widehat{\otimes}_{k_{0}}k_{0}\left\<\frac{\zeta,Z_{1},\dots,Z_{d}}{p^{\infty}}\right\>
    \right) \\
    &\cong
    B_{\dR}^{\dag}\widehat{\otimes}_{B_{\dR}^{\dag,+}}^{\rL}
    \left(\widehat{\cal{O}}\left(\widetilde{X}_{K}\times S\right)\left\<\frac{\zeta,Z_{1},\dots,Z_{d}}{p^{\infty}}\right\>
    \right) \\
    &\stackrel{\text{\ref{lem:OBlatT-otimes-OTOX-is-OBlaX-1}}}{\cong}
    B_{\dR}^{\dag}\widehat{\otimes}_{B_{\dR}^{\dag,+}}^{\rL}\OB_{\dR}^{\dag,+}\left(\widetilde{X}_{K}\times S\right) \\
    &\stackrel{\text{\ref{lem:OBlatT-otimes-OTOX-is-OBlaX-2}}}{\cong}
    \OB_{\dR}^{\dag}\left(\widetilde{X}_{K}\times S\right).
  \end{split}
  \end{equation*}
  Here, we used Corollary~\ref{cor:tensorproductfieldexact}
  and Lemma~\ref{lem:restrictedpowerseries-from-hotimes}.
\end{proof}

In the following, we make frequent use of Notation~\ref{notatio:base-change-in-proet}
without further reference.

\begin{prop}\label{prop:cont-group-coh-OBlattX-S}
  We have the canonical quasi-isomorphism
  \begin{equation*}
    \cal{O}\left(X\right)\widehat{\otimes}_{k}\BB_{\dR}^{\dag}\left(\Spa\left(C,\cal{O}_{C}\right)\times S\right)
    \stackrel{\simeq}{\longrightarrow}
    \Kos_{\OB_{\dR}^{\dag}\left(\widetilde{X}_{C}\times S\right)}\left(\gamma_{1}-1,\dots,\gamma_{d}-1\right).
  \end{equation*}
\end{prop}

\begin{proof}
  Every $k$-ind-Banach space is flat by Corollary~\ref{cor:tensorproductfieldexact}.
  This enables us to compute
  \begin{equation*}
    \begin{split}
    &\Kos_{\OB_{\dR}^{\dag}\left(\widetilde{X}_{C} \times S\right)}\left(\gamma_{1}-1,\dots,\gamma_{d}-1\right) \\
    &\stackrel{\text{\ref{prop:OBlatT-otimes-OTOX-is-OBlaX}}}{\cong}
    \Kos_{\OB_{\dR}^{\dag}\left(\widetilde{\TT}_{C}^{d} \times S\right)}\left(\gamma_{1}-1,\dots,\gamma_{d}-1\right)
    \widehat{\otimes}_{\cal{O}\left(\TT^{d}\right)}^{\rL}\cal{O}(X) \\
    &\stackrel{\text{\ref{prop:cont-group-coh-OBlatTTKd-S}}}{\cong}
    \left(\cal{O}\left(\TT^{d}\right)\widehat{\otimes}_{k}\BB_{\dR}^{\dag}\left(\Spa\left(C,\cal{O}_{C}\right) \times S \right)\right)
    \widehat{\otimes}_{\cal{O}\left(\TT^{d}\right)}^{\rL}\cal{O}(X) \\
    &\cong
    \left(\cal{O}\left(\TT^{d}\right)\widehat{\otimes}_{k}^{\rL}\BB_{\dR}^{\dag}\left(\Spa\left(C,\cal{O}_{C}\right) \times S \right)\right)
    \widehat{\otimes}_{\cal{O}\left(\TT^{d}\right)}^{\rL}\cal{O}(X) \\
    &\cong
    \BB_{\dR}^{\dag}\left(\Spa\left(C,\cal{O}_{C}\right) \times S \right)\widehat{\otimes}_{k}\cal{O}(X).
    \end{split}
  \end{equation*}
  in the derived category of $\IndBan_{k}$.
\end{proof}


\subsection{Theorems in the solid world}
\label{subsubsec:coh-over-XC-solid-reconstructionpaper}

Write $\underline{\BB}_{\dR}^{\dag}:=\underline{\BB_{\dR}^{\dag}}$
and $\underline{\OB}_{\dR}^{\dag}:=\underline{\OB_{\dR}^{\dag}}$.

\begin{thm}\label{thm:solidcont-group-coh-OBlatX}
  Let $X$ denote an arbitrary smooth rigid-analytic $k$-variety,
  equipped with an étale morphism $X\to\TT^{d}$. Then,
  we have the following canonical isomorphism in $\D\left(\Vect_{k}^{\solid}\right)$:
  \begin{equation*}
    \underline{\cal{O}}\left(X\right)\otimes_{k}^{\blacksquare}\underline{\BB}_{\dR}^{\dag}\left(\Spa\left(C,\cal{O}_{C}\right)\times S\right)
  \isomap\R\Gamma\left(X_{C} \times S,\underline{\OB}_{\dR}^{\dag}\right)
  \end{equation*}
\end{thm}

\begin{proof}
  We compute the cohomology via \v{C}ech cohomology with respect to the
  covering $\widetilde{X}_{C}\to X_{C}$. This is allowed, because the cohomology
  of $\underline{\OB}_{\dR}^{\dag}$ vanishes over objects of the form
  \begin{equation*}
    \widetilde{X}_{C}\times_{X_{C}}\dots\times_{X_{C}}\widetilde{X}_{C}
  \end{equation*}
  as they are affinoid perfectoid, cf. Theorem~\ref{cor:cohomology-solidOBdRdag-over-affperfd-reconstructionpaper}.
  This gives the first isomorphism here:
  \begin{equation}\label{eq:solidcont-group-coh-OBlatX-thecomputation}
  \begin{split}
    \R\Gamma\left(X_{C} \times S,\underline{\OB}_{\dR}^{\dag}\right)
    &\cong
    \check{C}^{\bullet}\left(\widetilde{X}_{C}\times S \to X_{C} \times S,\underline{\OB}_{\dR}^{\dag}\right) \\
    &\stackrel{\text{\ref{lem:sectionsofunderlineF}}}{\cong}
    \underline{\check{C}^{\bullet}\left(\widetilde{X}_{C}\times S \to X_{C} \times S,\OB_{\dR}^{\dag}\right)} \\
    &\stackrel{\text{\ref{cor:OBdRdag-overUtimesS-reconstructionpaper}}}{\cong}
    \R\Gamma_{\cont}\left( \ZZ_{p}(1)^{d} , \underline{\OB_{\dR}^{\dag}\left(\widetilde{X}_{C} \times S\right)} \right) \\
    &\stackrel{\text{\ref{lem:solidcontinuous-cohomology-over-ZZpd-via-Koszul}}}{\cong}
    \Kos_{\underline{\OB_{\dR}^{\dag}\left(\widetilde{X}_{C} \times S\right)}}\left(\gamma_{1}-1,\dots,\gamma_{d}-1\right) \\
    &\cong
    \underline{\Kos_{\OB_{\dR}^{\dag}\left(\widetilde{X}_{C} \times S\right)}\left(\gamma_{1}-1,\dots,\gamma_{d}-1\right)} \\ 
    &\stackrel{\text{\ref{prop:cont-group-coh-OBlattX-S}}}{\cong}
    \underline{\cal{O}\left(X\right)\widehat{\otimes}_{k}\BB_{\dR}^{\dag}\left(\Spa\left(C,\cal{O}_{C}\right)\times S\right)} \\
    &\stackrel{\text{\ref{lem:IndBan-to-Solid-canonicallystronglymonoidal-reconstructionpaper}}}{\cong}
    \underline{\cal{O}\left(X\right)}\otimes_{k}^{\blacksquare}\underline{\BB_{\dR}^{\dag}\left(\Spa\left(C,\cal{O}_{C}\right)\times S\right)} \\
    &\stackrel{\text{\ref{lem:sectionsofunderlineF}}}{\cong}
    \underline{\cal{O}}\left(X\right)\otimes_{k}^{\blacksquare}\underline{\BB}_{\dR}^{\dag}\left(\Spa\left(C,\cal{O}_{C}\right)\times S\right).
  \end{split}
  \end{equation}
  Here, we used that $\widetilde{X}_{C} \times S\to X_{C}\times S$ is a
  pro-étale $\ZZ_{p}(1)^{d}$-torsor, cf. Lemma~\ref{lem:ZpGaloiscoverings},
  and we fixed a suitable  basis $\gamma_{1},\dots,\gamma_{d}$ for $\ZZ_{p}(1)^{d}\cong\ZZ_{p}^{d}$
  as in \emph{loc. cit.}.
  In the second and last step of the computation~(\ref{eq:solidcont-group-coh-OBlatX-thecomputation}),
  we view both $\BB_{\dR}^{\dag}$ as well as $\OB_{\dR}^{\dag}$
  as sheaves on $X_{\proet,\affperfd}^{\fin}$,
  cf. Lemma~\ref{lem:sheaves-on-Xproet-and-Xproetaffperfdfin},
  and $\cal{O}$ as a sheaf on the site $X_{w}$ of affinoid subdomains of $X$ equipped with the
  weak Grothendieck topology.
  Both $X_{\proet,\affperfd}^{\fin}$
  and $X_{w}$ only admit finite coverings, thus we can apply Lemma~\ref{lem:sectionsofunderlineF}.
\end{proof}

Write $B_{\dR}^{\dag}:=\BB_{\dR}^{\dag}\left(C,\cal{O}_{C}\right)$ and denote its solidification by
$\underline{B}_{\dR}^{\dag}:=\underline{B_{\dR}^{\dag}}$.

\begin{cor}\label{thm:solidcont-group-coh-OBlatX-Spoint}
  Let $X$ denote an arbitrary smooth rigid-analytic $k$-variety,
  equipped with an étale morphism $X\to\TT^{d}$. Then,
  we have the following canonical isomorphism in $\D\left(\Vect_{k}^{\solid}\right)$:
  \begin{equation*}
    \underline{\cal{O}}\left(X\right)\otimes_{k}^{\blacksquare}\underline{B}_{\dR}^{\dag}
    \isomap\R\Gamma\left(X_{C},\underline{\OB}_{\dR}\right)
  \end{equation*}
\end{cor}

\begin{proof}
  The proof of Theorem~\ref{thm:solidcont-group-coh-OBlatX} gives
  \begin{equation*}
    \underline{\cal{O}}\left(X\right)\otimes_{k}^{\blacksquare}\underline{\BB_{\dR}^{\dag}\left(\Spa\left(C,\cal{O}_{C}\right)\times S\right)}
    \isomap\R\Gamma\left(X_{C}\times S,\underline{\OB}_{\dR}\right)
  \end{equation*}
  for an abitrary profinite set $S$. Now set $S=*$ and apply
  Theorem~\ref{thm:derivedlocalsectionsofBdRdagplus-recostructionpaper}.
\end{proof}

Denote the solidifications of $\BB_{\pdR}^{\dag}$ and
$\OB_{\pdR}^{\dag}$ by $\underline{\BB}_{\pdR}^{\dag}:=\underline{\BB_{\pdR}^{\dag}}$
and $\underline{\OB}_{\pdR}^{\dag}:=\underline{\OB_{\pdR}^{\dag}}$.

\begin{thm}\label{thm:solidcont-group-coh-OBpdRdagtTTKd}
  Let $X$ denote an arbitrary smooth rigid-analytic $k$-variety,
  equipped with an étale morphism $X\to\TT^{d}$. Then,
  we have the following canonical isomorphism in $\D\left(\Vect_{k}^{\solid}\right)$:
  \begin{equation*}
    \underline{\cal{O}}\left(X\right)\otimes_{k}^{\blacksquare}\underline{\BB}_{\pdR}^{\dag}\left(\Spa\left(C,\cal{O}_{C}\right)\times S\right)
  \isomap\R\Gamma\left(X_{C} \times S,\underline{\OB}_{\pdR}^{\dag}\right).
  \end{equation*}
\end{thm}

\begin{proof}
  We compute the cohomology via \v{C}ech cohomology with respect to the
  covering $\widetilde{X}_{C}\to X_{C}$. This is allowed, because the cohomology
  of $\underline{\OB}_{\dR}^{\dag}$ vanishes over objects of the form
  \begin{equation*}
    \widetilde{X}_{C}\times_{X_{C}}\dots\times_{X_{C}}\widetilde{X}_{C}
  \end{equation*}
  as they are affinoid perfectoid, cf. Corollary~\ref{cor:cohomology-solidOBpdRdag-over-affperfd-reconstructionpaper}.
  This gives the first isomorphism here:
  \begin{align*}
    \R\Gamma\left( X_{C} \times S,\underline{\OB}_{\pdR}^{\dag}\right)
    &\cong
    \check{C}^{\bullet}\left(\widetilde{X}_{C}\times S \to X_{C} \times S,\underline{\OB}_{\pdR}^{\dag}\right) \\
    &\stackrel{\text{\ref{lem:sectionsofunderlineF}}}{\cong}
    \underline{\check{C}^{\bullet}\left(\widetilde{X}_{C}\times S \to X_{C} \times S,\OB_{\pdR}^{\dag}\right)} \\
    &\stackrel{\text{\ref{cor:cohomology-solidOBpdRdag-over-affperfd-reconstructionpaper}}}{\cong}
    \R\Gamma_{\cont}\left( \ZZ_{p}(1)^{d} , \underline{\OB_{\pdR}^{\dag}\left( \widetilde{X}_{C} \times S\right)} \right) \\
    &\stackrel{\text{\ref{lem:solidcontinuous-cohomology-over-ZZpd-via-Koszul}}}{\cong}
    \Kos_{\underline{\OB_{\pdR}^{\dag}\left(\widetilde{X}_{C} \times S\right)}}\left(\gamma_{1}-1,\dots,\gamma_{d}-1\right) \\
    &\stackrel{\text{\ref{lem:solidOBpdRdag-directsum-of-solidOBdRdag}}}{\cong}
    \bigoplus_{\alpha\geq0}\underline{\Kos_{\OB_{\dR}^{\dag}\left(\widetilde{X}_{C} \times S\right)}\left(\gamma_{1}-1,\dots,\gamma_{d}-1\right)}\left(\log t\right)^{\alpha} \\ 
    &\cong
    \bigoplus_{\alpha\geq0}\R\Gamma\left(\widetilde{X}_{C} \times S,\underline{\OB}_{\dR}^{\dag}\right)\left(\log t\right)^{\alpha} \\
    &\stackrel{\text{\ref{thm:solidcont-group-coh-OBlatX}}}{\cong}
    \bigoplus_{\alpha\geq0}\underline{\cal{O}}\left(X\right)\otimes_{k}^{\blacksquare}\underline{\BB}_{\dR}^{\dag}\left(\Spa\left(C,\cal{O}_{C}\right)\times S\right)\left(\log t\right)^{\alpha} \\
    &\cong
    \underline{\cal{O}}\left(X\right)\otimes_{k}^{\blacksquare}\bigoplus_{\alpha\geq0}\underline{\BB}_{\dR}^{\dag}\left(\Spa\left(C,\cal{O}_{C}\right)\times S\right)\left(\log t\right)^{\alpha} \\
    &\stackrel{\text{(\ref{lem:describeBpdRdagoverXC-intererstingcomputation})}}{\cong}
    \underline{\cal{O}}\left(X\right)\otimes_{k}^{\blacksquare}\underline{\BB}_{\pdR}^{\dag}\left(\Spa\left(C,\cal{O}_{C}\right)\times S\right).
  \end{align*}  
  Here, we used that $\widetilde{X}_{C} \times S\to X_{C}\times S$ is a
  pro-étale $\ZZ_{p}(1)^{d}$-torsor, cf. Lemma~\ref{lem:ZpGaloiscoverings},
  and we fixed a suitable  basis $\gamma_{1},\dots,\gamma_{d}$ for $\ZZ_{p}(1)^{d}\cong\ZZ_{p}^{d}$
  as in \emph{loc. cit.}.
  In the second step of the computation~(\ref{eq:solidcont-group-coh-OBlatX-thecomputation}),
  we view $\OB_{\pdR}^{\dag}$ as a sheaf
  on $X_{\proet,\affperfd}^{\fin}$,
  cf. Lemma~\ref{lem:sheaves-on-Xproet-and-Xproetaffperfdfin}.
  This site only admits finite coverings, thus we are allowed apply Lemma~\ref{lem:sectionsofunderlineF}.
  Finally, see the proof of Lemma~\ref{lem:describeBpdRdagoverXC}
  for~(\ref{lem:describeBpdRdagoverXC-intererstingcomputation}).  
\end{proof}

Let $B_{\pdR}^{\dag}:=\BB_{\pdR}^{\dag}\left(C,\cal{O}_{C}\right)$ and denote its solidification by
$\underline{B}_{\pdR}^{\dag}:=\underline{B_{\pdR}^{\dag}}$.

\begin{cor}\label{thm:solidcont-group-coh-OBpdRdagtX-Spoint}
  Let $X$ denote an arbitrary smooth rigid-analytic $k$-variety,
  equipped with an étale morphism $X\to\TT^{d}$. Then,
  we have the following canonical isomorphism in $\D\left(\Vect_{k}^{\solid}\right)$:
  \begin{equation*}
    \underline{\cal{O}}\left(X\right)\otimes_{k}^{\blacksquare}\underline{B}_{\pdR}^{\dag}
    \isomap\R\Gamma\left(X_{C},\underline{\OB}_{\pdR}\right).
  \end{equation*}
\end{cor}

\begin{proof}
  The proof of Theorem~\ref{thm:solidcont-group-coh-OBpdRdagtTTKd} gives
  \begin{equation*}
    \underline{\cal{O}}\left(X\right)\otimes_{k}^{\blacksquare}\underline{\BB_{\pdR}^{\dag}\left(\Spa\left(C,\cal{O}_{C}\right)\times S\right)}
    \isomap\R\Gamma\left(X_{C}\times S,\underline{\OB}_{\pdR}\right)
  \end{equation*}
  for an abitrary profinite set $S$. Now set $S=*$ and apply
  Corollary~\ref{cor:derivedlocalsectionsofBpdRdag-recostructionpaper}.
\end{proof}


\subsection{Theorems in the ind-Banach world}

We deduce ind-Banach versions of the results
in \S\ref{subsubsec:coh-over-XC-solid-reconstructionpaper}.
Recall Definitions~\ref{defn:bornologicalspace-reconstructionpaper}
and~\ref{defn:bornology-countable-basis-reconstructionpaper}.

\begin{lem}\label{lem:bornological-thm:borncont-group-coh-OBlatX}
  Let $X$ denote an arbitrary smooth rigid-analytic $k$-variety,
  equipped with an étale morphism $X\to\TT^{d}$. Then, 
  for any affinoid perfectoid $U\in X_{\proet}/\widetilde{X}_{C}$
  and any affinoid $k$-algebra $A$,
  $\OB_{\dR}^{\dag}(U)$ and
  $A\widehat{\otimes}_{k}\BB_{\dR}^{\dag}\left(U\right)$
  are bornological spaces whose bornology has countable basis.
\end{lem}

\begin{proof}
  $\OB_{\dR}^{\dag}(U)$ is a bornological space whose bornology has countable basis
  by the Theorems~\ref{thm:cohomology-OBdRdag-over-affperfd-reconstructionpaper}
  and~\ref{thm:BdRdagRRplus-bornological}.
  Thanks to Theorem~\ref{thm:derivedlocalsectionsofBdRdagplus-recostructionpaper}
  and Theorem~\ref{thm:BdRdagRRplus-bornological}, it remains to check that
  $A\widehat{\otimes}_{k}-$ preserves injective maps between Banach spaces.
  But by~\cite[Proposition 10.1]{Sch02}, $A\cong c_{0}\left(\Omega\right)$ for some set $\Omega$,
  thus~\cite[Proposition 5.1.16]{benbassat2024perspectivefoundationsderivedanalytic} implies by \emph{loc. cit.}
  Remark~\ref*{rem:restrictedpowerseries-is-c0space} and
  Lemma~\ref*{lem:infiniteTatealgebrasascontractingcoproductandc0}.
\end{proof}

In the following, we work with sheaves valued in the left heart 
$\LH\left(\IndBan_{k}\right)=\IndBan_{\I\left(k\right)}$. As explained in
\S\ref{subsec:sheavesindbanspaces-reconstructionpaper},
this category has enough injectives. Thus the derived functor
in Theorem~\ref{thm:borncont-group-coh-OBlatX} below is well-defined.

\begin{thm}\label{thm:borncont-group-coh-OBlatX}
  Let $X$ denote an arbitrary smooth rigid-analytic $k$-variety,
  equipped with an étale morphism $X\to\TT^{d}$. Then,
  we have the following canonical isomorphism in $\D\left(\IndBan_{\I\left(k\right)}\right)$:
  \begin{equation*}
    \I\left(\cal{O}\left(X\right)
      \widehat{\otimes}_{k}\BB_{\dR}^{\dag}\left(\Spa\left(C,\cal{O}_{C}\right)\times S\right)\right)
    \isomap\R\Gamma\left(X_{C} \times S,\I\left(\OB_{\dR}^{\dag}\right)\right).
  \end{equation*}
  In particular,
  $\cal{O}\left(X\right)
    \widehat{\otimes}_{k}\BB_{\dR}^{\dag}\left(\Spa\left(C,\cal{O}_{C}\right)\times S\right)
    \isomap\OB_{\dR}^{\dag}\left(X_{C} \times S\right)$.
\end{thm}

\begin{proof}
  We compute the cohomology via \v{C}ech cohomology with respect to the
  covering $\widetilde{X}_{C}\to X_{C}$. This is allowed, because the cohomology
  of $\I\left(\OB_{\dR}^{\dag}\right)$ vanishes over objects of the form
  \begin{equation*}
    \widetilde{X}_{C}\times_{X_{C}}\dots\times_{X_{C}}\widetilde{X}_{C}
  \end{equation*}  
  as they are affinoid perfectoid, cf. Theorem~\ref{thm:cohomology-OBdRdag-over-affperfd-reconstructionpaper}.
  This gives the following isomorphism
  \begin{equation*}
    \R\Gamma\left(X_{C} \times S,\I\left(\OB_{\dR}^{\dag}\right)\right)
    \cong
    \check{C}^{\bullet}\left(\widetilde{X}_{C}\times S \to X_{C} \times S,\I\left(\OB_{\dR}^{\dag}\right)\right).
  \end{equation*}
  Consequently, it remains to check that
  \begin{equation*}
    \cal{O}\left(X\right)
      \widehat{\otimes}_{k}\BB_{\dR}^{\dag}\left(\Spa\left(C,\cal{O}_{C}\right)\times S\right)
      \to
      \check{C}^{\bullet}\left(\widetilde{X}_{C}\times S \to X_{C} \times S,\OB_{\dR}^{\dag}\right)
  \end{equation*}
  is a quasi-isomorphism, cf. Lemma~\ref{lem:LH-vs-H-reconstructionpaper}.
  But we know from the proof of Theorem~\ref{thm:solidcont-group-coh-OBlatX} that
  \begin{equation*}
    \underline{\cal{O}\left(X\right)
      \widehat{\otimes}_{k}\BB_{\dR}^{\dag}\left(\Spa\left(C,\cal{O}_{C}\right)\times S\right)}
      \to
      \underline{\check{C}^{\bullet}\left(\widetilde{X}_{C}\times S \to X_{C} \times S,\OB_{\dR}^{\dag}\right)}
  \end{equation*}
  is a quasi-isomorphism.
  By Lemma~\ref{lem:bornological-thm:borncont-group-coh-OBlatX}, we can now apply
  Proposition~\ref{prop:solidexact-implies-indBanachstrictlyexact-reconstructionpaper}.
\end{proof}

\begin{cor}\label{thm:borncont-group-coh-OBlatX-Spoint}
  Let $X$ denote an arbitrary smooth rigid-analytic $k$-variety,
  equipped with an étale morphism $X\to\TT^{d}$. Then,
  we have the following canonical isomorphism in $\D\left(\IndBan_{\I\left(k\right)}\right)$:
  \begin{equation*}
    \I\left(\cal{O}\left(X\right)
      \widehat{\otimes}_{k}B_{\dR}^{\dag}\right)
    \isomap\R\Gamma\left(X_{C},\I\left(\OB_{\dR}^{\dag}\right)\right).
  \end{equation*}
  In particular,
  $\cal{O}\left(X\right)\widehat{\otimes}_{k}B_{\dR}^{\dag}
    \isomap\OB_{\dR}^{\dag}\left(X_{C}\right)$.
\end{cor}

\begin{proof}
  This is Theorem~\ref{thm:borncont-group-coh-OBlatX} for $S=*$.
\end{proof}

\begin{lem}\label{lem:bornological-thm:borncont-group-coh-OBpdRdag}
  Let $X$ denote an arbitrary smooth rigid-analytic $k$-variety,
  equipped with an étale morphism $X\to\TT^{d}$. Then, 
  for any affinoid perfectoid $U\in X_{\proet}/\widetilde{X}_{C}$
  and any affinoid $k$-algebra $A$,
  $\OB_{\pdR}^{\dag}(U)$ and
  $A\widehat{\otimes}_{k}\BB_{\pdR}^{\dag}\left(U\right)$
  are bornological spaces whose bornology has countable basis.
\end{lem}

\begin{proof}
  $\OB_{\pdR}^{\dag}(U)$ is a bornological space whose bornology has countable basis
  by the Theorems~\ref{thm:cohomology-OBpdRdag-over-affperfd-reconstructionpaper}
  and~\ref{lem:BpdRdag-bornology-countable-basis-reconstructionpaper}.
  Thanks to Corollary~\ref{cor:derivedlocalsectionsofBpdRdag-recostructionpaper}
  and Theorem~\ref{lem:BpdRdag-bornology-countable-basis-reconstructionpaper}, it remains to check that
  $A\widehat{\otimes}_{k}-$ preserves injective maps between Banach spaces.
  For this, proceed as in the proof of Lemma~\ref{lem:bornological-thm:borncont-group-coh-OBlatX}.
\end{proof}

\begin{thm}\label{thm:borncont-group-coh-OBpdRdagtTTKd}
  Let $X$ denote an arbitrary smooth rigid-analytic $k$-variety,
  equipped with an étale morphism $X\to\TT^{d}$. Then,
  we have the following canonical isomorphism in $\D\left(\IndBan_{\I\left(k\right)}\right)$:
  \begin{equation*}
    \I\left(\cal{O}\left(X\right)\widehat{\otimes}_{k}\BB_{\pdR}^{\dag}\left(\Spa\left(C,\cal{O}_{C}\right)\times S\right)\right)
  \isomap\R\Gamma\left(X_{C} \times S,\I\left(\OB_{\pdR}^{\dag}\right)\right).
  \end{equation*}
  In particular,
  $\cal{O}\left(X\right)
    \widehat{\otimes}_{k}\BB_{\pdR}^{\dag}\left(\Spa\left(C,\cal{O}_{C}\right)\times S\right)
    \isomap\OB_{\pdR}^{\dag}\left(X_{C} \times S\right)$.
\end{thm}

\begin{proof}
  We compute the cohomology via \v{C}ech cohomology with respect to the
  covering $\widetilde{X}_{C}\to X_{C}$. This is allowed, because the cohomology
  of $\I\left(\OB_{\dR}^{\dag}\right)$ vanishes over objects of the form
  \begin{equation*}
    \widetilde{X}_{C}\times_{X_{C}}\dots\times_{X_{C}}\widetilde{X}_{C}
  \end{equation*}
  as they are affinoid perfectoid, cf. Theorem~\ref{thm:cohomology-OBdRdag-over-affperfd-reconstructionpaper}.
  This gives the following isomorphism
  \begin{equation*}
    \R\Gamma\left(X_{C} \times S,\I\left(\OB_{\pdR}^{\dag}\right)\right)
    \cong
    \check{C}^{\bullet}\left(\widetilde{X}_{C}\times S \to X_{C} \times S,\I\left(\OB_{\pdR}^{\dag}\right)\right).
  \end{equation*}
  Consequently, it remains to check that
  \begin{equation*}
    \cal{O}\left(X\right)
      \widehat{\otimes}_{k}\BB_{\pdR}^{\dag}\left(\Spa\left(C,\cal{O}_{C}\right)\times S\right)
      \to
      \check{C}^{\bullet}\left(\widetilde{X}_{C}\times S \to X_{C} \times S,\OB_{\pdR}^{\dag}\right)
  \end{equation*}
  is a quasi-isomorphism, cf. Lemma~\ref{lem:LH-vs-H-reconstructionpaper}.
  But we know from the proof of Theorem~\ref{thm:solidcont-group-coh-OBpdRdagtTTKd} that
  \begin{equation*}
    \underline{\cal{O}\left(X\right)
      \widehat{\otimes}_{k}\BB_{\pdR}^{\dag}\left(\Spa\left(C,\cal{O}_{C}\right)\times S\right)}
      \to
      \underline{\check{C}^{\bullet}\left(\widetilde{X}_{C}\times S \to X_{C} \times S,\OB_{\pdR}^{\dag}\right)}
  \end{equation*}
  is a quasi-isomorphism.
  By Lemma~\ref{lem:bornological-thm:borncont-group-coh-OBpdRdag}, we can now apply
  Proposition~\ref{prop:solidexact-implies-indBanachstrictlyexact-reconstructionpaper}.
\end{proof}

\begin{cor}\label{thm:borncont-group-coh-OBpdRdagtX-Spoint}
  Let $X$ denote an arbitrary smooth rigid-analytic $k$-variety,
  equipped with an étale morphism $X\to\TT^{d}$. Then,
  we have the following canonical isomorphism in $\D\left(\IndBan_{\I\left(k\right)}\right)$:
  \begin{equation*}
    \I\left(\cal{O}\left(X\right)\widehat{\otimes}_{k}B_{\pdR}^{\dag}\right)
  \isomap\R\Gamma\left(X_{C},\I\left(\OB_{\pdR}^{\dag}\right)\right).
  \end{equation*}
  In particular,
  $\cal{O}\left(X\right)\widehat{\otimes}_{k}B_{\pdR}^{\dag}
    \isomap\OB_{\pdR}^{\dag}\left(X_{C}\right)$.
\end{cor}

\begin{proof}
  This is Theorem~\ref{thm:borncont-group-coh-OBpdRdagtTTKd} for $S=*$.
\end{proof}


\section{On the cohomology over $X$}

Let $X$ denote an arbitrary smooth rigid-analytic $k$-variety.


\subsection{Theorems in the ind-Banach world}

In the following, we consider the adjunct
\begin{equation}\label{eq:O-to-nustar-OBdRdag}
  \cal{O}\to\nu_{*}\OB_{\dR}^{\dag,+}
\end{equation}
of the canonical map $\nu^{-1}\cal{O}\to\OB_{\dR}^{\dag,+}$.

\begin{thm}\label{thm:borncohomologyOBdRdagX-reconstructionpaper}
  Suppose $X$ is affinoid and admits an étale morphism $X\to\TT^{d}$.
\begin{itemize}
  \item[(i)] (\ref{eq:O-to-nustar-OBdRdag}) induces the canonical isomorphism
    $\cal{O}(X)\isomap \Ho^{0}\left(X,\OB_{\dR}^{\dag}\right)$.
  \item[(ii)] $\Ho^{1}\left(X,\I\left(\OB_{\dR}^{\dag}\right)\right)\cong\I\left(\cal{O}(X)\right)$, and
  \item[(iii)] $\Ho^{i}\left(X,\I\left(\OB_{\dR}^{\dag}\right)\right)=0$ for all $i\geq2$.
\end{itemize}
\end{thm}

\begin{proof}
  We compute the cohomology via \v{C}ech cohomology with respect to the
  covering $X_{C}\to X$. This is allowed, because the cohomology
  of $\I\left(\OB_{\dR}^{\dag}\right)$ vanishes over objects of the form
  \begin{equation*}
    X_{C}\times_{X}\dots\times_{X}X_{C} \cong X_{C}\times\cal{G}^{j}
  \end{equation*}
  where $\cal{G}:=\Gal\left(\overline{k}/k\right)$,
  cf. Corollary~\ref{thm:borncont-group-coh-OBlatX-Spoint}.
  It follows that each $\Ho^{i}\left(X,\I\left(\OB_{\dR}^{\dag}\right)\right)$
  is the $i$th cohomology of the \v{C}ech complex
  $\check{C}^{\bullet}\left(X_{C}\to X , \I\left(\OB_{\dR}^{\dag}\right) \right)$.
  By Lemma~\ref{lem:LH-vs-H-reconstructionpaper},
  this coincides with the $i$th cohomology of
  \begin{equation}\label{eq:borncohomologyOBdRdagX-computation-reconstructionpaper}
  \begin{split}
    &\check{C}^{\bullet}\left(X_{C}\to X , \OB_{\dR}^{\dag} \right) \\
    &\cong\left(
      \OB_{\dR}^{\dag}\left(X_{C}\right)
      \to\OB_{\dR}^{\dag}\left(X_{C}\times\cal{G}\right)
      \to\dots
    \right) \\
    &\stackrel{\text{\ref{thm:borncont-group-coh-OBlatX}}}{\cong}
    \left(
      \cal{O}\left(X\right)
        \widehat{\otimes}_{k}\BB_{\dR}^{\dag}\left(\Spa\left(C,\cal{O}_{C}\right)\right)
      \to
      \cal{O}\left(X\right)
        \widehat{\otimes}_{k}\BB_{\dR}^{\dag}\left(\Spa\left(C,\cal{O}_{C}\right)\times\cal{G}\right)
      \to\dots
    \right) \\
    &\simeq
    \cal{O}\left(X\right)\widehat{\otimes}_{k}
      \left(
      \BB_{\dR}^{\dag}\left(\Spa\left(C,\cal{O}_{C}\right)\right)
      \to
      \BB_{\dR}^{\dag}\left(\Spa\left(C,\cal{O}_{C}\right)\times\cal{G}\right)
      \to\dots
    \right) \\
    &\stackrel{\text{\ref{cor:BdRdaggerUtimesS-isomapHomcontSAdRgreaterthanqU-reconstructionpaper}}}{\simeq} 
      \cal{O}\left(X\right)\widehat{\otimes}_{k}
      \R\Gamma_{\cont}\left(\cal{G},B_{\dR}^{\dag}\right)
  \end{split}
  \end{equation}
  Here, $\simeq$ refers to a quasi-isomorphism. This quasi-isomorphism
  in the previous computation does indeed exist, as Corollary~\ref{cor:tensorproductfieldexact}
  gives the flatness of $\cal{O}(X)$ with respect to $\widehat{\otimes}$.
  From the discussion above and~(\ref{eq:borncohomologyOBdRdagX-computation-reconstructionpaper}),
  we now get
  \begin{equation*}
    \R\Gamma\left(X,\I\left(\OB_{\dR}^{\dag}\right)\right)
    \cong
    \I\left(\cal{O}\left(X\right)\right)\widehat{\otimes}_{\I\left(k\right)}
      \R\Gamma_{\cont}\left(\cal{G},B_{\dR}^{\dag}\right).
  \end{equation*}
  Using again the flatness of $\cal{O}\left(X\right)$,
  we apply Theorem~\ref{thm:galois-cohomology-of-solidBdRdagger-born} to finish the proof.
\end{proof}

\begin{cor}\label{cor:borncohomologyOBdRdagX-reconstructionpaper}
Let $X$ denote an arbitrary smooth rigid-analytic $k$-variety.
\begin{itemize}
  \item[(i)] The canonical morphism $\cal{O}\isomap\nu_{*}\OB_{\dR}^{\dag}$,
  cf.~(\ref{eq:O-to-nustar-OBdRdag}), is an isomorphism,
  \item[(ii)] $\R^{1}\nu_{*}\I\left(\OB_{\dR}^{\dag}\right)\cong\I\left(\cal{O}\right)$, and
  \item[(iii)] $\R^{i}\nu_{*}\I\left(\OB_{\dR}^{\dag}\right)=0$ for all $i\geq2$.
\end{itemize}
\end{cor}

\begin{proof}
  We may assume that $X$ is affinoid and equipped with an étale morphism
  $X\to\TT^{d}$. For all $i\in\NN$,
  $\R^{i}\nu_{*}\I\left(\OB_{\dR}^{\dag}\right)$ is the sheafification of the presheaf
  \begin{equation*}
    U\mapsto\Ho^{i}\left(U,\I\left(\OB_{\dR}^{\dag}\right)\right),
  \end{equation*}
  thus everything follows from Theorem~\ref{thm:borncohomologyOBdRdagX-reconstructionpaper}.
\end{proof}

Compose~(\ref{eq:O-to-nustar-OBdRdag})
with the canonical map $\nu_{*}\OB_{\dR}^{\dag}\to\nu_{*}\OB_{\pdR}^{\dag}$ to get
\begin{equation}\label{eq:O-to-nustar-OBpdRdag}
  \cal{O}\to\nu_{*}\OB_{\pdR}^{\dag}.
\end{equation}

\begin{thm}\label{thm:borncohomologyOBpdRdagX-reconstructionpaper}
  Suppose $X$ is affinoid and admits an étale morphism $X\to\TT^{d}$.
  Then the canonical morphism
  $\I\left(\cal{O}(X)\right)\isomap \R\Gamma\left(X,\I\left(\OB_{\pdR}^{\dag}\right)\right)$
  induced by~(\ref{eq:O-to-nustar-OBpdRdag}) is an isomorphism
  in $\D\left(\IndBan_{k}\right)$.
\end{thm}

\begin{proof}
  Proceed as in the proof of Theorem~\ref{thm:borncohomologyOBdRdagX-reconstructionpaper}
  to deduce
  \begin{equation*}
    \R\Gamma\left(X,\I\left(\OB_{\pdR}^{\dag}\right)\right)
    \cong
    \I\left(\cal{O}\left(X\right)\right)\widehat{\otimes}_{\I\left(k\right)}
      \R\Gamma_{\cont}\left(\cal{G},B_{\pdR}^{\dag}\right),
  \end{equation*}
  but apply Theorem~\ref{thm:borncont-group-coh-OBpdRdagtTTKd}
  instead of Theorem~\ref{thm:borncont-group-coh-OBlatX},
  and Corollary~\ref{cor:BpdRdaggerUtimesS-isomapHomcontSAdRgreaterthanqU-reconstructionpaper}
  instead of Corollary~\ref{cor:BdRdaggerUtimesS-isomapHomcontSAdRgreaterthanqU-reconstructionpaper}.
  By the flatness of $\cal{O}\left(X\right)$,
  cf. the proof of Theorem~\ref{thm:borncohomologyOBdRdagX-reconstructionpaper},
  it remains to apply Theorem~\ref{thm:galois-cohomology-of-BpdRdagger}.
\end{proof}

\begin{cor}\label{cor:borncohomologyOBpdRdagX-reconstructionpaper}
  Let $X$ denote an arbitrary smooth rigid-analytic $k$-variety.
  The canonical morphism $\I\left(\cal{O}\right)\to\R\nu_{*}\I\left(\OB_{\pdR}^{\dag}\right)$
  induced by~(\ref{eq:O-to-nustar-OBpdRdag})
  is an isomorphism.
\end{cor}

\begin{proof}
  We may assume that $X$ is affinoid and equipped with an étale morphism
  $X\to\TT^{d}$. For all $i\in\NN$,
  $\R^{i}\nu_{*}\I\left(\OB_{\pdR}^{\dag}\right)$ is the sheafification of the presheaf
  \begin{equation*}
    U\mapsto\Ho^{i}\left(U,\I\left(\OB_{\pdR}^{\dag}\right)\right),
  \end{equation*}
  thus everything follows from Theorem~\ref{thm:borncohomologyOBpdRdagX-reconstructionpaper}.
\end{proof}


\subsection{Theorems in the solid world}

Consider again~(\ref{eq:O-to-nustar-OBdRdag}).
Apply the solidification functor and compose with the canonical
morphism $\underline{\nu_{*}\OB_{\dR}^{\dag}}\to\nu_{*}\underline{\OB}_{\dR}^{\dag}$
to get the canonical morphism
\begin{equation}\label{eq:underlinecalO-to-nustar-underlineOBdRdag}
  \underline{\cal{O}}\to\nu_{*}\underline{\OB}_{\dR}^{\dag}.
\end{equation}

\begin{thm}\label{thm:cohomologyOBdRdagX-reconstructionpaper}
  Suppose $X$ is affinoid and admits an étale morphism $X\to\TT^{d}$.
\begin{itemize}
  \item[(i)] The canonical morphism
    $\underline{\cal{O}}(X)\isomap \Ho^{0}\left(X,\underline{\OB}_{\dR}^{\dag}\right)$
    induced by~(\ref{eq:underlinecalO-to-nustar-underlineOBdRdag}) is an isomorphism,
  \item[(ii)] $\Ho^{1}\left(X,\underline{\OB}_{\dR}^{\dag}\right)\cong\underline{\cal{O}}(X)$, and
  \item[(iii)] $\Ho^{i}\left(X,\underline{\OB}_{\dR}^{\dag}\right)=0$ for all $i\geq2$.
\end{itemize}
\end{thm}

\begin{proof}
  We compute the cohomology via \v{C}ech cohomology with respect to the
  covering $X_{C}\to X$. This is allowed, because the cohomology
  of $\underline{\OB}_{\dR}^{\dag}$ vanishes over objects of the form
  \begin{equation*}
    X_{C}\times_{X}\dots\times_{X}X_{C} \cong X_{C}\times\cal{G}^{j}
  \end{equation*}
  where $\cal{G}:=\Gal\left(\overline{k}/k\right)$,
  cf. Theorem~\ref{thm:solidcont-group-coh-OBlatX}.
  It follows that each $\Ho^{i}\left(X,\underline{\OB}_{\dR}^{\dag}\right)$
  is the $i$th cohomology of the \v{C}ech complex
    
  \begin{equation*}
    \check{C}^{\bullet}\left(X_{C}\to X , \underline{\OB}_{\dR}^{\dag} \right) \\
    \cong\left(
      \underline{\OB}_{\dR}^{\dag}\left(X_{C}\right)
      \to\underline{\OB}_{\dR}^{\dag}\left(X_{C}\times\cal{G}\right)
      \to\dots
    \right)
  \end{equation*}
  Thus Theorem~\ref{thm:cohomologyOBdRdagX-reconstructionpaper}
  follows from the proof of
  Theorem~\ref{thm:borncohomologyOBdRdagX-reconstructionpaper}
  and the exactness of the functor
  $\IndBan_{\I\left(k\right)}\to\Vect_{k}^{\solid}$, $E\mapsto\underline{E}$,
  cf. the discussion around~(\ref{eq:underlineIVisunderlineV-reconstructionpaper})
  on page~\pageref{eq:underlineIVisunderlineV-reconstructionpaper}.
\end{proof}

\begin{cor}\label{cor:cohomologyOBdRdagX-reconstructionpaper}
Let $X$ denote an arbitrary smooth rigid-analytic $k$-variety.
\begin{itemize}
  \item[(i)] The canonical morphism $\underline{\cal{O}}\isomap\nu_{*}\underline{\OB}_{\dR}^{\dag}$,
  cf.~(\ref{eq:underlinecalO-to-nustar-underlineOBdRdag}), is an isomorphism,
  \item[(ii)] $\R^{1}\nu_{*}\underline{\OB}_{\dR}^{\dag}\cong\underline{\cal{O}}$, and
  \item[(iii)] $\R^{i}\nu_{*}\underline{\OB}_{\dR}^{\dag}=0$ for all $i\geq2$.
\end{itemize}
\end{cor}

\begin{proof}
  We may assume that $X$ is affinoid and equipped with an étale morphism
  $X\to\TT^{d}$. For all $i\in\NN$,
  $\R^{i}\nu_{*}\underline{\OB}_{\dR}^{\dag}$ is the sheafification of the presheaf
  \begin{equation*}
    U\mapsto\Ho^{i}\left(U,\underline{\OB}_{\dR}^{\dag}\right),
  \end{equation*}
  thus everything follows from Theorem~\ref{thm:cohomologyOBdRdagX-reconstructionpaper}.
\end{proof}

Compose~(\ref{eq:underlinecalO-to-nustar-underlineOBdRdag})
with the canonical map $\nu_{*}\underline{\OB}_{\dR}^{\dag}\to\nu_{*}\underline{\OB}_{\pdR}^{\dag}$ to get
\begin{equation}\label{eq:underlinecalO-to-nustar-underlineOBpdRdag}
  \underline{\cal{O}}\to\nu_{*}\underline{\OB}_{\pdR}^{\dag}.
\end{equation}

\begin{thm}\label{thm:cohomologyOBpdRdagX-reconstructionpaper}
  Suppose $X$ is affinoid and admits an étale morphism $X\to\TT^{d}$.
  Then the canonical morphism
  $\underline{\cal{O}}(X)\isomap \R\Gamma\left(X,\underline{\OB}_{\pdR}^{\dag}\right)$
  induced by~(\ref{eq:underlinecalO-to-nustar-underlineOBpdRdag}) is an isomorphism
  in $\D\left(\Vect_{k}^{\solid}\right)$.
\end{thm}

\begin{proof}
  Proceed as in the proof of Theorem~\ref{thm:cohomologyOBdRdagX-reconstructionpaper}
  to deduce
  \begin{equation*}
    \R\Gamma\left(X,\underline{\OB}_{\pdR}^{\dag}\right)
    \cong
    \underline{\cal{O}}\left(X\right)\otimes_{k}^{\blacksquare}
      \R\Gamma_{\cont}\left(\cal{G},\underline{B}_{\pdR}^{\dag}\right),
  \end{equation*}
  but apply Theorem~\ref{thm:solidcont-group-coh-OBpdRdagtTTKd}
  instead of Theorem~\ref{thm:solidcont-group-coh-OBlatX},
  and Corollary~\ref{cor:BpdRdaggerUtimesS-isomapHomcontSAdRgreaterthanqU-reconstructionpaper}
  instead of Corollary~\ref{cor:BdRdaggerUtimesS-isomapHomcontSAdRgreaterthanqU-reconstructionpaper}.
  By the flatness of $\underline{\cal{O}}\left(X\right)$,
  cf. the proof of Theorem~\ref{thm:cohomologyOBdRdagX-reconstructionpaper},
  it remains to apply Theorem~\ref{thm:galois-cohomology-of-BpdRdagger}.
\end{proof}

\begin{cor}\label{cor:cohomologyOBpdRdagX-reconstructionpaper}
  Let $X$ denote an arbitrary smooth rigid-analytic $k$-variety.
  The canonical morphism $\underline{\cal{O}}\to\R\nu_{*}\underline{\OB}_{\pdR}^{\dag}$
  induced by~(\ref{eq:underlinecalO-to-nustar-underlineOBpdRdag})
  is an isomorphism.
\end{cor}

\begin{proof}
  We may assume that $X$ is affinoid and equipped with an étale morphism
  $X\to\TT^{d}$. For all $i\in\NN$,
  $\R^{i}\nu_{*}\underline{\OB}_{\pdR}^{\dag}$ is the sheafification of the presheaf
  \begin{equation*}
    U\mapsto\Ho^{i}\left(U,\underline{\OB}_{\pdR}^{\dag}\right),
  \end{equation*}
  thus everything follows from Theorem~\ref{thm:cohomologyOBpdRdagX-reconstructionpaper}.
\end{proof}

\bibliography{RecThm}
\bibliographystyle{amsplain}  

\end{document}